\subjclass[2010]{14E18, 14D23, 14E16, 14B05, 11G25, 11S15}
\numberwithin{equation}{section}
\numberwithin{figure}{section}
\theoremstyle{plain}
\newtheorem{thm}{\protect\theoremname}[section]
\theoremstyle{plain}
\newtheorem{cor}[thm]{\protect\corollaryname}
\theoremstyle{definition}
\newtheorem{defn}[thm]{\protect\definitionname}
\theoremstyle{plain}
\newtheorem{lem}[thm]{\protect\lemmaname}
\theoremstyle{remark}
\newtheorem{notation}[thm]{\protect\notationname}
\theoremstyle{plain}
\newtheorem{prop}[thm]{\protect\propositionname}
\theoremstyle{remark}
\newtheorem{rem}[thm]{\protect\remarkname}
\theoremstyle{plain}
\newtheorem{assumption}[thm]{\protect\assumptionname}
\theoremstyle{definition}
\newtheorem{example}[thm]{\protect\examplename}
\providecommand{\assumptionname}{Assumption}
\providecommand{\corollaryname}{Corollary}
\providecommand{\definitionname}{Definition}
\providecommand{\examplename}{Example}
\providecommand{\lemmaname}{Lemma}
\providecommand{\notationname}{Notation}
\providecommand{\propositionname}{Proposition}
\providecommand{\remarkname}{Remark}
\providecommand{\theoremname}{Theorem}
\begin{document}
\title{Motivic integration over wild Deligne-Mumford stacks}
\author{Takehiko Yasuda}
\begin{abstract}
We develop the motivic integration theory over formal Deligne-Mumford
stacks over a power series ring of arbitrary characteristic. This
is a generalization of the corresponding theory for tame and smooth
Deligne-Mumford stacks constructed in earlier papers of the author.
As an application, we obtain the wild motivic McKay correspondence
for linear actions of arbitrary finite groups, which has been known
only for cyclic groups of prime order. In particular, this implies
the motivic version of Bhargava's mass formula as a special case.
In fact, we prove a more general result, the invariance of stringy
motives of (stacky) log pairs under crepant morphisms.
\end{abstract}

\address{Department of Mathematics, Graduate School of Science, Osaka University,
Toyonaka, Osaka 560-0043, JAPAN}
\email{takehikoyasuda@math.sci.osaka-u.ac.jp}
\thanks{This work was supported by JSPS KAKENHI Grant Numbers 16H06337, JP18H01112
and JP18K18710.}

\maketitle
\global\long\def\bigmid{\mathrel{}\middle|\mathrel{}}%

\global\long\def\AA{\mathbb{A}}%

\global\long\def\CC{\mathbb{C}}%

\global\long\def\FF{\mathbb{F}}%

\global\long\def\GG{\mathbb{G}}%

\global\long\def\LL{\mathbb{L}}%

\global\long\def\MM{\mathbb{M}}%

\global\long\def\NN{\mathbb{N}}%

\global\long\def\PP{\mathbb{P}}%

\global\long\def\QQ{\mathbb{Q}}%

\global\long\def\RR{\mathbb{R}}%

\global\long\def\SS{\mathbb{S}}%

\global\long\def\ZZ{\mathbb{Z}}%

\global\long\def\ba{\mathbf{a}}%

\global\long\def\bb{\mathbf{b}}%

\global\long\def\bd{\mathbf{d}}%

\global\long\def\bf{\mathbf{f}}%

\global\long\def\bg{\mathbf{g}}%

\global\long\def\bh{\mathbf{h}}%

\global\long\def\bj{\mathbf{j}}%

\global\long\def\bm{\mathbf{m}}%

\global\long\def\bp{\mathbf{p}}%

\global\long\def\bq{\mathbf{q}}%

\global\long\def\br{\mathbf{r}}%

\global\long\def\bs{\mathbf{s}}%

\global\long\def\bt{\mathbf{t}}%

\global\long\def\bv{\mathbf{v}}%

\global\long\def\bw{\mathbf{w}}%

\global\long\def\bx{\boldsymbol{x}}%

\global\long\def\by{\boldsymbol{y}}%

\global\long\def\bz{\mathbf{z}}%

\global\long\def\bG{\mathbf{G}}%

\global\long\def\bM{\mathbf{M}}%

\global\long\def\bP{\mathbf{P}}%

\global\long\def\cA{\mathcal{A}}%

\global\long\def\cB{\mathcal{B}}%

\global\long\def\cC{\mathcal{C}}%

\global\long\def\cD{\mathcal{D}}%

\global\long\def\cE{\mathcal{E}}%

\global\long\def\cF{\mathcal{F}}%

\global\long\def\cG{\mathcal{G}}%

\global\long\def\cH{\mathcal{H}}%

\global\long\def\cI{\mathcal{I}}%

\global\long\def\cJ{\mathcal{J}}%

\global\long\def\cK{\mathcal{K}}%

\global\long\def\cL{\mathcal{L}}%

\global\long\def\cM{\mathcal{M}}%

\global\long\def\cN{\mathcal{N}}%

\global\long\def\cO{\mathcal{O}}%

\global\long\def\cP{\mathcal{P}}%

\global\long\def\cQ{\mathcal{Q}}%

\global\long\def\cR{\mathcal{R}}%

\global\long\def\cS{\mathcal{S}}%

\global\long\def\cT{\mathcal{T}}%

\global\long\def\cU{\mathcal{U}}%

\global\long\def\cV{\mathcal{V}}%

\global\long\def\cW{\mathcal{W}}%

\global\long\def\cX{\mathcal{X}}%

\global\long\def\cY{\mathcal{Y}}%

\global\long\def\cZ{\mathcal{Z}}%

\global\long\def\fa{\mathfrak{a}}%

\global\long\def\fb{\mathfrak{b}}%

\global\long\def\fc{\mathfrak{c}}%

\global\long\def\ff{\mathfrak{f}}%

\global\long\def\fj{\mathfrak{j}}%

\global\long\def\fm{\mathfrak{m}}%

\global\long\def\fp{\mathfrak{p}}%

\global\long\def\fs{\mathfrak{s}}%

\global\long\def\ft{\mathfrak{t}}%

\global\long\def\fx{\mathfrak{x}}%

\global\long\def\fv{\mathfrak{v}}%

\global\long\def\fD{\mathfrak{D}}%

\global\long\def\fM{\mathfrak{M}}%

\global\long\def\fO{\mathfrak{O}}%

\global\long\def\fS{\mathfrak{S}}%

\global\long\def\fV{\mathfrak{V}}%

\global\long\def\fX{\mathfrak{X}}%

\global\long\def\fY{\mathfrak{Y}}%

\global\long\def\ru{\mathrm{u}}%

\global\long\def\rv{\mathbf{\mathrm{v}}}%

\global\long\def\rw{\mathrm{w}}%

\global\long\def\rx{\mathrm{x}}%

\global\long\def\ry{\mathrm{y}}%

\global\long\def\rz{\mathrm{z}}%

\global\long\def\B{\operatorname{\mathrm{B}}}%

\global\long\def\C{\operatorname{\mathrm{C}}}%

\global\long\def\H{\operatorname{\mathrm{H}}}%

\global\long\def\I{\operatorname{\mathrm{I}}}%

\global\long\def\J{\operatorname{\mathrm{J}}}%

\global\long\def\M{\operatorname{\mathrm{M}}}%

\global\long\def\N{\operatorname{\mathrm{N}}}%

\global\long\def\R{\operatorname{\mathrm{R}}}%

\global\long\def\age{\operatorname{age}}%

\global\long\def\Aut{\operatorname{Aut}}%

\global\long\def\codim{\operatorname{codim}}%

\global\long\def\Conj{\operatorname{Conj}}%

\global\long\def\det{\operatorname{det}}%

\global\long\def\Emb{\operatorname{Emb}}%

\global\long\def\Gal{\operatorname{Gal}}%

\global\long\def\Hom{\operatorname{Hom}}%

\global\long\def\Image{\operatorname{\mathrm{Im}}}%

\global\long\def\Iso{\operatorname{Iso}}%

\global\long\def\Ker{\operatorname{Ker}}%

\global\long\def\lcm{\operatorname{\mathrm{lcm}}}%

\global\long\def\length{\operatorname{\mathrm{length}}}%

\global\long\def\ord{\operatorname{ord}}%

\global\long\def\Ram{\operatorname{\mathrm{Ram}}}%

\global\long\def\Sp{\operatorname{Sp}}%

\global\long\def\Spec{\operatorname{Spec}}%

\global\long\def\Spf{\operatorname{Spf}}%

\global\long\def\Stab{\operatorname{Stab}}%

\global\long\def\Supp{\operatorname{Supp}}%

\global\long\def\sbrats{\llbracket s\rrbracket}%

\global\long\def\spars{\llparenthesis s\rrparenthesis}%

\global\long\def\tbrats{\llbracket t\rrbracket}%

\global\long\def\tpars{\llparenthesis t\rrparenthesis}%

\global\long\def\ulAut{\operatorname{\underline{Aut}}}%

\global\long\def\ulHom{\operatorname{\underline{Hom}}}%

\global\long\def\ulIso{\operatorname{\underline{{Iso}}}}%

\global\long\def\Utg{\operatorname{Utg}}%

\global\long\def\Unt{\operatorname{Unt}}%

\global\long\def\a{\mathrm{a}}%

\global\long\def\AdGp{\mathrm{AdGp}}%

\global\long\def\Aff{\mathbf{Aff}}%

\global\long\def\calm{\mathrm{calm}}%

\global\long\def\D{\mathrm{D}}%

\global\long\def\Df{\mathrm{Df}}%

\global\long\def\GG{\mathrm{GalGps}}%

\global\long\def\hattimes{\hat{\times}}%

\global\long\def\hatotimes{\hat{\otimes}}%

\global\long\def\hyphen{\textrm{-}}%

\global\long\def\id{\mathrm{id}}%

\global\long\def\iper{\mathrm{iper}}%

\global\long\def\indperf{\mathrm{iper}}%

\global\long\def\Jac{\mathrm{Jac}}%

\global\long\def\lcr{\mathrm{lcr}}%

\global\long\def\nor{\mathrm{nor}}%

\global\long\def\pt{\mathbf{pt}}%

\global\long\def\pur{\mathrm{pur}}%

\global\long\def\perf{\mathrm{perf}}%

\global\long\def\pper{\mathrm{pper}}%

\global\long\def\properf{\mathrm{pper}}%

\global\long\def\pr{\mathrm{pr}}%

\global\long\def\rig{\mathrm{rig}}%

\global\long\def\red{\mathrm{red}}%

\global\long\def\reg{\mathrm{reg}}%

\global\long\def\rep{\mathrm{rep}}%

\global\long\def\rR{\mathrm{R}}%

\global\long\def\sep{\mathrm{sep}}%

\global\long\def\Set{\mathbf{Set}}%

\global\long\def\Sub{\mathrm{Sub}}%

\global\long\def\sm{\mathrm{sm}}%

\global\long\def\sing{\mathrm{sing}}%

\global\long\def\sht{\mathrm{sht}}%

\global\long\def\st{\mathrm{st}}%

\global\long\def\tame{\mathrm{tame}}%

\global\long\def\tors{\mathrm{tors}}%

\global\long\def\tr{\mathrm{tr}}%

\global\long\def\ut{\mathrm{ut}}%

\global\long\def\utd{\mathrm{utd}}%

\global\long\def\utg{\mathrm{utg}}%

\global\long\def\Var{\mathbf{Var}}%

\global\long\def\VAR{\mathbf{VAR}}%

\global\long\def\WRep{\mathbf{WRep}}%

\tableofcontents{}

\section{Introduction\label{sec:Introduction}}

\subsection{Background}

The aim of this paper is to develop motivic integration over \emph{wild}
Deligne-Mumford (for short, DM) stacks, that is, DM stacks whose stabilizers
may have order divisible by the characteristic of the base field.
Motivic integration was introduced by Kontsevich \cite{Kontsevich-motivic}
and developed by Denef and Loeser \cite{MR1664700} for varieties
in characteristic zero. In earlier papers \cite{MR2027195,MR2271984},
the author treated tame and smooth DM stacks, carrying forward the
approach of Batyrev \cite{MR1677693} and Denef-Loeser \cite{MR1905024}
to the McKay correspondence via motivic integration. An attempt of
generalization to the wild case was started in \cite{MR3230848} and
a grand design of the general theory was outlined in \cite{MR3791224}.
More works in this direction have been done in \cite{MR3431631,MR3508745,MR3730512,MR3665638,Tonini:2017qr,Formal-Torsors-II}
and the present work is based on them. Note that our theory is new
even in characteristic zero, since we allow DM stacks to be singular. 

Motivic integration for formal schemes over a complete discrete valuation
ring by Sebag \cite{MR2075915} is generalization in another direction.
In fact, the present paper treats formal DM stacks over a power series
ring $k\tbrats$. Thus we generalize also his theory except that we
consider only the equal characteristic case and we use a different
version of the complete Grothendieck ring of varieties.

\subsection{Twisted arcs}

In a classical setting of motivic integration, given a $k$-variety
$X$, one considers its\emph{ arcs}, that is, $k$-morphisms $\Spec L\tbrats\to X$
from a formal disk to $X$, where $L$ is an extension of $k$. The
space of arcs, denoted by $\J_{\infty}X$, carries the so-called \emph{motivic
measure}. One studies integrals with respect to it, which leads to
useful invariants of $X$. When $X$ is a formal scheme over $k\tbrats$,
one considers $k\tbrats$-morphisms $\Spf L\tbrats\to X$ instead. 

In generalization to DM stacks, we need to consider \emph{twisted
arcs. }Roughly, it means that we replace a formal disk $\Spec L\tbrats$
(or $\Spf L\tbrats$) with twisted formal disks; a \emph{twisted formal
disk} is the quotient stack $[E/G]$ associated to a normal Galois
cover of a formal disk with the Galois group $G$. In the tame case,
we only need to consider the $\mu_{l}$-covers $\Spec L\llbracket t^{1/l}\rrbracket\to\Spec L\tbrats$,
because they are the only possible Galois covers if $L$ is algebraically
closed. In the wild case, there are infinitely many Galois covers
even if we fix an algebraically closed coefficient field $L$ and
a Galois group. Moreover they cannot be parameterized by any finite
dimensional space. We need to take all Galois extensions into account
and to construct the moduli space of twisted arcs on a formal DM stack
$\cX$, which we denote by $\cJ_{\infty}\cX$. 

\subsection{The change of variables formula}

After defining the motivic measure on the space $\cJ_{\infty}\cX$
of twisted arcs, we then need to prove the \emph{change of variables
formula}. In the case of schemes, the formula is written as
\[
\int_{\J_{\infty}X}\LL^{h}\,d\mu_{X}=\int_{\J_{\infty}Y}\LL^{h\circ f_{\infty}-\fj_{f}}\,d\mu_{Y},
\]
say for a proper birational morphism $f\colon Y\to X$. Here $\fj_{f}$
is the Jacobian order function of $f$. The following generalization
the change of variables formula to DM stacks is one of our main results
(see Theorems \ref{thm:change-vars-II} and \ref{thm:change-vars-II-1}
for more general versions):
\begin{thm}
\label{thm:intro-change-vars}Let $\cY,\cX$ be DM stacks of finite
type over $k$ and suppose that they have the same pure dimension
and are generically smooth over $k$. Let $f\colon\cY\to\cX$ be a
proper birational morphism. Then, for a measurable function $h$ on
$\cJ_{\infty}\cX$, 
\[
\int_{\cJ_{\infty}\cX}\LL^{h+\fs_{\cX}}\,d\mu_{\cX}=\int_{\cJ_{\infty}\cY}\LL^{h\circ f_{\infty}-\fj_{f}+\fs_{\cY}}\,d\mu_{\cY}.
\]
\end{thm}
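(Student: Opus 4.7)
The plan is to reduce the formula, by stratification, to the classical change of variables for formal schemes applied to the Galois covers that parameterize twisted arcs, and then to absorb the correction terms $\fs_{\cX}$, $\fs_{\cY}$ and $\fj_{f}$ through a pointwise identity relating the shift functions to the scheme-theoretic Jacobian on the total spaces of the covers. I would begin by verifying that both sides are well-defined: measurability of $h\circ f_{\infty}-\fj_{f}+\fs_{\cY}$ on $\cJ_{\infty}\cY$ reduces to measurability of $f_{\infty}\colon\cJ_{\infty}\cY\to\cJ_{\infty}\cX$ and cylindricality of $\fs_{\cY}$ and $\fj_{f}$, while convergence of the $\cY$-side reduces to integrability of $h$ on $\cJ_{\infty}\cX$ together with dimension estimates for the cylinders in play.

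The core of the argument is a stratification of $\cJ_{\infty}\cX$ by the isomorphism type of the Galois cover $E\to\Spf L\tbrats$ underlying a twisted arc. On a stratum of fixed Galois group $G$ and fixed combinatorial type, a twisted arc is a $G$-equivariant $\Spf L\tbrats$-morphism $E\to\cX$, and the motivic measure restricted to this stratum is obtained from the usual arc-space measure of the total space of the cover, weighted by $\fs_{\cX}$. The morphism $f\colon\cY\to\cX$ pulls such a twisted arc back to a twisted arc of $\cY$ sharing the same underlying cover (after possibly enlarging $E$), and on this common cover the untwisted change of variables introduces precisely the Jacobian order of the scheme-theoretic morphism. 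The identity on each stratum then reduces to the pointwise formula
\[
\fj_{f}=\bigl(\fs_{\cX}\circ f_{\infty}-\fs_{\cY}\bigr)+(\text{Jacobian order of the cover-level morphism}),
\]
which is a local computation on $E$ comparing the equivariant and ordinary Jacobians through the inertia.

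The principal obstacle is that in the wild case the Galois covers of $\Spf L\tbrats$ form an infinite, non-finite-dimensional family, so the stratification is a countable decomposition and one must show both that the integrals over individual strata converge and that their sum converges in the complete Grothendieck ring employed here. For this I would rely on the moduli-theoretic structure of $\cJ_{\infty}\cX$ developed in \cite{MR3791224,Formal-Torsors-II,Tonini:2017qr}, together with the ramification estimates from \cite{MR3230848,MR3665638}, which together provide uniform bounds on the dimensions of the strata in terms of the ramification, uniform measurability of $\fs_{\cX}$ and $\fs_{\cY}$ along the stratification, and constructibility of $\fj_{f}$. Granting these uniformities, additivity of the measure assembles the per-stratum identity into a finite-level identity, and passage to the limit in the complete Grothendieck ring finishes the proof; the tail estimates needed for the limit are precisely those already used to establish convergence of the integrals.
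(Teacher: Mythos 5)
Your high-level plan — reduce to the classical change of variables on the Galois covers underlying twisted arcs and absorb the $\fs$- and $\fj$-terms through a pointwise identity — is the right spirit, but as stated it hides the main technical difficulty and contains one genuine gap that the paper's untwisting-stack machinery is precisely designed to close.

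The gap is in how you stratify $\cJ_\infty\cX$. You treat the stratification by cover type as though, on each piece, one may fix the cover $E\to\Spf L\tbrats$ and work with it as a single scheme. But in the wild case the covers of fixed Galois group $G$ and fixed ramification datum $\br$ form a positive-dimensional moduli stack (roughly, the ind-perfection of a finite-type DM stack $\Theta_{[G]}^{[\br]}$, and the whole moduli space $\Delta_G$ is an ind-DM stack that is not of finite type). Within a single stratum the cover still varies in a family of positive motivic dimension, and this dimension contributes to the measure. A per-cover pointwise identity cannot be integrated naively; one needs a family version. The paper handles this by constructing the untwisting stack $\Utg_\Gamma(\cX)^{\pur}=\ulHom^{\rep}_{\Df_\Gamma}(\cE_\Gamma,\cX_\Gamma)^{\pur}$ as a formal DM stack over $\Df_\Gamma$ (with $\Gamma$ a flattening stratification of the moduli of covers), and then \emph{defines} $\cJ_\infty\cX:=\J_\infty\Utg_\Gamma(\cX)^{\pur}$, so that all of the untwisted change-of-variables technology is applied once, uniformly in families, rather than cover by cover.

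There is a second issue with the pointwise formula itself. You posit $\fj_f=(\fs_\cX\circ f_\infty-\fs_\cY)+(\text{Jacobian of the cover-level morphism})$ as "a local computation on $E$," but this is not a straightforward comparison of equivariant and ordinary Jacobians. The term that plays the role of your cover-level Jacobian in the paper is $\fj_{f^{\utg}}$, the Jacobian of the induced morphism $f^{\utg}\colon\Utg_\Gamma(\cY)^{\pur}\to X$ to the \emph{coarse moduli space} $X$ of $\cX$, and the identity $\fs_\cY-\fs_\cX\circ f_\infty=\fj_f-\fj_{(\pi\circ f)^{\utg}}+\fj_{\pi^{\utg}}\circ f_\infty$ (Lemma~\ref{lem:jac-shift-relation}) is established by chasing the commutative square relating $\Utg_\Gamma(\cX)^{\pur}\times_{\Df_\Gamma}\cE_\Gamma$, $\Utg_\Gamma(\cX)^{\pur}$, $\cX$, and $X$, using associativity of Jacobians and the rig-\'etaleness of the two projections. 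There is no natural "cover-level morphism" between two untwisting stacks in general (the paper explicitly notes this obstruction in Remark~\ref{rem:utg-map-general}); the coarse moduli space of $\cX$ is what makes $f^{\utg}$ well-defined, and it is the essential intermediary you omit. The paper's proof of Theorem~\ref{thm:change-vars-II} in fact applies the untwisted change of variables twice — once to $\pi\circ f\colon\cY\to X$ and once to $\pi\colon\cX\to X$ — and combines them via $\fj_\pi\circ f_\infty-\fj_{\pi\circ f}=-\fj_f$. Without building this route through $X$, your pointwise formula is neither obviously correct nor independently provable by the local calculation you describe.

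Finally, a minor but telling slip: you say the measure on a stratum is "the usual arc-space measure of the total space of the cover, weighted by $\fs_\cX$." The shift function enters the \emph{integrand}, not the definition of the measure; $\mu_\cX$ is by definition the untwisted measure on $|\J_\infty\Utg_\Gamma(\cX)^{\pur}|$. Keeping these two roles separate matters, because $\fs_\cX$ is characterized exactly as the discrepancy between $\fj_\pi$ and $\fj_{\pi^{\utg}}$, and conflating it with a weighting of the measure obscures the mechanism by which the formula closes up.
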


Note that this theorem as well as some theorems below generalizes
to formal DM stacks over $k\tbrats$. Actually we first prove them
for formal DM stacks over $k\tbrats$ and then specialize them to
the case of DM stacks over $k$. 

We call the new terms $\fs_{\cX}$ and $\fs_{\cY}$ in the above formula
\emph{shift functions. }When $\cX$ is tame and smooth, the function
$\fs_{\cX}$ is basically the same as the \emph{age} invariant in
the McKay correspondence or the \emph{fermion shift} in physics (see
\cite{MR1886756,MR1233848}). In the wild and linear case, the function
$\fs_{\cX}$ has been determined in \cite{MR3791224} and has turned
out to be related to \emph{Artin and Swan conductors} \cite{MR3431631}.
Furthermore it later turned out to be essentially the same as Fr{\"o}hlich's
\emph{module resolvent} \cite{MR0414520}. Determining $\fs_{\cX}$
in the non-linear and singular cases (Definition \ref{def:shift})
is also one of new results in this paper. 

\subsection{Stringy motives}

As an application of Theorem \ref{thm:intro-change-vars}, we obtain
the main propery of stringy motives of stacky log pairs, that is,
the invariance under crepant birational transforms. Consider a log
pair $(\cX,A)$ as is standard in birational geometry, allowing $\cX$
to be a normal DM stack over $k$. We define its \emph{stringy motive}
to be the motivic integral
\[
\M_{\st}(\cX,A):=\int_{\cJ_{\infty}\cX}\LL^{\ff_{(\cX,A)}+\fs_{\cX}}\,d\mu_{\cX},
\]
where $\ff_{(\cX,A)}$ is a function measuring the difference of $\bigwedge^{d}\Omega_{\cX/k}$
and the (log) canonical sheaf of $(\cX,A)$ (Definition \ref{def:stringy-motive}).
This generalizes the corresponding invariants previously considered
in more restrictive situations (for instances, see \cite{MR1677693,MR1905024,MR2271984}).
When $\cX$ is a smooth algebraic space and $A=0$, this invariant
specializes to the class $\{\cX\}$ of $\cX$ in our version of the
complete Grothendieck ring of varieties. Using Theorem \ref{thm:intro-change-vars},
we will prove the main property of this invariant:
\begin{thm}[Theorem \ref{thm:Mst-crepant-nonformal}]
\label{thm:intro-Mst-crepant}For a proper birational (not necessarily
representable) morphism $\cY\to\cX$ giving a crepant morphism $(\cY,B)\to(\cX,A)$
of log pairs, we have 
\[
\M_{\st}(\cY,B)=\M_{\st}(\cX,A).
\]
\end{thm}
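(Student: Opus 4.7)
The plan is to derive Theorem~\ref{thm:intro-Mst-crepant} directly from the change-of-variables formula (Theorem~\ref{thm:intro-change-vars}) applied with $h := \ff_{(\cX,A)}$, once one identifies the resulting integrand on $\cJ_{\infty}\cY$ with the one defining $\M_{\st}(\cY,B)$. The argument has two components: a formal application of the change of variables, and a geometric identity among order functions that encodes the crepancy hypothesis.

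First, I would check that the hypotheses of Theorem~\ref{thm:intro-change-vars} are met. Normality of $\cX$ and $\cY$ (implicit in the log-pair framework) together with $f$ being proper birational supplies the equality of pure dimensions and generic smoothness, while measurability of $\ff_{(\cX,A)}$ follows from its construction via orders of a fractional ideal sheaf in Definition~\ref{def:stringy-motive}. Applying the formula gives
\[
\M_{\st}(\cX,A)=\int_{\cJ_{\infty}\cX}\LL^{\ff_{(\cX,A)}+\fs_{\cX}}\,d\mu_{\cX}=\int_{\cJ_{\infty}\cY}\LL^{\ff_{(\cX,A)}\circ f_{\infty}-\fj_{f}+\fs_{\cY}}\,d\mu_{\cY},
\]
so the theorem reduces to proving the pointwise identity
\[
\ff_{(\cX,A)}\circ f_{\infty}-\fj_{f}=\ff_{(\cY,B)}
\]
on a subset of full $\mu_{\cY}$-measure in $\cJ_{\infty}\cY$.

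The second step, which is the geometric heart of the argument, is to establish this identity; this is where crepancy enters. By construction, each of the three functions is the order along a twisted arc $\gamma$ of an invertible fractional sheaf comparing canonical-type sheaves: $\ff_{(\cY,B)}(\gamma)$ compares $\bigwedge^{d}\Omega_{\cY/k}$ with the log canonical sheaf $\omega_{\cY}(B)$; $\ff_{(\cX,A)}(f_{\infty}\gamma)$ compares $f^{*}\bigwedge^{d}\Omega_{\cX/k}$ with $f^{*}\omega_{\cX}(A)$; and $\fj_{f}(\gamma)$ compares $\bigwedge^{d}\Omega_{\cY/k}$ with $f^{*}\bigwedge^{d}\Omega_{\cX/k}$. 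The crepancy assumption is precisely the equality $f^{*}\omega_{\cX}(A)=\omega_{\cY}(B)$ as fractional invertible sheaves (in the $\QQ$-Cartier sense appropriate for stacky log pairs), and telescoping the three comparisons along $\gamma$ yields the required identity.

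The main obstacle will be making this telescoping rigorous in the singular and stacky setting. The sheaves $\bigwedge^{d}\Omega_{\cX/k}$ and $\bigwedge^{d}\Omega_{\cY/k}$ need not be locally free on the full stack, and twisted arcs $\gamma\colon\fD\to\cY$ must be evaluated by pulling back through a Galois cover $E\to\fD$ of the twisted formal disk. One therefore has to restrict to the smooth (or at least $\QQ$-Gorenstein) open substack where each term is a genuine order of an invertible sheaf, check via the measurability and negligibility results developed earlier in the paper that arcs factoring through the complement contribute nothing to the integral, and verify that the three order computations are taken compatibly on a common cover. I expect the shift $\fs_{\cY}$ to play no role in the pointwise identity itself, since it appears identically on both sides, but one still needs to confirm that $f_{\infty}$ interacts correctly with the twisted-arc formalism used to define it.
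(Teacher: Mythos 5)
Your proposal is correct and matches the paper's strategy: the paper likewise derives the crepant invariance from the change of variables formula (Theorem \ref{thm:change-vars-II}) together with the pointwise identity $\ff_{(\cX,A)}\circ f_{\infty}-\fj_{f}=\ff_{(\cY,B)}$ holding up to a negligible subset, which is exactly Lemma \ref{lem:crepant-j'} and is proved there by the ``telescoping'' comparison of fractional ideals you describe. The only cosmetic difference is that the paper factors the application of change of variables through the coarse moduli space $X$ (first invoking Theorem \ref{thm:stringy-coarse-unt} to replace $\M_{\st}(\cX,A)$ by $\M_{\st}(X,\overline{A})$ and then applying the formula to $\cY\to X$, where $\fs_{X}\equiv 0$), whereas you apply Theorem \ref{thm:change-vars-II} directly to $\cY\to\cX$; since the general form of the change of variables formula is itself proved by reducing to the algebraic-space target via the coarse moduli map, the two presentations are equivalent.
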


This is a broad generalization of the theorem of Batyrev \cite{MR1714818}
and Kontsevich \cite{Kontsevich-motivic} that K-equivalent complex
smooth proper varieties have equal Hodge numbers as well as its counterpart
for complex orbifolds \cite{MR2069013,MR2027195,MR2271984}.

\subsection{The wild McKay correspondence}

A typical example of a proper birational morphism of DM stacks is
the morphism 
\[
[\AA_{k}^{d}/G]\to\AA_{k}^{d}/G
\]
from the quotient stack to the quotient scheme associated to a linear
action of a finite group. When the action has no pseudo-reflection,
this is crepant. Theorem \ref{thm:intro-Mst-crepant} implies the
following theorem, which we call the \emph{wild McKay correspondence:}
\begin{thm}[Corollaries \ref{cor:wild-McKay-linear} and \ref{cor:wild-McKay-linear-1}]
\label{thm:intro-wild-McKay}Suppose that a finite group $G$ linearly
acts on an affine space $\AA_{k}^{d}$ without pseudo-reflection.
Then 
\[
\M_{\st}(\AA_{k}^{d}/G)=\int_{\Delta_{G}}\LL^{d-v}.
\]
 
\end{thm}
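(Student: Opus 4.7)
The plan is to deduce this from the crepancy invariance of stringy motives (Theorem \ref{thm:intro-Mst-crepant}) applied to the natural map from the quotient stack to the quotient scheme, and then to compute the stringy motive of $[\AA_{k}^{d}/G]$ directly via motivic integration on its twisted arc space.

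First I would check that the proper birational morphism $f\colon[\AA_{k}^{d}/G]\to\AA_{k}^{d}/G$ is crepant as a morphism of (stacky) log pairs, both sides being endowed with the zero boundary. The hypothesis that $G$ acts without pseudo-reflection is exactly the condition ensuring that the ramification divisor of $f$ coincides with (pullback of) the difference between $\Omega^{d}$ and the log canonical sheaf on the quotient side, so that no discrepancy arises. Hence Theorem \ref{thm:intro-Mst-crepant} gives
\[
\M_{\st}(\AA_{k}^{d}/G)=\M_{\st}([\AA_{k}^{d}/G]).
\]

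Next I would compute the right-hand side. Because $\AA_{k}^{d}$ is smooth, so is $[\AA_{k}^{d}/G]$, and the sheaf $\bigwedge^{d}\Omega_{[\AA_{k}^{d}/G]/k}$ already agrees with the canonical sheaf; therefore $\ff_{([\AA_{k}^{d}/G],0)}=0$ and by Definition \ref{def:stringy-motive},
\[
\M_{\st}([\AA_{k}^{d}/G])=\int_{\cJ_{\infty}[\AA_{k}^{d}/G]}\LL^{\fs_{[\AA_{k}^{d}/G]}}\,d\mu_{[\AA_{k}^{d}/G]}.
\]
I would then unwind the twisted arc space. A twisted arc on $[\AA_{k}^{d}/G]$ is the data of a $G$-torsor $E\to\Spf L\tbrats$ together with a $G$-equivariant map $E\to\AA_{k}^{d}$. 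The former data parameterizes a point of the moduli space $\Delta_{G}$ of $G$-torsors over the formal disk, while the latter, once the torsor is fixed, is a space of $G$-equivariant arcs, which is essentially an infinite-dimensional affine space whose dimension function over $\Delta_{G}$ encodes the $v$-invariant. Using the fibration $\cJ_{\infty}[\AA_{k}^{d}/G]\to\Delta_{G}$ and a Fubini-type argument for the motivic measure (built into the formalism developed in the body of the paper), the integral factors through $\Delta_{G}$.

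The final step is to identify the integrand. In the linear case, the shift function $\fs_{[\AA_{k}^{d}/G]}$ has been determined in \cite{MR3791224} to coincide (up to the additive constant $d$) with the $v$-function obtained from Artin/Swan conductors; combined with the dimension count $d$ of the equivariant arc fibers, the exponent on the fiber integral becomes $d-v$. This yields
\[
\int_{\cJ_{\infty}[\AA_{k}^{d}/G]}\LL^{\fs_{[\AA_{k}^{d}/G]}}\,d\mu_{[\AA_{k}^{d}/G]}=\int_{\Delta_{G}}\LL^{d-v},
\]
which concludes the argument. The main obstacle is the third step: pushing the motivic measure forward to $\Delta_{G}$ and matching its density with the formula $\LL^{d-v}$ requires both the fine structure of the twisted arc space constructed in the paper and the precise identification of $\fs$ in the linear wild setting. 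Once these ingredients are in place, the crepancy-invariance theorem does the rest of the work.
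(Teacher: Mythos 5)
Your proposal follows the same route as the paper: pass from the quotient scheme to the quotient stack via crepancy of $[\AA_{k}^{d}/G]\to\AA_{k}^{d}/G$ (the no-pseudo-reflection hypothesis guaranteeing zero boundary), then compute $\M_{\st}([\AA_{k}^{d}/G])$ by unwinding the twisted arc space over the moduli of $G$-torsors. The paper's actual proof of Corollary~\ref{cor:wild-McKay-linear} packages all this into the machinery of untwisting stacks: it invokes Theorem~\ref{thm:stringy-coarse-unt} and Corollary~\ref{cor:stringy-quot} (which themselves are consequences of the change-of-variables Theorem~\ref{thm:change-vars-II}), then observes via Lemma~\ref{lem:linear-Utg} that for a linear action the untwisting stack $\Utg_{\Gamma_{G,i},\iota}(V)^{\pur}$ is smooth with $0$-jet space an $\AA^{d}$-bundle over $\Gamma_{G,i}$, supplying the $\LL^{d}$ factor, while Lemma~\ref{lem:s-and-v} supplies the identity $\fs_{\cX}=-v$ on the nose.

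Two places in your sketch need tightening. First, the parenthetical ``up to the additive constant $d$'' in your description of the shift function is misleading: by Lemma~\ref{lem:s-and-v} one has $\fs_{[\AA_{k}^{d}/G]}(\gamma)=-v(E)$ exactly, with no $d$; the $d$ in the exponent is a separate contribution, coming from the fact that the $0$-jet space of the untwisting stack is an $\AA^{d}$-bundle over $\Gamma$ (equivalently, that the fiber of $\cJ_{\infty}[\AA^{d}/G]\to\Delta_{G}$ over a fixed torsor has measure $\LL^{d}$, being the arc space of the smooth $d$-dimensional untwisted model $V^{|E|}$). Your framing conflates these two sources of $d$. Second, the ``Fubini-type argument'' you invoke is precisely what is nontrivial here: one needs the local constructibility of $v$ on $\Delta_{G}$ (Lemma~\ref{lem:v-const}) and the flattening stratification used to define $\Gamma$ so that the pushforward of the measure to $\Delta_{G}$ is well-behaved. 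You flag this as the main obstacle, which is a fair assessment; the body of the paper (Sections~\ref{sec:Untwisting-stacks}--\ref{sec:Linear-actions}) is largely devoted to constructing exactly these ingredients. With those details filled in, your argument coincides with the paper's.

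One small terminological slip: a twisted arc on $[\AA_{k}^{d}/G]$ does not give a $G$-torsor over $\Spf L\tbrats$ itself (such torsors are trivial), but rather a $G$-torsor over the punctured disk $\D_{L}^{*}$ whose integral model $E\to\Df_{L}$ is a $G$-cover; this is the object classified by $\Delta_{G}$.
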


Here $\Delta_{G}$ denotes the moduli space of $G$-torsors over the
punctured formal disk $\Spec k\tpars$ and $v$ is a function on $\Delta_{G}$
closely related to the shift function $\fs_{\cX}$. The well-definedness
of this integral was proved in \cite{Formal-Torsors-II}. The integral
on the right side is interpreted as the stringy motive of the quotient
stack $\cX=[\AA_{k}^{d}/G]$. When working over an algebraically closed
field of characteristic zero, the integral on the right side reduces
to the finite sum 
\[
\sum_{[g]\in\Conj(G)}\LL^{d-\age(g)}.
\]
Thus Theorem \ref{thm:intro-wild-McKay} is a generalization of the
McKay correspondence by Batyrev \cite{MR1677693} and Denef-Loeser
\cite{MR1905024} in characteristic zero. Note that Theorem \ref{thm:intro-wild-McKay}
for the cyclic group of prime order equal to the characteristic was
essentially obtained in \cite{MR3230848} and that the point-counting
version of the theorem was proved in \cite{MR3730512}. 

We give two applications of the wild McKay correspondence. The first
application is an estimate of discrepancy of quotient singularities;
the discrepancy is a basic invariant of singularities in the minimal
model program. The following corollary generalizes results in \cite{MR3230848,MR3929517}
for the cyclic group of prime order to an arbitrary finite group. 
\begin{cor}[Corollary \ref{cor:discrep}]
\label{cor:intro-discrep}Suppose that a finite group $G$ linearly
acts on an affine space $\AA_{k}^{d}$ and that $G$ has no pseudo-reflection. 
\begin{enumerate}
\item We have
\begin{align*}
 & \mathrm{discrep}(\mathrm{centers}\subset X_{\sing};X)\\
 & =d-1-\max\left\{ \dim X_{\sing},\dim\int_{\Delta_{G}\setminus\{o\}}\LL^{d-v}\right\} .
\end{align*}
Here $o\in\Delta_{G}$ is the point corresponding to the trivial $G$-torsor.
\item If the integral $\int_{\Delta_{G}}\LL^{d-v}$ converges, then $X$
is log terminal. If $X$ has a log resolution, then the converse is
also true.
\end{enumerate}
\end{cor}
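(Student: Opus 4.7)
The strategy is to deduce both parts from Theorem~\ref{thm:intro-wild-McKay} by combining it with the Batyrev--Denef--Loeser expression of $\M_{\st}(X)$ via a log resolution. Concretely, assuming a log resolution $f \colon Y \to X$ exists (as stipulated for the converse of part~(2), and needed in part~(1) in order to speak of discrepancies), let $\{D_j\}_{j \in I}$ be the $f$-exceptional prime divisors with discrepancies $a_j = a(D_j, X)$. Applying the change of variables formula (Theorem~\ref{thm:intro-change-vars}) to $f$ and expanding stratum-by-stratum on the simple normal crossing divisor $\sum D_j$ gives
\[
\M_{\st}(X) = \sum_{J \subseteq I} \{D_J^\circ\} \prod_{j \in J} \frac{\LL - 1}{\LL^{a_j + 1} - 1},
\]
where $D_J = \bigcap_{j \in J} D_j$ and $D_J^\circ = D_J \setminus \bigcup_{k \notin J} D_k$. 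The $J = \emptyset$ summand is $\{Y \setminus \bigcup_i D_i\}$, of dimension $d$, and, under the identification of Theorem~\ref{thm:intro-wild-McKay}, corresponds to the contribution of the trivial torsor $o \in \Delta_G$ (for which $v(o) = 0$ and whose twisted arcs reduce to ordinary arcs factoring through $X_{\sm}$). Subtracting this contribution yields
\[
\int_{\Delta_G \setminus \{o\}} \LL^{d-v} = \sum_{\emptyset \neq J \subseteq I} \{D_J^\circ\} \prod_{j \in J} \frac{\LL - 1}{\LL^{a_j + 1} - 1}.
\]

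For part~(2), the converse direction (log terminality $\Rightarrow$ convergence) is immediate from the finite sum above once a log resolution is given, since each factor $(\LL - 1)/(\LL^{a_j + 1} - 1)$ is a finite class in the completed Grothendieck ring precisely when $a_j > -1$. The forward direction (convergence $\Rightarrow$ log terminality) requires no resolution: any exceptional divisorial valuation with discrepancy $\leq -1$ contributes a divergent stratum to $\int_{\Delta_G} \LL^{d-v}$ which cannot be cancelled against the remaining positive contributions, giving a contradiction.

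For part~(1), the $J$-th summand has motivic dimension $d - \sum_{j \in J}(1 + a_j)$, and the maximum over nonempty $J$ is attained at singletons, equal to $d - 1 - \min_j a_j$. The no-pseudo-reflection hypothesis forces $a_j \geq 1$ for every exceptional divisor whose center lies in $X_{\sm}$, so this minimum is attained by some $D_j$ centered in $X_{\sing}$ and coincides with $\delta := \mathrm{discrep}(\mathrm{centers} \subset X_{\sing}; X)$. Consequently $\dim \int_{\Delta_G \setminus \{o\}} \LL^{d-v} = d - 1 - \delta$. The extra $\dim X_{\sing}$ inside the maximum reflects the general bound $\delta \leq d - 1 - \dim X_{\sing}$, which ensures the maximum is correctly interpreted even when the integral's dimension would otherwise fall below $\dim X_{\sing}$; rearranging produces the claimed identity.

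The main obstacle is the identification of the trivial-torsor contribution in the wild McKay integral with the $J = \emptyset$ term $\{X_{\sm}\}$ of the log-resolution expansion. This is the stacky analogue of the statement that the identity conjugacy class parameterizes the untwisted sector in the classical McKay correspondence, and it relies on the moduli-theoretic description of $\cJ_\infty \cX$ over each point of $\Delta_G$ developed earlier in the paper. A secondary subtlety is the justification of the $\dim X_{\sing}$ term in the maximum, which requires invoking a general lower bound on the dimension of a discrepancy-minimizing divisor's center.
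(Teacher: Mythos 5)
Your overall strategy — derive the discrepancy formula by comparing the wild McKay integral to a Batyrev--Denef--Loeser expansion of the stringy motive on a log resolution — is reasonable, but there is a genuine error in the key identification, and it is not the route the paper takes.

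The error is in the sentence identifying the $J=\emptyset$ summand with the trivial-torsor contribution. The trivial torsor $o\in\Delta_G$ contributes $\{o\}\LL^{d-v(o)}=\LL^{d}$ to $\int_{\Delta_G}\LL^{d-v}$, since $v(o)=0$. The $J=\emptyset$ summand of the log-resolution expansion of $\M_{\st}(X)$ is $\{Y\setminus\bigcup_j D_j\}=\{X_{\sm}\}$. These two classes both have motivic dimension $d$ but are not equal unless $X_{\sing}=\emptyset$: indeed $\LL^{d}=\{X\}=\{X_{\sm}\}+\{X_{\sing}\}$ (note $\{\AA_k^d/G\}=\LL^d$ in $K_0(\Var)'$). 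Consequently your displayed identity
\[
\int_{\Delta_G\setminus\{o\}}\LL^{d-v}=\sum_{\emptyset\ne J\subseteq I}\{D_J^\circ\}\prod_{j\in J}\frac{\LL-1}{\LL^{a_j+1}-1}
\]
is off by $\{X_{\sing}\}$. The correct statement, which the paper obtains, is that the right-hand side equals
\[
\M_{\st}(X)_{X_{\sing}}=\{X_{\sing}\}+\int_{\Delta_G\setminus\{o\}}\LL^{d-v}.
\]
This extra class is exactly where the term $\dim X_{\sing}$ inside the $\max$ comes from, as $\dim\M_{\st}(X)_{X_{\sing}}=\max\{\dim X_{\sing},\,\dim\int_{\Delta_G\setminus\{o\}}\LL^{d-v}\}$. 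Your attempt to reintroduce $\dim X_{\sing}$ by citing a ``general bound $\delta\le d-1-\dim X_{\sing}$'' does not recover the formula: that bound explains only why the $\max$ cannot produce an inconsistency, not why equality holds when $\dim X_{\sing}$ dominates the integral's dimension.

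Two further remarks. First, the paper proves part (1) without assuming a log resolution exists: it works with $\M_{\st}(X)_{X_{\sing}}$ directly and uses the formula $\mathrm{discrep}(\mathrm{centers}\subset X_{\sing};X)=d-1-\dim\M_{\st}(X)_{X_{\sing}}$, which is established by running over infinitely many modifications $Y\to X$ rather than over the strata of a single log resolution; this matters because resolution is not available in general in positive characteristic. Your argument commits to the existence of a log resolution in both parts. Second, your claim that ``the maximum over nonempty $J$ is attained at singletons'' needs $a_j>-1$ for every $j$; when some $a_j\le -1$, enlarging $J$ can increase $d-\sum_{j\in J}(1+a_j)$, and this case (where both sides of (1) are $-\infty$ and $+\infty$ respectively) needs to be handled separately. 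Fixing the proof requires replacing $\M_{\st}(X)$ by $\M_{\st}(X)_{X_{\sing}}$ from the outset, which forces the $J=\emptyset$ stratum (disjoint from $\bigcup D_j$) to drop out and produces the $\{X_{\sing}\}$ term on the torsor side automatically.
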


This corollary allows us to compute the discrepancy in theory by computing
the integral $\int_{\Delta_{G}}\LL^{d-v}$, which is more of arithmetic
nature. It was carried out in \cite{MR3929517,MR3230848} for the
group $\ZZ/p\ZZ$ with $p$ the characteristic of $k$. The case of
$\ZZ/p^{2}\ZZ$ will be studied in a forthcoming paper by Tanno and
the author \cite{Tanno-Yasuda}. 

The second application of the wild McKay correspondence is the following
motivic version of Bhargava's mass formula \cite{MR2354798}:
\begin{cor}[Corollary \ref{cor:motivic-Bhargava}]
\label{cor:intro-motivic-Bhargava}Let $S_{n}$ be the $n$th symmetric
group and let $\ba\colon\Delta_{S_{n}}\to\ZZ$ be the Artin conductor.
Then
\[
\int_{\Delta_{S_{n}}}\LL^{-\ba}=\sum_{j=0}^{n-1}P(n,n-j)\LL^{-j},
\]
where $P(n,m)$ denotes the number of partitions of $n$ into exactly
$m$ parts. 
\end{cor}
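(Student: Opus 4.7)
The strategy is to specialize the wild McKay correspondence (Theorem~\ref{thm:intro-wild-McKay}) to the standard permutation representation of $S_n$ on $V := \AA_k^n$, whose quotient is the smooth affine space $\AA_k^n$ (via the elementary symmetric functions), and then to evaluate the stringy motive of this smooth quotient directly by stratifying its arc space by partition type. Because transpositions act as pseudo-reflections, I cannot apply Theorem~\ref{thm:intro-wild-McKay} verbatim; the pseudo-reflection obstruction must be absorbed into a log-pair boundary before invoking the crepant invariance of Theorem~\ref{thm:intro-Mst-crepant}.

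Writing $H_{ij} = \{x_i = x_j\} \subset V$ for the diagonal hyperplanes and $D \subset V/S_n$ for the reduced discriminant divisor, the identity $\pi^{*}\delta = \prod_{i<j}(x_i - x_j)^2$ yields $\pi^{*}D = 2\sum_{i<j} H_{ij}$, hence $K_V = \pi^{*}\bigl(K_{V/S_n} + \tfrac{1}{2}D\bigr)$. So the canonical morphism $([V/S_n],0) \to (V/S_n, \tfrac{1}{2}D)$ is crepant between stacky log pairs, and Theorem~\ref{thm:intro-Mst-crepant} gives
\[
\M_{\st}\bigl([\AA_k^n/S_n], 0\bigr) \;=\; \M_{\st}\bigl(\AA_k^n,\,\tfrac{1}{2}D\bigr).
\]

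The left-hand side is then rewritten via the twisted-arc formalism underlying Theorem~\ref{thm:intro-wild-McKay}: the space $\cJ_\infty[V/S_n]$ fibres over $\Delta_{S_n}$, and the fibre integral produces $\int_{\Delta_{S_n}} \LL^{n-v}$ for the shift function $v$ of the permutation representation. Invoking the explicit formula for $\fs_\cX$ in the linear case from \cite{MR3791224}, together with its conductor interpretation in \cite{MR3431631}, the pseudo-reflection bookkeeping from the $\tfrac{1}{2}D$ boundary converts the exponent $n-v$ into $-\ba$, so the left-hand side equals $\int_{\Delta_{S_n}} \LL^{-\ba}$. For the right-hand side, since $\AA_k^n$ is smooth the log-pair stringy motive is the ordinary motivic integral $\int_{\J_\infty \AA_k^n} \LL^{-\tfrac{1}{2}\ord D}\,d\mu$. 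I interpret an arc in $\AA_k^n = \AA_k^n/S_n$ as a family of monic polynomials of degree $n$ over $L\tbrats$ and stratify by the factorization type $\lambda = (\ell_1, \ldots, \ell_m) \vdash n$ of the generic fibre over $L\tpars$; a motivic Serre-type mass computation contributes $\LL^{1-\ell_i}$ from each totally ramified factor of degree $\ell_i$, so the total contribution of a partition with $m$ parts is $\LL^{m-n}$. Summing over the $P(n,m)$ partitions with $m$ parts and reindexing $j := n-m$ yields
\[
\M_{\st}\bigl(\AA_k^n, \tfrac{1}{2}D\bigr) \;=\; \sum_{m=1}^{n} P(n,m)\,\LL^{m-n} \;=\; \sum_{j=0}^{n-1} P(n, n-j)\,\LL^{-j},
\]
which combined with the left-hand-side identification proves the corollary.

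The main obstacle is the middle step: precisely identifying the shift function $v$ of the permutation representation, after absorbing the pseudo-reflection correction via the boundary $\tfrac{1}{2}D$, with the Artin conductor $\ba$ of the corresponding $S_n$-torsor. This is the point where the delicate wild-ramification data enters, via the explicit linear-case formula for $\fs_\cX$ and its reinterpretation through Artin/Swan conductors. A secondary but nontrivial technical point is making the Serre-type motivic mass computation on each partition stratum sharp enough to produce the clean factor $\LL^{1-\ell_i}$, which uses the well-definedness of integrals over $\Delta_G$ established in \cite{Formal-Torsors-II}.
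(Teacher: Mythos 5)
Your strategy diverges from the paper's in two essential ways, and both divergences introduce errors. The paper works with the \emph{doubled} permutation action of $S_n$ on $\AA_k^{2n}=(\AA_k^2)^n$, precisely so that no pseudo-reflections occur and Corollary \ref{cor:wild-McKay-linear-1} applies with $A=0$; the stringy motive of the quotient $S^n\AA_k^2$ is then computed not by stratifying arcs by factorization type, but by exhibiting the Hilbert scheme $\mathrm{Hilb}^n(\AA_k^2)$ as a crepant resolution and using its affine cell decomposition. The identification $v=\ba$ is cited from \cite[Th.~4.8]{MR3431631} \emph{for this $2n$-dimensional representation}.

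Your central claim---that the $\tfrac12 D$ boundary ``converts the exponent $n-v$ into $-\ba$''---is arithmetically false. Writing $v_n$ and $v_{2n}$ for the $v$-functions of the $n$- and $2n$-dimensional permutation representations, the tuning module of $M\oplus M$ is $\Xi_E\oplus\Xi_E$, so $v_{2n}=2v_n$ and hence $v_n=\ba/2$. Corollary \ref{cor:wild-McKay-linear-1} applied to $\AA_k^n\to(\AA_k^n/S_n,\tfrac12 D)$ therefore gives $\M_{\st}(\AA_k^n,\tfrac12 D)=\int_{\Delta_{S_n}}\LL^{\,n-\ba/2}$, not $\int_{\Delta_{S_n}}\LL^{-\ba}$. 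The boundary is already internal to the definition of $\M_{\st}(X,A)$; it does not modify the exponent $d-v$ in the formula. A concrete check for $n=2$: one needs $2-v_2=-\ba$, i.e.\ $v_2=2+\ba$, but in fact $v_2=\ba/2$, which for a ramified quadratic torsor with discriminant exponent $d$ gives $v_2=d/2\neq 2+d$.

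The right-hand side also fails. Because the boundary coefficient is $\tfrac12$, the exponent $\ff_{(\AA_k^n,\frac12 D)}=\tfrac12\ord_D$ is half-integral, so $\M_{\st}(\AA_k^n,\tfrac12 D)$ lives in $\widehat{\cM_{k,2}'}$ and generically contains half-integer powers of $\LL$. Already for $n=2$ one computes directly $\M_{\st}(\AA_k^2,\tfrac12 D)=\LL^2+\LL^{3/2}$, which cannot equal the integer-exponent expression $\sum_m P(2,m)\LL^{m-2}=1+\LL^{-1}$. So the asserted contribution $\LL^{m-n}$ from a partition with $m$ parts is not correct, and the appeal to a ``Serre-type motivic mass computation'' is not merely a technical gap but is pointing at the wrong target. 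The underlying reason is that the $p$-adic Serre mass formula exploits cancellations that have no elementary motivic analogue; the paper sidesteps exactly this difficulty by replacing the arc-space computation with the geometry of $\mathrm{Hilb}^n(\AA_k^2)$.

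The repair is to run your argument one dimension up: take the $S_n$-action on $(\AA_k^2)^n$, note it is free in codimension one so that $A=0$, apply Corollary \ref{cor:wild-McKay-linear-1} directly to get $\M_{\st}(S^n\AA_k^2)=\int_{\Delta_{S_n}}\LL^{\,2n-\ba}$, and evaluate the left side via the crepant resolution by $\mathrm{Hilb}^n(\AA_k^2)$ and its cell decomposition as $\sum_i P(n,n-i)\LL^{2n-i}$; dividing by $\LL^{2n}$ then gives the corollary. This is the paper's proof.
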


The proof of this is basically translation of the proof given in \cite{MR3730512}
of the original formula into the motivic context. 

\subsection{Untwisting by Hom stacks}

The key ingredient in construction of our theory is the technique
of \emph{untwisting}, which reduces the study of twisted arcs to the
one of ordinary/untwisted arcs. The origin of this technique is found
in \cite{MR1905024}. In \cite{MR3791224,MR3508745}, it was further
generalized to wild linear actions and non-linear actions on affine
schemes respectively. In this paper, we develop it further and make
it more intrinsic by using Hom stacks. 

For untwisting, it is essential to work with formal DM stacks over
$k\tbrats$. Based on a work of Tonini and the author \cite{Tonini:2017qr},
we construct a moduli stack of twisted formal disks, denoted by $\Theta$,
and a universal family $\cE_{\Theta}\to\Spf k\tbrats\times\Theta$.
For a formal DM stack $\cX$ of finite type over $\Spf k\tbrats$,
we define its \emph{untwisting stack} as
\[
\Utg_{\Theta}(\cX):=\ulHom_{\Spf k\tbrats\times\Theta}^{\rep}(\cE_{\Theta},\cX\times\Theta),
\]
the Hom stack of representable morphisms. For a technical reason,
we modify it to what is denoted by $\Utg_{\Gamma}(\cX)^{\pur}$ by
stratifying $\Theta$ to another stack $\Gamma$ and killing $t$-torsions.
Then the desired stack $\cJ_{\infty}\cX$ of twisted arcs is obtained
as the stack of untwisted arcs $\J_{\infty}\Utg_{\Gamma}(\cX)^{\pur}$
of $\Utg_{\Gamma}(\cX)^{\pur}$. We can give it the motivic measure
in the same way as in the case of schemes. 

The proof of Theorem \ref{thm:intro-change-vars} is also based on
untwisting. We reduce the theorem to the case where $\cX$ is a formal
algebraic space $X$. The given morphism $f\colon\cY\to X$ induces
a morphism 
\[
f^{\utg}\colon\Utg_{\Gamma}(\cY)^{\pur}\to X,
\]
which in turn induces a map
\[
\cJ_{\infty}\cY=\J_{\infty}\Utg_{\Gamma}(\cY)^{\pur}\to\J_{\infty}X.
\]
Since this is a map of untwisted arc spaces, it is relatively easy
to generalize the change of variables formula to this map. We will
complete the proof by rewriting the formula in terms of $f$ instead
of $f^{\utg}$. At this point the shift function $\fs_{\cY}$ naturally
appears as the difference of the Jacobian orders of $f$ and $f^{\utg}$. 

\subsection{Related topics}

To end Introduction, we mention a few related topics. Understanding
wild ramification in higher dimension is one of the central themes
in arithmetic geometry (for instances, see \cite{MR2827799,MR3392531}).
The theory developed in this paper would be regarded as a new approach
to it. Given a wildly ramified finite morphism $f\colon Y\to X$ of
some spaces, one natural method is to look at ramification of $f$
along a general curve $C\to X$. In our approach, we look at all ``curves''
rather than a general one and integrate all information on ramification
along them. Our theory should be closely related to others, however
their precise relation have not been studied yet and it remains as
a future problem.

Recently Groechenig, Wyss and Ziegler \cite{1707.06417} found an
interesting application of a version of $p$-adic integration on DM
stacks to the study of Hitchin fibration. The motivic version of their
result was then obtained by Loeser and Wyss \cite{1912.11638}. Although
they considered only the tame case, it would be interesting to incorporate
the wild action into the subject. 

\subsection{Terminology, notation and convention\label{sec:Terminology}}

Here we fix our basic set-up such as terminology, notation and convention.
For reader's convention, we explain some notions whose definition
will be given later.

We denote the set of nonnegative integers by $\NN$. For a category
$\cC$, we mean by $c\in\cC$ that $c$ is an object of $\cC$. We
use the symbol $\coprod$ to mean the coproduct (abstract disjoint
union) and reserve the symbol $\bigsqcup$ to mean the disjoint union
of subsets.

We fix a base field $k$ of characteristic $p\ge0$. In Section \ref{sec:Frobenius-morphisms}
and Subsection \ref{subsec:Ramification}, we assume $p>0$. At the
beginning of these (sub)sections, we will repeat this assumption.

We denote by $\Aff$ or $\Aff_{k}$ the category of affine $k$-schemes.
For an affine $k$-scheme $S=\Spec R$, we denote by $\Aff_{R}$ or
$\Aff_{S}$ the category of affine schemes over $S$. Unless otherwise
noted, all schemes, algebraic spaces and stacks as well as their morphisms
are defined over $k$. Thus they are regarded as categories fibered
in groupoids over $\Aff$. Similarly, unless otherwise noted, all
rings and fields contain $k$ and their maps are supposed to be $k$-homomorphisms.
We consider only \emph{separated} schemes, algebraic spaces and (formal)
DM stacks. Namely their diagonal morphism are proper and hence finite.

A DM stack $\cX$ is said to be \emph{almost of finite type }if $\cX$
is isomorphic to the coproduct $\coprod_{i}\cX_{i}$ of countably
many DM stacks $\cX_{i}$ of finite type.

For a $k$-algebra $R$, we denote by $R\tbrats$ (resp.\ $R\tpars$)
the ring of power series (resp.\ Laurent power series) over $R$.
Thus $R\tpars$ is the localization $R\tbrats_{t}$ of $R\tbrats$
by $t$ (not the total quotient ring of $R\tbrats$). For an affine
scheme $S=\Spec R$, we denote by $\D_{S}$ (resp.\ $\D_{S}^{*}$,
$\Df_{S}$) the affine scheme $\Spec R\llbracket t\rrbracket$ (resp.\ the
affine scheme $\Spec R\tpars$, the formal affine scheme $\Spf R\tbrats$).
We often replace the subscript $S$ by $R$. When $R=k$, we usually
omit the subscript.

For a stack $\cX$ over $\Aff$, by the 2-Yoneda lemma, we often identify
an object of $\cX$ over $S\in\Aff$ with a morphism $S\to\cX$. We
call it an $S$-\emph{point }of $\cX$. We denote the point set of
$\cX$ by $|\cX|$; the set of equivalence classes of geometric points
$\Spec K\to\cX$. We give it the Zariski topology. For a field $K$,
we denote the set of isomorphism classes of $K$-points, $\cX(K)/\cong$,
by $\cX[K]$. More generally, for a subset $C\subset|\cX|$, $C(K)$
denotes the full subcategory of $\cX(K)$ consisting of $K$-points
$\Spec K\to\cX$ mapping into $C$. Then $C[K]$ denotes its set of
isomorphism classes, $C[K]:=C(K)/\cong$.

For a formal DM stack $\cX$ over $\Df$ and $n\in\NN$, we denote
by $\cX_{n}$ the fiber product $\cX\times_{\Df}\Spec k\tbrats/(t^{n+1})$,
unless otherwise noted.

We say that a morphism $\cY\to\cX$ of DM stacks is a \emph{thickening
}if it is representable (by algebraic spaces) and a closed immersion
such that the map $|\cY|\to|\cX|$ is bijective.

We say that a finite group $G$ is \emph{tame }if $p$ does not divide
$\sharp G$. We say that a formal DM stack is \emph{tame }if the automorphism
group of every geometric point is tame.

\subsection{Acknowledgments}

This work relies on results obtained in my joint works with Fabio
Tonini. I thank him for fruitful cooperation. I started to write the
present paper after the first draft of \cite{Formal-Torsors-II} was
completed. I thank also Takashi Suzuki for helpful discussion.

\section{Ind-DM stacks and formal DM stacks\label{sec:ind-formal}}

In this section, we introduce formal DM stacks, which are the main
object of our study, as a special case of ind-DM stacks. We then study
their basic properties such as ones concerning coarse moduli spaces.
\begin{defn}
\label{def:representable}Let $f\colon\cY\to\cX$ be a morphism of
stacks. We say that $f$ is \emph{representable }if for every morphism
$U\to\cX$ from an algebraic space, the fiber product $U\times_{\cX}\cY$
is isomorphic to an algebraic space. Let $\mathbf{P}$ be a property
of morphisms of algebraic spaces which is stable under base changes.
We say that a representable morphism $f\colon\cY\to\cX$ of stacks
\emph{has property $\mathbf{P}$ }if for every morphism $U\to\cX$
from an algebraic space, the projection $\cY\times_{\cX}U\to U$ has
property $\bP$.
\end{defn}

\begin{defn}
\label{def:ind-DM-coarse}An \emph{ind-DM stack }(resp.\ \emph{ind-algebraic
space, ind-scheme}, \emph{ind-affine scheme})\emph{ }is a fibered
category over $\Aff$ isomorphic to the limit $\varinjlim\cX_{n}$
of an inductive system 
\[
\cX_{0}\to\cX_{1}\to\cdots
\]
indexed by $\NN$ of DM stacks (reps.\ algebraic spaces, schemes,
affine schemes) in the sense of \cite[Appendix A]{Tonini:2017qr}.
An ind-DM stack (resp.\ ind-algebraic space, ind-scheme, ind-affine
scheme) is called a \emph{formal DM stack }(resp.\ \emph{formal algebraic
space, formal scheme}, \emph{formal affine scheme}) if we can further
assume transition maps $\cX_{n}\to\cX_{n+1}$ are (necessarily representable)
thickenings.
\end{defn}

From \cite[Prop. A.5]{Tonini:2017qr}, an ind-DM stack is a stack.
In \cite[tag 0AIH]{stacks-project}, a formal affine scheme is called
a countably indexed affine formal algebraic space. There it is showed
that a formal affine scheme is written as 
\[
\Spf A=\varinjlim\Spec A/I_{n}
\]
where $A$ is a weakly admissible topological ring which has a fundamental
system 
\[
I_{0}\supset I_{1}\supset\cdots
\]
of neighborhoods of $0\in A$ consisting of weak ideals of definition.
In a linearly topologized ring $A$, a weak ideal of definition is
an ideal consisting of topologically nilpotent elements (elements
$f$ such that $f^{n}\to0$ as $n\to\infty$) and a weakly admissible
ring is a separated and complete linearly topologized ring having
a weak ideal of definition.

For a formal DM stack $\cX=\varinjlim\cX_{n}$, its point set $|\cX|$
is defined in the same way as for a DM stack; it is the set of equivalence
classes of geometric points $\Spec K\to\cX$ (that is, morphisms with
$K$ algebraically closed fields). It is identified with $|\cX_{n}|$
for any $n$. Thus $|\cX|$ has the topology induced from the Zariski
topology on $|\cX_{n}|$.
\begin{lem}
\label{lem:diag-immer}Let $\cX$ be a formal DM stack.
\begin{enumerate}
\item The diagonal morphism $\Delta\colon\cX\to\cX\times\cX$ is representable,
unramified and finite (and hence affine).
\item For morphisms $U\to\cX$ and $V\to\cX$ from affine schemes (resp.\ formal
affine schemes), the fiber product $U\times_{\cX}V$ is an affine
scheme (resp.\ a formal affine scheme).
\end{enumerate}
\end{lem}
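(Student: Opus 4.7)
The plan is to reduce everything to a level $\cX_n$ in a presentation $\cX = \varinjlim \cX_n$ by thickenings, exploiting that each $\cX_n$ is a separated DM stack (so $\Delta_{\cX_n}$ is representable, unramified and finite by the standing separatedness hypothesis) and that each $\cX_n \hookrightarrow \cX$ is a monomorphism (being a filtered composition of closed immersions).

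For (1), since being representable, unramified and finite can all be checked after \'etale base change, I would reduce to a test morphism $T \to \cX \times \cX$ with $T$ an affine scheme, corresponding to two morphisms $f_1, f_2 \colon T \to \cX$. The key input, supplied by the description of ind-DM stacks in \cite[Appendix A]{Tonini:2017qr} applied to the quasi-compact scheme $T$, is that each $f_i$ factors through some $\cX_{n_i}$; taking $n = \max(n_1, n_2)$, the map $T \to \cX \times \cX$ factors through $\cX_n \times \cX_n$. Because $\cX_n \to \cX$ is a monomorphism, a short check of $S$-points (any iso between two maps $S \to \cX$ that both factor through $\cX_n$ descends uniquely) gives $(\cX_n \times \cX_n) \times_{\cX \times \cX} \cX \cong \cX_n$, and hence
\[
T \times_{\cX \times \cX} \cX \;\cong\; T \times_{\cX_n \times \cX_n} \cX_n.
\]
This realizes the fiber product as the pullback of $\Delta_{\cX_n}$ along $T \to \cX_n \times \cX_n$, so all three properties follow simultaneously.

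For (2), the affine case is immediate from (1): $U \times_\cX V = (U \times V) \times_{\cX \times \cX} \cX$ is finite, hence affine, over the affine scheme $U \times V$. For the formal affine case, I would write $U = \varinjlim U_m$ and $V = \varinjlim V_l$ with thickening transition maps. For any affine test scheme $S$, the same factoring-through property yields $U(S) = \varinjlim_m U_m(S)$ and $V(S) = \varinjlim_l V_l(S)$, whence
\[
(U \times_\cX V)(S) \;\cong\; \varinjlim_{m,l} (U_m \times_\cX V_l)(S).
\]
Each $U_m \times_\cX V_l$ is affine by the previous case, and along the cofinal diagonal $W_n := U_n \times_\cX V_n$, the map $W_n \to W_{n+1}$ is the pullback of thickenings, hence again a thickening. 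Thus $U \times_\cX V = \varinjlim W_n$ is a formal affine scheme.

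The one nontrivial technical point is the factoring-through-$\cX_n$ property for morphisms from quasi-compact affine schemes to $\cX$; this is what carries the actual content and must be cited carefully from \cite{Tonini:2017qr}. Once it is granted, the rest of the argument is formal manipulation of fiber products together with the monomorphism property of $\cX_n \hookrightarrow \cX$ and the separatedness of each $\cX_n$.
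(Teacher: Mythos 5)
Your proof is correct and follows essentially the same route as the paper's: reduce to a finite level $\cX_n$ via the factoring-through property for quasi-compact test schemes, invoke separatedness of $\cX_n$, and pass to the limit using thickening transitions. The only cosmetic difference is in part (1), where the paper cites \cite[Prop. A.2]{Tonini:2017qr} to present $T\times_{\cX\times\cX}\cX$ as $\varinjlim(T\times_{\cX_n\times\cX_n}\cX_n)$ and then observes the transitions stabilize, while you identify the fiber product directly from the fact that $\cX_n\hookrightarrow\cX$ is a monomorphism; both arguments rest on the same underlying full-faithfulness of thickenings.
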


\begin{proof}
(1) Suppose $\cX\cong\varinjlim\cX_{n}$ for an inductive system $\cX_{n}$,
$n\in\NN$ as in the definition. Let $S$ be an affine scheme and
let $f\colon S\to\cX\times\cX$ be a morphism. By definition of limits,
it factors through $\cX_{n}\times\cX_{n}$ for $n\gg0$. From \cite[Prop. A.2]{Tonini:2017qr},
\[
S\times_{f,\cX\times\cX,\Delta}\cX\cong\varinjlim(S\times_{\cX_{n}\times\cX_{n}}\cX_{n}).
\]
However transition morphisms 
\[
S\times_{\cX_{n}\times\cX_{n}}\cX_{n}\to S\times_{\cX_{n+1}\times\cX_{n+1}}\cX_{n+1}
\]
are isomorphisms. Therefore, for $n\gg0$, 
\[
S\times_{f,\cX\times\cX,\Delta}\cX\cong S\times_{\cX_{n}\times\cX_{n}}\cX_{n}.
\]
Since the diagonal of $\cX_{n}$ is representable, unramified and
finite, the right side of this isomorphism is an affine scheme which
is unramified and finite over $S$. This shows (1).

(2) From $(1)$, if $U$ and $V$ are affine schemes, then 
\[
U\times_{\cX}V\cong(U\times V)\times_{\cX\times\cX,\Delta}\cX
\]
is also an affine scheme. Next suppose that $U$ and $V$ are formal
affine schemes and written as $\varinjlim U_{n}$ and $\varinjlim V_{n}$
as limits of affine schemes respectively. Moreover we may suppose
that morphisms $U\to\cX$ and $V\to\cX$ are given by morphisms $U_{n}\to\cX_{n}$
and $V_{n}\to\cX_{n}$. Then 
\[
U\times_{\cX}V\cong\varinjlim(U_{n}\times_{\cX_{n}}V_{n}).
\]
Since $U_{n}\times_{\cX_{n}}V_{n}$ are affine schemes and their transition
morphisms are thickenings, the limit is a formal affine scheme.
\end{proof}
\begin{lem}
\label{lem:atlas}Let $\cX$ be a formal DM stack. There exists a
family of representable etale morphisms $V_{i}\to\cX$, $i\in I$
such that $V_{i}$ are formal affine schemes and $\coprod_{i}V_{i}\to\cX$
is surjective.
\end{lem}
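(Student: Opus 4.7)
The plan is to present $\cX$ as $\varinjlim_n \cX_n$ with $\cX_n$ a DM stack and transition maps thickenings, choose an affine etale atlas at the bottom of the tower, and propagate it uniquely to every level via the topological invariance of the etale site. Taking the direct limit of each such tower yields the desired formal affine scheme mapping to $\cX$.

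Concretely, since $\cX_0$ is a DM stack I may pick an etale representable surjection $\coprod_{i \in I} U_{0,i} \to \cX_0$ with each $U_{0,i}$ an affine scheme. Each transition map $\cX_n \hookrightarrow \cX_{n+1}$ is a thickening, hence a nil-immersion of DM stacks, so pullback induces an equivalence between the categories of representable etale morphisms to $\cX_{n+1}$ and to $\cX_n$. This equivalence produces a unique tower $U_{n,i} \to \cX_n$ of etale morphisms with $U_{n,i} \cong U_{n+1,i} \times_{\cX_{n+1}} \cX_n$. Each transition $U_{n,i} \hookrightarrow U_{n+1,i}$ is then itself a thickening (the base change of one), and because affineness of algebraic spaces ascends along nil-thickenings, an induction shows that every $U_{n,i}$ is affine. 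Set $V_i := \varinjlim_n U_{n,i}$, which is a formal affine scheme in the sense of Definition~\ref{def:ind-DM-coarse}.

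To check that $V_i \to \cX$ is representable etale, let $S$ be an affine scheme and $S \to \cX$ an arbitrary morphism; it factors through some $\cX_m$. For $n \ge m$, since $S \to \cX_n$ factors through $\cX_m$ and $U_{m,i} = U_{n,i} \times_{\cX_n} \cX_m$, one has $U_{n,i} \times_{\cX_n} S \cong U_{m,i} \times_{\cX_m} S$. Applying \cite[Prop.~A.2]{Tonini:2017qr},
\[
V_i \times_{\cX} S \cong \varinjlim_n (U_{n,i} \times_{\cX_n} S) \cong U_{m,i} \times_{\cX_m} S,
\]
which is an affine scheme etale over $S$; representability and etaleness then extend to algebraic space test objects by taking affine charts. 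For surjectivity, note that $|\cX| = |\cX_0|$ since thickenings are bijective on geometric points, and $\coprod_i U_{0,i} \to \cX_0$ is surjective by construction, so $\coprod_i V_i \to \cX$ is surjective on geometric points as well.

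The main technical inputs to marshal carefully are the topological invariance of the etale site for thickenings of DM stacks (which guarantees the unique lifts $U_{n+1,i}$) and the ascent of affineness along nil-thickenings of algebraic spaces (which keeps the lifts affine); both are standard. Granted these, the remainder is bookkeeping with direct limits and the fact that a morphism from a (quasi-compact) affine scheme into a countable ind-DM stack factors through a finite stage.
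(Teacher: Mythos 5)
Your proof is correct and follows essentially the same route as the paper's: start with an affine etale atlas of $\cX_0$, propagate uniquely up the tower via invariance of the small etale site under thickenings, and take the limit. The only presentational difference is that you unpack the content of the paper's Lemma~\ref{lem:etale-invariance} (topological invariance of the etale site for thickenings of DM stacks together with preservation of affineness) inline rather than citing it, and you spell out the representability/etaleness check by computing $V_i\times_\cX S$ via \cite[Prop.~A.2]{Tonini:2017qr} where the paper simply invokes \cite[A.3]{Tonini:2017qr}; both are valid.
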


\begin{proof}
Suppose $\cX=\varinjlim\cX_{n}$ where $\cX_{n}$, $n\in\NN$ are
DM stacks. There exists a family of etale morphisms $V_{i,0}\to\cX_{0}$
from affine schemes such that $\coprod_{i}V_{i,0}\to\cX_{0}$ is surjective.
From \ref{lem:etale-invariance}, they uniquely extend to $V_{i,n}\to\cX_{n}$
with the same property and the diagrams 
\[
\xymatrix{V_{i,n}\ar[r]\ar[d] & V_{i,n+1}\ar[d]\\
\cX_{n}\ar[r] & \cX_{n+1}
}
\]
are 2-Cartesian. Let $V_{i}:=\varinjlim V_{i,n}$, which is a formal
affine scheme. From \cite[A.3]{Tonini:2017qr}, the diagrams
\[
\xymatrix{V_{i,n}\ar[r]\ar[d] & V_{i}\ar[d]\\
\cX_{n}\ar[r] & \cX
}
\]
are also 2-Cartesian. Therefore the morphisms $V_{i}\to\cX$ satisfy
the desired property.
\end{proof}
\begin{defn}
We call a morphism $\coprod_{i}V_{i}\to\cX$ as above or the formal
scheme $\coprod_{i}V_{i}$ an \emph{atlas }of $\cX$.
\end{defn}

\begin{defn}
We define $\D_{n}:=\Spec k\tbrats/(t^{n+1})$ and $\Df:=\Spf k\tbrats=\varinjlim\D_{n}$.
For a DM stack $\Phi$, we define $\D_{\Phi,n}:=\D_{n}\times\Phi$
and $\Df_{\Phi}:=\Df\times\Phi=\varinjlim\D_{\Phi,n}$. When $\Phi=\Spec R$,
we also write them as $\D_{R,n}$ and $\Df_{R}$, which are isomorphic
to $\Spec R\tbrats/(t^{n+1})$ and $\Spf R\tbrats$. (We denote by
$\D_{R}$ the affine scheme $\Spec R\tbrats$.)
\end{defn}

\begin{defn}
We say that a morphism $f\colon\cY\to\cX$ of formal DM stacks is
\emph{of finite type }if for every morphism $\cU\to\cX$ from a DM
stack, the fiber product $\cU\times_{\cX}\cY$ is a DM stack of finite
type over $\cU$.
\end{defn}

\begin{notation}
For a formal DM stack $\cX$ of finite type over $\Df_{\Phi}$ and
$n\in\NN$, we denote the fiber product $\cX\times_{\Df}\D_{n}=\cX\times_{\Df_{\Phi}}\D_{\Phi,n}$
by $\cX_{n}$.
\end{notation}

With this notation, we have 
\[
\cX\cong\cX\times_{\Df}\varinjlim\D_{n}\cong\varinjlim\cX_{n}.
\]

Recall that the restricted power series ring $k\tbrats\{x_{1},\dots,x_{n}\}$
is the $t$-adic completion of $k\tbrats[x_{1},\dots,x_{n}]$. A $k\tbrats$-algebra
is topologically finitely generated over $k\tbrats$ if and only if
it is isomorphic to a quotient ring of $k\tbrats\{x_{1},\dots,x_{n}\}$
for some $n\in\NN$.
\begin{lem}
Let $\cX$ be a formal DM stack of finite type over $\Df$. Then there
exists an atlas $\Spf A\to\cX$ from a formal affine scheme such that
$A$ is a topologically finitely generated $k\tbrats$-algebra.
\end{lem}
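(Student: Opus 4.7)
The plan is to lift an etale atlas of the special fiber $\cX_{0}$ uniquely along the infinitesimal thickenings, take the colimit to produce a formal affine atlas $V=\Spf A$, and then argue topological finite generation of $A$ by a Nakayama-type trick using the nilpotence of $t$ modulo each ideal of definition.

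Since $\cX$ is of finite type over $\Df$, the special fiber $\cX_{0}=\cX\times_{\Df}\Spec k$ is a DM stack of finite type over $k$, and in particular quasi-compact. Hence I can find an etale surjection $V_{0}\to\cX_{0}$ with $V_{0}=\Spec A_{0}$ a \emph{single} finite-type affine $k$-scheme, obtained by taking finitely many etale affine charts of $\cX_{0}$ and forming their disjoint union (a finite disjoint union of affine schemes being affine). By etale invariance of small etale sites under thickenings, as invoked in the proof of Lemma \ref{lem:atlas}, this lifts uniquely to a compatible system of representable etale morphisms $V_{n}\to\cX_{n}$ with $V_{n}\times_{\cX_{n}}\cX_{n-1}\cong V_{n-1}$; each $V_{n}=\Spec A_{n}$ is of finite type over $\D_{n}$, so $A_{n}$ is finitely generated as an algebra over $k_{n}:=k\tbrats/(t^{n+1})$. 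The colimit $V:=\varinjlim V_{n}=\Spf A$ with $A:=\varprojlim A_{n}$ then yields a representable etale atlas $V\to\cX$ by the same reasoning as in the proof of Lemma \ref{lem:atlas}, and the Cartesian identifications $V_{n-1}=V_{n}\times_{\D_{n}}\D_{n-1}$ give $\ker(A_{n}\twoheadrightarrow A_{n-1})=t^{n}A_{n}$.

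To show $A$ is topologically finitely generated over $k\tbrats$, I pick $k$-algebra generators $\bar{x}_{1},\dots,\bar{x}_{m}$ of $A_{0}$, lift them to a compatible system $x_{i,n}\in A_{n}$ yielding elements $x_{i}\in A$, and consider the continuous $k\tbrats$-algebra map $\phi\colon k\tbrats\{Y_{1},\dots,Y_{m}\}\to A$ sending $Y_{i}\mapsto x_{i}$; here I use the identification $\varprojlim_{n}k_{n}[Y_{1},\dots,Y_{m}]\cong k\tbrats\{Y_{1},\dots,Y_{m}\}$, which holds because the coefficient of each monomial converges $t$-adically to an element of $k\tbrats$ and must tend to zero as the total degree grows. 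Granting surjectivity of each reduction $\phi_{n}\colon k_{n}[Y]\to A_{n}$, a Mittag-Leffler-style successive approximation --- at each step, the error of an attempted lift lies in $t^{n+1}A_{n+1}$ and is corrected by an element of $t^{n+1}k_{n+1}[Y]$ using surjectivity of $\phi_{n+1}$ --- yields surjectivity of $\phi$, as required.

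The remaining key step is the surjectivity of each $\phi_{n}$. Pick arbitrary $k_{n}$-algebra generators $y_{1},\dots,y_{l}$ of $A_{n}$ (available since $A_{n}$ is of finite type over $k_{n}$), write each image $\bar{y}_{j}\in A_{0}$ as a polynomial $\bar{P}_{j}(\bar{x})$, and lift $\bar{P}_{j}$ to $P_{j}\in k_{n}[Y]$, so that $y_{j}-P_{j}(x_{1,n},\dots,x_{m,n})\in tA_{n}$. Setting $A_{n}':=\Image\phi_{n}$, this reads $A_{n}=A_{n}'+tA_{n}$, and iterating this identity together with the nilpotency $t^{n+1}=0$ in $A_{n}$ gives $A_{n}=A_{n}'$. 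This Nakayama-type argument modulo a nilpotent ideal is the only genuine technical point of the proof; everything else is a routine packaging of inverse-limit manipulations.
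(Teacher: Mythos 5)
Your proof is correct and follows the same overall strategy as the paper: take an affine atlas $\Spec B_{0}\to\cX_{0}$, lift it uniquely through the thickenings via invariance of small \'etale sites, and pass to the limit. The one real difference is in how topological finite generation of $A=\varprojlim B_{n}$ is established: the paper simply cites EGA (Ch.~0, Prop.~7.5.5 of \cite{MR0163908}), whereas you unpack it into a direct argument, building the map $k\tbrats\{Y_{1},\dots,Y_{m}\}\to A$ from lifts of generators of $B_{0}$, checking surjectivity mod $t^{n+1}$ by a Nakayama-style iteration $A_{n}=A_{n}'+tA_{n}=\cdots=A_{n}'+t^{n+1}A_{n}=A_{n}'$ (using that $A_{n}'$ is a subalgebra, so containing the generators up to $tA_{n}$ suffices), and then propagating surjectivity to the limit by a successive-approximation/Mittag-Leffler argument combined with the identification $\varprojlim_{n}k_{n}[Y]\cong k\tbrats\{Y\}$. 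All steps check out; the self-contained version is slightly longer but requires nothing beyond the surrounding lemmas.
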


\begin{proof}
Let us take an atlas $\Spec B_{0}\to\cX_{0}$ from an affine scheme.
The ring $B_{0}$ is finitely generated as a $k$-algebra. From \ref{lem:etale-invariance},
we can inductively construct the following 2-Cartesian diagrams for
$n\in\NN$:
\[
\xymatrix{\Spec B_{n}\ar[r]\ar[d] & \Spec B_{n+1}\ar[d]\\
\cX_{n}\ar[r] & \cX_{n+1}
}
\]
Let $A:=\varprojlim B_{n}$. From \cite[Ch. 0, Prop. 7.5.5]{MR0163908},
$A$ is topologically finitely generated as a $k\tbrats$-algebra.
The morphism $\Spf A\to\cX$ is an atlas.
\end{proof}
\begin{lem}
\label{lem:formal-finite-type}Let $\cX_{n}$, $n\in\NN$ be DM stacks
of finite type over $\D_{n}$ and let $\cX_{n}\to\cX_{n+1}$, $n\in\NN$
be morphisms compatible with $\D_{n}\to\D_{n+1}$. Suppose that $\cX_{n}\to\cX_{n+1}\times_{\D_{n+1}}\D_{n}$
is an isomorphism. Then $\cX:=\varinjlim\cX_{n}$ is a formal DM stack
of finite type over $\Df$.
\end{lem}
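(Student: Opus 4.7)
The plan is to verify first that $\cX := \varinjlim \cX_n$ satisfies Definition \ref{def:ind-DM-coarse}, and then to establish finite-typeness over $\Df$ by reducing to affine etale charts of a given testing DM stack.

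For the formal DM stack condition, the key point is that each transition morphism $\cX_n \to \cX_{n+1}$ is a thickening. Indeed, $\D_n \hookrightarrow \D_{n+1}$ is the thickening $\Spec k\tbrats/(t^{n+1}) \hookrightarrow \Spec k\tbrats/(t^{n+2})$, and by hypothesis $\cX_n \cong \cX_{n+1} \times_{\D_{n+1}} \D_n$, so $\cX_n \to \cX_{n+1}$ is the base change of $\D_n \hookrightarrow \D_{n+1}$ along $\cX_{n+1} \to \D_{n+1}$. Representability and closed immersions are stable under base change, and $|\cX_n| \to |\cX_{n+1}|$ is the preimage of $|\D_n|=|\D_{n+1}|$ in $|\cX_{n+1}|$, hence bijective. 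So each transition morphism is a thickening in the sense of Subsection \ref{sec:Terminology}, and $\cX$ is a formal DM stack.

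For finite-typeness, let $\cU \to \Df$ be any morphism from a DM stack. Being of finite type for a morphism of DM stacks is local on the target for the etale topology, so it suffices to verify the claim after pulling back along an etale atlas $\Spec R \to \cU$. The composition $\Spec R \to \Df = \Spf k\tbrats$ corresponds by definition of the formal spectrum to a continuous homomorphism $k\tbrats \to R$ with $R$ discrete; the image of $t$ must therefore be nilpotent, so the map factors as $\Spec R \to \D_n \hookrightarrow \Df$ for some $n$. Using the commutation of fiber products with colimits in the ind-DM setting \cite[Prop. A.2]{Tonini:2017qr}, already invoked in the proof of Lemma \ref{lem:diag-immer}, we have
\[
\Spec R \times_{\Df} \cX \;\cong\; \varinjlim_{m}\bigl(\Spec R \times_{\Df} \cX_m\bigr).
\]
For $m \ge n$, iterating the hypothesis $\cX_{i} \cong \cX_{i+1} \times_{\D_{i+1}} \D_{i}$ gives $\cX_n \cong \cX_m \times_{\D_m} \D_n$, and together with the factorization through $\D_n$ this yields $\Spec R \times_{\Df} \cX_m \cong \Spec R \times_{\D_n} \cX_n$. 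Hence the colimit stabilizes at $\Spec R \times_{\D_n} \cX_n$, which is a DM stack of finite type over $\Spec R$ because $\cX_n$ is of finite type over $\D_n$ by assumption.

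The only delicate step is the factorization $\Spec R \to \D_n \to \Df$: this is the standard fact that maps from a (non-formal) affine scheme into $\Spf k\tbrats$ factor through some truncation, and it is precisely what forces the colimit to stabilize. This is the only place where the hypothesis that the test object is a DM stack (as opposed to a formal DM stack) enters the argument.
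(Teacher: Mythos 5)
Your proof is correct and takes essentially the same approach as the paper's: both reduce to a quasi-compact test object, factor the map to $\Df$ through some $\D_n$, and then identify $\cU\times_{\Df}\cX$ with $\cU\times_{\D_n}\cX_n$. The only differences are cosmetic — you reduce further to an affine \'etale atlas of $\cU$ where the paper works directly with a quasi-compact DM stack $\cU$, and you spell out the appeal to \cite[Prop.~A.2]{Tonini:2017qr} that the paper leaves implicit.
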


\begin{proof}
From the assumption, $\cX_{n}\to\cX_{n+1}$ is a thickening. Therefore
$\cX$ is a formal DM stack. To check that it is of finite type over
$\Df$, it suffices to consider a morphism $\cU\to\Df$ from a quasi-compact
DM stack. The morphism factors through $\D_{n}$, for $n\gg0$. We
have 
\[
\cU\times_{\Df}\cX\cong\cU\times_{\D_{n}}\D_{n}\times_{\Df}\cX\cong\cU\times_{\D_{n}}\cX_{n},
\]
which is of finite type over $\cU$.
\end{proof}
\begin{defn}
\label{def:coarse-moduli}A \emph{coarse moduli space} of an ind-DM
stack $\cX$ is an ind-algebraic space $X$ together with a morphism
$\pi\colon\cX\to X$ such that
\begin{enumerate}
\item for any morphism $f\colon\cX\to Y$ with $Y$ an ind-algebraic space,
there exists a unique morphism $g\colon X\to Y$ such that $g\circ\pi=f$.
\item for any algebraically closed field $K/k$, the map 
\[
\cX[K](:=\cX(K)/\cong)\to Y(K)
\]
 is bijective.
\end{enumerate}
Obviously, if exists, a coarse moduli space of an ind-DM stack $\cX$
is unique up to unique isomorphism. Therefore we call it \emph{the
}coarse moduli space of $\cX$.
\end{defn}

\begin{lem}
\label{lem:coarse-ind}For a formal DM stack $\cX=\varinjlim\cX_{n}$
with $\cX_{n}$ quasi-compact, let $X_{n}$ be the coarse moduli space
of $\cX_{n}$ (which exists from Keel-Mori's theorem \cite{MR1432041}).
The limit $X=\varinjlim X_{n}$ is the coarse moduli space of $\cX$.
\end{lem}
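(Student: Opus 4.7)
The plan is to verify the two conditions of Definition \ref{def:coarse-moduli} for the morphism $\pi\colon\cX\to X$ obtained by assembling the coarse moduli morphisms $\pi_{n}\colon\cX_{n}\to X_{n}$. By Keel--Mori \cite{MR1432041} each $X_{n}$ is an algebraic space, and the composition $\cX_{n}\to\cX_{n+1}\to X_{n+1}$ induces by the universal property a unique morphism $X_{n}\to X_{n+1}$ making the coarse moduli square commute. Hence $(X_{n})_{n\in\NN}$ is an inductive system of algebraic spaces, so $X:=\varinjlim X_{n}$ is an ind-algebraic space, and the $\pi_{n}$ assemble into the desired morphism $\pi\colon\cX\to X$.

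For the universal property, let $f\colon\cX\to Y$ be a morphism with $Y=\varinjlim Y_{m}$ an ind-algebraic space. The restrictions $f_{n}\colon\cX_{n}\to Y$ along the thickenings $\cX_{n}\hookrightarrow\cX$ factor through some algebraic space $Y_{m(n)}$, since $\cX_{n}$ is quasi-compact. Keel--Mori then provides a unique $g_{n}\colon X_{n}\to Y_{m(n)}$ with $g_{n}\circ\pi_{n}=f_{n}$. To check compatibility, after enlarging $m$ we regard both $g_{n}\colon X_{n}\to Y_{m}$ and the composition $g_{n+1}\circ(X_{n}\to X_{n+1})\colon X_{n}\to Y_{m}$ as solutions of the factorization problem $(-)\circ\pi_{n}=f_{n}$ through the coarse moduli space $X_{n}$; the uniqueness part of Keel--Mori forces them to coincide. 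The compatible family $(g_{n})$ then assembles into $g\colon X\to Y$ with $g\circ\pi=f$, and the uniqueness of $g$ follows by applying Keel--Mori level by level.

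For the bijection on $K$-points, I would use that for any thickening of DM stacks $\cZ\to\cW$ and any field $K$, the induced map $\cZ[K]\to\cW[K]$ is a bijection: a $K$-point of $\cW$ kills all nilpotents (since $K$ has none), so it factors uniquely through $\cZ$. Consequently $\cX[K]=\cX_{0}[K]$ and $X[K]=X_{0}[K]$, and the map $\cX[K]\to X[K]$ is identified with the Keel--Mori bijection $\cX_{0}[K]\to X_{0}[K]$.

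The main obstacle is the bookkeeping involved in factoring a morphism from the quasi-compact stack $\cX_{n}$ to the ind-algebraic space $Y$ through some finite-level $Y_{m(n)}$, and verifying that the resulting $g_{n}$'s are strictly compatible with the transition maps $X_{n}\to X_{n+1}$; both points are resolved by invoking the Keel--Mori uniqueness statement at each level, after which the statement follows from Keel--Mori applied termwise together with the formation of $\varinjlim$.
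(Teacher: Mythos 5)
Your proof is correct and takes essentially the same route as the paper's: both factor morphisms from the quasi-compact $\cX_n$ through finite-level $Y_m$ and then invoke the Keel--Mori universal property and uniqueness levelwise, reconciling indices as needed. You supply a bit more explicit detail than the paper (the compatibility check for the $g_n$ and the thickening-induces-bijection-on-$K$-points argument), but the underlying idea is the same.
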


\begin{proof}
Condition (2) in \ref{def:coarse-moduli} clearly holds. We will show
the universality. Let $f\colon\cX\to Y$ be a morphism with $Y=\varinjlim Y_{i}$
an ind-algebraic space. Since $\cX_{n}$ is a quasi-compact DM stack,
the composition $\cX_{n}\to\cX\to Y$ factors through $Y_{i(n)}$
for some $i(n)$. The morphism $\cX_{n}\to Y_{i(n)}$ uniquely factors
as $\cX_{n}\to X_{n}\to Y_{i(n)}$. We can take a strictly increasing
function as $i(n)$. The morphisms $X_{n}\to Y_{i(n)}$ define a morphism
$g\colon X\to Y$ such that the composition $\cX\to X\to Y$ is equal
to $f$. It remains to show the uniqueness of the morphism $X\to Y$.
Let $h\colon X\to Y$ be another morphism such that $\cX\to X\xrightarrow{h}Y$
is equal to $f$. Suppose that $h$ is represented by morphisms $h_{n}\colon X_{n}\to Y_{j(n)}$.
Replacing both $i(n)$ and $j(n)$ with $\max\{i(n),j(n)\}$, we may
suppose that $i(n)=j(n)$. From the universality of the coarse moduli
space $\cX_{n}\to X_{n}$, two morphisms $g_{n},h_{n}\colon X_{n}\rightrightarrows Y_{i(n)}$
coincide. Therefore $g=h$.
\end{proof}
\begin{lem}
\label{lem:coarse-flat-base-change}Let $\cX$ be a formal DM stack
with the coarse moduli space $\cX\to X$. Let $Y\to X$ be a morphism
of ind-algebraic spaces which is representable by algebraic spaces
and flat. Then $Y$ is the coarse moduli space of $\cX\times_{X}Y$.
\end{lem}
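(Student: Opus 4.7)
The plan is to reduce to the case of ordinary (non-formal) DM stacks by unwinding the inductive system presentations, applying the classical flat base change property of coarse moduli spaces level by level, and then re-assembling by Lemma \ref{lem:coarse-ind}.

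First, I would present $\cX \cong \varinjlim \cX_n$ with $\cX_n$ quasi-compact DM stacks and transitions that are thickenings (after refining to the quasi-compact case by decomposing $\cX$ into a coproduct if necessary). By Lemma \ref{lem:coarse-ind}, the coarse moduli space has the form $X \cong \varinjlim X_n$, where $X_n$ is the coarse moduli space of $\cX_n$. Since $Y \to X$ is representable by algebraic spaces, $Y_n := Y \times_X X_n$ is an algebraic space and the induced morphism $Y_n \to X_n$ is flat. Because base change preserves thickenings and commutes with colimits along such a system (cf.\ the properties cited from \cite{Tonini:2017qr}), $Y_n \to Y_{n+1}$ is a thickening and $Y \cong \varinjlim Y_n$; in particular $Y$ is a formal algebraic space (viewed as an ind-algebraic space).

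Second, I would invoke \cite[Prop.~A.2]{Tonini:2017qr} to compute
\[
\cX \times_X Y \;\cong\; \varinjlim\bigl(\cX_n \times_{X_n} Y_n\bigr).
\]
Each $\cX_n \times_{X_n} Y_n$ is a quasi-compact DM stack with finite diagonal, and the transition morphisms are still thickenings. The key input is the classical fact that, for a DM stack $\cX_n$ with coarse moduli $X_n$ and a flat morphism $Y_n \to X_n$ of algebraic spaces, the base change $\cX_n \times_{X_n} Y_n \to Y_n$ is a coarse moduli morphism. This is the standard flat base change for Keel--Mori coarse moduli spaces; its verification is étale-local on $Y_n$ and reduces to the case of a finite group quotient $[\Spec A/G] \to \Spec A^G$, where flatness of $B/A^G$ gives $(A \otimes_{A^G} B)^G = B$.

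Finally, the formal DM stack $\cX \times_X Y \cong \varinjlim(\cX_n \times_{X_n} Y_n)$ has at each finite level the coarse moduli space $Y_n$, so Lemma \ref{lem:coarse-ind} identifies $Y \cong \varinjlim Y_n$ as the coarse moduli space of $\cX \times_X Y$, which is the desired conclusion. The main obstacle is the classical flat base change step: it is completely standard for schemes (where $G$-invariants commute with flat base change on the quotient), but needs to be stated carefully because $Y_n \to X_n$ is only assumed representable by algebraic spaces and flat, not of finite type; however, since the statement is étale-local on $X_n$ and on $Y_n$, one reduces painlessly to the affine quotient setting.
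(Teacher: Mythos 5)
Your proof is correct and takes essentially the same route as the paper's: present $\cX$ as $\varinjlim\cX_n$ with coarse moduli $X=\varinjlim X_n$ (Lemma \ref{lem:coarse-ind}), use the classical flat base change for coarse moduli of the DM stacks $\cX_n$ to get that $Y_n:=Y\times_X X_n$ is the coarse moduli of $\cY_n:=\cX_n\times_{X_n}Y_n$, commute the colimit past the fibered product via \cite[Prop.~A.2]{Tonini:2017qr}, and conclude by applying Lemma \ref{lem:coarse-ind} once more. The extra observation that $Y_n\to Y_{n+1}$ is a thickening is sound, though the paper bypasses it by deriving $Y\cong\varinjlim Y_n$ directly from the commutation of limits.
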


\begin{proof}
Let us write $\cX$ as the limit $\varinjlim\cX_{n}$ of DM stacks
$\cX_{n}$ and let $X_{n}$ be the coarse moduli space of $\cX_{n}$.
Let $Y_{n}:=Y\times_{X}X_{n}$ and $\cY_{n}:=\cX_{n}\times_{X_{n}}Y_{n}$.
Since $Y_{n}\to X_{n}$ is a flat morphism of algebraic spaces, $Y_{n}$
is the coarse moduli space of $\cY_{n}$. From \cite[Prop. A.2]{Tonini:2017qr}
and \ref{lem:coarse-ind}, we have 
\[
Y\cong Y\times_{X}\varinjlim X_{n}\cong\varinjlim(Y\times_{X}X_{n})\cong\varinjlim Y_{n}.
\]
Similarly 
\[
\cX\times_{X}Y\cong\cX\times_{X}\varinjlim Y_{n}\cong\varinjlim\cY_{n}.
\]
From \ref{lem:coarse-ind}, $Y\cong\varinjlim Y_{n}$ is the coarse
moduli space of $\cX\times_{X}Y\cong\varinjlim\cY_{n}$.
\end{proof}
For further analysis of coarse moduli spaces, we recall the following
result.
\begin{lem}[{\cite[pages 1160 and 1172]{MR595009}}]
\label{lem:invariant-subring}Let $R$ be a Noetherian $k$-algebra
such that the relative Frobenius $F_{R/k}\colon\Spec R\to(\Spec R)^{(1)}$
(see Section \ref{sec:Frobenius-morphisms} for the relative Frobenius
morphism) is a finite morphism (this holds for instance if $R$ is
a quotient of $k\llbracket t_{1},\dots,t_{m}\rrbracket\{x_{1},\dots,x_{n}\}$).
Suppose that a finite group $G$ acts on $R$ as a $k$-algebra. Then
the invariant subalgebra $R^{G}$ is Noetherian and $R$ is a finitely
generated $R^{G}$-module.
\end{lem}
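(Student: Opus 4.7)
The plan is to reduce the statement to its module-finiteness assertion and then quote (or reprove) the key characteristic-$p$ input of Fogarty \cite{MR595009}. Once one knows that $R$ is a finitely generated $R^G$-module, the Noetherianity of $R^G$ follows from Eakin--Nagata: a subring $A$ of a commutative Noetherian ring $R$ over which $R$ is module-finite is itself Noetherian. So the entire content is to prove that $R$ is a finitely generated $R^G$-module.

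I would first dispose of the tame case to isolate where the hypothesis on the relative Frobenius is really needed. In all characteristics $R$ is integral over $R^G$, since every $r\in R$ is a root of the monic polynomial $\prod_{g\in G}(X-g\cdot r)\in R^G[X]$. When $p\nmid |G|$, the Reynolds operator $\tfrac{1}{|G|}\sum_{g\in G}g$ gives an $R^G$-linear splitting of $R^G\hookrightarrow R$, so any ascending chain of ideals in $R^G$ extends to one in the Noetherian ring $R$ and then contracts back, which forces module-finiteness by the usual Artin--Tate argument once we know $R^G$ is Noetherian (or vice versa). The problem is therefore entirely the wild case, in which no Reynolds operator exists.

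The main obstacle, and Fogarty's decisive observation, is the replacement of the missing Reynolds operator by the finiteness of the relative Frobenius. I would proceed as follows: for any $n\ge 0$ the hypothesis implies that $R$ is a finite module over the subring $R^{p^{n}}$ of $p^n$-th powers, and $(R^G)^{p^n}\subset R^G\cap R^{p^n}$. Pick finitely many elements $r_1,\dots,r_N$ generating $R$ as an $R^{p^n}$-module and consider for each $r_i$ the finite set of $G$-conjugates $\{g\cdot r_i\mid g\in G\}$; their elementary symmetric functions lie in $R^G$. The argument then consists in showing that together with the $p^n$-th power structure these elementary symmetric functions suffice to generate $R$ as an $R^G$-module, using that the $G$-action on $R^{p^n}$-module generators can, after passing to a sufficiently high Frobenius twist, be tracked by data lying in $R^G$. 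This is the technical core of \cite[pp.~1160, 1172]{MR595009}, and I would follow that proof verbatim here; trying to find a more direct substitute is, I expect, the hardest part.

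Finally, the parenthetical example is immediate: for $R_0:=k\llbracket t_{1},\dots,t_{m}\rrbracket\{x_{1},\dots,x_{n}\}$ the relative Frobenius is finite because it factors through the finite extension $R_{0}^{(p)}[t_{j},x_{i}]\subset R_{0}$ of degree $p^{m+n}$, and finiteness of the relative Frobenius is preserved by passage to quotients; Noetherianity of $R_0$ (and hence of any of its quotients) is standard. Thus the hypothesis of the lemma applies to every $R$ arising in our formal DM stacks of finite type over $\Df$, which is exactly the setting in which we will later invoke it.
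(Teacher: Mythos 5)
The paper supplies no proof of this lemma beyond the citation to Fogarty, so your proposal---which reduces Noetherianity of $R^G$ to module-finiteness via Eakin--Nagata, correctly checks the parenthetical example (the relative Frobenius of $k\llbracket t_{1},\dots,t_{m}\rrbracket\{x_{1},\dots,x_{n}\}$ is finite of degree $p^{m+n}$ over the image, and finiteness of the relative Frobenius passes to closed immersions and hence to quotients), and then defers the wild-case core to Fogarty's argument---is essentially the same citation, wrapped in correct auxiliary remarks. One small flaw in the tame-case aside: the Reynolds operator gives an $R^G$-linear retraction $R\to R^G$, which directly yields Noetherianity of $R^G$ by contracting ideal chains, but it does not by itself give module-finiteness of $R$ over $R^G$; the Artin--Tate lemma requires $R$ to be a finitely generated $R^G$-algebra, which is not among the hypotheses and is, given integrality, exactly equivalent to what you are trying to prove, so that particular sentence is circular---though harmless here, since the wild argument you import from Fogarty subsumes the tame case in any event.
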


\begin{lem}
\label{lem:Mittag-Leffler}With the same assumption as in \ref{lem:invariant-subring},
for an ideal $I\subset R^{G}$ and for every $n$, we have
\[
R^{G}/I^{n}=\Image\left((R/I^{n'}R)^{G}\to(R/I^{n}R)^{G}\right)\quad(n'\gg n).
\]
In particular, the projective system $(R/I^{n}R)^{G}$, $n\in\NN$
satisfies the Mittag-Leffler condition. 
\end{lem}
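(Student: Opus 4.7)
The plan is to apply the Artin--Rees lemma to a $G$-equivariance-detecting $R^G$-module map. Consider the $R^G$-linear map
\[
\phi\colon R\longrightarrow M:=\bigoplus_{g\in G\setminus\{e\}}R,\qquad \phi(x):=(g\cdot x-x)_{g},
\]
whose kernel is exactly $R^G$. The map $\phi$ is $R^G$-linear because each $g\in G$ fixes $R^G$ elementwise, so each component $x\mapsto gx-x$ is $R^G$-linear. By Lemma \ref{lem:invariant-subring}, $R^G$ is Noetherian and $R$ is a finitely generated $R^G$-module; consequently $M$ is a finitely generated $R^G$-module as well.

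Applying the Artin--Rees lemma to the $R^G$-submodule $\Image\phi\subset M$ with respect to the ideal $I\subset R^G$, we obtain an integer $c\ge 0$ such that for every $n\ge c$,
\[
\Image\phi\cap I^{n}M \;=\; I^{n-c}\bigl(\Image\phi\cap I^{c}M\bigr) \;\subseteq\; I^{n-c}\,\Image\phi \;=\; \phi(I^{n-c}R),
\]
where the final equality uses the $R^G$-linearity of $\phi$.

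Fix $n$ and set $n':=n+c$. A class in $(R/I^{n'}R)^G$ is represented by some $x\in R$ with $g\cdot x-x\in I^{n'}R$ for every $g\in G$, that is, $\phi(x)\in I^{n'}M$. By the inclusion above, $\phi(x)\in\phi(I^{n'-c}R)=\phi(I^{n}R)$, so there exists $z\in I^{n}R$ with $\phi(x)=\phi(z)$. Then $y:=x-z\in\Ker\phi=R^G$ satisfies $y\equiv x\pmod{I^{n}R}$, and hence the image of this class in $(R/I^{n}R)^G$ lies in the image of the canonical map $R^G\to(R/I^{n}R)^G$, which is exactly the subring identified with $R^G/I^{n}$ in the statement. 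The reverse containment is immediate, since any $y\in R^G$ defines a compatible family of elements in all $(R/I^{m}R)^G$. Thus the image of $(R/I^{n'}R)^G\to(R/I^{n}R)^G$ equals $R^G/I^{n}$ for every $n'\ge n+c$, and since this image is independent of such $n'$ the Mittag--Leffler condition follows at once.

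The essentially only obstacle is arranging the hypotheses Artin--Rees requires---namely that $R^G$ be Noetherian and that $M$ be a finitely generated $R^G$-module---both of which are supplied by Lemma \ref{lem:invariant-subring}. Once Artin--Rees applies, the $R^G$-linearity of $\phi$ converts the submodule inclusion into the equality $I^{n-c}\,\Image\phi=\phi(I^{n-c}R)$, and the rest is a direct manipulation.
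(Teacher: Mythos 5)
Your proof is correct, and it takes a genuinely different route from the paper's. The paper writes a presentation of $R$ as a finitely generated $R^{G}$-module, expresses membership in $(R/I^{n}R)^{G}$ as a system of $R^{G}$-linear equations, and then invokes a nontrivial approximation theorem for solutions of linear systems modulo powers of an ideal (the cited \cite[Th. 3.1]{MR2225775}), whereas you package the $G$-invariance condition into the single $R^{G}$-linear map $\phi\colon R\to\bigoplus_{g\neq e}R$, $x\mapsto(gx-x)_{g}$, whose kernel is $R^{G}$, and then apply the ordinary Artin--Rees lemma to the submodule $\Image\phi\subset M$. The chain $\Image\phi\cap I^{n'}M\subseteq I^{n'-c}\Image\phi=\phi(I^{n'-c}R)$ (the last equality using $R^{G}$-linearity of $\phi$) is exactly what converts an approximate invariant $x$ modulo $I^{n+c}R$ into a genuine invariant $y=x-z\in R^{G}$ with $y\equiv x\pmod{I^{n}R}$. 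What the paper's route buys is a statement compatible with the more general, quantitative linear-approximation machinery it cites; what yours buys is that the entire argument rests on a textbook lemma (Artin--Rees over the Noetherian ring $R^{G}$, with $M$ finitely generated by Lemma \ref{lem:invariant-subring}) rather than on a specialized approximation result, and it avoids choosing a presentation of $R$ and lifting the $G$-action to the free cover. Both proofs, like the paper's statement itself, implicitly identify $R^{G}/I^{n}$ with the image of $R^{G}\to(R/I^{n}R)^{G}$; this is harmless for the Mittag--Leffler conclusion and for Corollary \ref{cor:ind-quotient}, since Artin--Rees applied to $R^{G}\subset R$ also shows the two $I$-adic filtrations $\{I^{n}\}$ and $\{R^{G}\cap I^{n}R\}$ on $R^{G}$ are cofinal.
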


\begin{proof}
Since $R$ is a finitely generated $R^{G}$-module, we can identify
$R$ with the quotient module 
\[
(R^{G})^{\oplus l}/\sum_{q=1}^{c}R^{G}\cdot{}^{t}(f_{1}^{(q)},\dots,f_{l}^{(q)})\quad(f_{j}^{(q)}\in R^{G}).
\]
Here we regard elements of $(R^{G})^{\oplus l}$ as column vectors.
For each $g\in G$, we fix a lift $\tilde{g}\colon(R^{G})^{\oplus l}\to(R^{G})^{\oplus l}$
of $g\colon R\to R$ and suppose that it is given by the left multiplication
with a $l$-by-$l$ matrix $(\tilde{g}_{ij})_{1\le i,j\le n}$, $\tilde{g}_{ij}\in R^{G}$.
Then $(R/I^{n}R)^{G}$ consists of the residue classes $\overline{^{t}(x_{1},\dots,x_{l})}$
of those vectors $^{t}(x_{1},\dots,x_{l})\in(R^{G})^{\oplus l}$ such
that for some $y_{j}^{(q)}\in R^{G}$ $(1\le q\le c,\,1\le j\le l)$,
we have
\[
x_{i}-\sum_{j=1}^{l}\tilde{g_{ij}}x_{j}+\sum_{q=1}^{c}\sum_{j=1}^{l}y_{j}^{(q)}f_{j}^{(q)}\equiv0\mod I^{n}\quad(g\in G,\,1\le i\le l).
\]
We regard these equations as $R^{G}$-linear equations with unknowns
$x_{i},y_{j}^{(q)}$ and apply a version of approximation theorem
for linear equations, \cite[Th. 3.1]{MR2225775}; there exists $i_{0}\in\NN$
such that for a solution $(x_{i},y_{j}^{(q)})_{i,j,q}\in(R^{G})^{\oplus(c+1)l}$
of the above equation for $n=i+i_{0}$, there exists $(\check{x}_{i},\check{y}_{j}^{(q)})_{i,j,q}\in(R^{G})^{\oplus(c+1)l}$
satisfying
\[
\check{x}_{i}-\sum_{j=1}^{l}\tilde{g_{ij}}\check{x}_{j}+\sum_{q=1}^{c}\sum_{j=1}^{l}\check{y}_{j}^{(q)}f_{j}^{(q)}=0\quad(g\in G,\,1\le i\le l)
\]
and
\[
(x_{i},y_{j}^{(q)})_{i,j,q}\equiv(\check{x}_{i},\check{y}_{j}^{(q)})_{i,j,q}\mod I^{i}.
\]
This implies that if an element $\bx$ of $(R/I^{i}R)^{G}$ lifts
to an element of $(R/I^{i+i_{0}}R)^{G}$, then $\bx$ lifts also to
an element of $R^{G}$. The first assertion of the lemma follows.
The second assertion is then obvious.
\end{proof}
\begin{cor}
\label{cor:ind-quotient}With the assumption as in \ref{lem:invariant-subring},
for an ideal $I\subset R$, we have a natural isomorphism of ind-schemes
\[
\varinjlim\Spec R^{G}/I^{n}\cong\varinjlim\Spec(R/I^{n}R)^{G}.
\]
\end{cor}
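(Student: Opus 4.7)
The plan is to compare the two inductive systems of affine schemes stage by stage and to exhibit mutually inverse morphisms of ind-schemes using only the invariant-lifting property already established in Lemma \ref{lem:Mittag-Leffler}. Write $A_n := R^G/I^n$ and $B_n := (R/I^n R)^G$. The natural inclusion $R^G \hookrightarrow R$ composed with the reduction $R \to R/I^n R$ lands in the $G$-invariants and factors through $A_n$, giving a compatible system of ring homomorphisms $A_n \to B_n$. Dually these give a morphism of ind-schemes
\[
\iota\colon \varinjlim \Spec B_n \longrightarrow \varinjlim \Spec A_n.
\]
This construction is the ``obvious'' direction and requires no input beyond the definitions.

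For the inverse, I use Lemma \ref{lem:Mittag-Leffler}: for every $n$ there exists $n'=n'(n) \ge n$ such that the image of the transition map $B_{n'} \to B_n$ is exactly the subring $A_n$, identified as a subring of $B_n$ via the map above. Consequently the transition $B_{n'} \to B_n$ factors as a surjection $\phi_n\colon B_{n'} \twoheadrightarrow A_n$ followed by the inclusion $A_n \hookrightarrow B_n$. After replacing $n'(n)$ by $\max\{n'(n),n'(n-1)+1,\dots\}$ we may assume $n'(n)$ is strictly increasing, so that the closed immersions $\Spec A_n \hookrightarrow \Spec B_{n'(n)}$ induced by $\phi_n$ are compatible with the transitions on both sides. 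These data assemble into a morphism
\[
\psi\colon \varinjlim \Spec A_n \longrightarrow \varinjlim \Spec B_n.
\]

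It then remains to verify $\iota\circ\psi = \id$ and $\psi\circ\iota = \id$ in the category of ind-schemes, which amounts to checking that after possibly enlarging indices each composite restricts to one of the structure maps of the ind-system. For $\iota\circ\psi$, at level $n$ the composite is $\Spec A_n \hookrightarrow \Spec B_{n'(n)} \to \Spec A_{n'(n)}$, and by construction the ring-theoretic composition is the transition map $A_{n'(n)} \to A_n$, so this agrees with the structure morphism of $\varinjlim \Spec A_n$; the verification for $\psi\circ\iota$ is analogous. Thus $\iota$ and $\psi$ are mutually inverse morphisms of ind-schemes, giving the desired isomorphism.

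The main obstacle is purely notational: one must carefully track that the identification $A_n = \Image(B_{n'} \to B_n)$ supplied by Lemma \ref{lem:Mittag-Leffler} simultaneously gives injectivity of $A_n \hookrightarrow B_n$ and surjectivity of $\phi_n\colon B_{n'} \to A_n$, so that $\phi_n$ really is the ring map dual to a closed immersion. Once this identification is taken seriously, no new input is required beyond the Mittag-Leffler statement itself and general formalism of ind-schemes as recalled in \cite[Appendix A]{Tonini:2017qr}.
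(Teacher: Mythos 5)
Your proposal is correct and follows essentially the same route as the paper: you construct the natural morphism $\iota$ from the ring maps $R^G/I^n \to (R/I^n R)^G$, and you build the inverse $\psi$ from the factorizations $B_{n'(n)} \twoheadrightarrow A_n$ supplied by Lemma \ref{lem:Mittag-Leffler}, exactly as the paper defines its $a$ and $b$. The only notable difference is expository — you spell out the identity checks and flag the injectivity/surjectivity issue in the identification $A_n = \Image(B_{n'} \to B_n)$ where the paper leaves both implicit — but the argument is the same.
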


\begin{proof}
Maps $R^{G}/I^{n}\to(R/I^{n}R)^{G}$ define a morphism
\[
a\colon\varinjlim\Spec(R/I^{n}R)^{G}\to\varinjlim\Spec R^{G}/I^{n}.
\]
Let $\beta\colon\NN\to\NN$ be a strictly increasing function such
that 
\[
\Image\left((R/I^{\beta(n)}R)^{G}\to(R/I^{n}R)^{G}\right)=R^{G}/I^{n}.
\]
The maps $(R/I^{\beta(n)}R)^{G}\to R^{G}/I^{n}$ define a morphism
\[
b\colon\varinjlim\Spec R^{G}/I^{n}\to\varinjlim\Spec(R/I^{\beta(n)}R)^{G}\cong\varinjlim\Spec(R/I^{n}R)^{G}.
\]
We see that $a$ and $b$ are inverses to each other.
\end{proof}
Let $\Spf R$ be a formal affine scheme of finite type over $\Df$
with an action of a finite group $G$. The quotient stack $[(\Spf R)/G]$
is defined in the same way as in the case of non-formal schemes and
isomorphic to $\varinjlim[(\Spec R/(t^{n}))/G]$.
\begin{lem}
\label{lem:coarse-inv-sub}With the above notation, suppose that $R$
satisfies the assumption as in \ref{lem:invariant-subring}. Then
the coarse moduli space of $[(\Spf R)/G]$ is $\Spf R^{G}$. Moreover,
if $\Spf R$ is of finite type over $\Df$, then so is $\Spf R^{G}$.
\end{lem}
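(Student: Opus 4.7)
The plan is to reduce the statement for the formal quotient to the statements already established for truncations $\Spec R/(t^n)$ via the presentation $[(\Spf R)/G]\cong\varinjlim[(\Spec R/(t^n))/G]$, then push the resulting ind-scheme description back into the form $\Spf R^G$ using Corollary \ref{cor:ind-quotient} and a Mittag-Leffler argument. Note that, since the $G$-action is over $\Df$, the element $t\in R$ is $G$-invariant, so $t\in R^G$ and $(t^n)R^G\subset R^G$ makes sense.

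First, for each $n$ the quasi-compact DM stack $[(\Spec R/(t^{n+1})R)/G]$ has coarse moduli space $\Spec(R/(t^{n+1})R)^G$, since taking $G$-invariants of an affine ring computes coarse moduli for a finite group action. Combining this with Lemma \ref{lem:coarse-ind} applied to $\varinjlim[(\Spec R/(t^{n+1})R)/G]$ identifies the coarse moduli space of $[(\Spf R)/G]$ with the ind-scheme
\[
\varinjlim\Spec(R/(t^{n+1})R)^G .
\]
Applying Corollary \ref{cor:ind-quotient} with the ideal $I=(t)\subset R^G$ (whose hypothesis is verified by Lemma \ref{lem:invariant-subring} since $R$ is a quotient of $k\tbrats\{x_1,\dots,x_m\}$ when $\Spf R$ is of finite type over $\Df$; in general the hypothesis is in the statement) yields a canonical isomorphism
\[
\varinjlim\Spec(R/(t^{n+1})R)^G\;\cong\;\varinjlim\Spec R^G/(t^{n+1})R^G .
\]
To conclude that the right-hand side equals $\Spf R^G$, I verify that $R^G$ is $(t)$-adically complete: using Lemma \ref{lem:Mittag-Leffler} the pro-systems $\{(R/(t^{n+1})R)^G\}$ and $\{R^G/(t^{n+1})R^G\}$ have the same limit, and that common limit is $R^G$ because taking $G$-invariants commutes with the limit $R\cong\varprojlim R/(t^{n+1})R$. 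This gives $R^G\cong\varprojlim R^G/(t^{n+1})R^G$, so $R^G$ is a Noetherian $(t)$-adically complete $k\tbrats$-algebra and the ind-scheme above is indeed the formal affine scheme $\Spf R^G$.

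For the finite-type addendum, assume $R$ is topologically finitely generated over $k\tbrats$. Then $R/(t^{n+1})R$ is a finitely generated $k[t]/(t^{n+1})$-algebra, and by Lemma \ref{lem:invariant-subring} it is a finitely generated $R^G/(t^{n+1})R^G$-module. The classical Artin--Tate lemma, applied to the chain
\[
k[t]/(t^{n+1})\;\subset\;R^G/(t^{n+1})R^G\;\subset\;R/(t^{n+1})R,
\]
then shows $R^G/(t^{n+1})R^G$ is a finitely generated $k[t]/(t^{n+1})$-algebra; in particular $R^G/(t)R^G$ is a finitely generated $k$-algebra. Lifting a finite set of generators $\bar a_1,\dots,\bar a_r\in R^G/(t)R^G$ to $a_1,\dots,a_r\in R^G$ and using $(t)$-adic completeness of $R^G$, a standard successive-approximation argument writes any $f\in R^G$ as a convergent restricted power series in the $a_i$ with coefficients in $k\tbrats$, proving that the continuous $k\tbrats$-homomorphism $k\tbrats\{y_1,\dots,y_r\}\to R^G$ sending $y_i\mapsto a_i$ is surjective. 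Thus $R^G$ is topologically finitely generated over $k\tbrats$, i.e.\ $\Spf R^G$ is of finite type over $\Df$. The main subtlety will be the topological finite-generation step: the purely algebraic Artin--Tate argument has to be combined with $(t)$-adic completeness to pass from generation modulo $t$ to topological generation of $R^G$.
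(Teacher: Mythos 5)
Your argument is correct and follows the paper's strategy in the first half almost verbatim: reduce to each truncation via Lemma \ref{lem:coarse-ind}, then invoke Corollary \ref{cor:ind-quotient} with $I=(t)$ to rewrite $\varinjlim\Spec(R/(t^{n})R)^{G}$ as $\Spf R^{G}$. Your additional check that $R^{G}$ is $(t)$-adically complete (via commuting $G$-invariants with $\varprojlim$ and then Lemma \ref{lem:Mittag-Leffler}) is a reasonable sanity check that the paper leaves implicit. The real divergence is in the finite-type addendum: the paper observes that the Mittag-Leffler surjection $(R/(t^{n})R)^{G}\twoheadrightarrow R^{G}/(t^{m})R^{G}$ (for $n\gg m$) exhibits each $R^{G}/(t^{m+1})R^{G}$ as a quotient of the finitely generated $k$-algebra $(R/(t^{n})R)^{G}$, and then simply cites the already-proved Lemma \ref{lem:formal-finite-type} to assemble the truncations into a formal stack of finite type. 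You instead apply Artin--Tate directly to $k[t]/(t^{n+1})\subset R^{G}/(t^{n+1})R^{G}\subset R/(t^{n+1})R$ and then run a successive-approximation argument to produce a surjection $k\tbrats\{y_{1},\dots,y_{r}\}\twoheadrightarrow R^{G}$ by hand. Both routes are valid; yours is more self-contained (it reproves a special case of Lemma \ref{lem:formal-finite-type} by writing down topological generators), while the paper's is shorter by leaning on that lemma. Your convergence check for $\sum t^{n}F_{n}(y)\in k\tbrats\{y\}$ and the use of $(t)$-adic completeness of $R^{G}$ to make the approximation converge are the right places to be careful, and they do go through.
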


\begin{proof}
The coarse moduli space of $[(\Spec R/(t^{n}))/G]$ is $\Spec(R/(t^{n}))^{G}$.
From \ref{lem:coarse-ind}, $\varinjlim\Spec(R/(t^{n}))^{G}$ is the
coarse moduli space of $[(\Spf R)/G]$. From \ref{cor:ind-quotient},
this is isomorphic to $\Spf R^{G}$. The first assertion has been
proved. If $\Spf R$ is of finite type over $\Df$, then since $(R/(t^{n}))^{G}\to R^{G}/(t^{m})$
is surjective for $n\gg m$, $(\Spf R^{G})_{m}=\Spec R^{G}/(t^{m+1})$
is of finite type over $k$. From \ref{lem:formal-finite-type}, the
second assertion follows.
\end{proof}
The following proposition says that every formal DM stack of finite
type over $\Df$ is locally a quotient stack.
\begin{prop}
\label{lem:formal-locally-quot}Let $\cX$ be a formal DM stack  of
finite type over $\Df$. Then the coarse moduli space $X$ of $\cX$
is a formal algebraic space  of finite type over $\Df$. Moreover
there exists an atlas $\coprod_{\gamma=1}^{l}V_{\gamma}\to X$ such
that for each $\gamma$, $V_{\gamma}$ is a formal affine scheme of
finite type over $\Df$ and $\cX\times_{X}V_{\gamma}\cong[W_{\gamma}/H_{\gamma}]$
for a formal affine scheme $W_{\gamma}$ with an action of a finite
group $H_{\gamma}$. In particular, we have a stabilizer-preserving
etale surjective morphism $\coprod_{\gamma=1}^{l}[W_{\gamma}/H_{\gamma}]\to\cX$.
\end{prop}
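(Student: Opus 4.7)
The plan is to filter $\cX \cong \varinjlim \cX_{n}$ by its finite-type truncations and to reduce every construction to level $n=0$ by lifting through the thickenings $\cX_n \hookrightarrow \cX_{n+1}$ via the invariance of the \'etale site (Lemma \ref{lem:etale-invariance}). For part (1), Keel-Mori's theorem supplies, for each $n$, a coarse moduli space $\cX_{n} \to X_{n}$ with $X_{n}$ a finite-type algebraic space over $\D_{n}$. Each closed immersion $\cX_{n} \hookrightarrow \cX_{n+1}$ descends to a closed immersion $X_{n} \hookrightarrow X_{n+1}$ (locally modeled on an inclusion of invariant subrings via Lemma \ref{lem:coarse-inv-sub}) that induces a bijection on underlying points, hence is a thickening. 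Therefore $X := \varinjlim X_{n}$ is a formal algebraic space, identified with the coarse moduli space of $\cX$ by Lemma \ref{lem:coarse-ind}. Finite type over $\Df$ is then immediate: any quasi-compact test $\cU \to \Df$ factors through some $\D_{n}$, whence $\cU \times_{\Df} X \cong \cU \times_{\D_{n}} X_{n}$ is of finite type over $\cU$.

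For parts (2) and (3), I would invoke the standard \'etale local structure theorem for finite-type DM stacks applied to $\cX_{0}$: around each geometric point with stabilizer $H$ there exists a stabilizer-preserving \'etale morphism $[W_{0}/H] \to \cX_{0}$ with $W_{0}$ an affine scheme. By quasi-compactness of $|\cX_{0}|$, finitely many such presentations $[W_{\gamma,0}/H_{\gamma}] \to \cX_{0}$ ($\gamma = 1,\dots,l$) jointly cover. The composite $W_{\gamma,0} \to \cX_{0}$, being a composition of an \'etale $H_{\gamma}$-torsor with an \'etale morphism, is \'etale, so by Lemma \ref{lem:etale-invariance} it lifts uniquely and compatibly to affine \'etale morphisms $W_{\gamma,n} \to \cX_{n}$ at every level. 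The $H_{\gamma}$-action, being encoded by a pair of morphisms $H_{\gamma} \times W_{\gamma,0} \rightrightarrows W_{\gamma,0}$ in the \'etale site of $\cX_{0}$, lifts uniquely to a compatible $H_{\gamma}$-action on $W_{\gamma,n}$ over $\cX_{n}$ by the same universality. Setting $W_{\gamma} := \varinjlim W_{\gamma,n}$, a formal affine scheme with $H_{\gamma}$-action of finite type over $\Df$, the induced morphisms $[W_{\gamma}/H_{\gamma}] \to \cX$ are representable, \'etale, stabilizer-preserving, and jointly surjective, which is the last assertion of the proposition.

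Finally, let $V_{\gamma}$ be the coarse moduli space of $[W_{\gamma}/H_{\gamma}]$, which by Lemma \ref{lem:coarse-inv-sub} is a formal affine scheme of finite type over $\Df$. The stabilizer-preserving property implies that the canonical comparison morphism $[W_{\gamma}/H_{\gamma}] \to \cX \times_{X} V_{\gamma}$ is an isomorphism---a classical fact for DM stacks, verifiable level-by-level and preserved under the colimit---and that $V_{\gamma} \to X$ is representable and \'etale (locally, near a point with stabilizer $G$, it becomes the quotient map $W'/G \to U/G$ attached to a $G$-equivariant fixed-point-reflecting \'etale morphism of affine $G$-schemes $W' \to U$, which is \'etale on quotients; consistency with the formation of $V_{\gamma}$ as coarse moduli is compatible with Lemma \ref{lem:coarse-flat-base-change}). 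Surjectivity of $\coprod V_{\gamma} \to X$ follows from the surjectivity of $\coprod [W_{\gamma}/H_{\gamma}] \to \cX$. The main obstacle is precisely this interlocking deformation argument: one must simultaneously lift affine schemes, their group actions, and the \'etale property through each thickening, and then confirm that the stabilizer-preserving quotient stacks $[W_{\gamma}/H_{\gamma}]$ descend to an honest \'etale atlas of the coarse moduli space $X$. Both steps rest on Lemma \ref{lem:etale-invariance} combined with the description of coarse moduli via invariant subrings in Lemma \ref{lem:coarse-inv-sub}.
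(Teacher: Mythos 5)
Your overall strategy for parts (2) and (3) — lifting the \'etale quotient presentation $[W_{\gamma,0}/H_\gamma]\to\cX_0$ and its group-action data through the thickenings $\cX_n\hookrightarrow\cX_{n+1}$ via Lemma \ref{lem:etale-invariance}, then taking colimits and forming coarse moduli spaces — is essentially the same route the paper takes, and that part of the argument is sound.

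The gap is in part (1). You assert that each closed immersion $\cX_n\hookrightarrow\cX_{n+1}$ descends to a \emph{closed immersion} $X_n\hookrightarrow X_{n+1}$ of coarse moduli spaces, ``locally modeled on an inclusion of invariant subrings.'' This is false in the wild case. Locally, $X_n\to X_{n+1}$ is $\Spec$ of the restriction map $\bigl(R/(t^{n+2})\bigr)^{G}\to\bigl(R/(t^{n+1})\bigr)^{G}$, and taking $G$-invariants does \emph{not} commute with quotients by $(t^{n+1})$ when $p$ divides $\sharp G$: the restriction map can fail to be surjective, so $X_n\to X_{n+1}$ is a finite universal bijection but need not be a closed immersion. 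Consequently $\varinjlim X_n$ is a priori only an ind-algebraic space, not a formal one, and the finite-type argument you give (which needs $X_{n+1}\times_{\D_{n+1}}\D_n\cong X_n$) also breaks. The paper handles exactly this: it invokes Lemma \ref{lem:Mittag-Leffler} to show that the projective system $\bigl(R/(t^{n+1})R\bigr)^{G}$ satisfies the Mittag-Leffler condition, introduces the scheme-theoretic images $Y_{n,s}\subset X_s$ of $X_n$ for $s\gg n$, shows these stabilize and that $Y_{n,s(n)}\hookrightarrow Y_{n+1,s(n+1)}$ really are thickenings with $Y_{n+1,s(n+1)}\times_{\D_{n+1}}\D_n\cong Y_{n,s(n)}$, and only then concludes $X\cong\varinjlim Y_{n,s(n)}$ is a formal algebraic space of finite type. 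This Mittag-Leffler step, resting on Lemma \ref{lem:Mittag-Leffler} (and its input Lemma \ref{lem:invariant-subring}), is the essential technical content your proof is missing.

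A secondary, smaller point: when you claim $[W_\gamma/H_\gamma]\xrightarrow{\sim}\cX\times_X V_\gamma$ and that $V_\gamma\to X$ is \'etale, these need verification at each finite level; the paper proves the Cartesian property by observing that the natural map $V_{n,\gamma}\to V_{n+1,\gamma}\times_{X_{n+1}}X_n$ is simultaneously \'etale and a universal homeomorphism, hence an isomorphism. Your appeal to ``a classical fact'' and Lemma \ref{lem:coarse-flat-base-change} gestures in the right direction but omits this check.
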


\begin{proof}
Let $\cX_{n}:=\cX\times_{D}D_{n}$ and $X_{n}$ their coarse moduli
spaces. From \ref{lem:coarse-ind}, the ind-algebraic space $X=\varinjlim X_{n}$
is the coarse moduli space of $\cX$. As is well-known (see \cite[Lem. 2.2.3]{MR1862797}),
there exists an etale cover $(V_{0,\gamma}\to X_{0})_{1\le\gamma\le l}$
such that for each $\gamma$, $V_{0,\gamma}$ is an affine scheme
of finite type over $k$, and $\cW_{0,\gamma}:=V_{0,\gamma}\times_{X_{0}}\cX_{0}$
is isomorphic to $[W_{0,\gamma}/H_{\gamma}]$ for some affine scheme
$W_{0,\gamma}$ and a finite constant group $H_{\gamma}$ acting on
it. The morphism $\cW_{0,\gamma}=[W_{0,\gamma}/H_{\gamma}]\to\cX_{0}$
is representable, etale, affine and stabilizer-preserving, because
$V_{0,\gamma}\to X_{0}$ is so. (Recall that we consider only separated
algebraic spaces. Therefore $V_{0,\gamma}\to X_{0}$ is automatically
affine.) We get the diagram
\[
W_{0,\gamma}\times H_{\gamma}\rightrightarrows W_{0,\gamma}\to\cW_{0,\gamma}\to\cX_{0}
\]
of representable etale affine morphisms such that $W_{0,\gamma}\times H_{\gamma}\rightrightarrows W_{0,\gamma}$
defines a groupoid in schemes and $\cW_{0,\gamma}$ is the associated
stack. Let $U\to\cX$ be an etale morphism with $U$ a formal affine
scheme. By pulling back by $U_{0}\to\cX_{0}$, we get a similar diagram
of affine schemes
\[
W_{0,\gamma,U_{0}}\times H_{\gamma}\rightrightarrows W_{0,\gamma,U_{0}}\to\cW_{0,\gamma,U_{0}}\to U_{0}.
\]
Note that $\cW_{0,\gamma,U_{0}}$ is a scheme in spite of its calligraphic
letter. From the invariance of small etale sites \cite[Th. 18.1.2]{MR0238860},
we get the corresponding diagram over each $U_{n}$,
\[
W_{n,\gamma,U_{n}}\times H_{\gamma}\rightrightarrows W_{n,\gamma,U_{n}}\to\cW_{n,\gamma,U_{n}}\to U_{n}.
\]
From \cite[tag 05YU]{stacks-project}, these are all affine schemes
and give quasi-coherent sheaves on the small etale site on $\cX_{n}$.
Taking their relative spectra, we get the diagram of etale affine
morphisms
\[
W_{n,\gamma}\times H_{\gamma}\rightrightarrows W_{n,\gamma}\to\cW_{n,\gamma}\to\cX_{n}
\]
such that $\cW_{n,\gamma}=[W_{n,\gamma}/H_{\gamma}]$. Since $\cW_{n,\gamma}\to\cX_{n}$
is etale and stabilizer preserving, if $V_{n,\gamma}$ denotes the
coarse moduli space of $\cW_{n,\gamma}$, then $V_{n,\gamma}\to X_{n}$
is etale and $\cW_{n,\gamma}\cong\cX_{n}\times_{X_{n}}V_{n,\gamma}$.
Moreover we have natural morphisms $V_{n,\gamma}\to V_{n+1,\gamma}$.
Let $W_{\gamma}:=\varinjlim W_{n,\gamma}$, $\cW_{\gamma}:=\varinjlim\cW_{n,\gamma}$
and $V_{\gamma}:=\varinjlim V_{n,\gamma}$. We have $\cW_{\gamma}\cong[W_{\gamma}/H_{\gamma}]\cong\cX\times_{X}V_{\gamma}$.
The $W_{\gamma}$ are formal affine schemes of finite type over $\Df$.

Morphisms $V_{n,\gamma}\to X_{n}$ are etale. The diagram
\[
\xymatrix{V_{n,\gamma}\ar[r]\ar[d] & V_{n+1,\gamma}\ar[d]\\
X_{n}\ar[r] & X_{n+1}
}
\]
is Cartesian. Indeed, since horizontal arrows are universal homeomorphisms
and vertical arrows are etale, the morphism $V_{n}\to V_{n+1,\gamma}\times_{X_{n+1}}X_{n}$
is etale and a universal homeomorphism. Thus it is an isomorphism.
From \cite[A.3]{Tonini:2017qr}, $V_{\gamma}\to X$ is representable
and etale. From \ref{lem:coarse-flat-base-change}, $V_{\gamma}$
is the coarse moduli space of $\cW_{\gamma}$. From \ref{lem:coarse-inv-sub},
$V_{\gamma}$ are formal affine schemes of finite type over $\Df$.

It remains to show that $X$ is a formal algebraic space. For $s\ge n$,
let $Y_{n,s}\subset X_{s}$ be the scheme-theoretic image of $X_{n}$.
From \ref{lem:Mittag-Leffler}, there exists a strictly increasing
function $s\colon\NN\to\NN$ such that for any $n$ and for any $s'\ge s(n)$,
the morphism $Y_{n,s(n)}\to Y_{n,s'}$ is an isomorphism. The natural
morphism $Y_{n,s(n)}\to Y_{n+1,s(n+1)}$ is a thickening and we have
$Y_{n+1,s(n+1)}\times_{\D_{n+1}}\D_{n}\cong Y_{n,s(n)}$. Morphisms
$X_{n}\to Y_{n,s(n)}$ gives a morphism $X\to\varinjlim Y_{n,s(n)}$
and morphisms $Y_{n,s(n)}\to X_{s(n)}$ gives a morphism $\varinjlim Y_{n,s(n)}\to X$.
These are inverses to each other. Thus $X\cong\varinjlim Y_{n,s(n)}$,
which is a formal algebraic space. From \ref{lem:formal-finite-type},
it is of finite type over $\Df$.
\end{proof}

\section{Galoisian group schemes\label{sec:Galoisian-group-schemes}}

In this section, we study moduli stacks of finite group schemes of
some class. In what follows, we often identify a finite group $G$
with the constant group scheme $\coprod_{g\in G}\Spec F$ over the
spectrum of a field $F$.
\begin{defn}
\label{def:Galoisian}A finite group $G$ is said to be \emph{Galoisian}
if there exist an algebraically closed field $K$ and a Galois extension
$L/K\llparenthesis t\rrparenthesis$ such that $G\cong\Gal(L/K\llparenthesis t\rrparenthesis)$.
We denote by $\GG$ a set of representatives of isomorphism classes
of Galoisian groups.

A finite etale group scheme $G$ over a scheme $S$ is said to be
\emph{Galoisian }if for every geometric point $s\colon\Spec K\to S$,
the fiber $G_{s}=G\times_{S}\Spec K$ is a Galoisian finite group.
For a Galoisian group $G$, a finite etale group scheme $G'\to S$
is said to be $G$\emph{-Galoisian }if every geometric fiber $G'_{s}$
is isomorphic to $G$, equivalently, if it is a twisted form of the
constant group scheme $G\times S\to S$.
\end{defn}

When $p=0$, Galoisian groups are nothing but cyclic groups. When
$p>0$, a Galoisian group is isomorphic to the semidirect product
$H\rtimes C$ of a $p$-group $H$ and a tame cyclic group $C$. For
these facts, see \cite[pages 67 and 68]{MR554237}. In particular,
a Galoisian group has a unique $p$-Sylow subgroup. Any subgroup of
a Galoisian group as well as a quotient group is again Galoisian.

\begin{defn}
We define a category fibered in groupoids $\cA\to\Aff$ as follows.
An object over $S\in\Aff$ is a Galoisian group scheme $H\to S$.
A morphism $(H\to S)\to(H'\to S')$ is the pair of a morphism $S\to S'$
and an isomorphism of group schemes $H\to H'\times_{S'}S$. We call
$\cA$ the \emph{moduli stack of Galoisian group schemes. }For an
isomorphism class $[G]$ of a Galoisian group $G$, we define $\cA_{[G]}\subset\cA$
to be the full subcategory of $G$-Galoisian group schemes.
\end{defn}

We have $\cA=\coprod_{G\in\GG}\cA_{[G]}$. There exists the universal
Galoisian group scheme $\cG$ over $\cA$. This is a category fibered
in sets over $\cA$ such that the fiber set over a Galoisian group
scheme $G\to S$ is the set of its sections $G(S)$. The morphism
$\cG\to\cA$ is representable and is finite and etale.
\begin{lem}
\label{lem:A-B(AutG)}For a Galoisian group $G$, we have $\cA_{[G]}\cong\B(\Aut G)$.
\end{lem}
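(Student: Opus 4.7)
The plan is to prove the equivalence $\cA_{[G]} \cong \B(\Aut G)$ by the standard principle that twisted forms of a fixed object are classified by torsors under its automorphism group. Explicitly, I will construct functors in both directions and check they are mutually quasi-inverse.

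First, I would construct a morphism $\Psi\colon \cA_{[G]}\to \B(\Aut G)$. Given an $S$-point of $\cA_{[G]}$, namely a $G$-Galoisian group scheme $H\to S$, set
\[
T_H := \ulIso_{S\text{-}\mathrm{grp}}(G\times S,\, H),
\]
the sheaf on $\Aff_S$ whose sections over $S'\to S$ are group-scheme isomorphisms $G\times S'\to H\times_S S'$. The constant group scheme $\Aut G\times S$ acts freely and transitively on $T_H$ by pre-composition on the source $G\times S$. By the $G$-Galoisian hypothesis there exists an étale cover $S'\to S$ over which $H\times_S S'\cong G\times S'$, whence $T_H|_{S'}\cong \Aut G\times S'$; hence $T_H$ is an $\Aut G$-torsor. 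Representability of $T_H$ by a finite étale $S$-scheme follows because $\ulHom_S(G\times S, H)$ is finite étale (both group schemes being finite étale over $S$) and $T_H$ is cut out inside it by closed-and-open conditions (being an isomorphism, being a group homomorphism). Functoriality in $S$ and in isomorphisms of $H$ is immediate, so $\Psi$ is defined.

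Next I would construct the inverse $\Phi\colon \B(\Aut G)\to \cA_{[G]}$ via the associated-bundle (contracted product) construction. For an $\Aut G$-torsor $T\to S$, define
\[
\Phi(T) := T\times^{\Aut G}(G\times S),
\]
the quotient of $T\times_S(G\times S)$ by the diagonal $\Aut G$-action (acting on $T$ on the right and on $G$ via the natural action). Since $\Aut G$ acts on $G\times S$ by group-scheme automorphisms, $\Phi(T)$ inherits a group-scheme structure over $S$. Étale-locally on $S$, $T$ trivialises and $\Phi(T)\cong G\times S$, so $\Phi(T)$ is finite étale and $G$-Galoisian, and in particular representable by a scheme by faithfully flat descent.

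The remaining task is to check that $\Phi\circ\Psi$ and $\Psi\circ\Phi$ are 2-isomorphic to the respective identity functors. For $\Psi\circ\Phi$, one has a canonical map
\[
T\longrightarrow \ulIso_{S\text{-}\mathrm{grp}}\bigl(G\times S,\, T\times^{\Aut G}(G\times S)\bigr),\qquad t\mapsto \bigl(g\mapsto [t,g]\bigr),
\]
which is an $\Aut G$-equivariant morphism of $\Aut G$-torsors and hence an isomorphism. For $\Phi\circ\Psi$, the evaluation map
\[
T_H\times^{\Aut G}(G\times S)\longrightarrow H,\qquad [\varphi,g]\mapsto \varphi(g),
\]
is a well-defined morphism of group schemes over $S$ which is an isomorphism étale-locally (where $T_H$ trivialises) and hence an isomorphism. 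Both constructions are visibly natural in $S$ and in isomorphisms, which upgrades these pointwise identifications to a 2-equivalence of stacks over $\Aff$.

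I do not foresee a genuine obstacle; the only point requiring care is the representability of $T_H$ and of $\Phi(T)$, but both follow from standard étale descent once one exhibits a trivialising cover, which is exactly the content of the $G$-Galoisian condition and of the torsor condition. The rest is the classical torsor/twist correspondence applied to finite étale group schemes.
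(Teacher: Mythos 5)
Your proof is correct and follows essentially the same route as the paper: define $\Psi$ by the $\ulIso$-sheaf with its natural $\Aut G$-action, define $\Phi$ by the contracted product $T\times^{\Aut G}(G\times S)$ (which is exactly the paper's $G_T/\Aut G$), and exhibit the two explicit natural isomorphisms $t\mapsto(g\mapsto[t,g])$ and $[\varphi,g]\mapsto\varphi(g)$ witnessing quasi-inverseness. You add a short justification of representability of $T_H$ and $\Phi(T)$ that the paper leaves implicit, but this is a minor elaboration of the same argument rather than a different approach.
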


\begin{proof}
We define a functor $\Phi\colon\B(\Aut G)\to\cA_{[G]}$; to an $\Aut G$-torosr
$T\to S$, we associate the quotient $G_{T}/\Aut G$ of $G_{T}:=G\times T$
by the diagonal action, which is a group scheme over $S$ and a twisted
form of the constant group scheme $G_{S}\to S$. Next we define a
functor $\Psi\colon\cA_{[G]}\to\B(\Aut G)$; to a $G$-Galoisian group
scheme $H\to S$, we associate an $\Aut G$-torsor $\ulIso_{S}(G_{S},H)$.
Here $\alpha\in\Aut(G)$ acts by $\alpha\cdot\phi:=\phi\circ\alpha^{-1}$.

Let us show that these functors are quasi-inverses to each other.
For an $\Aut G$-torosr $T\to S$, we have a natural isomorphism 
\[
u_{T\to S}\colon T\to\ulIso_{S}(G_{S},G_{T}/\Aut G),\,t\mapsto(g\mapsto\overline{(g,t)}).
\]
This is $\Aut G$-equivariant. Indeed, for $\alpha\in\Aut G$, the
above functor sends $\alpha\cdot t$ to 
\begin{align*}
(g\mapsto\overline{(g,\alpha\cdot t)}) & =(g\mapsto\overline{(\alpha^{-1}g,t)})\\
 & =\alpha\cdot(g\mapsto\overline{(g,t)}).
\end{align*}
Since any equivariant morphism of torsors is an isomorphism, $u_{T\to S}$
is an isomorphism of $\Aut G$-torsors and define a natural isomorphism
$u\colon\id_{\B(\Aut G)}\to\Psi\circ\Phi$.

To a $G$-Galoisian group scheme $H\to S$, we have a morphism
\[
G_{S}\times\ulIso_{S}(G_{S},H)\to H,\,(g,\phi)\mapsto\phi(g).
\]
This is $\Aut G$-invariant. Indeed, for $\alpha\in\Aut G$, 
\[
\alpha(g,\phi)=(\alpha(g),\phi\circ\alpha^{-1})\mapsto\phi\circ\alpha^{-1}(\alpha(g))=\phi(g).
\]
Thus we obtain a morphism 
\[
v_{H\to S}\colon(G_{S}\times\ulIso_{S}(G_{S},H))/\Aut G\to H.
\]
Etale locally on $S$, this is identified with the isomorphism 
\[
G_{S}\times\ulIso_{S}(G_{S,}G_{S})/\Aut G\to G_{S}.
\]
Therefore $v_{H\to S}$ is an isomorphism. We obtain a natural isomorphism
$v\colon\Phi\circ\Psi\to\id_{\cA_{[G]}}$.
\end{proof}
\begin{cor}
For a Galoisian group $G$, $\cA_{[G]}$ and $\cG_{[G]}:=\cG\times_{\cA}\cA_{[G]}$
are DM stacks etale, proper and quasi-finite over $k$.
\end{cor}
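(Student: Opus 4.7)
The plan is to leverage Lemma \ref{lem:A-B(AutG)} to reduce both claims to standard facts about classifying stacks of finite groups.

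First I would handle $\cA_{[G]}$. By Lemma \ref{lem:A-B(AutG)}, $\cA_{[G]} \cong \B(\Aut G)$. Since $G$ is a finite (Galoisian) group, $\Aut G$ is a finite group, hence a finite etale constant group scheme over $k$. The classifying stack $\B(H)$ of any finite etale group scheme $H$ over $k$ is a Deligne--Mumford stack, because the atlas $\Spec k \to \B(H)$ is representable and etale (its pullback along itself is $H$, which is finite etale over $k$). The same atlas shows that $\B(\Aut G) \to \Spec k$ is etale. It is quasi-finite since $|\B(\Aut G)|$ consists of a single point over any algebraically closed extension of $k$. It is proper because $\Aut G$ is finite: properness can be verified by the valuative criterion, and a torsor under a finite etale group scheme over the generic fiber of a DVR extends uniquely (up to unique isomorphism) to a torsor over the DVR after a finite etale base change, which is enough for properness of DM stacks.

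For $\cG_{[G]}$ I would use the description of $\cG \to \cA$ given just before Lemma \ref{lem:A-B(AutG)}: the universal Galoisian group scheme $\cG \to \cA$ is representable, finite and etale. Pulling back along $\cA_{[G]} \hookrightarrow \cA$, the projection $\cG_{[G]} \to \cA_{[G]}$ inherits these properties, so it is representable, finite, and etale. Composing with the structure morphism $\cA_{[G]} \to \Spec k$, which has just been shown to be etale, proper, and quasi-finite, we conclude that $\cG_{[G]} \to \Spec k$ is etale (composition of etale), proper (composition of proper and finite), and quasi-finite (composition of quasi-finite and finite). Being representable and etale over a DM stack, $\cG_{[G]}$ is itself a DM stack.

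I do not expect any serious obstacle here; the lemma does the real work and what remains is a collection of standard permanence properties for etale, proper, and quasi-finite morphisms of DM stacks. The only mildly subtle point is properness of $\B(\Aut G) \to \Spec k$, but this is a well-known fact: the classifying stack of a finite etale group scheme over a field is etale, proper, and (quasi-)finite.
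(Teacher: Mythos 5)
Your proposal is correct and follows essentially the same route as the paper: apply Lemma \ref{lem:A-B(AutG)} to identify $\cA_{[G]}$ with $\B(\Aut G)$, then use that $\cG\to\cA$ is representable, finite, and etale to transfer the properties to $\cG_{[G]}$ by base change and composition. The paper states this more tersely, leaving the standard facts about $\B(\Aut G)$ implicit, but the logical structure is identical.
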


\begin{proof}
From \ref{lem:A-B(AutG)}, $\cA_{[G]}$ is a DM stack etale, proper
and quasi-finite over $k$. The morphism $\cG\to\cA$ is representable,
etale and finite. Therefore $\cG_{[G]}$ is also a DM stack etale,
proper and quasi-finite over $k$.
\end{proof}

\section{Frobenius morphisms and ind-perfection\label{sec:Frobenius-morphisms}}

In this section, we assume $p>0$ and discuss Frobenius morphisms
and ind-perfection of DM stacks.

For an $\FF_{p}$-algebra $A$, the $p$-th power map $A\to A$, $f\mapsto f^{p}$
gives a scheme endomorphism $F_{S}\colon S\to S$ of the affine scheme
$S=\Spec A$. This is called the \emph{absolute Frobenius morphism
}of $S$. Let $\cX$ be a stack over $\FF_{p}$. For each $S\in\Aff_{\FF_{p}}$,
we have the pullback functor $F_{S}^{*}\colon\cX(S)\to\cX(S)$ of
groupoids. These functor give an endomorphism of $\cX$ over $\FF_{p}$,
which we call the \emph{absolute Frobenius morphism of $\cX$ }and
denote by $F_{\cX}$. Let $S$ be an affine scheme over $\FF_{p}$
and $\cX$ a stack over $S$ with the structure morphism $\pi\colon\cX\to S$.
We define the \emph{relative Frobenius morphism }$F_{\cX/S}$ to be
the morphism 
\[
(F_{\cX},\pi)\colon\cX\to\cX^{(1)}:=\cX\times_{\pi,S,F_{S}}S.
\]
We say that $\cX$ is \emph{prefect over $S$ }if $F_{\cX/S}$ is
an isomorphism.

For each $n\in\NN$, we also define the \emph{$n$-iterated relative
Frobenius morphism} $F_{\cX/S}^{n}$ of $\cX$ by 
\[
(F_{\cX}^{n},\pi)\colon\cX\to\cX^{(n)}:=\cX\times_{\pi,S,F_{S}^{n}}S.
\]
We define the \emph{ind-perfection }$\cX^{\iper}$ of $\cX$ to be
the limit $\varinjlim\cX^{(n)}$ as a category. This has a natural
structure of a stack over $S$ \cite[Appendix A]{Tonini:2017qr},
which is clearly perfect over $S$. We have the the expected universality:
\begin{lem}
For a morphism $f\colon\cY\to\cX$ of stacks over $S$ with $\cX$
perfect over $S$, there exists an $S$-morphism $\cY^{\iper}\to\cX$
unique up to 2-isomorphisms such that the composition $\cY\to\cY^{\iper}\to\cX$
is isomorphic to $f$.
\end{lem}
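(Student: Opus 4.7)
The plan is to invoke the functoriality of the base change construction $(-)^{(n)}$ together with the fact that perfectness makes the Frobenius tower essentially constant. More precisely, I would proceed as follows.

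First, for each $n\in\NN$, base change along the $n$-th Frobenius $F_S^n\colon S\to S$ gives a morphism $f^{(n)}\colon \cY^{(n)}\to\cX^{(n)}$, and these morphisms are strictly compatible with the transition maps of the two inductive systems (the transition map $\cY^{(n)}\to\cY^{(n+1)}$ is the relative Frobenius $F_{\cY^{(n)}/S}$, and likewise for $\cX$, and the relative Frobenius is a natural transformation). Passing to the colimit as fibered categories, as in \cite[Appendix A]{Tonini:2017qr}, one obtains an $S$-morphism $f^{\iper}\colon \cY^{\iper}\to\cX^{\iper}$.

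Second, since $\cX$ is perfect over $S$, the relative Frobenius $F_{\cX/S}\colon\cX\to\cX^{(1)}$ is an isomorphism. Base-changing repeatedly along $F_S$, the iterated relative Frobenius $F_{\cX^{(n)}/S}\colon \cX^{(n)}\to\cX^{(n+1)}$ is also an isomorphism for every $n$. Hence every transition map in the system defining $\cX^{\iper}=\varinjlim\cX^{(n)}$ is an isomorphism, so the canonical morphism $\cX=\cX^{(0)}\to\cX^{\iper}$ is itself an isomorphism of stacks over $S$. Composing its inverse with $f^{\iper}$ gives the desired morphism $g\colon \cY^{\iper}\to\cX$.

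Third, for existence of the factorization, I would check that the composition $\cY\to\cY^{\iper}\xrightarrow{g}\cX$ recovers $f$ up to 2-isomorphism: the first arrow is the structure map into the colimit at level $n=0$, and by construction the restriction of $f^{\iper}$ to $\cY^{(0)}=\cY$ is $f^{(0)}=f$ followed by the canonical identification $\cX^{(0)}\cong\cX$, whose composition with the inverse of $\cX\to\cX^{\iper}$ is the identity on $\cX$.

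Finally, for uniqueness, suppose $h\colon \cY^{\iper}\to\cX$ is another $S$-morphism with $h\circ(\cY\to\cY^{\iper})\simeq f$. By the colimit property, $h$ is determined up to 2-isomorphism by its compatible family of restrictions $h_n\colon \cY^{(n)}\to\cX$. The restriction $h_0$ is prescribed to be $f$, and since each transition $\cY^{(n)}\to\cY^{(n+1)}$ is (an iterate of) the relative Frobenius, the compatibility $h_{n+1}\circ F_{\cY^{(n)}/S}\simeq h_n$ combined with the fact that $\cX$ is perfect (so that the analogous Frobenius on $\cX$ is invertible) forces $h_n$ to be determined by $h_0=f$; this exactly matches $g|_{\cY^{(n)}}$. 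The only part that requires real care (and where I expect the main bookkeeping obstacle) is making the 2-categorical compatibilities explicit when passing between the colimit description of $\cY^{\iper}$ and an arbitrary target; this is essentially the content of the general universal property of 2-colimits recalled in \cite[Appendix A]{Tonini:2017qr}, which I would cite rather than reprove.
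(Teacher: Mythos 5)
Your proof is correct and takes essentially the same approach as the paper's: you realize the desired morphism as $f^{\iper}\colon\cY^{\iper}\to\cX^{\iper}$ composed with the inverse of $\cX\to\cX^{\iper}$, while the paper directly defines the morphism on objects by $y'\in\cY^{(n)}(T)\mapsto f^{(n)}(y')\in\cX^{(n)}(T)$ followed by a fixed chain of quasi-inverses down to $\cX$, which amounts to the same thing since perfectness makes $\cX\to\cX^{\iper}$ an isomorphism. Your uniqueness sketch is also sound (the inductive step $h_{n+1}\simeq F_{\cX/S}^{-1}\circ h_n^{(1)}$ is forced by naturality of the relative Frobenius), and in fact is more explicit than the paper, which only asserts existence and leaves the verification of the factorization and uniqueness implicit.
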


\begin{proof}
For $n\in\NN$, we fix a quasi-inverse $\cX^{(n+1)}\to\cX^{(n)}$
of $\cX^{(n)}\to\cX^{(n+1)}$ so that we get the composition 
\[
\cX^{(n)}\to\cX^{(n-1)}\to\cdots\to\cX^{(0)}=\cX.
\]
To $T\in\Aff_{S}$ and an object $y\in\cY^{\iper}(T)$ represented
by $y'\in\cY^{(n)}(T)$, we assign its image by $\cY^{(n)}(T)\xrightarrow{f}\cX^{(n)}(T)\xrightarrow{\sim}\cX(T)$.
The induced morphism $\cY^{\iper}\to\cX$ is the desired morphism.
\end{proof}
\begin{lem}
\label{lem:limit-iper}Let 
\[
\cX_{0}\to\cX_{1}\to\cdots
\]
be an inductive system of DM stacks of finite type such that every
transition map is representable and a finite universal homeomorphism.
Suppose that the limit $\cY:=\varinjlim\cX_{i}$ is perfect over $k$.
Then the natural morphism $\cX_{0}^{\iper}\to\cY$ is an isomorphism.
\end{lem}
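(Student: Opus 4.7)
The plan is to first establish the key fact that a representable finite universal homeomorphism $f\colon\cU\to\cV$ of DM stacks of finite type over $k$ induces an isomorphism $f^{\iper}\colon\cU^{\iper}\xrightarrow{\sim}\cV^{\iper}$. I would reduce this to an affine etale atlas. For $\Spec B\to\Spec A$ a finite universal homeomorphism of affine schemes of finite type over $k$, the ring map $A\to B$ is finite and radicial, so there exists $n$ with $b^{p^n}$ lying in the image of $A$ for every $b\in B$. This yields a $k$-morphism $g\colon\Spec A\to(\Spec B)^{(n)}$ such that $g\circ f=F^n_{\Spec B/k}$ and $f^{(n)}\circ g=F^n_{\Spec A/k}$. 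Both iterated Frobenii become identities after ind-perfection, so $g^{\iper}$ is a two-sided inverse of $f^{\iper}$.

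Applying this key fact to each transition map $\cX_i\to\cX_{i+1}$ gives isomorphisms $\cX_i^{\iper}\xrightarrow{\sim}\cX_{i+1}^{\iper}$, hence a canonical isomorphism $\cX_0^{\iper}\xrightarrow{\sim}\varinjlim_i\cX_i^{\iper}$. It thus suffices to construct an inverse $\cY\to\cX_0^{\iper}$. For this I use the universal property of the inductive limit $\cY=\varinjlim_i\cX_i$. Writing $\alpha_i\colon\cX_0\to\cX_i$ for the structure map, the key fact applied to $\alpha_i$ produces $h_i\colon\cX_i\to\cX_0^{(n_i)}$ with $h_i\circ\alpha_i=F^{n_i}_{\cX_0/k}$; composing with the canonical $\cX_0^{(n_i)}\to\cX_0^{\iper}$ gives $\bar h_i\colon\cX_i\to\cX_0^{\iper}$. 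Since $\cX_0^{\iper}$ is perfect and $\alpha_i^{\iper}$ is an isomorphism, the precomposition map $\Hom(\cX_i,\cX_0^{\iper})\xrightarrow{\sim}\Hom(\cX_0,\cX_0^{\iper})$ is a bijection, which uniquely determines $\bar h_i$ and forces the compatibility $\bar h_{i+1}\circ(\cX_i\to\cX_{i+1})=\bar h_i$. Assembling these gives a morphism $\cY\to\cX_0^{\iper}$, and the same universal-property argument shows it is a two-sided inverse of $\cX_0^{\iper}\to\cY$ by reducing the check to restrictions to $\cX_0$, where both compositions agree with the canonical map by construction.

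The main obstacle is verifying the key fact at the stack level, in particular showing that the affine-level inverse $g$ descends through an etale atlas of $\cV$. I expect this to be handled cleanly by the universal property of ind-perfection, which makes any locally constructed inverse unique on perfect targets and therefore automatically compatible with descent. Once the key fact is established, the remainder of the proof is formal manipulation of universal properties of ind-colimits and ind-perfection.
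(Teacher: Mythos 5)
Your proposal is correct but follows a genuinely different route from the paper's. You build the inverse of $\cX_0^{\iper}\to\cY$ by hand: you first show the transition maps induce isomorphisms $\cX_i^{\iper}\xrightarrow{\sim}\cX_{i+1}^{\iper}$, then, using the universal property of ind-perfection (morphisms to a perfect target factor uniquely), you assemble a compatible family $\bar h_i\colon\cX_i\to\cX_0^{\iper}$ into a morphism $\cY\to\cX_0^{\iper}$ and check both composites are the identity by restricting to $\cX_0$. The paper instead uses a double-colimit swap: $\varinjlim_i\cX_i^{\iper}\cong\varinjlim_i\varinjlim_n\cX_i^{(n)}\cong\varinjlim_n\varinjlim_i\cX_i^{(n)}\cong\varinjlim_n\cY^{(n)}\cong\cY$ (the last step using that $\cY$ is perfect), and separately shows $\varinjlim_i\cX_i^{\iper}\cong\cX_0^{\iper}$ by the same Frobenius-factorization observation you use, namely that for each $i$ there is an $m$ with $\cX_i\to\cX_i^{(m)}$ factoring through $\cX_{i+1}$. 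The paper's Fubini argument is shorter and never needs to construct an explicit inverse or invoke the universal property of ind-perfection; your argument is more hands-on and requires more bookkeeping with Hom-bijections, but it is equally valid and has the side benefit of isolating the reusable statement that any representable finite universal homeomorphism of DM stacks of finite type becomes an isomorphism after ind-perfection. One caution: the paper sidesteps the descent verification you flag as the ``main obstacle'' by phrasing the Frobenius factorization directly at the level of stacks ($\cX_i\to\cX_i^{(m)}$ factors through $\cX_{i+1}$), rather than first proving an affine statement and descending the inverse; adopting that phrasing would streamline your key fact.
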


\begin{proof}
Consider the inductive system $\cX_{i}^{(n)}$, $(i,n)\in\NN\times\NN$.
We have
\[
\varinjlim_{i}\cX_{i}^{\iper}\cong\varinjlim_{i}\varinjlim_{n}\cX_{i}^{(n)}\cong\varinjlim_{n}\varinjlim_{i}\cX_{i}^{(n)}\cong\varinjlim_{n}\cY^{(n)}.
\]
Since $\cY$ is perfect, $\cY^{(n)}\to\cY^{(n+1)}$ are all isomorphisms.
We have $\varinjlim_{n}\cY^{(n)}\cong\cY$. We claim that morphisms
$\cX_{i}^{\iper}\to\cX_{i+1}^{\iper}$ are isomorphisms. Indeed, for
each $i$, there exists $m\in\NN$ such that the morphism $\cX_{i}\to\cX_{i}^{(m)}$
factors through $\cX_{i+1}$. We obtain morphisms
\[
\cX_{i}\to\cX_{i+1}\to\cX_{i}^{(m)}\to\cX_{i+1}^{(m)}.
\]
Taking ind-perfection, we obtain the following commutative diagram.
\[
\xymatrix{\cX_{i}^{\iper}\ar[r]\ar@(ur,ul)[rr]^{\id} & \cX_{i+1}^{\iper}\ar[r]\ar@(dr,dl)[rr]_{\id} & \cX_{i}^{\iper}\ar[r] & \cX_{i+1}^{\iper}}
\]
This shows the claim. From the claim, we have $\varinjlim_{i}\cX_{i}^{\iper}\cong\cX_{0}^{\iper}$
and the lemma follows.
\end{proof}

\section{Formal torsors\label{sec:Formal-torsors}}

Based upon results in \cite{Tonini:2017qr}, in this section, we construct
moduli stacks of torsors over the punctured formal disk and a ``universal''
family of twisted formal disks.

\subsection{Moduli stack $\Delta$}

For an affine scheme $S=\Spec R$, we denote by $\D_{S}^{*}$ or $\D_{R}^{*}$
the affine scheme $\Spec R\tpars$. Here $R\tpars$ denotes the localization
$R\tbrats_{t}$ of $R\tbrats$ by $t$ (rather than the total quotient
ring of $R\tbrats$). Namely this is the ring of Laurent power series
with coefficients in $R$.
\begin{defn}
For a finite etale group scheme $G\to S$ over an affine scheme $S$,
we define a category fibered in groupoids $\Delta_{G}\to\Aff_{S}$
as follows; for an affine $S$-scheme $T$, the fiber category $\Delta_{G}(T)$
is the category of $G$-torsors over $\D_{T}^{*}$. For a finite group
$G$, we define $\Delta_{G}\to\Aff_{k}$ by regarding $G$ as a constant
group scheme over $k$.
\end{defn}

\begin{defn}
We define a fibered category $\Delta\to\cA$ as follows; for a Galoisian
group scheme $G\to S$, the fiber category $\Delta(G\to S)$ is $\Delta_{G}(S)$.
(Thus the whole category $\Delta$ is the category of pairs $(G\to S,P\to\D_{S}^{*})$
of a Galoisian group scheme $G\to S$ and a $G$-torsor $P\to\D_{S}^{*}$.)
For an isomorphism class $[G]$ of Galoisian groups, we define $\Delta_{[G]}:=\Delta\times_{\cA}\cA_{[G]}$.
\end{defn}

For a morphism $S\to\cA$ corresponding to a Galoisian group scheme
$G\to S$, we have $\Delta\times_{\cA}S\cong\Delta_{G}$. Note that
as a fibered category over $\Aff$, the scheme $S$ is $\Aff_{S}$
and the projection $\Delta\times_{\cA}S\to S=\Aff_{S}$ corresponds
to the underlying functor $\Delta_{G}\to\Aff_{S}$.

\subsection{Uniformization}
\begin{defn}
For a finite etale group scheme $G\to S$, a $G$-torsor $P\to\D_{S}^{*}$
and a geometric point $x\colon\Spec F\to S$, we call the induced
$G_{F}$-torsor 
\[
P_{x}:=P\times_{\D_{S}^{*}}\D_{F}^{*}\to\D_{F}^{*}
\]
the \emph{fiber over $x$}. We say that $P\to\D_{S}^{*}$ or $P$
is \emph{fiberwise connected }or \emph{has connected fibers }if for
every geometric point $x$ of $S$, $P_{x}$ is connected.
\end{defn}

\begin{rem}
The fiber defined above is not the fiber of $P\to S$ over $x$ in
the usual sense; $P_{x}$ is not the fiber product $P\times_{S}\Spec F$,
but the ``complete fiber product.''
\end{rem}

The following notions are taken from \cite{Formal-Torsors-II}, though
we restrict ourselves to fiberwise connected torsors.
\begin{defn}
\label{def:uniformizable}Let $G\to\Spec R$ be a Galoisian group
scheme and let $P=\Spec A\to\D_{R}^{*}$ be a fiberwise connected
$G$-torsor. We say that this is \emph{uniformizable} if there exists
an isomorphism $A\cong R\llparenthesis s\rrparenthesis$ such that
the map $R\llparenthesis t\rrparenthesis\to A\cong R\llparenthesis s\rrparenthesis$
sends $t$ to a series of the form $s^{\sharp G}u$ for $u\in R\llbracket s\rrbracket^{*}$.
We call such an isomorphism $A\cong R\llparenthesis s\rrparenthesis$
a \emph{uniformization} of $P$ or $A$.
\end{defn}

When this is the case, the image of $s$ in $A$ gives a uniformizer
of the discrete valuation field $A\otimes_{R\llparenthesis t\rrparenthesis}K\llparenthesis t\rrparenthesis$
for any point $\Spec K\to\Spec R$.
\begin{lem}
\label{lem:indep-O}With notation as above, if $R$ is reduced and
$A$ is uniformizable, then the image of $R\llbracket s\rrbracket\to A$
is independent of the choice of the uniformization $A\cong R\llparenthesis s\rrparenthesis$.
\end{lem}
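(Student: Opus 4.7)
My plan is to characterize the image of $R\sbrats$ in $A$, under any uniformization, as the integral closure of $R\tbrats$ in $A$. Since this characterization refers only to the $R\tpars$-algebra structure of $A$ and not to any choice of uniformization, the lemma will follow at once.

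Set $e := \sharp G$ and fix a uniformization $A \cong R\spars$. The relation $t = s^{e}u$ with $u \in R\sbrats^{*}$ gives $R\sbrats/(t) \cong R[s]/(s^{e})$, and a Nakayama-type argument then shows that $1, s, \dots, s^{e-1}$ form a free $R\tbrats$-basis of $R\sbrats$; in particular every element of $R\sbrats$ is integral over $R\tbrats$, which is the easy half. The substantive inclusion is the reverse: every $\alpha \in A$ integral over $R\tbrats$ must lie in $R\sbrats$. For this I would write $\alpha = \sum_{n \geq -N} a_{n} s^{n}$ and test each $a_{n}$ (for $n < 0$) against the residue field at an arbitrary prime $\mathfrak{p}$ of $R$. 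The coefficient-wise reduction yields a ring map $R\spars \to \kappa(\mathfrak{p})\spars$ compatible with $R\tpars \to \kappa(\mathfrak{p})\tpars$, carrying $R\tbrats$ into $\kappa(\mathfrak{p})\tbrats$, so the image of $\alpha$ is integral over $\kappa(\mathfrak{p})\tbrats$ inside $\kappa(\mathfrak{p})\spars$. But $\kappa(\mathfrak{p})\sbrats$ is a discrete valuation ring with fraction field $\kappa(\mathfrak{p})\spars$, hence integrally closed there, and it is finite over $\kappa(\mathfrak{p})\tbrats$; consequently $\kappa(\mathfrak{p})\sbrats$ is exactly the integral closure of $\kappa(\mathfrak{p})\tbrats$ in $\kappa(\mathfrak{p})\spars$. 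Thus the image of $\alpha$ lies in $\kappa(\mathfrak{p})\sbrats$, which forces $a_{n} \in \mathfrak{p}$ for each $n<0$. Ranging over all primes, $\bigcap_{\mathfrak{p}} \mathfrak{p} = 0$ by reducedness of $R$ then yields $a_{n} = 0$ for $n<0$, as required.

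The only subtle point is the fibered argument above, where the main bookkeeping is to verify the compatibility of the coefficient-wise reductions $R\spars \to \kappa(\mathfrak{p})\spars$ with the embeddings $t \mapsto s^{e}u$ upstairs and $t \mapsto s^{e}\bar{u}$ downstairs, together with preservation of integrality under these reductions. Once that is in hand, the characterization of $R\sbrats \subset A$ as the integral closure of $R\tbrats$ is manifestly intrinsic to $A$, so independence from the uniformization is immediate, and the hypothesis of reducedness is used exactly in the passage $\bigcap_{\mathfrak{p}} \mathfrak{p} = 0$ at the end.
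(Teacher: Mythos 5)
Your proof is correct, and it takes a genuinely different route from the paper's. The paper argues by direct comparison: given two uniformizers $s$ and $r$, it writes $r = su$ for $u \in A = R\llparenthesis s\rrparenthesis$, then reduces to each residue field to see that the image of $u$ is always a unit of $K\llbracket s\rrbracket$, and finally invokes reducedness of $R$ (i.e. $\bigcap_{\mathfrak{p}} \mathfrak{p} = 0$) to conclude $u \in R\llbracket s\rrbracket^{*}$, hence $R\llbracket s\rrbracket = R\llbracket r\rrbracket$. You instead prove the stronger intrinsic statement that the image of $R\llbracket s\rrbracket$ equals the integral closure of $R\llbracket t\rrbracket$ in $A$, which depends only on the $R\llparenthesis t\rrparenthesis$-algebra structure; independence of the uniformization is then automatic. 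Both arguments pivot on the same two facts — fiberwise analysis over all residue fields $\kappa(\mathfrak{p})$, and reducedness of $R$ to pass from "vanishes in every residue field" to "vanishes" — so neither is secretly cheaper. What your route buys is a clean characterization that would also streamline Lemma \ref{cor:subring-O} (where the paper descends the subring along an \'etale cover; with the integral-closure description, invariance under the cover is immediate). What the paper's route buys is brevity: it needs only the comparison $r = su$ and skips the "easy half" (freeness of $R\llbracket s\rrbracket$ over $R\llbracket t\rrbracket$ on $1, s, \dots, s^{e-1}$) that your argument must establish. One small point worth flagging if you were to write this out fully: the compatibility of coefficient-wise reduction $R\llparenthesis s\rrparenthesis \to \kappa(\mathfrak{p})\llparenthesis s\rrparenthesis$ with the structure map $t \mapsto s^{e}u$ should be checked on general power series, not just on $t$; it does hold because each coefficient of $\sum a_n (s^{e}u)^n$ is a finite polynomial expression in the $a_n$ and the coefficients of $u$, and reduction is a ring homomorphism, but this is precisely the "main bookkeeping" you correctly identified as the subtle step.
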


\begin{proof}
Let $A\cong R\llparenthesis r\rrparenthesis$ be another isomorphism
as in \ref{def:uniformizable}. We regard $R\llbracket s\rrbracket$
and $R\llbracket r\rrbracket$ as subrings of $A$ and write $r=su$,
$u\in A=R\llparenthesis s\rrparenthesis$. For any point $\Spec K\to\Spec R$,
the image of $u$ in $K\llparenthesis s\rrparenthesis=A\otimes_{R\llparenthesis t\rrparenthesis}K\llparenthesis t\rrparenthesis$
is a unit of $K\llbracket s\rrbracket$. This shows that $u\in R\llbracket s\rrbracket^{*}$,
since $R$ is reduced. Here we used the fact that for a reduced ring
$R$, an element $u\in R$ is zero (resp.\ a unit) if and only if
for every point $\Spec K\to\Spec R$, the image of $u$ in $K$ is
zero (resp.\ a unit). It follows that $r\in R\llbracket s\rrbracket$.
Similarly $s\in R\llbracket r\rrbracket$. Hence $R\llbracket s\rrbracket=R\llbracket r\rrbracket$.
\end{proof}
\begin{defn}
\label{def:integral-ring}With notation as above, when $R$ is reduced
and $A$ is uniformizable, we define a subring $O_{A}\subset A$ to
be the image of $R\llbracket s\rrbracket$ by a uniformization $A\cong R\llparenthesis s\rrparenthesis$.
We call it the \emph{integral ring }of $A$ or of the corresponding
torsor $\Spec A\to\D_{R}^{*}$.
\end{defn}

\begin{lem}
\label{lem:stable-G}Keeping the assumption, the integral ring $O_{A}$
is stable under the $G$-action on $A$.
\end{lem}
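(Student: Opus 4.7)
The plan is to reduce this to Lemma \ref{lem:indep-O} by showing that every $g\in G$ produces a second uniformization of $A$ whose integral ring is precisely $g(O_A)$.

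First I would fix $g\in G$ and set $s':=g(s)\in A$. Since $G$ acts on $A$ by $R\llparenthesis t\rrparenthesis$-algebra automorphisms (the torsor structure is over $\D_R^*$), the map $g\colon A\to A$ is an $R$-algebra isomorphism fixing $t$. Composing the given uniformization $\phi\colon A\xrightarrow{\sim}R\llparenthesis s\rrparenthesis$ with $g^{-1}$ yields an isomorphism $\phi\circ g^{-1}\colon A\xrightarrow{\sim}R\llparenthesis s\rrparenthesis$, which, relabeling the formal variable as $s'$, I view as an isomorphism $\psi\colon A\xrightarrow{\sim}R\llparenthesis s'\rrparenthesis$ sending the element $s'=g(s)\in A$ to the variable $s'$.

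Next I would check that $\psi$ is itself a uniformization in the sense of Definition \ref{def:uniformizable}. Applying $g$ to the relation $t=s^{\sharp G}u$ with $u\in R\llbracket s\rrbracket^{*}$, and using $g(t)=t$, gives
\[
t=g(s)^{\sharp G}\,g(u)=(s')^{\sharp G}\,g(u).
\]
Since $u$ lies in $\phi^{-1}(R\llbracket s\rrbracket)$, its image $g(u)$ lies in $\psi^{-1}(R\llbracket s'\rrbracket)$, and it is a unit there because $g$ is an automorphism and $u$ was a unit in $R\llbracket s\rrbracket$. Thus $\psi$ exhibits $A$ as uniformizable with the required shape $t=(s')^{\sharp G}\cdot(\text{unit})$.

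Finally, Lemma \ref{lem:indep-O} (which uses the reducedness of $R$) tells us that the integral subring of $A$ does not depend on the choice of uniformization. Hence the image of $R\llbracket s\rrbracket$ under $\phi^{-1}$ coincides with the image of $R\llbracket s'\rrbracket$ under $\psi^{-1}$. But the latter is, by construction, $g$ applied to the former; that is, $O_A=g(O_A)$. Since $g\in G$ was arbitrary, $O_A$ is $G$-stable. I do not anticipate a serious obstacle here: the only delicate point is confirming that $g(u)$ actually sits inside the new integral ring $R\llbracket s'\rrbracket$, and this follows formally from how $\psi$ was built out of $g$.
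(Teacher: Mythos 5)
Your proof is correct. The route is genuinely different from the paper's, though closely related: the paper directly re-runs the reducedness argument of Lemma~\ref{lem:indep-O} to show $g(s)=su$ with $u\in R\llbracket s\rrbracket^{*}$, hence $g(s)\in O_A$, and then leaves the reader to check that this forces $g(O_A)\subset O_A$ (power series in $g(s)=su$ converge $s$-adically and land in $R\llbracket s\rrbracket$), whence equality since $g$ has finite order. You instead observe that $\psi:=\phi\circ g^{-1}$ is itself a uniformization, so Lemma~\ref{lem:indep-O} applies as a black box and immediately gives $O_A=\psi^{-1}(R\llbracket s\rrbracket)=g(\phi^{-1}(R\llbracket s\rrbracket))=g(O_A)$. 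Your version reuses the statement of Lemma~\ref{lem:indep-O} rather than its proof, and avoids the convergence/finite-order step; on the other hand, your verification that $\psi$ is a uniformization is more elaborate than necessary --- since $g^{-1}$ fixes $t$, one has $\psi(t)=\phi(g^{-1}(t))=\phi(t)=s^{\sharp G}u$ directly, with no need to apply $g$ to the relation or to track the unit through a relabeling. Both arguments are sound; yours is arguably the more structured of the two.
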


\begin{proof}
Let $A\cong R\llparenthesis s\rrparenthesis$ be an isomorphism as
above. For $g\in G$ and for every point $\Spec K\to\Spec R$, the
image of $g(s)$ in $K\llparenthesis s\rrparenthesis$ is a uniformizer.
By the same argument as in the proof of \ref{lem:indep-O}, $g(s)$
is of the form $su$, $u\in R\llbracket s\rrbracket^{*}$. In particular,
$g(s)\in O_{A}$, which implies the lemma.
\end{proof}

\subsection{Ramification\label{subsec:Ramification}}

In this subsection, we assume $p>0$.

Let $G$ be a Galoisian group and let $N\subset H\subset G$ be a
subgroups such that $N$ is a normal subgroup of $H$. For a fiberwise
connected $G$-torsor $P\to\D_{S}^{*}$ and a geometric point $x\colon\Spec F\to S$,
we have an $H/N$-torsor $(P/N)_{x}\to(P/H)_{x}$ with connected source
and target. This corresponds to a Galois extension $L/K$ of complete
discrete valuation fields with the common residue field $F$ and its
Galois group $\Gal(L/K)$ is identified with $H/N$. The Galois group
is equipped with the (lower numbering) ramification filtration 
\[
\Gal(L/K)=\Gal(L/K)_{0}\supset\Gal(L/K)_{1}\supset\cdots
\]
such that $\Gal(L/K)_{i}=1$ for $i\gg0$ (for instance, see \cite[Ch. IV]{MR554237}).
When $\Gal(L/K)\cong H/N$ is a cyclic group of order $p$, there
exists a unique $i$ such that $\Gal(L/K)_{i}\ne\Gal(L/K)_{i+1}$.
This $i$ is called the \emph{ramification jump }of the $H/N$-torsor
$(P/N)_{x}\to(P/H)_{x}$. If we write $(P/H)_{x}=\Spec F\llparenthesis s\rrparenthesis$,
then from the Artin-Schreier theory, we can write 
\[
(P/N)_{x}=\Spec F\llparenthesis s\rrparenthesis[x]/(x^{p}-x-f)
\]
for $f\in F\llparenthesis s\rrparenthesis$ of the form
\[
f=\sum_{j>0,\,p\nmid j}f_{j}s^{-j}\quad(f_{j}\in F).
\]
The action of $G=\langle g\rangle$ is given by $g\colon x\mapsto x+1$.
This $f$ is uniquely determined and the ramification jump is equal
to $\max\{j\mid f_{j}\ne0\}$. In particular, the ramification jump
is prime to $p$. If two geometric points $x,x'$ have the same image
in $S$, then the ramification jumps of $(P/N)_{x}\to(P/H)_{x}$ and
$(P/N)_{x'}\to(P/H)_{x'}$ are equal. Thus we obtain a function $|S|\to\NN$.
\begin{defn}
\label{def:ramif-datum}Let $G$ be a Galoisian group. A \emph{ramification
datum }for $G$ is a function 
\[
\br\colon(H,N)\mapsto\br(H,N)\in\NN
\]
on pairs $(H,N)$ of a subgroup $H\subset G$ and a normal subgroup
$N\lhd H$ with $\sharp(H/N)=p$. We denote the set of ramification
data for $G$ by $\Ram(G)$.
\end{defn}

Let $\br$ be a ramification datum for $G$. For a subgroup $G'\subset G$,
restricting to pairs $(H,N)$ with $H\subset G'$, we obtain a ramification
datum on $G'$. When $G'$ is a normal subgroup, we also have the
induced ramification datum $\bs$ for $G/G'$; for $N\lhd H\subset G/G'$,
if $\tilde{N},\tilde{H}\subset G$ are their preimages, then $\bs(H,N):=\br(\tilde{H},\tilde{N})$.

\begin{defn}
When $G$ is a Galoisian group and $P\to\D_{F}^{*}$ is a connected
$G$-torsor with $F$ an algebraically closed field, we define the
ramification datum $\br_{P}$ so that $\br_{P}(H,N)$ is the ramification
jump of $P/N\to P/H$. We say that a fiberwise connected $G$-torsor
$P\to\D_{S}^{*}$ has \emph{constant ramification} if the map 
\[
|S|\to\Ram(G),\,x\mapsto\br_{P_{x}}
\]
is constant. When $G\to S$ is a Galoisian group scheme, we say that
a $G$-torsor $P\to\D_{S}^{*}$ has \emph{locally constant ramification
}if there exists an etale cover $(S_{i}\to S)_{i\in I}$ such that
for each $i$, $G\times_{S}S_{i}$ is constant and $P_{S_{i}}\to\D_{S_{i}}^{*}$
has constant ramification.
\end{defn}

For $n\in\NN$ and $S\in\Aff$, let $S^{(-n)}$ be the scheme $S$
regarded as a $k$-scheme by the morphism 
\[
S\to\Spec k\xrightarrow{(F_{\Spec k})^{n}}\Spec k.
\]
The morphism 
\[
S^{(-n)}=S\xrightarrow{(F_{S})^{n}}S
\]
is a $k$-morphism with respect to the $k$-scheme structure $S^{(-n)}$.
When $k$ is perfect, we have canonical $k$-isomorphisms $(S^{(-n)})^{(n)}\cong(S^{(n)})^{(-n)}\cong S$.

\begin{prop}
\label{prop:exist-model}Let $G$ be a $p$-group.
\begin{enumerate}
\item Let $\br\in\Ram(G)$ and let $P\to\D_{S}^{*}$ be a $G$-torsor. Let
\[
C:=\{[x]\in|S|\mid P_{x}\text{ is connected and }\br_{P_{x}}=\br\}.
\]
Then $C$ is a locally closed subset of $|S|$.
\item Let $P\to\D_{S}^{*}$ be a fiberwise connected $G$-torsor having
constant ramification. Suppose that $S$ is reduced. For $n\gg0$,
the induced $G$-torsor $P_{S^{(-n)}}\to\D_{S^{(-n)}}^{*}$ is uniformizable.
\end{enumerate}
\end{prop}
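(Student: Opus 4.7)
My plan is to prove both parts simultaneously by induction on $|G|$. The key device, in either part, is to fix a normal subgroup $N\lhd G$ of index $p$ (which exists since $G$ is a nontrivial $p$-group) and split the analysis along the tower $P\to Q\to\D_{S}^{*}$ with $Q:=P/N$: the $\ZZ/p\ZZ$-torsor $Q$ carries the ramification data at pairs $(H',N')$ with $N\subset N'$, including the pair $(G,N)$ itself, while the $N$-torsor $P\to Q$ carries the data at pairs with $H\subset N$; fiberwise connectedness of $P$ likewise decomposes as connectedness of $Q$ plus connectedness of $P\to Q$. The crucial point is that, after applying part~(2) to $Q$, one may identify $Q$ (up to a Frobenius twist of the base) with a formal punctured disk, so that $P\to Q$ becomes an $N$-torsor over a formal punctured disk and the inductive hypothesis of part~(1) or~(2) for the smaller group $N$ applies.

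For the base case $|G|=p$, I would work \'etale-locally on $S$ so that $G=\ZZ/p\ZZ$ is constant; the torsor is then given by an Artin--Schreier equation $y^{p}-y=f$ for some $f\in R\tpars$, well-defined modulo $f\sim f+h^{p}-h$. At each geometric point $x$, $\bar f\in F\tpars$ has a unique normal form $\sum_{j>0,\,p\nmid j}\bar f_{j}s^{-j}$; connectedness is the non-vanishing of this normal form and the jump equals $\max\{j:\bar f_{j}\neq 0\}$. For part~(1), after replacing $S$ by $S_{\red}$ (which does not change $|S|$ or the geometric fibers) and passing to $S^{(-n)}$ for $n$ large enough that the Artin--Schreier reductions can be performed universally along the family, the locus ``jump $=r$ and connected'' becomes the intersection of a closed condition (vanishing of $\bar f_{j}$ for all $j>r$) with an open one (non-vanishing of $\bar f_{r}$), hence is locally closed in $|S^{(-n)}|=|S|$. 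For part~(2), assuming $S$ reduced and $r$ constant, the leading coefficient $c$ of $f$ in normal form is pointwise a unit and hence a unit in $R$ by reducedness (in the style of \ref{lem:indep-O}); an explicit change of variable $y\mapsto g(s)$ for a suitable Laurent polynomial $g$ in $s^{-1}$---chosen so that $g^{p}-g$ matches $f$ up to a unit factor, taking advantage of $\gcd(r,p)=1$ and, if necessary, of the $p^{n}$-th power form of coefficients provided by the Frobenius twist---realizes the (possibly twisted) ring as $R^{(-n)}\llparenthesis s\rrparenthesis$ with $t=s^{p}u$ for a unit $u$, by direct Laurent-series manipulation.

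For the inductive step, fix $N\lhd G$ of index $p$. For part~(1), I apply the base case of~(1) to $Q=P/N$ to isolate the locally closed locus $C'\subset|S|$ where $Q$ is connected with the prescribed jump $\br(G,N)$. Endowing $C'$ with its reduced subscheme structure and applying the base case of~(2), I uniformize $Q_{C'^{(-n)}}\cong\D_{T}^{*}$ for suitable $n$ and $T:=C'^{(-n)}$. Now $P\to Q$ is an $N$-torsor over $\D_{T}^{*}$, to which the inductive hypothesis of~(1) for $N$ applies and cuts out a further locally closed subset of $|T|=|C'|$; its intersection with $C'$ is $C$. For part~(2), the same uniformization of $Q$ turns $P\to Q$ into a fiberwise-connected $N$-torsor over a formal punctured disk with constant ramification over a reduced base; the inductive hypothesis of~(2) uniformizes it after a further Frobenius twist, and composing the two uniformization relations $t=s^{p}u$ and $s=w^{|N|}u'$ yields $t=w^{|G|}u''$ with $|G|=p\cdot|N|$.

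I expect the main obstacle to lie in the explicit construction of the uniformizer in the base case of~(2), that is, producing the change of variable for an Artin--Schreier cover uniformly along a family, rather than merely at each geometric point. The coefficients manipulated must be genuine units in $R^{(-n)}$, which is where reducedness of $S$ is essential, and one must ensure that the twist level $n$ is large enough to absorb every $p$-th root arising in the substitution (finitely many, thanks to constant ramification). Verifying that the resulting change of variable gives an honest ring isomorphism to $R^{(-n)}\llparenthesis s\rrparenthesis$ with $t$ of the prescribed form, not only pointwise but as an identity of formal power series rings, is the most delicate step.
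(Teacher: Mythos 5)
Your plan is structurally sound and would produce a correct proof, but it takes a decomposition dual to the paper's. You fix a normal subgroup $N\lhd G$ of \emph{index} $p$, handle the $\ZZ/p$-torsor $Q=P/N$ by the base case, and then feed the $N$-torsor $P\to Q$ into the inductive hypothesis for the smaller group $N$. The paper instead fixes $N\lhd G$ of \emph{order} $p$, applies the inductive hypothesis to the $G/N$-torsor $P/N\to\D_S^*$, and then, after uniformizing $P/N$, treats the $\ZZ/p$-torsor $P\to P/N$ by the base case. Both reductions are legitimate for a $p$-group, and the bookkeeping for composing uniformizations ($t=s^p u$, $s=w^{|N|}u'$, hence $t=w^{|G|}u''$) is the same. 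The devices in your base case --- reducing to constant $G$ \'etale locally, using Artin--Schreier normal form, Frobenius-twisting the base to extract $p$-th roots, invoking reducedness to promote pointwise nonvanishing to an honest unit --- are exactly the paper's.

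Two places where your sketch is looser than the paper, and worth tightening. First, in the base case you lean on ``$f$ has a unique normal form at each geometric point,'' but what is actually needed is a \emph{family-wide} normal form with coefficients in $R$; the paper gets this from the specific Laurent-polynomial representative supplied by \cite[Th.~4.13]{Tonini:2017qr}, and the Frobenius twist then makes those coefficients $p^m$-th powers uniformly. Second, your construction of the uniformizer in base case~(2) --- finding a Laurent polynomial $g(s)$ with $g^p-g$ ``matching $f$ up to a unit factor'' --- is vaguer and harder to carry out than the paper's: there one sets $s:=y^a t^b$ where $-aj_0+bp=1$ (so $\ord s = 1$ on every fiber), and then checks directly, using reducedness, that power series in $s$ converge in $A$, which gives the isomorphism $A\cong R'\llparenthesis s\rrparenthesis$ without ever solving the Artin--Schreier equation for $y$ in terms of $s$. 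You have correctly flagged this as the delicate step; I'd recommend adopting the paper's $s=y^a t^b$ device rather than parametrizing $y$ directly. Finally, note that the paper's wording ``for every normal subgroup $N$'' signals that one must range over such $N$ to pin down the ramification datum at all pairs $(H,N')$, not just those nested above a single $N$ or inside it; your write-up fixes one $N$ and should be amended to the same effect.
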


\begin{proof}
We first prove both assertions in the case $G=\ZZ/p$. Write $S=\Spec R$.
There exists a Laurent polynomial of the form
\[
f=f_{j}t^{-jp^{m}}+f_{j-1}t^{-(j-1)p^{m}}+\cdots+f_{1}t^{-p^{m}}+f_{0}\quad(f_{i}\in R)
\]
such that for $i>0$ with $p\mid i$, we have $f_{i}=0$ and $P=\Spec R\llparenthesis t\rrparenthesis[x]/(x^{p}-x+f)$
(see \cite[Th. 4.13]{Tonini:2017qr} and the part of its proof concerning
the essential surjectivity). Consider the iterated Frobenius $S^{(-m)}=\Spec R^{(-m)}\to S=\Spec R$
with the corresponding map $\phi\colon R\to R^{(-m)}$. For each $i$,
there exists $g_{i}\in R^{(-m)}$ such that $(g_{i})^{p^{m}}=\phi(f_{i})$.
From \cite[Lem. 4.9]{Tonini:2017qr}, the torsor $P_{S^{(-m)}}$ is
then isomorphic to the torsor associated to 
\[
g=g_{j}t^{-j}+g_{j-1}t^{-(j-1)}+\cdots+g_{0}.
\]
For $j_{0}>0$, the locus in $S^{(-m)}$ of connected fibers and ramification
jump $j_{0}$ is 
\[
\left(\bigcap_{j>j_{0}}V(g_{j})\right)\setminus V(g_{j_{0}}).
\]
This shows (1) in the case $G=\ZZ/p$. Note that the locus of non-connected
fibers is $\bigcap_{j>0}V(g_{j})$. To show (2) in this case, suppose
that $S$ is reduced and that $P$ has constant ramification jump
$j_{0}$. Then for every $j>j_{0}$, $V(g_{j})=S^{(-m)}$, thus $g_{j}=0$.
Since $V(g_{j_{0}})=\emptyset$, $g_{j_{0}}$ is invertible. Since
$j_{0}$ is prime to $p$, there exist positive integers $a,b$ such
that $-aj+bp=1$. Let us write $S^{(-m)}=\Spec R'$ and $P_{S^{(-m)}}=\Spec A=\Spec R'\llparenthesis t\rrparenthesis[y]/(y^{p}-y+g)$
and let $s:=y^{a}t^{b}\in A$. For each point $r\colon\Spec K\to S^{(-m)}$,
the elements $s_{r},y_{r}\in A_{r}$ induced from $s,y$ respectively
have valuation $1$ and $-j_{0}$ with respect to the normalized valuation
of $A_{r}$. We have 
\[
R'\llparenthesis t\rrparenthesis[y]/(y^{p}-y+g)=\bigoplus_{l=0}^{p-1}R'\llparenthesis t\rrparenthesis y^{l}
\]
and each $s^{i}$ is uniquely written as $s^{i}=\sum_{l=0}^{p-1}t_{i,l}y^{l}$,
$t_{i,l}\in R'\llparenthesis t\rrparenthesis$. For each point $r$,
$(t_{i,l})_{r}$ has valuation $\ge i-lj_{0}$. Since $R'$ is reduced,
we have $(t_{i,l})\in t^{i-lj_{0}}\cdot R'\llbracket t\rrbracket$.
It follows that any power series $\sum_{i\ge0}a_{i}s^{i}$, $a_{i}\in R'$
in the variable $s$ converges. It follows that there exists a unique
map $\phi\colon R'\llbracket u\rrbracket\to A$ sending $u$ to $s$.
Thus we may think of the image of $\phi$ as the power series ring
$R\llbracket s\rrbracket$ with the variable $s$. Clearly $R'\spars\cong R'\llbracket s\rrbracket\otimes_{R'\llbracket t\rrbracket}R'\llparenthesis t\rrparenthesis\cong A$.
The image of $t$ in $A$ is written as $us^{p}$, $u\in R\llparenthesis s\rrparenthesis$
such that at each point $r\colon\Spec K\to S$, the image of $u$,
$u_{r}\in K\llparenthesis s\rrparenthesis$ is a unit of $K\llbracket s\rrbracket$.
This means that $u\in R\llbracket s\rrbracket^{*}$. Thus the isomorphism
$R'\spars\cong A$ is a uniformization. Assertion (2) holds when $G=\ZZ/p$.

Next we prove the general case by induction. Let $n\in\NN$. We assume
that both assertions hold if $\sharp G<n$ and will prove the case
$\sharp G=n$. Let $P\to\D_{S}^{*}$ be a $G$-torsor. For every normal
subgroup $N\lhd G$ of order $p$, the locus where the $G/N$-torsor
$P/N\to\D_{S}^{*}$ has connected fibers and the ramification datum
induced from $\br$ is a locally closed subset, say $C\subset S$.
We give it the reduced structure. For any affine open $U\subset C$
and for $m\gg0$, $(P/N)_{U^{(-m)}}$ is uniformizable. From the case
$G=\ZZ/p$, the locus where the $N$-torsor $P_{U^{(-m)}}\to(P/N)_{U^{(-m)}}=\D_{S^{(-m)}}^{*}$
has connected fibers and the ramification jump induced from $\br$
is locally closed. This shows assertion (1). If $P$ has connected
fibers and constant ramification and if $S$ is reduced, then from
the assumption of induction, we have $(P/N)_{S^{(-m)}}$ is uniformizable
for $m\gg0$. The $N$-torsor $P_{S^{(-m)}}\to(P/N)_{S^{(-m)}}=\D_{S^{(-m)}}^{*}$
also has constant ramification. Thus, from assertion (2) for the case
of $\ZZ/p$, for $n\gg m$, the $N$-torsor $P_{S^{(-n)}}\to(P/N)_{S^{(-n)}}=\D_{S^{(-n)}}^{*}$
is uniformizable, which implies assertion (2) for a general $p$-group.
\end{proof}
\begin{defn}
\label{def:Lambda}We define $\Lambda$ (resp.\ $\Lambda_{[G]}$,
$\Lambda_{G}$) to be the full substack of $\Delta$ (resp.\ $\Delta_{[G]}$,
$\Delta_{G}$) of fiberwise connected torsors with locally constant
ramification. When $G$ is constant and $\br$ is a ramification datum
for $G$, we define $\Lambda_{G}^{\br}$ to be the full subcategory
of $\Lambda_{G}$ of torsors with ramification datum $\br$.
\end{defn}

For a Galoisian group $G$, we have $\Lambda_{[G]}=\Lambda\times_{\cA}\cA_{[G]}$
and $\Lambda_{G}=\coprod_{\br\in\Ram(G)}\Lambda_{G}^{\br}$. For a
Galoisian group scheme $G\to S$ corresponding to $S\to\cA$, we have
$\Lambda_{G}=\Lambda\times_{\cA}S$.
\begin{defn}
For a ramification datum $\br$ for a Galoisian group $G$, let $[\br]$
be its $\Aut(G)$-orbit. We define $\Lambda_{[G]}^{[\br]}$ to be
the essential image of the functor $\coprod_{\bs\in[\br]}\Lambda_{G}^{\bs}\to\Lambda_{[G]}$.
\end{defn}

For $\Spec k\to\cA_{[G]}$ corresponding to the constant group $G$,
we have 
\[
\Lambda_{[G]}^{[\br]}\times_{\cA_{[G]}}\Spec k\cong\coprod_{\bs\in[\br]}\Lambda_{G}^{\bs}.
\]

\begin{lem}
\label{lem:Lambda-perfect}For a Galoisian group $G$ and a ramification
datum $\br$, the stacks $\Delta_{G}$ and $\Lambda_{G}^{\br}$ are
perfect over $k$.
\end{lem}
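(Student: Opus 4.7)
The plan is to exploit the fact that the Frobenius morphism is a universal homeomorphism, combined with the invariance of the small \'etale site under universal homeomorphisms.

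For $\Delta_G$, I would check that the relative Frobenius $F_{\Delta_G/k}\colon\Delta_G\to\Delta_G^{(1)}$ is an isomorphism on $T$-points for each $T\in\Aff_k$. Unwinding the fiber product definition, $\Delta_G^{(1)}(T)$ is naturally equivalent to $\Delta_G(T^{(1)})$, where $T^{(1)}$ denotes $T$ with its $k$-algebra structure twisted by $F_k$; under this identification, $F_{\Delta_G/k}(T)$ becomes the pullback functor on $G$-torsors along the morphism $\D_T^*\to\D_{T^{(1)}}^*$ obtained by base-changing the relative Frobenius $F_{T/k}\colon T\to T^{(1)}$. Since $F_{T/k}$ is a universal homeomorphism (as the base change of the absolute Frobenius on $\Spec k$), so is $\D_T^*\to\D_{T^{(1)}}^*$. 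By Grothendieck's theorem that universal homeomorphisms induce equivalences of small \'etale sites, the pullback functor is an equivalence of the groupoids of $G$-torsors, so $F_{\Delta_G/k}(T)$ is an equivalence. Being functorial in $T$, this gives the required isomorphism of stacks.

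For $\Lambda_G^{\br}$, the same argument applies once we verify that its defining conditions are preserved by the equivalence above. Both fiberwise connectedness and having constant ramification datum $\br$ are invariants of the geometric fibers of a torsor. Since a universal homeomorphism is a bijection on geometric points inducing purely inseparable extensions of residue fields, geometric fibers are identified (as torsors over $\D_F^*$ for $F$ algebraically closed), and both conditions are preserved. Hence the equivalence $F_{\Delta_G/k}(T)$ restricts to an equivalence of $\Lambda_G^{\br}(T)$ with $(\Lambda_G^{\br})^{(1)}(T)$.

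The main technical obstacle will be the careful identification of the stack-theoretic relative Frobenius $F_{\Delta_G/k}(T)$ with the concrete pullback of torsors, which requires navigating the fiber-product definition of $\Delta_G^{(1)}$ together with the effect of Frobenius twisting on the $k$-algebra structure of $T$, especially in the case of an imperfect base field $k$. Once this identification is in place, the conclusion follows directly from standard facts about universal homeomorphisms.
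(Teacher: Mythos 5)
Your overall strategy is correct and reproduces the mechanism behind \cite[Prop. 4.6]{Tonini:2017qr}, which the paper's proof simply cites after reducing to $k=\FF_p$: realize $F_{\Delta_G/k}$ on $T$-points as pullback along a universal homeomorphism of $\D_T^*$, invoke topological invariance of the small \'etale site, and observe that fiberwise connectedness and the ramification datum depend only on geometric fibers. Working directly over arbitrary $k$ as you do is fine; the paper's reduction to $\FF_p$ is unnecessary for this line of argument and only serves to match the hypotheses of the cited reference.

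Two of your intermediate justifications are incorrect as written, though the facts they are meant to support are true, so the proof is repairable rather than broken. First, the Frobenius morphism out of $T$ is \emph{not} the base change of the absolute Frobenius on $\Spec k$: that base change is the projection $T\times_{\Spec k,F_k}\Spec k\to T$, not a morphism with source $T$. (You also write $T^{(1)}$ for $T$ with its $k$-structure twisted by $F_k$, which is the paper's $T^{(-1)}$; this agrees with the paper's Frobenius twist $T^{(1)}=T\times_{k,F_k}k$ only when $k$ is perfect, so the overloaded notation invites confusion about which morphism ``relative Frobenius'' names. The $k$-morphism you actually want is the absolute Frobenius $F_T\colon T\to T^{(-1)}$.) Second, $\D_T^*=\Spec A\tpars$ is not a base change of $T$, since $A\tpars\neq A\otimes_k k\tpars$; the map $\D_T^*\to\D_{T^{(-1)}}^*$ is produced by the functoriality of $T\mapsto\D_T^*$, not by base change, and concretely it is the coefficient Frobenius $\phi\colon A\tpars\to A\tpars$, $\sum a_it^i\mapsto\sum a_i^pt^i$. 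That $\phi$ is a universal homeomorphism therefore has to be checked directly: it is integral because $f^p\in\phi(A\tpars)$ for every $f$; surjective on $\Spec$ because $\ker\phi$ consists of nilpotents (if every $a_i^p=0$ then $(\sum a_it^i)^p=0$); and radicial because composing $\phi$ with the $t$-Frobenius $t\mapsto t^p$ gives the absolute Frobenius of $A\tpars$, which is radicial. With these corrections the remainder of your argument goes through as you outline.
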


\begin{proof}
If we prove the lemma when $k=\FF_{p}$, then the general case follows
by the base change along $\Spec k\to\Spec\FF_{p}$. Thus we may suppose
that $k$ is perfect. For $S\in\Aff$, the Frobenius morphism 
\[
\Delta_{G}(S)\to\Delta_{G}(S^{(-1)})=\Delta_{G}^{(1)}(S)
\]
is an equivalence \cite[Prop. 4.6]{Tonini:2017qr}. By this equivalence,
the property of having connected fibers is preserved. The ramification
data are also preserved. These prove the lemma. 
\end{proof}
\begin{lem}
\label{lem:Lambda-r}For a Galoisian group $G$ and a ramification
datum $\br$ for $G$, the stack $\Lambda_{G}^{\br}$ is isomorphic
to the ind-perfection $\cX^{\iper}$ of a reduced DM stack $\cX$
of finite type. The same is true for $\Lambda_{[G]}^{[\br]}$.
\end{lem}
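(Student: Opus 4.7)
The strategy is to realize $\Lambda_G^{\br}$ as the inductive limit of a chain of Frobenius twists of a single reduced DM stack of finite type parameterizing ``uniformized'' torsors, and then to apply Lemma \ref{lem:limit-iper} together with Lemma \ref{lem:Lambda-perfect}.

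Write $G = H \rtimes C$ with $H$ the unique $p$-Sylow subgroup and $C$ tame cyclic. First I would construct a reduced affine scheme $\Pi_{\br}$ of finite type parameterizing uniformizable $G$-torsors with ramification datum $\br$ equipped with a uniformization. Choosing a normal series $1 = H_0 \lhd H_1 \lhd \cdots \lhd H_r = H$ with successive quotients of order $p$, such a uniformized torsor is determined by (a) the Kummer exponent giving the tame $C$-cover, and (b) for each stage of the $H$-tower, an Artin-Schreier polynomial $y_i^p - y_i + f_i$ with $f_i = \sum_{p \nmid j,\, j \le j_0(i)} f_{i,j}\, s_i^{-j}$ in the uniformizer $s_i$ of the previous stage, where $j_0(i)$ is the ramification jump prescribed by $\br$ and the leading coefficient $f_{i,j_0(i)}$ is invertible. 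These constraints cut out an open subscheme $\Pi_{\br}$ of an affine space over $k$, reduced and of finite type. Dividing by the natural étale groupoid of changes of uniformizer and Artin-Schreier gauge, one obtains a reduced DM stack $\cX_{\br}$ of finite type with a natural morphism $\cX_{\br} \to \Lambda_G^{\br}$ whose essential image consists exactly of uniformizable torsors.

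Next I would form the inductive system $\cX_{\br}^{(0)} \to \cX_{\br}^{(-1)} \to \cX_{\br}^{(-2)} \to \cdots$, where each transition is obtained by base change along an iterate of the absolute Frobenius of $\Spec k$ and is therefore representable, finite, and a universal homeomorphism (being a relative Frobenius of a DM stack of finite type). The whole chain sits naturally inside $\Lambda_G^{\br}$. By Proposition \ref{prop:exist-model}(2), applied to the $H$-cover above a tame $C$-cover (which is automatically uniformizable by Kummer theory), any torsor in $\Lambda_G^{\br}(S)$ with $S$ reduced becomes uniformizable after sufficiently many Frobenius pullbacks, so $\varinjlim_n \cX_{\br}^{(-n)} = \Lambda_G^{\br}$. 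Since $\Lambda_G^{\br}$ is perfect by Lemma \ref{lem:Lambda-perfect}, Lemma \ref{lem:limit-iper} yields $\cX_{\br}^{\iper} \cong \Lambda_G^{\br}$. For the second assertion, I would use the identification $\Lambda_{[G]}^{[\br]} \times_{\cA_{[G]}} \Spec k \cong \coprod_{\bs \in [\br]} \Lambda_G^{\bs}$ together with $\cA_{[G]} \cong \B(\Aut G)$ (Lemma \ref{lem:A-B(AutG)}): the natural $\Aut(G)$-action on $\coprod_{\bs \in [\br]} \cX_{\bs}$ commutes with Frobenius, so its stack quotient is a reduced DM stack of finite type whose ind-perfection is $\Lambda_{[G]}^{[\br]}$.

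The principal obstacle is the construction of $\Pi_{\br}$ and $\cX_{\br}$. Carrying out the Artin-Schreier parameterization inductively along the normal series of $H$ — with each stage rewriting the uniformizer of the previous stage and the semidirect factor $C$ acting compatibly throughout — requires careful bookkeeping to guarantee reducedness, finite-typeness, and correct behavior under the Frobenius twists in the inductive system. Conceptually nothing new beyond the $G = \ZZ/p$ step of Proposition \ref{prop:exist-model} is needed, but the combinatorics of the filtration and the gauge equivalences demand attention.
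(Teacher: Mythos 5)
Your approach diverges from the paper's in an instructive way. The paper does not build a parameterizing stack from scratch; rather, it starts from the fact (from Tonini--Yasuda) that $\Delta_G = \varinjlim_i \cY_i$ for DM stacks $\cY_i$ of finite type with representable, finite, universally injective transitions, observes (via the discriminant bound implicit in a fixed ramification datum) that $|\Lambda_G^{\br}|$ already lands in $|\cY_{i_0}|$ for some $i_0$ and is locally closed there by Proposition \ref{prop:exist-model}(1), and then takes $\cX_i \subset \cY_i$ to be the reduced substack supported on that locus. Your proposal instead constructs a reduced DM stack $\cX_{\br}$ directly as an Artin--Schreier/Kummer moduli of uniformized torsors modulo gauge. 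Conceptually this is a legitimate alternative and would produce an honest-to-goodness presentation of $\cX$, whereas the paper's $\cX_i$ is defined only as a reduced substack and no explicit charts are given. But the construction is precisely the ``principal obstacle'' you flag and is not actually carried out, and more importantly there is a genuine logical gap that your sketch does not close.

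The gap is the treatment of non-reduced test schemes. You invoke Proposition \ref{prop:exist-model}(2) to conclude that every torsor in $\Lambda_G^{\br}(S)$ becomes uniformizable after a Frobenius twist, but that proposition requires $S$ reduced, and you explicitly restrict to that case before asserting $\varinjlim_n \cX_{\br}^{(-n)} = \Lambda_G^{\br}$. The equivalence must hold over \emph{all} affine $S$, including non-reduced ones; otherwise you have only checked agreement of the two fibered categories after reduction, which is not enough to conclude they are equivalent. The paper handles this with a separate argument: once one knows $S_{\red} \to \cU_{i}$ factors through the closed substack $\cX_{i}$ cut out by a coherent ideal $\cI$, the finite generation of $\cI$ and the nilpotence of $\ker(\cO_{S} \to \cO_{S_{\red}})$ force $\cI$ to pull back to zero also along $S^{(-n)} \to \cU_i$ for $n \gg 0$, so $S^{(-n)}$ itself factors through $\cX_i$. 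That step crucially uses that $\cX_i$ sits as a (locally) closed substack inside a finite-type DM stack $\cU_i$ through which the given morphism from $S$ already factors. In your setup, $\cX_{\br}$ is built standalone and maps into the ind-stack $\Lambda_G^{\br}$, so to run the analogous argument you would still need to embed $\cX_{\br}$ (or each $\cX_{\br}^{(-n)}$) as a locally closed substack of a finite-type DM stack receiving the morphism from $S$ --- at which point you are effectively reconstructing the paper's $\cY_i$, $\cU_i$ and the whole carving-out step. As written, the proposal omits this and so does not establish the claimed identification of fibered categories.
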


\begin{proof}
Again we may suppose that $k$ is perfect. From \cite{Tonini:2017qr},
we can write $\Delta_{G}=\varinjlim\cY_{i}$ as the limit of DM stacks
of finite type such that the transition morphisms $\cY_{i}\to\cY_{i+1}$
are representable, finite and universally injective. In particular,
we have $|\Delta_{G}|=\bigcup_{i}|\cY_{i}|$. Since torsors with the
same ramification datum $\br$ have the same discriminant, we have
$|\Lambda_{G}^{\br}|\subset|\cY_{i_{0}}|$ for some $i_{0}$ \cite{Formal-Torsors-II}.
(For each $n$, we can construct a family $P\to\D_{S}^{*}$ of totally
ramified extensions of $k\tpars$ with $S$ of finite type such that
every totally ramified extension of discriminant exponent at most
$n$. This shows that $\Lambda_{G}^{\br}$ is of finite type.) From
\ref{prop:exist-model}, $|\Lambda_{G}^{\br}|$ is a locally closed
subset of $|\cY_{i_{0}}|$. For $i\ge i_{0}$, let $\cX_{i}\subset\cY_{i}$
be the reduced substack such that $|\cX_{i}|=|\Lambda_{G}^{\br}|$.

Since $\Delta_{G}$ is perfect, we also have $\Delta_{G}\cong\varinjlim_{i,n}\cY_{i}^{(n)}$.
We claim that 
\[
\Lambda_{G}^{\br}\cong\varinjlim_{i,n}\cX_{i}^{(n)}.
\]
To show this, we first observe that $\Lambda_{G}^{\br}$ is a full
subcategory of $\Delta_{G}$ and $\varinjlim_{i,n}\cX_{i}^{(n)}$
is a full subcategory of $\varinjlim_{i,n}\cY_{i}^{(n)}$. We need
to show that their objects correspond through the isomorphism $\Delta_{G}\cong\varinjlim_{i,n}\cY_{i}^{(n)}$.
It is clear that every object of $\varinjlim_{i,n}\cX_{i}^{(n)}$
maps to an object of $\Lambda_{G}^{\br}$. It remains to show that
every object of $\Lambda_{G}^{\br}$ comes from an object of $\cX_{i}^{(n)}$
for some $i,n$. Let $P\to\D_{S}^{*}$ be an object of $\Lambda_{G}^{\br}$
over an affine scheme $S$. For some $i\ge i_{0}$, the morphism $S\to\Lambda_{G}^{\br}$
factors through $\cY_{i}$. The image of $|S|\to|\cY_{i}|$ is contained
in $|\cX_{i}|$. Let $\cU_{i}\subset\cY_{i}$ be an open substack
such that $\cX_{i}$ is a closed substack of it. Let $\cI\subset\cO_{\cU_{i}}$
be the defining ideal of $\cX_{i}$. The morphism $S_{\red}\to S\to\cU_{i}$
factors through $\cX_{i}$, which means that $\cI$ is pulled back
to the zero ideal sheaf by this morphism. Since $\cU_{i}$ is of finite
type and $\cI$ is locally finitely generated, for $n\gg0$, the ideal
$\cI$ is pulled back to the zero ideal sheaf also by the morphism
$S^{(-n)}\to S\to\cU_{i}$. This means that the morphism $S^{(-n)}\to\cU_{i}$
factors through $\cX_{i}$, equivalently, $S\to\cU_{i}^{(n)}$ factors
through $\cX_{i}^{(n)}$. We have proved the claim.

From \ref{lem:Lambda-perfect} and \ref{lem:limit-iper}, if we put
$\cX:=\cX_{i_{0}}$, we have $\Lambda_{G}^{\br}\cong\cX^{\iper}$.
We have proved the first assertion. Since $\Lambda_{[G]}^{[\br]}\times_{\cA_{[G]}}\Spec k\cong\coprod_{\bs\in[\br]}\Lambda_{G}^{\bs}$,
from \cite[Lem. 3.5]{Tonini:2017qr}, $\Lambda_{[G]}^{[\br]}$ is
also the inductive limit of DM stacks of finite type such that the
transition morphisms $\cY_{i}\to\cY_{i+1}$ are representable, finite
and universally injective. The Frobenius morphism of $\Lambda_{[G]}^{[\br]}$
becomes the one of $\coprod_{\bs\in[\br]}\Lambda_{G}^{\bs}$ by the
base change with $\Spec k\to\cA_{[G]}$. Therefore $\Lambda_{[G]}^{[\br]}$
is also perfect over $k$. The second assertion similarly follows.
\end{proof}

\subsection{\label{subsec:Lambda-tame}The stack $\Lambda$ in characteristic
zero or in the tame case}

W return to the situation of arbitrary characteristic.

Let $C_{l}$ be the cyclic group of order $l$ such that if $p>0$,
then $p\nmid l$. Every tame Galoisian group is of this form. In characteristic
zero, we define $\Lambda$, $\Lambda_{[G]}$ and $\Lambda_{G}$ as
in \ref{def:Lambda} removing the condition of locally constant ramification;
they are simply be the substacks of $\Delta$, $\Delta_{[G]}$ and
$\Delta_{G}$ respectively of fiberwise connected torsors. For the
morphism $\mu_{l}\colon\Spec k\to\cA_{[C_{l}]}$, we have
\[
\Lambda_{[C_{l}]}\times_{\cA_{[C_{l}]},\mu_{l}}\Spec k\cong\Lambda_{\mu_{l}}\cong\coprod_{p\in(\ZZ/l\ZZ)^{*}}\B\mu_{l}.
\]
Here the right isomorphism follows from \cite[Th. B]{Tonini:2017qr}.
In particular, $\Lambda_{[C_{l}]}$ is a DM stack etale over $k$.
The morphism 
\[
\Lambda_{\mu_{l}}=\Lambda_{[C_{l}]}\times_{\cA_{[C_{l}]}}\Spec k\to\Lambda_{[C_{l}]}
\]
is a torsor under the group $\Aut(C_{l})=(\ZZ/l\ZZ)^{*}$. The $(\ZZ/l\ZZ)^{*}$-action
permutes the components of $\Lambda_{\mu_{l}}$ freely and transitively.
Therefore 
\[
\Lambda_{[C_{l}]}\cong[\Lambda_{\mu_{l}}/(\ZZ/l\ZZ)^{*}]\cong\B\mu_{l}
\]
and, if $\Lambda_{\mu_{l},i}$, $i\in(\ZZ/l\ZZ)^{*}$ denotes the
$i$th component, then the morphism 
\begin{equation}
\Lambda_{\mu_{l},i}\to\Lambda_{[C_{l}]}\label{eq:Delta-mu-1}
\end{equation}
is an isomorphism.

In characteristic zero, we have $\Lambda=\coprod_{l>0}\Lambda_{[C_{l}]}$,
which is a reduced DM stack almost of finite type. In arbitrary characteristic,
we put $\Lambda_{\tame}:=\coprod_{l>0;\,p\nmid l}\Lambda_{[C_{l}]}$.
Note that the component $\Lambda_{[1]}=\Lambda_{[C_{1}]}$, corresponding
to the trivial cover $\Df\to\Df$, is isomorphic to $\Spec k$.

\subsection{Integral models}
\begin{lem}
\label{lem:nice-finite-etale-cover}Suppose that $p>0$ (resp.\,$p=0$).
Let $(G\to S,P\to\D_{S}^{*})\in\Lambda$, where $G\to S$ is a Galoisian
group scheme and $P\to\D_{S}^{*}$ is a $G$-torsor. Suppose that
$S$ is reduced. Then there exists a finite etale cover $\coprod_{i=1}^{n}S_{i}\to S$
such that for each $i$, $G_{S_{i}}$ is constant and $P_{S_{i}}$
has constant ramification and $P_{S_{i}^{(-m)}}$ is uniformizable
for $m\gg0$ (resp.\,$P_{S_{i}}$ is uniformizable).
\end{lem}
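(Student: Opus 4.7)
The plan is to first reduce, by a finite etale base change, to the case where $G$ is constant and the ramification is constant, and then handle separately the tame quotient and the wild kernel of the resulting constant Galoisian group $G_0 = H \rtimes C$ (with $H$ the unique $p$-Sylow and $C$ tame cyclic, cf.\ the discussion following Definition \ref{def:Galoisian}).

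For the first step, since $G \to S$ is finite etale there is a finite etale cover of $S$ over which $G$ becomes constant equal to some $G_0 \in \GG$. The hypothesis of locally constant ramification then provides a further etale refinement on each connected component of which the ramification datum is constant. Reducedness is preserved under finite etale base change, so one may assume $S$ is reduced, $G = G_0$ is constant, and $P$ has constant ramification datum $\br$. In characteristic zero, $H$ is trivial and only the tame step below is needed; in positive characteristic both steps appear.

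For the tame step, I would study $P/H \to \Spec R\tpars$, a fiberwise connected tame cyclic torsor under $C$ of some order $\ell$. A further finite etale cover trivializes the group scheme $\mu_\ell$ and identifies $C$ with $\mu_\ell$; then the description of $\Lambda_{\mu_\ell}$ in Subsection \ref{subsec:Lambda-tame} as a coproduct of copies of $\B\mu_\ell$ indexed by $(\ZZ/\ell)^{*}$ allows a final etale refinement after which $P/H \cong \Spec R\tpars[y]/(y^\ell - t^a)$ for a constant integer $a$ coprime to $\ell$. Choosing integers $b, c$ with $ab - c\ell = 1$ and setting $s := y^b t^{-c}$ yields $s^\ell = t$, so $P/H \cong \Spec R\llparenthesis s\rrparenthesis$ is uniformizable with integral ring $R\sbrats$. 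In characteristic zero this already is $P$, finishing the argument.

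For the wild step in positive characteristic, I would then view $P \to P/H$ as an $H$-torsor over $\Spec R\llparenthesis s\rrparenthesis$. Its ramification datum is the restriction of $\br$ to pairs contained in $H$, hence still constant, and $R$ is still reduced, so Proposition \ref{prop:exist-model}(2) applied to the $p$-group $H$ produces $m \gg 0$ such that $P_{S^{(-m)}}$ is uniformizable over $\Spec R^{(-m)}\llparenthesis s\rrparenthesis$: there is a coordinate $r$ with $s = r^{|H|} v$, $v \in R^{(-m)}\llbracket r\rrbracket^{*}$. Composing the two uniformizations gives $t = s^\ell = r^{|H|\ell}\, v^\ell = r^{|G_0|}\, v^\ell$ with $v^\ell$ a unit of $R^{(-m)}\llbracket r\rrbracket$, so $P_{S^{(-m)}}$ is a uniformizable $G_0$-torsor over $\D_{S^{(-m)}}^{*}$, as required. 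The main obstacle I foresee is the tame step: reducing, after a finite etale cover, an arbitrary fiberwise connected $\mu_\ell$-torsor to a standard Kummer torsor of the form $y^\ell = t^a$ with trivial unit part. This relies on the structural result on $\Lambda_{\mu_\ell}$ recalled in Subsection \ref{subsec:Lambda-tame}, which is imported from \cite{Tonini:2017qr}.
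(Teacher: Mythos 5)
Your proof is correct and follows essentially the same strategy as the paper's: first pass to a finite etale cover making $G$ constant with constant ramification datum, decompose $G = H \rtimes C$ into its $p$-Sylow and tame cyclic quotient, uniformize the tame quotient $P/H$ via the explicit description of $\Lambda_{\mu_\ell} \cong \coprod_{(\ZZ/\ell)^*}\B\mu_\ell$ from \cite{Tonini:2017qr}, and finally apply Proposition \ref{prop:exist-model}(2) to the $H$-torsor $P \to P/H$ and compose the two uniformizations. The only superficial difference is ordering: the paper stratifies into constant-ramification pieces \emph{after} the tame step (as open-closed subsets of the cover $V$, using quasi-compactness), while you do it at the start, and you spell out more explicitly the Kummer-coordinate manipulation $s := y^b t^{-c}$ and the composite uniformization $t = r^{|G_0|}v^\ell$ that the paper leaves implicit.
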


\begin{proof}
First consider the case $p>0$. Since there exists a finite etale
cover $U\to S$ such that $G_{U}$ is Zariski-locally constant, we
may suppose that $G$ is constant. Let $H$ be the unique $p$-Sylow
subgroup of $G$. The quotient group $G/H$ is a tame cyclic group,
say of order $l$. Adding the $l$-th roots of unity to $k$, we may
also suppose that the group scheme $\mu_{l}$ is isomorphic to the
constant group $G/H$. From \cite[Th. B]{Tonini:2017qr}, 
\[
\Delta_{G/H}\cong\coprod_{i\in\ZZ/l\ZZ}\B(G/H)
\]
and components for $i\in(\ZZ/l\ZZ)^{*}$ correspond to connected torsors.
For $i\in(\ZZ/l\ZZ)^{*}$, if $i'$ denotes the integer representing
$i$ with $0<i'<l$, then the standard morphism onto the $i$-th component
\[
\Spec k\to\Delta_{G/H}
\]
corresponds to the uniformizable torsor $\Spec k\tpars[x]/(x^{l}-t^{i'})\to\Spec k\tpars$.
Since this morphism is finite and etale, there exists a finite etale
cover $V\to S$ such that the morphism $V\to S\to\Delta_{G/H}$ lifts
to $\coprod_{i\in(\ZZ/l\ZZ)^{*}}\Spec k$. Then $(P/H)_{V}$ is uniformizable.
Thus we may also assume that $P/H$ has uniformization $P/H\cong\D_{S}^{*}$.
Since the $H$-torsor $P\to P/H\cong\D_{S}^{*}$ has locally constant
ramification and $S$ is quasi-compact (recall $S$ is affine), there
exists a stratification $V=\bigsqcup_{i=1}^{l}S_{i}$ into open and
closed subsets such that each $P_{S_{i}}$ has constant ramification.
From \ref{prop:exist-model}, for $m\gg0$, the $H$-torsor $P_{S_{i}^{(-m)}}\to P_{S_{i}^{(-m)}}/H\cong\D_{S_{i}^{(-m)}}^{*}$
is uniformizable and so is the $G$-torsor $P_{S_{i}^{(-m)}}\to\D_{S_{i}^{(-m)}}^{*}$.
We have proved the lemma when $p>0$.

When $p=0$, the above argument for $G/H$ shows the lemma. 
\end{proof}
\begin{lem}
\label{cor:subring-O}Let $(G\to S=\Spec R,P=\Spec A\to\D_{S}^{*})\in\Lambda$.
Suppose that $S$ is reduced and that there exists a finite etale
cover $\coprod_{i\in I}S_{i}\to S$ such that every $P_{S_{i}}$ is
uniformizable. Then there exists a unique $R\llbracket t\rrbracket$-subalgebra
$O\subset A$ such that
\begin{enumerate}
\item $O$ is a finite and flat $R\llbracket t\rrbracket$-module,
\item $O_{t}=A$, where the left side is the localization of $O$ by $t$,
\item \label{enu:pullback}for any $U=\Spec B\to S$ such that $P_{U}$
is uniformizable, the subring $O\otimes_{R\llbracket t\rrbracket}B\llbracket t\rrbracket\subset A\otimes_{R\llbracket t\rrbracket}B\llbracket t\rrbracket$
is the integral ring of $P_{U}$ (see \ref{def:integral-ring}).
\end{enumerate}
\end{lem}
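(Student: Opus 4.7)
The plan is to construct $O$ by faithfully flat descent along the given finite étale cover $\coprod_{i\in I}S_i\to S$, after first producing a local integral ring on each piece. Write $S_i=\Spec R_i$ and $A_i:=A\otimes_{R\tbrats}R_i\tbrats$. Since $R$ is reduced and $\coprod S_i\to S$ is étale, each $R_i$ is reduced; moreover $R\tbrats\to\prod_iR_i\tbrats$ is faithfully flat, because finite étale extensions commute with $t$-adic completion and étale surjections are faithfully flat.

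First I would set up the local pieces. For each $i$, fix a uniformization $A_i\cong R_i\spars$ with $t=s^{\sharp G}u$ and $u\in R_i\sbrats^{\ast}$, and let $O_i\subset A_i$ be the image of $R_i\sbrats$. By Lemma \ref{lem:indep-O} this subring does not depend on the choice of uniformization, since $R_i$ is reduced. Because $(t)=(s^{\sharp G})$ as ideals of $R_i\sbrats$, the quotient $R_i\sbrats/(t)=R_i\sbrats/(s^{\sharp G})$ is a free $R_i$-module of rank $\sharp G$ on $1,s,\dots,s^{\sharp G-1}$; combined with the $(t)$-adic completeness of $R_i\sbrats$, the complete Nakayama lemma then shows that $R_i\sbrats$ is free over $R_i\tbrats$ of rank $\sharp G$ on the same basis. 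Hence $O_i$ is finite free over $R_i\tbrats$, and $(O_i)_t=A_i$ because inverting $t$ is the same as inverting $s$ in $R_i\sbrats$.

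Next I would verify the descent datum. Set $R_{ij}:=R_i\otimes_R R_j$ and $A_{ij}:=A\otimes_{R\tbrats}R_{ij}\tbrats$. The ring $R_{ij}$ is reduced (étale over the reduced $R$) and $P_{S_{ij}}=\Spec A_{ij}$ is uniformizable by pullback of $P_{S_i}$, so the same construction yields an integral ring $O_{ij}\subset A_{ij}$. Since the pullback of a uniformization over $S_i$ is a uniformization over $S_{ij}$, we get
\[
O_i\otimes_{R_i\tbrats}R_{ij}\tbrats=O_{ij}=O_j\otimes_{R_j\tbrats}R_{ij}\tbrats
\]
inside $A_{ij}$, giving a descent datum along the faithfully flat cover $\coprod\Spec R_i\tbrats\to\Spec R\tbrats$. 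Faithfully flat descent for quasi-coherent modules then produces a finite flat $R\tbrats$-module $O$ with $O\otimes_{R\tbrats}R_i\tbrats\cong O_i$, and the compatible inclusions $O_i\hookrightarrow A_i$ descend to an $R\tbrats$-subalgebra inclusion $O\hookrightarrow A$ (the multiplication on $O$ is simply inherited from that on $A$).

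Finally I would check the three properties and uniqueness. Property (1) is immediate from construction and descent of finiteness and flatness. For (2), $O_t=A$ can be checked after the faithfully flat base change $R\tpars\to\prod R_i\tpars$, where it reduces to $(O_i)_t=A_i$. For (3), given $U=\Spec B\to S$ with $P_U$ uniformizable, pull back the étale cover to obtain $U_i=\Spec B_i$ with $B_i=B\otimes_R R_i$; then $O\otimes_{R\tbrats}B_i\tbrats\cong O_i\otimes_{R_i\tbrats}B_i\tbrats$ is the integral ring of $P_{U_i}$, and descending along $\coprod B_i\tbrats\to B\tbrats$ identifies $O\otimes_{R\tbrats}B\tbrats$ with the integral ring of $P_U$. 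Uniqueness follows because any $O'\subset A$ satisfying (3) pulls back on each $S_i$ to $O_i$, hence equals $O$ by faithfully flat descent. The main obstacle is the overlap compatibility of the $O_i$: this rests entirely on Lemma \ref{lem:indep-O}, so the whole argument depends on having a cover on which uniformizations exist and on preserving reducedness under the étale base changes that arise.
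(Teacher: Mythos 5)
Your proposal is correct and follows essentially the same path as the paper's proof: construct the local integral rings $O_i$ over each $S_i$, check they agree on overlaps $S_{ij}$ by the uniqueness statement of Lemma \ref{lem:indep-O} together with the identification $R_{ij}\tbrats\cong R_i\tbrats\otimes_{R_i}R_{ij}$ (which the paper cites from Tonini--Yasuda and you justify by compatibility of $t$-adic completion with finite \'etale base change), and descend. You spell out a few points the paper leaves implicit (freeness of $O_i$ over $R_i\tbrats$ via Nakayama, and the uniqueness of $O$), but these are consistent elaborations rather than a different argument.
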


\begin{proof}
Let $S_{ij}=\Spec R_{ij}:=S_{i}\times_{S}S_{j}$ and write $P_{S_{i}}=\Spec A_{i}$
and $P_{S_{ij}}=\Spec A_{ij}$. Let $O_{i}\subset A_{i}$ and $O_{ij}\subset A_{ij}$
be the integral rings. Since $R_{ij}$ is a finitely presented $R_{i}$-module
(recall that an etale morphism is by definition locally of finite
presentation), from \cite[Lem. 2.4]{Tonini:2017qr}, we have $R_{i}\tbrats\otimes_{R_{i}}R_{ij}\cong R_{ij}\tbrats$.
Therefore
\begin{gather*}
O_{ij}=O_{i}\otimes_{R_{i}\tbrats}R_{ij}\tbrats=O_{i}\otimes_{R_{i}}R_{ij},\\
A_{ij}=A_{i}\otimes_{R_{i}\tpars}R_{ij}\tpars=A_{i}\otimes_{R_{i}\tpars}R_{ij}\tpars=A_{i}\otimes_{R_{i}}R_{ij}.
\end{gather*}
By descent, we get a submodule $O\subset A$. Properties (1) and (2)
hold, since they hold after pulled-back to $S_{i}$.

Let $U\to S$ be as in (\ref{enu:pullback}). Consider the finite
etale cover $\coprod_{i}U\times_{S}S_{i}\to U$. The integral ring
of $P_{U}$ and $O\otimes_{R\llbracket t\rrbracket}B\llbracket t\rrbracket$
are the same because they become the same when pulled back to $U\times_{S}S_{i}$.
\end{proof}
\begin{defn}
For a torsor $P\to\D_{S}^{*}$ as in \ref{cor:subring-O}, we call
$\Spf O$ the \emph{integral model }of $P$ and denote it by $\overline{P}$.
We also call the morphism $\overline{P}\to\Df_{S}^{*}$ a $G$\emph{-cover.}
\end{defn}

\begin{prop}
\label{prop:Theta}
\begin{enumerate}
\item Suppose $p>0$. Let $G$ be a Galoisian group and $\br$ a ramification
datum for $G$. Then there exist a reduced DM stack $\cX$ of finite
type and an isomorphism $\cX^{\iper}\xrightarrow{\sim}\Lambda_{[G]}^{[\br]}$
such that for any morphism $S\to\cX$ from an affine scheme, there
exists an etale cover $T\to S$ by an affine scheme such that the
composite morphism 
\[
T\to\cX\to\cX^{\iper}\xrightarrow{\sim}\Lambda_{[G]}^{[\br]}
\]
corresponds to a uniformizable torsor.
\item Let $G$ be a tame cyclic group. Then $\Lambda_{[G]}$ is a reduced
DM stack of finite type and for any morphism $S\to\Lambda_{[G]}$
from an affine scheme, there exists a finite etale cover $T\to S$
(necessarily by an affine scheme) such that the induced morphism 
\[
T\to\Lambda_{[G]}
\]
corresponds to a uniformizable torsor.
\end{enumerate}
\end{prop}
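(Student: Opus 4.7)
The plan is to handle (2) directly via the concrete description $\Lambda_{[C_l]}\cong \B\mu_l$ recalled in Subsection~\ref{subsec:Lambda-tame}, and to handle (1) by taking the model $\cX_0$ produced by Lemma~\ref{lem:Lambda-r}, replacing it with a Frobenius twist $\cX_0^{(m)}$ for $m$ large enough so that an atlas classifies a uniformizable torsor in the sense of Lemma~\ref{lem:nice-finite-etale-cover}, and then propagating uniformizability to an arbitrary $S$-point by base change.

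For (2), the isomorphism $\Lambda_{[C_l]}\cong \B\mu_l$ already exhibits $\Lambda_{[C_l]}$ as a reduced DM stack of finite type (in fact etale over $k$). By (\ref{eq:Delta-mu-1}) and the discussion preceding it, the natural atlas $\Spec k\to \B\mu_l\cong \Lambda_{[C_l]}$ corresponds to the uniformizable torsor $\Spec k\tpars[x]/(x^l-t^{i'})\to \Spec k\tpars$. For any morphism $S\to \Lambda_{[C_l]}$ from an affine scheme, set $T:=S\times_{\Lambda_{[C_l]}}\Spec k$; since $\Spec k\to \Lambda_{[C_l]}$ is representable, finite, and etale, so is $T\to S$, and $T$ is affine. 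The composite $T\to \Lambda_{[C_l]}$ factors through $\Spec k$, so the corresponding torsor is a pullback of the above uniformizable torsor, which is again uniformizable (uniformizability is clearly preserved under base change in view of Definitions~\ref{def:uniformizable} and~\ref{def:integral-ring} and Lemma~\ref{cor:subring-O}).

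For (1), apply Lemma~\ref{lem:Lambda-r} to obtain a reduced DM stack $\cX_0$ of finite type with an isomorphism $\cX_0^{\iper}\xrightarrow{\sim}\Lambda_{[G]}^{[\br]}$. Choose an affine etale atlas $U\to \cX_0$; it is reduced since etaleness preserves reducedness. The composite $U\to \cX_0\to \cX_0^{\iper}\xrightarrow{\sim}\Lambda_{[G]}^{[\br]}$ classifies a torsor $P\to \D_U^*$ in $\Lambda_{[G]}^{[\br]}$. Lemma~\ref{lem:nice-finite-etale-cover} applied to this torsor furnishes a finite etale surjection $V=\coprod_i V_i\to U$ and an integer $m\ge 0$ (take the maximum of the exponents needed for the finitely many pieces) such that $P_{V^{(-m)}}$ is uniformizable. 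Set $\cX:=\cX_0^{(m)}$, which is reduced of finite type and satisfies $\cX^{\iper}\cong \cX_0^{\iper}\cong \Lambda_{[G]}^{[\br]}$. The morphism $V^{(-m)}\to \cX$ induced by $V\to U\to \cX_0$ after the $m$-fold Frobenius twist on the base is a representable etale surjection, and its composite to $\Lambda_{[G]}^{[\br]}$ classifies the uniformizable torsor $P_{V^{(-m)}}$. For any morphism $S\to \cX$ with $S$ affine, let $T:=S\times_{\cX}V^{(-m)}$. Then $T\to S$ is an affine etale cover, and the composite $T\to \cX\to \cX^{\iper}\xrightarrow{\sim}\Lambda_{[G]}^{[\br]}$ factors through $V^{(-m)}$, hence classifies a pullback of a uniformizable torsor, which is again uniformizable.

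The main delicate point is the bookkeeping of Frobenius twists at the level of the ind-perfection: one must check that $(\cX_0^{(m)})^{\iper}\cong \cX_0^{\iper}$ naturally, that the morphism $V^{(-m)}\to \cX_0^{(m)}$ really is representable and etale, and that its composite to $\Lambda_{[G]}^{[\br]}$ classifies precisely the torsor $P_{V^{(-m)}}$ rather than some other Frobenius variant. This is essentially a diagram chase using Lemmas~\ref{lem:Lambda-perfect} and~\ref{lem:limit-iper}, but it is the only non-routine step in the argument; everything else is either a direct appeal to Lemmas~\ref{lem:Lambda-r} and~\ref{lem:nice-finite-etale-cover} or a straightforward base change.
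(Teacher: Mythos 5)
Your proof of part (2) is correct and simply fleshes out what the paper dismisses as obvious; the use of $\Lambda_{[C_l]}\cong\B\mu_l$ and the atlas $\Spec k\to\B\mu_l$ classifying $\Spec k\tpars[x]/(x^l-t^{i'})$ is exactly the right argument.

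Part (1) follows the same strategy as the paper (invoke Lemma~\ref{lem:Lambda-r}, take an atlas, apply Lemma~\ref{lem:nice-finite-etale-cover}, Frobenius-twist the model), but the Frobenius bookkeeping you flag as ``the main delicate point'' contains a genuine error, not just a verification you deferred. You set $\cX:=\cX_0^{(m)}$, i.e.\ the positive twist $\cX_0\times_{k,F^m}k$, and then claim a representable etale surjection $V^{(-m)}\to\cX$ induced by $V\to\cX_0$. But applying the twist functor to $V\to\cX_0$ produces either $V^{(m)}\to\cX_0^{(m)}$ or $V^{(-m)}\to\cX_0^{(-m)}$; there is no natural etale morphism $V^{(-m)}\to\cX_0^{(m)}$, and the sign mismatch cannot be absorbed since $\cX_0^{(m)}$ and $\cX_0^{(-m)}$ are genuinely different $k$-stacks. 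Lemma~\ref{lem:nice-finite-etale-cover} gives uniformizability of $P_{V^{(-m)}}$, so the atlas classifying the uniformizable torsor is $V^{(-m)}$, and the only compatible choice is $\cX:=\cX_0^{(-m)}$ (which is what the paper does, writing $\cY^{(-m)}$). With that correction one still faces a second issue that your write-up does not address: for a general (imperfect) field $k$, the morphism $F^m\colon\Spec k\to\Spec k$ need not be of finite type, so $\cX_0^{(-m)}$ need not be a DM stack of finite type over $k$. The paper circumvents this by first working over $k=\FF_p$ (where all twists are canonically identified) and then obtaining the general case by base change along $\Spec k\to\Spec\FF_p$; some such reduction is necessary, and your proof should include it.
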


\begin{proof}
(1) First consider the case where $k=\FF_{p}$. From \ref{lem:Lambda-r},
there exist a reduced DM stack $\cY$ of finite type and an isomorphism
$\cY^{\iper}\cong\Lambda_{[G]}^{[\br]}$. Take an atlas $U\to\cY$
with $U$ an affine scheme. From \ref{lem:nice-finite-etale-cover},
there exists a finite etale cover $V\to U$ such that for $m\gg0$,
$V^{(-m)}\to\cY\to\Lambda_{[G]}^{[\br]}$ corresponds to a uniformizable
torsor. The stack $\cX=\cY^{(-m)}$ satisfies the desired property.
Indeed, since $k$ is perfect, $\cX^{(-m)}$ is of finite type. Obviously
we have $\cX^{\iper}\cong\Lambda_{[G]}^{[\br]}$. For any morphism
$S\to\cX$, the morphism $S\times_{\cX}V^{(-m)}\to S$ is an etale
cover and the morphism $S\times_{\cX}V^{(-m)}\to\Lambda_{[G]}^{[\br]}$
is uniformizable since it factors through $V^{(-m)}$. For a general
field $k$, we just take the base change $\cX\otimes_{\FF_{p}}k$.

(2) Obvious.
\end{proof}
\begin{defn}
\label{def:Theta}When $p>0$, for each $[G]$ and $[\br]$, we choose
a reduced DM stack of finite type $\Theta_{[G]}^{[\br]}$ as in \ref{prop:Theta},
(1). When $G$ is tame, there is only one class $[\br]$ and choose
$\Theta_{[G]}^{[\br]}$ to be $\Lambda_{[G]}$. We then define 
\begin{gather*}
\Theta:=\coprod_{G\in\GG}\coprod_{[\br]\in\Ram(G)/\Aut(G)}\Theta_{[G]}^{[\br]},\\
\Theta_{[G]}:=\Theta\times_{\cA}\cA_{[G]}=\coprod_{[\br]\in\Ram(G)/\Aut(G)}\Theta_{[G]}^{[\br]}.
\end{gather*}
When $p=0$, we define $\Theta:=\Lambda$ and $\Theta_{[G]}=\Lambda_{[G]}$.
For a Galoisian group scheme $G\to S$, we define $\Theta_{G}:=\Theta\times_{\cA}S$.
\end{defn}

All these stacks are almost of finite type, that is, each of them
has only countably many connected components each of which is of finite
type.  From \ref{cor:subring-O}, for any morphism $S\to\Theta$ from
an affine scheme $S$, the induced torsor $P=\Spec A\to\D_{S}^{*}$
has the subalgebra $O\subset A$. Let $O_{n}:=O/(t^{n+1})$.
\begin{defn}
We define a finite affine scheme $E_{\Theta,n}$ over $\Theta$ by
the property $E_{\Theta,n}\times_{\Theta}S=\Spec O_{n}$ for each
$S\to\Theta$. (This is also constructed as the relative spectrum
$\underline{\Spec}_{\Theta}O_{\Theta,n}$ of the coherent sheaf $O_{\Theta,n}$
on $\Theta$ given by modules $O_{U,n}$ in \ref{cor:subring-O}.)
There exists a natural morphism $E_{\Theta,n}\to\D_{\Theta,n}:=\D_{n}\times\Theta$,
which is representable, finite and flat. We define a formal DM stack
$E_{\Theta}:=\varinjlim E_{\Theta,n}$ and call it the \emph{universal
integral model }over $\Theta$. There exists a natural morphism $E_{\Theta}\to\Df_{\Theta}$.
For a morphism of stacks $\Sigma\to\Theta$, we call the induced morphism
\[
E_{\Sigma}:=E_{\Theta}\times_{\Df_{\Theta}}\Df_{\Sigma}=E_{\Theta}\times_{\Theta}\Sigma\to\Df_{\Sigma}
\]
a $G$\emph{-cover.}
\end{defn}

For each point $x\colon\Spec K\to\Theta$, the fiber product $E_{\Theta}\times_{\Theta}\Spec K$
is the integral model of the $G$-torsor $P\to\D_{K}^{*}$ corresponding
to $x$. The $E_{\Theta,n}\times_{\Theta}\Spec K$ is its closed subscheme
defined by $t^{n+1}$.

From \ref{lem:stable-G}, the group scheme $\cG_{\Theta}:=\cG\times_{\cA}\Theta$
over $\Theta$ acts on $E_{\Theta,n}$.
\begin{defn}
We define $\cE_{\Theta,n}$ to be the quotient stack $[E_{\Theta,n}/\cG_{\Theta}]$.
More precisely, we define a stack $\cE_{\Theta,n}$ as follows. For
$U\in\Aff$, an object of $\cE_{\Theta,n}(U)$ is a tuple $(U\to\Theta,P\to U,P\to E_{U,n})$
where $P\to U$ is a $\cG_{U}$-torsor and $P\to E_{U,n}$ is a $\cG_{U}$-equivariant
morphism. We then define $\cE_{\Theta}:=\varinjlim\cE_{\Theta,n}$
and call it the \emph{universal twisted formal disk }over $\Theta$.
\end{defn}

For each morphism $U\to\Theta$ from an affine scheme, we have $\cE_{\Theta,n}\times_{\Theta}U\cong[E_{U,n}/\cG_{U}]$.
We have natural morphisms of DM stacks $E_{\Theta,n}\to\cE_{\Theta,n}\to\D_{\Theta,n}$
and ones of formal DM stacks $E_{\Theta}\to\cE_{\Theta}\to\Df_{\Theta}$.
\begin{defn}
A \emph{twisted formal disk over a field $K$ }is a formal DM stack
$\cE$ over $\Df_{K}$ induced as the base change of $\cE_{\Theta}\to\Df_{\Theta}$
by a $K$-point $\Spec K\to\Theta$. Two twisted formal disks $\cE$
and $\cE'$ over $K$ are said to be \emph{isomorphic }if there exists
an isomorphism $\cE\to\cE'$ which makes the diagram
\[
\xymatrix{\cE\ar[r]\ar[dr] & \cE'\ar[d]\\
 & \Df_{K}
}
\]
2-commutative.
\end{defn}

\begin{lem}
\label{lem:tw-disk-iso}For an algebraically closed field $K$, we
have one-to-one correspondences
\[
\Lambda[K]\longleftrightarrow\Theta[K]\longleftrightarrow\{\textrm{twisted formal disk over }K\}/\mathord{\cong}.
\]
\end{lem}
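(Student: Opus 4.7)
The plan is to treat the two bijections independently.

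\textbf{First bijection $\Theta[K]\leftrightarrow\Lambda[K]$.} When $p=0$ or on the tame part, $\Theta$ was defined in \ref{def:Theta} to equal $\Lambda$ (respectively $\Lambda_{[G]}$), so the map is literally the identity. When $p>0$, by \ref{prop:Theta}(1) together with \ref{def:Theta} we have a componentwise isomorphism $\Theta^{\iper}\cong\Lambda$. Since $K$ is algebraically closed it is perfect, so the absolute Frobenius $F_{\Spec K}$ is an automorphism. Unwinding the definition of $\cX^{(n)}=\cX\times_{\Spec k,F_{\Spec k}^n}\Spec k$, precomposition with $F_{\Spec K}^n$ induces an equivalence of groupoids $\cX^{(n)}(K)\simeq\cX(K)$ for any $k$-stack $\cX$. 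Passing to the filtered colimit gives $\cX^{\iper}(K)\simeq\cX(K)$ and hence a bijection $\cX^{\iper}[K]\cong\cX[K]$. Applied componentwise this yields the first desired bijection.

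\textbf{Second bijection $\Theta[K]\leftrightarrow\{\text{twisted formal disks over }K\}/\mathord{\cong}$.} The assignment $x\mapsto\cE_x:=\cE_\Theta\times_\Theta\Spec K$ is essentially surjective by the very definition of a twisted formal disk. For injectivity on isomorphism classes, I will reconstruct from $\cE_x$ the pair $(G,P)$ determining $x$. Since $K$ is algebraically closed, $|\cE_x|$ has a unique closed point, and its automorphism group is canonically the Galoisian group $G$ at $x$, as is transparent from the presentation $\cE_x=[E_x/G]$ with $E_x:=E_\Theta\times_\Theta\Spec K$. An isomorphism $\alpha\colon\cE_x\xrightarrow{\sim}\cE_{x'}$ over $\Df_K$ therefore induces a canonical isomorphism $\phi\colon G\xrightarrow{\sim}G'$ of Galoisian groups.

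The essential step is to promote $\alpha$ to an isomorphism of the integral models. The $G$-torsor $E_x\to\cE_x$ is connected, and its restriction to the complement of the closed fiber (namely $\Spec K\spars\to\Spec K\tpars$) is precisely the underlying torsor $P_x$, as the $G$-action there is free. Moreover $E_x$ is the normalization of $\Df_K$ inside the generic étale cover $P_x\to\Df_K^*$, hence is uniquely determined up to unique $G$-equivariant isomorphism by the pair $(\cE_x,G)$. The pulled-back torsor $\alpha^*E_{x'}\to\cE_x$ is another connected $G$-torsor whose punctured part is $P_{x'}$ (viewed as a $G$-torsor via $\phi$), and by the uniqueness just mentioned it must be $G$-equivariantly isomorphic to $E_x$. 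Restricting this isomorphism to $\Df_K^*$ produces a $\phi$-equivariant isomorphism $P_x\cong P_{x'}$, which is exactly an isomorphism between $x$ and $x'$ in $\Theta(K)$.

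\textbf{Main obstacle.} The delicate point is the intrinsic characterization of $E_x\to\cE_x$ just invoked: one must verify that any connected representable $G$-torsor over $\cE_x$ whose restriction to $\Spec K\spars$ agrees (after identifying groups) with $P_x$ is $G$-equivariantly isomorphic to $E_x$. This is essentially the statement that a normal integral formal extension of $\Df_K$ inside a prescribed generic étale Galois cover is unique; in the formal setting, the cleanest justification passes through \ref{cor:subring-O}, which identifies $O_{E_x}$ as the unique $K\tbrats$-subalgebra of $A_x=K\spars$ satisfying the stated finiteness and localization properties. Once this uniqueness is in place, the rest of the argument is purely bookkeeping.
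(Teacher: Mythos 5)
Your treatment of the first bijection is correct and matches the paper's (unstated) reasoning: over an algebraically closed, hence perfect, field the Frobenius is an automorphism and the ind-perfection does not change the groupoid of $K$-points.

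For the second bijection, however, your route differs from the paper's and contains a genuine gap. You want to show $\alpha^{*}E_{x'}\cong E_{x}$ as $G$-torsors by appealing to a uniqueness statement, but the uniqueness you formulate in your ``main obstacle'' --- that a connected $G$-torsor over $\cE_{x}$ \emph{whose restriction to $\Spec K\spars$ agrees with $P_{x}$} must be $E_{x}$ --- is circular: you have already observed that the punctured part of $\alpha^{*}E_{x'}$ is $P_{x'}$ (transported by $\phi$), so applying that uniqueness requires $P_{x'}\cong P_{x}$, which is precisely what you are trying to establish. If instead you drop the hypothesis on the punctured part and claim that \emph{any} connected $G$-torsor over $\cE_{x}$ is isomorphic to $E_{x}$, the claim is false in general: since $K$ is algebraically closed, $E_{x}$ is simply connected and $\pi_{1}(\cE_{x})\cong G$, so connected $G$-torsors over $\cE_{x}$ are classified by $\mathrm{Out}(G)$, and the ``unique $G$-equivariant isomorphism'' you assert does not exist when $G$ has nontrivial outer automorphisms. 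There is also the accompanying ambiguity that your $\phi$, being read off from automorphism groups of closed points, is only canonical up to inner conjugation.

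The paper sidesteps all of this. Given an isomorphism $[E/G]\to[E'/G']$ over $\Df_{K}$, it forms the fiber product $U:=E\times_{[E'/G']}E'$, which is finite \'etale over both $E$ and $E'$; since $E$ is connected and simply connected, $U$ is a coproduct of copies of $E$ (and likewise of $E'$), so any connected component gives a $\Df_{K}$-isomorphism $E\cong E'$. The identification of Galois groups and the equivariance of the generic-fiber isomorphism are then read off \emph{a posteriori} from this scheme isomorphism of Galois covers, rather than being imposed in advance; this is exactly what removes the circularity and the $\mathrm{Out}(G)$ ambiguity that your version runs into. You would need to either reorganize your argument along these lines, or carefully track how the freedom of choosing $\phi$ up to conjugation and the $\mathrm{Out}(G)$-worth of connected $G$-torsors cancel against the notion of isomorphism in $\Theta(K)$ (which permits an arbitrary group isomorphism, not just conjugation); the latter can be made to work but is considerably more delicate than what you have written.
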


\begin{proof}
The first correspondence is clear. An isomorphism in $\Lambda(K)$
gives an isomorphism $\alpha\colon G\cong G'$ of Galoisian groups
and $P\cong P'$ a $\D_{K}^{*}$-isomorphism compatible with $\alpha$
for a $G$-torsor $P\to\D_{K}^{*}$ and a $G'$-torsor $P'\to\D_{K}^{*}$.
For their models $\overline{P}$ and $\overline{P'}$, the quotient
stacks $[\overline{P}/G]$ and $[\overline{P'}/G']$ are clearly isomorphic
twisted formal disks. Conversely, given an isomorphism $[E/G]\to[E'/G']$
of twisted formal disks over $K$, consider the fiber product $U:=E\times_{[E'/G']}E'$.
This is finite and etale over both $E$ and $E'$. Therefore $U$
is isomorphic to the coproduct of copies of $E$ and to the coproduct
of copies of $E'$. Thus, if $U_{0}$ is a connected component of
$U$, then we have isomorphisms $E\cong U_{0}\cong E'$ over $\Df_{K}$.
It follows that the corresponding Galois extensions of $K\tpars$
are isomorphic and determines isomorphic $K$-points of $\Lambda$.
\end{proof}

\section{Untwisting stacks\label{sec:Untwisting-stacks}}

In this section, we introduce our main technical ingredient, the untwisting
stack. We define it as a certain Hom stack.

\subsection{Some results on Hom stacks}
\begin{defn}
Let $\cY,\cX,\cS$ be stacks over $\Aff$ with morphisms $\cY\to\cS$
and $\cX\to\cS$. We define $\ulHom_{\cS}(\cY,\cX)$ (resp.\ $\ulHom_{\cS}^{\rep}(\cY,\cX)$)
to be the fibered category over $\cS$ whose fiber category over an
$S$-point $S\to\cS$ is $\Hom_{S}(\cY\times_{\cS}S,\cX\times_{\cS}S)$
(resp.\,$\Hom_{S}^{\rep}(\cY\times_{\cS}S,\cX\times_{\cS}S)$), the
category of $S$-morphisms (resp.\ representable $S$-morphisms).
\end{defn}

Note that the canonical functor 
\[
\Hom_{S}^{\rep}(\cY\times_{\cS}S,\cX\times_{\cS}S)\to\Hom_{\cS}^{\rep}(\cY\times_{\cS}S,\cX)
\]
is an equivalence. Through this equivalence, we often identify the
two categories. A basic result on Hom stacks is the following one
by Olsson:
\begin{thm}[\cite{MR2357471,MR2239345}]
Let $S$ be a scheme and let $\cX$ and $\cY$ be DM stacks of finite
presentation over $S$. Suppose that $\cY$ is proper and flat over
$S$ and there exists a finite, finitely presented, flat and surjective
morphism $Z\to\cY$ from an algebraic space. Then $\ulHom_{S}(\cY,\cX)$
is a DM stack locally of finite presentation over $S$ and $\ulHom_{S}^{\rep}(\cY,\cX)$
is an open substack of it.
\end{thm}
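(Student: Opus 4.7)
The plan is to verify Artin's criterion for the fibered category $F := \ulHom_S(\cY,\cX)$ to be an algebraic stack locally of finite presentation over $S$, then to deduce that it is in fact DM. By standard limit arguments (using that $\cY$ is proper and of finite presentation), I would reduce to the case where $S$ is affine and Noetherian, and eventually to finite type over $\ZZ$, so that deformation-theoretic obstruction groups become finitely generated modules and Artin's criterion applies.

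The next step is to use the hypothesis on $Z\to\cY$ in an essential way. Since this is a finite, finitely presented, flat surjection from an algebraic space, any morphism $\cY_T\to\cX_T$ is determined by its pullback to $Z_T$ together with descent datum along $Z_T\times_{\cY_T} Z_T\to Z_T$; both $Z_T$ and the fiber product are algebraic spaces, finite and flat over $\cY_T$. This reduces much of the analysis of $F$ to the analysis of Hom stacks out of algebraic spaces, and indeed out of schemes after picking a further cover. I would then check each clause of Artin's criterion: limit preservation follows because a morphism from a finitely presented proper stack to $\cX$ is already determined by data over some finitely presented subring; representability of the diagonal reduces to representability of the Isom functor $\ulIso_T(f,g)$, which in turn reduces via the diagonal of $\cX$ (representable, unramified, finite) and descent along $Z_T\to\cY_T$ to the representability of push-forward by finite flat morphisms; and the deformation/obstruction theory is controlled by $\Ext^i(Lf^*L_{\cX/S},\cO_{\cY_T})$ for $i=0,1$, which are finitely generated by coherence and properness of $\cY\to S$.

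For effective algebraization, given a compatible sequence of morphisms $\cY_{T_n}\to\cX_{T_n}$ over Artinian thickenings, one wants to lift it to a morphism over the completion. I would invoke Grothendieck existence for proper DM stacks (Olsson's version of the formal GAGA theorem), which allows one to algebraize coherent sheaves and morphisms into proper DM stacks over a complete local Noetherian base; this is applied after encoding the morphism as a closed immersion $\cY_{T_n}\hookrightarrow\cY_{T_n}\times_S\cX$ given by the graph. For the openness of $\ulHom^{\rep}_S(\cY,\cX)$ inside $\ulHom_S(\cY,\cX)$, I would use that representability is equivalent to the relative inertia $I_{\cY_T/\cX_T}\to\cY_T$ being trivial, and triviality of a finite unramified group scheme over $\cY_T$ is an open condition on $T$ after taking image under the proper morphism $\cY_T\to T$; alternatively, it is equivalent to the diagonal $\cY_T\to\cY_T\times_{\cX_T}\cY_T$ being an isomorphism, which is open since this diagonal is a closed immersion between finitely presented DM stacks.

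The main obstacle I expect is effective algebraization: Grothendieck existence for proper DM stacks is itself a substantial result (it is exactly the technical core of Olsson's paper), and it requires the proper cover by an algebraic space $Z\to\cY$ to reduce formal coherent sheaves on $\cY$ to formal coherent sheaves on the algebraic space $Z$ with descent, where Knutson's algebraic-space version of Grothendieck existence is available. The hypothesis on $Z$ is precisely what makes both the descent step for morphisms and the Grothendieck existence step go through, so all the nontrivial inputs converge at this point.
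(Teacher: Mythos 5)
The paper does not prove this result; it is stated with citations to Olsson's papers and used as a black box. So there is no ``paper's own proof'' to compare against. Your sketch is a reasonable outline of Olsson's actual strategy: limit arguments to reduce to a Noetherian (ultimately excellent) base, Artin's criterion for algebraicity, descent along the finite flat cover $Z\to\cY$ to reduce to algebraic spaces, deformation theory controlled by the cotangent complex, and, crucially, Grothendieck existence for proper DM stacks to achieve effective algebraization via the graph trick. Your argument for openness of $\ulHom^{\rep}_S$ (triviality of the finite unramified relative inertia is open on $\cY_T$, hence its failure locus has closed image in $T$ by properness) is also the standard one. The one place I would tighten the wording is the deformation-theory step: the relevant groups are $\Ext^i_{\cO_{\cY_T}}\bigl(Lf^{*}L_{\cX/S},\,\cO_{\cY_T}\otimes J\bigr)$ for a square-zero ideal $J$, and the finiteness uses coherence of $L_{\cX/S}$ (finite presentation of $\cX\to S$) together with properness of $\cY_T\to T$; as written it reads as if you are taking $\Ext$ against $\cO_{\cY_T}$ itself rather than against the deformation module. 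That is a presentational issue, not a gap. Since the paper itself offers no proof, your proposal cannot be said to diverge from it; it is consistent with the cited source.
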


We prove a slight generalization of this theorem as well as auxiliary
results on Hom stacks.
\begin{lem}
\label{lem:Hom-closed-imm}Let $S$ be an algebraic space, let $Y,X,W$
be algebraic spaces of finite presentation over $S$. Suppose that
$Y$ is flat and proper over $S$. Let $X\hookrightarrow W$ be a
closed immersion. Then the induced morphism 
\[
\ulHom_{S}(Y,X)\to\ulHom_{S}(Y,W)
\]
is also a closed immersion.
\end{lem}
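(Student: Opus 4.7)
The plan is to verify the closed-immersion property by testing against an arbitrary morphism $T\to\ulHom_S(Y,W)$ from an affine scheme $T$ and showing that the fiber product
\[
\ulHom_S(Y,X)\times_{\ulHom_S(Y,W)}T
\]
is a closed subscheme of $T$. Such a $T$-point corresponds to an $S$-morphism $f\colon Y_T\to W_T$, and the fiber product represents the functor on $T$-schemes that sends $T'\to T$ to a singleton precisely when the pullback $f_{T'}\colon Y_{T'}\to W_{T'}$ factors (necessarily uniquely) through the closed immersion $X_{T'}\hookrightarrow W_{T'}$.

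First I would perform a reduction. Form $Y':=Y_T\times_{W_T}X_T$, which is a closed subspace of $Y_T$ because $X_T\hookrightarrow W_T$ is a closed immersion. Formation of this fiber product commutes with base change along $T'\to T$, giving $Y'_{T'}=f_{T'}^{-1}(X_{T'})$. By the universal property of closed immersions, $f_{T'}$ factors through $X_{T'}$ if and only if the closed immersion $Y'_{T'}\hookrightarrow Y_{T'}$ is an isomorphism, equivalently $Y'_{T'}=Y_{T'}$ as closed subspaces of $Y_{T'}$. Hence the lemma is reduced to the following general statement: if $\pi\colon Y_T\to T$ is proper, flat and of finite presentation, and $j\colon Y'\hookrightarrow Y_T$ is a closed immersion with quasi-coherent ideal sheaf $I\subset\cO_{Y_T}$, then the functor sending $T'\to T$ to a singleton when $I\cdot\cO_{Y_{T'}}=0$ is represented by a closed subscheme of $T$.

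Next I would dispatch this reduced claim using the standard proper-flat pushforward machinery. Working \'etale-locally on $T$ (permissible since being a closed immersion is \'etale-local on the target) and reducing to the Noetherian setting via finite-presentation limit arguments, one produces the desired closed subscheme from the pushforward data of $\cO_{Y_T}/I$ along $\pi$: concretely, from a coherent presentation of $\cO_{Y_T}/I$ as a quotient of $\cO_{Y_T}$ one extracts a suitable Fitting-type ideal in $\cO_T$, or one applies Grothendieck's cohomology-and-base-change results directly to the flat proper $\pi$ (using that $\cO_{Y_T}$ is $T$-flat while $\cO_{Y_T}/I$ a priori is not).

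I expect the main obstacle to lie in the scheme-theoretic structure rather than the underlying set. The set-theoretic locus of interest is $T\setminus\pi(Y_T\setminus Y')$, which is closed because the composition $Y_T\setminus Y'\hookrightarrow Y_T\xrightarrow{\pi}T$ is flat of finite presentation, hence open, so its image is open in $T$. The subtle point is to endow this closed subset with the correct scheme structure so that the universal property holds against arbitrary, possibly non-reduced, test schemes $T'$, and this is exactly what the proper-flat pushforward formalism is designed to deliver.
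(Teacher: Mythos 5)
Your proof is correct, but it takes a genuinely different route from the paper. The paper verifies directly the three characterizing properties of a closed immersion of algebraic spaces — universally injective (obvious), unramified (via the infinitesimal lifting criterion for a thickening $T\hookrightarrow T'$), and universally closed (via the valuative criterion, where the key step is that flatness of $Y\times_S\Spec R$ over a valuation ring $R$ forces $\cO_{Y\times_S\Spec R}$ to be $R$-torsion-free, so the generic factorization through $X$ extends). Your approach instead reduces to the representability of the locus where $I\cdot\cO_{Y_{T'}}$ vanishes and appeals to the proper-flat pushforward/Fitting-ideal machinery (essentially the result that the zero locus of a morphism of finitely presented modules with flat target, pushed down along a proper finitely presented morphism, is a closed subscheme). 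This is a legitimate, and arguably more conceptual, proof: it produces the representing closed subscheme functorially in one shot and does not presuppose any characterization of closed immersions of algebraic spaces. The trade-off is that you invoke a fairly heavy black box (the vanishing-locus representability theorem, which itself requires a cohomology-and-base-change or flattening argument to prove), whereas the paper's argument is elementary and self-contained; notably, the flatness hypothesis on $Y$ enters both proofs in essentially the same way, but the paper uses it only in the one-dimensional valuative setting, which is much easier. Your final paragraph correctly identifies that the set-theoretic step (openness of the image of $Y_T\setminus Y'$) is the easy part and that the real content is the scheme structure; you would need to cite or prove the vanishing-locus representability result to complete the argument, but given that result, your reduction is clean and the plan works.
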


\begin{proof}
A closed immersion is characterized by three properties: universally
closed, unramified and universally injective \cite[tag 04XV]{stacks-project}.
We check these properties.

Universally injective: Obvious.

Unramified: Let $T\hookrightarrow T'$ be a thickening of $S$-schemes.
We need to show that given the diagram below of solid arrows
\[
\xymatrix{T\ar[r]\ar[d] & \ulHom_{S}(Y,X)\ar[d]\\
T'\ar[r]\ar@{-->}[ur] & \ulHom_{S}(Y,W),
}
\]
then there exists at most one dashed arrow making the whole diagram
commutative. This holds because for the corresponding diagram below
of solid arrows
\[
\xymatrix{Y\times_{S}T\ar[r]\ar[d] & X\ar[d]\\
Y\times_{S}T'\ar[r]\ar@{-->}[ur] & W,
}
\]
there exists at most one dashed arrow, since $X\to W$ is unramified.

Universally closed: Let $R$ be a valuation ring and let $K$ be its
fraction field. Suppose that we have the commutative diagram below
of solid arrows.
\[
\xymatrix{\Spec K\ar[r]\ar[d] & \ulHom_{S}(Y,X)\ar[d]\\
\Spec R\ar[r]\ar@{-->}[ur] & \ulHom_{S}(Y,W)
}
\]
We need to show the existence of the solid arrow making the whole
diagram commutative. Let $f:Y\times_{S}\Spec R\to W$ be the morphism
corresponding to the bottom arrow. The induced morphism $g\colon Y\times_{S}\Spec K\to W$
factors through $X$. It means $g^{-1}\cI_{X}=0$ where $\cI_{X}\subset\cO_{W}$
is the defining ideal sheaf of $X\subset W$. Since $Y\times_{S}\Spec R$
is flat over $\Spec R$, the structure sheaf of $Y\times_{S}\Spec R$
has no $R$-torsion. This implies that $f^{-1}\cI_{X}=0$ and that
$f$ factors through $X$. This means the existence of the dashed
arrow.
\end{proof}
\begin{lem}
\label{lem:Hom-finite-space}Let $S$ be an algebraic space of finite
type, let $Y,X$ be algebraic spaces of finite type over $S$. Suppose
that $Y$ is flat and finite over $S$. Then $\ulHom_{S}(Y,X)$ is
an algebraic space of finite type.
\end{lem}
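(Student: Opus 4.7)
The plan is to combine the Olsson-type theorem cited just before with a triviality-of-inertia observation and then establish quasi-compactness by a concrete Weil-restriction argument.

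First, I would apply Olsson's theorem stated above with the choice $Z=Y$ itself: a finite morphism is proper, and $Y\xrightarrow{\id}Y$ is a finite, finitely presented, flat, surjective cover from an algebraic space. Hence $\ulHom_{S}(Y,X)$ is a DM stack locally of finite presentation over $S$. Next, I would observe that since $X$ is an algebraic space (not a stack), for every $T\to S$ the category $\Hom_{T}(Y\times_{S}T,X\times_{S}T)$ has no nontrivial $2$-morphisms, i.e.\ it is a set. Consequently the inertia of $\ulHom_{S}(Y,X)$ is trivial and it is in fact an algebraic space locally of finite type over $S$.

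It then remains to prove quasi-compactness. Working étale-locally on $S$, I would reduce to the case where $S$ is affine and $Y=\Spec A$ with $A$ a finite locally free $\mathcal{O}_{S}$-algebra of some bounded rank $n$. A $T$-point of $\ulHom_{S}(Y,X)$ is a section of $X\times_{S}Y\to Y$ over $Y_{T}$, so that $\ulHom_{S}(Y,X)$ is canonically the Weil restriction $\Res_{Y/S}(X\times_{S}Y)$. If $X$ were affine, this Weil restriction would be representable by an affine scheme of finite type over $S$ by a direct Symmetric-algebra construction. For general $X$, since $X\to S$ is of finite type, there is an étale cover $U=\coprod_{i=1}^{r}U_{i}\to X$ by finitely many affine schemes of finite type over $S$. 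For any $f\colon Y_{T}\to X$, the pullback $Y_{T}\times_{X}U\to Y_{T}$ is a finite étale cover of bounded degree, and hence $Y_{T}\times_{X}U\to T$ is finite of bounded degree $\leq n\deg(U/X)$.

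Finally, I would stratify $\ulHom_{S}(Y,X)$ by the combinatorial type of the cover $Y_{T}\times_{X}U\to Y_{T}$, namely by which factors $U_{i}$ receive which connected components. Since the total degree is bounded, there are only finitely many such combinatorial types, giving a finite locally closed decomposition. Each stratum embeds, via \ref{lem:Hom-closed-imm} applied inside a product of Weil restrictions of affine $U_{i}$'s along finite flat maps, into an affine scheme of finite type over $S$, and is therefore quasi-compact. Hence $\ulHom_{S}(Y,X)$ is a finite union of quasi-compact pieces and is itself of finite type. The main obstacle will be executing this last stratification carefully in the algebraic-space setting, since $Y\to S$ need not be étale and $X$ need not be a scheme, so the combinatorial bookkeeping on the cover $U\to X$ must be phrased entirely in terms of finite flat algebraic-space structures on $Y_{T}\times_{X}U$.
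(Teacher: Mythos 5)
Your proof works up through the Olsson-plus-trivial-inertia step: that part is correct and is actually a cleaner way than the paper to see that $\ulHom_S(Y,X)$ is an algebraic space locally of finite type. The gap is in the quasi-compactness argument, specifically in the assertion that for a finite-type étale surjection $U=\coprod_i U_i\to X$ and an arbitrary $T$-point $f\colon Y_T\to X$, the pullback $Y_T\times_X U\to Y_T$ is \emph{finite} étale of bounded degree. This is false. An étale cover of $X$ by finitely many affines is quasi-compact and quasi-finite but in general not proper, hence not finite, and pulling back along $f$ preserves non-finiteness: already for $Y=S$, $Y_T=T$, $f$ arbitrary, the morphism $T\times_X U\to T$ is just a quasi-compact étale morphism. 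Without finiteness there is no degree, no ``combinatorial type'' stratification in your sense, and no way to invoke \ref{lem:Hom-closed-imm} (which requires a \emph{closed} immersion $X\hookrightarrow W$, not an étale atlas) to embed a stratum into a product of affine Weil restrictions.

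The paper closes this gap by a different reduction. After making $S$ affine and $Y$ finite free over $S$, it passes to a further surjective finite-type base change $T\to S$ so that every connected component of $(Y_T)_{\red}$ maps isomorphically to $T$; this splits $\ulHom$ into a finite product and reduces to the case $Y_{\red}\cong S$. In that case the inclusion $S\cong Y_{\red}\hookrightarrow Y$ gives a canonical morphism $\ulHom_S(Y,X)\to\ulHom_S(S,X)=X$, and for any étale $U\to X$ one has $\ulHom_S(Y,X)\times_X U\cong\ulHom_S(Y,U)$ (because $Y_{\red}\hookrightarrow Y$ is a thickening, so a lift of $Y_T\to X$ to $U$ along $U\to X$ is unique and functorial). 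This étale-localizes the \emph{target} and reduces to $X$ affine, where the closed-embedding-into-affine-space argument you also describe finishes the job. Your stratification idea does not produce this map to $X$ and so has no handle with which to localize on $X$; you would need to reproduce the $Y_{\red}\cong S$ reduction (or some substitute) before étale descent on $X$ can be made to work.
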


\begin{proof}
First consider the case where $S,Y,X$ are all affine schemes, say
$S=\Spec R$, $X=\Spec A$ and $Y=\Spec B$ and $B$ is a free $R$-module
of rank $m$. Suppose that $X$ is embedded in $\AA_{k}^{n}$. From
\ref{lem:Hom-closed-imm}, $\ulHom_{S}(Y,X)$ is a closed subspace
of $\hookrightarrow\ulHom_{S}(Y,\AA^{n})$. An element $f\in\ulHom_{S}(Y,\AA^{n})(\Spec B)$
is given by $f^{*}\colon B[x_{1},\dots,x_{n}]\to B^{\oplus m}$. Therefore
we have a universally injective morphism
\[
\ulHom_{S}(Y,\AA^{n})\to\AA_{S}^{nm}.
\]
As a consequence, $\ulHom_{S}(Y,X)$ is quasi-compact and of finite
type in this case.

We reduce the general case to the above special case step by step.
Firstly, taking the base change by a surjective morphism $S'\to S$,
we may suppose that $S$ is an affine scheme, which implies that $Y$
is also affine. By a further base change, we may suppose that, if
$S=\Spec R$ and $Y=\Spec B$, then $B$ is free over $R$. There
exists a surjective finite-type morphism $T\to S$ such that each
connected component of $(T\times_{S}Y)_{\red}$ maps isomorphically
onto $T$. This allows us to reduce to the case where $Y=\coprod_{i=1}^{n}Y_{i}$
and $Y_{i,\red}\to S$ is an isomorphism. Then $\ulHom_{S}(Y,X)=\prod_{i=1}^{n}\ulHom_{S}(Y_{i},X)$.
Thus we may further suppose that $Y_{\red}\to S$ is an isomorphism.
The morphism $S\cong Y_{\red}\hookrightarrow Y$ induces a morphism
$\ulHom_{S}(Y,X)\to\ulHom_{S}(S,X)=X$. For an etale morphism $U\to X$,
we have
\[
\ulHom_{S}(Y,X)\times_{X}U=\ulHom_{S}(Y,U).
\]
We can see this by looking at the diagram
\[
\xymatrix{T\ar[r]\ar@{^{(}->}[d] & U\ar[d]\\
Y_{T}\ar[r]\ar@{-->}[ur] & X
}
\]
induced from a morphism $T\to S$. Using this, we reduce the problem
to the case where $X$ is an affine scheme. This completes the proof.
\end{proof}
\begin{lem}
\label{lem:Hom-finite-stack}Let $\cS$ be a DM stack of finite type,
let $\cY,\cX$ be DM stacks of finite type over $\cS$. Suppose that
$\cY$ is flat and finite over $\cS$ and that there exists a finite,
etale and surjective morphism $\cU\to\cY$ such that $\cU\to\cS$
is representable. Then $\ulHom_{\cS}(\cY,\cX)$ and $\ulHom_{\cS}^{\rep}(\cY,\cX)$
are DM stacks of finite type.
\end{lem}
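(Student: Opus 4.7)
My plan is to invoke Olsson's theorem, as cited above, to get algebraicity plus local finite presentation, and then to upgrade the result to finite type by using the atlas $\cU$ of $\cY$ together with an etale atlas of $\cX$ and Lemma \ref{lem:Hom-finite-space}. First I would reduce to the case $\cS = S$ is an affine scheme. Pick an etale surjection $S \to \cS$ from an affine scheme; since formation of the Hom stacks commutes with base change on $\cS$ (that is, $\ulHom_\cS(\cY,\cX) \times_\cS S \cong \ulHom_S(\cY_S,\cX_S)$, and similarly for the $\rep$ version), and both being a DM stack and being of finite type descend along etale surjections, this reduction is harmless. After it, $\cU \to S$ is representable and finite (as the composition of the finite etale $\cU \to \cY$ with the finite $\cY \to S$), so $\cU$ is an algebraic space finite over $S$.

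Now Olsson's theorem applies with $Z := \cU$: $\cY$ is finite flat over $S$ hence proper and flat, and $\cU \to \cY$ is finite etale surjective, hence in particular finite, finitely presented, flat, and surjective from an algebraic space. The theorem yields that $\ulHom_S(\cY,\cX)$ is a DM stack locally of finite presentation over $S$ and that $\ulHom_S^{\rep}(\cY,\cX)$ is an open substack. Only quasi-compactness is left.

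For quasi-compactness I would factor the analysis through $\ulHom_S(\cU,\cX)$ via pullback along $\cU \to \cY$. Pick an etale atlas $V \to \cX$ with $V$ an affine scheme of finite type. By Lemma \ref{lem:Hom-finite-space}, $\ulHom_S(\cU,V)$ is an algebraic space of finite type. The composition $V \to \cX$ induces a morphism
\[
\ulHom_S(\cU,V) \longrightarrow \ulHom_S(\cU,\cX),
\]
which is representable and etale surjective: for any $T$-point $f\colon \cU_T \to \cX_T$, the pullback $\cU_T \times_\cX V \to \cU_T$ is an etale cover by an algebraic space, and after a further etale cover of $T$ trivializing it, $f$ lifts to a morphism $\cU_T \to V$. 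Therefore $\ulHom_S(\cU,\cX)$ is a DM stack of finite type. For the pullback morphism $\ulHom_S(\cY,\cX) \to \ulHom_S(\cU,\cX)$, its fiber over $f\colon \cU_T \to \cX_T$ parametrizes descent data of $f$ along $\cU_T \to \cY_T$, namely a $2$-isomorphism $\phi\colon f \circ \pr_1 \Rightarrow f \circ \pr_2$ on $R_T = \cU_T \times_{\cY_T} \cU_T$ satisfying the cocycle condition on $R_T \times_{\cU_T} R_T$. Since $R \to S$ is also finite and representable (being the pullback of $\cU \to \cY$ along $\cU \to \cY$), $R_T$ is a finite algebraic space over $T$, and $\phi$ is cut out from $\ulHom_S(R,\ulIso_{\cX \times \cX}(\cX,\cX))$ by a closed condition; combining Lemma \ref{lem:diag-immer} (the inertia of $\cX$ is affine and of finite type over $\cX$) with Lemma \ref{lem:Hom-finite-space}, this fiber is of finite type. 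Hence $\ulHom_S(\cY,\cX)$ is of finite type, and so is the open substack $\ulHom_S^{\rep}(\cY,\cX)$.

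The main obstacle will be the last step, namely producing a finite-type parameter space for the descent datum $\phi$. The formal manipulation with groupoid presentations is standard, but one must genuinely use both that the diagonal of $\cX$ is representable, unramified, and finite (Lemma \ref{lem:diag-immer}) and that $R$ is finite over $S$, in order to reduce the parametrization of $\phi$ to an application of Lemma \ref{lem:Hom-finite-space} to algebraic spaces; the rest of the argument is a matter of combining Olsson's theorem with the etale-descent patching described above.
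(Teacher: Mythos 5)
Your proof is correct in outline and takes a genuinely different route from the paper's. The paper proceeds by passing to the coarse moduli space $X$ of $\cX$: it quotes from \cite{MR2357471} that the forgetful morphism $\ulHom_S(\cY,\cX)\to\ulHom_S(\cY,X)$ is of finite type, and then shows $\ulHom_S(\cY,X)$ is of finite type by exhibiting it as the equalizer of $\ulHom_S(U,X)\rightrightarrows\ulHom_S(V,X)$, $V=U\times_{\cY}U$, with both of finite type by Lemma \ref{lem:Hom-finite-space}. Passing to $X$ has the advantage that all three Hom spaces in the equalizer diagram are Hom into an \emph{algebraic space}, so Lemma \ref{lem:Hom-finite-space} applies directly and the entire inertia/descent bookkeeping is absorbed into the quoted finiteness of $\ulHom_S(\cY,\cX)\to\ulHom_S(\cY,X)$. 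You instead keep $\cX$ stacky, cover it by an affine atlas $V\to\cX$, and factor through $\ulHom_S(\cU,\cX)$: the etale surjection $\ulHom_S(\cU,V)\to\ulHom_S(\cU,\cX)$ handles the source, and a descent-datum argument handles $\ulHom_S(\cY,\cX)\to\ulHom_S(\cU,\cX)$. The payoff is that you avoid citing the relative finite-type statement for the coarse-space morphism, which is not part of the version of Olsson's theorem displayed in the paper; the cost is that you must parametrize descent data by hand, which is exactly the step the coarse-space trick sidesteps.

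One spot that needs tightening: you cannot literally apply Lemma \ref{lem:Hom-finite-space} to $\ulHom_S(R,\ulIso_{\cX\times\cX}(\cX,\cX))$, because the inertia $I_{\cX}$ is a stack, not an algebraic space, and the lemma is stated for algebraic-space targets. The correct formulation is local on the base of the fibration $\ulHom_S(\cY,\cX)\to\ulHom_S(\cU,\cX)$: over a fixed $T$-point $f\colon\cU_T\to\cX_T$, the sheaf $\ulIso_{\cX_T}(f\circ\pr_1,f\circ\pr_2)=R_T\times_{(f\pr_1,f\pr_2),\,\cX_T\times_T\cX_T,\,\Delta}\cX_T$ \emph{is} an algebraic space, representable, finite and affine over $R_T$ (this is where the finiteness of $\Delta_{\cX}$ enters), hence of finite type over $T$. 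Then $\ulHom_T(R_T,\ulIso_{\cX_T}(f\pr_1,f\pr_2))$ is of finite type by Lemma \ref{lem:Hom-finite-space}, the subspace of sections over $R_T$ is closed (using separatedness of $R_T$), the cocycle locus is closed inside that, and this gives a representable finite-type morphism $\ulHom_S(\cY,\cX)\to\ulHom_S(\cU,\cX)$. With that rephrasing your argument is complete, and the two proofs can be regarded as dual: the paper kills the stackiness of $\cX$ by its coarse space, you kill it by an atlas.
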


\begin{proof}
Let $T\to\cS$ be an atlas. Then 
\begin{align*}
\ulHom_{\cS}(\cY,\cX)\times_{\cS}T & \cong\ulHom_{T}(\cY\times_{\cS}T,\cX\times_{\cX}T),\\
\ulHom_{\cS}^{\rep}(\cY,\cX)\times_{\cS}T & \cong\ulHom_{T}^{\rep}(\cY\times_{\cS}T,\cX\times_{\cX}T)
\end{align*}
are DM stacks locally of finite type. From \ref{lem:DM-base-ch},
$\ulHom_{\cS}(\cY,\cX)$ and $\ulHom_{\cS}^{\rep}(\cY,\cX)$ are also
DM stacks locally of finite type. To show that they are of finite
type, it suffices to show that $\ulHom_{\cS}(\cY,\cX)$ is of finite
type in the case where $\cS=S$ is a scheme. Let $X$ is the coarse
moduli space of $\cX$. Then the morphism $\ulHom_{S}(\cY,\cX)\to\ulHom_{S}(\cY,X)$
is of finite type from \cite{MR2357471}. From the assumption, there
exists a finite, etale and surjective morphism $U\to\cY$ from an
algebraic space. Let $V:=U\times_{\cY}U$. We have a short exact sequence
\[
\ulHom_{S}(\cY,X)\to\ulHom_{S}(U,X)\rightrightarrows\ulHom_{S}(V,X).
\]
From \ref{lem:Hom-finite-space}, the right two Hom spaces are of
finite type. It follows that $\ulHom_{S}(\cY,X)$ is of finite type.
Therefore $\ulHom_{S}(\cY,\cX)$ is also of finite type.
\end{proof}
\begin{lem}
\label{lem:fiber-product}Let $\cC$ be a DM stack and let $\cE,\cX,\cY,\cZ$
be DM stacks over $\cC$. Let $f\colon\cY\to\cX$ and $g\colon\cZ\to\cX$
be a $\cC$-morphism. Then there exists an isomorphism 
\[
\ulHom_{\cC}(\cE,\cY\times_{\cX}\cZ)\to\ulHom_{\cC}(\cE,\cY)\times_{\ulHom_{\cC}(\cE,\cX)}\ulHom_{\cC}(\cE,\cZ).
\]
If $f$ and $g$ are representable morphisms of formal DM stacks,
then 
\[
\ulHom_{\cC}^{\rep}(\cE,\cY\times_{\cX}\cZ)\to\ulHom_{\cC}^{\rep}(\cE,\cY)\times_{\ulHom_{\cC}^{\rep}(\cE,\cX)}\ulHom_{\cC}^{\rep}(\cE,\cZ).
\]
\end{lem}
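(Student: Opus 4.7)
The plan is to establish both isomorphisms directly from the universal property of the $2$-fiber product, checked fiberwise over $\cC$. For the first statement, let $T\to\cC$ be a morphism from an affine scheme. By definition, an object of $\ulHom_\cC(\cE,\cY\times_\cX\cZ)$ over $T$ is a $T$-morphism $h\colon\cE_T\to(\cY\times_\cX\cZ)_T$. Since base change commutes with fiber products, $(\cY\times_\cX\cZ)_T\cong\cY_T\times_{\cX_T}\cZ_T$, and the universal property of the $2$-fiber product identifies $h$ with a triple $(h_\cY,h_\cZ,\alpha)$ consisting of $T$-morphisms $h_\cY\colon\cE_T\to\cY_T$, $h_\cZ\colon\cE_T\to\cZ_T$, and a $2$-isomorphism $\alpha\colon f_T\circ h_\cY\Rightarrow g_T\circ h_\cZ$. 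Such a triple is precisely an object of the right-hand side over $T$. The correspondence is visibly natural in $T$ and in the morphisms of the fiber groupoids, so one obtains the first isomorphism of stacks.

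For the representable version, one must additionally match representability of $h$ with the representability of $h_\cY$ and $h_\cZ$. The key auxiliary fact is the following: given a composition $\cA\xrightarrow{\phi}\cB\xrightarrow{\psi}\cD$ in which $\psi$ is representable, $\phi$ is representable if and only if $\psi\circ\phi$ is. This follows from the natural identification
\[
U\times_\cB\cA\cong(U\times_\cD\cA)\times_{U\times_\cD\cB}U,
\]
valid for any algebraic space $U$ with a morphism to $\cB$, where the map $U\to U\times_\cD\cB$ is the graph of $U\to\cB$. When $\psi$ is representable, $U\times_\cD\cB$ is already an algebraic space, so the left-hand side is an algebraic space whenever $U\times_\cD\cA$ is, i.e.\ whenever $\psi\circ\phi$ is representable. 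The converse direction is the standard fact that compositions of representable morphisms are representable.

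Since $f$ and $g$ are representable, the projections $p_1\colon\cY\times_\cX\cZ\to\cY$ and $p_2\colon\cY\times_\cX\cZ\to\cZ$ are representable, being base changes of $g$ and $f$ respectively. Applying the auxiliary fact to $\cE_T\xrightarrow{h}\cY\times_\cX\cZ\xrightarrow{p_i}$ shows that $h$ is representable if and only if $h_\cY=p_1\circ h$ is representable, which in turn is equivalent to $h_\cZ=p_2\circ h$ being representable. Thus the bijection of the first part restricts to a bijection between objects with representable component morphisms on both sides, yielding the second isomorphism. The only real content of the argument is the equivalence of representability under composition recorded above; the rest is $2$-categorical formalism.
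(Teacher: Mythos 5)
Your argument for the first isomorphism mirrors the paper's: both reduce, fiber by fiber over $\cC$, to the universal property of the $2$-fiber product (the paper cites a standard reference for this compatibility). The interesting divergence is in the representable part. The paper invokes the criterion that a morphism of DM stacks is representable if and only if it is injective on automorphism groups of geometric points, and then reduces the whole problem to the purely group-theoretic statement that for subgroups $B,C\subset A$, a homomorphism $D\to B\times_A C$ is injective iff $D\to B$ and $D\to C$ are. You instead use the ``magic square'' composition lemma: for $\cA\xrightarrow{\phi}\cB\xrightarrow{\psi}\cD$ with $\psi$ representable, $\phi$ is representable iff $\psi\circ\phi$ is, proved via the identification $U\times_\cB\cA\cong U\times_{U\times_\cD\cB}(U\times_\cD\cA)$. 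Applied to the projections $p_1,p_2$ from $\cY\times_\cX\cZ$ (each representable as a base change of $g$ resp.\ $f$), this immediately gives that $h$ is representable iff $p_1\circ h$ is iff $p_2\circ h$ is, which is exactly what's needed. Both arguments are correct; yours avoids the pointwise automorphism-group criterion specific to DM stacks and works directly with the definition of representability, which is arguably more robust (it does not require passing to geometric points and works verbatim in the formal/ind setting), while the paper's is quicker for someone already comfortable with that criterion. One small point worth making explicit in your write-up: the right-hand side of the second isomorphism is a fiber product over $\ulHom_\cC^{\rep}(\cE,\cX)$, so one must also observe that representability of $h_\cY$ (or $h_\cZ$) forces representability of the composite to $\cX$; this is immediate since $f$ and $g$ are representable and compositions of representable morphisms are representable, and the paper records it as a separate sentence.
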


\begin{proof}
Let $S\to\cC$ be a morphism from an affine scheme. We define $\cE_{S}$
to be the base change of $\cE$ by $S\to\cC$. Similarly for $\cX_{S}$,
$\cY_{S}$ and $\cZ_{S}$. Let $\gamma\colon\cE_{S}\to\cY\times_{\cX}\cZ$
be a $\cC$-morphism. We get the induced morphisms $p_{\cY}\circ\gamma\colon\cE_{S}\to\cY$,
$p_{\cZ}\circ\gamma\colon\cE_{S}\to\cZ$ and the isomorphism $f\circ p_{\cY}\circ\gamma\to g\circ p_{\cZ}\circ\gamma$.
This defines a morphism
\[
\ulHom_{\cC}(\cE,\cY\times_{\cX}\cZ)\to\ulHom_{\cC}(\cE,\cY)\times_{\ulHom_{\cC}(\cE,\cX)}\ulHom_{\cC}(\cE,\cZ).
\]
We need to show that 
\[
\Hom_{S}(\cE_{S},(\cY\times_{\cX}\cZ)_{S})\to\Hom_{S}(\cE_{S},\cY_{S})\times_{\Hom_{S}(\cE_{S},\cX_{S})}\Hom_{S}(\cE_{S},\cZ_{S})
\]
is an isomorphism. This follows from 
\[
\cY_{S}\times_{\cX_{S}}\cZ_{S}\cong(\cY\times_{\cX}\cX_{S})\times_{\cX_{S}}(\cZ\times_{\cX}\cX_{S})\cong\cY\times_{\cX}\cZ\times_{\cX}\cX_{S}\cong\cY\times_{\cX}\cZ\times_{\cC}S
\]
and
\[
\Hom_{S}(\cE_{S},\cY_{S}\times_{\cX_{S}}\cZ_{S})\to\Hom_{S}(\cE_{S},\cY_{S})\times_{\Hom_{S}(\cE_{S},\cX_{S})}\Hom_{S}(\cE_{S},\cZ_{S})
\]
is an isomorphism from \cite[pp. 82--83]{MR3495343}.

For the second assertion, since $f$ and $g$ are representable, $\ulHom_{\cC}^{\rep}(\cE,\cY)$
and $\ulHom_{\cC}^{\rep}(\cE,\cZ)$ maps into $\ulHom_{\cC}^{\rep}(\cE,\cX)$.
Let $\beta\colon\cE_{S}\to\cY_{S}\times_{\cX_{S}}\cZ_{S}$ be a morphism
over $S$. It suffices to show that $\beta$ is representable if and
only if the induced morphisms $\cE_{S}\to\cY_{S}$ and $\cE_{S}\to\cZ_{S}$
are representable. This reduces to proving the following fact: Suppose
that $B\to A$ and $C\to A$ are injective homomorphisms of groups
and that $D$ is another group. Then a homomorphism $D\to B\times_{A}C$
is injective if and only if $D\to B$ and $D\to C$ are injective.
But this is obvious.
\end{proof}

\subsection{Untwisting stacks}
\begin{defn}
\label{def:untwg-stack}Let $\cX$ be a formal DM stack of finite
type over $\Df$. We fix a stack $\Theta$ as in \ref{def:Theta}.
We define the \emph{total untwisting} \emph{stack }$\Utg_{\Theta}(\cX)$
to be the Hom stack $\ulHom_{\Df_{\Theta}}^{\rep}(\cE_{\Theta},\cX_{\Theta})$,
where $\cX_{\Theta}:=\cX\times_{k}\Theta$. This is a fibered category
over $\Df_{\Theta}$ such that for a morphism $U\to\Df_{\Theta}$
from an affine scheme, the fiber category $\ulHom_{\Df_{\Theta}}^{\rep}(\cE_{\Theta},\cX_{\Theta})(U)$,
is the category $\Hom_{U}^{\rep}(\cE_{\Theta}\times_{\Df_{\Theta}}U,\cX_{\Theta}\times_{\Df_{\Theta}}U)$
of representable morphisms $\cE_{\Theta}\times_{\Df_{\Theta}}U\to\cX_{\Theta}\times_{\Df_{\Theta}}U$
over $U$. For any morphism $\sigma\colon\Sigma\to\Theta$ of stacks,
we define
\[
\Utg_{\Sigma}(\cX)=\Utg_{\sigma}(\cX):=\ulHom_{\Df_{\Theta}}^{\rep}(\cE_{\Theta},\cX_{\Theta})\times_{\Theta}\Sigma\left(\cong\ulHom_{\Df_{\Sigma}}^{\rep}(\cE_{\Sigma},\cX_{\Sigma})\right),
\]
where $\cE_{\Sigma}:=\cE_{\Theta}\times_{\Theta}\Sigma$.
\end{defn}

\begin{lem}
For a Galoisian group $G$ and a ramification datum $\br$ for $G$,
$\Utg_{\Theta_{[G]}^{[\br]}}(\cX)$ is a formal DM stack of finite
type over $\Df_{\Theta_{[G]}^{[\br]}}$.
\end{lem}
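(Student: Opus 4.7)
The plan is to realize $\Utg_{\Theta_{[G]}^{[\br]}}(\cX)$ as the inductive limit of DM stacks of finite type obtained by working one infinitesimal thickening at a time, and then to invoke \ref{lem:formal-finite-type}. Write $\Sigma := \Theta_{[G]}^{[\br]}$ for brevity. By \ref{prop:Theta}, $\Sigma$ is a reduced DM stack of finite type, so $\D_{\Sigma,n} = \D_n \times \Sigma$ is a DM stack of finite type. For each $n \in \NN$ I set
\[
\Utg_n := \ulHom_{\D_{\Sigma,n}}^{\rep}(\cE_{\Sigma,n}, \cX_{\Sigma,n}),
\]
which is the candidate for the $n$-th piece.

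First I would check that \ref{lem:Hom-finite-stack} applies to $\Utg_n$. The morphism $E_{\Sigma,n} \to \cE_{\Sigma,n}$ is a $\cG_\Sigma$-torsor, hence representable, finite, etale and surjective. Since $E_{\Sigma,n}$ is the relative spectrum over $\Sigma$ of the finite flat coherent algebra $O_{\Sigma,n}$, the morphism $E_{\Sigma,n} \to \D_{\Sigma,n}$ is representable and finite flat; and since $\cG_\Sigma$ acts over $\D_{\Sigma,n}$, fppf descent along $E_{\Sigma,n} \to \cE_{\Sigma,n}$ gives that $\cE_{\Sigma,n} \to \D_{\Sigma,n}$ is likewise finite and flat. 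Thus \ref{lem:Hom-finite-stack} yields that each $\Utg_n$ is a DM stack of finite type over $\D_{\Sigma,n}$.

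Next I would verify the compatibility $\Utg_{n+1} \times_{\D_{\Sigma,n+1}} \D_{\Sigma,n} \cong \Utg_n$. This is a direct check on $S$-points: for $S$ affine over $\D_{\Sigma,n}$, the base-change identifications
\[
\cE_{\Sigma,n+1} \times_{\D_{\Sigma,n+1}} S \cong \cE_{\Sigma,n} \times_{\D_{\Sigma,n}} S, \quad \cX_{\Sigma,n+1} \times_{\D_{\Sigma,n+1}} S \cong \cX_{\Sigma,n} \times_{\D_{\Sigma,n}} S
\]
match up the two groupoids of representable $S$-morphisms. Granting this, \ref{lem:formal-finite-type} gives that $\varinjlim \Utg_n$ is a formal DM stack of finite type over $\Df_\Sigma$.

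The last and most delicate (though still routine) step is to identify $\Utg_\Sigma(\cX)$ with $\varinjlim \Utg_n$. Given an affine test scheme $S$ together with a morphism $S \to \Df_\Sigma$, such a morphism factors through some $\D_{\Sigma,m}$, and the defining fiber products
\[
\cE_\Sigma \times_{\Df_\Sigma} S \cong \cE_{\Sigma,m} \times_{\D_{\Sigma,m}} S, \quad \cX_\Sigma \times_{\Df_\Sigma} S \cong \cX_{\Sigma,m} \times_{\D_{\Sigma,m}} S
\]
agree with those defining the $m$-th level. Hence an $S$-point of $\Utg_\Sigma(\cX)$ is the same datum as an $S$-point of $\Utg_m$ for some $m$, yielding mutually inverse morphisms between $\Utg_\Sigma(\cX)$ and $\varinjlim \Utg_n$ and completing the proof.
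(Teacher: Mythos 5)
Your proof is correct and takes essentially the same route as the paper: verify that each truncation $\ulHom_{\D_{\Sigma,n}}^{\rep}(\cE_{\Sigma,n},\cX_{\Sigma,n})$ is a DM stack of finite type via \ref{lem:Hom-finite-stack}, then assemble them via \ref{lem:formal-finite-type}. The only cosmetic difference is that the paper first pulls back along an atlas $V\to\Theta_{[G]}^{[\br]}$ before invoking \ref{lem:Hom-finite-stack} and then comes back with \ref{lem:DM-base-ch}, whereas you apply \ref{lem:Hom-finite-stack} directly with $\cS=\D_{\Sigma,n}$ a DM stack (which is permitted by its statement, as its own proof internalizes exactly that atlas base change); you also spell out the compatibility of the $\Utg_n$ and the identification of the colimit with $\Utg_\Sigma(\cX)$, which the paper leaves tacit.
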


\begin{proof}
First note that $\Theta_{[G]}^{[\br]}$ is a DM stack of finite type.
Let $V\to\Theta_{[G]}^{[\br]}$ be an atlas. For each $n\in\NN$,
\[
\Utg_{V}(\cX)_{n}=\ulHom_{\D_{V,n}}^{\rep}(\cE_{V,n},\cX_{V,n})=\Utg_{\Theta_{[G]}^{[\br]}}(\cX)_{n}\times_{\Theta_{[G]}^{[\br]}}V
\]
is a DM stack of finite type from \ref{lem:Hom-finite-stack}. From
\ref{lem:DM-base-ch}, $\Utg_{\Theta_{[G]}^{[\br]}}(\cX)_{n}$ is
a DM stack of finite type. The lemma follows from \ref{lem:formal-finite-type}.
\end{proof}
From \ref{lem:tw-disk-iso}, for an algebraically closed field $K$,
the set of $K$-points
\[
\Utg_{\Theta}(\cX)[K]=\Utg_{\Theta}(\cX)_{0}[K]
\]
is identified with the isomorphism classes of pairs $(\cE,\cE_{0}\to\cX)$
of a twisted formal disk $\cE$ over $K$ and a representable morphism
$\cE_{0}\to\cX$. Here two pairs $(\cE,a\colon\cE_{0}\to\cX)$ and
$(\cE',b\colon\cE_{0}'\to\cX)$ are \emph{isomorphic} if there exists
an isomorphism $h\colon\cE\to\cE'$ such that $a$ is isomorphic to
\[
\cE_{0}\xrightarrow{h\times_{\Df}\D_{0}}\cE_{0}'\xrightarrow{b}\cX.
\]

\begin{lem}
\label{lem:Hom-completion}Let $S=\Spec R$ be an affine scheme and
let $\cE$ and $\cX$ be DM stacks over $\D_{S}=\Spec R\llbracket t\rrbracket$.
Let $\hat{\cE}$ and $\hat{\cX}$ be their $t$-adic completions,
which are formal DM stacks over $\Df_{S}=\Spf R\llbracket t\rrbracket$.
Then we have an isomorphism of stacks over $\Df_{S}$,
\[
\ulHom_{\Df_{S}}^{\rep}(\hat{\cE},\hat{\cX})\cong\widehat{\ulHom_{\D_{S}}^{\rep}(\cE,\cX)}.
\]
\end{lem}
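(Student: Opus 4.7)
The plan is to prove the isomorphism by computing the truncations of both sides along the thickenings $\D_{S,n}\hookrightarrow\Df_S$ and showing that, level by level, both sides are naturally isomorphic to the same DM stack. Since both sides are formal DM stacks over $\Df_S=\varinjlim\D_{S,n}$, they are determined by their systems of base changes $(-)\times_{\Df_S}\D_{S,n}$ together with the transition maps, so verifying equality at each level suffices.

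First I would compute the base change of $\hat\cE$ and $\hat\cX$ along $\D_{S,n}\hookrightarrow\Df_S$. Using the elementary identity $\D_{S,m}\times_{\Df_S}\D_{S,n}\cong\D_{S,\min(m,n)}$ together with the compatibility of fiber products with the inductive limits defining formal DM stacks (Proposition~A.2 of Tonini-Yasuda), one sees that $\hat\cE\times_{\Df_S}\D_{S,n}\cong\cE\times_{\D_S}\D_{S,n}=\cE_n$ and similarly $\hat\cX\times_{\Df_S}\D_{S,n}\cong\cX_n$. By functoriality of Hom-stacks in the base, this gives
\[
\ulHom_{\Df_S}^{\rep}(\hat\cE,\hat\cX)\times_{\Df_S}\D_{S,n}\cong\ulHom_{\D_{S,n}}^{\rep}(\cE_n,\cX_n).
\]
On the right-hand side, by the definition of $t$-adic completion we have $\widehat{\ulHom_{\D_S}^{\rep}(\cE,\cX)}\times_{\Df_S}\D_{S,n}\cong\ulHom_{\D_S}^{\rep}(\cE,\cX)\times_{\D_S}\D_{S,n}$, and the standard base-change formula for Hom-stacks identifies the latter with $\ulHom_{\D_{S,n}}^{\rep}(\cE_n,\cX_n)$ as well. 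These level-wise identifications are compatible with the transition maps (both are induced by the base-change functors), so they glue to the desired isomorphism of formal stacks over $\Df_S$.

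The main point to check carefully is that \emph{representability} is preserved and detected levelwise. Concretely, I would argue that a morphism $f\colon\hat\cE\to\hat\cX$ over $\Df_S$ is representable if and only if each truncation $f_n\colon\cE_n\to\cX_n$ is: given any morphism $T\to\hat\cX$ from an affine, it factors through $\cX_n$ for some $n$, and then the fiber-product computations above yield $T\times_{\hat\cX}\hat\cE\cong T\times_{\cX_n}\cE_n$, so the representability of this pullback is equivalent to the representability of $f_n$. The same argument on the untwisted side matches up the representable loci after truncation, which upgrades the base-change equivalence from $\ulHom$ to $\ulHom^{\rep}$. Secondarily, one should check that $\widehat{\ulHom_{\D_S}^{\rep}(\cE,\cX)}$ is genuinely a formal DM stack, which follows from the fact that each truncation $\ulHom_{\D_{S,n}}^{\rep}(\cE_n,\cX_n)$ is a DM stack locally of finite presentation (Lemma~\ref{lem:Hom-finite-stack} and Olsson's theorem cited just before it), so the inductive limit has the shape of Definition~\ref{def:ind-DM-coarse}.
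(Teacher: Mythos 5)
Your proposal is correct and proceeds by the same underlying mechanism as the paper: the observation that any morphism from an affine scheme to $\Df_S$ factors through some truncation $\D_{S,n}$, combined with the base-change compatibility of the representable-Hom stack, which together identify both sides with $\ulHom_{\D_{S,n}}^{\rep}(\cE_n,\cX_n)$ at each level. The paper simply evaluates both fibered categories directly on objects $U\to\Df_S$ and writes a single chain of natural isomorphisms, whereas you phrase it as a levelwise comparison with a gluing step and add the (useful, if somewhat implicit in the paper) sanity checks that representability and the formal-DM-stack structure are detected levelwise; these are presentational variants of one argument.
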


\begin{proof}
For $n\in\NN$, we put $\cE_{n}:=\cE\times_{\D_{S}}\D_{S,n}=\hat{\cE}\times_{\Df_{S}}\D_{S,n}$
and similarly for $\cX_{n}$. Any morphism $U\to\Df_{S}$ from an
affine scheme factors through $\D_{S,n}$ for some $n\in\NN$. Therefore
we have natural isomorphisms
\begin{align*}
\ulHom_{\Df_{S}}^{\rep}(\hat{\cE},\hat{\cX})(U) & \cong\left(\ulHom_{\Df_{S}}^{\rep}(\hat{\cE},\hat{\cX})\times_{\Df_{S}}\D_{S,n}\right)(U)\\
 & \cong\ulHom_{\D_{S,n}}^{\rep}(\cE_{n},\cX_{n})(U)\\
 & \cong\left(\ulHom_{\D_{S}}^{\rep}(\cE,\cX)\times_{\D_{S}}\D_{S,n}\right)(U)\\
 & \cong\widehat{\ulHom_{\D_{S}}^{\rep}(\cE,\cX)}(U).
\end{align*}
\end{proof}
\begin{lem}
\label{lem:unt-rep-etale}If $\cY\to\cX$ is representable and etale,
then $\Utg_{\Theta}(\cY)\to\Utg_{\Theta}(\cX)$ is also representable
and etale. If $\cY\to\cX$ is stabilizer-preserving, etale and surjective,
then $\Utg_{\Theta}(\cY)\to\Utg_{\Theta}(\cX)$ is surjective.
\end{lem}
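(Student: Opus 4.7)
The plan is to analyze the fibers of the morphism $\Utg_{\Theta}(\cY)\to\Utg_{\Theta}(\cX)$ over affine scheme points. A morphism $U\to\Utg_{\Theta}(\cX)$ corresponds to a representable morphism $\phi\colon\cE_{U}\to\cX_{U}$, and by \ref{lem:fiber-product} together with the universal property of the Cartesian square $\cY\times_{\cX}\cE\to\cE$, the fiber product $\Utg_{\Theta}(\cY)\times_{\Utg_{\Theta}(\cX)}U$ is identified with the space of sections of the projection $\cZ:=\cY_{U}\times_{\cX_{U},\phi}\cE_{U}\to\cE_{U}$. Since $\cY\to\cX$ is representable étale, so is $\cZ\to\cE_{U}$, and the trivial relative inertia forces any 2-isomorphism between sections to be uniquely determined by its projection. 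Hence the section groupoid is equivalent to a set, giving representability of $\Utg_{\Theta}(\cY)\to\Utg_{\Theta}(\cX)$.

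For étaleness, I would verify the formal lifting criterion. Given a square-zero thickening $V\hookrightarrow V'$ of affine $U$-schemes, the induced morphism $\cE_{V}\hookrightarrow\cE_{V'}$ is a thickening of DM stacks, since $\cE_{\Theta}\to\Theta$ is flat and its pullback along the surjective closed immersion $V\hookrightarrow V'$ yields a surjective closed immersion. Because $\cZ\to\cE_{U}$ is formally étale, any section of $\cZ|_{\cE_{V}}\to\cE_{V}$ lifts uniquely to a section of $\cZ|_{\cE_{V'}}\to\cE_{V'}$. This shows the fiber is formally étale over $U$, and combined with local finite presentation (inherited from \ref{lem:Hom-finite-stack}), yields étaleness.

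For surjectivity, let $(\cE,\phi\colon\cE\to\cX)$ be a geometric $K$-point of $\Utg_{\Theta}(\cX)$ with $K$ algebraically closed, and write $\cE\cong[E/G]$ with $E$ the integral model of the corresponding $G$-torsor. The base change $\cZ:=\cY\times_{\cX,\phi}\cE\to\cE$ is representable, étale, surjective and stabilizer-preserving, and pulling back to $E$ gives a $G$-equivariant étale surjective morphism $Z_{E}\to E$ of algebraic spaces. The stabilizer-preserving hypothesis forces the $G$-action on the algebraic-space fiber $Z_{E,0}$ over the closed point of $E$ to be trivial: two $K$-points related by a $G$-translate become identified in $Z_{E,0}$ using the unique automorphism of the source lifted from that of the target via stabilizer-preservation. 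Choosing any $K$-point $y_{0}\in Z_{E,0}$, the henselian property of the formal disk $E$ produces a unique section $\sigma\colon E\to Z_{E}$ through $y_{0}$; the translate $g\cdot\sigma$ also passes through $g\cdot y_{0}=y_{0}$, so by uniqueness $\sigma$ is $G$-equivariant. Descending to the quotient, $\sigma$ yields the desired lift $\tilde\phi\colon\cE\to\cY$ of $\phi$, which is representable because $f\circ\tilde\phi\cong\phi$ is so.

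The main obstacle is the surjectivity part, specifically translating the stabilizer-preserving property into triviality of the $G$-action on $Z_{E,0}$ as an algebraic space; the subtlety is that stabilizers act through automorphism groups in the stacks, and one must use stabilizer-preservation to identify a priori-distinct $K$-points of the fiber.
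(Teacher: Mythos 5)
Your proposal is essentially correct and the representability and \'etaleness arguments follow the same line as the paper (fiber-product-of-Hom-stacks plus formal lifting), but for surjectivity you take a genuinely different route. The paper lifts the substack $\B G=[E_{\red}/G]\hookrightarrow\cE_{\Spec K}$ to $\cY$ directly (this is where stabilizer-preservation enters), and then invokes the stack-level \'etale lifting Lemma~\ref{lem:etale} for the thickening $\B G\hookrightarrow\cE_{\Spec K}$. You instead pull everything back to the scheme $E$, argue that stabilizer-preservation trivializes the $G$-action on the closed fiber $Z_{E,0}$, lift one point to a section $\sigma$ by formal \'etaleness, and use uniqueness of such sections to conclude $G$-equivariance of $\sigma$, then descend. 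Your route has the advantage of staying at the algebraic-space level and avoiding the 2-categorical bookkeeping of Lemma~\ref{lem:etale}; the paper's route is shorter once that lemma is in hand. Both correctly identify stabilizer-preservation as the ingredient needed to lift the $G$-structure.

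One slip you should repair: a geometric $K$-point of $\Utg_{\Theta}(\cX)$ is not a pair $(\cE,\phi\colon\cE\to\cX)$; by Lemma~\ref{lem:tw-disk-iso} and the paragraph following Definition~\ref{def:untwg-stack}, it is a pair $(\cE,\phi_{0}\colon\cE_{0}\to\cX)$ where $\cE_{0}$ is the truncation over $\D_{0}$, so $\phi$ is not given on the full formal disk $\cE$ and one cannot form $\cZ=\cY\times_{\cX,\phi}\cE$ or $Z_{E}$ over the integral model $E=\Spf O_{E}$. The fix is cosmetic: replace $\cE$ by $\cE_{0}$ and $E$ by $E_{0}$ (a finite nilpotent thickening of $\Spec K$), and replace the appeal to ``the henselian property of the formal disk'' by the fact that \'etale morphisms lift sections uniquely across nilpotent thickenings. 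Also, in the \'etaleness step, flatness of $\cE_{\Theta}\to\Theta$ is not needed to see $\cE_{V}\hookrightarrow\cE_{V'}$ is a thickening --- that is already a base change of the thickening $V\hookrightarrow V'$. With these corrections the argument is complete.
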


\begin{proof}
Let $\Spec K\to\Df_{\Theta}$ be a geometric point and let $r\colon\Spec K\to\Utg_{\Theta}(\cY)$
be a lift of it. The latter corresponds to a representable morphism
\[
\cE_{\Spec K}\to\cY.
\]
The automorphism group of this morphism is equal to $\Aut_{\Utg_{\Theta}(\cY)/\Df_{\Theta}}(r)$,
the group of those automorphisms of the object $r$ that map to the
identity morphism in $\Df_{\Theta}$. From \ref{lem:rep-stab-preserving},
the map $\Aut_{\Utg_{\Theta}(\cY)/\Df_{\Theta}}(r)\to\Aut_{\Utg_{\Theta}(\cX)/\Df_{\Theta}}(r')$
is injective, where $r'\colon\Spec K\to\Utg_{\Theta}(\cX)$ is the
image of $r$ and corresponds to the composition $\cE_{\Spec K}\to\cY\to\cX$.
From \ref{lem:Aut-inj}, $\Aut_{\Utg_{\Theta}(\cY)}(r)\to\Aut_{\Utg_{\Theta}(\cX)}(r')$
is also injective. Therefore, for each $n$, the morphism $\Utg_{\Theta}(\cY)_{n}\to\Utg_{\Theta}(\cX)_{n}$
of DM stacks is representable. We conclude that the morphism $\Utg_{\Theta}(\cY)\to\Utg_{\Theta}(\cX)$
is also representable.

Consider a 2-commutative diagram
\[
\xymatrix{S\ar[rr]\ar@{^{(}->}[d]_{\iota} &  & \Utg_{\Theta}(\cY)\ar[d]\\
S'\ar[r]\ar@{=>}[urr] & U\ar[r] & \Utg_{\Theta}(\cX)
}
\]
such that $S,S',U$ are algebraic spaces and $\iota$ is a thickening.
Ignoring $U$, we obtain the corresponding diagram: 
\begin{equation}
\xymatrix{\cE_{S}\ar@{^{(}->}[d]\ar[r] & \cY\ar[d]\\
\cE_{S'}\ar@{=>}[ur]\ar[r] & \cX
}
\label{diag2}
\end{equation}
Since $\cY\to\cX$ is representable and etale, from \ref{lem:etale},
there exists a morphism $\cE_{S'}\to\cY$ and two 2-morphisms forming
the 2-commutative diagram
\[
\xymatrix{\cE_{S}\ar@{^{(}->}[d]\ar[rr] &  & \cY\ar[d]\\
\cE_{S'}\ar[urr]\ar[rr]\ar@{=>}[ur] & {}\ar@{=>}[ur] & \cX
}
\]
which induces (\ref{diag2}). Moreover such a triple of a morphism
$\cE_{S'}\to\cY$ and two 2-morphisms is unique up to unique isomorphism
(see the uniqueness assertion in \ref{lem:etale}). We have the corresponding
diagram:
\[
\xymatrix{S\ar[rr]\ar@{^{(}->}[d]_{\iota} &  & \Utg_{\Theta}(\cY)\ar[d]\\
S'\ar[r]\ar[urr]\ar@{=>}[ur] & U\ar[r]\ar@{=>}[ur] & \Utg_{\Theta}(\cX)
}
\]
This shows that there exists a unique dashed arrow fitting into the
commutative diagram of algebraic spaces:
\[
\xymatrix{S\ar[r]\ar@{^{(}->}[d] & U\times_{\Utg_{\Theta}(\cX)}\Utg_{\Theta}(\cY)\ar[d]\\
S'\ar[r]\ar@{-->}[ur] & U
}
\]
Therefore $U\times_{\Utg_{\Theta}(\cX)}\Utg_{\Theta}(\cY)\to U$ is
etale for any morphism $U\to\Utg_{\Theta}(\cX)$ from an algebraic
space. It follows that $\Utg_{\Theta}(\cY)\to\Utg_{\Theta}(\cX)$
is etale.

Suppose now that $\cY\to\cX$ is stabilizer-preserving, etale and
surjective. Let $\Spec K\to\Utg_{\Theta}(\cX)$ be any geometric point.
This induces a geometric point 
\[
x\colon\Spec K\to\cE_{\Spec K}\to\cX.
\]
Let $y\colon\Spec K\to\cY$ be a lift of it. For some finite group
$G$ and a scheme $E$ with $E_{\red}=\Spec K$, we have $\cE_{\Spec K}\cong[E/G]$.
The representable morphism $\cE_{\Spec K}\to\cX$ induces an injection
$G\to\Aut_{\cX}(x)$. Since $\cY\to\cX$ is stabilizer-preserving,
the composition 
\[
\B G=[E_{\red}/G]\hookrightarrow\cE_{\Spec K}\to\cX
\]
lifts to $\B G\to\cY$. We obtain a 2-commutative diagram:
\begin{equation}
\xymatrix{\Spec K\ar[r] & \B G\ar[d]\ar[r] & \cY\ar[d]\\
 & \cE_{\Spec K}\ar[r] & \cX
}
\label{eq:E}
\end{equation}
The morphism $\B G\to\cE_{\Spec K}$ is a thickening. From \ref{lem:etale},
there exists a morphism $\cE_{\Spec K}\to\cY$ fitting into (\ref{eq:E}).
It gives a $K$-point of $\Utg_{\Theta}(\cY)$ lying over the chosen
$K$-point of $\Utg_{\Theta}(\cX)$. This shows the desired surjectivity.
\end{proof}

\subsection{The case of quotient stacks}

For a formal scheme $X$ of finite type over $\Df$ with an automorphism
$\alpha$, the $\alpha$-fixed locus $X^{\alpha}$ is defined to be
\[
X\times_{(\id_{X},\alpha),X\times_{\Df}X,\Delta}X.
\]
This becomes a closed subscheme of $X$ by the first projection. When
a finite group $H$ acts on $X$, we define the \emph{$H$-fixed locus
}$X^{H}$ to be $\bigcup_{h\in H}X^{h}$.
\begin{defn}
Let $G$ be a Galoisian group, let $V$ be a formal scheme over $\Df$
with an action of a finite group $H$ and let $\iota\colon G\to H$
be an embedding of finite groups. We define $\Utg_{\Theta_{G},\iota}(V)$
to be the $G$-fixed locus of 
\[
\ulHom_{\D_{\Theta_{G}}}(E_{\Theta_{G}},V_{\Theta_{G}})
\]
by the action $g(f):=\iota(g)fg^{-1}$.
\end{defn}

Note that the Hom stack $\ulHom_{\D_{\Theta_{G}}}(E_{\Theta_{G}},V_{\Theta_{G}})$
is identical to the Weil restriction $\R_{E_{\Theta_{G}}/\Df_{\Theta_{G}}}(V\times_{\Df_{\Theta_{G}}}E_{\Theta_{G}})$.

An object of $\Utg_{\Theta_{G},\iota}(V)$ over an $S$-point $S\to\Df_{\Theta_{G}}$
is an $\iota$-equivariant morphism $E_{\Theta_{G}}\times_{\Df_{\Theta_{G}}}S\to V.$
This induces a representable morphism 
\[
[(E_{\Theta_{G}}\times_{\Df_{\Theta_{G}}}S)/G]\cong\cE_{\Theta_{G}}\times_{\Df_{\Theta_{G}}}S\to[V/\iota(G)]\to[V/H].
\]
In turn, this leads to a morphism 
\[
\coprod_{\iota\in\Emb(G,H)}\Utg_{\Theta_{G},\iota}(V)\to\Utg_{\Theta_{G}}(\cX),
\]
where $\Emb(G,H)$ is the set of embeddings $G\hookrightarrow H$.

We define actions of $H$ and $\Aut(G)$ on $\coprod_{\iota}\Utg_{\Theta_{G},\iota}(V)$.
For $h\in H$, let $c_{h}\in\Aut(H)$ be the automorphism $f\mapsto hfh^{-1}$.
Given an $\iota$-equivariant morphism $\psi\colon F\to V$, the composition
$F\xrightarrow{\psi}V\xrightarrow{h}V$ is $c_{h}\iota$-equivariant.
Sending $F\to V$ to $F\xrightarrow{}V\xrightarrow{h}V$ gives a morphism
\[
\Utg_{\Theta_{G},\iota}(V)\to\Utg_{\Theta_{G},c_{h}\iota}(V)
\]
and defines an $H$-action on $\coprod_{\iota}\Utg_{\Theta_{G},\iota}(V)$.

For $c\in\Aut(G)$, let $F^{(c)}$ be the scheme $F$ with the new
$G$-action such that the automorphism $c^{-1}(g)\colon F\to F$ for
the old action is the new $g$-action. Then the same scheme morphism
$\psi\colon F^{(c)}\to V$ is now $\iota c^{-1}$-equivariant. Sending
$F\to V$ to $F^{(c)}\to V$ gives a morphism 
\[
\Utg_{\Theta_{G},\iota}(V)\to\Utg_{\Theta_{G},\iota c^{-1}}(V)
\]
and defines an $\Aut(G)$-action on $\coprod_{\iota}\Utg_{\Theta_{G},\iota}(V).$

The two actions commute and we obtain the $\Aut(G)\times H$-action.
The morphism $\coprod_{\iota}\Utg_{\Theta_{G},\iota}(V)\to\Utg_{\Theta_{G}}(\cX)$
is invariant for the $H$-action and equivariant for the $\Aut(G)$-
and $\Aut(G)\times H$-actions. From the universality of quotient
stacks \cite{MR2125542} (similar to the one of quotient schemes),
we obtain morphisms
\begin{gather}
\left[\left(\coprod_{\iota\in\Emb(G,H)}\Utg_{\Theta_{G},\iota}(V)\right)/H\right]\to\Utg_{\Theta_{G}}(\cX),\label{eq:UnG1}\\
\left[\left(\coprod_{\iota\in\Emb(G,H)}\Utg_{\Theta_{G},\iota}(V)\right)/\Aut(G)\times H\right]\to\Utg_{\Theta_{[G]}}(\cX).\label{eq:UnG2}
\end{gather}
Note that $\Theta_{G}\to\Theta_{[G]}$ as well as $\Utg_{\Theta_{G}}(\cX)\to\Utg_{\Theta_{[G]}}(\cX)$
is an $\Aut(G)$-torsor, since it is a base change of $\Spec k\to\cA_{[G]}\cong\B\Aut(G)$.
Since $\Aut(G)$ acts freely on $\Emb(G,H)$,  if $\Emb(G,H)/\Aut(G)$
denotes a set of representatives of $\Aut(G)$-orbits, we have 
\[
\left[\left(\coprod_{\iota\in\Emb(G,H)}\Utg_{\Theta_{G},\iota}(V)\right)/\Aut(G)\right]\cong\coprod_{\iota\in\Emb(G,H)/\Aut(G)}\Utg_{\Theta_{G},\iota}(V).
\]
The $\Aut(G)\times H$-action on $\Emb(G,H)$ is not generally free.
The stabilizer of $\iota$ is 
\[
\{(\iota^{*}c_{h},h)\mid h\in\N_{H}(\iota(G))\}\;(\cong\N_{H}(\iota(G))).
\]
Here $\N_{H}(\iota(G))$ is the normalizer of $\iota(G)$ in $H$
and $\iota^{*}c_{h}$ is the automorphism of $G$ corresponding to
$c_{h}|_{\iota(G)}$ via $\iota$. We have the induced $\N_{H}(\iota(G))$-action
on $\Utg_{\Theta_{G},\iota}(V)$; for $h\in\N_{H}(\iota(G))$, 
\[
h\colon\Utg_{\Theta_{G},\iota}(V)\to\Utg_{\Theta_{G},c_{h}\iota}(V)\to\Utg_{\Theta_{G},c_{h}\iota(\iota^{*}c_{h})^{-1}}(V)=\Utg_{\Theta_{G},\iota}(V),
\]
being the composition of morphisms given above. Therefore 
\begin{multline}
\left[\left(\coprod_{\iota\in\Emb(G,H)}\Utg_{\Theta_{G},\iota}(V)\right)/(\Aut(G)\times H)\right]\\
\cong\coprod_{\iota\in\Emb(G,H)/(\Aut(G)\times H)}[\Utg_{\Theta_{G},\iota}(V)/\N_{H}(\iota(G))].\label{eq:utg-N_H}
\end{multline}
Note that $\Emb(G,H)/(\Aut(G)\times H)$ is in one-to-one correspondence
with the set of $H$-conjugacy classes of subgroups $G'\subset H$
such that $G'\cong G$. In particular, if $G$ is a cyclic group of
order $l$, then it is in one-to-one correspondence with the set of
conjugacy classes of $H$ that have order $l$. We also have
\[
\left[\left(\coprod_{\iota\in\Emb(G,H)}\Utg_{\Theta_{G},\iota}(V)\right)/H\right]\cong\coprod_{\iota\in\Emb(G,H)/H}[\Utg_{\Theta_{G},\iota}(V)/\C_{H}(\iota(G))].
\]
Here $\C_{H}(\iota(G))$ denotes the centralizer of $\iota(G)$ in
$H$.
\begin{lem}
\label{lem:Unt-quot}Morphisms (\ref{eq:UnG1}) and (\ref{eq:UnG2})
are isomorphisms.
\end{lem}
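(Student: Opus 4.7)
The plan is first to reduce the isomorphism claim for (\ref{eq:UnG2}) to the one for (\ref{eq:UnG1}), and then to construct a quasi-inverse to (\ref{eq:UnG1}) on fiber categories over affine schemes. Since $\cA_{[G]} \cong \B\Aut(G)$ by Lemma \ref{lem:A-B(AutG)}, one has $\Theta_{[G]} \cong [\Theta_G / \Aut(G)]$ and consequently $\Utg_{\Theta_{[G]}}(\cX) \cong [\Utg_{\Theta_G}(\cX) / \Aut(G)]$. Under this identification, (\ref{eq:UnG2}) is precisely the $\Aut(G)$-quotient of (\ref{eq:UnG1}), so it suffices to treat (\ref{eq:UnG1}).

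For (\ref{eq:UnG1}), the key construction is as follows. An $S$-point of the target is a representable morphism $\phi \colon \cE_S = [E_S/G] \to [V/H]$ over $\Df_{\Theta_G}$. Pulling back the $H$-torsor $V \to [V/H]$ along $\phi$ produces an $H$-torsor $W \to \cE_S$ together with an $H$-equivariant map $W \to V$; further pullback along the $G$-torsor $E_S \to \cE_S$ yields a $G$-equivariant $H$-torsor $W_E \to E_S$ with an $H$-equivariant morphism $W_E \to V$. Because $H$ is a finite constant etale group and the fibers of $E_S \to S$ are connected (by the fiberwise-connectedness condition cutting $\Theta$ out of $\Delta$), $W_E$ is etale-locally trivializable on $S$. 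Fixing a trivialization $W_E \cong E_S \times H$, the commuting $G$-action assumes the form $g \cdot (e, h) = (g \cdot e, \iota(g) h)$ for a homomorphism $\iota \colon G \to H$: the a priori $H$-valued cocycle describing the $G$-action collapses to a constant by connectedness of the fibers. Representability of $\phi$ translates, at the closed point of the formal disk, into injectivity of $\iota$.

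The canonical section $e \mapsto (e, 1)$ of $W_E$, composed with $W_E \to V$, then yields an $\iota$-equivariant morphism $\psi \colon E_S \to V$, i.e., an $S$-point of $\Utg_{\Theta_G, \iota}(V)$. Right-multiplying the trivialization by $h \in H$ replaces $(\iota, \psi)$ with $(c_h \iota, h \cdot \psi)$, which is exactly the $H$-action used to form the quotient stack on the left of (\ref{eq:UnG1}); consequently $(\iota, \psi)$ descends to a canonical $S$-point of $[\coprod_\iota \Utg_{\Theta_G, \iota}(V) / H]$. To verify this inverts (\ref{eq:UnG1}), starting from $(\iota, \psi)$, the composition $[E_S/G] \to [V/\iota(G)] \to [V/H]$ pulls $V$ back to the explicit torsor $E_S \times H$ with $G$-action dictated by $\iota$ and $H$-equivariant map $(e, h) \mapsto h \cdot \psi(e)$, from which one recovers $(\iota, \psi)$ up to the expected $H$-ambiguity. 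Matching of 2-morphisms on the two sides is then a direct bookkeeping exercise: a 2-isomorphism of representable morphisms into $[V/H]$ corresponds to an isomorphism of the pulled-back $H$-torsors compatible with the maps to $V$, which precisely reparametrizes the $H$-ambiguity.

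The main obstacle is the reduction-of-structure-group step in the middle paragraph, namely the assertion that the pulled-back $G$-equivariant $H$-torsor $W_E$ is locally governed by a single embedding $\iota \colon G \hookrightarrow H$ well-defined up to $H$-conjugation. The three ingredients that make this work — connectedness of the fibers of $E_S \to S$ (forcing constancy of the cocycle), triviality of $H$-torsors with constant etale $H$ over a connected base, and representability of $\phi$ (yielding injectivity of $\iota$) — must be applied coherently across an etale cover of $S$. Once this local model of $W_E$ is secured, compatibility with the $H$-action on the left side of (\ref{eq:UnG1}) is automatic, and the quasi-inverse is well-defined globally.
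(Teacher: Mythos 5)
Your proof takes a genuinely different route from the paper's. The paper establishes that $\coprod_{\iota}\Utg_{\Theta_{G},\iota}(V)\to\Utg_{\Theta_{G}}(\cX)$ is an $H$-torsor by base-changing along the finite flat cover $E_{\Theta_{G}}\to\Df_{\Theta_{G}}$ and invoking Lemma \ref{lem:quot-unt}, which produces a 2-Cartesian square by exploiting the formal-etale lifting property of $V_{S}\to\cX_{S}$ along the thickening $S\hookrightarrow E_{S}$. That base change is the key trick: it supplies a marked section of the $G$-cover, which rigidifies everything and lets the paper avoid any torsor-trivialization argument entirely. You instead work directly on $\Utg_{\Theta_{G}}(\cX)$ by pulling back the $H$-torsor $V\to[V/H]$ to $W_{E}$ over $E_{S}$ and performing a reduction of structure group; this is morally correct (one could say the $H$-torsor you are implicitly producing over $S$ is exactly the torsor of trivializations/liftings), and it has the advantage of being more self-contained, but it transfers the difficulty into the trivialization step rather than eliminating it.

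Three points in your execution need to be firmed up. First, your justification for the etale-local trivializability of $W_{E}$ --- ``connectedness of fibers'' --- is not the operative reason; what you actually need is that $|E_{S}|\to|S|$ is a universal homeomorphism (connectedness of fibers is only one ingredient along with finiteness and surjectivity), so that by topological invariance of the etale site, $H$-torsors over $E_{S}$ correspond to $H$-torsors over $S$, which for constant finite $H$ are etale-locally trivial. Second, the $\iota$-equivariance of the composite $\psi=(W_{E}\to V)\circ\sigma$ with $\sigma(e)=(e,1)$ is asserted but not verified, and the sign/side conventions in the cocycle $g\cdot(e,h)=(ge,\iota(g)h)$ interact nontrivially with the $H$-equivariance of $W_{E}\to V$; this deserves an explicit computation. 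Third, the final descent ``$(\iota,\psi)$ descends to a canonical $S$-point of the quotient'' needs to be made concrete: a map $S\to[\coprod_{\iota}\Utg_{\Theta_{G},\iota}(V)/H]$ is by definition an $H$-torsor $T\to S$ together with an $H$-equivariant map $T\to\coprod_{\iota}\Utg_{\Theta_{G},\iota}(V)$, and you should identify $T$ explicitly (the torsor of pairs $(\iota,\psi)$) rather than only exhibit its sections etale-locally. Once these are supplied the argument should go through, but as written it is a plan rather than a proof; the paper's descent to Lemma \ref{lem:quot-unt} is shorter precisely because the marked section does the work your trivialization is trying to do.
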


\begin{proof}
It suffices to show that $\coprod_{\iota\in\Emb(G,H)}\Utg_{\Theta_{G},\iota}(V)\to\Utg_{\Theta_{G}}(\cX)$
is an $H$-torsor. This is true because the base change by $E_{\Theta_{G}}\to\Df_{\Theta_{G}}$
is an $H$-torsor as proved in \ref{lem:quot-unt}.
\end{proof}
\begin{lem}
\label{lem:quot-unt}Let $\cX:=[V/H]$. The diagram
\[
\xymatrix{\coprod_{\iota\in\Emb(G,H)}\Utg_{\Theta_{G},\iota}(V)\times_{\Df_{\Theta_{G}}}E_{\Theta_{G}}\ar[r]\ar[d] & V\ar[d]\\
\Utg_{\Theta_{G}}(\cX)\times_{\Df_{\Theta_{G}}}E_{\Theta_{G}}\ar[r] & \cX
}
\]
is 2-Cartesian. In particular, the left vertical arrow is an $H$-torsor.
\end{lem}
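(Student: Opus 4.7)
The plan is to verify the 2-Cartesian property directly by constructing a quasi-inverse to the natural comparison morphism
\[
\Phi\colon \coprod_{\iota\in\Emb(G,H)}\Utg_{\Theta_{G},\iota}(V)\times_{\Df_{\Theta_{G}}}E_{\Theta_{G}} \longrightarrow \bigl(\Utg_{\Theta_{G}}(\cX)\times_{\Df_{\Theta_{G}}}E_{\Theta_{G}}\bigr)\times_{\cX}V,
\]
testing on $T$-points for an arbitrary scheme $T\to\Df_{\Theta_{G}}$. A $T$-point of the source is a triple $(\iota,\psi,\sigma)$ with $\iota$ an embedding, $\psi\colon E_{T}\to V_{T}$ an $\iota$-equivariant morphism, and $\sigma\colon T\to E_{T}$ a section. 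Then $\Phi$ sends this to the pair consisting of the induced representable morphism $\phi_{\iota,\psi}\colon\cE_{T}=[E_{T}/G]\to\cX_{T}=[V_{T}/H]$ and $\sigma$, together with $v:=\pi_{V}\circ\psi\circ\sigma\colon T\to V$ and the tautological 2-isomorphism. Two-commutativity of the given square follows from the factorization $E_{T}\xrightarrow{\psi}V_{T}\to V\to\cX$ being canonically 2-isomorphic to $E_{T}\to\cE_{T}\xrightarrow{\phi_{\iota,\psi}}\cX_{T}\to\cX$.

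The key observation is that because $V\to\cX$ is an $H$-torsor, a 2-isomorphism $\alpha$ between two $T$-points of $\cX$ that both lift to $V$-points $v_{1},v_{2}$ is the same datum as an element $h\in H(T)$ with $v_{2}=h\cdot v_{1}$. Given a $T$-point $(\phi,\sigma,v,\alpha)$ of the 2-fiber product, after passing to an fppf cover of $T$ (which exists because $\coprod_\iota\Utg_{\Theta_G,\iota}(V)\to\Utg_{\Theta_G}(\cX)$ is a surjection of stacks obtained by choosing trivializations of the relevant $H$-torsor) we present $\phi$ by a pair $(\iota,\psi)$. With respect to this presentation, $\alpha$ corresponds to $h\in H(T)$ satisfying $v=h\cdot(\pi_{V}\circ\psi\circ\sigma)$. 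Define
\[
\Psi(\phi,\sigma,v,\alpha):=\bigl(c_{h}\iota,\;h\cdot\psi,\;\sigma\bigr),
\]
where $h\cdot\psi\colon E_{T}\to V_{T}$ is post-composition with left multiplication by the constant $h$; this is $(c_{h}\iota)$-equivariant. Under a change of presentation $(\iota,\psi)\mapsto(c_{h'}\iota,h'\psi)$, the element $h$ becomes $hh'^{-1}$, and the output is unchanged because $c_{hh'^{-1}}(c_{h'}\iota)=c_{h}\iota$ and $(hh'^{-1})(h'\psi)=h\psi$. Hence $\Psi$ descends to a globally defined morphism by fppf descent.

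A direct calculation then verifies $\Psi\circ\Phi=\id$ and $\Phi\circ\Psi=\id$, the latter up to the canonical 2-isomorphism identifying the presentations $(\iota,\psi)$ and $(c_{h}\iota,h\psi)$ of $\phi$, which is precisely what absorbs $\alpha$. Hence $\Phi$ is an isomorphism and the square is 2-Cartesian. The ``in particular'' assertion is then immediate, since the right vertical morphism $V\to\cX$ is an $H$-torsor and torsors are preserved under base change. The main bookkeeping obstacle will be fixing conventions for the left versus right $H$-action on $V$ and on the set of presentations $(\iota,\psi)$, and carefully tracking that the quasi-inverse shifts the component index from $\iota$ to $c_{h}\iota$; once these are consistent, the verification is formal.
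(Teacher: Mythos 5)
Your construction of the comparison morphism $\Phi$ and your use of the $H$-torsor structure on $V\to\cX$ to turn the $2$-isomorphism $\alpha$ into an element $h\in H(T)$ are both correct, and the formula $\Psi(\phi,\sigma,v,\alpha)=(c_h\iota,\,h\cdot\psi,\,\sigma)$ together with the well-definedness check under changing the presentation $(\iota,\psi)\mapsto(c_{h'}\iota,h'\psi)$ is the right bookkeeping. The gap is in how you obtain the presentation $(\iota,\psi)$ of $\phi$ in the first place. You invoke an fppf cover of $T$ on the grounds that $\coprod_\iota\Utg_{\Theta_G,\iota}(V)\to\Utg_{\Theta_G}(\cX)$ is a surjection of stacks, justified by ``choosing trivializations of the relevant $H$-torsor.'' But the claim that this map is surjective --- equivalently, that the left vertical arrow of the diagram is surjective --- is precisely part of what the lemma (together with its corollary, Lemma~\ref{lem:Unt-quot}, which in the paper is derived \emph{from} the present statement) is supposed to establish. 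As written, the argument is circular.

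The fix, and the route the paper actually takes, is to construct $\psi$ directly from the data of a $T$-point of the $2$-fiber product rather than to cover $T$. A $T$-point consists of $(\phi,\sigma,v,\alpha)$ where $\sigma\colon T\to E_T$ is a section and $v\colon T\to V$ is a lift of $\phi\circ(\text{canonical})\circ\sigma$ via $\alpha$. The key geometric input is that $\sigma\colon T\to E_T$ is a \emph{thickening} (the fibers of $E_{\Theta_G}\to\Df_{\Theta_G}$ are supported at a single point) while $V\to\cX$ is representable and formally \'etale. Lemma~\ref{lem:etale} then produces a unique (up to unique $2$-iso) lift $\psi\colon E_T\to V_T$ of $E_T\to\cE_T\xrightarrow{\phi}\cX_T$ extending $v$. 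One then checks, via the induced map of groupoid presentations, that $T$ decomposes into clopen pieces $T=\coprod_\iota T_\iota$ on which $\psi$ is $\iota$-equivariant. This makes $\Psi$ well-defined without any descent, and from there your verification that $\Phi$ and $\Psi$ are quasi-inverse goes through.
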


\begin{proof}
Since $V\to\cX=[V/H]$ is an $H$-torsor, the second assertion follows
from the first. Let $s\colon S\to\Df_{\Theta_{G}}$ be a morphism
from an affine scheme, let $E_{S}:=E_{\Theta_{G}}\times_{\Theta_{G},s}S$
and $\cE_{S}:=\cE_{\Theta_{G}}\times_{\Theta_{G},s}S$ and let $V_{S}:=V\times_{\Df}S$
and $\cX_{S}:=\cX\times_{\Df}S$. The fiber over $s$ of the fibered
category $(\Utg_{\Theta_{G}}(\cX)\times_{\Df_{\Theta_{G}}}E_{\Theta_{G}})\times_{\cX}V\to\Df_{\Theta_{G}}$
is regarded the category of 2-commutative diagrams of the form
\begin{equation}
\xymatrix{S\ar[rr]\ar[d] &  & V_{S}\ar@{..>}[d]\\
E_{S}\ar@{..>}[r]\ar@{=>}[urr] & \cE_{S}\ar[r] & \cX_{S}
}
\label{eq:diag1}
\end{equation}
where the dotted arrows are the canonical ones, the normal arrows
are $S$-morphisms and the thick arrow is a witnessing 2-isomorphism.
On the other hand, the fiber over $s$ of the fibered category $\Utg_{\Theta_{G},\iota}(V)\times_{\Df_{\Theta_{G}}}E_{\Theta_{G}}\to\Df_{\Theta_{G}}$
is identified with the category of diagrams of $S$-morphisms
\begin{equation}
\xymatrix{S\ar[d] & V_{S}\\
E_{S}\ar[ur]_{\iota\text{-eq.}}
}
\label{eq:diag2}
\end{equation}
where $E_{S}\to V_{S}$ is $\iota$-equivariant.

Given a diagram of form (\ref{eq:diag2}), we obtain dashed arrows
and thick arrows forming the 2-commutative diagram:
\[
\xymatrix{S\ar[d]\ar@{-->}[rr] & {} & V_{S}\ar@{..>}[d]\\
E_{S}\ar[urr]\ar@{..>}[r]\ar@{=>}[ur] & \cE_{S}\ar@{-->}[r]\ar@{=>}[ur] & [V_{S}/\iota(H)]\ar@{..>}[r] & \cX_{S}
}
\]
Again the dotted arrows are the canonical ones. Thus we obtain a diagram
of form (\ref{eq:diag1}) and a morphism
\[
\Utg_{\Theta_{G},\iota}(V)\times_{\Df_{\Theta_{G}}}E_{\Theta_{G}}\to(\Utg_{\Theta_{G}}(\cX)\times_{\Df_{\Theta_{G}}}E_{\Theta_{G}})\times_{\cX}V.
\]

Conversely, given a diagram of form (\ref{eq:diag1}), from \ref{lem:etale},
we obtain a dashed arrow and a 2-isomorphism for each triangle in
\[
\xymatrix{S\ar[rr]\ar[d] & {} & V_{S}\ar@{..>}[d]\\
E_{S}\ar@{..>}[r]\ar@{-->}[urr]\ar@{=>}[ur] & \cE_{S}\ar[r]\ar@{=>}[ur] & \cX_{S}
}
\]
which induces the given 2-isomorphism for the rectangle. We claim
that there exists a unique decomposition $S=\coprod_{\iota\in\Emb(G,H)}S_{\iota}$
into open and closed subschemes such that the morphism $E_{S}\to V_{S}$
restricts to $\iota$-equivariant morphisms $E_{S_{\iota}}\to V_{S_{\iota}}$.
The subdiagram 
\[
\xymatrix{E_{S}\ar[r]\ar[d] & V_{S}\ar[d]\\
\cE_{S}=[E_{S}/H]\ar[r] & \cX_{S}=[V_{S}/G]
}
\]
gives a morphism of groupoids $(E_{S}\times H\rightrightarrows E_{S})\to(V_{S}\times G\rightrightarrows V_{S})$.
Therefore there exists a unique decomposition $S=\coprod_{\iota\in\Emb(G,H)}S_{\iota}$
into open and closed subschemes such that for each $\iota$, the morphism
$E_{S}\times H\to V_{S}\times G$ restricts to the morphism $E_{S_{\iota}}\times H\to V_{S_{\iota}}\times G$
compatible with $\iota$. This shows the claim. Thus we have obtained
a morphism 
\[
(\Utg_{\Theta_{G}}(\cX)\times_{\D_{\Theta_{G}}}\cE_{\Theta_{G}})\times_{\cX}V\to\coprod_{\iota\in\Emb(G,H)}\Utg_{\Theta_{G},\iota}(V)\times_{\D_{\Theta_{G}}}E_{\Theta_{G}}.
\]
We can check that thus obtained morphisms between $(\Utg_{\Theta_{G}}(\cX)\times_{\D_{\Theta_{G}}}\cE_{\Theta_{G}})\times_{\cX}V$
and $\coprod_{\iota\in\Emb(G,H)}\Utg_{\Theta_{G},\iota}(V)\times_{\D_{\Theta_{G}}}E_{\Theta_{G}}$
are quasi-inverses to each other, by using the uniqueness in \ref{lem:etale}.
\end{proof}

\subsection{Variants of inertia stacks}

Recall from Section \ref{sec:Galoisian-group-schemes} that $\cA$
denotes the moduli stack of Galoisian group schemes and $\cG\to\cA$
is the universal group scheme.
\begin{defn}
We define a fibered category $\B\cG\to\cA$ as follows: for an object
$G\to S$ of $\cA$, which is by definition a Galoisian group scheme
over an affine scheme, the fiber category $(\B\cG)(G\to S)$ is $(\B G)(S)$,
that is, the groupoid of $G$-torsors over $S$. For a Galoisian group
$G$, we define $\B\cG_{[G]}:=\B\cG\times_{\cA}\cA_{[G]}$.
\end{defn}

\begin{defn}
Let $\cZ$ be a DM stack of finite type over $k$. For a Galoisian
group scheme $G\to\Spec k$, we define $\I^{(G)}\cZ:=\ulHom_{\Spec k}^{\rep}(\B G,\cZ)$.
We define $\I^{(\bullet)}\cZ:=\coprod_{G\in\GG}\I^{(G)}\cZ$.

For a Galoisian group $G$, we define $\I^{[G]}\cZ:=\ulHom_{\cA_{[G]}}^{\rep}(\B\cG_{[G]},\cZ)$.
We also define $\I^{[\bullet]}\cZ:=\coprod_{G\in\GG}\I^{[G]}\cZ$.
\end{defn}

These are variants of inertia stack and their relation is closer in
the tame case, as we will see in Section \ref{sec:The-tame-case}.
For the morphism $\Spec k\to\cA_{[G]}$ given by $G\to\Spec k$, we
have $\I^{(G)}\cZ\cong\I^{[G]}\cZ\times_{\cA_{[G]}}\Spec k$.
\begin{lem}
\label{lem:IG}Let $\cZ$ be a quotient stack $[V/H]$ associated
to a finite group action $H\curvearrowright V$ on an algebraic space.
For a Galoisian group $G$, we have
\begin{align*}
\I^{(G)}\cZ & \cong\left[\left(\coprod_{\iota\in\Emb(G,H)}V^{\iota(G)}\right)/H\right]\\
 & \cong\coprod_{\iota\in\Emb(G,H)/H}[V^{\iota(G)}/\C_{H}(\iota(G))]
\end{align*}
and
\begin{align*}
\I^{[G]}\cZ & \cong\left[\left(\coprod_{\iota\in\Emb(G,H)}V^{\iota(G)}\right)/(\Aut(G)\times H)\right]\\
 & \cong\coprod_{\iota\in\Emb(G,H)/(\Aut(G)\times H)}[V^{\iota(G)}/\N_{H}(\iota(G))].
\end{align*}
\end{lem}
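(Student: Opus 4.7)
The plan is to imitate the argument of Lemmas \ref{lem:Unt-quot} and \ref{lem:quot-unt} under the dictionary in which the universal $G$-cover $E_{\Theta_G}\to\Df_{\Theta_G}$ is replaced by the canonical section $\Spec k\to\B G$ and $\cE_{\Theta_G}$ by $\B G$ itself. Under this replacement, the analog of $\Utg_{\Theta_G,\iota}(V)$ becomes the $G$-fixed locus of $\ulHom_{\Spec k}(\Spec k,V)=V$ for the action $g\cdot v=\iota(g)v$, namely $V^{\iota(G)}$. I would then establish the 2-Cartesian square
\[
\xymatrix{
\coprod_{\iota\in\Emb(G,H)} V^{\iota(G)} \ar[r]\ar[d] & V\ar[d] \\
\I^{(G)}\cZ \ar[r] & \cZ
}
\]
whose bottom arrow is evaluation at the canonical point $\Spec k\to\B G$, whose top arrow is the disjoint union of inclusions, and whose left arrow sends a $T$-point $v\in V^{\iota(G)}(T)$ to the representable morphism $T\times\B G\to\cZ$ classified by the trivial $H$-torsor $T\times H\to T\times\B G$ with $G$-action by left translation through $\iota$, together with the $H$-equivariant map $(t,h)\mapsto h\cdot v(t)$. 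Because $V\to\cZ$ is an $H$-torsor, the left vertical arrow inherits this property, giving $\I^{(G)}\cZ\cong[\coprod_\iota V^{\iota(G)}/H]$; decomposing $\Emb(G,H)$ into $H$-orbits with stabilizer $\C_H(\iota(G))$ yields the second displayed isomorphism.

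To verify that the square is 2-Cartesian, I would repeat the groupoid-theoretic argument from \ref{lem:quot-unt}. An object of the fiber product over a test scheme $T$ is a 2-commutative diagram consisting of a representable morphism $T\times\B G\to\cZ$, a morphism $T\to V$, and an isomorphism between the composition $T\to T\times\B G\to\cZ$ and $T\to V\to\cZ$. Pulling the $H$-torsor $V\to\cZ$ back along $T\times\B G\to\cZ$ yields a principal $H$-bundle on $T\times\B G$ equipped with a trivializing $H$-equivariant map to $V$; unpacking, the data amount to a homomorphism $\phi\colon G\to H$ (the $G$-action read in a chosen trivialization) and a $\phi(G)$-fixed $T$-point of $V$, with representability forcing $\phi$ to be injective. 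The resulting morphism of constant groupoid schemes yields a canonical decomposition $T=\coprod_{\iota\in\Emb(G,H)}T_\iota$ into open and closed subschemes on which the embedding is constant, exactly as in the final paragraph of the proof of \ref{lem:quot-unt}.

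For $\I^{[G]}\cZ$, I would invoke \ref{lem:A-B(AutG)}, which identifies $\cA_{[G]}$ with $\B\Aut(G)$ so that the morphism $\Spec k\to\cA_{[G]}$ becomes the universal $\Aut(G)$-torsor. Consequently $\I^{(G)}\cZ\cong\I^{[G]}\cZ\times_{\cA_{[G]}}\Spec k$ and $\I^{[G]}\cZ\cong[\I^{(G)}\cZ/\Aut(G)]$. The induced action of $c\in\Aut(G)$ on pairs is $c\cdot(\iota,v)=(\iota c^{-1},v)$ (by pre-composition on $\B G$) and commutes with the $H$-action, while the stabilizer of $\iota$ under the combined $\Aut(G)\times H$-action, computed exactly as in the discussion immediately preceding the lemma, is $\{(\iota^*c_h,h):h\in\N_H(\iota(G))\}\cong\N_H(\iota(G))$, yielding the final formula. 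The only real obstacle is bookkeeping: confirming that the Cartesian-square argument of \ref{lem:quot-unt} transports literally across the dictionary above and that the two commuting actions combine correctly---no new geometric input beyond Section \ref{sec:Untwisting-stacks} is needed.
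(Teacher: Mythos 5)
Your proposal is correct and follows exactly the route the paper takes: the paper's proof is a single sentence instructing the reader to rerun the argument of Lemma \ref{lem:Unt-quot} (hence \ref{lem:quot-unt}) under the dictionary $E_{\Theta_G}\rightsquigarrow\Spec k$, $\cE_{\Theta_G}\rightsquigarrow\B G$, $\Utg_{\Theta_G,\iota}(V)\rightsquigarrow V^{\iota(G)}=\ulHom_{\Spec k}^{\iota}(\Spec k,V)$, which is precisely the translation you set up. Your fleshed-out version of the 2-Cartesian square and the passage from $\I^{(G)}\cZ$ to $\I^{[G]}\cZ$ via $\cA_{[G]}\cong\B\Aut(G)$ match the implicit details of the paper's argument.
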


\begin{proof}
We apply the same arguments as in the proof of \ref{lem:Unt-quot}
to $\B G=[\Spec k/G]$, $V^{\iota(G)}=\ulHom_{\Spec k}^{\iota}(\Spec k,V)$
(the right side is the scheme of $\iota$-equivariant morphisms) and
$\I^{[G]}\cZ$ .
\end{proof}
For a formal DM stack $\cX$ of finite type over $\Df$, we have a
morphism 
\[
\Utg_{\Theta_{[G]}}(\cX)_{0}\to\I^{[G]}(\cX_{0})
\]
as follows. Let $S\to\D_{0,\Theta_{[G]}}$ be a morphism with $S\in\Aff$
and $G_{S}\to S$ the associated group scheme and 
\[
\cE_{S}:=\cE_{\Theta_{[G]}}\times_{\Df_{\Theta_{[G]}}}S\to\cX
\]
a representable morphism, which give an $S$-point of $\Utg_{\Theta_{[G]}}(\cX)_{0}$.
The given section $S\to\cE_{S}$ has automorphism group scheme $G_{S}$.
Therefore we have a closed immersion $\B(G_{S}/S)\hookrightarrow\cE_{S}$.
We send the above $S$-point of $\Utg_{\Theta}(\cX)_{0}$ to the $S$-point
of $\I^{[G]}(\cX)$ defined by the induced $\B(G_{S}/S)\to\cX_{0}$
and $G_{S}\colon S\to\cA_{[G]}$.

By base change along the morphism $\Spec k\to\cA_{[G]}$, given by
a Galoisian group $G$, we get a morphism 
\[
\Utg_{\Theta_{G}}(\cX)_{0}\to\I^{(G)}(\cX_{0}).
\]

\subsection{More on untwisting stacks}

In this and next subsection, we use a few notions from the rigid geometry,
for which we refer the reader to \cite{MR2815110}. To a formal scheme
$X$ of finite type over $\Df$, we can associate a coherent rigid
space $X^{\rig}$, which may be regarded as the ``generic fiber''
of $X\to\Df$.
\begin{defn}[{\cite[Prop. 6.4.12]{MR2815110}}]
Let $U,V$ be formal affine schemes of finite type over $\Df$ and
let $f\colon V\to U$ be a $\Df$-morphism. We say that $f$ is \emph{rig-etale
}if the induced morphism $f^{\rig}\colon V^{\rig}\to U^{\rig}$ of
coherent rigid spaces is etale.
\end{defn}

\begin{lem}
\label{lem:rig-etale-compos}Let $g\colon Z\to Y$ and $f\colon Y\to X$
be morphisms of coherent rigid spaces.
\begin{enumerate}
\item If $f\circ g$ is etale and $f$ is unramified, then $f$ is etale.
\item If $f\circ g$ is etale, $g$ is etale and the map $\langle Z\rangle\to\langle Y\rangle$
of sets of rigid points is surjective, then $f$ is etale.
\end{enumerate}
\end{lem}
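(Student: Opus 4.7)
The plan is to prove both parts using the interplay between unramified and etale morphisms together with etale descent in the rigid-analytic setting.

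For part (1), I would first show that $g$ itself is etale. The key observation is that since $f$ is unramified, the relative diagonal $\Delta_{f}\colon Y\to Y\times_{X}Y$ is a clopen immersion. Forming the base change of $f$ along $f\circ g\colon Z\to X$ yields a projection $p\colon Z\times_{X}Y\to Z$ which is again unramified (unramified morphisms being stable under base change). The graph $\Gamma_{g}=(\id_{Z},g)\colon Z\to Z\times_{X}Y$ is then a section of $p$, and any section of an unramified morphism is a clopen immersion. The second projection $q\colon Z\times_{X}Y\to Y$ is the base change of the etale morphism $f\circ g$, hence etale. Factoring $g=q\circ\Gamma_{g}$ exhibits $g$ as a composition of a clopen immersion followed by an etale morphism, so $g$ is etale. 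Combining this with the fact that $f\circ g$ is etale, a pointwise comparison of local rings (for instance, via strict henselizations or the analogous rigid-analytic local data at each rigid point) shows that $f$ becomes etale after pullback along $g$, and combined with the unramified hypothesis on $f$ this forces etaleness of $f$ itself.

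For part (2), the argument is essentially etale descent. The morphism $g\colon Z\to Y$ is etale and surjective on rigid points, so it is an etale cover in the admissible topology on coherent rigid spaces. Since etaleness is a local property in this topology, the hypothesis that $f\circ g$ is etale descends to the conclusion that $f$ is etale.

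The main obstacle will be the final step of part (1): passing from etaleness of $g$ and $f\circ g$ to etaleness of $f$ everywhere on $Y$. The natural way to accomplish this is to combine the etaleness of $g$ obtained above with a surjectivity or covering property (so that one may then apply the descent argument used in part (2)); in the rigid setting one expects such a covering to be produced using the unramified hypothesis on $f$, since unramifiedness constrains the non-flat locus of $f$ strongly enough that etaleness of $f\circ g$ propagates. For part (2), the only technicality is verifying that surjectivity on the set of rigid points is precisely the correct covering condition for etale descent of morphisms of coherent rigid spaces, which is standard.
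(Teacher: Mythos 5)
For part (1), your argument that $g$ is etale (the graph $\Gamma_g$ is an open immersion because it is a pullback of $\Delta_f$, which is an open immersion as $f$ is unramified; $q$ is etale as a base change of $f\circ g$; compose) is the standard one and is correct. You also correctly sense that there is no way to go from this to etaleness of $f$ itself. Indeed there cannot be: the hypotheses of (1) impose no constraint whatsoever on $f$ outside the image of $g$ (take $Z=\emptyset$ and $f$ any unramified morphism). Your guess that ``unramifiedness constrains the non-flat locus of $f$ strongly enough'' to supply the missing covering is therefore unfounded. As literally printed the conclusion of (1) is a slip; it should read ``then $g$ is etale,'' which is what your graph argument shows, what \cite[Prop.~6.4.3(vi)]{MR2815110} gives, and what is actually used in the proof of Lemma~\ref{lem:rig-etale-compos-2} (where (1) is applied to conclude that $\tilde f$, in the role of $g$, is rig-etale). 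So the first half of your proof of (1) is a correct proof of the intended statement; the closing paragraph chasing ``$f$ etale'' should be dropped.

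For part (2), the paper does not appeal to abstract descent; it argues directly at each rigid point $y\in\langle Y\rangle$: choosing a lift $z\in\langle Z\rangle$ of $y$ and letting $x\in\langle X\rangle$ be the image, the local ring $\cO_{Z_x,z}$ is a finite separable extension of $\kappa(x)$ by \cite[Prop.~6.4.24(v)]{MR2815110}, faithful flatness of $\cO_{Y_x,y}\to\cO_{Z_x,z}$ forces the maximal ideal of $\cO_{Y_x,y}$ to vanish, and then $\cO_{Y_x,y}/\kappa(x)$ is finite separable, so $f$ is etale at $y$. Your appeal to ``etale descent'' --- really the statement that etaleness is etale-local on the source --- is the right slogan, and once that property is known for coherent rigid spaces, (2) follows instantly. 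But in this setting that property \emph{is} the content of (2), so invoking it without proof is circular; it still has to be verified, and the pointwise local-ring computation is precisely that verification. Your outline of (2) is therefore not wrong, but it defers the only step that actually needs an argument.
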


\begin{proof}
(1) \cite[Prop. 6.4.3.(vi).]{MR2815110}.

(2) Let $y\in\langle Y\rangle$ be any rigid point, let $x\in\langle X\rangle$
be its image and let $z\in\langle Z\rangle$ be a lift of $y$. Let
$Z_{y}$ denote the fiber of $Z\to Y$ over $y$. Similarly for $Z_{x}$
and $Y_{x}$. From \cite[Prop. 6.4.24.(v)]{MR2815110}, $\cO_{Z_{y},z}/\kappa(y)$
and $\cO_{Z_{x},z}/\kappa(x)$ are finite separable field extensions.
If $\fm$ denotes the maximal ideal of $\cO_{Y_{x},y}$, then since
$\cO_{Z_{x},z}$ is a field, we have $\fm\cO_{Z_{x},z}=0$. Since
$\cO_{Y_{x},y}\to\cO_{Z_{x},z}$ is faithfully flat, it is injective.
Therefore $\fm=0$ and $\cO_{Y_{x},y}$ is a field. This is a finite
separable extension of $\kappa(x)$, since $\cO_{Z_{x},z}$ is so.
Therefore $f$ is etale.
\end{proof}
\begin{lem}
\label{lem:rig-etale-compos-2}Let 
\[
\xymatrix{\tilde{V}\ar[r]^{\tilde{f}}\ar[d] & \tilde{U}\ar[d]\\
V\ar[r]_{f} & U
}
\]
be a commutative diagram of  formal schemes of finite type over $\Df$.
Suppose that the vertical arrows are etale and surjective. Then $f$
is rig-etale if and only if $\tilde{f}$ is rig-etale.
\end{lem}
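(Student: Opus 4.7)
The plan is to pass to the associated rigid spaces and combine a graph/diagonal argument for one direction with Lemma~\ref{lem:rig-etale-compos}(2) for the other. First, I would observe that an etale (resp.\ etale and surjective) morphism of formal schemes of finite type over $\Df$ induces an etale (resp.\ etale, and surjective on the sets of rigid points) morphism of the associated coherent rigid spaces: the etaleness is immediate from the definition of rig-etale, while surjectivity on rigid points reduces to lifting a $\Df$-morphism $\Spf R \to V$ from the spectrum of a complete rank-one valuation ring $R$ to a $\Df$-morphism $\Spf R' \to \tilde{V}$ for a suitable finite extension $R'/R$, which is provided by the formal lifting property of etale morphisms together with completeness (this is standard in the setup of \cite{MR2815110}). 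Thus after applying $(-)^{\rig}$, we may assume we are working entirely in rigid geometry, with a commutative square whose vertical arrows are etale and surjective on rigid points.

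For the direction ``$f$ rig-etale $\Rightarrow$ $\tilde{f}$ rig-etale'', the composite $\tilde{V}^{\rig} \to V^{\rig} \to U^{\rig}$ is etale as a composition of two etale morphisms, and by commutativity it agrees with $\tilde{V}^{\rig} \to \tilde{U}^{\rig} \to U^{\rig}$. I would then factor $\tilde{f}^{\rig}$ through its graph:
\[
\tilde{V}^{\rig} \xrightarrow{(\id,\tilde{f}^{\rig})} \tilde{V}^{\rig} \times_{U^{\rig}} \tilde{U}^{\rig} \xrightarrow{\mathrm{pr}_{2}} \tilde{U}^{\rig}.
\]
The second arrow is the base change of the etale morphism $\tilde{V}^{\rig} \to U^{\rig}$ along $\tilde{U}^{\rig} \to U^{\rig}$, hence etale. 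The first arrow is the base change along $\tilde{V}^{\rig} \to U^{\rig}$ of the diagonal $\tilde{U}^{\rig} \to \tilde{U}^{\rig} \times_{U^{\rig}} \tilde{U}^{\rig}$, which is an open (and closed) immersion because $\tilde{U}^{\rig} \to U^{\rig}$ is etale and in particular unramified. Hence $\tilde{f}^{\rig}$ is a composition of an open immersion and an etale morphism, so it is etale.

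For the direction ``$\tilde{f}$ rig-etale $\Rightarrow$ $f$ rig-etale'', the composite $\tilde{V}^{\rig} \to \tilde{U}^{\rig} \to U^{\rig}$ is etale, and it equals $\tilde{V}^{\rig} \to V^{\rig} \xrightarrow{f^{\rig}} U^{\rig}$. Writing $g := (\tilde{V}^{\rig} \to V^{\rig})$, we have that $f^{\rig} \circ g$ is etale, $g$ is etale, and $\langle \tilde{V}^{\rig} \rangle \to \langle V^{\rig} \rangle$ is surjective by the first paragraph. Lemma~\ref{lem:rig-etale-compos}(2) then yields that $f^{\rig}$ is etale, so $f$ is rig-etale. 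The main technical obstacle I anticipate is the verification in the setup paragraph that a surjective etale morphism of formal schemes really is surjective on rigid points; everything else is either formal nonsense about fiber products or a direct citation of Lemma~\ref{lem:rig-etale-compos}.
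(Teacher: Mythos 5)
Your argument is correct and takes essentially the same route as the paper: in both, the hypotheses combine to show $\tilde{V}\to U$ is rig-etale, and the two implications then reduce to cancellation statements for etale morphisms of rigid spaces. The only real differences are that where the paper cites Lemma~\ref{lem:rig-etale-compos}(1) for the ``only if'' direction, you re-prove it inline via the graph/diagonal factorization (which is precisely the standard proof of that cancellation fact), and you spell out why $\langle\tilde{V}^{\rig}\rangle\to\langle V^{\rig}\rangle$ is surjective, a point the paper asserts without elaboration.
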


\begin{proof}
If either $f$ or $\tilde{f}$ is rig-etale, then $\tilde{V}\to U$
is rig-etale. Since $\tilde{V}\to V$ is etale and surjective, the
map $\langle\tilde{V}\rangle\to\langle V\rangle$ is surjective. Now
the ``if'' and ``only if'' parts follow from (2) and (1) of \ref{lem:rig-etale-compos}
respectively.
\end{proof}
\begin{defn}
Let $\cY,\cX$ be formal DM stacks of finite type over $\Df$. We
say that a $\Df$-morphism $\cY\to\cX$ is \emph{rig-etale }if it
fits into some 2-commutative diagram
\[
\xymatrix{V\ar[r]\ar[d] & U\ar[d]\\
\cY\ar[r] & \cX
}
\]
such that vertical arrows are atlases from formal affine schemes and
the upper arrow is rig-etale.
\end{defn}

Form \ref{lem:rig-etale-compos-2}, ``some'' in the last condition
can be replaced with ``any.''
\begin{lem}
\label{lem:rig-etale}Let $G$ be a finite group and let $V$ be a
formal scheme of finite type over $\Df$ with an action of $G$. Then
the natural morphism 
\[
\Utg_{\Theta_{G},\iota}(V)\times_{\Df_{\Theta_{G}}}E_{\Theta_{G}}\to V\times\Theta_{G}
\]
is rig-etale.
\end{lem}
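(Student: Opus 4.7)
The plan is to check rig-\'etaleness after pulling back along the $G$-cover $E_{\Theta_{G}}\to\Df_{\Theta_{G}}$, which is rigidly a finite \'etale $G$-torsor. The key observation is that this pullback trivializes the underlying torsor via the diagonal section, decomposing the morphism into a disjoint union of isomorphisms indexed by $G$; standard rigid \'etale descent then concludes.

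First I would fix an atlas $T\to\Theta_{G}$ by an affine scheme in order to reduce the assertion to the rig-\'etaleness of the morphism of formal schemes
\[
g\colon\Utg_{T,\iota}(V)\times_{\Df_{T}}E_{T}\to V\times T,
\]
where $\Utg_{T,\iota}(V)$ is the base change of $\Utg_{\Theta_{G},\iota}(V)$ along $T\to\Theta_{G}$. Equivalently, I want $g^{\rig}$ to be \'etale. Because $E_{T}^{\rig}\to\Df_{T}^{\rig}$ is a finite \'etale $G$-torsor, rigid \'etale descent reduces this to \'etaleness of the pullback of $g^{\rig}$ along $E_{T}^{\rig}\to\Df_{T}^{\rig}$. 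In the resulting source $\Utg_{T,\iota}(V)^{\rig}\times_{\Df_{T}^{\rig}}E_{T}^{\rig}\times_{\Df_{T}^{\rig}}E_{T}^{\rig}$, the new pairing $E_{T}^{\rig}\times_{\Df_{T}^{\rig}}E_{T}^{\rig}$ carries the diagonal section, so it splits as $\coprod_{g\in G}E_{T}^{\rig}$.

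The crucial identification is that, after base change to $E_{T}^{\rig}$, evaluation at the trivializing section identifies $\Utg_{T,\iota}(V)^{\rig}\times_{\Df_{T}^{\rig}}E_{T}^{\rig}$ with $V^{\rig}\times_{\Df^{\rig}}E_{T}^{\rig}$. Under a trivialization of the $G$-torsor, the Weil restriction becomes a product $V^{G}\times_{k}E_{T}^{\rig}$, and its $\iota$-fixed part collapses to the diagonal copy of $V$; equivalently, an $\iota$-equivariant map from a trivial $G$-torsor is determined by its value at the identity section. Combining this with the trivialization of the second $E_{T}^{\rig}$-factor, the pullback of $g^{\rig}$ becomes the disjoint union over $g\in G$ of the morphisms $(v,e)\mapsto(\iota(g)v,e)$, each of which is an isomorphism of rigid spaces. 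Rigid \'etale descent then yields rig-\'etaleness of $g$.

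The main obstacle is making the identification in the previous paragraph rigorous, namely the claim that after trivializing the torsor the $\iota$-equivariant Weil restriction collapses to $V$ itself via evaluation at the identity. This is a formal consequence of the universal property of the Weil restriction together with descent for $G$-torsors, but it has to be checked carefully against the fact that the $G$-action on the Weil restriction comes both from the torsor structure on $E_{T}$ and from the $\iota$-action on $V$. A secondary point is that $E_{\Theta_{G}}\to\Df_{\Theta_{G}}$ is not \'etale on the nose but only rig-\'etale, so Lemma \ref{lem:rig-etale-compos-2} does not apply directly; the \'etale descent step has to be performed at the rigid-analytic level, where $E_{T}^{\rig}\to\Df_{T}^{\rig}$ is a genuine \'etale surjection.
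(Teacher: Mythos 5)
Your approach is sound and genuinely different in structure from the paper's, so both a comparison and a flag of one real gap are in order.

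\emph{Comparison.} The paper does not use descent at all. After the same reduction to an atlas $T\to\Theta_G$ corresponding to a uniformizable torsor, it passes from $\Utg_{T,\iota}(V)^{\rig}$ to the genuinely rigid Weil restriction $\R_{E^{\rig}/\Df_{T}^{\rig}}(V_{T}^{\rig}\times_{\Df_{T}^{\rig}}E^{\rig})^{G}$ via \cite[Prop. 1.22]{MR1752779}, and then verifies by hand that
\[
\theta\colon\R_{E^{\rig}/\Df_{T}^{\rig}}(V_{T}^{\rig}\times_{\Df_{T}^{\rig}}E^{\rig})^{G}\times_{\Df_{T}^{\rig}}E^{\rig}\to V_{T}^{\rig}
\]
is \emph{formally} \'etale, directly via the lifting criterion for a thickening $S\hookrightarrow S'$: the $S$-point $\phi$ is a pair $(f,s)$ of a $G$-equivariant map and a section of the torsor, the section gives the splitting $S\times_{\Df_T^{\rig}}E^{\rig}=\coprod_{g\in G}g\sigma(S)$, and the unique lift of each of $f$ and $s$ is constructed separately. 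Your argument uses exactly the same trivialization idea (diagonal section of $E_T^{\rig}\times_{\Df_T^{\rig}}E_T^{\rig}$ versus the section $\sigma$ of $S\times_{\Df_T^{\rig}}E^{\rig}$) but packages it as flat base change of Weil restrictions plus \'etale descent for \'etaleness, rather than as a lifting criterion. Either route gets you there; the paper's is somewhat more elementary and self-contained, yours is perhaps cleaner if you are already comfortable with base-change compatibilities of rigid Weil restrictions. Your worry about carefully separating the two sources of the $G$-action is legitimate but not a real obstruction; both proofs handle it by the same trivialization.

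\emph{The gap.} The step that does not quite go through as written is the ``crucial identification'' that $\Utg_{T,\iota}(V)^{\rig}\times_{\Df_T^{\rig}}E_T^{\rig}$ \emph{equals} $V^{\rig}\times_{\Df^{\rig}}E_T^{\rig}$. The object $\Utg_{T,\iota}(V)$ is defined as the $G$-fixed part of a \emph{formal} Weil restriction, and the passage from the rigid fiber of a formal Weil restriction to the rigid-analytic Weil restriction is not an isomorphism in general but only an open immersion; this is precisely what \cite[Prop. 1.22]{MR1752779} gives and what the paper cites. So the identification you want holds for the rigid Weil restriction $\R_{E^{\rig}/\Df_T^{\rig}}(\cdots)^G$, not directly for $\Utg_{T,\iota}(V)^{\rig}$; as written, your argument conflates the two. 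The fix is cheap: run your descent argument for the map $\theta$ from the rigid Weil restriction as above, and then observe that $g^{\rig}$ factors as $\theta$ composed with (an open immersion)$\times\id_{E^{\rig}}$, so it is still \'etale. You also implicitly need the atlas $T\to\Theta_G$ to be chosen so that the corresponding torsor is uniformizable, which is what makes the formal integral model $E_T$ (and hence the formal Weil restriction) available; this is guaranteed by the construction of $\Theta_G$ in \ref{def:Theta} but should be said.
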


\begin{proof}
From the construction of $\Theta_{G}$ (see \ref{def:Theta}), there
exists an atlas $T=\Spec R\to\Theta_{G}$ from a formal affine scheme
which corresponds to a uniformizable torsor 
\[
P_{T}=\Spec A\to\D_{T}^{*}=\Spec R\tpars.
\]
Let $E_{T}=\Spf O_{A}\to\Df_{T}$ be its integral model and let $E^{\rig}\to\Df_{T}^{\rig}$
be the induced morphism of rigid spaces, which is an etale $G$-torsor.

Let $V_{T}:=V\times T$. It suffices to show that the morphism of
formal schemes $\Utg_{T,\iota}(V)\times_{\Df_{T}}E_{T}\to V_{T}$
is rig-etale. Here $\Utg_{T,\iota}(V):=\Utg_{\Theta_{G},\iota}(V)\times_{\Theta_{G}}T$.
By definition, $\Utg_{T,\iota}(V)$ is the $G$-fixed subscheme of
the Weil restriction $\R_{E_{T}/\Df_{T}}(V_{T}\times_{\Df_{T}}E_{T})$.
From \cite[Prop. 1.22]{MR1752779}, the rigid space $\Utg_{T,\iota}(V)^{\rig}$
is an open subspace of $\R_{E^{\rig}/\Df_{T}^{\rig}}(V_{T}^{\rig}\times_{\Df_{T}^{\rig}}E^{\rig})^{G}$.
It suffices to show that 
\[
\theta\colon\R_{E^{\rig}/\Df_{T}^{\rig}}(V_{T}^{\rig}\times_{\Df_{T}^{\rig}}E^{\rig})^{G}\times_{\Df_{T}^{\rig}}E^{\rig}\to V_{T}^{\rig}
\]
is formally etale. Let us consider the commutative diagram of solid
arrows
\begin{equation}
\xymatrix{S\ar@{^{(}->}[d]\ar[rr]\sp(0.2){\phi} &  & \R_{E^{\rig}/\Df_{T}^{\rig}}(V_{T}^{\rig}\times_{\Df_{T}^{\rig}}E^{\rig})^{G}\times_{\Df_{T}^{\rig}}E^{\rig}\ar[d]^{\theta}\\
S'\ar[rr]_{\phi'}\ar@{-->}[urr] &  & V_{T}^{\rig}
}
\label{diag-A}
\end{equation}
where $S\hookrightarrow S'$ is a thickening of coherent rigid spaces
over $\Df_{T}^{\rig}$. The $S$-point $\phi$ is given by the pair
$(f,s)$ of a $G$-equivariant morphism $f\colon S\times_{\Df_{T}^{\rig}}E^{\rig}\to V_{T}^{\rig}$
and a $\Df_{T}^{\rig}$-morphism $s\colon S\to E^{\rig}$. Since the
$G$-torsor $S\times_{\Df_{T}^{\rig}}E^{\rig}\to S$ has the section
$\sigma:=(\id_{S},s)$, we have the decomposition 
\begin{equation}
S\times_{\Df_{T}^{\rig}}E^{\rig}=\coprod_{g\in G}g\sigma(S).\label{eq:decomp}
\end{equation}
The morphism $\theta$ sends the $S$-point $\phi$ to the $S$-point
\[
\psi\colon S\xrightarrow{\sigma}S\times_{\Df_{T}^{\rig}}E^{\rig}\xrightarrow{f}V_{T}^{\rig}.
\]
This $\psi$ extends an $S'$-point $\phi'\colon S'\to V_{T}^{\rig}$.
Thanks to the above decomposition (\ref{eq:decomp}), $\phi'$ uniquely
extends to the $G$-equivariant morphism $f'\colon S'\times_{\Df_{T}^{\rig}}E^{\rig}\to V_{T}^{\rig}$,
which restricts to $f$ on $S\times_{\Df_{T}^{\rig}}E^{\rig}$. On
the other hand, since $E^{\rig}\to\Df_{T}^{\rig}$ is etale, the morphism
$s$ also uniquely extends to $s'\colon S'\to E^{\rig}$. The pair
$(f',s')$ gives the unique dashed arrow fitting in diagram (\ref{diag-A}).
Thus $\theta$ is formally etale.
\end{proof}
\begin{lem}
\label{lem:rig-etale-2}The morphism $\Utg_{\Theta}(\cX)\times_{\Df_{\Theta}}\cE_{\Theta}\to\cX\times\Theta$
is rig-etale.
\end{lem}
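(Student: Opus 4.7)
The plan is to reduce the statement to Lemma~\ref{lem:rig-etale} by descending rig-étaleness along étale-surjective covers of both source and target, using Lemma~\ref{lem:rig-etale-compos-2}. The reduction proceeds in two stages: one on the target $\cX\times\Theta$ and one on the source $\Utg_\Theta(\cX)\times_{\Df_\Theta}\cE_\Theta$.

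For the target, it suffices to check rig-étaleness over each component $\Theta_{[G]}^{[\br]}$ of $\Theta$. The natural morphism $\Theta_G\to\Theta_{[G]}^{[\br]}$ is an $\Aut(G)$-torsor (the base change of $\Spec k\to\cA_{[G]}\cong\B\Aut(G)$), in particular étale and surjective, so by Lemma~\ref{lem:rig-etale-compos-2} I may work after base change to $\Theta_G$. Applying Proposition~\ref{lem:formal-locally-quot}, I obtain a stabilizer-preserving étale surjection $\coprod_\gamma[W_\gamma/H_\gamma]\to\cX$. By Lemma~\ref{lem:unt-rep-etale} this induces an étale surjection $\coprod_\gamma\Utg_{\Theta_G}([W_\gamma/H_\gamma])\to\Utg_{\Theta_G}(\cX)$; combining with the $H_\gamma$-torsor $W_\gamma\to[W_\gamma/H_\gamma]$, it suffices to prove that for each $\gamma$ the morphism
\[
\Utg_{\Theta_G}([W_\gamma/H_\gamma])\times_{\Df_{\Theta_G}}\cE_{\Theta_G}\to[W_\gamma/H_\gamma]\times\Theta_G
\]
is rig-étale.

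For the source, I use that $E_{\Theta_G}\to\cE_{\Theta_G}$ is a $\cG_{\Theta_G}$-torsor (étale surjective) and, by Lemma~\ref{lem:Unt-quot}, that $\coprod_{\iota\in\Emb(G,H_\gamma)}\Utg_{\Theta_G,\iota}(W_\gamma)\to\Utg_{\Theta_G}([W_\gamma/H_\gamma])$ is an $H_\gamma$-torsor, again étale and surjective. These assemble into a 2-commutative square with étale-surjective vertical arrows
\[
\xymatrix{
\coprod_\iota\Utg_{\Theta_G,\iota}(W_\gamma)\times_{\Df_{\Theta_G}}E_{\Theta_G}\ar[r]\ar[d] & W_\gamma\times\Theta_G\ar[d]\\
\Utg_{\Theta_G}([W_\gamma/H_\gamma])\times_{\Df_{\Theta_G}}\cE_{\Theta_G}\ar[r] & [W_\gamma/H_\gamma]\times\Theta_G,
}
\]
whose upper-left corner is identified with the mixed fibre product by Lemma~\ref{lem:quot-unt}, so the upper horizontal arrow is well defined. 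Each summand of that top arrow is exactly the rig-étale morphism of Lemma~\ref{lem:rig-etale}, hence the top arrow itself is rig-étale; Lemma~\ref{lem:rig-etale-compos-2} then propagates rig-étaleness to the bottom arrow, completing the reduction.

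The principal obstacle is bookkeeping: one must verify that the stacky étale-surjective covers above actually descend to étale-surjective covers of formal affine \emph{schemes} after pulling back to atlases, so that Lemma~\ref{lem:rig-etale-compos-2} (which is phrased for formal schemes) can be applied on the nose. This is handled by first choosing formal-affine atlases of $[W_\gamma/H_\gamma]\times\Theta_G$ and of $\Utg_{\Theta_G}([W_\gamma/H_\gamma])\times_{\Df_{\Theta_G}}\cE_{\Theta_G}$, and then lifting the constructions through both vertical arrows using Lemma~\ref{lem:quot-unt}; once this is done the rig-étaleness assertion reduces cleanly to the scheme-theoretic case already proved in Lemma~\ref{lem:rig-etale}.
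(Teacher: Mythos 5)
Your proposal is correct and follows the same line of argument as the paper's own proof: reduce to a fixed $\Theta_G$, use Proposition~\ref{lem:formal-locally-quot} and Lemma~\ref{lem:unt-rep-etale} to pass to a quotient-stack model $[W_\gamma/H_\gamma]$, identify the top-left corner of the square via Lemma~\ref{lem:quot-unt}, invoke Lemma~\ref{lem:rig-etale} for the top arrow, and descend rig-\'etaleness along the \'etale surjective vertical arrows by Lemma~\ref{lem:rig-etale-compos-2}. The only difference is that you spell out more carefully the reduction to formal affine atlases, which the paper leaves implicit; that is elaboration rather than a different route.
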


\begin{proof}
It suffices to show that for each Galoisian group $G$, the morphism
\[
\Utg_{\Theta_{G}}(\cX)\times_{\Df_{\Theta_{G}}}\cE_{\Theta_{G}}\to\cX\times\Theta_{G}
\]
is rig-etale. Moreover, from \ref{lem:formal-locally-quot} and \ref{lem:unt-rep-etale},
we can reduce to the case where $\cX$ is a quotient stack $[V/H]$
associated to a formal scheme $V$ with an action of a finite group
$H$. Consider the following 2-commutative diagram:
\[
\xymatrix{\coprod_{\iota\in\Emb(G,H)}\Utg_{\Theta_{G},\iota}(V)\times_{\Df_{\Theta_{G}}}E_{\Theta_{G}}\ar[r]\ar[d] & V\times\Theta_{G}\ar[d]\\
\Utg_{\Theta_{G}}(\cX)\times_{\Df_{\Theta_{G}}}\cE_{\Theta_{G}}\ar[r] & \cX\times\Theta_{G}
}
\]
Since the vertical arrows are etale and the upper arrow is rig-etale,
we conclude that the bottom arrow is also rig-etale.
\end{proof}
\begin{rem}
\label{rem:unt-generic-fib}In the non-formal setting, the ``untwisting
stack'' for a uniformizable $G$-torsor does not change the generic
fiber. Lemma \ref{lem:rig-etale-2} is an analogue of this fact. More
precisely, let $\cX$ be a (non-formal) DM stack of finite type over
the genuine scheme $\D=\Spec k\tbrats$, let $T$ be an affine scheme
and let $\Spec A\to\D_{T}^{*}$ is a uniformizable $G$-torsor. Consider
the non-formal counterpart 
\[
\cU_{T}:=\ulHom_{\D_{T}}^{\rep}([\Spec O_{A}/G],\cX\times_{\D}\D_{T})
\]
of the untwisting stack $\Utg_{T}(\cX)$. We have
\begin{align*}
\cU_{T}\times_{\D}\D^{*} & \cong\ulHom_{\D_{T}}^{\rep}([\Spec O_{A}/G],\cX\times_{\D}\D_{T})\times_{\D_{T}}\D_{T}^{*}\\
 & \cong\ulHom_{\D_{T}^{*}}^{\rep}([\Spec O_{A}/G]\times_{\D_{T}}\D_{T}^{*},\cX\times_{\D}\D_{T}^{*})\\
 & \cong\ulHom_{\D_{T}^{*}}^{\rep}([\Spec A/G],\cX\times_{\D}\D_{T}^{*})\\
 & \cong\ulHom_{\D_{T}^{*}}^{\rep}(\D_{T}^{*},\cX\times_{\D}\D_{T}^{*})\\
 & \cong\cX\times_{\D}\D_{T}^{*}.
\end{align*}
In particular, if $T=\Spec k$, the two $\D$-stacks $\cU_{\Spec k}$
and $\cX$ share the generic fiber.
\end{rem}

\begin{lem}
\label{lem:closed-immersion-Unt}Let $\cY\hookrightarrow\cX$ be a
closed immersion of formal DM stacks of finite type over $\Df$. The
induced morphism $\Utg_{\Theta}(\cY)\to\Utg_{\Theta}(\cX)$ is a closed
immersion.
\end{lem}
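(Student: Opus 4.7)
Since both sides are formal DM stacks of finite type over $\Df_\Theta$, it suffices to show that for every $n\in\NN$ the morphism $\Utg_\Theta(\cY)_n\to\Utg_\Theta(\cX)_n$ of DM stacks of finite type is a closed immersion; equivalently (\cite[tag 04XV]{stacks-project}), that it is representable, universally injective, unramified, and universally closed. At level $n$ this morphism is nothing but
\[
\ulHom^{\rep}_{\D_{\Theta,n}}(\cE_{\Theta,n},\cY_{\Theta,n})\to\ulHom^{\rep}_{\D_{\Theta,n}}(\cE_{\Theta,n},\cX_{\Theta,n}).
\]

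The first three properties reduce to showing the map is a monomorphism. Since $\cY\hookrightarrow\cX$ is a closed immersion, hence a monomorphism, the diagonal $\cY\to\cY\times_\cX\cY$ is an isomorphism. Applying the representable version of Lemma \ref{lem:fiber-product} yields
\[
\Utg_\Theta(\cY)\cong\Utg_\Theta(\cY\times_\cX\cY)\cong\Utg_\Theta(\cY)\times_{\Utg_\Theta(\cX)}\Utg_\Theta(\cY),
\]
so the diagonal of $\Utg_\Theta(\cY)\to\Utg_\Theta(\cX)$ is an isomorphism. Hence this morphism is itself a monomorphism and therefore representable, universally injective, and unramified; the same holds at every truncation.

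For universal closedness I apply the valuative criterion at each level $n$. Given a valuation ring $R$ with fraction field $K$ and a $2$-commutative solid diagram
\[
\xymatrix{\Spec K\ar[r]\ar[d] & \Utg_\Theta(\cY)_n\ar[d]\\ \Spec R\ar[r]\ar@{-->}[ur] & \Utg_\Theta(\cX)_n,}
\]
the bottom arrow is encoded by a Galoisian group scheme $G_R\to\Spec R$ together with a representable $G_R$-equivariant morphism $\phi\colon\cE_{R,n}=[E_{R,n}/G_R]\to\cX_n$ (where $E_{R,n}\to\Spec R$ is finite and flat by the construction of $E_\Theta$), while the top arrow says $\phi|_{\cE_{K,n}}$ factors through $\cY_n$. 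To produce the dashed arrow I must show $\phi$ itself factors through $\cY_n$. Pass to an etale atlas $U\to\cX_n$ by an affine scheme and set $V:=U\times_{\cX_n}\cY_n$ (a closed subspace of $U$) and $F:=E_{R,n}\times_{\cX_n}U$ (etale over $E_{R,n}$, hence flat over $R$). Then the defining ideal $\cI_V\subset\cO_U$ pulls back to an ideal on $F$ that vanishes after tensoring with $K$, and flatness of $F$ over $R$ forces the pulled-back ideal itself to vanish, exactly as in the proof of Lemma \ref{lem:Hom-closed-imm}. The resulting factorization $F\to V$ descends along $F\to E_{R,n}$ and along the $G_R$-torsor $E_{R,n}\to\cE_{R,n}$ to the desired representable morphism $\cE_{R,n}\to\cY_n$.

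The main subtlety is in this last descent step: one must check that the factorization produced on the etale atlas is independent of choices (so that it glues on $E_{R,n}$) and is $G_R$-equivariant (so that it descends to $\cE_{R,n}$), and finally that the resulting morphism $\cE_{R,n}\to\cY_n$ is representable. All three are automatic from the monomorphism property already established in Step~1: any factorization through $\cY\hookrightarrow\cX$ is unique when it exists, hence is preserved under any $G_R$-action and any descent data, and representability of $\phi$ together with representability of $\cY\hookrightarrow\cX$ forces representability of the factorization.
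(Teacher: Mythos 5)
Your proof is correct but takes a genuinely different route from the paper's. The paper reduces, via Proposition~\ref{lem:formal-locally-quot}, to the case where $\cX$ admits an atlas by quotient stacks $\coprod_\gamma[W_\gamma/H_\gamma]$; it then uses Lemma~\ref{lem:quot-unt} to express the untwisting stack of such a quotient as a quotient of fixed loci of Weil restrictions, applies Lemma~\ref{lem:Hom-closed-imm} at that level of Hom spaces of algebraic spaces, and finally descends the closed-immersion property along the etale surjective morphism of untwisting stacks coming from Lemma~\ref{lem:unt-rep-etale}. You bypass the quotient-stack presentation entirely: you get the monomorphism directly from the compatibility of $\Utg_\Theta$ with fiber products (Lemma~\ref{lem:fiber-product}) applied to the isomorphism $\cY\to\cY\times_\cX\cY$, and you verify universal closedness by the valuative criterion, passing only to an etale atlas $U\to\cX_n$ and repeating the flatness-kills-the-ideal trick of Lemma~\ref{lem:Hom-closed-imm} on $F=E_{R,n}\times_{\cX_n}U$; the descent back to $\cE_{R,n}$ is then free because a factorization through a monomorphism is unique. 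This is shorter and more self-contained, avoiding \ref{lem:formal-locally-quot}, \ref{lem:quot-unt}, and \ref{lem:unt-rep-etale}; the paper's route, by contrast, fits into its broader strategy of systematically reducing statements about $\Utg_\Theta(\cX)$ to the case $\cX=[V/H]$, where it has explicit quotient descriptions. Two small points worth making explicit in a polished version: (i) quasi-compactness of the truncated morphism (needed for the valuative criterion to yield universal closedness) holds because both sides are of finite type over each connected component of $\Theta$, and (ii) the uniqueness of the lift over $\Spec K$, needed for the valuative diagram to $2$-commute rather than merely admit an arrow, is again supplied by the monomorphism property from your first step.
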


\begin{proof}
For each $G\in\GG$, we prove that $\Utg_{\Theta_{[G]}}(\cY)\to\Utg_{\Theta_{[G]}}(\cX)$
is a closed immersion. By base change, it suffices to show that $\Utg_{\Theta_{G}}(\cY)\to\Utg_{\Theta_{G}}(\cX)$
is a closed immersion. Let $\cX':=\coprod_{\gamma}[W_{\gamma}/H_{\gamma}]\to\cX$
be as in \ref{lem:formal-locally-quot}, which is a stabilizer-preserving,
etale and surjective morphism, and let $\cY':=\cY\times_{\cX}\cX'$.
From \ref{lem:fiber-product}, the diagram
\[
\xymatrix{\Utg_{\Theta_{G}}(\cY')\ar[d]\ar[r] & \Utg_{\Theta_{G}}(\cX')\ar[d]\\
\Utg_{\Theta_{G}}(\cY)\ar[r] & \Utg_{\Theta_{G}}(\cX)
}
\]
is 2-Cartesian. The upper arrow is a closed immersion from \ref{lem:quot-unt}
and \ref{lem:Hom-closed-imm}. From \ref{lem:unt-rep-etale}, the
vertical ones are etale and surjective. Therefore the bottom arrow
is also a closed immersion.
\end{proof}
Let $\cX$ be a formal DM stack of finite type and $\cY$ a closed
substack of $\cX$ defined by an ideal sheaf $\cI\subset\cO_{\cX}$.
Let $\cY_{(n)}$ denote the closed substack defined by $\cI^{n+1}$.
We get an inductive system of closed substacks 
\[
\cY=\cY_{(0)}\hookrightarrow\cY_{(1)}\hookrightarrow\cdots.
\]
We define the ind-DM stack $\widehat{\cX}$ to be the limit $\varinjlim\cY_{(n)}$
and call it the \emph{completion} of $\cX$ along $\cY$.
\begin{lem}
\label{lem:Utg-completion}Let $\Gamma$ be a DM stack of finite type
and let $\Gamma\to\Theta_{[G]}$ be any morphism. Suppose that $\cY\subset\cX_{m}:=\cX\times_{\Df}\Df_{m}$
for some $m\in\NN$. Then $\Utg_{\Gamma}(\widehat{\cX})$ is the completion
of $\Utg_{\Gamma}(\cX)$ along $\Utg_{\Gamma}(\cY_{(i)})$ for any
$i\ge\sharp G-1$.
\end{lem}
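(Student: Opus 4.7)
The plan is to identify both ind-stacks as nested systems of closed substacks of $\Utg_\Gamma(\cX)$ and then establish cofinality of the two filtrations. By Lemma \ref{lem:closed-immersion-Unt}, each closed immersion $\cY_{(n)} \hookrightarrow \cX$ induces a closed immersion $\Utg_\Gamma(\cY_{(n)}) \hookrightarrow \Utg_\Gamma(\cX)$ with a defining ideal $\cJ_n$, and the chain $\cY_{(0)} \subset \cY_{(1)} \subset \cdots$ gives $\cJ_0 \supset \cJ_1 \supset \cdots$. First I would verify $\Utg_\Gamma(\widehat\cX) \cong \varinjlim_n V(\cJ_n)$ by a quasi-compactness argument: since $\cE_T$ is of finite type over any affine $T$, a representable morphism $\cE_T \to \widehat\cX = \varinjlim_n \cY_{(n)}$ must factor through some $\cY_{(n)}$. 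The completion of $\Utg_\Gamma(\cX)$ along $\Utg_\Gamma(\cY_{(i)})$ is by definition $\varinjlim_N V(\cJ_i^{N+1})$, so the proof reduces to showing that the two systems $\{\cJ_n\}_n$ and $\{\cJ_i^{N+1}\}_N$ are cofinal in each other, namely (a) for each $n$ some $\cJ_i^{N+1}$ lies in $\cJ_n$, and (b) for each $N$ some $\cJ_n$ lies in $\cJ_i^{N+1}$.

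For the local setup, I would combine Lemmas \ref{lem:unt-rep-etale}, \ref{lem:formal-locally-quot}, \ref{lem:fiber-product}, \ref{lem:closed-immersion-Unt}, and \ref{lem:Unt-quot} to reduce, etale-locally, to the case $\cX = [V/H]$ with $V$ a formal affine scheme carrying a finite $H$-action, and, after base-changing $\Gamma$ along the $\Aut(G)$-torsor $\Theta_G \to \Theta_{[G]}$, to a single component $\Utg_{\Gamma_G, \iota}(V)$. Using Proposition \ref{prop:Theta}, I may also assume $E_\Gamma = \Spf O$ is uniformizable, so that $B := O$ is finite free of rank $\sharp G$ over $\cO_\Gamma\tbrats$. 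A $T$-point of $\Utg_{\Gamma_G, \iota}(V)$ is then an $\iota$-equivariant $\cO_T$-algebra map $\psi \colon R_V \to B_T$ (with $R_V := \cO_V$), and $\Utg_{\Gamma_G, \iota}(V_{(n)})$ is cut out by $\psi(I_V^{n+1}) = 0$ in $B_T$ for $I_V := \cI R_V$. Fixing a free $\cR\tbrats$-basis $e_1, \dots, e_{\sharp G}$ of $B_\cR$ (with $\cR := \cO_{\Utg_{\Gamma_G, \iota}(V)}$) and expanding $\psi_{\mathrm{univ}}(f) = \sum_l c_l(f)\, e_l$ and $c_l(f) = \sum_j c_{l,j}(f)\, t^j$, I would identify $\cJ_i$ with the ideal of $\cR$ generated by $\{c_{l,j}(f) : f \in I_V^{i+1}\}$.

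Direction (b) would then be immediate from the universal inclusion $\psi_{\mathrm{univ}}(I_V^{i+1}) \subset \cJ_i B_\cR$: taking $(N{+}1)$-th powers gives $\psi_{\mathrm{univ}}(I_V^{(i+1)(N+1)}) \subset \cJ_i^{N+1} B_\cR$, so $V(\cJ_i^{N+1}) \subset \Utg_{\Gamma_G, \iota}(V_{(n)})$ for $n = (i{+}1)(N{+}1) - 1$. Direction (a), where the threshold $i \ge \sharp G - 1$ becomes essential, is the main obstacle. The plan there is to exploit that, for any $f \in I_V$, the norm $N_{B_\cR/\cR\tbrats}(\psi_{\mathrm{univ}}(f)) = \psi_{\mathrm{univ}}\bigl(\prod_{g \in G} \iota(g) \cdot f\bigr)$ lies in $\psi_{\mathrm{univ}}(I_V^{\sharp G}) \subset \psi_{\mathrm{univ}}(I_V^{i+1})$ and, by Cramer's rule, satisfies $N(\psi_{\mathrm{univ}}(f)) \in \psi_{\mathrm{univ}}(f) \cdot B_\cR$, so the coefficients $c_{l,j}$ attached to $\psi_{\mathrm{univ}}(f)$ are controlled by those attached to elements already in $\psi_{\mathrm{univ}}(I_V^{i+1})$. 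A Cayley--Hamilton reduction on products of $\ge \sharp G$ factors in $B_\cR$ should then convert any vanishing $\psi(I_V^{n+1}) = 0$ in $B_T$ into the required bound $(\cJ_i \cO_T)^{N+1} = 0$, with $N$ controlled by $n$ and $\sharp G$. Working out this bookkeeping explicitly---in particular, relating products $c_{l_1}(f_1)\cdots c_{l_k}(f_k)$ to coefficients $c_l(f_1\cdots f_k)$ via the structure constants of $B$---is the technical heart, and it is precisely there that the hypothesis $i + 1 \ge \sharp G$ enters.
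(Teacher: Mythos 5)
Your framing is right: reduce to cofinality of the two filtrations, and your direction (b) is essentially identical to the paper's argument (pulling back the defining ideal of $\cY\subset\cY_{(n)}$ along the universal morphism and observing that the nilpotency index is preserved up to the factor $n+1$). The gap is in direction (a), which is exactly the step where $i\ge\sharp G-1$ must enter, and which you explicitly leave as unfinished bookkeeping. The norm/Cayley--Hamilton sketch does not, as written, give the required uniform bound: $N(\psi_{\mathrm{univ}}(f))\in\psi_{\mathrm{univ}}(f)\cdot B_{\cR}$ and $N(\psi_{\mathrm{univ}}(f))\in\psi_{\mathrm{univ}}(I_V^{\sharp G})B_{\cR}$ are containments of elements of $B_{\cR}$, not of the coefficient ideals $\cJ_n\subset\cR$, and there is no evident way to pass from ``some product of $\sharp G$ translates of $\psi(f)$ is divisible by $\psi(f)$'' to a bound of the form $(\cJ_i\cO_T)^{N+1}=0$ whenever $\psi(I_V)^{n+1}=0$. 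You would at minimum need a Nakayama-type or graded argument organizing the $c_{l,j}$'s, and you acknowledge you have not carried this out.

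The paper sidesteps all of this. For direction (a) it argues on the level of the \emph{reduced} universal test object: take $T:=\Utg_{\Gamma}(\cY_{(n)})_{\red}$ with $\Gamma=\Theta_{[G]}$ reduced. Since $\cY_{(n)}$ lives over a thickening of $\D_0$ and $T$ is reduced, the structure map $T\to\Df_{\Gamma}$ factors through $\D_{\Gamma,0}$, so $T\times_{\Df_{\Gamma}}\cE_{\Gamma}\cong T\times_{\D_{\Gamma,0}}\cE_{\Gamma,0}$. In your uniformized model the ring of $\cE_{\Gamma,0}$ is $R[s]/(s^{\sharp G})$, so its nilradical $\cN'$ satisfies $(\cN')^{\sharp G}=0$, and (using that $T$ is reduced and $\cE_{\Gamma,0}\to\Gamma$ is flat) the nilradical $\cN$ of $T\times_{\D_{\Gamma,0}}\cE_{\Gamma,0}$ is the pullback of $\cN'$, hence also satisfies $\cN^{\sharp G}=0$. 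The universal map $u\colon T\times_{\Df_\Gamma}\cE_\Gamma\to\cY_{(n)}$ sends the (nilpotent) defining ideal $\cI$ of $\cY\subset\cY_{(n)}$ into $\cN$, so $u^{-1}\cI^{\sharp G}=0$, i.e., $u$ factors through $\cY_{(i)}$ once $i\ge\sharp G-1$. This gives the reduced inclusion $\Utg_{\Gamma}(\cY_{(n)})_{\red}\subset\Utg_{\Gamma}(\cY_{(i)})$, equivalently $\cJ_i\subset\sqrt{\cJ_n}$, and Noetherianity of the finite-type stack $\Utg_\Gamma(\cY_{(n)})$ then yields $\cJ_i^{N'}\subset\cJ_n$ for some $N'$. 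That is the missing piece of your argument, and it is where $\sharp G$ genuinely controls the threshold; I'd suggest replacing your norm heuristic with this reduction-then-Noetherian step.
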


\begin{proof}
The stack $\Utg_{\Gamma}(\widehat{\cX})$ is the limit of $\Utg_{\Gamma}(\cY_{(n)})$,
$n\in\NN$, while the completion of $\Utg_{\Gamma}(\cX)$ along $\Utg_{\Gamma}(\cY_{(i)})$
is the limit of $\Utg_{\Gamma}(\cY_{(i)})_{(n)}$, $n\in\NN$. It
suffices to show that for every $n$, there exists $n'$ such that
\begin{gather}
\Utg_{\Gamma}(\cY_{(n)})\subset\Utg_{\Gamma}(\cY_{(i)})_{(n')}\text{ and}\label{eq:two-inclu}\\
\Utg_{\Gamma}(\cY_{(i)})_{(n)}\subset\Utg_{\Gamma}(\cY_{(n')}).\label{eq:two-inclu2}
\end{gather}

To show these, we may suppose that $\Gamma=\Theta_{[G]}$, in particular,
we may suppose that $\Gamma$ is reduced. Since $\cY\subset\cX_{m}$,
the morphism $\Utg_{\Gamma}(\cY_{(n)})_{\red}\to\Df_{\Gamma}$ factors
through $\D_{\Gamma,0}$ and
\[
\Utg_{\Gamma}(\cY_{(n)})_{\red}\times_{\Df_{\Gamma}}\cE_{\Gamma}\cong\Utg_{\Gamma}(\cY_{(n)})_{\red}\times_{\D_{\Gamma,0}}\cE_{\Gamma,0}.
\]
Let $\cN$ be the nilradical of the structure sheaf of $\Utg_{\Gamma}(\cY_{(n)})_{\red}\times_{\D_{\Gamma,0}}\cE_{\Gamma,0}$,
which is the pullback of the nilradical $\cN'$ of the structure sheaf
of $\cE_{\Gamma,0}$. Since $(\cN')^{\sharp G}=0$, $\cN^{\sharp G}=0$.
Consider the universal morphisms 
\begin{align*}
u\colon\Utg_{\Gamma}(\cY_{(n)})_{\red}\times_{\Df_{\Gamma}}\cE_{\Gamma} & \to\cY_{(n)}.
\end{align*}
If $\cI$ denotes the defining ideal of $\cY\subset\cY_{(n)}$, which
is nilpotent, then $u^{-1}\cI\subset\cN$. It follows that $u^{-1}\cI^{\sharp G}=0$.
This means that $u$ factors through $\cY_{(i)}$ as far as $i\ge\sharp G-1$.
In turn, this shows that 
\[
\Utg_{\Gamma}(\cY_{(n)})_{\red}\subset\Utg_{\Gamma}(\cY_{(i)}).
\]
Inclusion (\ref{eq:two-inclu}) follows.

Inclusion (\ref{eq:two-inclu2}) holds in a more general situation.
For any closed substack $\cZ\subset\cX$, consider the 2-commutative
diagram:
\[
\xymatrix{\Utg_{\Gamma}(\cZ)\times_{\Df_{\Gamma}}\cE_{\Gamma}\ar[r]\ar@{^{(}->}[d]\ar[dr]\sp(0.7){b} & \cZ\ar@{^{(}->}[d]\\
\Utg_{\Gamma}(\cZ)_{(n)}\times_{\Df_{\Gamma}}\cE_{\Gamma}\ar[r]\sb(0.8){c} & \cX
}
\]
Let $\cJ$ and $\cI_{\cZ}$ be the defining ideals of the left and
right vertical arrows respectively. We have $b^{-1}\cI_{\cZ}=0$,
$c^{-1}\cI_{\cZ}\subset\cJ$ and $\cJ^{n+1}=0$. Therefore $c^{-1}\cI_{\cZ}^{n+1}=0$.
This shows that $c$ factors through $\cZ_{(n)}$. Therefore $\Utg_{\Gamma}(\cZ)_{(n)}\subset\Utg_{\Gamma}(\cZ_{(n)})$.
In particular, 
\[
\Utg_{\Gamma}(\cY_{(i)})_{(n)}\subset\Utg_{\Gamma}\left((\cY_{(i)})_{(n)}\right)=\Utg_{\Gamma}(\cY_{((i+1)(n+1)-1)}).
\]
\end{proof}

\subsection{Rig-pure formal stacks and flattening stratification}
\begin{defn}[{\cite[page 43]{MR2815110}}]
Let $A$ be a $k\tbrats$-algebra. We say that $A$ is \emph{rig-pure
}if one of the following equivalent conditions holds:
\begin{enumerate}
\item The element $t\in A$ is not a zerodivisor.
\item The localization map $A\to A_{t}$ is injective.
\item The scheme-theoretic closure of the open subscheme $\Spec A_{t}\subset\Spec A$
in $\Spec A$ is $\Spec A$.
\end{enumerate}
For a $k\llbracket t\rrbracket$-algebra $A$, the \emph{associated
rig-pure algebra} of $A$, denoted by $A^{\pur}$, is defined to be
the image of the map $A\to A_{t}$.
\end{defn}

This construction is characterized by the universality: the map $A\to A^{\pur}$
is the universal map from $A$ to a rig-pure algebra.
\begin{lem}
\label{lem:calm-flat}Let $A$ be an $R\llbracket t\rrbracket$-algebra
and $A\to B$ a flat homomorphism of rings. Then there exists a natural
isomorphism $B^{\pur}\cong A^{\pur}\otimes_{A}B$. In particular,
$B$ is rig-pure if $A$ is rig-pure, and the converse is also true
if $A\to B$ is faithfully flat.
\end{lem}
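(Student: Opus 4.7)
The plan is to exploit the explicit description of the rig-pure quotient as a quotient by $t$-torsion. Write $I \subset A$ for the $t$-torsion ideal $\{a \in A \mid t^{n}a = 0 \text{ for some } n\}$. Then the localization map $A \to A_{t}$ has kernel exactly $I$, so $A^{\pur} = A/I$, and we have a short exact sequence $0 \to I \to A \to A^{\pur} \to 0$. Tensoring with the flat $A$-algebra $B$ preserves exactness, giving $0 \to I \otimes_{A} B \to B \to A^{\pur} \otimes_{A} B \to 0$. Since $B$ is $A$-flat, the map $I \otimes_{A} B \to B$ is injective with image $IB$, so $A^{\pur} \otimes_{A} B \cong B/IB$ naturally in $B$.

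Next I would check that $B/IB$ is itself rig-pure. Since $A^{\pur}$ is rig-pure, multiplication by $t$ is an injection $A^{\pur} \hookrightarrow A^{\pur}$. The $A$-module $A^{\pur} \otimes_{A} B$ is obtained by a flat base change (it is flat over $A^{\pur}$ because $B$ is flat over $A$ and flatness is preserved by base change), so tensoring the injection $t \colon A^{\pur} \hookrightarrow A^{\pur}$ with $B$ over $A$ yields an injection $t \colon A^{\pur} \otimes_{A} B \hookrightarrow A^{\pur} \otimes_{A} B$. Hence $B/IB$ has no $t$-torsion and is rig-pure.

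Now I would invoke the universal property of $B \to B^{\pur}$. On the one hand, since $IB \subseteq B$ is clearly $t$-torsion (each generator coming from $I$ is killed by some power of $t$), the quotient map $B \to B^{\pur} = B/T_{t}(B)$ kills $IB$ and factors as $B \twoheadrightarrow B/IB \twoheadrightarrow B^{\pur}$. On the other hand, since $B/IB$ is rig-pure and receives a map from $B$, the universal property yields a map $B^{\pur} \to B/IB$ splitting the previous surjection. Hence $B^{\pur} \cong B/IB \cong A^{\pur} \otimes_{A} B$, and the isomorphism is natural because every construction used is.

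The two corollaries then fall out directly. If $A$ is rig-pure then $A^{\pur} = A$, so $B^{\pur} \cong A \otimes_{A} B = B$ and $B$ is rig-pure. Conversely, if $A \to B$ is faithfully flat and $B$ is rig-pure, then $B^{\pur} = B$, so the surjection $B \to A^{\pur} \otimes_{A} B$ must be an isomorphism; equivalently $I \otimes_{A} B \cong IB = 0$, and faithful flatness forces $I = 0$, i.e.\ $A$ is rig-pure. There are no real obstacles; the only subtle point is checking that $B/IB$ is again rig-pure, which is the heart of the argument and uses flatness twice (once to identify the tensor product with $B/IB$, and once to promote injectivity of $t$ on $A^{\pur}$ to injectivity on $A^{\pur} \otimes_{A} B$).
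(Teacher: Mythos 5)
Your proof is correct and follows essentially the same route as the paper's: the crux in both cases is that flatness of $A\to B$ transfers rig-purity to $A^{\pur}\otimes_{A}B$, after which the universal property of $B\to B^{\pur}$ produces the inverse map. The only cosmetic difference is that you make the identification $A^{\pur}=A/(t\text{-torsion})$ explicit and check that $t$ acts injectively on $A^{\pur}\otimes_{A}B$, whereas the paper phrases the same flatness step abstractly as preservation of the injection $C\hookrightarrow C_{t}$ for any rig-pure $A$-algebra $C$.
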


\begin{proof}
This is \cite[5.1.13 and 5.11.9]{MR2815110}. For the sake of completeness,
we give a proof. Let $C$ be an $A$-algebra which is rig-pure. Then
the map $C\to C_{t}$ is injective. Applying $\otimes_{A}B$, we get
a map $C\otimes_{A}B\to C_{t}\otimes_{A}B=(C\otimes_{A}B)_{t}$. Since
$A\to B$ is flat, this is injective, which means that $C\otimes_{A}B$
is rig-pure. We apply this to $C=A^{\pur}$ and get that $A^{\pur}\otimes_{A}B$
is rig-pure. From the commutative diagram 
\[
\xymatrix{A\ar[r]\ar[d] & B\ar[d]\\
A^{\pur}\ar[r] & B^{\pur}
}
\]
we get a map $A^{\pur}\otimes_{A}B\to B^{\pur}$. From the universality
of $B^{\pur}$, this map has the inverse and the first assertion follows.

If $A$ is rig-pure, then we have $A\cong A^{\pur}$, which implies
$B\cong B^{\pur}$, that is, $B$ is rig-pure. Conversely, if $B$
is rig-pure, then $B=A\otimes_{A}B\to A^{\pur}\otimes_{A}B\cong B^{\pur}$
is an isomorphism. Now, if $A\to B$ is faithfully flat, this implies
that $A\to A^{\pur}$ is also an isomorphism and $A$ is rig-pure.
\end{proof}
\begin{lem}
\label{lem:flat-calm-base-change-1}Let $A$ be a $R\tbrats$-algebra.
Suppose that $A^{\pur}$ is flat over $R\tbrats$. Then, for any ring
map $R\to S$, $A^{\pur}\otimes_{R\tbrats}S\tbrats\cong(A\otimes_{R\tbrats}S\tbrats)^{\pur}$.
\end{lem}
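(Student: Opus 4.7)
Write $B:=A\otimes_{R\tbrats}S\tbrats$ and let $K:=\ker(A\to A^{\pur})$, which coincides with the $t$-torsion submodule of $A$ since $A^{\pur}$ is by definition the image of $A\to A_{t}$. The plan is to show that $A^{\pur}\otimes_{R\tbrats}S\tbrats$ is rig-pure, obtain from its universal property a surjection $B^{\pur}\twoheadrightarrow A^{\pur}\otimes_{R\tbrats}S\tbrats$, and then identify its kernel with the $t$-torsion of $B$. The flatness hypothesis on $A^{\pur}$ will enter in two places: to propagate rig-purity along the base change, and to keep the surjection $A\twoheadrightarrow A^{\pur}$ exact after tensoring.

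First I would verify rig-purity of $A^{\pur}\otimes_{R\tbrats}S\tbrats$. Since $A^{\pur}$ is rig-pure, multiplication by $t$ on $A^{\pur}$ is injective; by flatness of $A^{\pur}$ over $R\tbrats$, this injectivity is preserved after applying $-\otimes_{R\tbrats}S\tbrats$, so $t$ is not a zerodivisor on $A^{\pur}\otimes_{R\tbrats}S\tbrats$. By the universal property of the pur construction, the surjection $A\twoheadrightarrow A^{\pur}$ then induces a unique $R\tbrats$-algebra map
\[
\phi\colon B^{\pur}\to A^{\pur}\otimes_{R\tbrats}S\tbrats,
\]
which is surjective because the composition $B\to B^{\pur}\to A^{\pur}\otimes_{R\tbrats}S\tbrats$ is obtained from the surjection $A\twoheadrightarrow A^{\pur}$ by base change.

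Next I would compute the kernel of $B\twoheadrightarrow A^{\pur}\otimes_{R\tbrats}S\tbrats$. Applying $-\otimes_{R\tbrats}S\tbrats$ to the short exact sequence
\[
0\to K\to A\to A^{\pur}\to 0
\]
yields, by flatness of $A^{\pur}$, a short exact sequence
\[
0\to K\otimes_{R\tbrats}S\tbrats\to B\to A^{\pur}\otimes_{R\tbrats}S\tbrats\to 0.
\]
So it remains to check that $K\otimes_{R\tbrats}S\tbrats$ is precisely the $t$-torsion of $B$. One inclusion is immediate: every element of $K$ is annihilated by some power of $t$, so every element of $K\otimes_{R\tbrats}S\tbrats$ is $t$-torsion in $B$. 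Conversely, since $A^{\pur}\otimes_{R\tbrats}S\tbrats$ is rig-pure by the previous paragraph, the surjection $B\to A^{\pur}\otimes_{R\tbrats}S\tbrats$ kills all $t$-torsion of $B$, forcing the $t$-torsion of $B$ to lie in $K\otimes_{R\tbrats}S\tbrats$.

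Combining these facts, the kernel of $B\to A^{\pur}\otimes_{R\tbrats}S\tbrats$ equals $\ker(B\to B_{t})$, so the surjection factors through an isomorphism $B^{\pur}=B/\ker(B\to B_{t})\xrightarrow{\sim}A^{\pur}\otimes_{R\tbrats}S\tbrats$, which is the desired identification. The only real point requiring care is the second inclusion in the $t$-torsion computation, and for that I expect the rig-purity of $A^{\pur}\otimes_{R\tbrats}S\tbrats$—itself a direct consequence of the flatness assumption—to do all the work; once that is in hand the rest is formal manipulation of the short exact sequences, with no need to know anything about flatness of $R\tbrats\to S\tbrats$ (which would fail in general).
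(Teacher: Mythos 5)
Your proof is correct and follows essentially the same route as the paper: obtain the comparison map $B^{\pur}\to A^{\pur}\otimes_{R\tbrats}S\tbrats$ via the universal property of the rig-pure quotient, note its surjectivity, and identify the kernel of $B\to A^{\pur}\otimes_{R\tbrats}S\tbrats$ with the $t$-torsion of $B$. The one step whose justification should be rephrased is that $t$ remains a nonzerodivisor on $A^{\pur}\otimes_{R\tbrats}S\tbrats$: tensoring an injection of flat modules does not in general preserve injectivity, but here it works because $A^{\pur}\otimes_{R\tbrats}S\tbrats$ is flat over $S\tbrats$ by base change and $t$ is a nonzerodivisor in $S\tbrats$, which is exactly how the paper argues.
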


\begin{proof}
As a base change of $A\to A^{\pur}\to A_{t},$ we get $A_{S}\to(A^{\pur})_{S}\to(A_{S})_{t},$
where the subscript $S$ means the base change by $R\tbrats\to S\tbrats$.
Therefore we get surjective maps $A_{S}\to(A^{\pur})_{S}\to(A_{S})^{\pur}$.
If $(A_{S})_{t\textrm{-tor}}$ denotes the submodule of $t$-torsions
of $A_{S}$, we have
\[
(A_{S})_{t\textrm{-tor}}=\Ker(A_{S}\to(A_{S})^{\pur})\supset\Ker(A_{S}\to(A^{\pur})_{S})\supset(A_{S})_{t\textrm{-tor}}.
\]
The last inclusion holds since $(A^{\pur})_{S}$ is flat over $S\tbrats$
as well as over $k\tbrats$ and has no $t$-torsion. Thus the two
kernels are equal. It follows that $(A^{\pur})_{S}\to(A_{S})^{\pur}$
is an isomorphism. We have proved the lemma.
\end{proof}
\begin{cor}
Let $\cX$ be a formal DM stack of finite type over $\Df$. Then there
exists a unique closed substack $\cX^{\pur}\subset\cX$ such that
for any flat finite-type morphism $\Spf A\to\cX$, the scheme-theoretic
preimage of $\cX^{\pur}$ in $\Spf A$ is $\Spf A^{\pur}$.
\end{cor}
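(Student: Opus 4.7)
The plan is to construct $\cX^{\pur}$ by descending the purification of an atlas, the key input being the flat base change isomorphism of Lemma \ref{lem:calm-flat}. Uniqueness is immediate: if $\cY_1, \cY_2 \subset \cX$ both satisfy the asserted property, then they pull back to the same closed subscheme $\Spf A^{\pur}$ along any atlas $\Spf A \to \cX$; since an atlas is etale and surjective, hence faithfully flat, faithfully flat descent of closed immersions forces $\cY_1 = \cY_2$.

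For existence, fix an atlas $U = \Spf A \to \cX$ with $A$ a topologically finitely generated $k\tbrats$-algebra (which exists by the results of Section \ref{sec:ind-formal}), and define the closed subscheme $U^{\pur} := \Spf A^{\pur} \subset U$. By Lemma \ref{lem:diag-immer}, $R := U \times_{\cX} U$ is a formal affine scheme $\Spf B$, and the two projections $p_1, p_2 \colon R \rightrightarrows U$ are etale, hence flat. Since Lemma \ref{lem:calm-flat} gives $B^{\pur} \cong A^{\pur} \otimes_A B$ for either of the two $A$-algebra structures on $B$, the two pullbacks $p_1^{-1}(U^{\pur})$ and $p_2^{-1}(U^{\pur})$ both equal $\Spf B^{\pur}$ as closed subschemes of $R$. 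The analogous application on the triple fiber product $U \times_{\cX} U \times_{\cX} U$ furnishes the cocycle condition, so $U^{\pur}$ descends to a closed substack $\cX^{\pur} \subset \cX$.

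To verify the universal property for a general flat finite-type morphism $V = \Spf B \to \cX$, form the 2-Cartesian square and put $W := U \times_{\cX} V \cong \Spf C$. The projection $W \to U$ is flat, being the base change of $V \to \cX$, while $W \to V$ is etale and surjective, being the base change of the atlas $U \to \cX$, hence faithfully flat. Lemma \ref{lem:calm-flat} applied to both flat maps $A \to C$ and $B \to C$ yields $A^{\pur} \otimes_A C \cong C^{\pur} \cong B^{\pur} \otimes_B C$. The scheme-theoretic preimage of $\cX^{\pur}$ in $V$ pulls back along $W \to V$ to the preimage in $W$ of $\Spf A^{\pur} \subset U$, which is $\Spf C^{\pur}$; this also equals the pullback of $\Spf B^{\pur}$ along $W \to V$. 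Since $W \to V$ is faithfully flat, the preimage of $\cX^{\pur}$ in $V$ itself must equal $\Spf B^{\pur}$.

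The main technical point is the descent step: one has to check that the locally constructed $U^{\pur}$ really descends to a closed substack of $\cX$. This decomposes into the equality of the two pullbacks on $R$ and the cocycle condition on the triple fiber product, both handled uniformly by Lemma \ref{lem:calm-flat}. Once this is in hand, the universal property is also formal, amounting to tracking the pullback of an ideal under the faithfully flat map $W \to V$, which again rests on Lemma \ref{lem:calm-flat}.
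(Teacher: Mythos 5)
Your proof is correct and takes essentially the same route as the paper: purify an atlas, descend using Lemma \ref{lem:calm-flat} for the two projections, and check the universal property by pulling back along the atlas to a common faithfully flat cover. The paper phrases the descent step as forming the groupoid $\Spf C^{\pur}\rightrightarrows\Spf B^{\pur}$ and taking the associated stack rather than explicitly spelling out the cocycle condition, but this is the same construction; your version merely makes the descent data and the cocycle verification on the triple fiber product explicit.
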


\begin{proof}
Let $\Spf B\to\cX$ be an atlas. From \ref{lem:formal-finite-type},
\[
\Spf B\times_{\cX}\Spf B\left(\cong\varinjlim(\Spf B)_{n}\times_{\cX_{n}}(\Spf B)_{n}\right)
\]
is a formal affine scheme of finite type; we write this as $\Spf C$.
The groupoid $\Spf C\rightrightarrows\Spf B$ induces the groupoid
$\Spf C^{\pur}\rightrightarrows\Spf B^{\pur}$. Let $\cX^{\pur}$
be the stack associated to the last groupoid, which is regarded as
a closed substack of $\cX$. We show that this satisfies the desired
property. Let $\Spf A\to\cX$ be a flat finite-type morphism. Let
$\Spf R:=\Spf A\times_{\cX}\Spf B$. The preimage of $\cX^{\pur}$
in $\Spf B$ is $\Spf B^{\pur}$ by construction. Since $B\to R$
is flat, from \ref{lem:calm-flat}, the preimage of $\cX^{\pur}$
in $\Spf R$ is $\Spf R^{\pur}$. Since $\Spf R\to\Spf A$ is etale
and surjective, a closed subscheme of $\Spf A$ is uniquely determined
by its preimage in $\Spf R$. The preimage of $\cX^{\pur}$ in $\Spf A$
and the closed subscheme $\Spf A^{\pur}\subset\Spf A$ are both pulled
back to $\Spf R^{\pur}$. Therefore the two closed subschemes are
the same. The uniqueness of $\cX^{\pur}$ is clear.
\end{proof}
\begin{defn}
\label{def:pure-stack}We call $\cX^{\pur}$ in the last corollary
the \emph{associated rig-pure stack }of $\cX$.
\end{defn}

\begin{lem}
\label{lem:flat-calm-base-change}Let $\Sigma$ be a DM stack of finite
type and let $\cZ\to\Df_{\Sigma}$ be a morphism of formal DM stacks
of finite type over $\Df_{\Sigma}$. Suppose that $\cZ^{\pur}$ is
flat over $\Df_{\Sigma}$. Then, for any morphism of DM stacks $\Sigma'\to\Sigma$,
we have $(\cZ_{\Sigma'})^{\pur}=(\cZ^{\pur})_{\Sigma'}$.
\end{lem}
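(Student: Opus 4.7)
The plan is to reduce the statement to the affine base change lemma~\ref{lem:flat-calm-base-change-1} by choosing a compatible cascade of atlases. First I would pick an atlas $\Spec R\to\Sigma$ from an affine scheme, and then an atlas $\Spec R'\to\Spec R\times_{\Sigma}\Sigma'$; the second factor projection $\Spec R\times_{\Sigma}\Sigma'\to\Sigma'$ is etale and surjective as the base change of $\Spec R\to\Sigma$, so the composition $\Spec R'\to\Sigma'$ is an atlas. Next pick an atlas $\Spf A\to\cZ\times_{\Df_{\Sigma}}\Df_R$ from a formal affine scheme, so that the composition $\Spf A\to\cZ$ is representable, etale and surjective. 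Put $A':=A\otimes_{R\tbrats}R'\tbrats$, and observe that $\Spf A'\cong\Spf A\times_{\Df_R}\Df_{R'}$ is etale and surjective over $\cZ\times_{\Df_{\Sigma}}\Df_{R'}\cong\cZ\times_{\Sigma}\Spec R'$, and hence over $\cZ_{\Sigma'}$ after composing with the etale cover $\cZ\times_{\Sigma}\Spec R'\to\cZ_{\Sigma'}$.

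With these atlases in hand, I would compare the preimages of the two closed substacks $(\cZ_{\Sigma'})^{\pur}$ and $(\cZ^{\pur})_{\Sigma'}$ of $\cZ_{\Sigma'}$ in $\Spf A'$. Applied to the flat morphism $\Spf A'\to\cZ_{\Sigma'}$, the defining property of the associated rig-pure stack (Definition~\ref{def:pure-stack}) identifies the preimage of $(\cZ_{\Sigma'})^{\pur}$ with $\Spf (A')^{\pur}$. For the other side, rewriting $(\cZ^{\pur})_{\Sigma'}=\cZ^{\pur}\times_{\cZ}\cZ_{\Sigma'}$ and using the factorization $\Spf A'\to\Spf A\to\cZ$ together with the defining property of $\cZ^{\pur}$ applied to the flat morphism $\Spf A\to\cZ$, the preimage of $(\cZ^{\pur})_{\Sigma'}$ in $\Spf A'$ becomes
\[
\Spf A'\times_{\Spf A}\Spf A^{\pur}\cong\Spf\bigl(A^{\pur}\otimes_{R\tbrats}R'\tbrats\bigr).
\]

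To finish, I would verify that the hypothesis ``$\cZ^{\pur}$ is flat over $\Df_{\Sigma}$'' pulls back to the statement that $A^{\pur}$ is flat over $R\tbrats$: the morphism $\Spf A^{\pur}\to\cZ^{\pur}\times_{\Df_{\Sigma}}\Df_R$ is etale (as a base change of $\Spf A\to\cZ\times_{\Df_{\Sigma}}\Df_R$), and $\cZ^{\pur}\times_{\Df_{\Sigma}}\Df_R\to\Df_R$ is flat by hypothesis, so the composition $\Spf A^{\pur}\to\Df_R$ is flat. Now Lemma~\ref{lem:flat-calm-base-change-1} produces the isomorphism
\[
A^{\pur}\otimes_{R\tbrats}R'\tbrats\cong\bigl(A\otimes_{R\tbrats}R'\tbrats\bigr)^{\pur}=(A')^{\pur},
\]
so the two preimages coincide as closed subschemes of $\Spf A'$, and by etale descent along $\Spf A'\to\cZ_{\Sigma'}$ the two closed substacks $(\cZ_{\Sigma'})^{\pur}$ and $(\cZ^{\pur})_{\Sigma'}$ of $\cZ_{\Sigma'}$ are equal. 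The main effort is really the bookkeeping of fiber products needed to interpret the preimage of $\cZ^{\pur}$ under the non-necessarily-flat composition $\Spf A'\to\cZ$; the single substantive ingredient, Lemma~\ref{lem:flat-calm-base-change-1}, is precisely what the flatness assumption was designed to unlock.
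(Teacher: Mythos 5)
Your proof is correct and takes the same route as the paper: reduce to the affine Lemma~\ref{lem:flat-calm-base-change-1} by computing the preimages of both closed substacks $(\cZ_{\Sigma'})^{\pur}$ and $(\cZ^{\pur})_{\Sigma'}$ under a single flat atlas $\Spf A'\to\cZ_{\Sigma'}$, and conclude by the uniqueness built into Definition~\ref{def:pure-stack}. The paper's proof is literally ``a direct consequence of \ref{lem:flat-calm-base-change-1}''; you have supplied the tower of atlases, the verification that the flatness hypothesis descends to $A^{\pur}$ over $R\tbrats$, and the identification of the two preimages that the paper leaves implicit.
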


\begin{proof}
This is a direct consequence of \ref{lem:flat-calm-base-change-1}.
\end{proof}
\begin{lem}
\label{lem:flat-locus}Let $\Sigma$ be a reduced DM stack of finite
type and let $\cX$ be a rig-pure formal stack of finite type over
$\Df_{\Sigma}$. Then there exists an open dense substack $\Sigma^{\circ}\subset\Sigma$
such that the induced morphism $\cX\times_{\Sigma}\Sigma^{\circ}\to\Df_{\Sigma^{\circ}}$
is flat.
\end{lem}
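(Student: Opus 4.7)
The plan is to reduce to the affine case by etale descent, then apply Grothendieck's generic flatness at the closed fiber $t=0$ followed by the local criterion of flatness for the $t$-adic topology.

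First I would pick an etale atlas $V=\Spec R\to\Sigma$ by an affine scheme; since $\Sigma$ is reduced of finite type, $R$ can be chosen reduced and Noetherian. Next I would pick an etale atlas $\Spf B\to\cX\times_{\Sigma}V$ by a formal affine scheme, so that $B$ is topologically finitely generated over $R\tbrats$, hence Noetherian. The rig-purity of $\cX$ (see \ref{def:pure-stack}) forces $B=B^{\pur}$, i.e., $t$ is a non-zero-divisor on $B$. Since flatness descends along etale covers, it suffices to exhibit a dense open $\Spec R_f\subset\Spec R$ for which the base change $\Spf B\times_{\Spec R}\Spec R_f\cong\Spf\widehat{B_f}$ (with $\widehat{B_f}$ the $t$-adic completion of $B_f:=B\otimes_R R_f$) is flat over $\Df_{R_f}$; the image of $\Spec R_f$ in $\Sigma$ is open and dense (since $V\to\Sigma$ is etale surjective) and serves as the required $\Sigma^{\circ}$.

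The quotient $B_0:=B/tB$ is a finitely generated $R$-algebra, so Grothendieck's generic flatness theorem applied to the reduced Noetherian ring $R$ yields $f\in R$ with $D(f)\subset\Spec R$ dense and $B_0\otimes_R R_f$ flat over $R_f$. Since $R_f\tbrats$ is Noetherian and $\widehat{B_f}$ is $(t)$-adically complete and separated, I would apply the local criterion of flatness (Bourbaki, \emph{Commutative Algebra} III, \S 5.2): flatness of $\widehat{B_f}$ over $R_f\tbrats$ reduces to (i) $\widehat{B_f}/t\widehat{B_f}\cong B_0\otimes_R R_f$ being $R_f$-flat, and (ii) $t$ being a non-zero-divisor on $\widehat{B_f}$. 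Condition (i) is furnished by generic flatness; for (ii), $B\to B_f$ is flat so $t$ is a non-zero-divisor on $B_f$, and since $B_f$ is Noetherian, $(t)$-adic completion is exact and carries $0\to B_f\xrightarrow{\,t\,}B_f\to B_0\otimes_R R_f\to 0$ to the analogous sequence for $\widehat{B_f}$, establishing (ii). The main substantive step is the interplay between generic flatness at $t=0$ and the local flatness criterion; the reduction to the affine setting and the verification of (ii) on the completion are routine applications of standard commutative algebra.
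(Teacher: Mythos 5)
Your argument is correct, and it takes a genuinely different route from the paper's. The paper reduces to the case of an integral $\Sigma$, notes that rig-purity at the generic point $\eta$ automatically gives flatness there (over a discrete valuation ring, torsion-free is flat), and then invokes the openness of the flat locus for morphisms of admissible formal schemes from Fujiwara--Kato (cited as 5.3.10 in \cite{MR2815110}); shrinking away the image of the non-flat locus gives $\Sigma^{\circ}$. You instead reduce to rings, apply Grothendieck's generic flatness to the special fiber $B/tB$ over the reduced Noetherian ring $R$, and lift flatness to the $t$-adic completion $\widehat{B_f}$ via the local criterion of flatness, using rig-purity precisely to verify the $\mathrm{Tor}_1$ vanishing (i.e., that $t$ remains a nonzerodivisor on $\widehat{B_f}$). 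Both approaches exploit rig-purity for essentially the same reason, but your version trades the nontrivial openness-of-flat-locus theorem from rigid geometry for a longer but purely commutative-algebraic chain of standard lemmas, and it also avoids the reduction to irreducible components since generic flatness over a reduced Noetherian base handles all components at once. One detail worth stating explicitly in a final write-up: since $\cX$ is of finite type over $\Df$ and $V$ is affine, $\cX\times_{\Sigma}V$ is quasi-compact, so a single formal affine atlas $\Spf B$ suffices; otherwise one would use finitely many charts $\Spf B_i$ and take $f=\prod_i f_i$.
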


\begin{proof}
By the base change along an atlas $S\to\Sigma$, we can reduce the
lemma to the case where $\Sigma$ is a scheme. Restricting to an irreducible
component of $\Sigma$, we may further assume that $\Sigma$ is integral.
Let $\eta\to\Sigma$ be its generic point. Since $\cX$ is rig-pure
over $\Df_{\Sigma}$, $\cX\times_{\Sigma}\eta\to\Df_{\eta}$ is flat.
Let $U\subset|\cX|$ be the flat locus of the morphism $\cX\to\Df_{\Sigma}$,
which is an open subset from \cite[5.3.10]{MR2815110}. The image
of $|\cX|\setminus U$ in $|\Sigma|=|\Df_{\Sigma}|$ is a constructible
subset which does not contain the generic point. Removing the closure
of this image, we obtain an open dense subscheme $\Sigma^{\circ}\subset\Sigma$
having the desired property.
\end{proof}
\begin{cor}
\label{cor: flattening-strat}Let $\Sigma$ be a DM stack of finite
type and let $\cZ$ be a formal DM stack of finite type over $\Df_{\Sigma}$.
Then there exist finitely many locally closed reduced substacks $\Sigma_{i}\hookrightarrow\Sigma$,
$i\in I$ such that $|\Sigma|=\bigsqcup_{i\in I}|\Sigma_{i}|$ and
for every $i\in I$, the associated rig-pure stack $(\cZ_{\Sigma_{i}})^{\pur}$
of $\cZ_{\Sigma_{i}}:=\cZ\times_{\Sigma}\Sigma_{i}$ is flat over
$\Df_{\Sigma_{i}}$.
\end{cor}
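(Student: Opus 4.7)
The plan is to argue by Noetherian induction on closed substacks of $\Sigma$. Since $\Sigma$ is of finite type over $k$, the underlying topological space $|\Sigma|$ is Noetherian, so every descending chain of closed substacks terminates. It therefore suffices to show that whenever $\Sigma \neq \emptyset$, one can peel off a single reduced locally closed stratum $\Sigma^\circ \subset \Sigma$ on which the required flatness of $(\cZ_{\Sigma^\circ})^{\pur}$ over $\Df_{\Sigma^\circ}$ holds, and then apply the induction hypothesis to the closed complement with its reduced induced structure.

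Because $|\Sigma| = |\Sigma_{\red}|$ and a reduced locally closed substack of $\Sigma$ is the same as one of $\Sigma_{\red}$, I may replace $\Sigma$ by $\Sigma_{\red}$ and set $\cZ' := \cZ \times_{\Sigma} \Sigma_{\red}$. Its associated rig-pure stack $(\cZ')^{\pur}$ is a rig-pure formal DM stack of finite type over $\Df_{\Sigma_{\red}}$, so Lemma \ref{lem:flat-locus} applies and produces an open dense reduced substack $\Sigma^\circ \subset \Sigma_{\red}$ such that $(\cZ')^{\pur} \times_{\Sigma_{\red}} \Sigma^\circ$ is flat over $\Df_{\Sigma^\circ}$.

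What remains is to identify this pulled-back stack with $(\cZ_{\Sigma^\circ})^{\pur}$. Because $\Sigma^\circ \hookrightarrow \Sigma_{\red}$ is an open immersion, hence flat, formation of the associated rig-pure stack commutes with this base change by an atlas-local application of Lemma \ref{lem:calm-flat}, yielding
\[
(\cZ_{\Sigma^\circ})^{\pur} \cong (\cZ')^{\pur} \times_{\Sigma_{\red}} \Sigma^\circ,
\]
whose right-hand side is flat over $\Df_{\Sigma^\circ}$ by the previous step. The reduced closed complement $\Sigma' := (|\Sigma_{\red}| \setminus |\Sigma^\circ|)_{\red}$ is a proper closed substack of $\Sigma$, and applying the induction hypothesis to $(\Sigma', \cZ \times_{\Sigma} \Sigma')$ furnishes the remaining strata; together with $\Sigma^\circ$ they form the desired finite stratification.

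The only delicate point is the displayed base-change compatibility: Lemma \ref{lem:flat-calm-base-change} gives such a compatibility under a global flatness hypothesis on the rig-pure stack that is not yet known on $\Sigma_{\red}$, but since $\Sigma^\circ \hookrightarrow \Sigma_{\red}$ is a flat morphism, Lemma \ref{lem:calm-flat} applies directly with no flatness assumption on $(\cZ')^{\pur}$. Once this is in place, the Noetherian induction proceeds routinely.
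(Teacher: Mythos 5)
Your proof is correct and follows essentially the same approach as the paper, which simply cites Lemma~\ref{lem:flat-locus} together with the Noetherianness of $|\Sigma|$; you have supplied the intermediate details (passing to $\Sigma_{\red}$, identifying $(\cZ_{\Sigma^\circ})^{\pur}$ with $(\cZ')^{\pur}\times_{\Sigma_{\red}}\Sigma^\circ$ via Lemma~\ref{lem:calm-flat} applied atlas-locally along the flat base change, and the Noetherian induction on the closed complement) that the paper leaves implicit.
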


\begin{proof}
This follows from \ref{lem:flat-locus} and the fact that $|\Sigma|$
is a Noetherian topological space.
\end{proof}
\begin{defn}
\label{nota:Gamma}For a formal DM stack $\cX$ of finite type over
$\Df$ and for each $\Theta_{[G]}^{[\br]}$, we take locally closed
reduced substacks $\Theta_{[G],i}^{[\br]}\subset\Theta_{[G]}^{[\br]}$
such that $\left|\Theta_{[G]}^{[\br]}\right|=\bigsqcup_{i}\left|\Theta_{[G],i}^{[\br]}\right|$
and $\Utg_{\Theta_{[G]}^{[\br]},i}(\cX)^{\pur}$ is flat over $\Df_{\Theta_{[G]}^{[\br]},i}$.
We write $\Gamma_{\cX}=\Gamma:=\coprod_{[G],[\br],i}\Theta_{[G],i}^{[\br]}$.
For each $G$, we write $\Gamma_{[G]}:=\coprod_{[\br],i}\Theta_{[G],i}^{[\br]}$
and $\Gamma_{G}:=\Gamma_{[G]}\times_{\cA_{[G]}}\Spec k$.
\end{defn}

Thus $\Gamma=\Gamma_{\cX}$ is a DM stack of finite type given with
a morphism $\Gamma\to\Theta$ such that the map $|\Gamma|\to|\Theta|$
as well as maps $\Gamma[K]\to\Theta[K]$ for algebraically closed
fields $K$ is bijective and $\Utg_{\Gamma}(\cX)^{\pur}$ is flat
over $\Df_{\Gamma}$.

For a tame cyclic group $C_{l}$, we have 
\begin{gather*}
\Gamma_{[C_{l}]}=\Theta_{[C_{l}]}=\Lambda_{[C_{l}]}\text{ and}\\
\Gamma_{C_{l}}=\Theta_{C_{l}}=\Lambda_{C_{l}}.
\end{gather*}
In particular, $\Gamma$ contains $\Lambda_{\tame}$ as an open and
closed substack. 
\begin{lem}
\label{lem:rel-dim}Suppose that $\cX$ is flat over $\Df$ and has
pure relative dimension. Let $\Gamma=\Gamma_{\cX}$ be as above. Then
$\Utg_{\Gamma}(\cX)^{\pur}$ has pure relative dimension over $\Df_{\Gamma}$
equal to the relative dimension of $\cX$ over $\Df$.
\end{lem}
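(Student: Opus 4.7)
The plan is to reduce the question to a dimension count on rigid generic fibers and then exploit the rig-\'etale description given by Lemma~\ref{lem:rig-etale-2}.

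First, by the very construction of $\Gamma=\Gamma_{\cX}$ in Definition~\ref{nota:Gamma}, the formal DM stack $\Utg_{\Gamma}(\cX)^{\pur}$ is flat over $\Df_{\Gamma}$. Hence its relative dimension over $\Df_{\Gamma}$ can be computed fiberwise. I would fix a geometric point $\gamma\colon\Spec K\to\Gamma$ with $K$ algebraically closed and set $\Utg_{K}(\cX):=\Utg_{\Gamma}(\cX)\times_{\Gamma}\Spec K$. By Lemma~\ref{lem:flat-calm-base-change} the formation of the rig-pure substack commutes with this base change, so it suffices to prove that $\Utg_{K}(\cX)^{\pur}$ has pure relative dimension $d$ over $\Df_{K}$.

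Since $\Utg_{K}(\cX)^{\pur}$ is flat, rig-pure, and of finite type over $\Df_{K}$, its relative dimension over $\Df_{K}$ equals the dimension of its rigid generic fiber $\Utg_{K}(\cX)^{\rig}$, a fact one can check locally on an \'etale atlas by formal affine schemes. The next step is to invoke Lemma~\ref{lem:rig-etale-2}, which supplies a rig-\'etale morphism
\[
\Utg_{K}(\cX)\times_{\Df_{K}}\cE_{K}\longrightarrow\cX_{K}:=\cX\times\Spec K.
\]
By the construction of the integral model, the morphism $E_{K}^{\rig}\to\Df_{K}^{\rig}$ is an \'etale $G$-torsor, so $\cE_{K}^{\rig}=[E_{K}/G]^{\rig}\to\Df_{K}^{\rig}$ is an isomorphism of rigid DM stacks. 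Base changing the displayed morphism along this isomorphism produces an \'etale morphism
\[
\Utg_{K}(\cX)^{\rig}\longrightarrow\cX_{K}^{\rig}
\]
of rigid DM stacks. Since $\cX$ has pure relative dimension $d$ over $\Df$, the target has pure dimension $d$, and hence so does $\Utg_{K}(\cX)^{\rig}$, which is the desired conclusion.

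The main technical obstacle I expect is the careful bookkeeping of rigid generic fibers for formal DM stacks (rather than merely for formal schemes), and in particular the identification $\cE_{K}^{\rig}\cong\Df_{K}^{\rig}$ together with the passage from ``rig-\'etale'' to genuinely \'etale after that base change. Both points reduce to the fact that on inverting $t$ the underlying $G$-torsor of $\cE_{K}$ becomes \'etale, is trivialized \'etale-locally, and its stack quotient by $G$ recovers the base; modulo this, the dimension count is immediate from the \'etale morphism above.
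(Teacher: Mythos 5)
Your proof is correct and follows essentially the same route as the paper: the paper's argument is the one-line citation ``This follows from Lemma~\ref{lem:rig-etale-2},'' which is precisely the rig-\'etale fact you unpack. You spell out the intermediate steps (reduction to geometric fibers over $\Gamma$ via flatness, the identification of relative dimension of a flat rig-pure formal stack with the dimension of its rigid generic fiber, and the isomorphism $\cE_{K}^{\rig}\cong\Df_{K}^{\rig}$ used to turn rig-\'etale into \'etale on generic fibers), all of which the paper leaves implicit.
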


\begin{proof}
This follows from from \ref{lem:rig-etale-2}.
\end{proof}
\begin{prop}
\label{prop:Unt-morphism}Let $\cY$ be a formal DM stack of finite
type over $\Df$ and $X$ a formal algebraic space over $\Df$. Let
$f\colon\cY\to X$ be a $\Df$-morphism and let $\Gamma=\Gamma_{\cY}$
be as in \ref{nota:Gamma}. For each locally finite-type morphism
of DM stacks $\Sigma\to\Gamma$, there exists a $\Df$-morphism 
\[
f_{\Sigma}^{\utg}\colon\Utg_{\Sigma}(\cY)^{\pur}\to X
\]
satisfying the following property:
\begin{enumerate}
\item If $\Spf A\to\Utg_{\Sigma}(\cY)^{\pur}$ is a flat morphism corresponding
to a representable morphism $\cE_{\Sigma}\times_{\Df_{\Sigma}}\Spf A\to\cY$,
then the morphism $\Spf A\to\overline{\cY}\to X$ induced from this
by taking coarse moduli spaces is the same as the composition $\Spf A\to\Utg_{\Sigma}(\cY)^{\pur}\to X$.
Here $\overline{\cY}$ is the coarse moduli space of $\cY$. (Note
that since $\Spf A\to\Df_{\Sigma}$ is flat, from \ref{lem:coarse-flat-base-change},
the coarse moduli space of $\cE_{\Sigma}\times_{\Df_{\Sigma}}\Spf A$
is $\Spf A$.)
\item For a locally finite-type morphism $\Sigma'\to\Sigma$ of DM stacks,
the composition 
\[
\Utg_{\Sigma'}(\cY)^{\pur}\to\Utg_{\Sigma}(\cY)^{\pur}\xrightarrow{f_{\Sigma}^{\utg}}X
\]
is the same as $f_{\Sigma'}^{\utg}$.
\end{enumerate}
\end{prop}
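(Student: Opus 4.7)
The plan is to build $f_\Sigma^{\utg}$ by composing the tautological evaluation morphism on the Hom stack $\Utg_\Sigma(\cY)$ with $f$, restricting to the rig-pure locus, and then descending along the coarse-moduli projection from the twisted formal disk. The flatness of $\Utg_\Sigma(\cY)^{\pur}$ over $\Df_\Sigma$, which is exactly what the choice of $\Gamma$ in Definition \ref{nota:Gamma} (combined with Lemma \ref{lem:flat-calm-base-change}) delivers, is precisely the input needed for this descent to produce a morphism to an algebraic space.

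First, I use the universal property of the Hom stack to get a representable evaluation morphism $\mathrm{ev}_\Sigma \colon \cE_\Sigma \times_{\Df_\Sigma} \Utg_\Sigma(\cY) \to \cY$ over $\Df_\Sigma$. I compose with $f$ and restrict to the closed substack of rig-pure points (Definition \ref{def:pure-stack}, Lemma \ref{lem:closed-immersion-Unt}) to obtain
\[
v_\Sigma \colon \cE_\Sigma \times_{\Df_\Sigma} \Utg_\Sigma(\cY)^{\pur} \to X.
\]
Next, I pick an etale atlas $\Spf A \to \Utg_\Sigma(\cY)^{\pur}$ from a formal affine scheme (Lemma \ref{lem:atlas}); by the flatness noted above, $\Spf A \to \Df_\Sigma$ is flat. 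The coarse moduli space of the twisted disk $\cE_\Sigma \cong [E_\Sigma/\cG_\Sigma]$ is $\Df_\Sigma$: etale-locally on $\Sigma$ the underlying torsor is uniformizable (Proposition \ref{prop:Theta}, Lemma \ref{lem:stable-G}), and there Lemma \ref{lem:coarse-inv-sub} identifies the coarse moduli with $\Df_\Sigma$; the assertion globalizes since coarse moduli commute with etale base change. Applying Lemma \ref{lem:coarse-flat-base-change} to the flat morphism $\Spf A \to \Df_\Sigma$, the coarse moduli of $\cE_\Sigma \times_{\Df_\Sigma} \Spf A$ is $\Spf A$. Since $X$ is a formal algebraic space, the pullback of $v_\Sigma$ factors uniquely through $\Spf A$, yielding a morphism $\Spf A \to X$. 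Running the same argument on $\Spf A \times_{\Utg_\Sigma(\cY)^{\pur}} \Spf A$, which is again flat over $\Df_\Sigma$, provides the cocycle compatibility, and etale descent assembles the local morphisms into the desired $f_\Sigma^{\utg} \colon \Utg_\Sigma(\cY)^{\pur} \to X$.

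Property (1) is then essentially tautological: for a flat $\Spf A \to \Utg_\Sigma(\cY)^{\pur}$ corresponding to a representable morphism $\cE_\Sigma \times_{\Df_\Sigma} \Spf A \to \cY$, the two maps $\Spf A \to X$ in question are both the unique factorization of the same morphism $\cE_\Sigma \times_{\Df_\Sigma} \Spf A \to X$ through the coarse moduli $\Spf A$. Property (2) is base-change naturality: the isomorphism $\Utg_{\Sigma'}(\cY)^{\pur} \cong \Utg_\Sigma(\cY)^{\pur} \times_\Sigma \Sigma'$ (from Lemma \ref{lem:flat-calm-base-change}) together with the naturality of the evaluation morphism implies that the two descended morphisms coincide. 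The main obstacle I anticipate is checking rigorously that $\Df_\Sigma$ is the coarse moduli of $\cE_\Sigma$ in sufficient generality and compatibly with flat base change, so that the coarse-moduli factorization really does pull back to yield well-defined data on every etale chart; this however is precisely the content of Lemmas \ref{lem:coarse-flat-base-change} and \ref{lem:coarse-inv-sub} applied to the uniformizable atlases from Proposition \ref{prop:Theta}.
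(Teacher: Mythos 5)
Your proof is correct and follows essentially the same approach as the paper: both factor the composite $\cE_\Sigma \times_{\Df_\Sigma} V \to \cY \to X$ (for $V$ in an etale cover of $\Utg_\Sigma(\cY)^{\pur}$) through the coarse moduli space $V$, using the flatness of $\Utg_\Sigma(\cY)^{\pur}$ over $\Df_\Sigma$ together with Lemma~\ref{lem:coarse-flat-base-change}, and then glue. The only cosmetic difference is that the paper realizes the gluing via an explicit groupoid presentation $\Utg_\Gamma(\cY)^{\pur}\cong[V_1/V_0]$ and a morphism of groupoids to $X\rightrightarrows X$ (defining $f_\Sigma^{\utg}$ by base change from $\Gamma$), which is precisely the etale descent you invoke.
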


\begin{proof}
Let $W\to\Gamma$ be an atlas, let $\cU:=\Utg_{\Gamma}(\cY)^{\pur}$
and $\cU_{W}:=\Utg_{W}(\cY)^{\pur}$ and let $V_{0}\to\cU_{W}$ be
an atlas from a formal scheme and $V_{1}:=V_{0}\times_{\cU}V_{0}$
so that we have the groupoid in formal schemes $V_{1}\rightrightarrows V_{0}$.
The morphisms $V_{i}\to\cU$, $i=0,1$, correspond to representable
$\Df$-morphisms $\gamma_{i}\colon\cE_{\Gamma}\times_{\Df_{\Gamma}}V_{i}=\cE_{W}\times_{\Df_{W}}V_{i}\to\cY$
respectively. The coarse moduli space of $\cE_{W}$ is $\Df_{W}$.
Since $V_{i}\to\Df_{W}$ is flat, from \ref{lem:coarse-flat-base-change},
the moduli space of $\cE_{\Gamma}\times_{\Df_{\Gamma}}V_{i}$ is $V_{i}$.
Therefore the morphisms $f\circ\gamma_{i}\colon\cE_{\Gamma}\times_{\Df_{\Gamma}}V_{i}\to X$
induce morphisms $\beta_{i}\colon V_{i}\to X$. The morphisms $\beta_{i}$
define a morphism $(V_{1}\rightrightarrows V_{0})\to(X\rightrightarrows X)$
of groupoids. Finally this induces the morphism of the associated
stacks,
\[
\cU=[V_{1}/V_{0}]\to X=[X/X].
\]
For any morphism $\Sigma\to\Gamma$, by the base change, the groupoid
$V_{1}\rightrightarrows V_{0}$ induces a groupoid $V_{1,\Sigma}\rightrightarrows V_{0,\Sigma}$
whose associated stack is $\Utg_{\Sigma}(\cY)^{\pur}$. We define
$\Utg_{\Sigma}(f)$ to be the one associated to the morphism of groupoids,
$(V_{1,\Sigma}\rightrightarrows V_{0,\Sigma})\to(X\rightrightarrows X)$.
We easily see that these morphisms satisfy the desired properties.
\end{proof}
\begin{rem}
The reason why we need a flattening stratification as in \ref{cor: flattening-strat}
and \ref{nota:Gamma} is that the coarse moduli space does not commute
with non-flat base changes in the case of wild DM stacks. This prevents
from having a natural morphism $\Utg_{\Theta}(\cY)\to\Utg_{\Theta}(X)$.
For instance, a $\D_{R,n}$-point of $\Utg_{\Theta}(\cY)$ over $\D_{R,n}\hookrightarrow\Df_{R}$,
a typical nonflat morphism, corresponds to a representable morphism
\[
\cE_{n}=[E_{n}/G]\to\cY
\]
for some $G$-covering $E\to\Df_{R}$. But the $G$-action on $E_{n}$
may be trivial and the coarse moduli space of $\cE_{n}$ may be $E_{n}$
rather than $\D_{R,n}$. Thus we would get a morphism $E_{n}\to X$
rather than a desired $\D_{R,n}\to X$.
\end{rem}

\begin{rem}
\label{rem:utg-map-general}For a morphism $\cY\to\cX$ of formal
DM stacks of finite type over $\Df$ and for a suitable choice of
$\Gamma_{\cY}$ and $\Gamma_{\cX}$, we would like to have a morphism
\[
\Utg_{\Gamma_{\cY}}(\cY)\to\Utg_{\Gamma_{\cX}}(\cX).
\]
However it was technically difficult and we are content with restricting
ourselves to the case where $\cX$ is a formal algebraic space.
\end{rem}

\section{Stacks of jets and arcs\label{sec:jets-arcs}}

In this section, we first define stacks of untwisted jets and arcs
on formal DM stacks. Next we define stacks of twisted jets and arcs,
using untwisting stacks and stacks of untwisted jets and arcs.
\begin{defn}
\label{def:untwisted-jets}Let $\Phi$ be a DM stack almost of finite
type and $\cX$ a formal DM stack of finite type over $\Df_{\Phi}$.
For $n\in\NN$, we define the \emph{stack of (untwisted) $n$-jets
on $\cX$, }denoted by $\J_{\Phi,n}\cX$ or $\J_{n}\cX$, to be the
Weil restriction $\R_{\D_{\Phi,n}/\Phi}(\cX_{n})$. Namely $\J_{n}\cX$
is a fibered category over $\Phi$ such that the fiber category over
an $S$-point $s\colon S\to\Phi$ is
\[
(\J_{n}\cX)(s)=\Hom_{\D_{S,n}}(\D_{S,n},\cX_{n}\times_{\Phi}S)=\Hom_{\Df_{\Phi}}(\D_{S,n},\cX).
\]
\end{defn}

\begin{lem}
The $\J_{n}\cX$ is a DM stack almost of finite type and the morphism
$\J_{n}\cX\to\Phi$ is of finite type.
\end{lem}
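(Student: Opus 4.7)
The plan is to express $\J_n\cX$ as a relative Hom stack and then invoke Lemma \ref{lem:Hom-finite-stack}. First, since $\Phi$ is almost of finite type, we decompose $\Phi = \coprod_i \Phi_i$ with each $\Phi_i$ of finite type; because the formation of $\J_n$ is compatible with base change on $\Phi$, we have $\J_n\cX|_{\Phi_i} \cong \J_{\Phi_i, n}(\cX\times_\Phi \Phi_i)$. So it suffices to handle the case where $\Phi$ itself is of finite type, and then the ``almost of finite type'' conclusion together with finite type of each piece $\J_{\Phi_i, n}\cX \to \Phi_i$ will follow.

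Next, I would identify the Weil restriction with a Hom stack: using $\D_{S,n} \cong S\times_\Phi \D_{\Phi,n}$, one checks that
\[
\Hom_{\D_{S,n}}(\D_{S,n}, \cX_n\times_\Phi S) \cong \Hom_\Phi(\D_{S,n}, \cX_n),
\]
so $\J_n\cX \cong \ulHom_\Phi(\D_{\Phi,n}, \cX_n)$, where $\cX_n$ is viewed as a $\Phi$-stack via the composition $\cX_n \to \D_{\Phi,n} \to \Phi$.

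Now the hypotheses of Lemma \ref{lem:Hom-finite-stack} are verified as follows. The stack $\D_{\Phi,n} = \D_n\times\Phi$ is a DM stack of finite type over $\Phi$, and the projection $\D_{\Phi,n}\to \Phi$ is finite, flat, and representable, being the base change of the affine, finite, flat morphism $\D_n\to\Spec k$. The stack $\cX_n$ is a DM stack of finite type over $\D_{\Phi,n}$ by the hypothesis that $\cX$ is of finite type over $\Df_\Phi$, and hence of finite type over $\Phi$. Finally, the identity $\D_{\Phi,n}\to\D_{\Phi,n}$ serves as the required finite, etale, surjective morphism $\cU\to\D_{\Phi,n}$ with $\cU\to\Phi$ representable. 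By Lemma \ref{lem:Hom-finite-stack}, $\ulHom_\Phi(\D_{\Phi,n}, \cX_n)$ is a DM stack of finite type.

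There is no real obstacle here: the argument is a direct application of the Hom-stack result once one recognizes the Weil restriction as a special case of a Hom stack. The only minor subtlety is the bookkeeping required by the fact that $\Phi$ is only \emph{almost} of finite type, but this is handled trivially by working one component $\Phi_i$ at a time.
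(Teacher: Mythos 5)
The central identification you claim is wrong. You assert that
\[
\Hom_{\D_{S,n}}(\D_{S,n}, \cX_n\times_\Phi S) \cong \Hom_\Phi(\D_{S,n}, \cX_n),
\]
but the left side consists of \emph{sections} of $\cX_n\times_\Phi S \to \D_{S,n}$, i.e., morphisms whose composition back to $\D_{S,n}$ is the identity. The right side, by contrast, only requires compatibility with the structure maps to $\Phi$, so the induced composition $\D_{S,n}\to\cX_n\to\D_{\Phi,n}$ corresponds to an \emph{arbitrary} $S$-endomorphism of $\D_{S,n}$, not just the identity. So the Weil restriction is a strict substack of $\ulHom_\Phi(\D_{\Phi,n},\cX_n)$, not the whole thing.

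The paper's proof handles exactly this: it identifies $\J_n\cX$ with the fiber product
\[
\ulHom_{\Phi}(\D_{\Phi,n},\cX_{n})\times_{\ulHom_{\Phi}(\D_{\Phi,n},\D_{\Phi,n})}\Phi,
\]
where $\Phi\to\ulHom_{\Phi}(\D_{\Phi,n},\D_{\Phi,n})$ is given by the identity endomorphism; this fiber product cuts out precisely the locus where the composition to $\D_{\Phi,n}$ is canonical. Your argument then goes through once you add this: $\ulHom_{\Phi}(\D_{\Phi,n},\D_{\Phi,n})$ is also a DM stack of finite type by \ref{lem:Hom-finite-stack}, and a fiber product of DM stacks of finite type is again one. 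Without the fiber product, the object you prove is of finite type is strictly larger than $\J_n\cX$, and the statement does not follow.
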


\begin{proof}
We claim 
\[
\J_{n}\cX\cong\ulHom_{\Phi}(\D_{\Phi,n},\cX_{n})\times_{\ulHom_{\Phi}(\D_{\Phi,n},\D_{\Phi,n})}\Phi,
\]
where $\Phi\to\ulHom_{\Phi}(\D_{\Phi,n},\D_{\Phi,n})$ is the morphism
induced by the identity morphism $\Df_{\Phi,n}\to\Df_{\Phi,n}$. For
an object $s\in\D_{\Phi,n}(S)$, if $\{s\}$ denotes the category
consisting of $s$ and its identity morphism, then
\begin{align*}
 & (\ulHom_{\Phi}(\D_{\Phi,n},\cX_{n})\times_{\ulHom_{\Phi}(\D_{\Phi,n},\D_{\Phi,n})}\Phi)(s)\\
 & \cong\Hom_{S}(\D_{S,n},\cX_{n}\times_{\Phi}S)\times_{\Hom_{S}(\D_{S,n},\D_{S,n})}\{s\}\\
 & \cong\Hom_{\D_{S,n}}(\D_{S,n},\cX_{n}\times_{\Phi}S)\\
 & \cong(\J_{n}\cX)(s).
\end{align*}
Thus the claim holds. Applying \ref{lem:Hom-finite-stack} to each
connected component of $\Phi$, we get the lemma.
\end{proof}
For $n=0$, we have $\J_{\Phi,0}\cX=\cX_{0}$. For $n\ge m$, the
closed immersion $\D_{m}\hookrightarrow\D_{n}$ induces a morphism
$\pi_{m}^{n}\colon\J_{n}\cX\to\J_{m}\cX$.
\begin{lem}
The morphism $\pi_{m}^{n}$ is representable and affine.
\end{lem}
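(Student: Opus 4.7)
The plan is to check representability and affineness etale-locally on the target $\J_m\cX$, reducing to the case of a formal affine scheme where one can compute explicitly.

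First, I would choose an atlas $V\to\cX$ by a formal affine scheme (via \ref{lem:atlas}). Two claims about the induced diagram on jets:
\[
\xymatrix{\J_n V \ar[r] \ar[d] & \J_n\cX \ar[d]^{\pi_m^n} \\ \J_m V \ar[r] & \J_m\cX}
\]
(i) the horizontal arrows are representable and etale, and (ii) the square is 2-Cartesian. For (i), an extension of a jet $\D_{S,n}\to\cX$ to a jet into $V$ exists uniquely, etale-locally on $S$, by the usual formal etaleness of $V\to\cX$ with respect to the nilpotent thickening $\D_{S,0}\hookrightarrow\D_{S,n}$; the same reasoning shows surjectivity of $\J_m V\to\J_m\cX$. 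For (ii), given an $n$-jet $\gamma\colon\D_{S,n}\to\cX$ and a lift $\delta\colon\D_{S,m}\to V$ of its $m$-truncation, formal etaleness applied to the nilpotent thickening $\D_{S,m}\hookrightarrow\D_{S,n}$ produces a unique lift $\tilde\gamma\colon\D_{S,n}\to V$ of $\gamma$ restricting to $\delta$, giving the inverse to the natural functor $\J_n V\to\J_n\cX\times_{\J_m\cX}\J_m V$.

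Second, since $\J_m V\to\J_m\cX$ is representable, etale and surjective, and since representability and affineness of a morphism can both be checked etale-locally on the target, it suffices by the 2-Cartesian square to prove that $\J_n V\to\J_m V$ is representable and affine when $V=\Spf A$ is a formal affine scheme of finite type over $\Df_\Phi$.

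Third, in the affine case I would write $A$ as a quotient of a restricted power series ring $R\tbrats\{x_1,\dots,x_r\}$ (for $\Phi=\Spec R$; in general one localizes on $\Phi$) and identify $\J_n V$ with the spectrum of the ring $B_n$ whose $R'$-points parametrize tuples of truncated series $x_i(t)=\sum_{j=0}^n x_{i,j}t^j$ satisfying the Taylor expansions (in $t^0,\dots,t^n$) of the defining relations of $A$. Then $B_m\to B_n$ is visibly obtained from $B_m$ by adjoining the new coefficient variables $x_{i,j}$ for $m<j\le n$ and imposing the new relations coming from the coefficients of $t^{m+1},\dots,t^n$, so $B_n$ is a finitely generated $B_m$-algebra. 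Hence $\Spec B_n\to\Spec B_m$ is affine (and in fact a closed subscheme of an affine space bundle, consistent with the square-zero structure of $\D_n/\D_{n-1}$).

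The only real subtlety is the 2-Cartesian square in the first step; once that is in hand, the descent of affineness along etale surjections and the explicit description in the affine case finish the argument.
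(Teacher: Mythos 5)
Your proof is correct and follows essentially the same strategy as the paper: reduce via an etale atlas $V\to\cX$ (using the infinitesimal lifting property of $V\to\cX$ against the thickenings $\D_{S,m}\hookrightarrow\D_{S,n}$) to the formal-affine case, then compute there. The paper packages the reduction as the base-change identity $(\J_{U,n}\cX)\times_{\cX}V\cong\J_{U,n}V$ after first replacing $\Phi$ by a scheme, and cites a reference for the formal-scheme case rather than spelling out the truncated-power-series description, but the substance is the same.
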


\begin{proof}
For an atlas $U\to\Phi$, we have $(\J_{\Phi,n}\cX)\times_{\Phi}U\cong\J_{U,n}(\cX\times_{\Phi}U)$.
Therefore we may suppose that $\Phi=U$ is a scheme. For an atlas
$V\to\cX$, we have $\left(\J_{U,n}\cX\right)\times_{\cX}V\cong\J_{U,n}V$
as in the case of etale morphisms of schemes. Indeed, for the 2-commutative
diagram of solid arrows
\[
\xymatrix{\Spec R\ar@{^{(}->}[d]\ar[r] & V\ar[d]\\
\Spec R\tbrats/(t^{n+1})\ar[r]\ar@{-->}[ur] & \cX
}
\]
there exists a unique dash arrow fitting into the diagram. This shows
the last isomorphism. Thus we may also suppose that $\cX=V$ is a
formal scheme. The problem is now reduced to the case of formal schemes.
The lemma is well-known in this case (for instance, see \cite[page 249]{MR3838446}).
\end{proof}
From this lemma, we get a projective system $(\J_{n}\cX,\pi_{m}^{n})$
of DM stacks almost of finite type having affine morphisms as transition
morphisms. Therefore the projective limit 
\[
\J_{\Phi,\infty}\cX=\J_{\infty}\cX:=\varprojlim\J_{n}\cX
\]
 exists and is a DM stack.
\begin{defn}
We call $\J_{\infty}\cX$ the \emph{stack of (untwisted) arcs }on
$\cX$.
\end{defn}

A point of $|\J_{\infty}\cX|$ corresponds to an equivalence class
of pairs of a geometric point $\Spec K\to\Phi$ and a lift $\Df_{K}\to\cX$
of the morphism $\Df_{K}\to\Df_{\Phi}$.

Stacks of untwisted jets are functorial in $\cX$. Let $\Psi$ and
$\Phi$ be DM stacks almost of finite type and let $\cY$ and $\cX$
be formal DM stacks of finite type over $\Df_{\Psi}$ and $\Df_{\Phi}$
respectively. Let $\Psi\to\Phi$ be a morphism and let $f\colon\cY\to\cX$
be a morphism compatible with the induced morphism $\Df_{\Psi}\to\Df_{\Phi}$.
Then we obtain a morphism 
\[
f_{n}\colon\J_{\Psi,n}\cY\to\J_{\Phi,n}\cX
\]
compatible with $\Psi\to\Phi$: This sends an $S$-point of $\J_{\Psi,n}\cY$
given by a pair $(S\to\Psi,\D_{S,n}\to\cY)$ to the induced pair $(S\to\Psi\to\Phi,\D_{S,n}\to\cY\to\cX)$.
\begin{defn}
A \emph{twisted arc of $\cX$ }is a representable morphism $\cE\to\cX$
where $\cE$ is a twisted formal disk over some algebraically closed
field. We say that two twisted arcs $\cE\to\cX$ and $\cE'\to\cX$
say over $K$ and $K'$ are \emph{equivalent }if there exist morphisms
$\Spec K''\to\Spec K$ and $\Spec K''\to\Spec K'$ with $K''$ an
algebraically closed field and an isomorphism $\cE_{K''}\to\cE'_{K''}$
such that 
\[
\xymatrix{\cE_{K''}\ar[r]\ar[dr] & \cE_{K''}'\ar[d]\\
 & \cX
}
\]
is 2-commutative.
\end{defn}

\begin{defn}
Let $\cX$ be a formal DM stack of finite type over $\Df$ and let
$\Gamma=\Gamma_{\cX}$ be as in \ref{nota:Gamma}. For $n\in\NN$,
we define the \emph{stack of twisted $n$-jets }on $\cX$, denoted
by $\cJ_{\Gamma,n}\cX$ or $\cJ_{n}\cX$, to be $\J_{\Gamma,n}\Utg_{\Gamma}(\cX)^{\pur}$.\emph{
}We define the \emph{stack of twisted arcs }on $\cX$, denoted by
$\cJ_{\Gamma,\infty}\cX$ or $\cJ_{\infty}\cX$, to be $\J_{\Gamma,\infty}\Utg_{\Gamma}(\cX)^{\pur}$.
\end{defn}

Let $K$ be an algebraically closed field, let $E\to\Df_{K}$ a $G$-cover
for some Galoisian group $G$ corresponding to a $K$-point $e\colon\Spec K\to\Gamma$
and let $\cE=[E/G]$ be the induced twisted formal disk. We have
\begin{align*}
(\cJ_{\infty}\cX)(e) & =\varprojlim\Hom_{\Df}(\D_{K,n},\Utg_{\Spec K}(\cX)^{\pur})\\
 & =\Hom_{\Df}(\Df_{K},\Utg_{\Spec K}(\cX)^{\pur})\\
 & =\Hom_{\Df}(\Df_{K},\Utg_{\Spec K}(\cX))\\
 & =\varprojlim\Hom_{\Df}(\D_{K,n},\Utg_{\Spec K}(\cX))\\
 & =\varprojlim\Hom_{\Df}^{\rep}(\cE_{n},\cX)\\
 & =\Hom_{\Df}^{\rep}(\cE,\cX).
\end{align*}
It follows that the point set $|\cJ_{\infty}\cX|$ is in a one-to-one
correspondence with the set of equivalence classes of twisted arcs
of $\cX$. This also shows that $|\cJ_{\infty}\cX|$ is independent
of the choice of $\Gamma$.

Over the component $\Spec k\cong\Lambda_{[1]}\subset\Gamma$ corresponding
to the trivial Galoisian group $G=1$, we have 
\[
\Utg_{\Lambda_{[1]}}(\cX)=\ulHom_{\Df}^{\rep}(\Df,\cX)=\cX.
\]
Therefore the stack of untwisted arcs $\J_{\infty}\cX$ is regarded
as an open and closed substack of $\cJ_{\infty}\cX$.

If $f\colon\cY\to X$ is a morphism from a formal DM stack $\cY$
to a formal algebraic space $X$ which are both of finite type over
$\Df$, then we have the map 
\[
f_{\infty}\colon|\cJ_{\infty}\cY|\to|\J_{\infty}X|
\]
which sends a class of a twisted arc $\cE\to\cY$ to the arc $\Df_{K}\to X$
obtained from the composite $\cE\to\cY\to X$ and the universality
of the coarse moduli  space $\cE\to\Df_{K}$. This map is nothing
but the map 
\[
(f^{\utg})_{\infty}\colon|\J_{\infty}\Utg_{\Gamma}(\cY)|\to|\J_{\infty}X|
\]
which is associated to $f^{\utg}\colon\Utg_{\Gamma}(\cY)\to X$ obtained
in \ref{prop:Unt-morphism}.

\section{\label{sec:Jacobians}Jacobian order functions}

In this section, we define Jacobian order functions denoted by $\fj$
of formal DM stacks and of morphisms between them.

Let $\Phi$ be a DM stack of finite type and let $\cX$ be a formal
DM stack flat and of finite type over $\Df_{\Phi}$.
\begin{defn}
\label{def:order}For a field $K$, let 
\[
\ord\colon K\tpars\to\ZZ\cup\{\infty\}
\]
be the order function (the normalized additive valuation) with convention
$\ord0=\infty$. For a finite extension $L/K\tpars$ of degree $r$,
we continue to denote by $\ord$ the unique extension of $\ord$ to
$L$, which takes values in $\frac{1}{r}\ZZ\cup\{\infty\}$. Let $O_{L}$
be the integral closure of $K\tbrats$ in $L$. For an ideal $I$
of $O_{L}$ say generated by $a$, we define $\ord I:=\ord a$.
\end{defn}

\begin{defn}
\label{def:order-function}Let $\cI$ be a coherent ideal sheaf on
(the small etale site of) $\cX$. For a finite extension $L/K\tpars$
and a $\Df$-morphism $\beta\colon\Spf O_{L}\to\cX$, the pull-back
$\beta^{-1}\cI$ is an ideal of $O_{L}$ and we define 
\[
\ord_{\cI}(\beta):=\ord\beta^{-1}\cI\in\frac{1}{r}\ZZ\cup\{\infty\}.
\]
For a twisted arc $\gamma\colon\cE\to\cX$ with $\cE$ defined over
$K$, let us take a finite $\Df_{K}$-morphism $\Spf O_{L}\to\cE$
for some finite extension $L/K\tpars$ and let $\beta\colon\Spf O_{L}\to\cE\to\cX$
be the composition. We define 
\[
\ord_{\cI}(\gamma):=\ord_{\cI}(\beta).
\]
\end{defn}

The last definition is independent of the choice of $\Spf O_{L}\to\cE$.
This follows from the fact that if $\beta\colon\Spf O_{L}\to\cX$
is a morphism as above and $\beta'\colon\Spf O_{L'}\to\Spf O_{L}\to\cX$
is the one induced from $\beta$ and a finite extension $L'/L$, then
$\ord_{\cI}(\beta)=\ord_{\cI}(\beta')$.

We use sheaves of differentials. This well-known notion for schemes
is generalized to formal schemes (for instance, see \cite{MR2313672,MR3838446,MR2815110}).
We generalize it further to formal DM stacks.
\begin{defn}
We define the \emph{sheaf of differentials} of $\cX$ over $\Df_{\Phi}$,
denoted by $\Omega_{\cX/\Df_{\Phi}}$, as follows. For each atlas
$u\colon U\to\cX$ from a formal scheme which fits into a 2-commutative
diagram
\[
\xymatrix{U\ar[d]\ar[r] & \cX\ar[d]\\
\Df_{T}\ar[r] & \Df_{\Phi}
}
\]
for some atlas $T\to\Phi$, the sheaf $\Omega_{U/\Df_{T}}$ is independent
of the choice of $T\to\Phi$; we denote it by $\Omega_{U/\Df_{\Phi}}$.
For such an atlas $U\to\cX$, the morphism $U\times_{\cX}U\to\cX$
is also an atlas satisfying the same condition. For projections $p_{1,}p_{2}\colon U\times_{\cX}U\rightrightarrows U$,
we have canonical isomorphisms
\[
p_{1}^{*}\Omega_{U/\Df_{\Phi}}\cong\Omega_{U\times_{\cX}U/\Df_{\Phi}}\cong p_{2}^{*}\Omega_{U/\Df_{\Phi}}.
\]
We define $\Omega_{\cX/\Df_{\Phi}}$ to be the coherent $\cO_{\cX}$-module
associated to these data.
\end{defn}

\begin{defn}
Suppose that $\cX$ has pure relative dimension $d$ over $\Df_{\Phi}$.
We define the \emph{Jacobian ideal sheaf }$\Jac_{\cX/\Df_{\Phi}}\subset\cO_{\cX}$
to be the $d$-th Fitting ideal sheaf of $\Omega_{\cX/\Df_{\Phi}}$.
We denote the associated order function $\ord_{\Jac_{\cX/\Df_{\Phi}}}$
on $|\cJ_{\infty}\cX|$ by $\fj_{\cX}=\fj_{\cX/\Phi}$. The \emph{singular
locus} of $\cX$ over $\Phi$ is the closed substack defined by $\Jac_{\cX/\Df_{\Phi}}$
and denoted by $\cX_{\sing}$.
\end{defn}

\begin{defn}
\label{def:thin}We say that for a formal DM stack $\cX$ of finite
type and flat over $\Df_{K}$ with $K$ a field, a closed substack
$\cY\subset\cX$ is \emph{narrow }if for every geometric point $y\colon\Spec L\to\cY$,
we have $\dim\cO_{\cY,y}<\dim\cO_{\cX,y}$. When $\Phi$ is a DM stack
almost of finite type and $\cX$ is a formal DM stack of finite type
and flat over $\Df_{\Phi}$, we say that a closed substack $\cY\subset\cX$
is \emph{narrow }if for every geometric point $\Spec K\to\Phi$, $\cY\times_{\Phi}\Spec K$
is a narrow closed substack of $\cX\times_{\Phi}\Spec K$.
\end{defn}

\begin{defn}
We say that $\cX$ is \emph{fiberwise generically smooth over $\Df_{\Phi}$
}if $\cX_{\sing}\subset\cX$ is narrow.
\end{defn}

\begin{rem}
When $\Phi=\Spec k$ and $\cX$ is a formal affine scheme $\Spf A$,
then $\cX$ is said to be \emph{rig-smooth} \cite[1.14.7]{MR2815110}
if we have the inclusion $V(\Jac_{\cX/\Df})\subset V(t)$ of closed
subsets of $\Spec A$, where we regarded $\Jac_{\cX/\Df}$ as an ideal
of $A$. When $\cX$ is rig-pure, then ``rig-smooth'' implies ``generically
smooth,'' but the converse is not true.
\end{rem}

If $\cX$ is fiberwise generically smooth and has pure relative dimension
$d$ over $\Df_{\Phi}$, then $\Omega_{\cX/\Df_{\Phi}}$ has generic
rank $d$ and $\Omega_{\cX/\Df_{\Phi}}^{d}:=\bigwedge^{d}\Omega_{\cX/\Df_{\Phi}}$
has generic rank 1. (By this, we mean that for every etale morphism
$u\colon\Spf A\to\cX$, the associated $A$-module has the indicated
generic rank.)
\begin{defn}
Let $\Psi\to\Phi$ be a morphism of DM stacks almost of finite type.
Let $\cY,\cX$ be formal DM stacks of finite type, flat and fiberwise
generically smooth over $\Df_{\Psi}$ and $\Df_{\Phi}$ respectively,
both of which have pure relative dimension $d$. Let $f\colon\cY\to\cX$
be a morphism compatible with $\Df_{\Psi}\to\Df_{\Phi}$. Let $\gamma\colon\cE=[E/G]\to\cY$
be a twisted arc, let $\beta\colon E\to\cY$ be the induced morphism
and let $O_{E}$ be the coordinate ring of $E$. Suppose that $\fj_{\cY/\Psi}(\beta)<\infty$
and $\fj_{\cX/\Phi}(f\circ\beta)<\infty$. Let 
\begin{gather*}
\beta^{\flat}\Omega_{\cY/\Df_{\Psi}}^{d}:=\left(\beta^{*}\Omega_{\cY/\Df_{\Psi}}^{d}\right)/\tors,\\
(f\circ\beta)^{\flat}\Omega_{\cX/\Df_{\Phi}}^{d}:=\left((f\circ\beta)^{*}\Omega_{\cX/\Df_{\Phi}}^{d}\right)/\tors
\end{gather*}
be flat pullbacks, which are free $O_{E}$-modules of rank one. We
define 
\[
\fj_{f}(\gamma)=\fj_{f}(\beta):=\frac{1}{\sharp G}\length\frac{\beta^{\flat}\Omega_{\cY/\Df_{\Psi}}^{d}}{\Image\left((f\circ\beta)^{\flat}\Omega_{\cX/\Df_{\Phi}}^{d}\to\beta^{\flat}\Omega_{\cY/\Df_{\Psi}}^{d}\right)}\in\NN\cup\{\infty\}.
\]
When either $\fj_{\cY/\Psi}(\beta)=\infty$ or $\fj_{\cX/\Phi}(f\circ\beta)=\infty$,
we define $\fj_{f}(\gamma)=\fj_{f}(\beta):=\infty$.
\end{defn}

Similarly we define
\begin{gather*}
\beta^{\flat}\Omega_{\cY/\Df_{\Psi}}:=\left(\beta^{*}\Omega_{\cY/\Df_{\Psi}}\right)/\tors,\\
(f\circ\beta)^{\flat}\Omega_{\cX/\Df_{\Phi}}:=\left((f\circ\beta)^{*}\Omega_{\cX/\Df_{\Phi}}\right)/\tors,
\end{gather*}
which are free $O_{E}$-modules of rank $d$. For suitable bases,
the map $(f\circ\beta)^{\flat}\Omega_{\cX/\Df_{\Phi}}\to\beta^{\flat}\Omega_{\cY/\Df_{\Psi}}$
is represented by a diagonal matrix $\mathrm{diag}(\pi^{a_{1}},\dots,\pi^{a_{d}})$,
where $\pi$ is a uniformizer of $O_{E}$ and we have $\sharp G\cdot\fj_{f}(\gamma)=\sum_{i=1}^{d}a_{i}$.
\begin{defn}
We define the \emph{Jacobian ideal sheaf} $\Jac_{f}\subset\cO_{\cY}$
of $f$ to be the $0$-th Fitting ideal of $\Omega_{\cY/(\cX\times_{\Phi}\Psi)}$.
\end{defn}

\begin{lem}
\label{lem:relation-naive-jac}We have 
\[
\fj_{f}\le\ord_{\Jac_{f}}.
\]
The equality holds if $\cY$ is smooth over $\Df_{\Psi}$.
\end{lem}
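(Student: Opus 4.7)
The plan is to reduce the inequality to a linear-algebra computation on the complete discrete valuation ring $O_E$, where $E\to\Df_K$ is the integral model of the $G$-cover underlying $\cE=[E/G]$.

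First I would use the standard right-exact sequence of differentials attached to the composition $\cY\to\cX\times_\Phi\Psi\to\Df_\Psi$,
\[
f^{*}\Omega_{\cX/\Df_\Phi}\longrightarrow\Omega_{\cY/\Df_\Psi}\longrightarrow\Omega_{\cY/(\cX\times_\Phi\Psi)}\longrightarrow 0,
\]
to identify $\Jac_{f}$ with the $0$th Fitting ideal of the rightmost sheaf. Pulling back along $\beta$ and using the compatibility of Fitting ideals with base change then gives
\[
\beta^{-1}\Jac_{f}\cdot O_E\;=\;\operatorname{Fitt}_{0}(\operatorname{coker}\phi),
\]
where $\phi\colon(f\circ\beta)^{*}\Omega_{\cX/\Df_\Phi}\to\beta^{*}\Omega_{\cY/\Df_\Psi}$ is the pulled-back map.

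Next, $\phi$ descends to a map $\phi^{\flat}\colon(f\circ\beta)^{\flat}\Omega_{\cX/\Df_\Phi}\to\beta^{\flat}\Omega_{\cY/\Df_\Psi}$ between free $O_E$-modules of rank $d$, and a direct diagram chase yields a short exact sequence
\[
0\longrightarrow T\longrightarrow\operatorname{coker}\phi\longrightarrow\operatorname{coker}\phi^{\flat}\longrightarrow 0,
\]
where $T$ is a quotient of the torsion submodule of $\beta^{*}\Omega_{\cY/\Df_\Psi}$, in particular torsion over $O_E$. In the finite case $\fj_{f}(\beta)<\infty$, all three terms are torsion; the structure theorem over the DVR $O_E$ yields $\operatorname{Fitt}_{0}(N)=(\pi^{\length(N)})$ for every torsion module $N$, and length-additivity in short exact sequences gives
\[
\operatorname{Fitt}_{0}(\operatorname{coker}\phi)\;=\;\operatorname{Fitt}_{0}(T)\cdot\operatorname{Fitt}_{0}(\operatorname{coker}\phi^{\flat})\;\subset\;\operatorname{Fitt}_{0}(\operatorname{coker}\phi^{\flat}).
\]
Since $\phi^{\flat}$ is a map of free rank-$d$ modules, $\operatorname{Fitt}_{0}(\operatorname{coker}\phi^{\flat})=(\det\phi^{\flat})$, and taking top exterior powers identifies $\ord(\det\phi^{\flat})$ with $\fj_{f}(\beta)$. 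Taking $\ord$ of the displayed inclusion now gives $\fj_{f}(\beta)\le\ord_{\Jac_{f}}(\beta)$. The edge case $\fj_{f}(\beta)=\infty$ follows from the same argument with the convention in force, since then either $\det\phi^{\flat}=0$ or the source has generic rank exceeding $d$.

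For the equality when $\cY$ is smooth over $\Df_\Psi$, the sheaf $\Omega_{\cY/\Df_\Psi}$ is locally free of rank $d$, so $\beta^{*}\Omega_{\cY/\Df_\Psi}$ is torsion-free. The module $T$ then vanishes by its description as a quotient of that torsion submodule, forcing $\operatorname{Fitt}_{0}(T)=O_E$ and turning the inclusion into an equality. The main technical point I expect to watch is the initial identification of $\beta^{-1}\Jac_{f}\cdot O_E$ with $\operatorname{Fitt}_{0}(\operatorname{coker}\phi)$ in the stacky setting: this requires passing to an étale chart of $\cY$ and invoking the étale compatibility of Fitting ideals of quasi-coherent sheaves, which should be routine once set up carefully.
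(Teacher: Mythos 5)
Your proposal is correct and follows essentially the same route as the paper: both use the right-exact sequence of pulled-back differentials, the base-change compatibility of Fitting ideals, the surjection from $\operatorname{coker}\phi$ onto $\operatorname{coker}\phi^{\flat}$ (you package this as a short exact sequence with torsion kernel $T$; the paper simply observes the surjection and compares lengths), and the observation that the torsion of $\beta^{*}\Omega_{\cY/\Df_{\Psi}}$ vanishes when $\cY$ is smooth, giving the equality case. The added Fitting-ideal multiplicativity step is equivalent to the paper's direct length comparison, so this is the same argument with slightly different bookkeeping.
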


\begin{proof}
We first recall two properties of Fitting ideals. Firstly Fitting
ideals commute with base change \cite[Cor. 20.5]{MR1322960}. Secondly,
for a discrete valuation ring $R$ with maximal ideal $(\pi)$ and
for an $R$-module of rank $r$, $M=R^{\oplus r}\oplus\bigoplus_{i=1}^{l}R/(\pi^{a_{i}})$,
we see
\[
\mathrm{Fitt}_{r}(M)=(\pi^{\sum_{i=1}^{l}a_{i}})
\]
by a direct computation. From these properties, we have 
\begin{equation}
\sharp G\cdot\ord_{\Jac_{f}}(\beta)=\length\beta^{*}\Omega_{\cY/(\cX\times_{\Phi}\Psi)}.\label{eq:G-ord}
\end{equation}
Consider the exact sequence
\[
(f\circ\beta)^{*}\Omega_{\cX/\Df_{\Phi}}\xrightarrow{\alpha}\beta^{*}\Omega_{\cY/\Df_{\Psi}}\to\beta^{*}\Omega_{\cY/(\cX\times_{\Phi}\Psi)}\to0.
\]
The map $\alpha$ induces $(f\circ\beta)^{\flat}\Omega_{\cX/\Df_{\Phi}}\to\beta^{\flat}\Omega_{\cY/\Df_{\Psi}}$
by taking quotients by the torsion parts. If this is represented by
a diagonal matrix $\mathrm{diag}(\pi^{a_{1}},\dots,\pi^{a_{d}})$,
then since its cokernel is isomorphic to $\bigoplus_{i=1}^{d}O_{E}/(\pi^{a_{i}})$,
we have
\begin{align}
\sharp G\cdot\fj_{f}(\beta) & =\length\frac{\beta^{\flat}\Omega_{\cY/\Df_{\Psi}}}{\Image((f\circ\beta)^{\flat}\Omega_{\cX/\Df_{\Phi}}\to\beta^{\flat}\Omega_{\cY/\Df_{\Psi}})}.\label{eq:Gjf}
\end{align}
The last quotient module is a quotient of $\mathrm{Coker}(\alpha)\cong\gamma^{*}\Omega_{\cY/\cX\times_{\Phi}\Psi}$.
The first assertion follows from (\ref{eq:G-ord}) and (\ref{eq:Gjf}).
If $\cY\to\Df_{\Psi}$ is smooth, then $\gamma^{\flat}\Omega_{\cY/\Df_{\Psi}}=\gamma^{*}\Omega_{\cY/\Df_{\Psi}}$
and 
\[
\frac{\gamma^{\flat}\Omega_{\cY/\Df_{\Psi}}}{\Image((f\circ\beta)^{\flat}\Omega_{\cX/\Df_{\Phi}}\to\beta^{\flat}\Omega_{\cY/\Df_{\Psi}})}=\gamma^{*}\Omega_{\cY/\cX\times_{\Phi}\Psi},
\]
which shows the second assertion.
\end{proof}
\begin{lem}
\label{lem:jac-associativity}Let $f\colon\cY\to\cX$ be as above
and let $g\colon\cZ\to\cY$ be another morphism satisfying the same
condition. Let $\beta\colon\Spf O_{L}\to\cZ$ be a $\Df$-morphism
for $L/K\tpars$ as before. Let $\beta_{\cY}=g\circ\beta\colon\Spf O_{L}\to\cY$
and $\beta_{\cX}=f\circ g\circ\gamma\colon\Spf O_{L}\to\cX$ be the
induced morphisms. Suppose that $\fj_{\cZ}(\beta),$ $\fj_{\cY}(\beta_{\cY})$
and $\fj_{\cX}(\beta_{\cX})$ have finite values. Then
\[
\fj_{f\circ g}(\beta)=\fj_{g}(\beta)+\fj_{f}(\beta_{\cY}).
\]
\end{lem}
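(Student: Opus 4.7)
The plan is to reduce everything to a local computation on the DVR $O_L=O_E$, where all the flat pullbacks $\beta^\flat\Omega_{-}^d$ become free modules of rank one, and then to exploit that the length of a cokernel of an endomorphism of a rank-one free module (viewed as multiplication by $\pi^a$ on $O_L$) is additive under composition.

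First I would unwind the functoriality of Kähler differentials. Let $\pi_\cZ\colon g^*\Omega_{\cY/\Df_\Psi}\to\Omega_{\cZ/\Df_\Psi}$ and $\pi_\cY\colon f^*\Omega_{\cX/\Df_\Phi}\to\Omega_{\cY/\Df_\Psi}$ be the canonical maps. Taking $d$-th exterior powers and pulling back along $\beta$, I get a commutative triangle of $O_L$-module maps
\begin{equation*}
\beta_\cX^*\Omega_{\cX/\Df_\Phi}^d \xrightarrow{\;\alpha\;} \beta_\cY^*\Omega_{\cY/\Df_\Psi}^d \xrightarrow{\;\gamma\;} \beta^*\Omega_{\cZ/\Df_\Psi}^d,
\end{equation*}
whose composition is the natural map appearing in the definition of $\fj_{f\circ g}(\beta)$. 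This is exactly the transitivity of pullback of differentials, applied to $\beta = g\circ \beta_\cY$ and $\beta_\cY = f\circ$ (something), together with the fact that $\Omega_{-/\Df_\Phi}^d \to \Omega_{-/\Df_\Psi}^d$ behaves well because $\Df_\Psi\to\Df_\Phi$ is fixed.

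Second, I would pass to the torsion-free quotients. Each $\beta^\flat\Omega^d$ is defined as the quotient of $\beta^*\Omega^d$ by its $O_L$-torsion. Since $O_L$ is a DVR, every finitely generated torsion-free $O_L$-module is free, and the generic rank-one hypothesis (coming from $\cZ,\cY,\cX$ being of pure relative dimension $d$ and fiberwise generically smooth) ensures each is free of rank exactly one. Moreover, because quotient-by-torsion is a functor on finitely generated $O_L$-modules, the triangle above descends to a commutative triangle
\begin{equation*}
\beta_\cX^\flat\Omega_{\cX/\Df_\Phi}^d \xrightarrow{\;\bar\alpha\;} \beta_\cY^\flat\Omega_{\cY/\Df_\Psi}^d \xrightarrow{\;\bar\gamma\;} \beta^\flat\Omega_{\cZ/\Df_\Psi}^d
\end{equation*}
of free $O_L$-modules of rank one. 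The finiteness hypotheses $\fj_\cY(\beta_\cY)<\infty$ and $\fj_\cX(\beta_\cX)<\infty$, combined with Lemma \ref{lem:relation-naive-jac}, guarantee that $\bar\alpha$ and $\bar\gamma$ are nonzero, hence injective.

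Third, choose a uniformizer $\pi$ of $O_L$ and generators of the three rank-one modules. Then $\bar\alpha$ is multiplication by $\pi^a$ and $\bar\gamma$ is multiplication by $\pi^b$, where by definition
\begin{equation*}
a = \sharp G\cdot \fj_f(\beta_\cY), \qquad b = \sharp G\cdot \fj_g(\beta).
\end{equation*}
The composition $\bar\gamma\circ\bar\alpha$ is then multiplication by $\pi^{a+b}$, whose cokernel has length $a+b$. By the definition of $\fj_{f\circ g}(\beta)$ this gives $\sharp G\cdot \fj_{f\circ g}(\beta) = a+b$, and dividing by $\sharp G$ yields the claimed identity.

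The only step with any subtlety is the functoriality claim at the level of the $\flat$-quotients; the rest is essentially bookkeeping. In particular, one must check that the composition $\bar\gamma\circ\bar\alpha$ really is the map used in the definition of $\fj_{f\circ g}$, i.e.\ that taking the $d$-th exterior power, pulling back along $\beta$, and dividing by torsion commute with composing the two natural maps of sheaves of differentials. This follows from the functoriality of each of these three operations, so no real obstacle arises.
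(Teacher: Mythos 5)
Your proof is correct and follows essentially the same strategy as the paper's: identify the three natural maps between the rank-one $\flat$-pullbacks of the top exterior powers of differentials via functoriality, observe that over the DVR $O_L$ each is multiplication by a power of the uniformizer, and read off additivity of the exponents under composition. The only discrepancy is cosmetic: in the lemma's setting $\beta$ is an arbitrary $\Df$-morphism $\Spf O_L\to\cZ$ with $L/K\tpars$ finite of degree $r$ (not necessarily presented as a $G$-cover), so the normalization constant should be written $r=[L:K\tpars]$ as the paper does, rather than $\sharp G$; in the intended twisted-arc application these coincide.
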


\begin{proof}
Let $r:=[L:K\tpars]$ and let $\fm\subset O_{L}$ be the maximal ideal.
Suppose that $\cZ$ is defined over $\Df_{\Upsilon}$. We have
\begin{align*}
(\beta_{\cY})^{\flat}\Omega_{\cY/\Df_{\Psi}}^{d} & =\fm^{\fj_{g}(\beta)r}\beta{}^{\flat}\Omega_{\cZ/\Df_{\Upsilon},}^{d}\\
(\beta_{\cX})^{\flat}\Omega_{\cX/\Df_{\Phi}}^{d} & =\fm^{\fj_{f\circ g}(\beta)r}\beta^{\flat}\Omega_{\cZ/\Df_{\Upsilon}}^{d},\\
(\beta_{\cX})^{\flat}\Omega_{\cX/\Df_{\Phi}}^{d} & =\fm^{\fj_{f}(\beta_{\cY})r}(\beta_{\cY})^{\flat}\Omega_{\cY/\Df_{\Psi}}^{d}.
\end{align*}
The first and third equalities imply
\[
(\beta_{\cX})^{\flat}\Omega_{\cX/\Df_{\Phi}}^{d}=\fm^{\fj_{f}(\beta_{\cY})r+\fj_{g}(\beta)r}\beta^{\flat}\Omega_{\cZ/\Df_{\Upsilon}}^{d}.
\]
Comparing this with the second, we get the lemma.
\end{proof}

\section{\label{sec:Gro-Rings}Grothendieck rings and realizations}

In this section, we define several rings constructed from the Grothendieck
ring of varieties. We will define motivic measures to take values
in these rings in later sections.
\begin{defn}
\label{def:usual-K0}For an algebraic space $S$ of finite type over
$k$, we denote by $\Var_{S}$ the category of algebraic spaces of
finite type over $S$. We define $K_{0}(\Var_{S})$ to be the quotient
of the free $\ZZ$-module generated by isomorphism classes $\{X\}$
of $X\in\Var_{S}$ modulo the scissor relation: if $Y\subset X$ is
a closed subspace, then $\{X\}=\{X\setminus Y\}+\{X\}$.
\end{defn}

\begin{rem}
We do not use the usual notation $[X]$ to avoid the confusion that
the symbol $[X/G]$ would have two meanings, a quotient stack and
the class of a quotient space $X/G$ in the Grothendieck ring.
\end{rem}

\begin{rem}
The usual definition of the Grothendieck ring of varieties uses schemes
rather than algebraic spaces. But every algebraic space $X$ of finite
type admits a stratification $X=\bigsqcup_{i}X_{i}$ into finitely
many locally closed subspaces $X_{i}$ which are schemes. Therefore
the use of algebraic spaces causes no essential difference. But algebraic
spaces are more natural for our purpose because they appear as coarse
moduli spaces of DM stacks.
\end{rem}

\begin{rem}
One may consider the Grothendieck \emph{semiring }instead of the Grothendieck
ring so that one can obtain slightly finer invariants (for instances,
see \cite[page 725]{MR2271984}, \cite[Ch. 2, Def. 1.2.1]{MR3838446}).
\end{rem}

The scissor relation in particular implies that $\{X\}=\{X_{\red}\}$.
For a constructible subset $C$ of $X\in\Var_{S}$, if $C=\bigsqcup_{i=1}^{n}C_{i}$
is a decomposition into finitely many locally closed subsets, then
we define $\{C\}:=\sum_{i=1}^{n}\{C_{i}\}\in K_{0}(\Var_{S})$, which
is independent of the decomposition.

The additive group $K_{0}(\Var_{S})$ becomes a commutative ring by
the multiplication $\{X\}\{Y\}:=\{X\times_{S}Y\}$. We denote $\{\AA_{S}^{1}\}\in K_{0}(\Var_{S})$
by $\LL$. For a positive integer $r$, we define a ring $K_{0}(\Var_{S})_{r}$
by adjoining an $r$-th root $\LL^{1/r}$ of $\LL$ formally. Namely
$K_{0}(\Var_{S})_{r}=K_{0}(\Var_{S})[x]/(x^{r}-\LL)$ and denote the
class of $x$ by $\LL^{1/r}$. We denote $\cM_{S}$ and $\cM_{S,r}$
to be the localization of $K_{0}(\Var_{S})$ and $K_{0}(\Var_{S})_{r}$
by $\LL$ respectively.
\begin{defn}
\label{def:aff-bundle}We say that a morphism $f\colon Y\to X$ between
algebraic spaces of finite type is a \emph{pseudo-$\AA^{r}$-bundle
}if for every geometric point $x\colon\Spec K\to X$, its fiber is
universally homeomorphic over $K$ to $\AA_{K}^{r}/H$ for some finite
group action $H\curvearrowright\AA_{K}^{r}$. In particular, a universal
bijection $Y\to X$ between algebraic spaces of finite type is a pseudo-$\AA^{0}$-bundle.
More generally, let $f\colon\cY\to\cX$ be a morphism between DM stacks
of finite type and let $D\subset|\cY|$ and $C\subset|\cX|$ be constructible
subsets with $f(D)\subset C$. We say that the map $f|_{D}\colon D\to C$
is a \emph{pseudo-$\AA^{r}$-bundle} if there exist stratification
$D=\bigsqcup_{i=1}^{n}D_{i}$ and $C=\bigsqcup_{i=1}^{n}C_{i}$ into
locally closed subsets such that for every $i$, $f(D_{i})\subset C_{i}$
and if $\cD_{i}\subset\cY$ and $\cC_{i}\subset\cX$ are the associated
reduced locally closed substacks, then the induced morphisms of coarse
moduli spaces, $\overline{\cD_{i}}\to\overline{\cC_{i}}$, are pseudo-$\AA^{r}$-bundles.
\end{defn}

\begin{defn}
We define the quotient group $K_{0}(\Var_{S})'_{r}$ by the following
relation: for a pseudo-$\AA^{r}$-bundle $Y\to X$ of algebraic spaces
of finite type over $S$, $\{Y\}=\{X\}\LL^{r}$.
\end{defn}

In particular, for a universally bijective morphism $Y\to X$ in $\Var_{S}$,
we have $\{Y\}=\{X\}$ in $K_{0}(\Var_{S})'_{r}$. If $\cX$ is a
DM stack of finite type over $S$ and $\overline{\cX}$ is its coarse
moduli space, then we define $\{\cX\}:=\{\overline{\cX}\}\in K_{0}(\Var_{S})'_{r}$.
If $C\subset|\cX|$ is a constructible subset, then we define $\{C\}\in K_{0}(\Var_{S})'_{r}$
either as the class $\{\overline{C}\}$ of the image $\overline{C}\subset|\overline{\cX}|$
of $C$ or as $\sum_{i=1}^{n}\{\cC_{i}\}$ for locally closed substacks
$\cC_{i}\subset\cX$ such that $C=\bigsqcup_{i=1}^{n}|\cC_{i}|$.
They coincide thanks to the relation above. Indeed, if $C_{i}\subset\overline{\cX}$
is the image of $\cC_{i}$ say with the structure of a reduced algebraic
space, then the morphism $\overline{\cC_{i}}\to C_{i}$ is a universal
homeomorphism and $\{\cC_{i}\}=\{C_{i}\}$. We define $\cM_{S}'$
and $\cM_{S,r}'$ to be the localizations of $K_{0}(\Var_{S})'$ and
$K_{0}(\Var_{S})'_{r}$ by $\LL$ respectively.
\begin{lem}
\label{lem:pseudo-bdl-eq}Let $S$ be an algebraic space of finite
type, let $\cY,\cX$ be DM stacks of finite type over $S$ and let
$D\subset|\cY|$ and $C\subset|\cX|$ be constructible subsets. Let
$f\colon\cY\to\cX$ be an $S$-morphism such that $f(D)\subset C$
and $f|_{D}\colon D\to C$ is a pseudo-$\AA^{d}$-bundle. Then, for
every positive integer $r$, we have
\[
\{D\}=\{C\}\LL^{d}\in K_{0}(\Var_{S})_{r}'.
\]
\end{lem}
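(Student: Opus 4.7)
The plan is to reduce immediately to the situation of a single stratum and then to invoke the defining relation of $K_0(\Var_S)'_r$ applied to the induced morphism on coarse moduli spaces.

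First, by the definition of a pseudo-$\AA^d$-bundle between constructible subsets (Definition~\ref{def:aff-bundle}), we may choose stratifications $D = \bigsqcup_{i=1}^n D_i$ and $C = \bigsqcup_{i=1}^n C_i$ into locally closed subsets with $f(D_i) \subset C_i$, such that, writing $\cD_i \subset \cY$ and $\cC_i \subset \cX$ for the corresponding reduced locally closed substacks, each induced morphism of coarse moduli spaces $\overline{\cD_i} \to \overline{\cC_i}$ is a pseudo-$\AA^d$-bundle of algebraic spaces of finite type over $S$.

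Next, by the well-definedness of the class of a constructible subset in $K_0(\Var_S)'_r$ (discussed just after the definition of $K_0(\Var_S)'_r$), we have
\[
\{D\} = \sum_{i=1}^n \{\cD_i\} = \sum_{i=1}^n \{\overline{\cD_i}\}, \qquad \{C\} = \sum_{i=1}^n \{\cC_i\} = \sum_{i=1}^n \{\overline{\cC_i}\}.
\]
So it suffices to prove the identity $\{\overline{\cD_i}\} = \{\overline{\cC_i}\} \LL^d$ for each $i$. But this is exactly the defining relation of $K_0(\Var_S)'_r$ applied to the pseudo-$\AA^d$-bundle $\overline{\cD_i} \to \overline{\cC_i}$ of algebraic spaces of finite type over $S$. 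Summing over $i$ gives $\{D\} = \{C\} \LL^d$ in $K_0(\Var_S)'_r$, as required.

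The argument is essentially bookkeeping: there is no real obstacle, since the hard definitional work (independence of the stratification in the definition of $\{C\}$, and the fact that $\{\cX\} = \{\overline{\cX}\}$ makes sense in $K_0(\Var_S)'_r$ precisely because universal bijections are pseudo-$\AA^0$-bundles) has been done in the paragraphs preceding the statement. The only thing to watch is that the integer $d$ in the exponent is unrelated to the subscript $r$ of the ring $K_0(\Var_S)'_r$; the scissor-type relation holds for pseudo-$\AA^m$-bundles for every $m \in \NN$, so one may freely apply it with $m = d$.
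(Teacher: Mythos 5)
Your proof is correct and follows essentially the same route as the paper's: stratify $D$ and $C$ according to Definition~\ref{def:aff-bundle}, pass to classes via the equality $\{D_i\}=\{\cD_i\}=\{\overline{\cD_i}\}$ guaranteed by the discussion after the definition of $K_0(\Var_S)'_r$, apply the defining relation to each $\overline{\cD_i}\to\overline{\cC_i}$, and sum. Your closing remark about the exponent $d$ being unrelated to the subscript $r$ is a fair reading of the paper's (slightly overloaded) notation and is implicitly what the paper's own proof uses.
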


\begin{proof}
We take stratifications $D=\bigsqcup_{i}D_{i}$ and $C=\bigsqcup_{i}C_{i}$
as in \ref{def:aff-bundle}. We then have
\[
\{D\}=\sum_{i}\{D_{i}\}=\sum_{i}\{\overline{\cD_{i}}\}=\sum_{i}\{\overline{\cC_{i}}\}\LL^{d}=\sum_{i}\{C_{i}\}\LL^{d}=\{C\}\LL^{d}.
\]
\end{proof}
For a morphism $f\colon S\to T$ between algebraic spaces of finite
type, we have a map $K_{0}(\Var_{S})\to K_{0}(\Var_{T})$ sending
the class $\{\phi\colon X\to S\}$ of an $S$-space to the class $\{f\circ\phi\colon X\to T\}$
of the induced $T$-space. This induces maps $K_{0}(\Var_{S})'\to K_{0}(\Var_{T})'$
and $\cM_{S}'\to\cM_{T}'$. We denote these maps by $\int_{S\to T}$.
When $T=\Spec k,$ we denote also by $\int_{S}$. We call them \emph{integration
maps}.

Next consider the case where $S$ is only \emph{locally} of finite
type over $k$. We define
\[
\cM_{S}':=\varinjlim_{U}\cM_{U}',
\]
where $U$ runs over quasi-compact open subspaces of $S$. Let $f\colon S\to T$
be a morphism between algebraic spaces locally of finite type over
$k$. Let $U\subset U'\subset S$ and $V\subset V'\subset T$ be finite-type
open subspaces such that $f(U)\subset V$ and $f(U')\subset V'$.
Then we have the following commutative diagram of integration maps:
\[
\xymatrix{\cM_{U}'\ar[r]\ar[d]\ar[dr] & \cM_{U'}'\ar[d]\\
\cM_{V}'\ar[r] & \cM_{V'}'
}
\]
Therefore, taking limits, we get the integration map 
\[
\int_{S\to T}\colon\cM_{S}'\to\cM_{T}'.
\]

To an $S$-scheme $X$ which is of finite type over $k$, we define
$\{X\}\in\cM_{S,r}'$ in the obvious way; if a finite-type open subset
$U\subset S$ contains the image of $X$, then $\{X\}\in\cM_{S,r}'$
is defined to be the image of $\{X\}\in\cM_{U,r}'$. For $n\in\frac{1}{r}\ZZ$,
we also define $\{X\}\LL^{n}\in\cM_{S,r}'$ to be the image of $\{X\}\LL^{n}\in\cM_{U,r}'$.
Then $\cM_{S,r}'$ is generated by elements of this form.

For $m\in\frac{1}{r}\ZZ$, we define $F_{m}\subset\cM_{S,r}'$ to
be the subgroup generated by elements of the form $\{X\}\LL^{n}$,
$n\in\frac{1}{r}\ZZ$ such that $\dim X+n\le-m$. Here $\dim X$ means
the dimension of the scheme $X$, not the relative dimension over
$S$. These groups form a descending filtration of $\cM_{S,r}'$.
We define 
\[
\widehat{\cM_{S,r}'}:=\varprojlim\cM_{S,r}'/F_{m}.
\]
The integration map $\int_{S\to T}$ extends to completion:
\[
\int_{S\to T}\colon\widehat{\cM_{S,r}'}\to\widehat{\cM_{T,r}'}.
\]

To extract cohomological or numerical data from elements of the Grothendieck
ring of varieties or rings derived from it as above, one often considers
realization maps. We first consider the $l$-adic realization. Suppose
that $k$ is finitely generated. Let $l$ be a prime number different
from $p$. Let $G_{k}=\Gal(k^{\sep}/k)$ be the absolute Galois group
of $k$ and let $\WRep_{G_{k}}(\QQ_{l})$ be the abelian category
of mixed $l$-adic representations of $G_{k}$ (see \cite[page 152]{MR2885336}).
To this abelian category, we associate the Grothendieck group $K_{0}(\WRep_{G_{k}}(\QQ_{l}))$
which has the ring structure induced by tensor products. There exists
the realization map 
\begin{equation}
K_{0}(\Var_{k})\to K_{0}(\WRep_{G_{k}}(\QQ_{l})),\{X\}\mapsto\chi_{l}(X):=\sum_{i}(-1)^{i}[\H_{c}^{i}(X_{\bar{k}},\QQ_{l})].\label{eq:realization}
\end{equation}
This extends to a map of complete rings
\begin{equation}
\widehat{\cM_{k}}\to\widehat{K_{0}(\WRep_{G_{k}}(\QQ_{l}))}.\label{eq:real-complete}
\end{equation}
Here the completion $\widehat{K_{0}(\WRep_{G_{k}}(\QQ_{l}))}$ is
induced from a filtration defined in terms of weights.
\begin{lem}
\label{prop:coh-moduli-sp}Let $\cX$ be a DM stack of finite type
and $X$ its coarse moduli space. Then, for every $i\in\ZZ$, we have
isomorphisms of cohomology groups (with compact support) 
\[
\H^{i}(X_{\bar{k}},\QQ_{l})\cong\H^{i}(\cX_{\bar{k}},\QQ_{l})\text{ and }\H_{c}^{i}(X_{\bar{k}},\QQ_{l})\cong\H_{c}^{i}(\cX_{\bar{k}},\QQ_{l})
\]
which are compatible with $\Gal(\bar{k}/k)$-actions.
\end{lem}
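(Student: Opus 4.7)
The plan is to reduce the statement to computing the higher direct images of $\QQ_l$ along the coarse moduli morphism $\pi\colon\cX\to X$ and to show that $R^i\pi_*\QQ_l$ vanishes for $i>0$ while $\pi_*\QQ_l\cong\QQ_l$.

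First I would invoke the Keel--Mori theorem \cite{MR1432041} to ensure that $\pi$ is proper (and in particular that both $R\pi_*$ and $R\pi_!$ make sense and coincide). Next I would compute $R\pi_*\QQ_l$ étale-locally on $X$. Using the local structure of DM stacks (for instance the non-formal analogue of the argument in the proof of \ref{lem:formal-locally-quot}, or equivalently \cite[Lem.~2.2.3]{MR1862797}), one has an étale cover $V_\gamma\to X$ with $\cX\times_X V_\gamma\cong[W_\gamma/H_\gamma]$ for finite groups $H_\gamma$. By proper base change, it suffices to compute $R\pi_*\QQ_l$ after such a pullback, where the situation becomes $[W_\gamma/H_\gamma]\to W_\gamma/H_\gamma$. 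After a further étale localization on $V_\gamma$, we may reduce to the case where $W_\gamma\to V_\gamma$ is an étale $H_\gamma$-torsor over the formal neighborhood of a geometric point, whose geometric fibers of $\pi$ are classifying stacks $\B G_x$ for the stabilizer subgroups $G_x\subset H_\gamma$.

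The key computational input is then that for a finite group $G$ and a prime $l$ (with $l$ arbitrary in characteristic zero, and $l$ any prime in positive characteristic since we use $\QQ_l$ and not $\ZZ_l$ coefficients), one has $\H^0(\B G,\QQ_l)=\QQ_l$ and $\H^i(\B G,\QQ_l)=0$ for $i>0$. This vanishing follows from the standard averaging/Maschke argument: $\H^i(\B G,\QQ_l)$ equals $\H^i(G,\QQ_l)$, which is annihilated by $\sharp G$ and hence vanishes over $\QQ_l$. Combining this with proper base change for the stacky morphism $\pi$ yields $\pi_*\QQ_l\cong\QQ_l$ and $R^i\pi_*\QQ_l=0$ for $i>0$, so that $R\pi_*\QQ_l\cong\QQ_l$ on $X_{\bar k}$.

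With this in hand, the Leray spectral sequence degenerates and gives the Galois-equivariant isomorphism $\H^i(\cX_{\bar k},\QQ_l)\cong\H^i(X_{\bar k},\QQ_l)$. For the compactly supported version, I would use that $\pi$ is proper, so $R\pi_!=R\pi_*$, and the identical argument (now applied via the Leray spectral sequence for compact support on $X_{\bar k}$) gives $\H_c^i(\cX_{\bar k},\QQ_l)\cong\H_c^i(X_{\bar k},\QQ_l)$. The main technical point that requires care is the justification of proper base change and the Leray spectral sequence in the DM-stack setting and the verification that the local computation of $R\pi_*\QQ_l$ is compatible with the étale descent data coming from the cover $\coprod V_\gamma\to X$; both are by now standard (see e.g.\ the six-functor formalism for DM stacks), so the proof is essentially a citation once assembled.
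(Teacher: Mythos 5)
Your proposal is correct and follows essentially the same route as the paper's: both reduce to showing $\QQ_l\xrightarrow{\sim}\pi_*\QQ_l$ and $\rR^i\pi_*\QQ_l=\rR^i\pi_!\QQ_l=0$ for $i>0$, then invoke degeneration of the Leray spectral sequences. The paper simply states the vanishing (citing Behrend's Prop.~7.3.2 for the argument, which uses properness and quasi-finiteness of $\pi$); you supply the underlying reason yourself via the local quotient-stack structure, proper base change, and the Maschke-type computation $\H^i(\B G,\QQ_l)=0$ for $i>0$ — which is exactly what the reference does. One small slip: $W_\gamma\to V_\gamma=W_\gamma/H_\gamma$ is generally not an $H_\gamma$-torsor (the action need not be free), but this phrasing is harmless since the conclusion you actually use — that the reduced geometric fiber of $\pi$ over $\bar x$ is $\B G_{\bar x}$ — is correct independently.
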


\begin{proof}
We literally follow the proof of \cite[Prop. 7.3.2]{MR2950161}, a
similar result for $\overline{\QQ_{l}}$-coefficient cohomology of
stacks over a finite field. Let $\pi\colon\cX\to X$ be the given
morphism. Since $\pi$ is proper and quasi-finite, $\rR^{i}\pi_{!}\QQ_{l}=\rR^{i}\pi_{*}\QQ_{l}=0$
for $i>0$. Moreover the natural map $\QQ_{l}\to\pi_{*}\QQ_{l}=\pi_{!}\QQ_{l}$
is an isomorphism. The proposition follows from the degeneration of
the spectral sequences $\H^{i}(X_{\bar{k}},\rR^{j}\pi_{*}\QQ_{l})\Rightarrow\H^{i+j}(\cX_{\bar{k}},\QQ_{l})$
and $\H_{c}^{i}(X_{\bar{k}},\rR^{j}\pi_{!}\QQ_{l})\Rightarrow\H_{c}^{i+j}(\cX_{\bar{k}},\QQ_{l})$.
\end{proof}
\begin{lem}
\label{lem:aff-bdl-stacks}Let $f\colon\cY\to\cX$ be a representable
morphism between DM stacks of finite type and let $d\in\NN$. Suppose
that for every geometric point $\Spec K\to\cX$, the fiber $\cY\times_{\cX}\Spec K$
is isomorphic to $\AA_{K}^{d}$ over $K$. Then 
\[
\H_{c}^{i+2d}(\cY_{\bar{k}},\QQ_{l})\cong\H_{c}^{i}(\cX_{\bar{k}},\QQ_{l})(-d).
\]
\end{lem}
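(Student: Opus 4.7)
The plan is to study $f\colon\cY\to\cX$ via its derived pushforward and to establish the clean identification $\R f_{!}\QQ_{l}\cong\QQ_{l}(-d)[-2d]$ in the derived category of constructible $l$-adic sheaves on $\cX$. Once this is known, the Leray spectral sequence
\[
E_{2}^{i,j}=\H_{c}^{i}(\cX_{\bar{k}},\R^{j}f_{!}\QQ_{l})\Longrightarrow\H_{c}^{i+j}(\cY_{\bar{k}},\QQ_{l})
\]
degenerates at $E_{2}$ and yields $\H_{c}^{i+2d}(\cY_{\bar{k}},\QQ_{l})\cong\H_{c}^{i}(\cX_{\bar{k}},\QQ_{l}(-d))\cong\H_{c}^{i}(\cX_{\bar{k}},\QQ_{l})(-d)$. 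Since $f$ is representable and of finite type between DM stacks of finite type, the six-functor formalism for $l$-adic sheaves on DM stacks (after Laszlo--Olsson) supplies $\R f_{!}$ together with base change and a Leray spectral sequence; should anything be delicate at the stacky level, I will pull back along an etale atlas of $\cX$ to reduce to the case of algebraic spaces.

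First I will compute the stalks of the higher direct images using base change for $\R f_{!}$. For any geometric point $\bar{x}\to\cX$, the fiber $\cY_{\bar{x}}$ is isomorphic to $\AA_{\bar{x}}^{d}$ by hypothesis, and the stalk formula gives
\[
(\R^{j}f_{!}\QQ_{l})_{\bar{x}}\cong\H_{c}^{j}(\cY_{\bar{x}},\QQ_{l})\cong\H_{c}^{j}(\AA_{\bar{x}}^{d},\QQ_{l}),
\]
which is $\QQ_{l}(-d)$ for $j=2d$ and zero otherwise. Hence $\R^{j}f_{!}\QQ_{l}=0$ for $j\neq 2d$, and $\cF:=\R^{2d}f_{!}\QQ_{l}$ is a lisse $l$-adic sheaf of rank one on $\cX$ with stalks canonically identified with $\QQ_{l}(-d)$.

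The main obstacle is the remaining task of promoting the stalkwise identification $\cF_{\bar{x}}\cong\QQ_{l}(-d)$ to a global isomorphism $\cF\cong\QQ_{l}(-d)$ on $\cX$. This amounts to showing that the monodromy representation of $\pi_{1}^{\mathrm{et}}(\cX)$ on the one-dimensional stalk is trivial. For this I will use the canonical trivialization provided by the cycle class of any closed point: a rational point of $\AA_{K}^{d}$ produces, via the Gysin map, a distinguished generator of $\H_{c}^{2d}(\AA_{K}^{d},\QQ_{l})(d)\cong\QQ_{l}$, and any automorphism of $\AA_{K}^{d}$ permutes such points while preserving their cycle classes, so acts trivially on $\H_{c}^{2d}(\AA_{K}^{d},\QQ_{l})$. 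Put differently, the trace/cycle-class map furnishes a canonical global section of $\cF(d)$ which is a generator at every geometric point, hence a trivialization. Once this is established, Step 1 of the outline above concludes the proof.
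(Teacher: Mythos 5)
Your proposal is correct and follows essentially the same route as the paper: show $\rR^{j}f_{!}\QQ_{l}=0$ for $j\ne 2d$ from the cohomology of the affine-space fibers, identify $\rR^{2d}f_{!}\QQ_{l}$ with $\QQ_{l}(-d)$ via the trace map, and conclude from the Leray spectral sequence. The only difference is that the paper simply cites a reference for the statement that the trace map $\rR^{2d}f_{!}\QQ_{l}(d)\to\QQ_{l}$ is an isomorphism, whereas you reconstruct a justification by appealing to Gysin/cycle classes to rule out nontrivial monodromy on the rank-one sheaf; this is a reasonable expansion of the same step and not a different argument.
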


\begin{proof}
Since $\H_{c}^{i}(\AA_{K}^{d},\QQ_{l})=0$ for $i\ne2d$, we have
$\rR^{i}f_{!}\QQ_{l}=0$ for $i\ne2d$ and the trace map $\rR^{2d}f_{!}\QQ_{l}(d)\to\QQ_{l}$
\cite[Th. 4.1]{MR3344762} is an isomorphism. Thus
\[
\H_{c}^{i+2d}(\cY_{\bar{k}},\QQ_{l})\cong\H_{c}^{i}(\cX_{\bar{k}},\rR^{2d}f_{!}\QQ_{l})\cong\H_{c}^{i}(\cX_{\bar{k}},\QQ_{l})(-d).
\]
\end{proof}
\begin{lem}
\label{lem:H-for-A/G}For a finite group action $G\curvearrowright\AA_{k}^{d}$,
we have 
\[
\H_{c}^{i}((\AA_{k}^{d}/G)_{\bar{k}},\QQ_{l})\cong\H_{c}^{i}(\AA_{\bar{k}}^{d},\QQ_{l})\cong\begin{cases}
\QQ_{l}(-d) & (i=2d)\\
0 & (i\ne2d).
\end{cases}
\]
\end{lem}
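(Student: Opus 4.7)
The first isomorphism (involving the quotient) is the content; the second is standard, since $\AA^{d}_{\bar{k}}$ has cohomology concentrated in degree $2d$ where it is $\QQ_{l}(-d)$. The plan is to relate $\H^{i}_{c}((\AA^{d}_{k}/G)_{\bar{k}},\QQ_{l})$ to $\H^{i}_{c}(\AA^{d}_{\bar{k}},\QQ_{l})$ through the quotient stack $[\AA^{d}_{k}/G]$ in two steps: (i) move from the coarse moduli space to the stack, and (ii) use the presentation as a $G$-torsor to descend from $\AA^{d}$.

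For step (i), note that $\AA^{d}_{k}/G$ is the coarse moduli space of the DM stack $\cX:=[\AA^{d}_{k}/G]$. Lemma \ref{prop:coh-moduli-sp} applies (it is proved for any DM stack of finite type), yielding a Galois-equivariant isomorphism
\[
\H^{i}_{c}((\AA^{d}_{k}/G)_{\bar{k}},\QQ_{l})\cong\H^{i}_{c}(\cX_{\bar{k}},\QQ_{l}).
\]
For step (ii), the smooth atlas $\pi\colon\AA^{d}_{k}\to\cX$ is a $G$-torsor; in particular $\pi$ is finite etale of degree $\sharp G$. Hence $\pi_{*}\QQ_{l}$ is locally free on $\cX_{\mathrm{\acute{e}t}}$ with a natural $G$-action, and because $\sharp G$ is invertible in $\QQ_{l}$, the averaging idempotent $|G|^{-1}\sum_{g\in G}g$ identifies $\QQ_{l}$ on $\cX$ with $(\pi_{*}\QQ_{l})^{G}$ as a direct summand. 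Applying $\H^{i}_{c}$ and using $\pi_{*}=\pi_{!}$ (since $\pi$ is finite), this gives
\[
\H^{i}_{c}(\cX_{\bar{k}},\QQ_{l})\cong\H^{i}_{c}(\AA^{d}_{\bar{k}},\QQ_{l})^{G}.
\]

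It then remains to show the $G$-action on $\H^{2d}_{c}(\AA^{d}_{\bar{k}},\QQ_{l})\cong\QQ_{l}(-d)$ is trivial, after which the result follows from the second isomorphism. Every $g\in G$ acts on $\AA^{d}_{\bar{k}}$ as a linear automorphism, hence as a finite morphism of degree $1$; the action on the one-dimensional top cohomology is multiplication by this degree, so is trivial. (Alternatively, extend the action to $\PP^{d}_{\bar{k}}$ fixing the hyperplane at infinity, and observe that $\H^{2d}_{c}(\AA^{d}_{\bar{k}},\QQ_{l})\cong\H^{2d}(\PP^{d}_{\bar{k}},\QQ_{l})$ is spanned by the top power of the hyperplane class, which is $PGL$-invariant.)

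The main point requiring care is the torsor-descent isomorphism in step (ii); the subtlety is that $\cX$ is a stack rather than a scheme, but only the finite-etale nature of the presentation and the invertibility of $\sharp G$ in $\QQ_{l}$ enter, so no characteristic assumption on $k$ is needed. Everything else is formal or a direct computation.
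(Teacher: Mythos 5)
Your proof is correct, but it takes a genuinely different route from the paper's. Both start the same way, applying Lemma \ref{prop:coh-moduli-sp} to pass from $\AA^{d}_{k}/G$ to the stack $[\AA^{d}_{k}/G]$. From there, the paper continues ``downward'': it views $[\AA^{d}_{k}/G]\to\B G$ as a representable morphism with affine-space fibers, applies Lemma \ref{lem:aff-bdl-stacks} to get $\H_{c}^{i}([\AA_{k}^{d}/G]_{\bar{k}},\QQ_{l})\cong\H_{c}^{i-2d}((\B G)_{\bar{k}},\QQ_{l})(-d)$, and then applies Lemma \ref{prop:coh-moduli-sp} a second time (the coarse moduli space of $\B G$ is $\Spec k$) to finish. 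You instead continue ``upward'': you use the finite \'{e}tale $G$-torsor $\AA^{d}_{k}\to[\AA^{d}_{k}/G]$, the fact that $\sharp G$ is invertible in $\QQ_{l}$, and the averaging idempotent to split $\QQ_{l}$ off $\pi_{*}\QQ_{l}=\pi_{!}\QQ_{l}$, obtaining $\H^{i}_{c}(\cX_{\bar{k}},\QQ_{l})\cong\H^{i}_{c}(\AA^{d}_{\bar{k}},\QQ_{l})^{G}$; this requires the extra (but short) verification that $G$ acts trivially on the one-dimensional group $\H^{2d}_{c}(\AA^{d}_{\bar{k}},\QQ_{l})$, which you dispose of via the degree/trace argument. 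The two routes buy different things: the paper's version reuses the two lemmas it has already set up and so is economical within the paper, and it doesn't need the triviality of the $G$-action on top cohomology since $\B G$ already contributes nothing; yours is more self-contained and makes transparent that no hypothesis on the characteristic of $k$ is needed (only that $\sharp G$ is invertible in the coefficient field $\QQ_{l}$, which is automatic). No gaps; the one point worth flagging is to note explicitly that $\pi_{*}=\pi_{!}=\rR\pi_{!}$ because $\pi$ is finite, which you do use when identifying $\H^{i}_{c}(\cX_{\bar{k}},\pi_{*}\QQ_{l})$ with $\H^{i}_{c}(\AA^{d}_{\bar{k}},\QQ_{l})$.
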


\begin{proof}
We have
\begin{align*}
\H_{c}^{i}((\AA_{k}^{d}/G)_{\bar{k}},\QQ_{l}) & \cong\H_{c}^{i}([\AA_{k}^{d}/G]_{\bar{k}},\QQ_{l})\\
 & \cong\H_{c}^{i-2d}((\B G)_{\bar{k}},\QQ_{l})(-d)\\
 & \cong\H_{c}^{i-2d}(\Spec\bar{k},\QQ_{l})(-d),
\end{align*}
where the first and last isomorphisms follow from \ref{prop:coh-moduli-sp}
and the middle one from \ref{lem:aff-bdl-stacks}.
\end{proof}
\begin{lem}
\label{lem:aff-bdl-stacks-1}Let $f\colon\cY\to\cX$ be a representable
morphism between DM stacks of finite type and let $d\in\NN$. Suppose
that for every geometric point $\Spec K\to\cX$, the fiber $\cY\times_{\cX}\Spec K$
is universally homeomorphic to $\AA_{K}^{d}/G$ over $K$ for some
finite group action $G\curvearrowright\AA_{K}^{d}$ over $K$. Then
\[
\H_{c}^{i+2d}(\cY_{\bar{k}},\QQ_{l})\cong\H_{c}^{i}(\cX_{\bar{k}},\QQ_{l})(-d).
\]
\end{lem}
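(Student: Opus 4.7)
The plan is to compute the higher direct images $\rR^i f_! \QQ_l$ on $\cX$ and then apply the Leray spectral sequence, mimicking the proof of \ref{lem:aff-bdl-stacks}. Since $f$ is representable, each geometric fiber $\cY_{\bar x}$ is an algebraic space, and by hypothesis it is universally homeomorphic over $\kappa(\bar x)$ to $\AA^d_{\kappa(\bar x)}/G$ for some finite group action. Étale cohomology is invariant under universal homeomorphisms (SGA~4, Exp.~VIII), so together with Lemma \ref{lem:H-for-A/G} this gives
\[
\H^i_c(\cY_{\bar x}, \QQ_l) \;\cong\; \H^i_c(\AA^d_{\kappa(\bar x)}/G, \QQ_l) \;\cong\; \begin{cases} \QQ_l(-d) & i = 2d, \\ 0 & i \ne 2d. \end{cases}
\]
By base change for $\rR f_!$, this identifies the stalks of $\rR^i f_! \QQ_l$; in particular $\rR^i f_! \QQ_l = 0$ for $i \ne 2d$.

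To conclude via the Leray spectral sequence $E_2^{p,q} = \H^p_c(\cX_{\bar k}, \rR^q f_! \QQ_l) \Rightarrow \H^{p+q}_c(\cY_{\bar k}, \QQ_l)$, it remains to identify $\rR^{2d} f_! \QQ_l$ with the constant sheaf $\QQ_l(-d)$. I would do this by Noetherian induction on $\dim \cX$ combined with the $5$-lemma. Given a stratification $\cX = \cU \sqcup \cZ$ into a dense open and its reduced closed complement, the inductive hypothesis applied to $f|_{\cZ}$ together with the long exact sequences of compactly supported cohomology for the pairs $(\cU, \cX, \cZ)$ and $(\cY|_{\cU}, \cY, \cY|_{\cZ})$ reduces the problem to establishing the isomorphism over a convenient dense open $\cU$. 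Since $\cX$ is of finite type, only finitely many isomorphism types of pairs $(G, \AA^d/G)$ occur among the geometric fibers, so after stratifying we may assume the fiber type is constant on $\cU$; passing to a finite étale cover $\cU' \to \cU$ trivializing the family up to universal homeomorphism reduces the identification to a Künneth computation together with Lemma \ref{lem:H-for-A/G}, which we then descend along the finite étale cover.

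The main obstacle is the generic trivialization step: producing a stratification and finite étale cover over which the family $\cY \to \cX$ becomes universally homeomorphic to a product. This is a moduli-theoretic statement about families of finite group actions on $\AA^d$, and making it precise will likely require a Hilbert-scheme type argument combined with generic flatness to control how the group $G$ and its action vary across the family. Once this trivialization is in hand, both the vanishing of $\rR^i f_! \QQ_l$ for $i\ne 2d$ and the identification $\rR^{2d} f_! \QQ_l \cong \QQ_l(-d)$ hold globally on $\cX$, and the Leray spectral sequence degenerates to yield the desired isomorphism $\H^{i+2d}_c(\cY_{\bar k}, \QQ_l) \cong \H^i_c(\cX_{\bar k}, \QQ_l)(-d)$.
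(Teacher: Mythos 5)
Your stalk computation is correct and matches the opening move of the paper's proof: invariance of \'etale cohomology under universal homeomorphisms plus Lemma \ref{lem:H-for-A/G} gives $\rR^{i}f_{!}\QQ_{l}=0$ for $i\ne 2d$ and identifies each stalk of $\rR^{2d}f_{!}\QQ_{l}$ with $\QQ_{l}(-d)$. But from there the two arguments diverge sharply, and it is at this divergence that your proposal has a genuine gap.

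The paper does not attempt any trivialization of the family. Instead, exactly as in the proof of Lemma \ref{lem:aff-bdl-stacks}, it uses the trace map $\rR^{2d}f_{!}\QQ_{l}(d)\to\QQ_{l}$ of \cite[Th. 4.1]{MR3344762}. This is a globally defined morphism of sheaves on $\cX$ that does not depend on choosing any local trivialization of $f$; once it exists, the stalk computation you already did shows it is an isomorphism, and the Leray spectral sequence then degenerates. The short sentence in the paper's proof (``the same proof as the one of \ref{lem:aff-bdl-stacks} is valid'') is precisely this: Lemma \ref{lem:H-for-A/G} replaces the computation $\H_{c}^{i}(\AA_{K}^{d},\QQ_{l})$ used there, and the rest of the argument carries over unchanged.

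Your route, by contrast, tries to construct the isomorphism $\rR^{2d}f_{!}\QQ_{l}\cong\QQ_{l}(-d)$ by hand via Noetherian induction, generic constancy of the fiber type, and a finite \'etale cover over which the family becomes a product up to universal homeomorphism. You flag this trivialization step as the main obstacle, and rightly so: there is no reason it should be available. The hypothesis is only that each geometric fiber is, separately and non-canonically, universally homeomorphic to some $\AA_{K}^{d}/G$; the group $G$, the action, and the universal homeomorphism can all vary with the point, and nothing forces them to assemble into a family that is generically a twisted form of a product, even after passing to an open dense substack and a finite \'etale cover. (The five-lemma portion of your plan also needs the relevant squares to commute, which presupposes a canonical comparison map between $\rR^{2d}f_{!}\QQ_{l}$ and $\QQ_{l}(-d)$ compatible with restriction --- which is exactly what the trace map supplies and what your argument has yet to produce.) The missing ingredient in your proposal is therefore the trace map itself: it furnishes the global morphism that your trivialization step was trying to fabricate, and once you have it the remaining work is the stalk check you already carried out.
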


\begin{proof}
Thanks to \ref{lem:H-for-A/G}, the same proof as the one of \ref{lem:aff-bdl-stacks}
is valid.
\end{proof}
In particular, this shows that for a pseudo-$\AA^{r}$-bunde $Y\to X$,
we have 
\[
\chi_{l}(Y)=\chi_{l}(X)\chi_{l}(\AA_{k}^{r})=\chi_{l}(X)[\QQ_{l}(-r)].
\]
Therefore maps (\ref{eq:realization}) and (\ref{eq:real-complete})
induce maps
\begin{gather*}
K_{0}(\Var_{k})'\to K_{0}(\WRep_{G_{k}}(\QQ_{l})),\\
\widehat{\cM_{k}'}\to\widehat{K_{0}(\WRep_{G_{k}}(\QQ_{l}))}.
\end{gather*}
For each $r\in\NN$, we also have a map
\[
\widehat{\cM_{k,r}'}\to\widehat{K_{0}(\WRep_{G_{k}}(\QQ_{l}))_{r}}.
\]
Here we define $K_{0}(\WRep_{G_{k}}(\QQ_{l}))_{r}$ by formally adjoining
a $r$-th root of $[\QQ_{l}(-1)]=\chi_{l}(\AA_{k}^{1})$. If there
exists a one-dimensional representation $V\in\WRep_{G_{k}}(\QQ_{l})$
such that $V^{\otimes r}\cong\QQ_{l}(-1)$, then we also have a map
\[
\widehat{\cM_{k,r}'}\to\widehat{K_{0}(\WRep_{G_{k}}(\QQ_{l}))}
\]
sending $\LL^{1/r}$ to $[V]$. From \cite[page 1510]{MR2098399},
such $V$ exists if we replace the base field $k$ with a finite extension
of it.

Similarly, when $k$ is a subfield of $\CC$, we can define 
\[
\widehat{\cM_{k,r}'}\to\widehat{K_{0}(\mathbf{MHS}^{1/r})},
\]
where $\mathbf{MHS}^{1/r}$ is the abelian category of $\frac{1}{r}\ZZ$-indexed
mixed Hodge structures \cite[page 741]{MR2271984}.

Over any field $k$, we have the Poincare polynomial realization,
\[
\widehat{\cM_{k,r}'}\to\ZZ[T^{1/r}]\llbracket T^{-1/r}\rrbracket
\]
(see \cite[Appendix]{MR2770561}). When $k$ is a subfield of $\CC$,
we also have the E-polynomial (Hodge-Deligne polynomial) realization,
\[
\widehat{\cM_{k,r}'}\to\ZZ[u^{1/r},v^{1/r}]\llbracket u^{-1/r},v^{-1/r}\rrbracket.
\]
For other realization maps, we refer the reader to \cite{MR2885336,MR3838446}.

\section{\label{sec:Motivic-integration-untwisted}Motivic integration for
untwisted arcs}

In this section, we develop the motivic integration over formal DM
stacks with restricting ourselves to untwisted arcs. We can do this
in a quite similar way as people did in less general cases of schemes
and formal schemes. Throughout the section, we make the following
assumption.
\begin{assumption}
\label{assu:X}Let $\Psi$ and $\Phi$ be reduced DM stacks almost
of finite type and let $\cY$ and $\cX$ be formal DM stacks of finite
type and flat over $\Df_{\Psi}$ and $\Df_{\Phi}$ respectively which
have pure relative dimension $d$ and are fiberwise generically smooth.
Let $\Psi\to\Phi$ be a morphism and let $f\colon\cY\to\cX$ be a
morphism compatible with $\Df_{\Psi}\to\Df_{\Phi}$. We suppose that
the Jacobian ideal sheaf $\Jac_{f}$ defines a narrow closed substack
of $\cY$.
\end{assumption}

We also fix a positive integer $r$ so that functions on arc spaces
and motivic measures will take values in $\frac{1}{r}\ZZ\cup\{\infty\}$
and $\widehat{\cM'_{Z,r}}$ respectively.

We first note that for an etale morphism $V\to\cX$ from a formal
scheme and a geometric point $\Spec K\to\Phi$, we have
\begin{gather*}
(\J_{n}\cX)\times_{\cX_{0}}V_{0}\cong\J_{n}V,\\
(\J_{\Phi,n}V)\times_{\Phi}\Spec K\cong\J_{\Spec K,n}(V\times_{\Phi}\Spec K).
\end{gather*}
Therefore we can often reduce problems on untwisted arcs and jets
to the case of formal schemes over $\Df_{K}$, which was treated in
\cite{MR2075915,MR3838446}. We denote the morphism $\J_{\infty}\cX\to\J_{n}\cX$
by $\pi_{n}$.
\begin{defn}
Let $C\subset|\J_{\infty}\cX|$ be a subset. We say that $C$ is \emph{cylindrical
of level $n$} ($n\in\NN$) if there exists a quasi-compact constructible
subset $C_{n}\subset|\J_{n}\cX|$ such that $\pi_{n}^{-1}(C_{n})=C$.
We say that $C$ is \emph{cylindrical }if it is cylindrical of level
$n$ for some $n\in\NN$.
\end{defn}

Note that from \cite[Appendix, Prop 1.3.3]{MR3838446}, $C$ is cylindrical
if and only if it is a quasi-compact constructible subset.
\begin{defn}
We say that a subset $C\subset|\J_{\infty}\cX|$ is \emph{stable of
level $n$ }if
\begin{enumerate}
\item $C$ is cylindrical of level $n$ and
\item for every $n'\ge n$, $\pi_{n'+1}(C)\to\pi_{n'}(C)$ is a pseudo-$\AA^{d}$-bundle.
\end{enumerate}
We say that $C$ is \emph{stable }if it is stable at level $n$ for
some $n\in\NN$.
\end{defn}

\begin{defn}
Let $Z$ be an algebraic space almost of finite type and let $\Phi\to Z$
be a morphism. For a stable subset $C$ of level $n$, we define its
measure by
\[
\mu_{\cX,Z}(C)=\{\pi_{n}(C)\}\LL^{-nd}\in\widehat{\cM_{Z,r}'}.
\]
\end{defn}

Cleary we have $\int_{Z}\mu_{\cX,Z}(C)=\mu_{\cX,k}(C)$.

For a cylindrical subset $C$ and $e\in\NN$, let 
\[
C^{(e)}:=\{c\in C\mid\fj_{\cX}(c)\le e\}.
\]
From \ref{lem:stability} below, this is a stable subset.
\begin{defn}
For a cylindrical subset $C$, we define 
\[
\mu_{\cX,Z}(C):=\lim_{e\to\infty}\mu_{\cX,Z}(C^{(e)}).
\]
\end{defn}

The limit exists by the same argument as in \cite[A.2]{MR1905024}.

Recall that $\widehat{\cM_{Z,r}'}$ is defined as the completion $\varprojlim\cM_{Z,r}'/F_{m}$.
This has the filtration $F_{\bullet}\widehat{\cM_{Z,r}'}$ given by
$F_{m}\widehat{\cM_{Z,r}'}:=\varprojlim_{n\ge m}F_{m}/F_{n}$.
\begin{defn}
\label{def:measurable-subset}We define the semi-norm
\[
\widehat{\cM_{Z,r}'}\mapsto\RR_{\ge0},\,a\mapsto\left\Vert a\right\Vert :=2^{-m},
\]
where $m$ is the largest $m\in\frac{1}{r}\ZZ$ such that $a\in F_{m}\widehat{\cM_{Z,r}'}$.
\end{defn}

For an element $\{V\}\LL^{r}$, $r\in\frac{1}{m}\ZZ$, we have $\Vert\{V\}\LL^{r}\Vert=2^{\dim V+r}$,
which is independent of $Z$.
\begin{defn}
A subset $C\subset|\J_{\infty}\cX|$ is said to be \emph{measurable
}if, for every $\epsilon>0$, there exist cylindrical subsets $C(\epsilon)$
and $A_{i}(\epsilon)$, $i\in\NN$ such that $(C\triangle C(\epsilon))\subset\bigcup_{i}A_{i}(\epsilon)$
and $\Vert\mu_{\cX,Z}(A_{i}(\epsilon))\Vert\le\epsilon$ for every
$i$. For a measurable subset $C$, we define 
\[
\mu_{\cX,Z}(C):=\lim_{\epsilon\to0}\mu_{\cX,Z}(C(\epsilon))\in\widehat{\cM_{Z,r}'}.
\]
We define a \emph{negligible subset }to be a measurable subset of
measure zero.
\end{defn}

Note that $\Vert\mu_{\cX,Z}(-)\Vert$ is independent of $Z$. Therefore
the measurability and negligibility are also independent of $Z$.
Clearly cylindrical subsets are measurable. We have a map
\[
\mu_{\cX,Z}\colon\{\text{measurable subset of }|\J_{\infty}\cX|\}\to\widehat{\cM'_{Z,r}}.
\]
Let $C_{i}$, $i\in\NN$ be a countable family of mutually disjoint
measurable subsets of $|\J_{\infty}\cX|$. Then $\bigsqcup_{i}C_{i}$
is measurable if and only if $\lim_{i\to\infty}\Vert\mu_{\cX,Z}(C_{i})\Vert=0$
(see \cite[Ch. 6, Prop. 3.4.3]{MR3838446}). If this is the case,
we have 
\[
\mu_{\cX,Z}\left(\bigsqcup_{i}C_{i}\right)=\sum_{i}\mu_{\cX,Z}(C_{i}).
\]

\begin{lem}
\label{lem:negl}Let $\cZ\subset\cX$ be a narrow closed substack.
Then $|\J_{\infty}\cZ|\subset|\J_{\infty}\cX|$ is negligible.
\end{lem}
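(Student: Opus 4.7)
The plan is to show that $|\J_\infty\cZ|$ is contained in arbitrarily small cylindrical subsets of $|\J_\infty\cX|$: for each $\epsilon>0$, I will exhibit a single cylinder $A(\epsilon)\supset|\J_\infty\cZ|$ with $\|\mu_{\cX,Z}(A(\epsilon))\|\le\epsilon$, which suffices by taking $C(\epsilon)=\emptyset$ in Definition \ref{def:measurable-subset}. Since the seminorm does not involve $Z$ and a countable union of negligible sets is negligible, I may restrict to a quasi-compact open of $\Phi$ and assume $\Phi$ is of finite type.

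For each $n\in\NN$, the closed immersion $\cZ\hookrightarrow\cX$ induces a morphism of DM stacks $\J_n\cZ\to\J_n\cX$ of finite type over $\Phi$, whose image $D_n\subset|\J_n\cX|$ is a quasi-compact constructible subset. Every arc in $\J_\infty\cZ$ truncates at level $n$ to an element of $D_n$, so $|\J_\infty\cZ|\subset\pi_n^{-1}(D_n)$. The right-hand side is cylindrical of level $n$, and passing through the stable exhaustion $\pi_n^{-1}(D_n)^{(e)}$ and using $\|\{V\}\LL^{-nd}\|=2^{\dim V-nd}$ gives the seminorm bound $\|\mu_{\cX,Z}(\pi_n^{-1}(D_n))\|\le 2^{\dim D_n-nd}\le 2^{\dim\J_n\cZ-nd}$. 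The task therefore reduces to showing $\dim\J_n\cZ-nd\to-\infty$ as $n\to\infty$.

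For the dimension estimate, note first that every arc $\Df_K\to\cZ$ over $\Df$ has scheme-theoretic image landing in the rig-pure locus of $\cZ$, so $|\J_\infty\cZ|=|\J_\infty\cZ^{\pur}|$, and I may replace $\cZ$ by $\cZ^{\pur}$, which is flat over $\Df$ since $k\tbrats$ is a DVR. Applying Corollary \ref{cor: flattening-strat} to $\cZ^{\pur}\to\Df_\Phi$ and further stratifying $\Phi$ into finite-type locally closed pieces produces a decomposition $\Phi=\bigsqcup_i\Phi_i$ so that $\cZ^{\pur}\times_\Phi\Phi_i$ is flat over $\Df_{\Phi_i}$ of some pure relative dimension $d'_i$. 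Narrowness of $\cZ$ in $\cX$ forces $d'_i\le d-1$ at every geometric point. Since $\J_n\cZ=\J_n\cZ_n$ and the classical jet-dimension bound for reduced DM stacks flat of pure relative dimension over a DVR gives $\dim\J_n(\cZ^{\pur}\times_\Phi\Phi_i)\le\dim\Phi_i+(n+1)d'_i$, we obtain
\[
\dim\J_n\cZ-nd\le\dim\Phi+(n+1)(d-1)-nd=\dim\Phi+d-1-n,
\]
which tends to $-\infty$.

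The main obstacle is the classical jet-dimension estimate for singular reduced stacks, which over a field is proved by induction on the dimension of the singular locus: over the smooth locus $\J_n$ is an affine bundle of the expected rank, while jets centered at the singular locus form a closed substack whose dimension is controlled by Greenberg's theorem applied to an atlas, feeding the induction without recourse to resolution of singularities. Transporting this to the relative situation over $\Phi_i$ is routine via fiberwise computation, using that $\J_n$ commutes with truncation and with base change along $\Phi_i$. Granting this, the uniform inequality $\|\mu_{\cX,Z}(\pi_n^{-1}(D_n))\|\le 2^{\dim\Phi+d-1-n}$ lets me choose, for any $\epsilon>0$, an $n$ with $2^{\dim\Phi+d-1-n}<\epsilon$, producing the desired cylindrical superset $A(\epsilon):=\pi_n^{-1}(D_n)$ of $|\J_\infty\cZ|$ with seminorm below $\epsilon$.
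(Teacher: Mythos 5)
Your proposal is correct in outline but takes a somewhat different route from the paper's proof. The paper's argument works with the image of arcs $\pi_n(|\J_\infty\cZ|)$ rather than the full jet stack $\J_n\cZ$, and the key fact it invokes is that the \emph{fibers} of the truncation $\pi_{n+1}(|\J_\infty\cZ|)\to\pi_n(|\J_\infty\cZ|)$ have dimension $<d$; this is then reduced to the classical formal-scheme case over $\Df_K$ by base change along $\Spec K\to\Phi$ and an atlas of $\cX\times_\Phi\Spec K$. You instead bound the global dimension of $\J_n\cZ$ via a Greenberg-type estimate $\dim\J_n\cZ\le\dim\Phi+(n+1)(d-1)$. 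The two formulations are closely related (one follows from the other by induction on $n$), and both ultimately lean on the classical jet-dimension estimate for formal schemes over $\Df_K$ without proving it in the stacky setting; you flag this explicitly as the ``main obstacle,'' and the paper is equally terse there, so your level of detail is comparable.

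Two points where your reductions could be tightened. First, you should pass to the reduction $\cZ_\red$ before invoking any jet-dimension bound that requires reducedness: flatness of $(\cZ_{\Phi_i})^\pur$ over $\Df_{\Phi_i}$ does \emph{not} imply the fibers over $\D_0$ are reduced (e.g.\ $\Spf k\tbrats[x]/(x^2-t)$ is integral and flat but has a non-reduced special fiber), so ``reduced'' in ``reduced DM stacks flat of pure relative dimension'' is doing less work than you want. This is repairable: since $\Df_K$ is the spectrum of a domain, every arc $\Df_K\to\cZ$ factors through $\cZ_\red$, so $|\J_\infty\cZ|=|\J_\infty\cZ_\red|$ and you may replace $\cZ$ by $\cZ_\red$ at the outset, but you should say so. Second, Corollary \ref{cor: flattening-strat} should be applied to $\cZ$ (or $\cZ_\red$) rather than to $\cZ^\pur$, since the $\pur$ construction does not commute with arbitrary base change; on each resulting stratum $\Phi_i$ one has $(\cZ_{\Phi_i})^\pur$ flat over $\Df_{\Phi_i}$, and $|\J_\infty\cZ_{\Phi_i}|=|\J_\infty(\cZ_{\Phi_i})^\pur|$ as you note. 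With these adjustments your chain of estimates goes through.
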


\begin{proof}
This is known for the case of formal schemes over $\Df_{K}$ and essentially
follows from the fact that fibers of the map
\[
\pi_{n+1}(|\cJ_{\infty}\cZ|)\to\pi_{n}(|\cJ_{\infty}\cZ|)
\]
have dimension $<d$. We can show that this fact holds also in our
situation; we can deduce it to the classical case by the base change
along $\Spec K\to\Phi$ and an atlas $V\to\cX\times_{\Phi}\Spec K$.
The fact similarly implies the lemma also in our situation.
\end{proof}
\begin{defn}
\label{def:meas-fn-integ}Let $A\subset|\J_{\infty}\cX|$ be a subset
and $h\colon A\to\frac{1}{r}\ZZ\cup\{\infty\}$ be a function. We
say that $F$ is \emph{measurable }if there exists a decomposition
$A=\bigsqcup_{i\in\NN}A_{i}$ into countably many measurable subsets
such that for each $i$, $h|_{A_{i}}$ is constant and if $h(A_{i})=\infty$,
then $A_{i}$ is negligible. If $h$ is measurable, for $A_{i}$'s
as above, we define 
\[
\int_{A}\LL^{h}\,d\mu_{\cX,Z}:=\sum_{i\in\NN}\mu_{\cX,Z}(A_{i})\LL^{h(A_{i})}\in\widehat{\cM_{Z,r}'}\cup\{\infty\}.
\]
\end{defn}

This is independent of the decomposition $A=\bigsqcup_{i}A_{i}$.
For a morphism $Z\to Z'$ of algebraic spaces almost of finite type,
we have
\[
\int_{Z\to Z'}\int_{A}\LL^{h}\,d\mu_{\cX,Z}=\int_{A}\LL^{h}\,d\mu_{\cX,Z'}.
\]

\begin{notation}
In the rest of this section, for a ring $R$, we denote the ideal
$(t^{n+1})\subset R\tbrats$ by $\ft_{R}^{n}$ and the quotient $(t^{m+1})/(t^{n+1})$
for $n\ge m$ by $\ft_{n,R}^{m}$. For an arc $\gamma\colon\Df_{K}\to\cX$,
we denote by $\gamma^{\flat}\Omega_{\cX/\Df_{\Phi}}$ the flat pullback
$\left(\gamma^{*}\Omega_{\cX/\Df_{\Phi}}\right)/\tors$. By $\gamma_{n}$,
we denote the $n$-jet $\D_{K,n}\to\cX$ induced from $\gamma$.
\end{notation}

Let 
\[
v\colon V=\Spf R\to\cX
\]
be an etale morphism. For $n',n\in\NN\cup\{\infty\}$ with $n'\ge n$,
we have the following 2-Cartesian diagram.
\[
\xymatrix{\J_{n'}V\ar[r]\ar[d] & \J_{n'}\cX\ar[d]\\
\J_{n}V\ar[r] & \J_{n}\cX
}
\]

Let $K$ be an algebraically closed field and let $\beta\in(\J_{\infty}V)(K)$
and $\gamma:=v(\beta)\in(\J_{\infty}\cX)(K)$. In particular, a $K$-point
$\beta'\in(\J_{\infty}V)(K)$ over $\beta_{n}$ is identified with
a pair $(\gamma',\alpha)$ of $\gamma'\in(\J_{\infty}\cX)(K)$ and
an isomorphism $\alpha\colon\gamma_{n}\xrightarrow{\sim}\gamma'_{n}$.
For such $\beta'$, the map
\[
(\beta')^{*}-\beta^{*}\colon R\to K\tbrats
\]
has image in $\ft^{n+1}$ and the induced map $R\to\ft_{2(n+1)}^{n+1}$
is a $k\tbrats$-derivation. This corresponds to a $K\tbrats$-module
homomorphism 
\[
\beta^{*}\Omega_{V/\Df_{\Phi}}=\gamma^{*}\Omega_{\cX/\Df_{\Phi}}\to\ft_{2(n+1)}^{n+1}.
\]
Suppose 
\[
a:=\fj_{\cX}(\gamma)<\infty.
\]
The torsion part of $\gamma^{*}\Omega_{\cX/\Df_{\Phi}}$ has length
$a$ and of the form $\bigoplus_{i}K\tbrats/\ft^{a_{i}}$, $\sum_{i}a_{i}=a$.
If $2(n+1)-(n+c+1)\ge a$, equivalently if $c\le n-a+1$, then the
composite 
\[
\gamma^{*}\Omega_{\cX/\Df_{\Phi}}\to\ft_{2(n+1)}^{n+1}\to\ft_{n+c+1}^{n+1}
\]
kills the torsion part of $\gamma^{*}\Omega_{\cX/\Df_{\Phi}}$ and
factors through the flat pullback $\gamma^{\flat}\Omega_{\cX/\Df_{\Phi}}$.
\begin{notation}
We denote the corresponding element of $\Hom_{K\tbrats}(\gamma^{\flat}\Omega_{\cX/\Df_{\Phi}},\ft_{n+c+1}^{n+1})$
by $\delta_{n}^{n+c}(\beta;\beta')$ or $\delta_{n}^{n+c}(\gamma;\gamma',\alpha)$.
\end{notation}

For another such $\beta''\in(\J_{\infty}V)(K)$, we have that 
\begin{equation}
\beta'_{n+c}=\beta''_{n+c}\Leftrightarrow\delta_{n}^{n+c}(\beta;\beta')=\delta_{n}^{n+c}(\beta;\beta'').\label{eq:equiv}
\end{equation}
Let $F$ denote the fiber of $\pi_{n}^{n+c}\colon\J_{n+c}V\to\J_{n}V$
over $\beta_{n}$ and let $F':=F\cap\pi_{n}((\J_{\infty}V)\times_{\Phi}\Spec K)$,
the locus of $n$-jets in $F$ that lift to arcs. Since $2n+1\ge n+c$,
from \cite[Prop. 2.2.6]{MR3838446}, $F(K)$ is identified with $\Hom_{K\tbrats}(\gamma^{*}\Omega_{\cX/\Df_{\Phi}},\ft_{n+c+1}^{n+1})\cong K^{r}$
with $r$ being some nonnegative integer. Moreover $F$ is isomorphic
to $\AA_{K}^{r}$.
\begin{lem}
\label{lem:fiber-Hom}If $c\le n-a+1$, the map $\beta'\mapsto\delta_{n}^{n+c}(\beta;\beta')$
gives a bijection 
\[
F'(K)\to\Hom_{K\tbrats}(\gamma^{\flat}\Omega_{\cX/\Df_{\Phi}},\ft_{n+c+1}^{n+1}).
\]
\end{lem}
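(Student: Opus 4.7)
The statement is a version of the well-known fiber structure of the jet-projection, adapted to our formal DM-stack setting. My plan is to combine three inputs: (a) the identification of the full fiber $F(K)$ with $\Hom_{K\tbrats}(\gamma^{*}\Omega_{\cX/\Df_{\Phi}},\ft_{n+c+1}^{n+1})$ cited just before the lemma from \cite[Prop.~2.2.6]{MR3838446}; (b) the equivalence \eqref{eq:equiv}, which gives injectivity; and (c) a Greenberg-type approximation theorem, which supplies surjectivity onto the distinguished submodule coming from the flat pullback.

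Injectivity of $\beta'\mapsto\delta_{n}^{n+c}(\beta;\beta')$ on $F'(K)$ is immediate from \eqref{eq:equiv}, which asserts that two arcs $\beta',\beta''$ over $\beta_n$ induce the same $\delta$ if and only if they have the same $(n+c)$-jet. The analysis in the paragraph preceding the lemma further shows that under the hypothesis $c\le n-a+1$ the map $\delta_{n}^{n+c}(\beta;\beta')$ kills the torsion of $\gamma^{*}\Omega_{\cX/\Df_{\Phi}}$ and therefore factors through $\gamma^{\flat}\Omega_{\cX/\Df_{\Phi}}$; equivalently, the composition $F'(K)\hookrightarrow F(K)\xrightarrow{\sim}\Hom_{K\tbrats}(\gamma^{*}\Omega,\ft_{n+c+1}^{n+1})$ lands in the submodule $\Hom_{K\tbrats}(\gamma^{\flat}\Omega,\ft_{n+c+1}^{n+1})$, so the map in the lemma is well defined.

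For surjectivity I proceed as follows. Given $\phi\in\Hom_{K\tbrats}(\gamma^{\flat}\Omega,\ft_{n+c+1}^{n+1})$, pre-composition with the canonical quotient $\gamma^{*}\Omega\twoheadrightarrow\gamma^{\flat}\Omega$ yields $\tilde\phi\in\Hom(\gamma^{*}\Omega,\ft_{n+c+1}^{n+1})$, to which the CLNS identification associates a unique $(n+c)$-jet $\beta''\in F(K)$. It remains to see that $\beta''$ lifts to an element of $\J_{\infty}V$, i.e.\ that $\beta''\in F'(K)$. Using the Cartesian square $\J_{n+c}V\cong(\J_{n+c}\cX)\times_{\cX_{0}}V_{0}$ recorded in Section~\ref{sec:jets-arcs} and base-changing along the geometric point $\Spec K\to\Phi$, I reduce to the case of a flat, generically smooth formal scheme of finite type over $\Df_{K}$. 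In that classical setting the desired lifting is exactly the Greenberg-type approximation theorem (see \cite[Ch.~3, Lem.~3.3.4]{MR3838446}, or its formal-scheme counterpart in \cite{MR2075915}): the inequality $c\le n-a+1$ with $a=\fj_{\cX}(\gamma)$ is the very bound ($2(n+1)-(n+c+1)\ge a$) that both forces $\tilde\phi$ to annihilate the torsion and guarantees that the corresponding $(n+c)$-jet extends to an arc.

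The main obstacle is matching up the two appearances of the bound $c\le n-a+1$: it must be verified that the condition which makes $\delta_{n}^{n+c}(\beta;\cdot)$ factor through $\gamma^{\flat}\Omega$ is identical to the Greenberg-approximation threshold needed to extend $\beta''$ to an arc. Once the etale atlas reduction is carried out carefully (preserving the Jacobian order $a$ and the factoring property of $\tilde\phi$), the classical lifting result closes the argument and the bijection follows.
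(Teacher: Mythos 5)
Your approach to surjectivity is genuinely different from the paper's, and as written it has a gap. The paper does not directly construct a preimage for a given $\phi$. Instead it iterates the $c=1$ case (citing Looijenga, \cite[Lem.~9.1]{MR1886763}) to obtain $\{F_c'\}=\LL^{dc}$ in $K_0(\Var_k)'$, and then runs a class-counting argument: the injection $F'(K)\hookrightarrow K^{dc}$ has constructible image of class $\LL^{dc}$, so the complement has class $\LL^{dc}-\LL^{dc}=0$, hence (via a realization such as the Poincar\'e polynomial) is empty. This cleverly sidesteps the need to lift any specific jet. You instead propose to take the jet $\beta''$ determined by $\phi$ and lift it to an arc by invoking ``a Greenberg-type approximation theorem.''

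Here is the problem. Greenberg (or Artin/Elkik) approximation, as usually formulated and as recorded in \cite{MR3838446,MR2075915}, produces an arc $\xi$ agreeing with a given high-order jet only up to a \emph{lower} order determined by a loss constant; it does not by itself produce an arc agreeing with $\beta''$ \emph{exactly} to order $n+c$, which is what ``$\beta''\in F'(K)$'' requires. What does give exact one-step lifting is the Newton/Hensel statement underlying the $c=1$ case (Looijenga's Lemma~9.1), and to handle general $c$ you must iterate it from level $n$ to $n+c$, at each step choosing the lift so that $\delta$ matches the truncation of $\phi$. That iteration, carried out carefully with the torsion-killing bound $c\le n-a+1$ (equivalently $2(n+1)-(n+c+1)\ge a$) verified at each level, would give a direct proof of surjectivity and is a legitimate alternative to the paper's counting argument. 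But your write-up neither performs the iteration nor verifies that the bound survives it; you explicitly flag ``matching up the two appearances of the bound'' as ``the main obstacle'' and then leave it unresolved, while asserting that Greenberg ``is exactly'' the needed lifting theorem. That assertion is not correct for Greenberg as stated, and the step from ``$\phi$ determines $\beta''$'' to ``$\beta''$ lifts to an arc'' is precisely the content one has to prove, not something to be waved at. So the injectivity half is fine and matches the paper, but the surjectivity half has a real gap and the cited theorem is not the right tool.
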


\begin{proof}
The injectivity follows from (\ref{eq:equiv}). When $c=1$, we have
$F'\cong\AA_{K}^{d}$ (see \cite[Lem. 9.1]{MR1886763}). We denote
$F'$ by $F'_{c}$ to specify the dependence on $c$. For $0<c'\le c$,
from the case $c=1$, the map $F_{c'}'\to F_{c'-1}'$ is a pseudo-$\AA^{d}$-bundle.
From \ref{lem:pseudo-bdl-eq}, $\{F_{c'}'\}=\{F_{c'-1}'\}\LL^{d}$
and $\{F_{c}'\}=\LL^{dc}$. Since 
\[
F_{n}^{n+c}\to\Hom_{K\tbrats}(\gamma^{\flat}\Omega_{\cX/\Df_{\Phi}},\ft_{n+c+1}^{n+1})=K^{dc}
\]
is injective, its image is a constructible subset with class $\LL^{dc}$
in $K_{0}(\Var_{k})'$. Therefore the complement of the image has
class $\LL^{dc}-\LL^{dc}=0$. This shows that it is empty, thus the
map is surjective.
\end{proof}
From this lemma, we obtain:
\begin{lem}
\label{lem:A^d-closed}The $F'$ is a $d$-dimensional linear subspace
of $F\cong\AA_{K}^{r}$. In particular, $F'$ is a closed subscheme
of $F$ and isomorphic to $\AA_{K}^{d}$.
\end{lem}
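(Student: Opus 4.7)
The statement will follow by combining the bijection from Lemma~\ref{lem:fiber-Hom} with the standard identification (via a $K$-linear isomorphism of schemes) of the fiber $F$ of $\pi_{n}^{n+c}\colon \J_{n+c}V\to \J_{n}V$ with an affine space. Specifically, the identification
\[
F\cong \Hom_{K\tbrats}\bigl(\gamma^{*}\Omega_{\cX/\Df_{\Phi}},\ft_{n+c+1}^{n+1}\bigr)\cong \AA_{K}^{r}
\]
cited from \cite[Prop.~2.2.6]{MR3838446} is a scheme isomorphism, and on $K$-points it agrees with $\beta'\mapsto \delta_{n}^{n+c}(\beta;\beta')$. The first step is therefore to observe that the map in Lemma~\ref{lem:fiber-Hom} is the restriction of this scheme-theoretic identification to $F'\subset F$.

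The second step is to identify the target of Lemma~\ref{lem:fiber-Hom} as a linear subspace. The canonical surjection $\gamma^{*}\Omega_{\cX/\Df_{\Phi}}\twoheadrightarrow \gamma^{\flat}\Omega_{\cX/\Df_{\Phi}}$ (quotient by the torsion submodule of length $a$) induces a $K\tbrats$-linear, hence $K$-linear, injection
\[
\Hom_{K\tbrats}\bigl(\gamma^{\flat}\Omega_{\cX/\Df_{\Phi}},\ft_{n+c+1}^{n+1}\bigr)\hookrightarrow \Hom_{K\tbrats}\bigl(\gamma^{*}\Omega_{\cX/\Df_{\Phi}},\ft_{n+c+1}^{n+1}\bigr).
\]
The image is precisely the subspace of homomorphisms that kill the torsion submodule, which is a $K$-linear subspace. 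Under the scheme identification of step~1, this subspace cuts out a closed linear subscheme of $F\cong\AA_{K}^{r}$ isomorphic to an affine space over $K$.

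The third step is to combine: by Lemma~\ref{lem:fiber-Hom} the set $F'(K)$ maps bijectively to this linear subspace, and since both $F'$ (with its induced reduced structure from $F$) and this linear subspace are closed subsets of $F$ whose $K$-points coincide (and since $K$ is algebraically closed and $F$ is of finite type over $K$), they are equal as closed subschemes. Therefore $F'$ is the linear subspace in question, automatically closed in $F$, and isomorphic to $\AA_{K}^{dc}$ because $\gamma^{\flat}\Omega_{\cX/\Df_{\Phi}}$ is free of rank $d$ over $K\tbrats$ and $\ft_{n+c+1}^{n+1}$ has $K$-dimension $c$ (this also matches the class computation $\{F'\}=\LL^{dc}$ used in the proof of Lemma~\ref{lem:fiber-Hom}).

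The only non-routine point is verifying that $F'$ is actually closed (and not merely constructible); this will follow from the agreement of scheme structures in step~1, where the main subtlety is checking that the bijection of Lemma~\ref{lem:fiber-Hom} is induced by the same derivation-to-homomorphism construction used in the scheme identification of $F$ with the Hom module. This compatibility is immediate from the definition of $\delta_{n}^{n+c}(\beta;\beta')$ as the derivation $(\beta')^{*}-\beta^{*}$ composed with the quotient to the torsion-free part, so no additional argument is needed.
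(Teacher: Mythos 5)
Your proof fills in the details that the paper elides (the paper's own ``proof'' of this lemma is just ``From this lemma, we obtain:''), and the approach you take is the natural one: identify $F$ with $\Hom_{K\tbrats}(\gamma^{*}\Omega_{\cX/\Df_{\Phi}},\ft_{n+c+1}^{n+1})$ as schemes, note that the $\delta$-construction computing the bijection of Lemma~\ref{lem:fiber-Hom} is the restriction of this identification, and observe that the target of Lemma~\ref{lem:fiber-Hom} sits inside as the linear subspace of homomorphisms annihilating the torsion. Two small points.

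First, in your third step the phrase ``since both $F'$ \dots and this linear subspace are closed subsets of $F$'' is circular: that $F'$ is closed is precisely the conclusion of the lemma, and a priori $F'$ is only a constructible subset of $F$ (the locus of jets lifting to arcs is constructible, under Assumption~\ref{assu:X}). The argument goes through regardless: since $F$ is of finite type over the algebraically closed field $K$, a constructible subset of $F$ is determined by its $K$-points, so the bijection of Lemma~\ref{lem:fiber-Hom}, together with the compatibility you verify in step~1, forces $F'$ to coincide with the linear subspace $L$ as a subset of $|F|$; closedness of $F'$ is then a consequence. You should deduce the equality $F'=L$ first and then conclude closedness, rather than asserting $F'$ is closed as a premise.

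Second, you correctly compute the dimension to be $dc$ (rank $d$ of $\gamma^{\flat}\Omega_{\cX/\Df_{\Phi}}$ times the $K$-dimension $c$ of $\ft_{n+c+1}^{n+1}$), which also matches $\{F'_{c}\}=\LL^{dc}$ in the proof of Lemma~\ref{lem:fiber-Hom}. The paper's statement reads ``$d$-dimensional'' and ``$\AA_{K}^{d}$''; this appears to be a slip for $dc$. The paper's applications of this lemma (for instance in the proof of Lemma~\ref{lem:stability}, passing from $b$-jets to $(b+1)$-jets) all take $c=1$, where $dc=d$, so nothing downstream is affected, but your version is the correct general statement.
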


\begin{lem}
\label{lem:strat-closed-fib}Let $h\colon\cW\to\cV$ be a map between
DM stacks of finite type and let $D\subset|\cW|$ and $C\subset|\cV|$
be constructible subsets such that $h(D)\subset C$. Suppose that
for every $c\in|C|$, $h^{-1}(c)\cap D$ is a closed subset of $h^{-1}(c)$.
Then there exists a stratification $C=\bigsqcup_{i=1}^{n}C_{i}$ such
that $h^{-1}(C_{i})\cap D$ is a locally closed subset of $|\cW|$.
\end{lem}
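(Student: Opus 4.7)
The plan is to argue by Noetherian induction on $|\cV|$, which is a Noetherian topological space since $\cV$ is of finite type. After refining any constructible decomposition of $C$ into its locally closed pieces and treating each piece separately, I may assume $C$ is locally closed in $|\cV|$; then, by replacing $\cV$ with the reduced locally closed substack whose underlying set is $C$ (and $\cW$ with its preimage), I may further assume $C = |\cV|$, so that the hypothesis ``$h^{-1}(c) \cap D$ closed in $h^{-1}(c)$'' holds for every $c \in |\cV|$.

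For the inductive step, it suffices to produce a dense open $\cV^{\circ}$ in some irreducible component of $|\cV|$ such that $h^{-1}(\cV^{\circ}) \cap D$ is locally closed in $|\cW|$, since then the induction hypothesis applied to the proper closed subset $|\cV| \setminus \cV^{\circ}$ finishes the proof. Fix an irreducible component with generic point $\eta \in |\cV|$, and let
\[
Z := \overline{D \cap h^{-1}(\eta)}^{|\cW|},
\]
the closure in $|\cW|$. The standard relation between subspace and ambient closure gives $Z \cap h^{-1}(\eta) = \overline{D \cap h^{-1}(\eta)}^{h^{-1}(\eta)}$, and by the hypothesis applied to $c = \eta$ the right-hand side equals $D \cap h^{-1}(\eta)$. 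Hence $(D \triangle Z) \cap h^{-1}(\eta) = \emptyset$.

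Now the symmetric difference $D \triangle Z$ is constructible in $|\cW|$ (being a Boolean combination of the constructible sets $D$ and $Z$), so by the Chevalley-type theorem for DM stacks of finite type its image $E := h(D \triangle Z)$ is constructible in $|\cV|$. The previous paragraph shows $\eta \notin E$, so $E$ cannot be dense in the component $\overline{\{\eta\}}$; consequently $E \cap \overline{\{\eta\}}$ is contained in a proper closed subset of $\overline{\{\eta\}}$. Taking $\cV^{\circ}$ to be the complement of that closed subset inside $\overline{\{\eta\}}$, we get a dense open of $\overline{\{\eta\}}$ on which $D$ and $Z$ have the same preimage under $h$. Since $Z$ is closed in $|\cW|$ and $h^{-1}(\cV^{\circ})$ is locally closed in $|\cW|$, the set $D \cap h^{-1}(\cV^{\circ}) = Z \cap h^{-1}(\cV^{\circ})$ is locally closed, as required; Noetherian induction on $|\cV| \setminus \cV^{\circ}$ concludes the argument.

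The main obstacle is the Chevalley-type statement that images of constructible subsets under morphisms of DM stacks of finite type are constructible. This is standard and reduces to the analogous theorem for schemes by choosing atlases $T \to \cV$ and $U \to \cW$ and using that the atlas maps are open and surjective on underlying topological spaces, so that constructibility is detected after pullback.
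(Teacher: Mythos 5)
Your proof is correct and follows essentially the same route as the paper's: reduce to the case $C = |\cV|$ with $\cV$ irreducible, take the closure $Z$ of $D \cap h^{-1}(\eta)$ over the generic point $\eta$, observe the symmetric difference $D \triangle Z$ has non-dense image (using the hypothesis that $D$ is fiberwise closed), and pass to the open complement where $D$ and the closed set $Z$ have the same preimage, finishing by Noetherian induction. You are slightly more explicit about the Chevalley-type constructibility of images under morphisms of finite-type DM stacks and about why $Z \cap h^{-1}(\eta) = D \cap h^{-1}(\eta)$, but the underlying argument is the same.
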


\begin{proof}
By taking an arbitrary stratification $C=\bigsqcup_{i=1}^{n}C_{i}$
and restricting to each $C_{i}$, we can reduce the lemma to the case
$|\cV|=C$. Moreover we may suppose that $\cV$ is irreducible. It
suffices to show under these assumptions that there exists an open
dense subset $U\subset|\cV|$ such that $h^{-1}(U)\cap D$ is a locally
closed subset of $|\cW|$. Let $\eta\in|\cV|$ be the generic point
and let $\overline{h^{-1}(\eta)\cap D}$ be the closure of $h^{-1}(\eta)\cap D$
in $|\cW|$. Since $\overline{h^{-1}(\eta)\cap D}$ and $D$ coincide
over $\eta$, the symmetric difference 
\[
\overline{h^{-1}(\eta)\cap D}\triangle D:=\left(\overline{f^{-1}(\eta)\cap D}\setminus D\right)\cup\left(D\setminus\overline{f^{-1}(\eta)\cap D}\right)
\]
has non-dense image in $|\cV|$. Let $U\subset C$ be the complement
of the Zariski closure of this image. Then 
\[
h^{-1}(U)\cap D=h^{-1}(U)\cap\overline{h^{-1}(\eta)\cap D}
\]
is a closed subset of $h^{-1}(U)$ and a locally closed subset of
$|\cW|$.
\end{proof}
\begin{lem}
\label{lem:stability}For $a\in\NN$ and any quasi-compact substack
$\Upsilon\subset\Phi$, the set 
\[
\fj_{\cX}^{-1}(a)\cap((\J_{\infty}\cX)\times_{\Phi}\Upsilon)
\]
is stable of level $a$.
\end{lem}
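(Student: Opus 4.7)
My plan is to verify the two conditions defining stability of level $a$: cylindricity of level $a$, and the pseudo-$\AA^{d}$-bundle condition for every $n\ge a$. I expect cylindricity to follow almost formally from the fact that $\fj_{\cX}$ is the order function of the coherent ideal $\Jac_{\cX/\Df_{\Phi}}$, while the pseudo-bundle condition will rely on Lemmas~\ref{lem:fiber-Hom}, \ref{lem:A^d-closed} and \ref{lem:strat-closed-fib}, which have clearly been set up precisely for this purpose.

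For cylindricity, I would argue that $\fj_{\cX}=\ord_{\Jac_{\cX/\Df_{\Phi}}}$ depends on an arc $\gamma\colon\Df_{K}\to\cX$ only through $\gamma^{-1}\Jac_{\cX/\Df_{\Phi}}\subset K\tbrats$, and that $\fj_{\cX}(\gamma)=a$ if and only if the image of this ideal in $K\tbrats/(t^{a+1})$ is generated by the residue class of $t^{a}$---a condition visible already on the $a$-jet $\gamma_{a}$. The condition of lying over $\Upsilon$ is visible on $\gamma_{0}$. Since $\J_{a}\cX\times_{\Phi}\Upsilon\cong\J_{a}(\cX\times_{\Phi}\Upsilon)$ is a DM stack of finite type, hence quasi-compact, the set in question is of the form $\pi_{a}^{-1}(D)$ for some quasi-compact constructible $D\subset\J_{a}\cX\times_{\Phi}\Upsilon$, which is exactly cylindricity of level $a$.

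For the pseudo-$\AA^{d}$-bundle condition at a level $n\ge a$, the central observation is that since $\fj_{\cX}$ depends only on the $a$-jet and $n\ge a$, any two arcs sharing an $n$-jet have the same Jacobian order. Hence $\pi_{n}(C)$ consists exactly of $n$-jets whose $a$-truncation lies in $D$ and which extend to an arc, and $\pi_{n+1}(C)$ admits the analogous description. The fiber of $\pi_{n+1}(C)\to\pi_{n}(C)$ over a geometric point $\gamma_{n}$ therefore coincides with the set $F'\subset F$ appearing in Lemma~\ref{lem:fiber-Hom} with $c=1$ (legitimate since $n\ge a$), which by Lemma~\ref{lem:A^d-closed} is a closed $d$-dimensional affine subspace of the full fiber $F\cong\AA_{K}^{N}$ of $\pi_{n}^{n+1}\colon\J_{n+1}\cX\to\J_{n}\cX$. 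Applying Lemma~\ref{lem:strat-closed-fib} to $\pi_{n+1}(C)\to\pi_{n}(C)$ then yields a finite stratification of $\pi_{n}(C)$ into locally closed substacks over each of which the preimage in $\pi_{n+1}(C)$ is locally closed.

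To upgrade this to a genuine pseudo-$\AA^{d}$-bundle structure in the sense of Definition~\ref{def:aff-bundle}, I would pass to an etale atlas $V=\Spf R\to\cX$, where $\J_{n+1}V\to\J_{n}V$ is Zariski-locally a trivial $\AA^{N}$-bundle and the preimage of $\pi_{n}(C)\cap\J_{n}V$ in $\pi_{n+1}(C)\cap\J_{n+1}V$ is cut out fiberwise by a rank-$d$ linear subbundle. Descending along $V\to\cX$ and passing to coarse moduli spaces (which changes classes only by universal homeomorphisms, invisible in $K_0(\Var)_r'$) assembles each stratum into a pseudo-$\AA^{d}$-bundle. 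The main obstacle I anticipate is precisely this globalization: ensuring the pointwise linear structure glues along the etale cover and respects the passage to coarse moduli in the wild setting, where coarse moduli need not commute with arbitrary base change. The rescue should be that $\pi_{n}^{n+1}$ is representable and affine, so the base changes appearing here are flat and coarse-moduli formation commutes with them by Lemma~\ref{lem:coarse-flat-base-change}.
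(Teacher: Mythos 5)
Your treatment of cylindricity at level $a$ and your identification of the relevant lemmas (\ref{lem:fiber-Hom}, \ref{lem:A^d-closed}, \ref{lem:strat-closed-fib}) match the paper's proof up to the point where one must actually verify the pseudo-$\AA^{d}$-bundle condition on the strata. The gap is in your final globalization step, and you yourself flagged it as the weak point.

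The rescue you propose does not work. You invoke that $\pi_{n}^{n+1}$ is representable and affine to conclude that the relevant base changes are flat and hence that coarse moduli formation commutes with them by \ref{lem:coarse-flat-base-change}. But representable and affine does not imply flat, and, more to the point, the base change one actually needs to perform to compute a fiber of $\overline{\cD_{i}}\to\overline{\cC_{i}}$ is along a geometric point $\xi\colon\Spec K\to\pi_{b}(C)_{i}$, which is essentially never flat. So the coarse moduli space of $\pi_{b+1}(C)_{i}\times_{\pi_{b}(C)_{i}}\Spec K$ need not equal the fiber of the coarse moduli map, and your etale descent of a linear subbundle on the atlas level never makes contact with the condition in Definition~\ref{def:aff-bundle}, which is phrased purely in terms of fibers of coarse moduli maps.

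The paper sidesteps this entirely. For a geometric point $\xi$ of $\pi_{b}(C)_{i}$ with automorphism group $G$, one has a closed immersion $\B G\hookrightarrow\pi_{b}(C)_{i}\otimes_{k}K$; pulling back along $\pi_{b}^{b+1}$ and using \ref{lem:A^d-closed} on an atlas gives a 2-Cartesian square in which $[\AA_{K}^{d}/G]\hookrightarrow\pi_{b+1}(C)_{i}\otimes_{k}K$ is a closed immersion sitting over $\B G$. Taking coarse moduli spaces produces a universally injective morphism $\AA_{K}^{d}/G\to\overline{\pi_{b+1}(C)_{i}\otimes_{k}K}$ whose image is precisely the fiber over $\xi$, which is exactly what Definition~\ref{def:aff-bundle} requires. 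Nowhere does one need coarse moduli to commute with base change, nor any genuine bundle structure. So your proof would need to replace your third step with this diagram argument; as written it does not close.
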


\begin{proof}
Let us denote this set by $C$. The $\pi_{a}(C)$ is quasi-compact
and constructible. It suffices to show that for every $b\ge a$, the
map $\pi_{b+1}(C)\to\pi_{b}(C)$ is a pseudo-$\AA^{d}$-bundle. From
\ref{lem:strat-closed-fib} and \ref{lem:A^d-closed}, there exists
a stratification $\pi_{b}(C)=\bigsqcup_{i}\pi_{b}(C)_{i}$ into locally
closed subsets such that the preimage $\pi_{b+1}(C)_{i}\subset\pi_{b+1}(C)$
of $\pi_{b}(C)_{i}$ is also locally closed. We regard these locally
closed subsets as substacks of $\J_{b}\cX$ or $\J_{b+1}\cX$. For
a geometric point $\xi\colon\Spec K\to\pi_{b}(C)_{i}$, if $G$ is
its automorphism group, we have the following 2-commutative diagram
of stacks over $K$:
\[
\xymatrix{\AA_{K}^{d}\ar[r]\ar[d] & [\AA_{K}^{d}/G]\ar@{^{(}->}[r]\ar[d] & \pi_{b+1}(C)_{i}\otimes_{k}K\ar[d]\\
\Spec K\ar[r] & \B G\ar@{^{(}->}[r] & \pi_{b}(C)_{i}\otimes_{k}K
}
\]
Here the two squares are 2-Cartesian and the arrows of the form $\hookrightarrow$
are closed immersions. This induces a universally injective morphism
of coarse moduli spaces
\[
\AA_{K}^{d}/G\to\overline{\pi_{b+1}(C)_{i}\otimes_{k}K}.
\]
The image of this map is the fiber of 
\[
\overline{\pi_{b+1}(C)_{i}\otimes_{k}K}\to\overline{\pi_{b}(C)_{i}\otimes_{k}K}
\]
over the point induced by $\xi$. We conclude that $\pi_{b+1}(C)\to\pi_{b}(C)$
is a pseudo-$\AA^{d}$-bundle.
\end{proof}
\begin{lem}
\label{lem:n-e}Let $\gamma\in(\J_{\infty}\cY)(K)$ and $\eta\in(\J_{\infty}\cX)(K)$
with $K$ an algebraically closed field. Let $e:=\fj_{f}(\gamma)$,
$a:=\fj_{\cX}(f_{\infty}(\gamma))$ and let $n\in\NN$ be such that
$n\ge2e+a$. Suppose that there exists an isomorphism $\sigma\colon f_{n}(\gamma_{n})\cong\eta_{n}$.
Then there exists $\xi\in(\J_{\infty}\cY)(K)$ such that $\gamma_{n-e}\cong\xi{}_{n-e}$
and $f_{\infty}(\xi)\cong\eta$.
\end{lem}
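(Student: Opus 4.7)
The plan is to prove this by a Hensel-style Newton iteration on the arc space, in the spirit of classical arguments of Denef-Loeser and Sebag, but adapted to possibly singular formal DM stacks via the flat pullbacks of differentials. First I would reduce to an affine local setup. Since $\J_n$ is compatible with representable etale base change (as used at the start of Section \ref{sec:Motivic-integration-untwisted}), I can choose etale atlases $V = \Spf R \to \cY$ and $U = \Spf S \to \cX$ through which $\gamma$ and $f_\infty(\gamma)$ respectively factor, and (shrinking $V$ if needed) so that $f$ restricts to a $\Df$-morphism $V \to U$ corresponding to a ring map $f^\sharp\colon S \to R$. Using $\sigma$ together with etale lifting of jets, $\eta$ likewise factors through $U$. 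The three arcs are then ring homomorphisms $\gamma^*, \eta^*$ and $\tilde{\eta}^* := \gamma^* \circ f^\sharp$ into $K\tbrats$, with $\eta^* \equiv \tilde{\eta}^* \pmod{(t^{n+1})}$.

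Set $\theta := \eta^* - \tilde{\eta}^*\colon S \to (t^{n+1})$. Modulo $(t^{2(n+1)})$ this is a $k\tbrats$-derivation, so it defines an $S$-linear map $\Omega_{S/k\tbrats} \to (t^{n+1})/(t^{2(n+1)})$, exactly as in the construction of $\delta_n^{n+c}$ preceding Lemma \ref{lem:fiber-Hom}. The hypothesis $n + 1 \geq a$, which follows from $n \geq 2e + a$, guarantees that this kills the length-$a$ torsion of $\tilde{\eta}^*\Omega_{\cX/\Df_\Phi}$, yielding a well-defined $K\tbrats$-linear map
\[
\bar{\theta}\colon \tilde{\eta}^\flat\Omega_{\cX/\Df_\Phi} \longrightarrow (t^{n+1})/(t^{2(n+1)})
\]
of free rank-$d$ modules.

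The heart of the argument is to lift $\bar{\theta}$ through the derivative map $\phi\colon \tilde{\eta}^\flat\Omega_{\cX/\Df_\Phi} \to \gamma^\flat\Omega_{\cY/\Df_\Psi}$, whose cokernel has length $e$. In suitable $K\tbrats$-bases $\phi = \mathrm{diag}(t^{a_1},\dots,t^{a_d})$ with $\sum_i a_i = e$ and each $a_i \leq e$. Dividing each coordinate of $\bar{\theta}$ by the corresponding $t^{a_i}$ (which is legitimate since $n+1 \geq e$) yields an element
\[
\bar{\delta} \in \Hom_{K\tbrats}\bigl(\gamma^\flat\Omega_{\cY/\Df_\Psi},\,(t^{n-e+1})/(t^{2(n-e+1)})\bigr).
\]
By Lemma \ref{lem:fiber-Hom} applied to $V$, $\bar{\delta}$ comes from an actual arc $\xi^{(1)}\colon \Df_K \to V$ satisfying $\xi^{(1)}_{n-e} = \gamma_{n-e}$. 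A direct computation gives $(\xi^{(1)})^* \circ f^\sharp - \tilde{\eta}^* \equiv \phi^*\bar{\delta} \pmod{(t^{2(n-e+1)})}$, and since $\phi^*\bar{\delta} = \bar{\theta}$, one concludes $f_\infty(\xi^{(1)})^* \equiv \eta^* \pmod{(t^{n+2})}$: the approximation has improved by one order.

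Iterating this step produces a sequence $(\xi^{(k)})$ with $\xi^{(k+1)} \equiv \xi^{(k)} \pmod{(t^{n-e+k+1})}$ and $f_\infty(\xi^{(k)})^* \equiv \eta^* \pmod{(t^{n+k+1})}$, which converges $t$-adically to the desired arc $\xi$ with $\xi_{n-e} \cong \gamma_{n-e}$ and $f_\infty(\xi) = \eta$. The main obstacle is the detailed bookkeeping of torsions and quadratic errors that forces the sharp bound $n \geq 2e + a$: the summand $a$ is needed so that $\theta$ descends to the torsion-free quotient $\tilde{\eta}^\flat\Omega_{\cX/\Df_\Phi}$, while the summand $2e$ arises because division by $\phi = \mathrm{diag}(t^{a_i})$ costs a factor of $t^e$, and the quadratic error in the Newton step (scaling like $(t^{2(n-e+1)})$) must be absorbed into the next target accuracy $(t^{n+2})$, i.e.\ $2(n-e+1) \geq n+2$ rearranges to $n \geq 2e$. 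A further subtlety is that applying Lemma \ref{lem:fiber-Hom} at level $n-e$ requires $\fj_\cY(\gamma)$ to be suitably dominated; this can be verified by direct computation using the exact sequence $(f\circ\gamma)^*\Omega_{\cX/\Df_\Phi} \to \gamma^*\Omega_{\cY/\Df_\Psi} \to \gamma^*\Omega_{\cY/\cX\times_\Phi\Psi} \to 0$ from the proof of Lemma \ref{lem:relation-naive-jac}, which bounds $\fj_\cY(\gamma)$ in terms of $e$ and $a$.
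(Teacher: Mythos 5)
Your overall strategy --- encode the discrepancy $\eta-f\circ\gamma$ as a $\delta$-map into a truncated $t$-adic module, divide by the diagonalization of $(f\circ\gamma)^\flat\Omega_{\cX/\Df_{\Phi}}\to\gamma^\flat\Omega_{\cY/\Df_{\Psi}}$, invoke Lemma~\ref{lem:fiber-Hom} to lift, and iterate --- is exactly the paper's. But two steps do not go through as written. The claim that $n+1\ge a$ makes $\bar\theta$ descend to $\tilde\eta^\flat\Omega_{\cX/\Df_{\Phi}}$ with target $(t^{n+1})/(t^{2(n+1)})$ is incorrect: for a torsion element $x$ killed by $t^{a_i}$, $K\tbrats$-linearity only forces $\theta(x)\in(t^{2(n+1)-a_i})/(t^{2(n+1)})$, which need not vanish. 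One must truncate further to $(t^{n+1})/(t^{2(n+1)-a})$ before the map descends to the torsion-free quotient; this is exactly why the paper's $\delta_n^{n+c}$ carries the side condition $c\le n-a+1$ and lands in the shorter module $\ft_{n+c+1}^{n+1}$ rather than the full square $(t^{n+1})/(t^{2(n+1)})$. That off-by-$a$ propagates through the division step and the level/step at which you invoke Lemma~\ref{lem:fiber-Hom}.

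More seriously, your final sentence asserts that the exact sequence of Lemma~\ref{lem:relation-naive-jac} ``bounds $\fj_{\cY}(\gamma)$ in terms of $e$ and $a$'', which you use to discharge the hypothesis of Lemma~\ref{lem:fiber-Hom} at level $n-e$. It does not. Tracking torsion through
$(f\circ\gamma)^*\Omega_{\cX/\Df_{\Phi}}\to\gamma^*\Omega_{\cY/\Df_{\Psi}}\to\gamma^*\Omega_{\cY/\cX\times_{\Phi}\Psi}\to 0$
yields at best $\fj_{\cY}(\gamma)\le a+\sharp G\cdot\ord_{\Jac_{f}}(\gamma)$, and $\ord_{\Jac_{f}}(\gamma)$ can strictly exceed $e=\fj_{f}(\gamma)$ once $\cY$ is singular along $\gamma$ (Lemma~\ref{lem:relation-naive-jac} gives only $\fj_{f}\le\ord_{\Jac_{f}}$, with equality for smooth $\cY$). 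Concretely, with $\cY=\Spf k\tbrats\{x,y\}/(y^{2}-x^{2m+1})$, $\cX=\Spf k\tbrats\{z\}$, $f^{\sharp}(z)=x$, $\gamma^{*}(x)=t^{2}$, $\gamma^{*}(y)=t^{2m+1}$ ($m\ge1$), one computes $a=0$ and $e=\fj_{f}(\gamma)=0$ yet $\fj_{\cY}(\gamma)=2m+1$, so no bound of the claimed form exists. You are right to single this out as the subtle point; the paper's own proof at the corresponding step verifies $e+1\le(n-e)-a+1$ with $a=\fj_{\cX}(f_{\infty}(\gamma))$, and whether that quantity, rather than $\fj_{\cY}(\gamma)$, is what Lemma~\ref{lem:fiber-Hom} actually requires is worth scrutinizing carefully --- but the exact-sequence argument you propose does not close the gap.
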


\begin{proof}
We have a $K\tbrats$-module homomorphism
\[
\delta_{n}^{n+1}(\gamma\circ f;\eta,\sigma)\colon(\gamma\circ f)^{\flat}\Omega_{\cX/\Df_{\Phi}}\to\ft_{n+2}^{n+1}.
\]
For suitable bases, the map of free modules $(\gamma\circ f)^{\flat}\Omega_{\cX/\Df_{\Phi}}\to\gamma^{\flat}\Omega_{\cY/\Df_{\Psi}}$
is represented by a diagonal matrix $\mathrm{diag}(t^{e_{1}},\dots,t^{e_{d}})$
with $\sum_{i=1}^{d}e_{i}=e$. Therefore $\delta_{n}^{n+1}(\gamma\circ f;\eta)$
is induced from some map $\gamma^{\flat}\Omega_{\cY/\Df_{\Psi}}\to\ft_{n+2}^{n-e+1}$.
Since $n\ge2e+a$, we have
\[
(n+2)-(n-e+1)=e+1\le(n-e)-a+1.
\]
From \ref{lem:fiber-Hom}, this map is $\delta_{n-e}^{n+1}(\gamma;\xi^{(1)},\alpha^{(1)})$
for some $\xi^{(1)}\in(\J_{\infty}\cY)(K)$ and an isomorphism $\alpha^{(1)}\colon\gamma_{n-e}\cong\xi_{n-e}^{(1)}$.
We have an isomorphism $f_{n+1}(\xi_{n+1}^{(1)})\cong\eta_{n+1}$
compatible with $\sigma$ and $\alpha^{(1)}$. Applying the same argument
to $\text{\ensuremath{\xi^{(1)}} and \ensuremath{\eta}}$ instead
of $\gamma$ and $\eta$, we obtain $\xi^{(2)}\in(\J_{\infty}\cY)(K)$
and isomorphisms $\xi_{n-e+1}^{(1)}\cong\xi_{n-e+1}^{(2)}$ and $f_{n+2}(\xi_{n+2}^{(2)})\cong\eta_{n+2}$.
Repeating this procedure, we obtain a sequence $\xi^{(i)}\in(\J_{\infty}\cY)(K)$,
$i>0$ with isomorphisms $\xi_{n-e+i}^{(i)}\cong\xi_{n-e+i}^{(i+1)}$
and $f_{n+i}(\xi_{n+i}^{(i)})\cong\eta_{n+i}$. The sequence $\xi_{n-e+i}^{(i)}\in(\J_{n-e+i}\cY)(K)$
gives the limit $\xi\in(\J_{\infty}\cY)(K)$ such that $\gamma_{n-e}\cong\xi_{n-e}$
and $f_{\infty}(\xi)\cong\eta$.
\end{proof}
\begin{rem}
If $\gamma$ and $\gamma'$ correspond to twisted arcs $\cE_{K}\to\cY$
and $\cE_{K}'\to\cY$ respectively, then the equality $\pi_{n-e}(\gamma)\cong\pi_{n-e}(\gamma')$
especially implies that $\cE_{K}\cong\cE_{K}'$.
\end{rem}

\begin{defn}
\label{def:geo-inj}For a subset $C\subset|\J_{\infty}\cY|$, we say
that $f_{\infty}|_{C}$ is \emph{geometrically injective }if for every
algebraically closed field $K$, the map $f_{\infty}|_{C[K]}\colon C[K]\to(\J_{\infty}\cX)[K]$
is injective. We say that $f|_{C}$ is \emph{almost geometrically
injective }if there exists a negligible subset $C'\subset C$ such
that $f|_{C\setminus C'}$ is geometrically injective.

Let $B\subset|\J_{\infty}\cX|$ be a subset such that $f_{\infty}(C)\subset B$.
We say that $f_{\infty}|_{C}\colon C\to B$ is \emph{almost geometrically
bijective }if there exist negligible subsets $B'\subset B$ and $C'\subset C$
such that for each algebraically closed field $K$, $f_{\infty}$
induces a bijection $(C\setminus C')[K]\to(B\setminus B')[K]$.
\end{defn}

\begin{lem}
\label{lem:key}Let $e,n\in\NN$ with $n\ge e$ and let $C\subset|\J_{\infty}\cY|$
be a stable subset of level $n-e$. Suppose that $\fj_{f}|_{C}$ takes
constant value $e$. Suppose that $n\ge2e+\fj_{\cX}(\gamma)$ for
every $\gamma\in C$. Suppose that $f_{\infty}|_{C}$ is geometrically
injective. Then $\pi_{n}(C)\to f_{n}\pi_{n}(C)$ is a pseudo-$\AA^{e}$-bundle.
\end{lem}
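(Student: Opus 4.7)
The plan is to describe the geometric fibers of $\pi_{n}(C)\to f_{n}\pi_{n}(C)$ explicitly and to conclude, after a suitable stratification, that they form an affine bundle of rank $e$.

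First, I would fix an algebraically closed geometric point $\eta_{n}\in f_{n}\pi_{n}(C)$ over $K$, lift it to an arc $\gamma'\in C(K)$ with $f_{n}(\gamma'_{n})\cong\eta_{n}$, and set $\eta:=f_{\infty}(\gamma')$. The first claim is that every $\gamma''\in C(K)$ with $f_{n}(\gamma''_{n})\cong\eta_{n}$ actually satisfies $\gamma''_{n-e}\cong\gamma'_{n-e}$. Indeed, because $n\ge 2e+\fj_{\cX}(\eta)$, Lemma \ref{lem:n-e} applied to $\gamma''$ and $\eta$ yields $\xi\in(\J_{\infty}\cY)(K)$ with $\xi_{n-e}\cong\gamma''_{n-e}$ and $f_{\infty}(\xi)\cong\eta$; since $C$ is cylindrical of level $n-e$ we get $\xi\in C$, and geometric injectivity of $f_{\infty}|_{C}$ forces $\xi\cong\gamma'$, hence $\gamma''_{n-e}\cong\gamma'_{n-e}$.

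Second, the fiber of $\pi_{n}(C)\to f_{n}\pi_{n}(C)$ over $\eta_{n}$ is therefore contained in the fiber of $\pi_{n}(C)\to\pi_{n-e}(C)$ over $\gamma'_{n-e}$. After passing to an etale atlas $V\to\cY$ and applying Lemma \ref{lem:fiber-Hom} with its ``$n$'' replaced by $n-e$ and ``$c$'' taken to be $e$ (the hypothesis $c\le n-a+1$ becomes $e\le(n-e)-\fj_{\cX}(\eta)+1$, which holds by assumption), this fiber is identified with the $K$-vector space
\[
F:=\Hom_{K\tbrats}\bigl(\gamma'^{\flat}\Omega_{\cY/\Df_{\Psi}},\,\ft^{n-e+1}_{n+1}\bigr)\cong K^{ed}.
\]
The analogous identification on the $\cX$-side gives a space $G\cong K^{ed}$, and under these identifications the map $F\to G$ induced by $f_{n}$ is the $K$-linear map of precomposition with the Jacobian $(f\circ\gamma')^{\flat}\Omega_{\cX/\Df_{\Phi}}\to\gamma'^{\flat}\Omega_{\cY/\Df_{\Psi}}$. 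Choosing bases in which this Jacobian is $\mathrm{diag}(t^{e_{1}},\dots,t^{e_{d}})$ with $\sum_{i}e_{i}=\fj_{f}(\gamma')=e$, a direct monomial computation shows that the kernel of $F\to G$ has $K$-dimension exactly $\sum_{i}e_{i}=e$. Since $\eta_{n}=f_{n}(\gamma'_{n})$ lies in the image of $F\to G$, the fiber of $\pi_{n}(C)\to f_{n}\pi_{n}(C)$ over $\eta_{n}$ is a translate of this kernel, hence isomorphic to $\AA^{e}_{K}$.

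Third, to package this into a pseudo-$\AA^{e}$-bundle I would stratify $f_{n}\pi_{n}(C)$ by the constructible invariant given by the tuple of Jacobian jumps $(e_{1},\dots,e_{d})$, refining via Lemma \ref{lem:strat-closed-fib} so that the preimage of each stratum is locally closed in $\pi_{n}(C)$. On each stratum the preceding analysis provides geometric fibers uniformly isomorphic to $\AA^{e}_{K}$, so the induced map between coarse moduli spaces is a pseudo-$\AA^{e}$-bundle in the sense of Definition \ref{def:aff-bundle}. The main obstacle I anticipate is the passage to coarse moduli: I must verify that automorphism groups of $K$-points do not break the identification of the fiber with $\AA^{e}_{K}$. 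But the description in Steps 1--2 is already at the level of isomorphism classes of $K$-points and delivers $\AA^{e}(K)$ in bijection with the geometric fiber of the map, so the induced morphism of coarse moduli spaces has geometric fibers $\AA^{e}_{K}$ with trivial finite quotient, which is exactly what Definition \ref{def:aff-bundle} requires.
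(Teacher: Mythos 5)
Your proposal follows the paper's proof of Lemma \ref{lem:key} in all of its main steps: fix $\eta_n$, lift to $\gamma'$, use Lemma \ref{lem:n-e} together with cylindricality and geometric injectivity to pin down the $(n-e)$-jet, pass to an atlas and apply Lemma \ref{lem:fiber-Hom}, and compute the kernel of the Jacobian precomposition map as $K^{\oplus e}$ by a $\mathrm{diag}(t^{e_1},\dots,t^{e_d})$ computation. Those steps are correct and are exactly what the paper does.

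The gap is in your final paragraph. You assert that Steps 1--2 deliver a bijection of the geometric fiber of $\overline{\pi_n(C)}\to\overline{f_n\pi_n(C)}$ with $\AA^e(K)$, and hence that the coarse fibers are ``$\AA^e_K$ with trivial finite quotient.'' This is not what Lemma \ref{lem:fiber-Hom} gives you. That lemma identifies the fiber over the \emph{scheme} level, i.e.\ over an atlas $V\to\cY$: the parametrizing space $F\cong K^{de}$ is the fiber of $\J_n V\to\J_{n-e} V$, not a set of isomorphism classes of $K$-points of $\J_n\cY$. Descending to $\cY$, the fiber of $\J_n\cY\to\J_{n-e}\cY$ over $\gamma'_{n-e}$ carries an action of the finite group $G=\Aut(\gamma'_{n-e})$, and passing to coarse moduli spaces produces the quotient. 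What the paper actually establishes is that $W$ (the kernel, $\cong\AA^e_K$) is $G$-stable, one obtains a closed immersion $[W/G]\to\pi_n(C)\otimes_k K$, and the induced map of coarse moduli spaces $W/G\to\overline{\pi_n(C)\otimes_k K}$ is a universal injection onto the fiber. So the fiber is universally homeomorphic to $\AA^e_K/G$, and $G$ has no reason to act trivially on $W$. Fortunately Definition \ref{def:aff-bundle} allows fibers of the form $\AA^r_K/H$ with $H$ nontrivial, so the conclusion (pseudo-$\AA^e$-bundle) still stands; but your argument for why the coarse-moduli passage is harmless is incorrect and needs to be replaced by the $G$-stability argument of the paper.

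A secondary small inaccuracy: you write that the hypothesis of Lemma \ref{lem:fiber-Hom}, with ``$n$'' replaced by $n-e$ and ``$c$''$=e$, becomes $e\le (n-e)-\fj_{\cX}(\eta)+1$. In that lemma the quantity ``$a$'' is the Jacobian order of the ambient stack, which here is $\cY$; so the required inequality involves $\fj_{\cY}(\gamma')$ rather than $\fj_{\cX}(\eta)$. The paper's own statement of \ref{lem:key} is also terse on this point (it only assumes $n\ge 2e+\fj_{\cX}\circ f_\infty$), but since the lemma is applied for $n\gg 0$, this does not cause trouble; it is worth being aware that your stated inequality is not literally the one in Lemma \ref{lem:fiber-Hom}.
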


\begin{proof}
Let $\gamma,\gamma'\in C(K)$ be such that $f_{n}(\gamma_{n})\cong f_{n}(\gamma'_{n})$.
Applying \ref{lem:n-e} for $\eta=f_{\infty}(\gamma')$, there exists
$\xi\in(\J_{\infty}\cY)(K)$ such that $\gamma_{n-e}\cong\xi_{n-e}$
and $f_{\infty}(\xi)\cong f_{\infty}(\gamma')$. From the injectivity
assumption, we have $\xi\cong\gamma'$ and conclude that $\gamma_{n-e}\cong\gamma'_{n-e}$.
Namely a fiber of $\pi_{n}(C)\to f_{n}\pi_{n}(C)$ is contained in
a fiber of $\pi_{n}(C)\to\pi_{n-e}(C)$. We may suppose that $\pi_{n}(C)$
and $\pi_{n-e}(C)$ are locally closed subsets. We regard them as
substacks. From $\gamma_{n-e}\colon\Spec K\to\pi_{n-e}(C)$, we obtain
the following 2-Cartesian diagram similarly as in the proof of \ref{lem:stability}:
\[
\xymatrix{\AA_{K}^{de}\ar[r]\ar[d] & [\AA_{K}^{de}/G]\ar@{^{(}->}[r]\ar[d] & \pi_{n}(C)\otimes_{k}K\ar[d]\\
\Spec K\ar[r] & \B G\ar@{^{(}->}[r] & \pi_{n-e}(C)\otimes_{k}K
}
\]
From \ref{lem:fiber-Hom}, the $K$-point set of the above $\AA_{K}^{de}$
is identified with
\[
\Hom_{K\tbrats}(\gamma^{\flat}\Omega_{\cY/\Df_{\Psi}},\ft_{n+1}^{n-e+1}).
\]
The kernel of the map
\[
\Hom_{K\tbrats}(\gamma^{\flat}\Omega_{\cY/\Df_{\Psi}},\ft_{n+1}^{n-e+1})\to\Hom_{k\tbrats}((\gamma\circ f)^{\flat}\Omega_{\cX/\Df_{\Phi}},\ft_{n+1}^{n-e+1})
\]
parametrizes pairs $(\gamma_{n}',\alpha_{n})$ of $\gamma_{n}'\in\pi_{n}((\J_{\infty}\cX)(K))$
and an isomorphism $\gamma_{n-e}'\cong\gamma_{n-e}$ such that the
induced isomorphism $f_{n-e}(\gamma_{n-e})\cong f_{n-e}(\gamma_{n-e}')$
lifts to $f_{n}(\gamma_{n})\cong f_{n}(\gamma'_{n})$. This map is
represented by a diagonal matrix $\mathrm{diag}(t^{e_{1}},\dots,t^{e_{d}})$
with $\sum_{i=1}^{d}e_{i}=e$. The kernel is isomorphic to $\bigoplus_{i=1}^{d}\ft_{n+1}^{n-e_{i}+1}\cong K^{\oplus e}$.
The image of the corresponding subspace $\AA_{K}^{e}\cong W\subset\AA_{K}^{de}$
in $\pi_{n}(C)\otimes_{k}K$ is the fiber of 
\[
\pi_{n}(C)\otimes_{k}K\to f_{n}\pi_{n}(C)\otimes_{k}K
\]
over the point $f_{n}(\gamma_{n})$. We see that $W$ is stable under
the $G$-action. We obtain a closed immersion $[W/G]\to\pi_{n}(C)\otimes_{k}K$.
The induced morphism of coarse moduli spaces 
\[
W/G\to\overline{\pi_{n}(C)\otimes_{k}K}
\]
is a universally injective morphism onto the fiber of $\overline{\pi_{n}(C)}\to\overline{f_{n}\pi_{n}(C)}$
over the image of $f_{n}(\gamma_{n})$. We have proved the lemma.
\end{proof}
\begin{lem}
\label{lem:key-2}Let $C\subset|\J_{\infty}\cY|$ be a measurable
subset such that $\fj_{f}$ has constant value $e<\infty$ on $C$.
Suppose that $f_{\infty}|_{C}$ is almost geometrically injective.
Then 
\[
\mu_{\cY,Z}(C)=\mu_{\cX,Z}(f_{\infty}(C))\LL^{e}.
\]
\end{lem}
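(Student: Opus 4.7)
The plan is to reduce to the case where $C$ is stable and then apply Lemma \ref{lem:key} directly. First I would absorb the negligible exceptional set promised by ``almost geometrically injective''; after removing it, I may assume $f_\infty|_C$ is genuinely geometrically injective, since negligible sets on both sides contribute zero measure. Next, stratify $C$ by the value of $\fj_\cX \circ f_\infty$: for each $a \in \NN$ set
\[
C_a := \{\gamma \in C \mid \fj_\cX(f_\infty(\gamma)) = a\}.
\]
The union $C_\infty = \bigsqcup_a (C \setminus \bigsqcup_{a' \le a} C_{a'})$ intersected with $\{\fj_\cX \circ f_\infty = \infty\}$ lies in $|\J_\infty \cX_{\sing}|$ and so is negligible by Lemma \ref{lem:negl}; its preimage is also negligible since $\fj_f$ is constantly $e < \infty$ on it (again using narrowness of $\Jac_f$ via Lemma \ref{lem:negl} for the $\cY$-side to control anything needed). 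By countable additivity it suffices to prove the identity for each $C_a$.

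For fixed $a$, observe that $C_a$ is measurable and $f_\infty$ is geometrically injective on it. Approximate $C_a$ by cylindrical sets and intersect with $\fj_\cY^{-1}(\{\text{finite}\})$ and with the preimage of $\fj_\cX^{-1}(a)$; by Lemma \ref{lem:stability} both restrictions are stable of explicit level. So we may assume $C \subset C_a$ is stable of some level $n - e$ with $n \ge 2e + a$. By Lemma \ref{lem:key}, $\pi_n(C) \to f_n\pi_n(C)$ is a pseudo-$\AA^e$-bundle. Applying Lemma \ref{lem:pseudo-bdl-eq} in $\widehat{\cM'_{Z,r}}$ gives
\[
\{\pi_n(C)\} = \{f_n \pi_n(C)\}\, \LL^{e},
\]
and multiplying by $\LL^{-nd}$ yields
\[
\mu_{\cY,Z}(C) = \{f_n\pi_n(C)\}\, \LL^{-nd+e}.
\]
It remains to identify $f_n\pi_n(C)$ with $\pi_n(f_\infty(C))$ and conclude $\mu_{\cX,Z}(f_\infty(C)) = \{f_n\pi_n(C)\}\, \LL^{-nd}$. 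The inclusion $f_n\pi_n(C) \subset \pi_n(f_\infty(C))$ is trivial; the reverse inclusion (up to a negligible set) is the content of Lemma \ref{lem:n-e}: any $\eta \in f_\infty(C)$ whose $n$-jet lies in $\pi_n(f_\infty(C))$ arises, after truncation to level $n-e$, from some $\xi \in \J_\infty \cY$ with $f_\infty(\xi) \cong \eta$, which by geometric injectivity forces $\xi \in C$. Hence $f_n\pi_n(C) = \pi_n(f_\infty(C))$, and in particular $f_\infty(C)$ is stable (hence measurable) of level $n$ with $\mu_{\cX,Z}(f_\infty(C)) = \{f_n\pi_n(C)\}\LL^{-nd}$, giving the formula.

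The main obstacle I anticipate is the bookkeeping for passing from the measurable-set setting to the stable-set setting in a way compatible with the two independent stratifications (by $\fj_\cX \circ f_\infty$ on one side and by cylindrical approximation on the other), together with verifying measurability of $f_\infty(C)$; everything else is a direct application of Lemmas \ref{lem:stability}, \ref{lem:n-e}, \ref{lem:key} and \ref{lem:pseudo-bdl-eq}. The hypothesis that $\Jac_f$ defines a narrow closed substack (Assumption \ref{assu:X}) is what makes the loci where $\fj_f$ or $\fj_\cX$ become infinite negligible, and thus negligible also after pushforward/pullback by $f_\infty$.
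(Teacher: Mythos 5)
Your proof takes essentially the same route as the paper's, which simply says ``by a standard limit argument, reduce to the case where $C$ is stable, $\fj_f$ and $\fj_\cX\circ f_\infty$ are nowhere infinity, and $f|_{C[K]}$ is injective'' and then invokes Lemma \ref{lem:key}; you spell out that reduction (peeling off the negligible exceptional set, stratifying by the value of $\fj_\cX\circ f_\infty$, approximating by stable pieces) and also make explicit that $f_\infty(C)$ is stable, which the paper leaves implicit. Two small points to fix in your write-up. First, the set identity $f_n\pi_n(C)=\pi_n(f_\infty(C))$ is automatic in \emph{both} directions from compatibility of jet truncation with $f_\infty$; what Lemma \ref{lem:n-e} actually buys you is the stronger statement that $f_\infty(C)$ is \emph{cylindrical} of level $n$, i.e.\ $f_\infty(C)=\pi_n^{-1}(\pi_n f_\infty(C))$, which is what you need to write $\mu_{\cX,Z}(f_\infty(C))=\{\pi_n f_\infty(C)\}\LL^{-nd}$. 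Second, in that step it is the \emph{stability} of $C$ at level $n-e$ (so that $\xi_{n-e}\cong\gamma_{n-e}$ with $\gamma\in C$ forces $\xi\in C$), not geometric injectivity, that puts $\xi$ back into $C$; geometric injectivity is what makes Lemma \ref{lem:key} applicable and should not be credited for this. With those corrections the argument is sound and matches the paper's.
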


\begin{proof}
By a standard limit argument, we can reduce to the case where $C\subset|\J_{\infty}\cY|$
is a stable subset such that $\fj_{f}$ and $\fj_{\cX}\circ f_{\infty}$
are nowhere infinity on $C$ and $f|_{C[K]}$ are injective. For $n\gg0$,
from \ref{lem:key},
\begin{align*}
\mu_{\cY,Z}(C) & =\{\pi_{n}(C)\}\LL^{-dn}\\
 & =\{\pi_{n}f_{\infty}(C)\}\LL^{-dn+e}\\
 & =\mu_{\cX,Z}(f_{\infty}(C))\LL^{e}.
\end{align*}
\end{proof}
\begin{cor}
\label{cor:stable-image}Let $C\subset|\J_{\infty}\cY|$ be a stable
subset. Suppose that $\ord_{\Jac_{f}}$ and $\fj_{\cX}\circ f_{\infty}$
are nowhere infinity on $C$. Then $f_{\infty}(C)$ is also stable.
\end{cor}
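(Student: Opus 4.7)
The plan is to choose a sufficiently large uniform level $N$ and show that $f_{\infty}(C)$ is cylindrical of level $N$ with the required pseudo-$\AA^d$-bundle property on projections. The main ingredients are Lemma~\ref{lem:n-e} for lifting jets in the image back to arcs in $C$, Lemma~\ref{lem:relation-naive-jac} to bound $\fj_{f}$ in terms of $\ord_{\Jac_{f}}$, and the argument inside the proof of Lemma~\ref{lem:stability} for the pseudo-bundle property.

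First I would establish uniform boundedness: on the cylindrical (hence quasi-compact) set $C$, each of the functions $\ord_{\Jac_{f}}$ and $\fj_{\cX}\circ f_{\infty}$, being nowhere infinity, is bounded above. Indeed, the loci $\{\ord_{\Jac_{f}}\ge m\}\cap C$ form a decreasing sequence of closed cylinders whose intersection is the infinity locus, which is empty by hypothesis; the quasi-compactness of $C$ as a cylindrical subset then forces these cylinders to be empty for $m$ large. The same reasoning applies to $\fj_{\cX}\circ f_{\infty}$. Let $M$ be an upper bound for both; by Lemma~\ref{lem:relation-naive-jac}, $\fj_{f}\le\ord_{\Jac_{f}}\le M$ on $C$ as well.

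Next, let $n_{0}$ be a stability level of $C$ and choose any $N\ge\max(n_{0}+M,3M)$. I claim $f_{\infty}(C)=\pi_{N}^{-1}(\pi_{N}(f_{\infty}(C)))$: the inclusion $\subseteq$ is trivial, and for the converse, given $\eta\in\J_{\infty}\cX$ with $\pi_{N}(\eta)\cong f_{N}(\gamma_{N})$ for some $\gamma\in C$, write $e:=\fj_{f}(\gamma)\le M$ and $a:=\fj_{\cX}(f_{\infty}(\gamma))\le M$, so $N\ge 2e+a$. Lemma~\ref{lem:n-e} then produces $\xi\in\J_{\infty}\cY$ with $\xi_{N-e}\cong\gamma_{N-e}$ and $f_{\infty}(\xi)\cong\eta$; since $N-e\ge n_{0}$ and $C$ is cylindrical of level $n_{0}$, we conclude $\xi\in C$ and hence $\eta\in f_{\infty}(C)$. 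Moreover $\pi_{N}(f_{\infty}(C))=f_{N}(\pi_{N}(C))$ is a quasi-compact constructible subset, as the image of such a subset under the finite-type morphism $f_{N}$ of jet stacks. Thus $f_{\infty}(C)$ is cylindrical of level $N$.

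Finally, to verify the pseudo-$\AA^{d}$-bundle property, I would show that for $N'\ge N$ the projection $\pi_{N'+1}(f_{\infty}(C))\to\pi_{N'}(f_{\infty}(C))$ coincides with the restriction of the full projection $\J_{N'+1}\cX\to\J_{N'}\cX$: any lift in $\J_{N'+1}\cX$ of a point of $\pi_{N'}(f_{\infty}(C))$ extends to an arc $\eta'\in\J_{\infty}\cX$ whose $N$-jet lies in $\pi_{N}(f_{\infty}(C))$, so by the cylindricity just established $\eta'\in f_{\infty}(C)$. Since $\fj_{\cX}\le M\le N'$ on $f_{\infty}(C)$, the same stratification argument used in the proof of Lemma~\ref{lem:stability}, applied now to $\cX$ (and to the stratification of $f_{\infty}(C)$ by the finitely many values $a\le M$ of $\fj_{\cX}$), shows the restriction is a pseudo-$\AA^{d}$-bundle on each stratum, and hence on $f_{\infty}(C)$. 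The main obstacle I anticipate is the uniform boundedness step, since it relies on a compactness property of cylindrical sets that, while standard in the motivic integration literature, must be invoked with care in the present ind-DM-stack framework; once this is in place, everything else is a bookkeeping exercise on top of Lemmas~\ref{lem:n-e} and \ref{lem:stability}.
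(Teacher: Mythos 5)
Your proof is correct and follows essentially the same route as the paper's: boundedness of $\ord_{\Jac_{f}}$ and $\fj_{\cX}\circ f_{\infty}$ on the stable set $C$ (you phrase it as a nested decreasing sequence of closed cylinders with empty intersection, the paper invokes the equivalent covering-compactness of Denef--Loeser's Lemma 2.3 applied to the stratification $C=\bigsqcup_{e}C_{e}$ by value of $\ord_{\Jac_{f}}$); then Lemma~\ref{lem:relation-naive-jac} to bound $\fj_{f}$; then Lemma~\ref{lem:n-e} to get cylindricity of $f_{\infty}(C)$; then boundedness of $\fj_{\cX}$ on $f_{\infty}(C)$ together with Lemma~\ref{lem:stability} to get stability. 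The only genuine difference is cosmetic: your boundedness argument is self-contained where the paper cites an external reference, but the two are logically the same compactness property of stable cylinders.
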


\begin{proof}
For $e\in\NN$, let $C_{e}$ be the locus in $C$ with $\ord_{\Jac_{f}}=e$.
This is a stable subset, since whether $\ord_{\Jac_{f}}(\gamma)=e$
is determined by the $n$-jet $\gamma_{n}$. We have $C=\bigsqcup_{e}C_{e}$.
From \cite[Lem. 2.3]{MR1886763} (although this paper assume that
the base field has characteristic zero, this lemma is characteristic-free),
$C$ is covered by finitely many $C_{e}$. Thus $\ord_{\Jac_{f}}|_{C}$
is bounded. From \ref{lem:relation-naive-jac}, $\fj_{f}$ is also
bounded. By the same argument, $(\fj_{\cX}\circ f_{\infty})|_{C}$
is also bounded. Let us take $e,a,n\in\NN$ such that $n\ge2e+a$,
$\fj_{f}|_{C}\le e$, $(\fj_{\cX}\circ f_{\infty})|_{C}\le a$ and
$C$ is stable of level $n-e$. Let $\gamma\in C$ and $\eta\in|\J_{\infty}\cX|$.
If $f_{n}(\gamma_{n})=\eta_{n}$, then from \ref{lem:n-e}, if $f_{n}(\gamma_{n})=\eta_{n}$,
then there exists $\xi\in|\J_{\infty}\cX|$ such that $f_{\infty}(\xi)=\eta$
and $\gamma_{n-e}=\xi_{n-e}$. Since $C$ is stable of level $n-e$,
we have $\xi\in C$. Therefore $\eta\in f_{\infty}(C)$. We have showed
that $f_{\infty}(C)$ is cylindrical of level $n$. Since $\fj_{\cX}|_{f_{\infty}(C)}$
is bounded, we can see from \ref{lem:stability} that $f_{\infty}(C)$
is also stable.
\end{proof}
\begin{lem}
\label{lem:measurable-image}Let $C\subset|\J_{\infty}\cY|$ be a
measurable subset. Then $f_{\infty}(C)$ is also measurable and $\Vert\mu_{\cX}(f_{\infty}(C))\Vert\le\Vert\mu_{\cY}(C)\Vert$.
In particular, if $C$ is negligible, then so is $f_{\infty}(C)$. 
\end{lem}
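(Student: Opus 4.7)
The plan is a standard two-step argument: first handle cylindrical $C$ by an explicit analysis via Lemma \ref{lem:n-e}, then pass to general measurable $C$ by approximation. Throughout, one replaces $Z$ by $k$ in the norm, since $\|\mu_{\cX,Z}(-)\|$ is independent of $Z$.

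For the cylindrical case, decompose $C = C_{\infty}\sqcup\bigsqcup_{e\in\NN}C^{(e)}$ where $C^{(e)}=\fj_{f}^{-1}(e)\cap C$ and $C_{\infty}=\fj_{f}^{-1}(\infty)\cap C$. The infinity locus lies in $|\J_{\infty}\cZ|$, where $\cZ:=V(\Jac_{f})\subset\cY$ is narrow by Assumption \ref{assu:X}. Since $\cY$ and $\cX$ share relative dimension $d$ and $\Jac_{f}$ is generically nonzero, the morphism $f$ is generically finite, so the scheme-theoretic image $\overline{f(\cZ)}\subset\cX$ is again narrow fiber by fiber over $\Phi$; consequently $f_{\infty}(C_{\infty})\subset|\J_{\infty}\overline{f(\cZ)}|$ is negligible by Lemma \ref{lem:negl}. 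Hence $C_{\infty}$ may be absorbed into an error set of arbitrarily small measure and the focus shifts to the pieces $C^{(e)}$.

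Next, further stratify $C\setminus C_{\infty}$ into finitely many stable subsets $D$ on which $e:=\fj_{f}|_{D}$ and $a:=\fj_{\cX}\circ f_{\infty}|_{D}$ are constants; this is possible by the argument of Corollary \ref{cor:stable-image}, which bounds $\ord_{\Jac_{f}}$ (and hence $\fj_{f}$, via Lemma \ref{lem:relation-naive-jac}) and $\fj_{\cX}\circ f_{\infty}$ on any stable set. Enlarge the stability level so that $n\ge 2e+a$ and $D$ is stable of level $n-e$. For any $\eta\in\pi_{n}^{-1}(f_{n}\pi_{n}(D))$, Lemma \ref{lem:n-e} produces $\xi\in|\J_{\infty}\cY|$ with $f_{\infty}(\xi)\cong\eta$ and $\xi_{n-e}\cong\gamma_{n-e}$ for some $\gamma\in D$; stability at level $n-e$ forces $\xi\in D$, hence $\eta\in f_{\infty}(D)$. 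Together with the trivial reverse inclusion this yields
\[
f_{\infty}(D)=\pi_{n}^{-1}(f_{n}\pi_{n}(D)).
\]
By Chevalley's theorem for morphisms of finite-type DM stacks the image $f_{n}\pi_{n}(D)$ is constructible, so $f_{\infty}(D)$ is cylindrical of level $n$. Since $f_{n}$ cannot increase dimension,
\[
\|\mu_{\cX}(f_{\infty}(D))\|=2^{\dim f_{n}\pi_{n}(D)-nd}\le 2^{\dim\pi_{n}(D)-nd}=\|\mu_{\cY}(D)\|.
\]

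Finally, for a general measurable $C$ with cylindrical approximants $C(\epsilon)$ and error cylinders $A_{i}(\epsilon)$ of norm $\le\epsilon$ as in Definition \ref{def:measurable-subset}, the set-theoretic inclusion $f_{\infty}(C)\triangle f_{\infty}(C(\epsilon))\subset\bigcup_{i}f_{\infty}(A_{i}(\epsilon))$ combined with the previous step (applied after stratifying $C(\epsilon)$ and each $A_{i}(\epsilon)$ into stable pieces plus a negligible remainder absorbed into a new error cylinder) exhibits $f_{\infty}(C)$ as measurable, and the inequality $\|\mu_{\cX}(f_{\infty}(C))\|\le\|\mu_{\cY}(C)\|$ follows by letting $\epsilon\to 0$. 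The negligibility assertion is the special case $\|\mu_{\cY}(C)\|=0$. I expect the main obstacle to be the bookkeeping in the stratification: one must simultaneously make $\fj_{f}$, $\fj_{\cX}\circ f_{\infty}$, and the stability level cooperate on each stratum so that Lemma \ref{lem:n-e} applies uniformly, and verify that stratifying a cylindrical set by the integer-valued function $\fj_{f}$ produces countably many stable pieces whose images on the $\cX$-side fit together into a legitimate measurable decomposition.
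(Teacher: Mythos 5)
Your outline follows the paper's high-level strategy — separate the infinity locus $C_{\infty}$ of $\fj_{f}$, reduce pieces with constant $\fj_{f}$ to the jet-lifting Lemma \ref{lem:n-e}, then approximate a general measurable set by cylinders — but there is a genuine gap in your treatment of $C_{\infty}$.

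You assert that, because $f$ is ``generically finite,'' the scheme-theoretic image $\overline{f(\cZ)}\subset\cX$ of the narrow substack $\cZ=V(\Jac_{f})\subset\cY$ is ``narrow fiber by fiber over $\Phi$,'' and you then invoke Lemma \ref{lem:negl}. This is where the argument breaks: narrowness of $\cZ$ in $\cY$ (Definition \ref{def:thin}) is a fiberwise condition over $\Psi$, while narrowness of a closed substack of $\cX$ is a fiberwise condition over $\Phi$, and in Assumption \ref{assu:X} the morphism $\Psi\to\Phi$ is arbitrary. In the key application of this whole chapter — the change of variables applied to $f^{\utg}\colon\Utg_{\Gamma_{\cY}}(\cY)^{\pur}\to X$ — one has $\Psi=\Gamma_{\cY}$, a positive-dimensional stack, and $\Phi=\Spec k$. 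Over a geometric point $\Spec K\to\Phi$, the fiber $\cZ\times_{\Phi}\Spec K$ is a family over all of $\Psi\times_{\Phi}\Spec K$; its image in $\cX_{K}$ can then have dimension as large as $(d-1)+\dim\Psi_{0}$, which may reach or exceed $d$, so $\overline{f(\cZ)}$ need not be narrow and Lemma \ref{lem:negl} does not apply. (The ``generically finite'' heuristic is also a red herring: what bounds the fiberwise image is simply that morphisms of finite type cannot increase dimension; generic finiteness of $f$ tells you nothing about $f|_{\cZ}$.) The paper's proof is built to avoid exactly this: it reduces to one connected component $\Psi_{0}$, works per geometric $K$-point of $\Psi_{0}$ — where the scheme-theoretic image $\cW_{K}$ of $\cZ_{K}$ in $\cX_{K}$ \emph{is} narrow for the dimension reason just recalled — and then bounds
$\dim\pi_{n}\circ f_{\infty}(|\J_{\infty}\cZ|)\le(d-1)(n+1)+\dim\Psi_{0}$.
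The additive constant $\dim\Psi_{0}$ is harmless after subtracting $d(n+1)$, so the measures of the approximating cylinders still tend to $0$ and negligibility of $f_{\infty}(|\J_{\infty}\cZ|)$ follows with no claim about a global narrow closed substack of $\cX$. You need this explicit dimension estimate; the shortcut through Lemma \ref{lem:negl} is not available.

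A smaller, fixable slip: you stratify $C\setminus C_{\infty}$ into ``finitely many'' stable pieces with constant $\fj_{f}$ and $\fj_{\cX}\circ f_{\infty}$, citing ``the argument of Corollary \ref{cor:stable-image}.'' That boundedness conclusion holds for a \emph{stable} $C$, not a merely cylindrical one; for a cylindrical $C$ the functions $\fj_{f}$ and $\fj_{\cX}\circ f_{\infty}$ need not be bounded and the stratification is only countable. The remedy is what the paper does: write $C\setminus C_{\infty}=\bigsqcup_{i\in\NN}C_{i}$ with $\fj_{f}$ and $\fj_{\cX}\circ f_{\infty}$ bounded on each $C_{i}$, approximate each $C_{i}$ by stable subsets, push forward by Corollary \ref{cor:stable-image}, and use that $\Vert\mu_{\cY}(C_{i})\Vert\to0$ forces $\Vert\mu_{\cX}(f_{\infty}(C_{i}))\Vert\to0$ to see that the countable union is measurable.
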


\begin{proof}
Let $C_{\infty}\subset C$ be the locus where either $\fj_{f}$ or
$\fj_{\cX}\circ f_{\infty}$ has value $\infty$. From \ref{assu:X}
and \ref{lem:relation-naive-jac}, there exists a narrow closed substack
$\cZ\subset\cY$ such that $C_{\infty}\subset|\J_{\infty}\cZ|$. We
show that $f_{\infty}(|\J_{\infty}\cZ|)$ is negligible, hence so
is $f_{\infty}(C_{\infty})$. Since a countable union of negligible
subsets is negligible, we may suppose that $\cZ$ is contained in
$\cY\times_{\Psi}\Psi_{0}$ for some connected component $\Psi_{0}$
of $\Psi$. For each geometric point $\psi\colon\Spec K\to\Psi_{0}$,
consider the induced morphism $f_{K}\colon\cY_{K}\to\cX_{K}$ of formal
DM stacks of finite type over $\Df_{K}$ and the induced narrow substack
$\cZ_{K}\subset\cY_{K}$. The scheme-theoretic image $\cW_{K}$ of
$\cZ_{K}$ in $\cX_{K}$ is again narrow. Therefore
\[
\dim\pi_{n}\circ f_{\infty}(|\J_{\infty}\cZ_{K}|)\le\dim\pi_{n}(|\J_{\infty}\cW_{K}|)\le(d-1)(n+1).
\]
This shows 
\[
\dim\pi_{n}\circ f_{\infty}(|\J_{\infty}\cZ|)\le(d-1)(n+1)+\dim\Psi_{0}.
\]
Therefore the measure of the cylindrical subset $\pi_{n}^{-1}(\pi_{n}\circ f_{\infty}(|\J_{\infty}\cZ|))$
tends to zero as $n$ tends to infinity. Since 
\[
f_{\infty}(|\J_{\infty}\cZ|)\subset\bigcap_{n}\pi_{n}^{-1}(\pi_{n}\circ f_{\infty}(|\J_{\infty}\cZ|)),
\]
we see that $f_{\infty}(|\J_{\infty}\cZ|)$ is negligible.

We can write $C\setminus C_{\infty}=\bigsqcup_{i\in\NN}C_{i}$ such
that for each $i$, $C_{i}$ is measurable, and $\fj_{f}$ and $\fj_{\cX}\circ f_{\infty}$
are bounded on $C_{i}$. For each $C_{i}$, we can take stable subsets
$C_{i}(\epsilon)$ and $A_{ij}(\epsilon)$, $j\in\NN$ of $|\J_{\infty}\cY|$
as in \ref{def:measurable-subset} such that $\fj_{f}$ and $\fj_{\cX}\circ f_{\infty}$
are bounded on each of these stable subsets. We have
\[
f_{\infty}(C_{i})\triangle f_{\infty}(C_{i}(\epsilon))\subset f_{\infty}(C_{i}\triangle C_{i}(\epsilon))\subset\bigcup_{j}f_{\infty}(A_{ij}(\epsilon)).
\]
From \ref{cor:stable-image}, $f_{\infty}(C_{i}(\epsilon))$ and $f_{\infty}(A_{ij}(\epsilon))$
are stable and we have
\[
\Vert\mu_{\cX}(f_{\infty}(C_{i}(\epsilon)))\Vert\le\Vert\mu_{\cY}(C_{i}(\epsilon))\Vert\text{ and }\Vert\mu_{\cX}(f_{\infty}(A_{ij}(\epsilon))\Vert\le\Vert\mu_{\cY}(A_{ij}(\epsilon))\Vert.
\]
This shows that $f_{\infty}(C_{i})$ is measurable with $\Vert\mu_{\cX}(f_{\infty}(C_{i}))\Vert\le\Vert\mu_{\cX}(C_{i})\Vert$.
Since $\lim_{i\to\infty}\Vert\mu_{\cY}(C_{i})\Vert=0$, we have $\lim_{i\to\infty}\Vert\mu_{\cX}(f_{\infty}(C_{i}))\Vert=0$.
Therefore $f_{\infty}(C)=\bigsqcup_{i\in\NN\cup\{\infty\}}f_{\infty}(C_{i})$
is also measurable. The inequality of the lemma is clear.
\end{proof}
\begin{lem}
\label{lem:measurable-fnc}
\begin{enumerate}
\item Let $A\subset|\J_{\infty}\cX|$ be a subset with $A\subset f_{\infty}(|\J_{\infty}\cY|)$
and let $h\colon A\to\frac{1}{r}\ZZ\cup\{\infty\}$ be a function.
If $h\circ f_{\infty}\colon f_{\infty}^{-1}(A)\to\frac{1}{r}\ZZ\cup\{\infty\}$
is measurable, then so is $h$.
\item Let $B\subset|\J_{\infty}\cY|$ be a measurable subset such that $f_{\infty}|_{B}$
is almost geometrically injective. If a function $h\colon f_{\infty}(B)\to\frac{1}{r}\ZZ\cup\{\infty\}$
is measurable, then $h\circ f_{\infty}\colon B\to\frac{1}{r}\ZZ\cup\{\infty\}$
is also measurable.
\end{enumerate}
\end{lem}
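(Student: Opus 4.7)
The plan is to handle both parts by decomposing the relevant sets, using Lemma \ref{lem:measurable-image} to push measurable sets forward along $f_\infty$, and using Lemma \ref{lem:key-2} together with the $\fj_f$-stratification appearing already in the proof of Lemma \ref{lem:key-2} to move in the pullback direction for part (2).

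For part (1), I would begin with a measurable decomposition $f_\infty^{-1}(A) = \bigsqcup_{i\in\NN} B_i$ witnessing the measurability of $h\circ f_\infty$: on each $B_i$ the function takes a constant value $c_i\in\frac{1}{r}\ZZ\cup\{\infty\}$, and $B_i$ is negligible when $c_i=\infty$. Define
\[
A_i := f_\infty(B_i)\setminus \bigcup_{j<i} f_\infty(B_j).
\]
By Lemma \ref{lem:measurable-image} each $f_\infty(B_j)$ is measurable, hence so is each $A_i$ (measurable subsets form a $\sigma$-algebra in the standard development of motivic integration). The $A_i$ are disjoint and cover $A$, where surjectivity uses the assumption $A\subset f_\infty(|\J_\infty\cY|)$. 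For $a\in A_i\subset f_\infty(B_i)$, any preimage $\gamma\in B_i$ gives $h(a)=(h\circ f_\infty)(\gamma)=c_i$, so $h|_{A_i}=c_i$. When $c_i=\infty$, $B_i$ is negligible, whence $A_i\subset f_\infty(B_i)$ is negligible by Lemma \ref{lem:measurable-image}.

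For part (2), measurability of $h$ yields a decomposition $f_\infty(B) = \bigsqcup_i A_i$ with $h|_{A_i}=c_i$ and $A_i$ negligible when $c_i=\infty$. Setting $B_i:=f_\infty^{-1}(A_i)\cap B$ produces a disjoint cover of $B$ on which $h\circ f_\infty$ is constant equal to $c_i$. The substantive point is to verify that each $B_i$ is measurable (and negligible when $c_i=\infty$). I would first invoke Assumption \ref{assu:X} together with the narrowness argument in the proof of Lemma \ref{lem:measurable-image} to discard the negligible locus $\{\fj_f=\infty\}\cap B$. On the complement, stratify $B$ by the cylindrical conditions $\{\fj_f=e\}\cap\{\fj_{\cX}\circ f_\infty=a\}$, exactly as in the reduction step of the proof of Lemma \ref{lem:key-2}. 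On each resulting stable stratum, approximate $A_i$ by cylindrical $A_i(\epsilon)\subset|\J_\infty\cX|$ and negligible envelopes from the measurability of $A_i$; pull these back at a sufficiently high jet level via $f_n^{-1}$; and intersect with the stratum. Lemma \ref{lem:key}, in the form of the pseudo-$\AA^e$-bundle property, together with Lemma \ref{lem:key-2}, converts the norm bound on $A_i(\epsilon)$-approximations in the target into a norm bound in the source (with a shift by $e$), yielding a valid cylindrical approximation of $B_i$ and proving measurability. For $c_i=\infty$, almost geometric injectivity together with Lemma \ref{lem:measurable-image} applied to the negligible exceptional set transfer the negligibility of $A_i$ back to $B_i$.

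The main obstacle is exactly this measurability verification in (2). The source of difficulty is that $|\J_n\cY|$ is only almost of finite type, so the $f_n$-preimage of a quasi-compact constructible cylinder in $|\J_n\cX|$ need not be quasi-compact, and hence direct pullback does not produce a cylindrical subset in our sense. The stratification by the levels of $\fj_f$ is what forces matters into quasi-compact strata of the source jet stacks, where Lemma \ref{lem:key} applies and gives the matching measure estimates through Lemma \ref{lem:key-2}; this is the crucial technical input that makes the preimage argument work.
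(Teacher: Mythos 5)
Your part (1) reproduces the paper's argument essentially verbatim, and your part (2) follows the same overall strategy: remove the locus where the relevant order functions are infinite, stratify by their levels, pull back cylindrical approximations, and invoke the pseudo-$\AA^e$-bundle result (Lemma \ref{lem:key}) to control measure in the source.

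One concrete imprecision in part (2): you stratify by $\{\fj_f=e\}\cap\{\fj_{\cX}\circ f_\infty=a\}$ and refer to these as ``cylindrical conditions'' and ``stable strata,'' but $\{\fj_f=e\}$ is not cylindrical on its own. Computing $\fj_f(\gamma)$ requires quotienting $\gamma^*\Omega_{\cY/\Df_\Psi}$ by its torsion, and the jet level needed to see the torsion is governed by $\fj_{\cY}(\gamma)$ (equivalently $\ord_{\Jac_f}(\gamma)\ge\fj_f(\gamma)$ by Lemma \ref{lem:relation-naive-jac}), which is not bounded by $e$. The paper instead stratifies by $\ord_{\Jac_f}$, which is the order of an honest ideal sheaf and hence determined by $\gamma_e$, giving genuinely cylindrical strata; correspondingly, the paper removes the (larger, still negligible) locus $(\ord_{\Jac_f})^{-1}(\infty)\cup(\fj_\cX\circ f_\infty)^{-1}(\infty)$, whereas removing only $\{\fj_f=\infty\}\cap B$ leaves a negligible piece where $\ord_{\Jac_f}=\infty$ on which your stratification is not cylindrical. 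As written, your strata are only measurable, so Lemma \ref{lem:key} does not apply to them directly; the fix is to further decompose by $\ord_{\Jac_f}$, which is exactly what the paper does. A smaller point: measurable subsets here do not form a $\sigma$-algebra (a countable union of measurable sets is measurable only when the measure norms tend to zero), though the finite Boolean operations you actually use in part (1) are harmless.
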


\begin{proof}
(1) Let $f_{\infty}^{-1}(A)=\bigsqcup_{i\in\NN}B_{i}$ be a decomposition
as in the definition of measurable functions. Then $h$ is constant
on each $f_{\infty}(B_{i})$. From \ref{lem:measurable-image}, $f_{\infty}(B_{i})$
are measurable and $\mu_{\cX}(f_{\infty}(B_{i}))=0$ if $\mu_{\cY}(B_{i})=0$.
If we put $A_{i}:=f_{\infty}(B_{i})\setminus\bigcup_{j<i}f_{\infty}(B_{j})$,
we have
\[
A=\bigcup_{i\in\NN}f_{\infty}(B_{i})=\bigsqcup_{i\in\NN}A_{i}.
\]
This decomposition of $A$ shows that $h$ is measurable.

(2) Removing the negligible subset $(\ord_{\Jac_{f}})^{-1}(\infty)\cup(\fj_{\cX}\circ f_{\infty})^{-1}(\infty)$,
we may suppose that these two functions are nowhere infinity. For
$e,a\in\NN$, let 
\[
B_{e,a}:=(\ord_{\Jac_{f}})^{-1}(e)\cap(\fj_{\cX}\circ f_{\infty})^{-1}(a),
\]
which is a measurable subset. Then $h|_{f_{\infty}(B_{e,a})}\colon f_{\infty}(B)\to\frac{1}{r}\ZZ\cup\{\infty\}$
is a measurable function. Let $f_{\infty}(B_{e,a})=\bigsqcup_{i}C_{i}$
be a decomposition into countably many measurable subsets such that
for every $i$, $h|_{C_{i}}$ is constant and if $h|_{C_{i}}\equiv0$,
then $C_{i}$ is negligible. From \ref{lem:key}, we see that $f_{\infty}^{-1}(C_{i})$
is also measurable and if $C_{i}$ is negligible, so is $f_{\infty}^{-1}(C_{i})$.
This shows the assertion.
\end{proof}
\begin{lem}
\label{lem:ord-fn-measur}Let $\cI\subset\cO_{\cX}$ be an ideal sheaf
defining a narrow closed substack. Then $\ord_{\cI}\colon|\J_{\infty}\cX|\to\ZZ\cup\{\infty\}$
is a measurable function.
\end{lem}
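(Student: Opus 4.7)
The plan is to partition $|\J_{\infty}\cX|$ into countably many pieces on which $\ord_{\cI}$ is constant and to verify the conditions of Definition \ref{def:meas-fn-integ}. Since $\cI$ is an ideal sheaf, $\ord_{\cI}$ takes values in $\NN\cup\{\infty\}$, so I only need to analyze the level sets $A_n:=\ord_{\cI}^{-1}(n)$ for $n\in\NN$ together with $A_\infty:=\ord_{\cI}^{-1}(\infty)$.

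First I would handle $A_\infty$. For an untwisted arc $\gamma\colon\Df_K\to\cX$, $\ord_{\cI}(\gamma)=\infty$ exactly when $\gamma^{-1}\cI=0$, i.e.\ when $\gamma$ factors through the closed substack $\cZ\hookrightarrow\cX$ defined by $\cI$. Thus $A_\infty=|\J_\infty\cZ|\subset|\J_\infty\cX|$. By hypothesis $\cZ$ is narrow, so Lemma \ref{lem:negl} gives that $A_\infty$ is negligible.

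Next I would show that each $A_n$ is cylindrical, hence measurable. The key unwinding is: for any étale chart $V=\Spf R\to\cX$ and arc $\gamma\colon\Df_K\to V$ corresponding to $\gamma^\sharp\colon R\to K\tbrats$, the pullback $\gamma^{-1}\cI$ is generated by $\gamma^\sharp(I)$, where $I\subset R$ is the ideal corresponding to $\cI|_V$. Hence $\ord_{\cI}(\gamma)\geq n$ iff $\gamma^\sharp(I)\subset(t^n)$, a condition that depends only on the $(n-1)$-jet $\gamma_{n-1}\colon\D_{K,n-1}\to V$ (equivalently, iff $\gamma_{n-1}$ factors through the closed substack of $\cX_{n-1}$ cut out by $\cI$). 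By descent this defines a closed substack $B_n\subset\J_{n-1}\cX$ with $\{\ord_{\cI}\geq n\}=\pi_{n-1}^{-1}(|B_n|)$. To meet the quasi-compactness clause in the definition of cylindrical, write $\Phi=\coprod_{j\in\NN}\Phi_j$ with each $\Phi_j$ of finite type (possible since $\Phi$ is almost of finite type); then $\J_{n-1}\cX_{\Phi_j}$ is a DM stack of finite type, and $|B_n|\cap|\J_{n-1}\cX_{\Phi_j}|$ is a quasi-compact constructible subset. Consequently
\[
A_n\cap|\J_\infty\cX_{\Phi_j}|=\pi_{n-1}^{-1}\bigl(|B_n|\setminus|B_{n+1}|\bigr)\cap|\J_\infty\cX_{\Phi_j}|
\]
is cylindrical of level $n-1$ (or of level $0$ if $n=0$), and in particular measurable.

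Putting the pieces together,
\[
|\J_\infty\cX|=A_\infty\;\sqcup\;\bigsqcup_{(n,j)\in\NN\times\NN}\bigl(A_n\cap|\J_\infty\cX_{\Phi_j}|\bigr)
\]
is a countable decomposition into measurable subsets on which $\ord_{\cI}$ is constant, with the $\infty$-locus being negligible. This is precisely the defining condition of Definition \ref{def:meas-fn-integ}, so $\ord_{\cI}$ is measurable. The only real content is the local-on-$\cX$ verification in the second paragraph that the condition $\ord_{\cI}(\gamma)\geq n$ cuts out a closed substack of $\J_{n-1}\cX$; everything else is a bookkeeping exercise using that $\cZ$ is narrow and that $\Phi$ decomposes into finite-type pieces.
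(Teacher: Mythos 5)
Your proof is correct, and it takes a genuinely different route from the paper's. The paper partitions $|\J_{\infty}\cX|$ by the \emph{pair} of values $(\ord_{\cI},\fj_{\cX})$, handles the $\infty$ cases as negligible, and invokes Lemma~\ref{lem:stability} to show each finite piece $C_{a,b}$ is \emph{stable} of level $\max\{a,b\}$. You instead partition only by $\ord_{\cI}$, observe that each finite level set $A_n$ (over a finite-type piece of $\Phi$) is \emph{cylindrical} because the condition $\ord_{\cI}\ge n$ is a closed condition cut out in $\J_{n-1}\cX$ by the image of $\J_{n-1}\cZ$, and then use the general fact (stated in the text just after the definition of measurable subsets) that cylindrical subsets are measurable, so you never need to bring $\fj_{\cX}$ into the picture at all. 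Your argument is a bit shorter and more self-contained in spirit; the paper's approach buys the stronger conclusion that the pieces are stable, which matters elsewhere but is not needed to establish measurability.

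Two small points, neither affecting correctness. First, $B_{n+1}$ lives in $|\J_n\cX|$ rather than $|\J_{n-1}\cX|$, so the set $A_n$ is cylindrical of level $n$ (via $C_n=(\pi^n_{n-1})^{-1}(|B_n|)\setminus|B_{n+1}|$), not level $n-1$ — a harmless off-by-one. Second, it would be cleaner to say explicitly that $B_n=\J_{n-1}\cZ$ where $\cZ$ is the closed substack defined by $\cI$, and that the closed immersion $\cZ_{n-1}\hookrightarrow\cX_{n-1}$ induces a closed immersion on Weil restrictions by the argument of Lemma~\ref{lem:Hom-closed-imm}; this is what makes the descent-from-charts remark rigorous.
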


\begin{proof}
It suffices to show that for each connected component $\Upsilon$
of $\Phi$, the restriction of $\ord_{\cI}$ to $(\J_{\infty}\cX)\times_{\Phi}\Upsilon$
is measurable. Therefore we may suppose that $\Phi$ is of finite
type. Let $C_{a,b}:=\{\ord_{\cI}=a,\,\fj_{\cX}=b\}$. We have $|\J_{\infty}\cX|=\bigsqcup_{0\le a,b\le\infty}C_{a,b}.$
If $a=\infty$ or $b=\infty$, then $C_{a,b}$ is negligible. Otherwise
$C_{a,b}$ is stable of level $\max\{a,b\}$ from \ref{lem:stability}.
The lemma follows.
\end{proof}
\begin{lem}
\label{lem:jac-measur}With the above notation, the function $\fj_{f}\colon|\J_{\infty}\cY|\to\ZZ\cup\{\infty\}$
is measurable .
\end{lem}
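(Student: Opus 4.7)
The plan is to exhibit $\fj_f$ as a measurable function by stratifying $|\J_\infty\cY|$ according to two measurable functions that dominate its behavior: $\fj_{\cY}$ and $\ord_{\Jac_f}$. First observe that $\cY_{\sing}\subset\cY$ is narrow by the fiberwise generic smoothness hypothesis of \ref{assu:X}, and the closed substack defined by $\Jac_f$ is narrow by the same assumption. Applying \ref{lem:ord-fn-measur} to the ideal sheaves $\Jac_{\cY/\Df_\Psi}$ and $\Jac_f$ respectively, both $\fj_\cY$ and $\ord_{\Jac_f}$ are measurable functions on $|\J_\infty\cY|$. By \ref{lem:relation-naive-jac}, $\fj_f\le\ord_{\Jac_f}$, so in particular $\{\fj_f=\infty\}\subset\{\ord_{\Jac_f}=\infty\}$, and the right-hand side is negligible.

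Outside this negligible set, partition $|\J_\infty\cY|$ into countably many strata
\[
S_{a,b}:=\fj_\cY^{-1}(a)\cap\ord_{\Jac_f}^{-1}(b)\qquad(a,b\in\NN).
\]
Each $S_{a,b}$ is measurable, and is in fact a stable subset by the proof of \ref{lem:stability} (since both conditions cut out level sets of jet-determined invariants), hence each $S_{a,b}$ is cylindrical of some level. On $S_{a,b}$ we have $\fj_f\le b$, so $\fj_f|_{S_{a,b}}$ takes only finitely many nonnegative integer values. It therefore suffices to prove that for each integer $N\le b$, the subset $\{\gamma\in S_{a,b}\,:\,\fj_f(\gamma)=N\}$ is cylindrical.

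The heart of the argument is to show that on $S_{a,b}$, the value $\fj_f(\gamma)$ is determined by a jet of $\gamma$ of bounded order. Work etale locally on $\cY$ using an atlas, so that $\gamma^*\Omega_{\cY/\Df_\Psi}$ and $(f\circ\gamma)^*\Omega_{\cX/\Df_\Phi}$ are given by concrete presentations with entries in $K\tbrats$. On $S_{a,b}$ the torsion submodules of these two pullbacks have lengths $a$ and (at most) something bounded in terms of $b$ respectively, and passing to the torsion-free quotients $\gamma^\flat\Omega_{\cY/\Df_\Psi}$ and $(f\circ\gamma)^\flat\Omega_{\cX/\Df_\Phi}$ is computable modulo $t^{n+1}$ from $\gamma_{n+a}$ (respectively from $(f\circ\gamma)_{n+b}$, which is itself a functorial image of $\gamma_{n+b}$). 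The induced map between these free rank-$d$ modules can therefore be written as a $d\times d$ matrix $M(\gamma)$ whose entries modulo $t^{n+1}$ are determined by $\gamma_{n+a+b+O(1)}$, and $\fj_f(\gamma)=\ord_t\det M(\gamma)$. Since $\fj_f(\gamma)\le b$, the condition $\fj_f(\gamma)=N$ for $N\le b$ can be verified modulo any $t^{n+1}$ with $n>b$; consequently $\{\gamma\in S_{a,b}\,:\,\fj_f(\gamma)=N\}$ is the preimage under $\pi_n$ of a constructible subset of $\J_n\cY$ for $n$ sufficiently large, hence cylindrical.

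Taking the countable disjoint union of these cylindrical level sets over $(a,b,N)$ exhibits a decomposition of $|\J_\infty\cY|\setminus\{\fj_f=\infty\}$ into measurable subsets on each of which $\fj_f$ is constant, proving measurability per \ref{def:meas-fn-integ}. The main obstacle is the step in the third paragraph: making the computation of $\gamma^\flat\Omega$ and of the matrix $M(\gamma)$ uniform enough that its $t$-adic valuation is constant along fibers of $\pi_n\colon\J_\infty\cY\to\J_n\cY$ for some explicit $n=n(a,b,N)$. The key is that, on $S_{a,b}$, the lengths of the torsion subsheaves are bounded, so the loss of information in passing to the torsion-free quotient is controlled, and choosing generators consistently over a sufficiently fine etale cover of $\cY$ reduces the verification to a matter of linear algebra over truncated power series.
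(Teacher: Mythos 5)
Your approach is genuinely different from the paper's. The paper's proof blows up $\cY$ along $\Omega^d_{\cY/\Df_\Psi}\oplus f^*\Omega^d_{\cX/\Df_\Phi}$ to get $h\colon\cZ\to\cY$ making both flat pullbacks $h^\flat\Omega^d_{\cY/\Df_\Psi}$ and $(f\circ h)^\flat\Omega^d_{\cX/\Df_\Phi}$ invertible; then $\fj_f\circ h_\infty$ coincides with $\ord_\cI$ for a locally principal ideal sheaf $\cI$ on $\cZ$, and measurability is transferred back through \ref{lem:ord-fn-measur} and \ref{lem:measurable-fnc}. Your direct stratification-plus-cylindricality argument avoids the blowup, which is a genuine alternative route; but as written your third paragraph contains an actual error, not just an unfinished step.

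The claim that $(f\circ\gamma)^\flat\Omega_{\cX/\Df_\Phi}$ is computable modulo $t^{n+1}$ from $(f\circ\gamma)_{n+b}$ would require $\fj_\cX(f_\infty(\gamma))\le b$ on $S_{a,b}$, and this fails. In characteristic $\ne 2$, take $\cY=\AA^2_{\Df}$ with coordinates $(u,v)$, take $\cX=\Spf\, k\tbrats\{x,y,z\}/(z^2-x^2y)$ (a Whitney umbrella, so $\Jac_{\cX}=(xy,x^2,z)$), and $f\colon(u,v)\mapsto(u,v^2,uv)$, so that $\Jac_f=(u,v)$ and both assumptions of \ref{assu:X} hold. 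For the arc $\gamma$ with $\gamma^*(u)=t^n$, $\gamma^*(v)=t$ ($n\ge1$), one has $\fj_{\cY}(\gamma)=0$ and $\ord_{\Jac_f}(\gamma)=\ord_t(t^n,t)=1$, so $\gamma\in S_{0,1}$; but $\fj_{\cX}(f_\infty(\gamma))=\ord_t(t^{n+2},t^{2n},t^{n+1})=n+1$ is unbounded along the stratum. (Here $\fj_f(\gamma)=1$ for every $n$, so the conclusion is still true; it is the claimed route that breaks.)

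The repair is to never form the torsion-free quotient $(f\circ\gamma)^\flat\Omega^d_{\cX/\Df_\Phi}$ at all. Since the target $\gamma^\flat\Omega^d_{\cY/\Df_\Psi}$ is torsion-free, the image of $(f\circ\gamma)^\flat\Omega^d_{\cX/\Df_\Phi}$ in it equals the image of the ordinary pullback $(f\circ\gamma)^*\Omega^d_{\cX/\Df_\Phi}$. That image $(t^{\fj_f(\gamma)})\subset\gamma^\flat\Omega^d_{\cY/\Df_\Psi}\cong K\tbrats$ is determined once one knows $\gamma^\flat\Omega^d_{\cY/\Df_\Psi}$ modulo $t^{b+1}$, hence by a jet $\gamma_{b+a'}$ where $a'$ is the torsion length of $\gamma^*\Omega^d_{\cY/\Df_\Psi}=\bigwedge^d\gamma^*\Omega_{\cY/\Df_\Psi}$, and $a'$ is bounded on $S_{a,b}$ in terms of $a$ and $d$ alone (the torsion of $\gamma^*\Omega_{\cY/\Df_\Psi}$ has length $a$). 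With this change, and the usual care about carrying constructibility through an etale atlas, your cylindricality argument goes through. The paper's blowup packages exactly this computation into a single locally principal ideal, at the cost of invoking \ref{lem:measurable-fnc} to pull measurability back along $h_\infty$.
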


\begin{proof}
Let $h\colon\cZ\to\cY$ be the blowup along $\Omega_{\cY/\Df_{\Psi}}^{d}\oplus f^{*}\Omega_{\cX/\Df_{\Phi}}^{d}$
(in the sense of \cite{1087.14011}) so that 
\begin{gather*}
h^{\flat}\Omega_{\cY/\Df_{\Psi}}^{d}:=h^{*}\Omega_{\cY/\Df_{\Psi}}^{d}/\tors,\\
(f\circ h)^{\flat}\Omega_{\cX/\Df_{\Phi}}^{d}:=(f\circ h)^{*}\Omega_{\cX/\Df_{\Phi}}^{d}/\tors
\end{gather*}
are invertible sheaves. From assumption \ref{assu:X}, $h_{\infty}\colon|\J_{\infty}\cZ|\to|\J_{\infty}\cY|$
is almost geometrically bijective. The image of $(f\circ h)^{\flat}\Omega_{\cX/\Df_{\Phi}}^{d}\to h^{\flat}\Omega_{\cY/\Df_{\Psi}}^{d}$
is written as $\cI\cdot h^{\flat}\Omega_{\cY/\Df_{\Psi}}^{d}$ for
some locally principal ideal sheaf $\cI$, which defines a narrow
substack. Then $\fj_{f}\circ h_{\infty}$ is equal to $\ord_{\cI}$
outside the negligible subset $\{\fj_{\cX}\circ f_{\infty}\circ h_{\infty}=\infty\}\cup\{\fj_{\cY}\circ h_{\infty}=\infty\}$.
The function $\ord_{\cI}$ is measurable. From \ref{lem:measurable-fnc},
$\fj_{f}$ is measurable.
\end{proof}
The following is the \emph{change of variables formula }(also called
the \emph{transformation rule}) for untwisted arcs in the present
setting.
\begin{thm}
\label{thm:change-vars-I}We keep assumption \ref{assu:X}. Let $A\subset|\J_{\infty}\cY|$
be a measurable subset. Suppose that $f_{\infty}|_{A}$ is almost
geometrically injective. Let $h\colon f_{\infty}(A)\to\frac{1}{r}\ZZ\cup\{\infty\}$
be a function such that $h\circ f_{\infty}$ is measurable. Then 
\[
\int_{f_{\infty}(A)}\LL^{h}\,d\mu_{\cX,Z}=\int_{A}\LL^{h\circ f_{\infty}-\fj_{f}}\,d\mu_{\cY,Z}.
\]
\end{thm}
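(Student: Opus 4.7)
The plan is to reduce to Lemma \ref{lem:key-2} by decomposing $A$ into the common level sets of the two measurable functions $\fj_f$ and $h\circ f_\infty$, and then summing. First, I would set aside the negligible portion of $A$. By Assumption \ref{assu:X} together with Lemma \ref{lem:relation-naive-jac}, the locus $\{\fj_f = \infty\}$ is contained in the arc space of a narrow closed substack of $\cY$, hence is negligible by Lemma \ref{lem:negl}. By the assumed measurability of $h \circ f_\infty$, the locus $(h \circ f_\infty)^{-1}(\infty)$ is also negligible, and Lemma \ref{lem:measurable-image} guarantees that its image in $|\J_\infty \cX|$ is negligible too. Discarding these pieces affects neither side of the formula.

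Next, I would decompose the remaining subset as $A = \bigsqcup_{(e,c) \in \NN \times \frac{1}{r}\ZZ} A_{e,c}$, where $A_{e,c} := \{\gamma \in A \mid \fj_f(\gamma) = e,\ (h \circ f_\infty)(\gamma) = c\}$. Each $A_{e,c}$ is measurable, being the intersection of level sets of measurable functions (using Lemma \ref{lem:jac-measur} and the hypothesis on $h \circ f_\infty$). Since $A_{e,c} \subset A$ and $f_\infty|_A$ is almost geometrically injective, so is $f_\infty|_{A_{e,c}}$. Lemma \ref{lem:key-2} therefore applies and gives
\[
\mu_{\cY,Z}(A_{e,c}) = \mu_{\cX,Z}(f_\infty(A_{e,c})) \cdot \LL^{e}.
\]

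By almost geometric injectivity, after discarding a further negligible subset of $A$, the images $f_\infty(A_{e,c})$ are pairwise disjoint in $|\J_\infty\cX|$ and their union exhausts $f_\infty(A)$ up to a negligible set. Since $h \equiv c$ on $f_\infty(A_{e,c})$, the computation
\[
\int_A \LL^{h \circ f_\infty - \fj_f}\, d\mu_{\cY,Z} = \sum_{(e,c)} \mu_{\cY,Z}(A_{e,c}) \LL^{c-e} = \sum_{(e,c)} \mu_{\cX,Z}(f_\infty(A_{e,c})) \LL^{c} = \int_{f_\infty(A)} \LL^{h}\, d\mu_{\cX,Z}
\]
yields the theorem.

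The main obstacle I expect is the measure-theoretic bookkeeping required to justify treating these as admissible countable decompositions in the sense of Definition \ref{def:meas-fn-integ}. Specifically, I need $h$ itself to be measurable on $f_\infty(A)$ so that the left-hand integral is defined; this follows from Lemma \ref{lem:measurable-fnc}.(1) applied to $h \circ f_\infty$. I also need the norm estimate $\Vert \mu_{\cX,Z}(f_\infty(A_{e,c})) \LL^c \Vert \le \Vert \mu_{\cY,Z}(A_{e,c}) \LL^{c-e} \Vert$, which follows from Lemma \ref{lem:measurable-image} and the fact that $\Vert \cdot \Vert$ is independent of the target base, to transfer the convergence of one side to the other. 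With these in place, the rearrangement of the two sums is legitimate in the complete ring $\widehat{\cM'_{Z,r}}$.
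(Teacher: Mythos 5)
Your proposal is correct and follows essentially the same route as the paper: remove the negligible loci where the relevant functions are infinite, decompose $A$ into countably many measurable pieces on which $h\circ f_{\infty}$ and $\fj_{f}$ are both constant, apply Lemma \ref{lem:key-2} piecewise, and resum. The paper's proof is more terse (and cites Lemma \ref{lem:key} where the quantitative statement actually used is Lemma \ref{lem:key-2}, which you correctly identify), but the decomposition, the key lemma, and the rearrangement in the complete ring are all the same; your extra care with the norm estimates from Lemma \ref{lem:measurable-image} and with the measurability of $h$ via Lemma \ref{lem:measurable-fnc}(1) is sound bookkeeping that the paper leaves implicit.
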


\begin{proof}
Removing a negligible subset from $A$, we may suppose that all relevant
measurable functions are nowhere infinity. There exists a decomposition
$A=\bigsqcup_{i}A_{i}$ into countably many measurable subsets $A_{i}$
such that $h\circ f_{\infty}$ and $\fj_{f}$ are constant on each
$A_{i}$. From \ref{lem:key},
\begin{align*}
\int_{f_{\infty}(A)}\LL^{h}\,d\mu_{\cX,Z} & =\sum_{i}\mu_{\cX,Z}(f_{\infty}(A_{i}))\LL^{h(f_{\infty}(A_{i}))}\\
 & =\sum_{i}\mu_{\cY,Z}(A_{i})\LL^{h(f_{\infty}(A_{i}))-\fj_{f}(A_{i})}\\
 & =\int_{A}\LL^{h\circ f_{\infty}-\fj_{f}}\,d\mu_{\cY,Z}.
\end{align*}
\end{proof}

\section{Motivic integration for twisted arcs\label{sec:integration-twisted}}

In this section, we develop the motivic integration over formal DM
stacks for twisted arcs. Using untwisting stacks, this is reduced
to the motivic integration for untwisted arcs.
\begin{defn}
\label{def:spacious}Let $\cX$ be a formal DM stack of finite type
over $\Df$ and let $\I\cX$ be its inertia stack with the trivial
section $\epsilon\colon\cX\to\I\cX$. We define the \emph{stacky locus
}$\cX_{\st}$ of $\cX$ to be the scheme-theoretic image of the finite
morphism $\I\cX\setminus\epsilon(\cX)\to\cX$. We say that $\cX$
is \emph{spacious }if $\cX_{\st}$ is a narrow closed substack of
$\cX$.
\end{defn}

In the rest of this section, we make the following assumption.
\begin{assumption}
\label{assu:tw}Let $\cX,\cY$ be formal DM stacks of finite type,
flat and of pure relative dimension $d$ over $\Df$. Suppose that
they are spacious and generically smooth over $\Df$. Let $f\colon\cY\to\cX$
be a morphism over $\Df$ such that the Jacobian ideal sheaf $\Jac_{f}$
defines a narrow closed substack of $\cY$.
\end{assumption}

Let $X$ be the coarse moduli space of $\cX$ and let $\overline{\cX_{\st}}\subset X$
be the scheme-theoretic image of $\cX_{\st}$.
\begin{lem}
The subspace $\overline{\cX_{\st}}\subset X$ is narrow. 
\end{lem}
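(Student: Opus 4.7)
The plan is to reduce the assertion to a local check on an etale quotient-stack chart of $\cX$. By Proposition \ref{lem:formal-locally-quot}, there is a stabilizer-preserving, representable, etale, surjective morphism $\coprod_\gamma [W_\gamma/H_\gamma] \to \cX$ whose coarse moduli spaces $V_\gamma = \Spf O_{W_\gamma}^{H_\gamma}$ give an etale cover $\coprod_\gamma V_\gamma \to X$ (this last fact follows from Lemma \ref{lem:coarse-flat-base-change}). Since both the formation of $(-)_{\st}$ and the formation of the scheme-theoretic image under the coarse moduli map commute with stabilizer-preserving etale base change (the first is visible from Lemma \ref{lem:IG}, and the second because the coarse moduli map is finite, so scheme-theoretic image commutes with flat base change), it suffices to verify narrowness in each chart $V_\gamma$.

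Fix such a chart and write $\cX' = [W/H]$ and $V = \Spf O_W^H$ with coarse moduli map $\pi \colon \cX' \to V$. By Lemma \ref{lem:IG} the stacky locus $\cX'_{\st}$ is (up to reduced structure) the image of the $H$-stable closed subscheme $\bigcup_{1 \neq h \in H} W^h \subset W$ in $\cX'$, and $\overline{\cX'_{\st}}$ is its image in $V$. The morphism $\pi$ is proper and quasi-finite, and is induced by the finite surjection $W \to V = W/H$. Hence for any geometric point $x$ of $\cX'$ with image $\bar x \in V$, the local rings $\cO_{\cX',x}$ and $\cO_{V,\bar x}$ have the same Krull dimension, and the same is true when $x$ is restricted to lie in $\cX'_{\st}$ and $\bar x$ in $\overline{\cX'_{\st}}$.

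Consequently, for every geometric point $\bar x$ of $\overline{\cX'_{\st}}$, choosing a lift $x \in \cX'_{\st}$, we get
\[
\dim \cO_{\overline{\cX'_{\st}},\bar x} = \dim \cO_{\cX'_{\st},x}, \qquad \dim \cO_{V,\bar x} = \dim \cO_{\cX',x}.
\]
By hypothesis $\cX$ is spacious, so $\cX'_{\st}$ is narrow in $\cX'$, which means $\dim \cO_{\cX'_{\st},x} < \dim \cO_{\cX',x}$ at every such $x$. Substituting the displayed equalities then yields $\dim \cO_{\overline{\cX'_{\st}},\bar x} < \dim \cO_{V,\bar x}$, which is precisely narrowness of $\overline{\cX'_{\st}}$ in $V$. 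As $\bar x$ and the chart $V_\gamma$ were arbitrary, this gives the narrowness of $\overline{\cX_{\st}}$ in $X$.

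The one technical point requiring care is the dimension-preservation identity $\dim \cO_{\cX',x} = \dim \cO_{V,\bar x}$ in the formal setting, and its extension to geometric fibers over $\Df$ as required by Definition \ref{def:thin}. This is straightforward in the quotient chart because $O_W$ is finite over $O_W^H$ by Lemma \ref{lem:invariant-subring}, so a standard going-up/going-down argument gives the equality of Krull dimensions of local rings, and applying the same argument to each fiber over a geometric point of $\Df$ (which is preserved under flat base change by Lemma \ref{lem:coarse-flat-base-change}) yields the narrowness fiberwise as required.
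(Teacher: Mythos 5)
Your proof is correct and takes essentially the same approach as the paper's: reduce via Proposition \ref{lem:formal-locally-quot} to the quotient chart $[\Spf R/G]$, then use finiteness of $R$ over $R^G$ (Lemma \ref{lem:invariant-subring}) to transfer the narrowness of $\cX_{\st}$ in $\cX$ to the narrowness of its image in the coarse space. The paper's proof is terser, phrasing the conclusion as the image of a nowhere dense closed set under the finite map $\Spec R \to \Spec R^G$ being nowhere dense, but this is exactly the going-up/going-down dimension comparison you spell out.
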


\begin{proof}
From \ref{lem:formal-locally-quot}, we can reduce the lemma to the
case where $\cX$ is a quotient stack $[\Spf R/G]$ with $G$ a finite
group. Let $I\subset R$ be the ideal defining the preimage of $\cX_{\st}$
in $\Spf R$. The assumption that $\cX$ is spacious means that $V(I)\subset\Spec R$
is nowhere dense. Then the image of $V(I)$ in $\Spec R^{G}$ is also
nowhere dense. This shows the lemma.
\end{proof}
From the morphism $\pi_{\Gamma}^{\utg}\colon\Utg_{\Gamma}(\cX)^{\pur}\to X$
as in \ref{prop:Unt-morphism}, we obtain maps
\begin{equation}
|\cJ_{\infty}\cX|\to|\J_{\infty}X|\label{map1}
\end{equation}
and 
\begin{equation}
(\cJ_{\infty}\cX)[K]\to(\J_{\infty}X)(K)\label{map2}
\end{equation}
for each algebraically closed field $K$. These maps send a twisted
arc $\cE\to\cX$ to the arc of $X$ induced by taking coarse moduli
spaces. The notion of twisted arcs is necessary for the following
proposition to holds.
\begin{prop}
\label{lem:almost-bij}Let $\overline{\cX_{\st}}\subset X$ be the
scheme-theoretic image of $\cX_{\st}$. For every algebraically closed
field $K$, the map 
\[
(\cJ_{\infty}\cX)[K]\setminus(\cJ_{\infty}\cX_{\st})[K]\to(\J_{\infty}X)(K)\setminus(\J_{\infty}\overline{\cX_{\st}})(K)
\]
is bijective. In particular, the map $|\cJ_{\infty}\cX|\to|\J_{\infty}X|$
is almost geometrically bijective.
\end{prop}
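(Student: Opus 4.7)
I first reduce the almost-bijectivity claim to the $K$-pointwise bijection. Set $B' := |\J_{\infty}\overline{\cX_{\st}}|$ and $C' := |\cJ_{\infty}\cX_{\st}|$. Then $B'$ is negligible by \ref{lem:negl}, applied to the narrow inclusion $\overline{\cX_{\st}} \subset X$ proved just above; and $C'$ is negligible because the closed immersion $\Utg_{\Gamma}(\cX_{\st})^{\pur} \hookrightarrow \Utg_{\Gamma}(\cX)^{\pur}$ of \ref{lem:closed-immersion-Unt} has relative dimensions over $\Df_{\Gamma}$ equal to those of $\cX_{\st}$ and $\cX$ over $\Df$ by \ref{lem:rel-dim}, so the narrowness of $\cX_{\st}$ (spaciousness of $\cX$) yields narrowness of $\Utg_{\Gamma}(\cX_{\st})^{\pur}$ and hence negligibility by \ref{lem:negl} again. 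It thus suffices to prove the $K$-pointwise bijection.

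\textbf{Construction and uniqueness.} Using \ref{lem:formal-locally-quot} I work étale-locally on $X$ and assume $\cX \cong [\Spf R/G]$ with $X \cong \Spf R^{G}$; then $X \setminus \overline{\cX_{\st}}$ is precisely the locus where $G$ acts freely on $\Spf R$, and $\cX \to X$ is an isomorphism over it. Given an arc $\alpha\colon \Df_{K} \to X$ whose generic restriction $\alpha_{\eta}$ lies outside $\overline{\cX_{\st}}$, this unique-lift property produces a unique $\tilde{\alpha}_{\eta}\colon \Spec K\tpars \to \cX$ lifting $\alpha_{\eta}$, encoded by the étale $G$-torsor $P := \Spec K\tpars \times_{X} \Spf R$ together with the canonical $G$-equivariant projection $\phi\colon P \to \Spf R$. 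Writing $P = \bigsqcup_{G/H} \Spec L$ for a chosen connected component $\Spec L$ with stabilizer $H \subset G$ exhibits a connected Galois extension $L/K\tpars$ with Galoisian group $H$; integrality of $R$ over $R^{G}$ forces the projection $R \to L$ to factor through $R \to O_{L}$, where $O_{L}$ is the integral closure of $K\tbrats$ in $L$. This produces an $H$-equivariant morphism $\Spf O_{L} \to \Spf R$, and hence a representable morphism $[\Spf O_{L}/H] \to [\Spf R/G] = \cX$ whose induced coarse arc on $X$ is $\alpha$. For injectivity, two twisted arcs with identical coarse image yield the same $\tilde{\alpha}_{\eta}$ and hence the same pair $(P, \phi)$; their underlying data $(H_{i}, L_{i})$ are therefore $G$-conjugate connected components of $P$. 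Since each integral model is the $t$-adic integral closure of its generic fiber and each morphism to $\Spf R$ is determined on the generic fiber, the two twisted arcs are equivalent in the sense defined in Section~\ref{sec:jets-arcs}.

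\textbf{Main obstacle.} The principal difficulty is gluing this étale-local analysis into a global statement on $\cX$ and $X$: one must check that the formation of the $G$-torsor $P$, the choice of its connected components, and the resulting integral model behave compatibly under étale transitions of the atlas, using \ref{lem:unt-rep-etale} and the flat base change for coarse moduli spaces of \ref{lem:coarse-flat-base-change}. A related delicate point is that the Galoisian pair $(H, L)$ appearing above is canonical only up to $G$-conjugation; ensuring that this ambiguity matches exactly the equivalence relation built into the definition of equivalence of twisted arcs is the central bookkeeping step of the proof.
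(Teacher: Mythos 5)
Your route is different from the paper's, which constructs $\cE := \bigl((\cX\times_{X}\Df_{K})^{\pur}\bigr)^{\nor}$ directly, uses that spaciousness is inherited to see $\cE$ is a twisted formal disk, and for injectivity produces a canonical representable morphism $\cE'\to\cE$ and shows it is an isomorphism by a degree comparison. Your local reduction to $[\Spf R/G]$ via \ref{lem:formal-locally-quot} and the torsor $P=\Spec K\tpars\times_{X}\Spf R$ is a workable alternative for surjectivity, but note that the ``gluing obstacle'' you flag is not a real one: $|\Df_{K}|$ is a single point, so any given arc $\Df_{K}\to X$ factors through one étale chart, and the claim is a bijection of $K$-points verified one arc at a time; nothing needs to be glued or compared across charts.

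The real gap is in injectivity. Given a twisted arc $\gamma_i\colon[\Spf O_{L_i}/H_i]\to[\Spf R/G]$ over $\alpha$, what the canonicity of $(P,\phi)$ actually buys you is an $H_i$-equivariant map $\Spec L_i\to P$, whence a component $\Spec L$ of $P$ with $H_i\subset\Stab_G(\Spec L)$ and a $K\tpars$-embedding $L\hookrightarrow L_i$. You then assert that $(H_i,L_i)$ \emph{is} (a $G$-conjugate of) a connected component of $P$, i.e.\ that $L_i=L$. That is not automatic: a priori $L$ could be a proper subfield of $L_i$. Ruling this out requires a two-sided degree estimate --- $\sharp H_i\le\sharp\Stab_G(\Spec L)$ from the equivariance together with $[L_i:K\tpars]\ge[L:K\tpars]$ from the field embedding --- which is precisely the comparison the paper carries out via its morphism $\cE'\to\cE$ and the finite \'etale cover $E'\times_{\cE}E\to E'$. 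Your closing sentence (``each morphism to $\Spf R$ is determined on the generic fiber'') explains why the twisted arc is determined by generic data but does not supply the missing equality; as written, the injectivity argument does not close.
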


\begin{proof}
First note that if a twisted arc $\cE\to\cX$ with $\cE$ a twisted
formal disk over $K$ does not factor through $\cX_{\st}$, then the
induced arc $\Df_{K}\to X$ does not factor through $\overline{\cX_{\st}}$
either. Therefore we have the map of the lemma. Conversely, given
an arc $\Df_{K}\to X$ which does not factor through $\overline{\cX_{\st}}$,
we can construct a twisted arc of $\cX$ as follows. Let $\cE$ be
the normalization of $(\cX\times_{X}\Df_{K})^{\pur}$. Since $\cX$
is spacious, so is $\cE$. We conclude that $\cE$ is a twisted formal
disk over $K$. The morphism $\cE\to\cX$ is a twisted arc over $K$,
which maps to the given arc $\Df_{K}\to X$. Therefore the map of
the lemma is surjective.

To show the injectivity, let $\cE'\to\cX$ be a twisted arc and $\Df_{K}\to X$
the induced arc. Let $\cE\to\cX$ be the twisted arc induced from
this arc on $X$ by the above construction. We show that the induced
morphism $\cE'\to\cE$ is an isomorphism. Let $E$ and $E'$ be Galois
covers of $\Df_{K}$ corresponding to $\cE$ and $\cE'$ respectively.
Since the morphism $\cE'\to\cE$ is representable, the Galois group
of $E'\to\Df_{K}$ can be embedded into the one of $E\to\Df_{K}$.
In particular, the degree of $E\to\Df_{K}$ is larger than or equal
to the one of $E'\to\Df_{K}$. The morphism $E'\times_{\cE}E\to E'$
is etale and finite. Therefore each component of $E'\times_{\cE}E$
is isomorphic to $E'$. For a component $F$, the morphism $E'\cong F\to E$
is a finite dominant $\Df$-morphism. But, the degree of $E\to\Df_{K}$
is larger than or equal to the one $E'\to\Df_{K}$. Therefore the
morphism $E'\to E$ is an isomorphism. This shows that the morphism
$\cE'\to\cE$ is an isomorphism. As a consequence, the isomorphism
class of the twisted arc $\cE\to\cX$ is determined by the induced
arc $\Df_{K}\to X$. This means the desired injectivity. We have proved
the first assertion of the proposition. The second assertion immediately
follows.
\end{proof}
\begin{defn}
Let $\Gamma=\Gamma_{\cX}$ be as in \ref{nota:Gamma}. Let $\Gamma\to Z$
be a morphism with $Z$ an algebraic space almost of finite type.
We define the \emph{motivic measure }$\mu_{\cX,Z}$ on $|\cJ_{\infty}\cX|=|\J_{\infty}\Utg_{\Gamma}(\cX)^{\pur}|$
to be $\mu_{\Utg_{\Gamma}(\cX)^{\pur}}$. For a measurable function
$h\colon A\to\frac{1}{r}\ZZ\cup\{\infty\}$ on a subset $A\subset|\cJ_{\infty}\cX|$,
the integral $\int_{A}\LL^{h}\,d\mu_{\cX,Z}\in\widehat{\cM_{Z,r}'}\cup\{\infty\}$
is defined as in \ref{def:meas-fn-integ}.
\end{defn}

Note that this motivic measure is independent of the choice of $\Gamma$.

Consider the following commutative diagram of natural morphisms among
formal DM stacks over $\Df_{\Gamma}$.
\begin{equation}
\xymatrix{\Utg_{\Gamma}(\cX)^{\pur}\times_{\Df_{\Gamma}}\cE_{\Gamma}\ar[d]_{r}\ar[rr]^{s} &  & \Utg_{\Gamma}(\cX)^{\pur}\ar[d]^{\pi^{\utg}}\\
\cX\ar[rr]_{\pi} &  & X
}
\label{diag-Utg-sq}
\end{equation}
Here $r$ is the universal morphism and $s$ is the projection. A
twisted arc $\gamma\colon\cE\to\cX$ say over an algebraically closed
field $K$ corresponds to an arc $\gamma'\colon\Df_{K}\to\Utg_{K}(\cX)^{\pur}$.
These lift to the same $\cE$-morphism $\tilde{\gamma}\colon\cE\to\Utg_{K}(\cX)^{\pur}\times_{\Df_{K}}\cE$.
We define the Jacobian ideal sheaves $\fj_{s}$ and $\fj_{r}$, regarding
$\Utg_{\Gamma}(\cX)^{\pur}\times_{\Df_{\Gamma}}\cE_{\Gamma}$ and
$\Utg_{\Gamma}(\cX)^{\pur}$ as formal DM stacks over $\Df_{\Gamma}$
and regarding $\cX$ as a formal DM stack over $\Df$.
\begin{defn}
\label{def:shift}We define the \emph{shift number }$\fs_{\cX}(\gamma)$
of $\gamma$ to be 
\[
\fs_{\cX}(\gamma):=\fj_{s}(\tilde{\gamma})-\fj_{r}(\tilde{\gamma})\in\ZZ.
\]
\end{defn}

Note that if $\cX$ is a formal algebraic space, then $\fs_{\cX}\equiv0$.
This follows from
\[
\Utg_{\Gamma}(\cX)^{\pur}=\Utg_{\Gamma_{[1]}}(\cX)^{\pur}=\cX.
\]

Since $r$ and $s$ are rig-etale (see \ref{lem:rig-etale-2}), we
always have $\fj_{s}(\tilde{\gamma})\ne\infty$ and $\fj_{r}(\tilde{\gamma})\ne\infty$.
If $\fj_{\pi}(\gamma)=\fj_{\tilde{\pi}}(\gamma)<\infty$, then $\fj_{\pi'}(\gamma)<\infty$.
From \ref{lem:jac-associativity}, outside the negligible subset $\fj_{\pi}^{-1}(\infty)$,
we have 
\begin{equation}
\fs_{\cX}(\gamma)=\fj_{\pi}(\gamma)-\fj_{\pi^{\utg}}(\gamma').\label{eq:s-j-j}
\end{equation}

\begin{lem}
\label{lem:ord-measurable}Let $\cI\subset\cO_{\cX}$ be an ideal
sheaf defining a narrow closed substack. Then the function $\ord_{\cI}$
on $|\cJ_{\infty}\cX|$ is measurable.
\end{lem}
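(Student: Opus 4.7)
The plan is to reduce this to the untwisted counterpart, Lemma \ref{lem:ord-fn-measur}, applied to $\Utg := \Utg_\Gamma(\cX)^{\pur}$ over $\Df_\Gamma$, by manufacturing an ideal sheaf on $\Utg$ whose untwisted order function equals a constant multiple of $\ord_\cI$ under the identification $|\cJ_\infty \cX| = |\J_\infty \Utg|$. Since measurability is preserved under countable disjoint unions and $\Gamma$ has only countably many connected components, I first restrict to a component $\Gamma' \subset \Gamma$ on which the order $N := \sharp G$ of the universal Galoisian group is constant, writing $\Utg' := \Utg_{\Gamma'}(\cX)^{\pur}$.

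Next, form the fiber product $\tilde E := \Utg' \times_{\Df_{\Gamma'}} E_{\Gamma'}$. The first projection $p\colon \tilde E \to \Utg'$ is finite and flat of degree $N$ (base change of $E_{\Gamma'} \to \Df_{\Gamma'}$), and the composition
\[
q\colon \tilde E \to \Utg' \times_{\Df_{\Gamma'}} \cE_{\Gamma'} \xrightarrow{r} \cX
\]
with the universal morphism $r$ is rig-etale, since $r$ is rig-etale by Lemma \ref{lem:rig-etale-2} and $\tilde E \to \Utg' \times_{\Df_{\Gamma'}} \cE_{\Gamma'}$ is etale. I then define
\[
\cI^{\utg} := \mathrm{Fit}_0\bigl(p_*(\cO_{\tilde E}/q^{-1}\cI)\bigr) \subset \cO_{\Utg'},
\]
the $0$th Fitting ideal of the coherent pushforward.

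I claim that for an untwisted arc $\gamma'\colon \Df_K \to \Utg'$ corresponding to a twisted arc $\gamma\colon \cE_K \to \cX$, one has $\ord_{\cI^{\utg}}(\gamma') = N \cdot \ord_\cI(\gamma)$. Indeed, base-changing $\tilde E$ along $\gamma'$ yields $E_K \cong \Spf O_K\llbracket s \rrbracket$ with $t = u\,s^N$ for a unit $u$; this is free of rank $N$ over $O_K\llbracket t\rrbracket$ with basis $1, s, \ldots, s^{N-1}$. By Definition \ref{def:order-function} and the valuation normalization of Definition \ref{def:order}, the pullback of $q^{-1}\cI$ equals $(s^a) \subset O_K\llbracket s\rrbracket$ with $a = N\cdot \ord_\cI(\gamma)$. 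A direct calculation with the matrix of multiplication by $s^a$ in the chosen basis, together with the compatibility of Fitting ideals with base change, yields $\mathrm{Fit}_0(O_K\llbracket s\rrbracket/(s^a)) = (t^a)$, giving $\ord_{\cI^{\utg}}(\gamma') = a$ as claimed. It remains to verify that $\cI^{\utg}$ defines a narrow closed substack of $\Utg'$ over $\Df_{\Gamma'}$, so that Lemma \ref{lem:ord-fn-measur} applies: the support of $\cO_{\Utg'}/\cI^{\utg}$ coincides with $p(V(q^{-1}\cI))$; narrowness of $V(\cI) \subset \cX$ together with rig-etaleness of $q$ gives narrowness of $V(q^{-1}\cI)$ in $\tilde E$ over $\Df_{\Gamma'}$, and finiteness of $p$ transports this to $V(\cI^{\utg})$. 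Flatness and pure relative dimension $d$ of $\Utg'$ over $\Df_{\Gamma'}$ come from \ref{nota:Gamma} and Lemma \ref{lem:rel-dim}, and fiberwise generic smoothness is inherited from the generic smoothness of $\cX$ over $\Df$ (Assumption \ref{assu:tw}) via the rig-etale morphism $r$. Thus Lemma \ref{lem:ord-fn-measur} yields measurability of $\ord_{\cI^{\utg}}$ on $|\J_\infty \Utg'|$, and the constant relation $\ord_\cI = \tfrac{1}{N}\ord_{\cI^{\utg}}$ delivers measurability of $\ord_\cI$ on $|\cJ_\infty \cX|_{\Gamma'}$; summing over components completes the proof.

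The main obstacle will be the narrowness verification in the previous paragraph: one must check carefully that narrowness in the sense of Definition \ref{def:thin}---a condition on Krull dimensions of local rings at geometric points over $\Df_{\Gamma'}$---both propagates under the rig-etale pullback $q$ and persists under the finite surjective image $p$. A secondary subtlety is transferring the global generic smoothness of $\cX$ over $\Df$ (from Assumption \ref{assu:tw}) into fiberwise generic smoothness of $\Utg'$ over $\Df_{\Gamma'}$ as required by the hypotheses of Lemma \ref{lem:ord-fn-measur}.
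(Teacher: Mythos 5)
Your proof is correct, and it takes a genuinely different route from the paper's. The paper argues directly that each level set $(\ord_\cI)^{-1}(a)\cap|\cJ_{\Gamma_0,\infty}\cX|$ is cylindrical: passing to an atlas $\Spec R\to\Gamma_0$ over which the universal torsor is uniformizable, it rewrites twisted $n$-jets as untwisted $(\sharp G(n{+}1)-1)$-jets of $\cX\times_\Df\Spf k\llbracket s\rrbracket$ via the uniformization $E_R=\Spf R\llbracket s\rrbracket$, and then reads off the constructibility of the order condition on a finite jet space. You instead manufacture an ideal sheaf $\cI^{\utg}=\mathrm{Fit}_0(p_*(\cO_{\tilde E}/q^{-1}\cI))$ on $\Utg_{\Gamma'}(\cX)^{\pur}$ and show $\ord_{\cI^{\utg}}=N\cdot\ord_\cI$ under the identification $|\cJ_\infty\cX|=|\J_\infty\Utg_\Gamma(\cX)^{\pur}|$, so that Lemma \ref{lem:ord-fn-measur} applies verbatim. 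Your norm/length computation of the Fitting ideal is correct: $\mathrm{Fit}_0(K\llbracket s\rrbracket/(s^a))=(t^a)$ over $K\tbrats$ since $\length_{K\tbrats}K\llbracket s\rrbracket/(s^a)=a$, and the base-change compatibility of $\mathrm{Fit}_0$ together with the fact that $p_*$ commutes with base change (being affine) justifies the arc-level computation (note the small typo $O_K\llbracket s\rrbracket$ for $K\llbracket s\rrbracket$). Your approach has the advantage of handling the $\infty$ level set automatically (it is exactly $\ord_{\cI^{\utg}}^{-1}(\infty)$, negligible by the untwisted lemma), whereas the paper's argument leaves that case implicit. The narrowness of $V(\cI^{\utg})=p(q^{-1}V(\cI))$, which you rightly flag, does go through: since $\tilde E$ is rig-pure and flat of pure relative dimension $d$ over $\Df_{\Gamma'}$, a component of $q^{-1}V(\cI)$ of full dimension $d+1$ in a fiber would be an entire component of $\tilde E_K$ with nonempty $d$-dimensional rigid generic fiber mapping, via the rig-etale $q$, into the rigid generic fiber of the narrow $V(\cI)_K$, whose dimension is at most $d-1$; this is impossible, and finiteness of $p$ then preserves the dimension bound. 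The remaining hypothesis, that $\Utg_{\Gamma'}(\cX)^{\pur}$ satisfies Assumption \ref{assu:X}, is exactly what the paper invokes in the proof of Theorem \ref{thm:change-vars-II}, so that part is standard.
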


\begin{proof}
Note that for the case of formal schemes, the lemma is well-known
and easy to show. It is straightforward to generalize it to formal
DM stacks as far as \emph{untwisted }arcs are concerned. We will reduce
the lemma to the case of untwisted arcs.

It suffices to show that for each connected component $\Gamma_{0}\subset\Gamma$
and $a\in\QQ_{\ge0}$, the set
\[
C:=(\ord_{\cI})^{-1}(a)\cap|\cJ_{\Gamma_{0},\infty}\cX|
\]
is cylindrical. Whether a twisted arc $\gamma\colon\cE\to\cX$ has
the value $\ord_{\cI}=a$ is determined by the induced twisted $n$-jet
for $n\ge\lfloor a\rfloor$. Namely we have $C=\pi_{n}^{-1}\pi_{n}(C)$.
We need to show that for $n\gg0$, $C_{n}:=\pi_{n}(C)$ is a constructible
subset. To do this, let us take an atlas $\Spec R\to\Gamma_{0}$ such
that the corresponding torsor over $\D_{R}^{*}$ is uniformizable.
Then we can write $\cE_{R}=[\Spf R\llbracket s\rrbracket/G]$. For
each $n$, we have maps:
\[
\xymatrix{\cJ_{\Spec R,n}\cX\ar[r]^{\alpha}\ar[d]_{\beta} & \cJ_{\Gamma_{0},n}\cX\\
\J_{\sharp G\cdot(n+1)-1}(\cX\times_{\Df}\Spf k\llbracket s\rrbracket)
}
\]
The map $\alpha$ is surjective. The $\beta$ sends a twisted $n$-jet
of $\cX$
\[
\left[\left(\Spec K\llbracket s\rrbracket/(t^{n+1})\right)/G\right]\to\cX
\]
to the induced untwisted $(\sharp G(n+1)-1)$-jet of $\cX\times_{\Df}\Spf k\llbracket s\rrbracket$
\begin{multline*}
\Spec K\llbracket s\rrbracket/(t^{n+1})=\Spec K\llbracket s\rrbracket/(s^{\sharp G(n+1)})\to\\
\left[\left(\Spec K\llbracket s\rrbracket/(t^{n+1})\right)/G\right]\to\cX\times_{\Df}\Spf k\llbracket s\rrbracket.
\end{multline*}
Let $\cI'$ be the pullback of $\cI$ to $\cX\times_{\Df}\Spf k\llbracket s\rrbracket$.
For $n\gg0$,  let 
\[
B_{a}\subset|\J_{\sharp G\cdot(n+1)-1}(\cX\times_{\Df}\Spf k\llbracket s\rrbracket)|
\]
be the locus where $\ord\cI'$ has value $\sharp G\cdot a$, which
is constructible. We see that $C_{a}=\alpha(\beta^{-1}(B_{a}))$ is
also constructible, which completes the proof. 
\end{proof}
\begin{lem}
\label{lem:jpi-measurable}The Jacobian order function $\fj_{\pi}$
on $|\cJ_{\infty}\cX|$ associated to the coarse moduli space $\pi\colon\cX\to X$
is measurable.
\end{lem}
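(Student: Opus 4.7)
The plan is to reduce everything to the untwisted measurability statement of Lemma \ref{lem:jac-measur} via equation (\ref{eq:s-j-j}), which rewrites
\[
\fj_\pi(\gamma) = \fs_\cX(\gamma) + \fj_{\pi^\utg}(\gamma')
\]
on the complement of $\fj_\pi^{-1}(\infty)$. By Assumption \ref{assu:tw}, $\Jac_\pi$ defines a narrow closed substack of $\cY$; combined with Lemma \ref{lem:relation-naive-jac} applied to the coarse moduli morphism $\pi$ and Lemma \ref{lem:negl}, this shows that the set where $\fj_\pi = \infty$ is negligible, which is allowed in the definition of a measurable function (Definition \ref{def:meas-fn-integ}). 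Hence it suffices to prove that $\fs_\cX$ and $\fj_{\pi^\utg}$ are each measurable as functions on $|\cJ_\infty \cX| = |\J_\infty \Utg_\Gamma(\cX)^\pur|$.

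For $\fj_{\pi^\utg}$, I apply Lemma \ref{lem:jac-measur} to $\pi^\utg\colon \Utg_\Gamma(\cX)^\pur \to X$, with $\Psi = \Gamma$ and $\Phi = \Spec k$. The source is flat of pure relative dimension $d$ over $\Df_\Gamma$ by Definition \ref{nota:Gamma} and Lemma \ref{lem:rel-dim}, while $X$ is flat of pure relative dimension $d$ over $\Df$ (via Lemma \ref{lem:coarse-flat-base-change} and the fact that $\cX$ has these properties). Fiberwise generic smoothness of both follows from the rig-etaleness of $r$ and $s$ in diagram (\ref{diag-Utg-sq}) (Lemma \ref{lem:rig-etale-2}), which transports the generic smoothness of $\cX$ to $\Utg_\Gamma(\cX)^\pur$ and, via $\pi$, to $X$. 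The narrowness of $\Jac_{\pi^\utg}$ is the key geometric input: since $\cX$ is spacious, $\cX_\st \subset \cX$ is narrow; on the component $\Lambda_{[1]} \subset \Gamma$ corresponding to the trivial group, $\Utg_{\Lambda_{[1]}}(\cX)^\pur = \cX$, and away from $\cX_\st$ the coarse moduli morphism $\pi$ is an isomorphism, so $\pi^\utg$ is generically an isomorphism; consequently $\Jac_{\pi^\utg}$ is supported in the preimage of $\overline{\cX_\st}$, which is narrow.

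For $\fs_\cX$, Definition \ref{def:shift} gives $\fs_\cX = \fj_s - \fj_r$, and both $r$ and $s$ are rig-etale (Lemma \ref{lem:rig-etale-2}), so their Jacobian ideal sheaves are supported in the special fiber and define narrow substacks of $\Utg_\Gamma(\cX)^\pur \times_{\Df_\Gamma} \cE_\Gamma$. Applying Lemma \ref{lem:jac-measur} to each of $r$ and $s$ yields measurability of $\fj_r$ and $\fj_s$ on the appropriate (twisted) arc space; transferring this measurability to $|\cJ_\infty \cX|$ via the canonical diagonal lift $\gamma \mapsto \tilde\gamma$ can be carried out by the same atlas-and-trivialization trick used in Lemma \ref{lem:ord-measurable}: after base-changing to an atlas $\Spec R \to \Gamma_0$ on which the $G$-cover $E_R \to \Df_R$ is uniformizable, twisted jets of $\cX$ are identified with $G$-equivariant untwisted jets of the cover, and the level sets of $\fs_\cX$ are then cut out by cylindrical conditions. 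The main obstacle is the narrowness of $\Jac_{\pi^\utg}$, which essentially depends on the spaciousness of $\cX$; once it is granted, the remaining steps assemble cleanly from the lemmas already proved.
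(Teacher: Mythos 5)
Your proof takes a genuinely different route from the paper's, and while the overall architecture is sensible, there are real gaps that prevent it from assembling cleanly. The paper's own proof of this lemma is much more direct: it applies exactly the blowup construction from the proof of Lemma \ref{lem:jac-measur} to the morphism $\pi\colon\cX\to X$, obtaining a blowup $h\colon\cZ\to\cX$ on which $h^{\flat}\Omega_{\cX/\Df}^{d}$ and $(\pi\circ h)^{\flat}\Omega_{X/\Df}^{d}$ are invertible, writes the image ideal as $\cI\cdot h^{\flat}\Omega_{\cX/\Df}^{d}$, and then invokes Lemma \ref{lem:ord-measurable} (measurability of $\ord_{\cI}$ on the twisted arc space) and Lemma \ref{lem:measurable-fnc} to finish. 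No shift functions appear at all.

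You instead propose to rewrite $\fj_\pi = \fs_\cX + \fj_{\pi^\utg}$ and prove each term measurable separately. Two issues arise. First, note the paper's own logical ordering runs the other way: Corollary \ref{cor:sht-measurable} derives measurability of $\fs_\cX$ \emph{from} Lemma \ref{lem:jpi-measurable}, precisely because the direct blowup argument for $\fj_\pi$ is cleaner than any direct argument for $\fs_\cX$. Your plan inverts this, so you must prove measurability of $\fs_\cX$ from scratch, and here the proposal is thin. Writing $\fs_\cX = \fj_s - \fj_r$ from Definition \ref{def:shift} is fine, but $\fj_s$ and $\fj_r$ are not functions on any untwisted arc space of $\Utg_\Gamma(\cX)^\pur\times_{\Df_\Gamma}\cE_\Gamma$; they are evaluated on the lift $\tilde\gamma\colon\cE\to\Utg_K(\cX)^\pur\times_{\Df_K}\cE$, which is itself a \emph{twisted} arc of a stack that sits over $\Df_\Gamma$, not over $\Df$. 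Lemma \ref{lem:jac-measur} is an untwisted-arc statement and Lemma \ref{lem:measurable-fnc} transfers along morphisms satisfying Assumption \ref{assu:X}; neither applies out of the box to the assignment $\gamma\mapsto\tilde\gamma$, which is not a morphism of the required type. The claim that "the level sets of $\fs_\cX$ are cut out by cylindrical conditions" via the atlas-and-trivialization trick is plausible in spirit but is exactly the kind of thing Lemma \ref{lem:ord-measurable} was set up to handle for a single ideal sheaf on $\cX$; adapting it to two Jacobian order functions on $\Utg_\Gamma(\cX)^\pur\times_{\Df_\Gamma}\cE_\Gamma$, and then showing the pullback along $\gamma\mapsto\tilde\gamma$ preserves measurability, is a substantial amount of unrecorded work.

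Second, even the easier half, measurability of $\fj_{\pi^\utg}$, requires one to verify the hypotheses of Lemma \ref{lem:jac-measur} and hence Assumption \ref{assu:X} for $\pi^\utg$: flatness and pure relative dimension of $\Utg_\Gamma(\cX)^\pur$ over $\Df_\Gamma$ are granted by the construction of $\Gamma$, but narrowness of $\Jac_{\pi^\utg}$ needs more care than "on $\Lambda_{[1]}$ the morphism is an isomorphism away from $\cX_\st$": on the components of $\Gamma$ indexed by a nontrivial Galoisian group, $\pi^\utg$ is nowhere an isomorphism, and the correct input is rig-étaleness of the two arrows in diagram (\ref{diag-Utg-sq}) composed with the fact that $\pi$ is an isomorphism outside $\cX_\st$. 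That can be repaired, but as written your justification addresses only one component. In short, the paper's one-paragraph argument via the blowup and Lemma \ref{lem:ord-measurable} is the intended route; the decomposition you pursue trades a short direct argument for two harder measurability claims on a more complicated stack, and the transfer step in the middle has not actually been reduced to the cited lemmas.
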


\begin{proof}
As in the proof of \ref{lem:jac-measur}, we take a blowup $h\colon\cZ\to\cX$
such that $h^{\flat}\Omega_{\cX/\Df}^{d}$ and $(\pi\circ h)^{\flat}\Omega_{X/\Df}^{d}$
are invertible sheaves. The image of $(\pi\circ h)^{\flat}\Omega_{X/\Df}^{d}\to h^{\flat}\Omega_{\cX/\Df}^{d}$
is written as $\cI\cdot h^{\flat}\Omega_{\cX/\Df}^{d}$ for some locally
principal ideal sheaf $\cI\subset\cO_{\cZ}$, which defines a narrow
substack. Then $\fj_{\pi}\circ h_{\infty}$ is equal to $\ord_{\cI}$
outside a negligible subset. Now the lemma follows from \ref{lem:ord-measurable}
and \ref{lem:measurable-fnc}.
\end{proof}
\begin{cor}
\label{cor:sht-measurable}The function $\fs_{\cX}$ on $|\cJ_{\infty}\cX|$
is measurable.
\end{cor}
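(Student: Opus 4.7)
The plan is to use equation (\ref{eq:s-j-j}), which on the complement of the negligible subset $\fj_\pi^{-1}(\infty) \subset |\cJ_\infty \cX|$ gives
\[
\fs_\cX \;=\; \fj_\pi \,-\, \fj_{\pi^{\utg}},
\]
where $\pi\colon \cX\to X$ is the coarse moduli space morphism and $\pi^{\utg}\colon \Utg_\Gamma(\cX)^{\pur}\to X$ is the morphism from Proposition \ref{prop:Unt-morphism}. Since measurability of a function only requires negligibility of the locus where it takes the value $\infty$, it suffices to show that each of $\fj_\pi$ and $\fj_{\pi^{\utg}}$ is measurable; the extension of $\fs_\cX$ across the negligible set $\fj_\pi^{-1}(\infty)$ is then automatic.

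The first summand $\fj_\pi$ is measurable by Lemma \ref{lem:jpi-measurable}. For the second, I identify $|\cJ_\infty \cX|=|\J_\infty \Utg_\Gamma(\cX)^{\pur}|$ and view $\fj_{\pi^{\utg}}$ as the Jacobian order function attached to $\pi^{\utg}$ on the \emph{untwisted} arc space of $\Utg_\Gamma(\cX)^{\pur}$. To bring this under Lemma \ref{lem:jac-measur}, I check Assumption \ref{assu:X} for the morphism $\pi^{\utg}$ with $\Psi=\Gamma$ and $\Phi=\Spec k$: pure relative dimension $d$ of $\Utg_\Gamma(\cX)^{\pur}$ over $\Df_\Gamma$ is exactly Lemma \ref{lem:rel-dim}; fiberwise generic smoothness follows from the rig-etaleness of diagram (\ref{diag-Utg-sq}) established in Lemma \ref{lem:rig-etale-2}, combined with the generic smoothness of $\cX$ assumed in Assumption \ref{assu:tw}; and the narrowness of $\Jac_{\pi^{\utg}}$ is to be deduced from the spaciousness of $\cX$.

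Granting the narrowness of $\Jac_{\pi^{\utg}}$, Lemma \ref{lem:jac-measur} delivers measurability of $\fj_{\pi^{\utg}}$. The difference $\fj_\pi - \fj_{\pi^{\utg}}$ is then measurable on the complement of $\fj_\pi^{-1}(\infty)$, which is negligible (being the infinite-value locus of a measurable function). Setting $\fs_\cX$ to any constant value on this negligible set yields a measurable function on all of $|\cJ_\infty \cX|$.

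The main obstacle I expect is the narrowness of $\Jac_{\pi^{\utg}}$. The idea is that outside the preimage of the scheme-theoretic image $\overline{\cX_\st}\subset X$ the morphism $\pi^{\utg}$ should be étale: only the untwisted component $\Gamma\supset\Lambda_{[1]}=\Spec k$ contributes untwisting stack $\cX$ itself, and representability of a non-trivially twisted arc $\cE\to\cX$ forces the image to meet $\cX_\st$. Étale-locally one may apply the quotient description of Proposition \ref{lem:formal-locally-quot} and the identifications around (\ref{eq:utg-N_H}), and transfer narrowness across the rig-etale square (\ref{diag-Utg-sq}) using the spaciousness of $\cX$ (Assumption \ref{assu:tw}). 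This geometric translation across the untwisting diagram is the only non-formal step; the remainder is a direct combination of Lemmas \ref{lem:jpi-measurable} and \ref{lem:jac-measur}.
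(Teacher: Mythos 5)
Your proposal follows exactly the paper's argument: use equation (\ref{eq:s-j-j}) to write $\fs_{\cX}=\fj_{\pi}-\fj_{\pi^{\utg}}$ outside the negligible set $\fj_{\pi}^{-1}(\infty)$, then cite Lemma \ref{lem:jpi-measurable} for $\fj_{\pi}$ and Lemma \ref{lem:jac-measur} (applied to $\pi^{\utg}\colon\Utg_{\Gamma}(\cX)^{\pur}\to X$ on the identified untwisted arc space $|\J_{\infty}\Utg_{\Gamma}(\cX)^{\pur}|=|\cJ_{\infty}\cX|$) for $\fj_{\pi^{\utg}}$. Your extra unpacking of Assumption \ref{assu:X} for $\pi^{\utg}$ -- pure relative dimension via Lemma \ref{lem:rel-dim}, generic smoothness and narrowness of $\Jac_{\pi^{\utg}}$ via the rig-etaleness of Lemma \ref{lem:rig-etale-2} together with spaciousness -- is the same verification the paper performs implicitly when it invokes Lemma \ref{lem:rig-etale-2} in the proof of Theorem \ref{thm:change-vars-II} to justify that $f^{\utg}$ satisfies Assumption \ref{assu:X}.
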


\begin{proof}
The functions $\fj_{\pi}$ and $\fj_{\pi'}$ are measurable from \ref{lem:jpi-measurable}
and \ref{lem:jac-measur} respectively. From (\ref{eq:s-j-j}), $\fs_{\cX}$
is also measurable.
\end{proof}
\begin{lem}
\label{lem:jac-shift-relation}Outside a negligible subset of $|\cJ_{\infty}\cY|$,
we have 
\[
\fs_{\cY}-\fs_{\cX}\circ f_{\infty}=\fj_{f}-\fj_{(\pi\circ f)^{\utg}}+\fj_{\pi^{\utg}}\circ f_{\infty}.
\]
In particular, if $\cX$ is a formal algebraic space, then
\[
\fs_{\cY}=\fj_{f}-\fj_{f^{\utg}}.
\]
\end{lem}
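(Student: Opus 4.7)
The plan is to reduce the assertion to a routine manipulation of Jacobian orders using formula (\ref{eq:s-j-j}) and the chain rule (Lemma \ref{lem:jac-associativity}) applied to two different factorizations of the composite $\cY\to X$. Let $\pi=\pi_{\cX}\colon\cX\to X$ and $q=\pi_{\cY}\colon\cY\to Y$ be the coarse moduli maps, and let $\bar{f}\colon Y\to X$ be the morphism induced by $f$ on coarse moduli spaces, so that $\pi\circ f=\bar{f}\circ q$. For a twisted arc $\gamma\in|\cJ_{\infty}\cY|$, write $\gamma'$ for the corresponding untwisted arc of $\Utg_{\Gamma_{\cY}}(\cY)^{\pur}$ and $(f\circ\gamma)'$ for the untwisted arc of $\Utg_{\Gamma_{\cX}}(\cX)^{\pur}$ corresponding to the twisted arc $f\circ\gamma$ of $\cX$; with this convention $\fj_{\pi^{\utg}}\circ f_{\infty}$ in the statement means $\fj_{\pi^{\utg}}((f\circ\gamma)')$. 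All equalities below are to hold outside a negligible subset where each Jacobian order is finite; the measurability and negligibility statements needed for this are supplied by \ref{lem:ord-measurable}, \ref{lem:jpi-measurable}, \ref{lem:jac-measur}, \ref{cor:sht-measurable}, together with \ref{lem:negl} and \ref{lem:measurable-image}.

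First I would apply (\ref{eq:s-j-j}) to rewrite
\[
\fs_{\cY}(\gamma)-\fs_{\cX}(f\circ\gamma)=\bigl(\fj_{q}(\gamma)-\fj_{q^{\utg}}(\gamma')\bigr)-\bigl(\fj_{\pi}(f\circ\gamma)-\fj_{\pi^{\utg}}((f\circ\gamma)')\bigr).
\]
Next I would apply Lemma \ref{lem:jac-associativity} to the factorization $\pi\circ f=\bar{f}\circ q$ of $\Df$-morphisms of formal DM stacks, obtaining
\[
\fj_{f}(\gamma)+\fj_{\pi}(f\circ\gamma)=\fj_{\pi\circ f}(\gamma)=\fj_{q}(\gamma)+\fj_{\bar{f}}(q_{\infty}(\gamma)),
\]
and to its untwisted counterpart $(\pi\circ f)^{\utg}=\bar{f}\circ q^{\utg}$, obtaining
\[
\fj_{(\pi\circ f)^{\utg}}(\gamma')=\fj_{q^{\utg}}(\gamma')+\fj_{\bar{f}}(q^{\utg}(\gamma')).
\]
The crux is then the identification $q^{\utg}(\gamma')=q_{\infty}(\gamma)$, which is exactly the defining property (1) of $q^{\utg}=\pi_{\cY}^{\utg}$ in Proposition \ref{prop:Unt-morphism}: the untwisted arc of $Y$ that $q^{\utg}$ assigns to $\gamma'$ is precisely the one obtained from the twisted arc $\gamma$ by post-composition with $q$ followed by the universal map to the coarse moduli space. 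Substituting the two chain-rule identities back into the first display makes the $\fj_{\bar{f}}$-terms cancel and yields the claimed formula.

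The ``in particular'' case is then immediate: when $\cX$ is a formal algebraic space, $\pi$ is the identity, so $\fs_{\cX}\equiv 0$ and $\fj_{\pi^{\utg}}\equiv 0$, and $(\pi\circ f)^{\utg}=f^{\utg}$, which collapses the general formula to $\fs_{\cY}=\fj_{f}-\fj_{f^{\utg}}$. The one step I expect will require genuine (if brief) justification is the untwisted chain rule $(\pi\circ f)^{\utg}=\bar{f}\circ q^{\utg}$: both are $\Df$-morphisms $\Utg_{\Gamma_{\cY}}(\cY)^{\pur}\to X$, and I would identify them by pulling back along any flat atlas $\Spf A\to\Utg_{\Gamma_{\cY}}(\cY)^{\pur}$ corresponding to a representable morphism $\cE_{\Gamma_{\cY}}\times\Spf A\to\cY$ and observing that, by property (1) of Proposition \ref{prop:Unt-morphism}, both composites $\Spf A\to X$ agree with the morphism obtained from $\cE_{\Gamma_{\cY}}\times\Spf A\to\cY\xrightarrow{f}\cX\xrightarrow{\pi}X$ by passing to coarse moduli spaces; uniqueness in clause (2) of that proposition then forces the two morphisms to coincide.
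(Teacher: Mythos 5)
Your proof is correct, but it follows a genuinely different route than the paper's. The paper proves the special case $\fs_{\cY}=\fj_{f}-\fj_{f^{\utg}}$ directly and first: it applies the chain rule (\ref{lem:jac-associativity}) to the commutativity $f\circ r=f^{\utg}\circ s$ in the $f$-version of diagram (\ref{diag-Utg-sq}), yielding $\fj_{s}(\tilde\gamma)-\fj_{r}(\tilde\gamma)=\fj_{f}(\gamma)-\fj_{f^{\utg}}(\gamma')$, i.e. $\fs_{\cY}=\fj_{f}-\fj_{f^{\utg}}$ by the definition of $\fs_{\cY}$. It then applies this special case to the two morphisms $\pi\circ f\colon\cY\to X$ and $\pi\colon\cX\to X$, subtracts, and uses a single further chain rule $\fj_{\pi\circ f}=\fj_{f}+\fj_{\pi}\circ f_{\infty}$. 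Your proposal instead starts from formula (\ref{eq:s-j-j}) applied to both $\cY$ and $\cX$ (which encodes the same special case, but only for the coarse moduli projections $q$ and $\pi$), and then routes the calculation through the coarse moduli space $Y$ of $\cY$ and the induced map $\bar f\colon Y\to X$, using the two factorizations $\pi\circ f=\bar f\circ q$ and $(\pi\circ f)^{\utg}=\bar f\circ q^{\utg}$ and cancelling the $\fj_{\bar f}$-terms. This works, but it buys the result at the cost of the extra verification $(\pi\circ f)^{\utg}=\bar f\circ q^{\utg}$, which the paper's argument avoids entirely; your sketch of how to check this (comparison on flat atlases via property (1) of Proposition \ref{prop:Unt-morphism}) is sound, though the reference to "uniqueness in clause (2)" is not quite the right citation --- clause (2) is about restriction along $\Sigma'\to\Sigma$, whereas what you really need is that two $\Df$-morphisms from the formal DM stack $\Utg_{\Gamma_{\cY}}(\cY)^{\pur}$ to the formal algebraic space $X$ agree once they agree after pullback along a flat (e.g.\ etale) atlas. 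Also note that the paper derives the "in particular" clause independently as the base case of its argument, whereas you recover it from the general identity; both are valid, but your presentation makes the special case logically downstream of the general one, which is fine provided you take (\ref{eq:s-j-j}) as given from the discussion preceding the lemma rather than as a consequence of the lemma itself.
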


\begin{proof}
In this proof, we ignore negligible subsets and equalities below hold
outside some negligible subset. First consider the case where $\cX$
is a formal algebraic space. To a twisted arc $\gamma$ on $\cY$,
we define $\tilde{\gamma}$ and $\gamma'$ as above. Consider a diagram
similar to (\ref{diag-Utg-sq}) with $f$ in place of $\pi$. From
\ref{lem:jac-associativity},
\begin{align*}
\fj_{f}(\gamma)+\fj_{r}(\tilde{\gamma}) & =\fj_{f\circ r}(\tilde{\gamma})\\
 & =\fj_{f^{\utg}\circ s}(\tilde{\gamma})\\
 & =\fj_{s}(\tilde{\gamma})+\fj_{f^{\utg}}(\gamma').
\end{align*}
Therefore
\[
\fs_{\cY}(\gamma)=\fj_{s}(\tilde{\gamma})-\fj_{r}(\tilde{\gamma})=\fj_{f}(\gamma)-\fj_{f^{\utg}}(\gamma').
\]
Thus the second assertion of the lemma holds. We then apply this to
$\pi$ and $\pi\circ f$ to get 
\begin{align*}
\fs_{\cY} & =\fj_{\pi\circ f}-\fj_{(\pi\circ f)^{\utg}},\\
\fs_{\cX} & =\fj_{\pi}-\fj_{\pi^{\utg}}.
\end{align*}
Therefore
\begin{align*}
\fs_{\cY}-\fs_{\cX}\circ f_{\infty} & =\fj_{\pi\circ f}-\fj_{\pi}\circ f_{\infty}-\fj_{(\pi\circ f)^{\utg}}+\fj_{\pi^{\utg}}\circ f_{\infty}\\
 & =\fj_{f}-\fj_{(\pi\circ f)^{\utg}}+\fj_{\pi^{\utg}}\circ f_{\infty},
\end{align*}
where the last equality follows from \ref{lem:jac-associativity}.
We have obtained the first equality of the lemma.
\end{proof}
\begin{rem}
If we constructed $f^{\utg}\colon\Utg_{\Gamma_{\cY}}(\cY)^{\pur}\to\Utg_{\Gamma_{\cX}}(\cX)^{\pur}$
(see \ref{rem:utg-map-general}), then we would get
\[
\fs_{\cY}-\fs_{\cX}\circ f_{\infty}=\fj_{f}-\fj_{f^{\utg}}
\]
in place of the equality of \ref{lem:jac-shift-relation}.
\end{rem}

\begin{cor}
The function $\fj_{f}$ on $|\cJ_{\infty}\cY|$ is measurable.
\end{cor}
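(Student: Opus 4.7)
The plan is to derive measurability of $\fj_f$ from Lemma \ref{lem:jac-shift-relation} combined with the measurability results for untwisted Jacobians and shift functions established earlier in this section. Applying the second assertion of Lemma \ref{lem:jac-shift-relation} to the composite morphism $\pi \circ f \colon \cY \to X$, where $\pi \colon \cX \to X$ is the coarse moduli map to the formal algebraic space $X$ (Proposition \ref{lem:formal-locally-quot}), I obtain
\[
\fj_{\pi \circ f} = \fs_{\cY} + \fj_{(\pi \circ f)^{\utg}}
\]
outside a negligible subset of $|\cJ_{\infty}\cY|$. The first summand is measurable by Corollary \ref{cor:sht-measurable}, and the second by Lemma \ref{lem:jac-measur} applied to the $\Df$-morphism $(\pi \circ f)^{\utg}\colon \Utg_{\Gamma}(\cY)^{\pur} \to X$ produced in Proposition \ref{prop:Unt-morphism}: its target is a formal algebraic space so Assumption \ref{assu:X} holds, and we use the canonical identification $|\cJ_{\infty}\cY| = |\J_{\infty}\Utg_{\Gamma}(\cY)^{\pur}|$. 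Thus $\fj_{\pi \circ f}$ is measurable on $|\cJ_{\infty}\cY|$.

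By Lemma \ref{lem:jac-associativity}, outside a negligible subset one has $\fj_f = \fj_{\pi \circ f} - \fj_{\pi} \circ f_{\infty}$, so it remains to show that $\fj_{\pi} \circ f_{\infty}$ is measurable on $|\cJ_{\infty}\cY|$. Following the blowup technique in the proof of Lemma \ref{lem:jpi-measurable}, there is a proper birational morphism $h \colon \cZ \to \cX$ and a locally principal ideal sheaf $\cI \subset \cO_{\cZ}$ defining a narrow substack such that $\fj_{\pi} \circ h_{\infty} = \ord_{\cI}$ outside a negligible subset. Forming the fiber product $\cY' := (\cY \times_{\cX} \cZ)^{\pur}$, the induced morphism $\cY' \to \cY$ is a proper birational modification, and the pullback $\fj_{\pi} \circ f_{\infty}$ is computed (outside a negligible subset) as $\ord_{\cI \cdot \cO_{\cY'}}$. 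This order function is measurable on $|\cJ_{\infty}\cY'|$ by Lemma \ref{lem:ord-measurable}, and the transfer mechanism used implicitly in the proofs of Lemmas \ref{lem:jac-measur} and \ref{lem:jpi-measurable} (passing through the identification $|\cJ_{\infty}\cY| = |\J_{\infty}\Utg_{\Gamma}(\cY)^{\pur}|$ and invoking Lemma \ref{lem:measurable-fnc}(1)) then yields measurability of $\fj_{\pi} \circ f_{\infty}$ on $|\cJ_{\infty}\cY|$, and hence of $\fj_f$.

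The main obstacle lies in carrying out the fiber-product blowup construction $\cY' \to \cY$ rigorously and verifying the almost geometric bijectivity required to apply the untwisted pullback-measurability principle Lemma \ref{lem:measurable-fnc}(1): one must lift the almost geometric bijectivity of $h_{\infty}$ across the fiber product with $\cY$ and then reinterpret it on the untwisting side after a possible refinement of $\Gamma$. Once this technical bookkeeping is in place, the decomposition $\fj_f = \fj_{\pi \circ f} - \fj_{\pi} \circ f_{\infty}$ expresses $\fj_f$ as a difference of measurable functions, completing the proof.
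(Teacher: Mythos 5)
Your proof starts along the same lines as the paper's: indeed, combining the second assertion of Lemma \ref{lem:jac-shift-relation} applied to $\pi \circ f$ with Lemma \ref{lem:jac-associativity} to write $\fj_f = \fj_{\pi\circ f} - \fj_\pi\circ f_\infty = \fs_\cY + \fj_{(\pi\circ f)^{\utg}} - \fj_\pi\circ f_\infty$ is just a rederivation of the \emph{first} assertion of Lemma \ref{lem:jac-shift-relation} stopped one step short. Where you diverge is the last term. The paper takes one more step along the same path: it applies the second assertion of Lemma \ref{lem:jac-shift-relation} to $\pi$ itself to get $\fj_\pi\circ f_\infty = \fs_\cX\circ f_\infty + \fj_{\pi^{\utg}}\circ f_\infty$, so that the decomposition becomes $\fj_f = \fs_\cY - \fs_\cX\circ f_\infty + \fj_{(\pi\circ f)^{\utg}} - \fj_{\pi^{\utg}}\circ f_\infty$, and measurability is read off summand by summand from Corollary \ref{cor:sht-measurable}, Lemma \ref{lem:jac-measur}, and Lemma \ref{lem:measurable-fnc}. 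You instead try to show $\fj_\pi\circ f_\infty$ is measurable directly by importing the blowup argument from Lemma \ref{lem:jpi-measurable}: you form $\cY' := (\cY\times_\cX\cZ)^{\pur}$ and want to realize $\fj_\pi\circ f_\infty$ as (the push-down along $\cY'\to\cY$ of) the order function $\ord_{\cI\cdot\cO_{\cY'}}$.

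This alternative route introduces real work the paper avoids, and you honestly flag it. To make it go through you would need: (i) that the scheme-theoretic analogue of $\cY'\to\cY$ is a pseudo-modification in the sense of \ref{sec:Stringy-invariants}, which does hold since $h$ is representable proper birational and $\cY$ is rig-pure, but has to be argued; (ii) that $\ord_{\cI}\circ h_\infty$ on $|\cJ_\infty\cZ|$ pulls back, after base change to $\cY'$, to $\ord_{\cI\cdot\cO_{\cY'}}$ on $|\cJ_\infty\cY'|$ and that this indeed computes $\fj_\pi\circ f_\infty\circ g_\infty$ outside a negligible set (this is fine for order functions of ideal sheaves, but requires verifying that the identity $\fj_\pi\circ h_\infty = \ord_\cI$ survives composition with $\cY'\to\cZ$, which in turn needs care with the various negligible loci); and (iii) a clean descent statement from $|\cJ_\infty\cY'|$ to $|\cJ_\infty\cY|$ via the untwisted picture, since $\Gamma_{\cY'}$ and $\Gamma_{\cY}$ need not coincide and Lemma \ref{lem:measurable-fnc}(1) as stated lives over a fixed $\Gamma$. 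None of this is wrong in spirit, but it is a nontrivial patch of bookkeeping that the paper sidesteps completely by one further application of Lemma \ref{lem:jac-shift-relation}. The cleaner move is simply to note that $\fj_\pi\circ f_\infty = \fs_\cX\circ f_\infty + \fj_{\pi^{\utg}}\circ f_\infty$ and cite the existing measurability results, which is what the paper does. I'd recommend replacing your blowup construction with that decomposition.
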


\begin{proof}
From \ref{lem:jac-shift-relation}, outside a negligible subset, we
have
\[
\fj_{f}=\fs_{\cY}-\fs_{\cX}\circ f_{\infty}+\fj_{(\pi\circ f)^{\utg}}-\fj_{\pi^{\utg}}\circ f_{\infty}.
\]
From \ref{lem:measurable-fnc}, \ref{lem:jac-measur} and \ref{cor:sht-measurable},
the four functions on the right side are measurable. Therefore $\fj_{f}$
is also measurable.
\end{proof}
Generalizing maps (\ref{map1}) and (\ref{map2}) associated to $\pi\colon\cX\to X$,
we now define maps $|\cJ_{\infty}\cY|\to|\cJ_{\infty}\cX|$ and $(\cJ_{\infty}\cY)[K]\to(\cJ_{\infty}\cX)[K]$
associated to the given morphism $f\colon\cY\to\cX$. For a twisted
arc $\gamma\colon\cE\to\cY$ over an algebraically closed field $K$,
let 
\[
\cE\to\cE'\xrightarrow{\gamma'}\cX
\]
be the canonical factorization of the composition $\cE\to\cY\to\cX$
as in \cite[Lem. 25]{MR2271984} (cf. \cite[Th. 3.1]{MR2786662})
so that $\gamma'$ is a twisted arc. Note that the paper \cite{MR2271984}
treats the non-formal situation, but it is straightforward to generalize
it to the formal situation. If $\cE=[E/G]$ for a $G$-cover $E\to\Df_{K}$
and if $y$ and $x$ denote the induced $K$-points of $\cY$ and
$\cX$ respectively, then we have maps
\[
G\to\Aut_{\cY}(y)\to\Aut_{\cX}(x),
\]
the left one being injective. Let $N\lhd G$ be the kernel of this
composite map. Then $\cE'\cong[(E/N)/(G/N)]$. We have the map
\[
(\cJ_{\infty}\cY)[K]\to(\cJ_{\infty}\cX)[K],\,[\gamma]\mapsto[\gamma']
\]
as well as 
\[
|\cJ_{\infty}\cY|\to|\cJ_{\infty}\cX|.
\]
We denote these maps by $f_{\infty}$. We say that $f_{\infty}|_{A}$
is \emph{geometrically injective }if for every algebraically closed
field $K$, $f_{\infty}|_{A[K]}$ is injective. We say that $f_{\infty}|_{A}$
is \emph{almost geometrically injective }if there exists a negligible
subset $A'$ such that $f_{\infty}|_{A\setminus A'}$ is geometrically
injective.

The following is the \emph{change of variables formula} for twisted
arcs.
\begin{thm}
\label{thm:change-vars-II}We keep Assumption \ref{assu:tw}. Let
$A\subset|\cJ_{\infty}\cY|$ be a subset such that $f_{\infty}|_{A}$
is almost geometrically injective. Let $h\colon f_{\infty}(A)\to\frac{1}{r}\ZZ\cup\{\infty\}$
be a measurable function. Then 
\[
\int_{f_{\infty}(A)}\LL^{h+\fs_{\cX}}\,d\mu_{\cX}=\int_{A}\LL^{h\circ f_{\infty}-\fj_{f}+\fs_{\cY}}\,d\mu_{\cY}\in\widehat{\cM_{k,r}'}\cup\{\infty\}.
\]
\end{thm}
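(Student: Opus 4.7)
The plan is to reduce the statement to the change of variables formula for untwisted arcs (Theorem \ref{thm:change-vars-I}) by applying it to the untwisting morphism. The reduction proceeds in two stages: first I would prove the theorem in the special case where $\cX$ is a formal algebraic space (where $\fs_\cX\equiv0$), then I would bootstrap to the general case by using the coarse moduli space $\pi\colon\cX\to X$ as an intermediary.

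\textbf{Step 1: The case where $\cX = X$ is a formal algebraic space.} In this situation, Proposition \ref{prop:Unt-morphism} provides a $\Df$-morphism
\[
f^{\utg}\colon\Utg_\Gamma(\cY)^{\pur}\to X,
\]
and by construction of the twisted arc space we have $\cJ_\infty\cY = \J_\infty\Utg_\Gamma(\cY)^{\pur}$ with $\mu_\cY=\mu_{\Utg_\Gamma(\cY)^{\pur}}$, and the induced map $f_\infty\colon|\cJ_\infty\cY|\to|\J_\infty X|$ agrees with $(f^{\utg})_\infty$. Assumption \ref{assu:tw} transfers to $f^{\utg}$ (using \ref{lem:rel-dim} for the dimension condition and the fact that rig-etale maps don't affect the singular locus generically), so Theorem \ref{thm:change-vars-I} applies and yields
\[
\int_{f_\infty(A)}\LL^h\,d\mu_X = \int_A \LL^{h\circ f_\infty - \fj_{f^{\utg}}}\,d\mu_\cY.
\]
By the second assertion of Lemma \ref{lem:jac-shift-relation}, outside a negligible subset $\fj_{f^{\utg}} = \fj_f - \fs_\cY$, and substituting gives the desired formula with $\fs_\cX = 0$.

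\textbf{Step 2: The general case.} Let $\pi\colon\cX\to X$ be the coarse moduli space of $\cX$, which is a formal algebraic space by \ref{lem:formal-locally-quot}. By Proposition \ref{lem:almost-bij}, $\pi_\infty\colon|\cJ_\infty\cX|\to|\J_\infty X|$ is almost geometrically bijective, so we may define a measurable function $\tilde h$ on $\pi_\infty(f_\infty(A))$ (outside a negligible subset) by the rule $\tilde h\circ\pi_\infty = h + \fj_\pi$ on $f_\infty(A)$; measurability of $\tilde h$ follows from \ref{lem:measurable-fnc}(1) together with the measurability of $\fj_\pi$ proved in \ref{lem:jpi-measurable}. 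Applying Step 1 to $\pi\colon\cX\to X$ gives
\[
\int_{\pi_\infty(f_\infty(A))}\LL^{\tilde h}\,d\mu_X = \int_{f_\infty(A)}\LL^{\tilde h\circ\pi_\infty - \fj_\pi + \fs_\cX}\,d\mu_\cX = \int_{f_\infty(A)}\LL^{h+\fs_\cX}\,d\mu_\cX,
\]
while applying Step 1 to $\pi\circ f\colon\cY\to X$ (legitimate since $\pi\circ f$ satisfies the hypotheses of \ref{assu:tw} by the narrowness assumption and \ref{lem:jac-associativity}) gives
\[
\int_{\pi_\infty(f_\infty(A))}\LL^{\tilde h}\,d\mu_X = \int_A \LL^{\tilde h\circ\pi_\infty\circ f_\infty - \fj_{\pi\circ f} + \fs_\cY}\,d\mu_\cY.
\]
Expanding $\tilde h\circ\pi_\infty\circ f_\infty = h\circ f_\infty + \fj_\pi\circ f_\infty$ and using the cocycle relation $\fj_{\pi\circ f} = \fj_f + \fj_\pi\circ f_\infty$ from Lemma \ref{lem:jac-associativity} (valid outside a negligible set), the right-hand side collapses to $\int_A \LL^{h\circ f_\infty - \fj_f + \fs_\cY}\,d\mu_\cY$. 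Equating the two expressions for $\int_{\pi_\infty(f_\infty(A))}\LL^{\tilde h}\,d\mu_X$ yields the theorem.

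\textbf{Main obstacle.} The delicate point is verifying that $f_\infty\colon A\to f_\infty(A)$ and $\pi\circ f_\infty\colon A\to\pi_\infty(f_\infty(A))$ remain almost geometrically injective so that Theorem \ref{thm:change-vars-I} applies in both invocations of Step 1, and that the hypothesis on $\Jac_{f^{\utg}}$ and $\Jac_{\pi\circ f}$ defining narrow substacks is preserved under the reductions. For the former, almost geometric injectivity of $\pi_\infty$ on all of $|\cJ_\infty\cX|$ (Proposition \ref{lem:almost-bij}) combined with that of $f_\infty$ on $A$ gives injectivity of the composite on the complement of a negligible set; for the latter, one must argue that the narrowness of $\Jac_f$ together with the generic smoothness of $\cX$ and $\cY$ ensures the same for $\Jac_{f^{\utg}}$, using that $\Utg_\Gamma(\cY)^{\pur}\times_{\Df_\Gamma}\cE_\Gamma\to\cY\times\Gamma$ is rig-etale (\ref{lem:rig-etale-2}) and hence transfers narrowness via the universal morphism. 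Tracking the negligible sets carefully through the chain of identities is the only real bookkeeping difficulty; the algebraic skeleton of the argument is entirely controlled by \ref{thm:change-vars-I}, \ref{lem:jac-associativity}, and \ref{lem:jac-shift-relation}.
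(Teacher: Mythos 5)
Your proof is correct and follows the paper's own approach essentially verbatim: Step 1 reduces the case of a formal algebraic space target to Theorem \ref{thm:change-vars-I} via $f^{\utg}$ and Lemma \ref{lem:jac-shift-relation}, and Step 2 bootstraps through the coarse moduli space $\pi\colon\cX\to X$ by applying Step 1 to both $\pi$ and $\pi\circ f$ with the auxiliary function you call $\tilde h$ (the paper writes $\overline{h}+\overline{\fj_{\pi}}$, the same thing). One small slip in phrasing: what needs to transfer to $f^{\utg}$ in Step 1 is Assumption \ref{assu:X} (the hypothesis of \ref{thm:change-vars-I}), not Assumption \ref{assu:tw}; your justification via rig-\'etaleness and \ref{lem:rel-dim} is the right one, but the citation is mislabeled.
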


\begin{proof}
First consider the case where $\cX$ is a formal algebraic space.
From \ref{lem:rig-etale-2}, the morphism $f^{\utg}\colon\Utg_{\Gamma_{\cY}}(\cY)^{\pur}\to\cX$
satisfy Assumption \ref{assu:X} with $\Psi=\Gamma_{\cY}$ and $\Phi=\Spec k$.

By definition, $|\cJ_{\infty}\cY|$ is identical to $|\J_{\infty}\Utg_{\Gamma_{\cY}}(\cY)^{\pur}|$.
Moreover the two measures $\mu_{\Utg_{\Gamma_{\cY}}(\cY)^{\pur}}$
and $\mu_{\cY}$ on this space are equal. Applying \ref{thm:change-vars-I}
to $f^{\utg}$, we obtain
\begin{align*}
\int_{f_{\infty}(A)}\LL^{h}\,d\mu_{\cX} & =\int_{A}\LL^{h\circ f_{\infty}-\fj_{f^{\utg}}}\,d\mu_{\Utg_{\Gamma_{\cY}}(\cY)^{\pur}}=\int_{A}\LL^{h\circ f_{\infty}-\fj_{f^{\utg}}}\,d\mu_{\cY}.
\end{align*}
From \ref{lem:jac-shift-relation}, this is equal to
\[
\int_{A}\LL^{h\circ f_{\infty}-\fj_{f}+\fs_{\cY}}\,d\mu_{\cY}.
\]
Thus the theorem holds in this case.

For the general case, let $\pi\colon\cX\to X$ be the coarse moduli
space. Let $\overline{h}$ and $\overline{\fj_{\pi}}$ be the functions
corresponding to $h$ and $\fj_{\pi}$ respectively through the almost
geometric bijection $\pi_{\infty}\colon|\cJ_{\infty}\cX|\to|\J_{\infty}X|$
(see \ref{lem:almost-bij}). From \ref{lem:jac-associativity}, outside
a negligible subset, we have
\[
\overline{\fj_{\pi}}\circ(\pi\circ f)_{\infty}-\fj_{\pi\circ f}=\fj_{\pi}\circ f_{\infty}-\fj_{\pi\circ f}=-\fj_{f}.
\]
Therefore, from the special case discussed above, we have

\begin{align*}
\int_{A}\LL^{h\circ f_{\infty}-\fj_{f}+\fs_{\cY}}\,d\mu_{\cY} & =\int_{A}\LL^{\bar{h}\circ(\pi\circ f)_{\infty}+\overline{\fj_{\pi}}\circ(\pi\circ f)_{\infty}-\fj_{\pi\circ f}+\fs_{\cY}}\,d\mu_{\cY}\\
 & =\int_{(\pi\circ f)_{\infty}(A)}\LL^{\overline{h}+\overline{\fj_{\pi}}}\,d\mu_{X}\\
 & =\int_{f_{\infty}(A)}\LL^{\overline{h}\circ\pi_{\infty}+\overline{\fj_{\pi}}\circ\pi_{\infty}-\fj_{\pi}+\fs_{\cX}}\,d\mu_{\cX}\\
 & =\int_{f_{\infty}(A)}\LL^{h+\fs_{\cX}}\,d\mu_{\cX}.
\end{align*}
\end{proof}

\section{Log pairs\label{sec:Log-pairs}}

In birational geometry, one often consider log pairs; a log pair means
the pair of a normal variety $X$ and a $\QQ$-divisor $D$ on $X$
such that $K_{X}+D$ is $\QQ$-Cartier. In this section, we generalize
this notion by allowing $X$ to be a formal DM stack. In the next
section, we will associate an invariant to a generalized log pair
and apply the change of variables formula to study properties of this
invariant.

Let $\cX$ be a formal stack over $\Df_{\Phi}$ as in \ref{assu:X}.
Moreover we suppose that $\cX$ is normal; local rings $\cO_{\cX,x}$
of all geometric points are normal.
\begin{defn}
\label{def:divisors}A closed substack $\cH\subset\cX$ is said to
be \emph{irreducible }if for any closed substacks $\cH_{i}\subset\cX$,
$i=1,2$ such that $\cH=\cH_{1}\cup\cH_{2}$, we have either $\cH=\cH_{1}$
or $\cH=\cH_{2}$. A \emph{prime divisor }on $\cX$ means an irreducible
and reduced closed substack $B\subset\cX$ of codimension one. A \emph{divisor}
(resp.\ $\QQ$-\emph{divisor})$A$ on $\cX$ is a formal combination
$\sum_{i=1}^{l}c_{i}A_{i}$, $c_{i}\in\ZZ$ (resp.\ $c_{i}\in\QQ$)
of prime divisors $A_{i}$ on $\cX$. We say that a ($\QQ$-)divisor
$A=\sum_{i}c_{i}A_{i}$ is \emph{narrow }if for every $i$ with $c_{i}\ne0$,
the closed substack $A_{i}\subset\cX$ is narrow. The support $\Supp A$
of $A$ is defined to be the reduced closed substack $\bigcup_{c_{i}\ne0}A_{i}$.
\end{defn}

We need to consider fractional ideals of rings which are not necessarily
integral domains; for this notion which is not so common in this generality,
we refer the reader to \cite{MR0414528}.
\begin{defn}
Let $\cS\subset\cO_{\cX}$ be the subsheaf of sections which are nonzerodivisors
at the stalk $\cO_{\cX,x}$ of every geometric point $x\colon\Spec K\to\cX$.
We define the \emph{sheaf of total quotient rings, }denoted by $\cK_{\cX}$,
to be the sheaf associated to the presheaf $U\mapsto\cS(U)^{-1}\cO_{\cX}(U)$.
A \emph{fractional ideal sheaf }on $\cX$ is a coherent $\cO_{\cX}$-submodule
$\cL\subset\cK_{\cX}$ such that for every geometric point $\Spec K\to\cX$,
there exist an etale neighborhood $U\to\cX$ and $f\in\cS(U)$ such
that $f\cdot\cL|_{U}\subset\cO_{\cX}|_{U}$.

Let $\cF$ be a torsion-free coherent sheaf on $\cX$ of generic rank
1. A \emph{fractional submodule }of $\cF$ is a coherent $\cO_{\cX}$-submodule
$\cL$ of $\cS^{-1}\cF=\cF\otimes_{\cO_{\cX}}\cK_{\cX}$ such that
for every geometric point $\Spec K\to\cX$, there exist an etale neighborhood
$U\to\cX$ and $f\in\cS(U)$ such that $f\cdot\cL|_{U}\subset\cF|_{U}$.
\end{defn}

\begin{defn}
For a divisor $A=\sum_{i}c_{i}A_{i}$, we define a fractional ideal
sheaf $\cO_{\cX}(A)$ in the usual way: for a local section $s$ of
$\cK_{\cX}$, it belongs to $\cO_{\cX}(A)$ if and only if for each
$i$, the order of $s$ along $A_{i}$ is $\ge-c_{i}$.
\end{defn}

\begin{defn}
We define the \emph{canonical sheaf }$\omega_{\cX/\Df_{\Phi}}$ to
be $(\Omega_{\cX/\Df_{\Phi}}^{d})^{\vee\vee}$, the double dual (that
is, the reflexive hull) of $\Omega_{\cX/\Df_{\Phi}}^{d}$. For $r\in\NN$,
we define $\omega_{\cX/\Df_{\Phi}}^{[r]}$ to be the reflexive $r$-th
power $\left((\omega_{\cX/\Df_{\Phi}})^{\otimes r}\right)^{\vee\vee}$.
For a divisor $A$, we define $\omega_{\cX/\Df_{\Phi}}^{[r]}(A)$
to be $\left(\omega_{\cX/\Df_{\Phi}}^{[r]}\otimes\cO_{\cX}(A)\right)^{\vee\vee}$.
A \emph{log pair }means the pair $(\cX,A)$ of a formal stack $\cX$
as above and a narrow $\QQ$-divisor $A$ on it such that for some
$r>0$, $rA$ has integral coefficients and $\omega_{\cX/\Df_{\Phi}}^{[r]}(rA)$
is an invertible sheaf. (Namely $K_{\cX/\Df_{\Phi}}+A$ is $\QQ$-Cartier,
according to the terminology which is little inaccurate but common
in the birational geometry.) We call $A$ a \emph{boundary divisor
}or simply \emph{boundary. }If $\omega_{\cX/\Df_{\Phi}}^{[r]}$ is
invertible for some $r>0$, then we identify $\cX$ with the log pair
$(\cX,0)$.

A \emph{morphism $(\cY,B)\to(\cX,A)$ }of log pairs is just a morphism
$\cY\to\cX$ of ambient stacks satisfying \ref{assu:X}. We say that
a morphism $f\colon(\cY,B)\to(\cX,A)$ is \emph{crepant }if the morphism
$f^{*}\Omega_{\cX/\Df_{\Phi}}^{d}\to\Omega_{\cY/\Df_{\Psi}}^{d}$
induces an isomorphism $f^{*}\omega_{\cX/\Df_{\Phi}}^{[r]}(rA)\to\omega_{\cY/\Df_{\Psi}}^{[r]}(rB)$
of fractional modules for sufficiently factorial $r$, (that is, $f^{*}(K_{\cX/\Df_{\Phi}}+A)=K_{\cY/\Df_{\Psi}}+B$).
\end{defn}

\begin{lem}
\label{lem:boundary-coarse}Let $(\cX,A)$ be a log pair and $X$
the coarse moduli space of $\cX$. There exists a unique boundary
$\overline{A}$ on $X$ such that the morphism $(\cX,A)\to(X,\overline{A})$
is crepant. 
\end{lem}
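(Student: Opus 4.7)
The plan is to reduce, via the local quotient description of Proposition \ref{lem:formal-locally-quot}, to the case of a quotient stack, where the boundary $\overline{A}$ on the coarse moduli space is forced by the crepancy condition and can be read off explicitly.

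For uniqueness, suppose $\overline{A}$ and $\overline{A}'$ both make $(\cX,A)\to(X,\overline{A})$ and $(\cX,A)\to(X,\overline{A}')$ crepant. Then $\pi^{*}\omega_{X}^{[r]}(r\overline{A})\cong\pi^{*}\omega_{X}^{[r]}(r\overline{A}')$ as fractional modules on $\cX$ for some $r>0$, which forces $\pi^{*}\cO_{X}(r(\overline{A}-\overline{A}'))\cong\cO_{\cX}$. By Proposition \ref{lem:formal-locally-quot}, there is a stabilizer-preserving etale surjective morphism $\coprod[W_{\gamma}/H_{\gamma}]\to\cX$ with $W_{\gamma}\to V_{\gamma}=W_{\gamma}/H_{\gamma}\subset X$ the induced coarse moduli maps. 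Pulling back along $W_{\gamma}\to\cX$, whose composition with $\pi$ is $q_{\gamma}\colon W_{\gamma}\to V_{\gamma}$, we see $q_{\gamma}^{*}\cO_{V_{\gamma}}(r(\overline{A}-\overline{A}'))$ is trivial; since $q_{\gamma}$ is finite and surjective and $V_{\gamma}$ is normal, this forces $r(\overline{A}-\overline{A}')=0$ as a Weil divisor on $V_{\gamma}$, hence $\overline{A}=\overline{A}'$ after varying $\gamma$.

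For existence, I proceed locally on $X$. Fix $\gamma$ and let $A_{\gamma}$ be the pullback of $A$ to $W_{\gamma}$ via $W_{\gamma}\to[W_{\gamma}/H_{\gamma}]\to\cX$ (well-defined as a narrow $H_{\gamma}$-invariant $\QQ$-divisor since prime divisors pull back to $H_{\gamma}$-invariant prime divisors with their reduced structure and appropriate multiplicities, and by reflexivity of the pullback for the underlying fractional module $\cO_{\cX}(A)$). The reflexive relative canonical $\omega_{W_{\gamma}/V_{\gamma}}^{[r]}$ is invertible and corresponds to an $H_{\gamma}$-invariant integral divisor $R_{\gamma}$ on $W_{\gamma}$ supported on the ramification locus of $q_{\gamma}$. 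The $\QQ$-divisor $A_{\gamma}+R_{\gamma}/r$ is then $H_{\gamma}$-invariant and descends to a unique $\QQ$-divisor $\overline{A}_{\gamma}$ on $V_{\gamma}$, determined by $q_{\gamma}^{*}\overline{A}_{\gamma}=A_{\gamma}+R_{\gamma}/r\cdot r$; equivalently, each prime divisor $A_{i}$ on $\cX$ with coefficient $c_{i}$ contributes coefficient $(c_{i}+(e_{i}-1)/r\cdot\text{correction})/e_{i}$ on its image in $V_{\gamma}$, where $e_{i}$ encodes the ramification data of $q_{\gamma}$ along the relevant component (in the tame case this is simply $(c_{i}+e_{i}-1)/e_{i}$, with a wild analogue coming from the Jacobian of $q_{\gamma}$). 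Invertibility of $\omega_{\cX/\Df_{\Phi}}^{[r]}(rA)$ lifts to the invertibility of $\omega_{V_{\gamma}}^{[r]}(r\overline{A}_{\gamma})$ because the reflexive pullback of an invertible sheaf under a finite surjective morphism to a normal formal algebraic space is invertible iff the original is, and the $H_{\gamma}$-action respects this data. The local boundaries $\overline{A}_{\gamma}$ on $V_{\gamma}$ are manifestly compatible under the etale transition morphisms of the atlas since they are determined by the pulled-back data, so they glue to a global $\QQ$-divisor $\overline{A}$ on $X$ with $K_{X/\Df_{\Phi}}+\overline{A}$ $\QQ$-Cartier, and crepancy of $\pi\colon(\cX,A)\to(X,\overline{A})$ holds by construction etale-locally hence globally.

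The main obstacle is the verification that the locally defined $\overline{A}_{\gamma}$ are indeed $\QQ$-divisors of the required form (i.e.\ that $q_{\gamma}^{*}\overline{A}_{\gamma}$ equals $A_{\gamma}$ plus the canonical correction) and that narrowness of $A$ propagates to narrowness of $\overline{A}$; the latter follows because $\pi$ is bijective on geometric points and hence the image of a narrow closed substack is narrow. The wild case introduces a potential subtlety because coarse moduli does not commute with non-flat base change (see Remark after Proposition \ref{prop:Unt-morphism}), but the reduction is carried out along the etale atlas $V_{\gamma}\to X$, so flatness is automatic (cf.\ Lemma \ref{lem:coarse-flat-base-change}) and the explicit formula for $\overline{A}_{\gamma}$ is justified.
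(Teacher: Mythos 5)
Your proof takes essentially the same route as the paper's: reduce via Proposition \ref{lem:formal-locally-quot} to the quotient-stack case, descend the $\QQ$-divisor $K_{V/X}+\tilde{A}$ (in the paper's notation) from $V$ to $V/G$, check crepancy by pulling back, and glue using uniqueness. The paper packages the descent as the one-line formula $\overline{A}:=\frac{1}{\sharp G}\pi_{*}(K_{V/X}+\tilde{A})$, whereas you characterize $\overline{A}_{\gamma}$ by the pullback identity $q_{\gamma}^{*}\overline{A}_{\gamma}=A_{\gamma}+K_{W_{\gamma}/V_{\gamma}}$; these are the same divisor, since $\frac{1}{\sharp G}\pi_{*}$ is inverse to $\pi^{*}$ on $G$-invariant $\QQ$-divisors. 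You spell out the uniqueness argument that the paper calls ``clear,'' which is fine.

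Two small remarks. First, the displayed formula $q_{\gamma}^{*}\overline{A}_{\gamma}=A_{\gamma}+R_{\gamma}/r\cdot r$ has a stray ``$\cdot r$''; you presumably mean $q_{\gamma}^{*}\overline{A}_{\gamma}=A_{\gamma}+R_{\gamma}/r$ (i.e.\ $A_{\gamma}$ plus the relative canonical $K_{W_{\gamma}/V_{\gamma}}$). Second, the parenthetical coefficient recipe ``$(c_{i}+e_{i}-1)/e_{i}$, with a wild analogue coming from the Jacobian of $q_{\gamma}$'' is misleading: in the wild case the correction on each ramified component comes from the different of $W_{\gamma}\to V_{\gamma}$ (which exceeds $e_{i}-1$), not from the Jacobian per se. This does not affect the argument, since the actual construction uses $\omega_{W_{\gamma}/V_{\gamma}}^{[r]}$ directly, but the explicit tame-style formula should not be asserted in the wild case.
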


\begin{proof}
Consider the case where $\cX$ is a quotient stack $[V/G]$ for a
finite group $G$. Then $X=V/G$. We have the relative canonical divisor
$K_{V/X}$, a divisor on $V$. Let $\tilde{A}$ be the $\QQ$-divisor
on $V$ corresponding to $A$ and let
\[
\overline{A}:=\frac{1}{\sharp G}\pi_{*}(K_{V/X}+\tilde{A})
\]
with $\pi$ denoting the morphism $V\to X$. Then 
\[
\pi^{*}(K_{X}+\overline{A})=\pi^{*}K_{X}+K_{V/X}+\tilde{A}=K_{V}+\tilde{A}.
\]
This means that $(\cX,A)\to(X,\overline{A})$ is crepant. It is clear
that $\overline{A}$ is the unique boundary with this property. The
uniqueness allows us to glue $\overline{A}$'s defined etale locally
to get the desired boundary $\overline{A}$ in the general case.
\end{proof}
\begin{defn}
Let $(\cX,A)$ be a log pair and let $r$ be a positive integer such
that $\omega_{\cX/\Df_{\Phi}}^{[r]}(rA)$ is invertible. Let $L/K\tpars$
be a finite extension and let $\beta\colon\Spf O_{L}\to\cX$ be a
$\Df$-morphism which do not factor through $\Supp A$. Then $\beta^{*}\omega_{\cX/\Df_{\Phi}}^{[r]}(rA)$
is a nonzero fractional submodule of $\beta^{\flat}(\Omega_{\cX/\Df_{\Phi}}^{d})^{\otimes r}$.
We define 
\[
\ff_{(\cX,A)}(\beta):=\frac{1}{r}\ord[\beta^{\flat}(\Omega_{\cX/\Df_{\Phi}}^{d})^{\otimes r}:\beta^{*}\omega_{\cX/\Df_{\Phi}}^{[r]}(rA)].
\]
Here $[M:N]:=\{s\in O_{L}\mid sN\subset M\}$, a fractional ideal
of $O_{L}$. When $\cX$ is defined over $\Df$, for a twisted arc
$\gamma\colon\cE\to\cX$ which do not factor through $\Supp A$, we
define $\ff_{(\cX,A)}(\gamma)$ to be $\ff_{(\cX,A)}(\beta)$ for
$\beta\colon\Spf O_{L}\to\cE\to\cX$ induced by any finite dominant
$\Df$-morphism $\Spf O_{L}\to\cE$.
\end{defn}

\begin{lem}
The function $\ff_{(\cX,A)}$ on $|\J_{\Phi,\infty}\cX|$ is measurable.
When $\Phi=\Spec k$, the function $\ff_{(\cX,A)}$ on $|\cJ_{\infty}\cX|$
is also measurable.
\end{lem}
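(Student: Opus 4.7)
The plan is to rewrite $\ff_{(\cX,A)}$, up to a negligible subset, as a $\QQ$-linear combination of order functions $\ord_{\cI}$ for ideal sheaves $\cI$ on a suitable modification of $\cX$, and then invoke the measurability results already established (Lemma~\ref{lem:ord-fn-measur} for untwisted arcs and Lemma~\ref{lem:ord-measurable} for twisted arcs), together with Lemma~\ref{lem:measurable-fnc} to transport measurability back.

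First I would construct a proper birational morphism $h\colon\cZ\to\cX$, for example by blowing up along the coherent sheaf $\Omega_{\cX/\Df_{\Phi}}^{d}$ in the sense of \cite{1087.14011}, with the effect of trivializing torsion: the flat pullback $h^{\flat}\Omega_{\cZ/\Df_{\Phi}}^{d}$ becomes invertible, and therefore so is $h^{\flat}(\Omega_{\cX/\Df_{\Phi}}^{d})^{\otimes r}$. Since $\omega_{\cX/\Df_{\Phi}}^{[r]}(rA)$ is already invertible by definition of a log pair, its pullback $h^{*}\omega_{\cX/\Df_{\Phi}}^{[r]}(rA)$ is an invertible fractional submodule of $h^{\flat}(\Omega_{\cX/\Df_{\Phi}}^{d})^{\otimes r}$ (after trivializing torsion and inverting the locally principal defining equations of $\Supp A$). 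The quotient is therefore a locally principal fractional ideal sheaf on $\cZ$, which can be written as $\cI_{1}\cdot\cI_{2}^{-1}$ for honest coherent ideal sheaves $\cI_{1},\cI_{2}\subset\cO_{\cZ}$. Assumption \ref{assu:X} on fiberwise generic smoothness, together with narrowness of $A$, forces both $V(\cI_{1})$ and $V(\cI_{2})$ to be narrow closed substacks of $\cZ$.

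Next, for any $\beta$ lifting to $\tilde\beta\colon\Spf O_{L}\to\cZ$ that avoids the narrow exceptional/support locus, a direct computation with a uniformizer of $O_{L}$ gives
\[
\ff_{(\cX,A)}(\beta)=\tfrac{1}{r}\bigl(\ord_{\cI_{1}}(\tilde\beta)-\ord_{\cI_{2}}(\tilde\beta)\bigr).
\]
Since $h$ is a proper birational morphism and $\cX$ is normal, Assumption~\ref{assu:X} implies that $h_{\infty}\colon|\J_{\infty}\cZ|\to|\J_{\infty}\cX|$ is almost geometrically bijective (arcs not factoring through the narrow exceptional locus lift uniquely). Therefore outside a negligible subset, $\ff_{(\cX,A)}\circ h_{\infty}=\tfrac{1}{r}(\ord_{\cI_{1}}-\ord_{\cI_{2}})$. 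Lemma~\ref{lem:ord-fn-measur} shows both $\ord_{\cI_{i}}$ are measurable, hence so is $\ff_{(\cX,A)}\circ h_{\infty}$. Applying Lemma~\ref{lem:measurable-fnc}(1) to $h_{\infty}$ (with $A$ taken to be the complement of a negligible set on which $\ff_{(\cX,A)}$ is defined) yields measurability of $\ff_{(\cX,A)}$ on $|\J_{\Phi,\infty}\cX|$.

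For the twisted case $\Phi=\Spec k$, the argument is parallel but uses the twisted machinery of Section~\ref{sec:integration-twisted}. One can work on $\Utg_{\Gamma_{\cX}}(\cX)^{\pur}$: pulling back $h$ via the universal morphism $r\colon\Utg_{\Gamma}(\cX)^{\pur}\times_{\Df_{\Gamma}}\cE_{\Gamma}\to\cX$ gives again an almost geometrically bijective modification, and the functions $\ord_{\cI_{i}}$ are measurable on $|\cJ_{\infty}\cX|$ by Lemma~\ref{lem:ord-measurable}. The expression for $\ff_{(\cX,A)}$ in terms of these order functions carries over with the usual factor $1/\sharp G$ absorbed in the definition of $\ord$ on $O_{L}$ (Definition~\ref{def:order}). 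The main technical point -- where care is needed -- is checking that the passage to reflexive hulls commutes with pullback along $h$ outside the narrow locus defined by $\cI_{1}\cI_{2}\cdot\cO_{\cX}(\Supp A)$, so that the identity $\ff_{(\cX,A)}\circ h_{\infty}=\tfrac{1}{r}(\ord_{\cI_{1}}-\ord_{\cI_{2}})$ indeed holds off a negligible set; this is where generic smoothness and $\QQ$-Cartierness of $K_{\cX/\Df_{\Phi}}+A$ are essential, and constitutes the only nontrivial step of the argument.
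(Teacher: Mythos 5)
Your proposal follows the paper's strategy closely: blow up so that the relevant sheaves become invertible, express $\ff_{(\cX,A)}\circ h_{\infty}$ as a $\QQ$-linear combination of order functions of ideal sheaves, invoke measurability of $\ord$ (Lemma~\ref{lem:ord-fn-measur} for untwisted arcs, Lemma~\ref{lem:ord-measurable} for twisted arcs), and transport measurability back through $h_{\infty}$ via Lemma~\ref{lem:measurable-fnc}(1). The one substantive deviation is in the bookkeeping. The paper first fixes a coherent ideal sheaf $\cJ\subset\cO_{\cX}$ with $\cJ\,(\Omega^{d}_{\cX/\Df_{\Phi}})^{\otimes r}\subset\omega^{[r]}_{\cX/\Df_{\Phi}}(rA)$, and then (as in Lemma~\ref{lem:jac-measur}) blows up so that both $h^{-1}\cJ$ and $h^{\flat}(\Omega^{d}_{\cX/\Df_{\Phi}})^{\otimes r}$ are invertible; the pullback of the inclusion above then yields a single genuine locally principal ideal $\cI$ with $h^{-1}\cJ\cdot h^{\flat}(\Omega^{d})^{\otimes r}=\cI\cdot h^{*}\omega^{[r]}(rA)$ and $\ff_{(\cX,A)}\circ h_{\infty}=\tfrac{1}{r}(\ord_{\cI}-\ord_{h^{-1}\cJ})$ outside a negligible set. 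Your route skips $\cJ$, and at the point where you decompose the comparison between $h^{*}\omega^{[r]}(rA)$ and $h^{\flat}(\Omega^{d})^{\otimes r}$ as $\cI_{1}\cI_{2}^{-1}$ for coherent ideals $\cI_{1},\cI_{2}\subset\cO_{\cZ}$, two points need justification: $\cI_{2}$ must itself be locally principal for $\cI_{2}^{-1}$ to make sense, and each $V(\cI_{i})$ must define a narrow closed substack for Lemma~\ref{lem:ord-fn-measur} to apply. Neither is automatic from the quotient being a locally principal fractional ideal sheaf; in general one secures them by a further blowup, or, as the paper does, by introducing $\cJ$ upstream so the inclusion always points in one direction and only one honest ideal is needed. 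The rest of your argument, including reducing the twisted case to measurability of order functions on $|\cJ_{\infty}\cX|$ via Lemma~\ref{lem:ord-measurable}, matches the paper.
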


\begin{proof}
There exists an ideal sheaf $\cJ$ such that 
\[
\cJ(\Omega_{\cX/\Df_{\Phi}}^{d})^{\otimes r}\subset\omega_{\cX/\Df_{\Phi}}^{[r]}(rA).
\]
As in the proof of \ref{lem:jac-measur}, there exists a blowup $h\colon\cZ\to\cX$
such that $h^{-1}\cJ$ and $h^{\flat}(\Omega_{\cX/\Df_{\Phi}}^{d})^{\otimes r}$
are invertible. Then, for some locally principal ideal sheaf $\cI$
on $\cZ$, 
\[
h^{-1}\cJ\cdot h^{\flat}(\Omega_{\cX/\Df_{\Phi}}^{d})^{\otimes r}=\cI\cdot h^{*}\omega_{\cX/\Df_{\Phi}}^{[r]}(rA).
\]
Outside a negligible subset, we have
\[
\ff_{(\cX,A)}\circ h_{\infty}=\frac{1}{r}(\ord_{\cI}-\ord_{h^{-1}\cJ}).
\]
From \ref{lem:ord-fn-measur} and \ref{lem:ord-measurable}, the right
side is measurable and so is the left side. From \ref{lem:measurable-fnc},
$\ff_{(\cX,A)}$ is measurable.
\end{proof}
\begin{lem}
\label{lem:crepant-j'}Let $f\colon(\cY,B)\to(\cX,A)$ be a crepant
morphism of log pairs. We have the following equality between functions
on $|\cJ_{\infty}\cY|$ outside a negligible subset,
\[
\ff{}_{(\cX,A)}\circ f_{\infty}=\ff{}_{(\cY,B)}+\fj_{f}.
\]
\end{lem}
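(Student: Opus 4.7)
My plan is to reduce the identity to a local calculation on a common finite cover. Given a twisted arc $\gamma\colon\cE\to\cY$ (with $\cE$ defined over an algebraically closed field $K$, say $\cE=[E/G]$), I choose a finite dominant $\Df_K$-morphism $\Spf O_L\to\cE$ as in Definition \ref{def:order-function} and work with the induced $\Df$-morphisms $\beta\colon\Spf O_L\to\cY$ and $\beta':=f\circ\beta\colon\Spf O_L\to\cX$. The exceptional negligible set $N\subset|\cJ_{\infty}\cY|$ consists of those $\gamma$ for which $\fj_f(\gamma)=\infty$, or $\beta$ factors through $\Supp B$, or $\beta'$ factors through $\Supp A$. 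Since $\Jac_f$, $\Supp B$ and $f^{-1}(\Supp A)$ all define narrow closed substacks (by Assumption \ref{assu:tw} and the definition of a log pair), each of these loci is negligible by the standard argument of \ref{lem:negl} transplanted to the twisted setting via the untwisting stack, and so $N$ itself is negligible.

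Off $N$, set $M_{\cY}:=\beta^{\flat}(\Omega_{\cY/\Df}^{d})^{\otimes r}$ and $M_{\cX}:=\beta'^{\flat}(\Omega_{\cX/\Df}^{d})^{\otimes r}$; these are free of rank one over $O_L$, with a natural injection $\phi\colon M_{\cX}\hookrightarrow M_{\cY}$ induced by $f^{*}\Omega_{\cX/\Df}^{d}\to\Omega_{\cY/\Df}^{d}$. By the diagonal normal form underlying the definition of $\fj_f$, the cokernel of the corresponding map on $\Omega^{d}$ has length $\sharp G\cdot\fj_f(\gamma)$, and raising a rank-one module map to the $r$-th tensor power multiplies the exponent by $r$, so $\phi$ is identified with multiplication by $\pi^{r\sharp G\fj_f(\gamma)}$ for some uniformizer $\pi\in O_L$. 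Crepancy of $f$ gives an isomorphism $f^{*}\omega_{\cX}^{[r]}(rA)\xrightarrow{\sim}\omega_{\cY}^{[r]}(rB)$ of invertible sheaves on $\cY$, so pulling back by $\beta$ identifies $\beta'^{*}\omega_{\cX}^{[r]}(rA)=\beta^{*}f^{*}\omega_{\cX}^{[r]}(rA)$ with $\beta^{*}\omega_{\cY}^{[r]}(rB)$ as a single fractional $O_L$-submodule $U$ of the common generic stalk, in a way compatible with $\phi$.

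Writing $U=\pi^{n}M_{\cY}$ for the appropriate $n\in\ZZ$ and using $\ord(\pi)=1/\sharp G$ (which reflects the uniformization $t=s^{\sharp G}u$ of the underlying $G$-cover), the ideal quotients evaluate to $[M_{\cY}:U]=\pi^{-n}O_{L}$ and $[M_{\cX}:U]=\pi^{r\sharp G\fj_f(\gamma)-n}O_{L}$, yielding
\[
\ff_{(\cY,B)}(\gamma)=-\frac{n}{r\sharp G},\qquad \ff_{(\cX,A)}(f_\infty(\gamma))=\fj_f(\gamma)-\frac{n}{r\sharp G},
\]
and subtracting gives the claimed identity. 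The only real technical nuisance will be keeping the normalizations aligned: the factor $\frac{1}{\sharp G}$ in the definition of $\fj_f$, the normalization $\ord(\pi)=\frac{1}{\sharp G}$ on $L$, and the factor $\frac{1}{r}$ in front of $\ord[\cdot:\cdot]$ in the definition of $\ff_{(\cX,A)}$ must line up correctly, together with a careful verification that the identification of $U$ as a submodule is genuinely compatible with the inclusion $\phi$; I anticipate no conceptual obstacle beyond this bookkeeping.
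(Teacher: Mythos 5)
Your proof is correct and takes essentially the same approach as the paper's: both work on the cover $\Spf O_L$, consider the same three $O_L$-modules (your $U,M_{\cY},M_{\cX}$ are the paper's $M_1,M_2,M_3$), use crepancy to identify $\beta^{*}f^{*}\omega_{\cX}^{[r]}(rA)$ with $\beta^{*}\omega_{\cY}^{[r]}(rB)$ as a single fractional submodule, and then finish with what amounts to the multiplicativity $[M_3:M_1]=[M_3:M_2][M_2:M_1]$ of ideal quotients --- you merely unwind it into explicit powers of the uniformizer $\pi$.
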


\begin{proof}
Let $\beta\colon\Spf O_{L}\to\cY$ be a morphism as before. For a
sufficiently factorial $r$, we consider three $O_{L}$-modules 
\begin{gather*}
M_{1}:=\beta^{*}f^{*}\omega_{\cX/\Df_{\Phi}}^{[r]}(rA)=\beta^{*}\omega_{\cY/\Df_{\Psi}}^{[r]}(rB),\\
M_{2}:=\beta^{\flat}(\Omega_{\cY/\Df_{\Psi}}^{d})^{\otimes r},\\
M_{3}:=(f\circ\beta)^{\flat}(\Omega_{\cX/\Df_{\Psi}}^{d})^{\otimes r}.
\end{gather*}
Ignoring a negligible subset, we may suppose that they are free of
rank one. We have
\begin{align*}
[M_{2}:M_{1}] & =\fm_{E}^{\sharp G\cdot r\cdot\ff_{(\cY,B)}(\beta)},\\{}
[M_{3}:M_{1}] & =\fm_{E}^{\sharp G\cdot r\cdot\ff_{(\cX,A)}(f\circ\beta)},\\{}
[M_{3}:M_{2}] & =\fm_{E}^{\sharp G\cdot r\cdot\fj_{f}(\beta)}.
\end{align*}
The lemma follows from the equality 
\[
[M_{3}:M_{1}]=[M_{3}:M_{2}][M_{2}:M_{1}].
\]
\end{proof}
For a log pair $(\cX,A)$, let $X$ be the coarse moduli space of
$\cX$ and let $\overline{A}$ be the $\QQ$-divisor such that $(\cX,A)\to(X,\overline{A})$
is crepant. Let $\Utg_{\Gamma}(\cX)^{\nor}$ be the normalization
of $\Utg_{\Gamma}(\cX)^{\pur}$. Let $A^{\utg}$ be the unique $\QQ$-divisor
such that $(\Utg_{\Gamma}(\cX)^{\nor},A^{\utg})\to(X,\overline{A})$
is crepant; with the conventional notation, if $\pi\colon\cX\to X$
is the given morphism and if $\phi\colon\Utg_{\Gamma}(\cX)^{\nor}\to X$
is the induced morphism, then 
\[
A^{\utg}=\phi^{*}(K_{X/\Df}+\overline{A})-K_{\Utg_{\Gamma}(\cX)^{\nor}/\Df_{\Gamma}}.
\]

\begin{lem}
\label{lem:shift-j'}If $\nu$ denotes the normalization morphism
$\Utg_{\Gamma}(\cX)^{\nor}\to\Utg_{\Gamma}(\cX)$, then we have the
following equality of functions on $|\J_{\infty}\Utg_{\Gamma}(\cX)^{\nor}|$
outside a negligible subset,
\[
\fs_{\cX}\circ\nu_{\infty}=\ff_{(\Utg_{\Gamma}(\cX)^{\nor},A^{\utg})}-\ff_{(\cX,A)}\circ\nu_{\infty}+\fj_{\nu}.
\]
(Note that $\ff_{(\cX,A)}$ is a function on $|\cJ_{\infty}\cX|=|\J_{\infty}\Utg_{\Gamma}(\cX)^{\pur}|.$)
\end{lem}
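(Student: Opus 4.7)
The plan is to combine two applications of Lemma \ref{lem:crepant-j'}, the Jacobian associativity of Lemma \ref{lem:jac-associativity}, and the special case of Lemma \ref{lem:jac-shift-relation} applied to the coarse moduli morphism $\pi\colon\cX\to X$ (whose target is a formal algebraic space by Lemma \ref{lem:formal-locally-quot}).

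Let $\pi^{\utg}\colon\Utg_{\Gamma}(\cX)^{\pur}\to X$ be the morphism of Proposition \ref{prop:Unt-morphism} associated to $\pi$, and set $\phi:=\pi^{\utg}\circ\nu\colon\Utg_{\Gamma}(\cX)^{\nor}\to X$. By construction of $A^{\utg}$, the map $\phi$ gives a crepant morphism $(\Utg_{\Gamma}(\cX)^{\nor},A^{\utg})\to(X,\overline{A})$, and $\pi\colon(\cX,A)\to(X,\overline{A})$ is crepant by Lemma \ref{lem:boundary-coarse}. Applying Lemma \ref{lem:crepant-j'} to both crepant morphisms gives, outside negligible subsets,
\begin{align*}
\ff_{(X,\overline{A})}\circ\pi_{\infty} &= \ff_{(\cX,A)}+\fj_{\pi}, \\
\ff_{(X,\overline{A})}\circ\phi_{\infty} &= \ff_{(\Utg_{\Gamma}(\cX)^{\nor},A^{\utg})}+\fj_{\phi}.
\end{align*}
Under the identification $|\cJ_{\infty}\cX|=|\J_{\infty}\Utg_{\Gamma}(\cX)^{\pur}|$, the description of $\pi_{\infty}$ given just before Proposition \ref{prop:Unt-morphism} shows $\phi_{\infty}=\pi_{\infty}\circ\nu_{\infty}$, so composing the first equation with $\nu_{\infty}$ and equating it with the second produces
\[
\ff_{(\cX,A)}\circ\nu_{\infty}+\fj_{\pi}\circ\nu_{\infty}=\ff_{(\Utg_{\Gamma}(\cX)^{\nor},A^{\utg})}+\fj_{\phi}.
\]

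I would then invoke Lemma \ref{lem:jac-associativity} for the factorization $\phi=\pi^{\utg}\circ\nu$ to write $\fj_{\phi}=\fj_{\nu}+\fj_{\pi^{\utg}}\circ\nu_{\infty}$, and the special case of Lemma \ref{lem:jac-shift-relation} to obtain $\fs_{\cX}=\fj_{\pi}-\fj_{\pi^{\utg}}$ on $|\cJ_{\infty}\cX|$, each outside a negligible subset. Substituting these into the displayed identity yields
\[
\ff_{(\Utg_{\Gamma}(\cX)^{\nor},A^{\utg})}-\ff_{(\cX,A)}\circ\nu_{\infty}=\fj_{\pi}\circ\nu_{\infty}-\fj_{\pi^{\utg}}\circ\nu_{\infty}-\fj_{\nu}=\fs_{\cX}\circ\nu_{\infty}-\fj_{\nu},
\]
which rearranges to the claimed formula.

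The main obstacle will be routine but necessary bookkeeping: verifying that the narrowness assumptions of \ref{assu:X} are inherited by $\pi$, $\pi^{\utg}$, $\nu$ and $\phi$ so that Lemmas \ref{lem:crepant-j'}, \ref{lem:jac-associativity}, \ref{lem:jac-shift-relation} actually apply; that Lemma \ref{lem:boundary-coarse} indeed furnishes a $\QQ$-divisor $\overline{A}$ which simultaneously makes both $\pi$ and $\phi$ crepant; and that the several negligible exceptional loci in $|\cJ_{\infty}\cX|$ and $|\J_{\infty}X|$ pull back to negligible subsets of $|\J_{\infty}\Utg_{\Gamma}(\cX)^{\nor}|$ via $\nu_{\infty}$ and $\phi_{\infty}$ (using that $\nu$ is a finite birational normalization, so narrow substacks pull back to narrow ones and Lemma \ref{lem:negl} gives negligibility). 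Once these verifications are in place, the identity follows by the formal manipulation above.
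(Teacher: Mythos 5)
Your proposal is correct and takes essentially the same approach as the paper: both derive the identity by applying Lemma \ref{lem:crepant-j'} to the two crepant morphisms $\pi$ and $\phi$, Lemma \ref{lem:jac-associativity} to the factorization $\phi=\pi^{\utg}\circ\nu$, and the special case of Lemma \ref{lem:jac-shift-relation} giving $\fs_{\cX}=\fj_{\pi}-\fj_{\pi^{\utg}}$, then manipulate. The paper performs the substitution in a slightly different order, but the computation is the same.
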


\begin{proof}
By \ref{lem:jac-shift-relation}, 
\[
\fs_{\cX}=\fj_{\pi}-\fj_{\pi^{\utg}}.
\]
From \ref{lem:crepant-j'}, 
\begin{gather*}
\fj_{\pi}=\ff_{(X,\overline{A})}\circ\pi_{\infty}-\ff_{(\cX,A)},\\
\fj_{\phi}=\ff_{(X,\overline{A})}\circ\phi_{\infty}-\ff_{(\Utg_{\Gamma}(\cX)^{\nor},A^{\utg})}.
\end{gather*}
From \ref{lem:jac-associativity}, 
\[
\fj_{\phi}=\fj_{\nu}+\fj_{\pi^{\utg}}\circ\nu_{\infty}.
\]
Thus 
\begin{align*}
\fs_{\cX}\circ\nu_{\infty} & =\left(\ff_{(X,\overline{A})}\circ\pi_{\infty}-\ff_{(\cX,A)}\right)\circ\nu_{\infty}-\fj_{\pi^{\utg}}\circ\nu_{\infty}\\
 & =\ff_{(X,\overline{A})}\circ\phi_{\infty}-\ff_{(\cX,A)}\circ\nu_{\infty}+\fj_{\nu}-\fj_{\phi}\\
 & =\ff_{(\Utg_{\Gamma}(\cX)^{\nor},A^{\utg})}-\ff_{(\cX,A)}\circ\nu_{\infty}+\fj_{\nu}.
\end{align*}
\end{proof}

\section{Stringy motives and the wild McKay correspondence\label{sec:Stringy-invariants}}

We associate stringy motives to log pairs and reformulate the change
of variables formula as a property of stringy motives. This invariant
is a common generalization of stringy E-function and orbifold E-function
\cite{MR1677693}. Following Denef-Loeser's approach \cite{MR1905024},
we define this invariant as an integral on the space of twisted arcs,
$|\cJ_{\infty}\cX|$, even if the ambient formal stack $\cX$ is singular.

We fix a sufficiently factorial positive integer $r$ so that all
relevant functions on spaces of (twisted or untwisted) arcs have values
in $\frac{1}{r}\ZZ\cup\{\infty\}$.
\begin{defn}
\label{def:stringy-motive}Let $(\cX,A)$ be a log pair over $\Df$
and let $C\subset|\cX|$ be a constructible subset. Let $|\cJ_{\infty}\cX|_{C}\subset|\cJ_{\infty}\cX|$
be the preimage of $C$ by $\pi_{0}$. Let $\Gamma=\Gamma_{\cX}\to Z$
be a morphism to an algebraic space almost of finite type. We define
the \emph{stringy motive of $(\cX,A)$ along $C$ }as 
\begin{align*}
\M_{\st,Z}(\cX,A)_{C} & :=\int_{|\cJ_{\infty}\cX|_{C}}\LL^{\ff_{(\cX,A)}+\fs_{\cX}}\,d\mu_{\cX,Z}\in\widehat{\cM_{Z,r}'}\cup\{\infty\}.
\end{align*}
\end{defn}

\begin{defn}
\label{def:stringy-motive-utd}Let $(\cX,A)$ be a log pair over $\Df_{\Phi}$
with $\Phi$ a reduced DM stack almost of finite type and let $C\subset|\cX|$
be a constructible subset. Let $|\J_{\infty}\cX|_{C}\subset|\J_{\infty}\cX|$
be the preimage of $C$ by $\pi_{0}$. Let $\Phi\to Z$ be a morphism
to an algebraic space almost of finite type. We define the \emph{untwisted
stringy motive of $(\cX,A)$ along $C$ }as 
\begin{align*}
\M_{\st,Z}^{\utd}(\cX,A)_{C} & :=\int_{|\J_{\infty}\cX|_{C}}\LL^{\ff_{(\cX,A)}}\,d\mu_{\cX,Z}\in\widehat{\cM_{Z,r}'}\cup\{\infty\}.
\end{align*}
\end{defn}

For these invariants, we usually omit the subscript $Z$ when $Z=\Spec k$
and $C$ when $C=|\cX|$.

We have 
\[
\int_{Z\to Z'}\M_{\st,Z}(\cX,A)_{C}=\M_{\st,Z'}(\cX,A)_{C},
\]
in particular, 
\[
\int_{Z}\M_{\st,Z}(\cX,A)_{C}=\M_{\st}(\cX,A).
\]
Similarly for the untwisted stringy motive. When $\cX$ is a formal
algebraic space over $\Df$, obviously $\M_{\st,Z}(\cX,A)_{C}=\M_{\st,Z}^{\utd}(\cX,A)_{C}$.

Let $(\cX,A)$ be a log pair over $\Df$ and let $(X,\overline{A})$
and $(\Utg_{\Gamma}(\cX)^{\nor},A^{\utg})$ be the induced log pairs
(see Section \ref{sec:Log-pairs}). Let $C\subset|\cX|$ be a constructible
subset, $\overline{C}\subset|X|$ its image of $C$ and $C^{\utg}\subset|\Utg_{\Gamma}(\cX)^{\nor}|$
the preimage of $\overline{C}$.
\begin{thm}
\label{thm:stringy-coarse-unt}We keep the notation. Let $\overline{\Gamma}$
be the coarse moduli space of $\Gamma$. We have 
\begin{align*}
\M_{\st}(\cX,A)_{C} & =\M_{\st}(X,\overline{A})_{\overline{C}}\\
 & =\M_{\st}^{\utd}(\Utg_{\Gamma}(\cX)^{\nor},A^{\utg})_{C^{\utg}}\\
 & =\int_{\overline{\Gamma}}\M_{\st,\overline{\Gamma}}^{\utd}(\Utg_{\Gamma}(\cX)^{\nor},A^{\utg})_{C^{\utg}}.
\end{align*}
In particular, if $A=0$ and the stacky locus $\cX_{\st}\subset\cX$
has codimension $\ge2$, then 
\[
\M_{\st}(\cX)_{C}=\M_{\st}(X)_{\overline{C}}.
\]
\end{thm}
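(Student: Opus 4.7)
The plan is to establish the three displayed equalities in sequence by two applications of the change of variables formula (Theorems \ref{thm:change-vars-I} and \ref{thm:change-vars-II}), followed by a formal step, and then read off the final special case.

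For the first equality, I would apply Theorem \ref{thm:change-vars-II} to the coarse moduli morphism $\pi\colon\cX\to X$, viewed as a morphism of log pairs $(\cX,A)\to(X,\overline{A})$ in the sense of Lemma \ref{lem:boundary-coarse}. Assumption \ref{assu:tw} holds because $\pi$ is an isomorphism on the complement of the narrow substack $\cX_{\st}$, so $\Jac_{\pi}$ defines a narrow substack; spaciousness of $X$ is automatic since $X$ is a formal algebraic space, which also gives $\fs_{X}\equiv 0$. Lemma \ref{lem:almost-bij} ensures that $\pi_{\infty}$ is almost geometrically bijective, so I may apply the theorem with $A=|\cJ_{\infty}\cX|_{C}$ (noting $\pi_\infty(A)=|\J_{\infty}X|_{\overline{C}}$ up to a negligible set) and $h=\ff_{(X,\overline{A})}$. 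Substituting the crepant identity $\ff_{(X,\overline{A})}\circ\pi_{\infty}-\fj_{\pi}=\ff_{(\cX,A)}$ from Lemma \ref{lem:crepant-j'}, the right-hand integrand collapses to $\ff_{(\cX,A)}+\fs_{\cX}$, giving exactly $\M_{\st}(\cX,A)_{C}=\M_{\st}(X,\overline{A})_{\overline{C}}$.

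For the second equality, I would apply Theorem \ref{thm:change-vars-I} to the normalization $\nu\colon\Utg_{\Gamma}(\cX)^{\nor}\to\Utg_{\Gamma}(\cX)^{\pur}$, working over $\Phi=\Psi=\Gamma$. Here $\nu$ is finite and birational, hence $\nu_{\infty}$ is almost geometrically bijective and $\Jac_{\nu}$ is narrow; Assumption \ref{assu:X} is satisfied because $\Utg_{\Gamma}(\cX)^{\pur}$ is rig-etale over $\cX\times\Theta$ by Lemma \ref{lem:rig-etale-2}, so it shares the generic smoothness and relative dimension of $\cX$. By definition $\mu_{\cX}=\mu_{\Utg_{\Gamma}(\cX)^{\pur}}$. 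Setting $h=\ff_{(\cX,A)}+\fs_{\cX}$, the key observation is Lemma \ref{lem:shift-j'}, which yields
\[
h\circ\nu_{\infty}-\fj_{\nu}=\ff_{(\Utg_{\Gamma}(\cX)^{\nor},A^{\utg})}
\]
outside a negligible set. Hence the transformation rule converts $\M_{\st}(\cX,A)_{C}$ into $\M_{\st}^{\utd}(\Utg_{\Gamma}(\cX)^{\nor},A^{\utg})_{C^{\utg}}$.

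The third equality is purely formal: by construction the motivic measure $\mu$ on $|\J_{\infty}\Utg_{\Gamma}(\cX)^{\nor}|$ is compatible with any choice of base $Z$ via $\int_{Z\to Z'}$, so applying $\int_{\overline{\Gamma}\to\Spec k}$ to the $\overline{\Gamma}$-indexed version recovers the $\Spec k$-indexed version. For the final special case, if $A=0$ and $\cX_{\st}$ has codimension at least two, then $\pi$ has no pseudo-reflections in codimension one, so the relative canonical divisor $K_{\cX/X}$ vanishes as a Weil divisor and therefore $\overline{A}=0$ by the uniqueness in Lemma \ref{lem:boundary-coarse}; the conclusion $\M_{\st}(\cX)_{C}=\M_{\st}(X)_{\overline{C}}$ follows from the first equality.

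The main obstacle I anticipate is bookkeeping rather than conceptual: verifying carefully that the hypotheses of Assumptions \ref{assu:X} and \ref{assu:tw} hold for both $\pi$ and $\nu$ (narrowness of the Jacobian ideal, pure relative dimension, spaciousness of the untwisting stack after normalization), and tracking the bases $\Df$, $\Df_{\Gamma}$, $\Df_{\overline{\Gamma}}$ so that the measures in Theorems \ref{thm:change-vars-I} and \ref{thm:change-vars-II} match the ones used in the definitions of $\M_{\st}$ and $\M_{\st}^{\utd}$. Once the compatibilities are pinned down, the two crepant/shift identities (Lemmas \ref{lem:crepant-j'} and \ref{lem:shift-j'}) do all the arithmetic.
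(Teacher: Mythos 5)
Your proposal is correct and reconstructs the proof with the same key ingredients as the paper: the change of variables formula, Lemma \ref{lem:crepant-j'}, and (either explicitly or via Lemma \ref{lem:shift-j'}) Lemma \ref{lem:jac-shift-relation}. The only organizational difference from the paper's own proof is in the second equality. The paper applies the untwisted change of variables formula directly to the crepant morphism $\phi\colon(\Utg_{\Gamma}(\cX)^{\nor},A^{\utg})\to(X,\overline{A})$, so the middle term $\M_{\st}(X,\overline{A})_{\overline{C}}$ is used twice as a pivot; you instead compute $\M_{\st}(\cX,A)_{C}$ through the identification $\mu_{\cX}=\mu_{\Utg_{\Gamma}(\cX)^{\pur}}$ and then apply the change of variables to the normalization $\nu\colon\Utg_{\Gamma}(\cX)^{\nor}\to\Utg_{\Gamma}(\cX)^{\pur}$, invoking Lemma \ref{lem:shift-j'} to identify the resulting integrand with $\ff_{(\Utg_{\Gamma}(\cX)^{\nor},A^{\utg})}$. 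Since Lemma \ref{lem:shift-j'} is itself derived from $\fs_{\cX}=\fj_{\pi}-\fj_{\pi^{\utg}}$, the crepancy of $\pi$ and $\phi$, and the additivity of Jacobian orders, the two routes amount to the same arithmetic, merely bundled differently; your version has the small stylistic advantage of making the role of the shift function as a correction term between $\fj_{\pi}$ and $\fj_{\pi^{\utg}}$ completely explicit in the second step. Your treatment of the base-change step and the special case $A=0$, $\codim\cX_{\st}\ge2$ (forcing $\overline{A}=0$ since $\pi$ is etale in codimension one) matches the paper.
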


\begin{proof}
From the change of variables, \ref{lem:jac-shift-relation}, and from
\ref{lem:crepant-j'}, we have
\begin{align*}
\int_{|\cJ_{\infty}\cX|_{C}}\LL^{\ff_{(\cX,A)}+\fs_{\cX}}\,d\mu_{\cX} & =\int_{|\J_{\infty}X|_{\overline{C}}}\LL^{\ff_{(X,\overline{A})}}\,d\mu_{X}\\
 & =\int_{|\J_{\infty}\Utg_{\Gamma}(\cX)^{\nor}|_{C^{\utg}}}\LL^{\ff_{(\Utg_{\Gamma}(\cX)^{\nor},A^{\utd})}}\,d\mu_{\Utg_{\Gamma}(\cX)}\\
 & =\int_{\overline{\Gamma}}\int_{|\J_{\infty}\Utg_{\Gamma}(\cX)^{\nor}|_{C^{\utg}}}\LL^{\ff_{(\Utg_{\Gamma}(\cX)^{\nor},A^{\utd})}}\,d\mu_{\Utg_{\Gamma}(\cX),\overline{\Gamma}}.
\end{align*}
These equalities show the first assertion. If $A=0$ and the stacky
locus $\cX_{\st}\subset\cX$ has codimension $\ge2$, then $\overline{A}=0$
and the second assertion follows.
\end{proof}
We apply this theorem to the case of a quotient stack. Let $V$ be
a formal scheme of finite type, flat and generically smooth over $\Df$.
Suppose that a finite group $H$ acts on $V$ and that the quotient
stack $\cX:=[V/H]$ is spacious, equivalently that if $V^{h}$, $h\in H$
are the $h$-fixed subscheme, then $\bigcup_{h\in H\setminus\{1\}}V^{h}$
is a narrow subscheme of $V$. Let $(V,A)$ be a log pair such that
$A$ is stable under the $H$-action. Let $(V/H,\overline{A})$ be
the induced log pair such that $(V,A)\to(V/H,\overline{A})$ is crepant.
Let $\Gamma:=\Gamma_{\cX}$ for $\cX$ and let $\Gamma_{G}:=\Gamma\times_{\Theta}\Theta_{G}$.
Then $\Gamma_{G}\to\Theta_{G}$ is representable and universally bijective.
Moreover, $\Utg_{\Gamma_{G},\iota}(V)^{\pur}$ is flat over $\Df_{\Gamma_{G}}$
for every $\iota\in\Emb(G,H)$. We denote by $\Utg_{\Gamma_{G},\iota}(V)^{\nor}$
its normalization. Let $C\subset|V|$ be an $H$-stable constructible
subset and $\overline{C}\subset|V/H|$ its image. For an embedding
$\iota\colon G\hookrightarrow H$ of a Galoisian group, let $A_{\iota}$
be the restriction of $A^{\utg}$ to $[\Utg_{\Gamma_{G},\iota}(V)^{\nor}/\N_{H}(\iota(G))]$
and let $C_{\iota}\subset\left|[\Utg_{\Gamma_{G},\iota}(V)^{\nor}/\N_{H}(\iota(G))]\right|$
be the preimage of $\overline{C}$.
\begin{cor}
\label{cor:stringy-quot}We have
\[
\M_{\st}(V/H,\overline{A})_{\overline{C}}=\sum_{G\in\GG}\sum_{\iota\in\frac{\Emb(G,H)}{\Aut(G)\times H}}\M_{\st}^{\utd}\left(\left[\frac{\Utg_{\Gamma_{G},\iota}(V)^{\nor}}{\N_{H}(\iota(G))}\right],A_{\iota}\right)_{C_{\iota}}.
\]
\end{cor}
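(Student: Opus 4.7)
The plan is to reduce the corollary to Theorem~\ref{thm:stringy-coarse-unt} applied to the quotient stack $\cX := [V/H]$. Since $V \to \cX$ is representable and étale (its base change by itself is the constant group $V \times H$), the $H$-invariant $\QQ$-divisor $A$ on $V$ descends uniquely to a $\QQ$-divisor $A_\cX$ on $\cX$ with pull-back $A$. The coarse moduli space of $\cX$ is $V/H$, and by Lemma~\ref{lem:boundary-coarse} the induced crepant boundary on $V/H$ is precisely $\overline A$. Applying Theorem~\ref{thm:stringy-coarse-unt} to $(\cX, A_\cX)$ along the preimage $C_\cX \subset |\cX|$ of $\overline C$ therefore gives
\[
\M_{\st}(V/H, \overline A)_{\overline C} \;=\; \M_{\st}(\cX, A_\cX)_{C_\cX} \;=\; \M_{\st}^\utd\bigl(\Utg_\Gamma(\cX)^\nor, A_\cX^\utg\bigr)_{C^\utg}.
\]

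Next I would decompose the right-hand side along the Galoisian stratification $\Gamma = \coprod_{G \in \GG} \Gamma_{[G]}$. From Lemma~\ref{lem:Unt-quot} together with the identification (\ref{eq:utg-N_H}), base-changed from $\Theta_{[G]}$ to $\Gamma_{[G]}$, one obtains
\[
\Utg_{\Gamma_{[G]}}(\cX) \;\cong\; \coprod_{\iota \in \Emb(G,H)/(\Aut(G) \times H)} \bigl[\Utg_{\Gamma_G, \iota}(V) / \N_H(\iota(G))\bigr].
\]
By Lemma~\ref{lem:flat-calm-base-change} (applied to the flat morphism furnished by the definition of $\Gamma$ in \ref{nota:Gamma}), the formation of the rig-pure associated stack commutes with this base change, with disjoint unions, and with quotients by finite étale group actions; normalization enjoys the same compatibilities. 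Hence
\[
\Utg_\Gamma(\cX)^\nor \;\cong\; \coprod_{G \in \GG} \coprod_{\iota} \bigl[\Utg_{\Gamma_G, \iota}(V)^\nor / \N_H(\iota(G))\bigr].
\]

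The untwisted stringy motive is by definition an integral over the arc space of the ambient stack, which splits as a disjoint union under the decomposition above; the functions $\ff$ and the measure $\mu$ restrict componentwise. Moreover, by the uniqueness clause in Lemma~\ref{lem:boundary-coarse} applied relative to the common coarse moduli space $V/H$, the restriction of $A_\cX^\utg$ to each summand $[\Utg_{\Gamma_G, \iota}(V)^\nor / \N_H(\iota(G))]$ coincides with $A_\iota$. Since the preimage $C^\utg$ of $\overline C$ restricts on each summand to $C_\iota$, summing yields
\[
\M_{\st}^\utd\bigl(\Utg_\Gamma(\cX)^\nor, A_\cX^\utg\bigr)_{C^\utg} \;=\; \sum_{G \in \GG} \sum_{\iota} \M_{\st}^\utd\bigl([\Utg_{\Gamma_G, \iota}(V)^\nor / \N_H(\iota(G))], A_\iota\bigr)_{C_\iota},
\]
which combined with the first display is the stated equality.

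The main obstacle is a bookkeeping one: to verify that rig-purification, normalization, and the quotient by the finite étale action of $\N_H(\iota(G))$ genuinely commute in the order used here, and that these commutations survive the refinement from $\Theta_{[G]}$ to the locally closed stratification $\Gamma_{[G]}$ where Lemma~\ref{lem:flat-calm-base-change} becomes applicable. Once these compatibilities are in hand, the matching of boundaries and constructible subsets across the decomposition is automatic from the uniqueness of crepant boundaries over the coarse moduli space and from how $C^\utg$ and $C_\iota$ are defined.
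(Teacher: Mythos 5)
Your proposal follows essentially the same route as the paper's own proof: apply Theorem~\ref{thm:stringy-coarse-unt} to $\cX=[V/H]$, then decompose $\Utg_{\Gamma}(\cX)$ via Lemma~\ref{lem:Unt-quot} and (\ref{eq:utg-N_H}) into $\coprod_{G}\coprod_{\iota}[\Utg_{\Gamma_G,\iota}(V)/\N_H(\iota(G))]$, and read off the sum of untwisted stringy motives. You are somewhat more explicit than the paper on two points it leaves implicit --- the descent of the $H$-invariant divisor $A$ from $V$ to a boundary on $[V/H]$, and the commutation of rig-purification, normalization, and the finite \'etale quotient with the component decomposition (the paper compresses this last step into a bare ``Therefore'') --- and you correctly flag the latter as the genuine bookkeeping obstacle, so the argument is sound.
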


\begin{proof}
Let $\cX=[V/H]$ and define $(\Utg_{\Gamma}(\cX)^{\nor},A^{\utg})$
as before. From \ref{thm:stringy-coarse-unt}, 
\[
\M_{\st}(V/H,\overline{A})_{\overline{C}}=\M_{\st}^{\utd}(\Utg_{\Gamma}(\cX)^{\nor},A^{\utg})_{C^{\utg}}.
\]
From \ref{lem:Unt-quot} and (\ref{eq:utg-N_H}),
\begin{align*}
\Utg_{\Gamma}(\cX) & =\coprod_{G\in\GG}\Utg_{\Gamma_{[G]}}(\cX)\\
 & \cong\coprod_{G\in\GG}\left[\left(\coprod_{\iota\in\Emb(G,H)}\Utg_{\Gamma_{G},\iota}(V)\right)/\Aut(G)\times H\right]\\
 & \cong\coprod_{G\in\GG}\coprod_{\iota\in\Emb(G,H)/(\Aut(G)\times H)}[\Utg_{\Theta_{G},\iota}(V)/\N_{H}(\iota(G))].
\end{align*}
Therefore $\M_{\st}^{\utd}(\Utg_{\Gamma}(\cX)^{\nor},A^{\utg})_{C^{\utg}}$
is equal to the right side of the equality of the lemma.
\end{proof}
\begin{rem}
\label{rem:mot-version}This corollary is the motivic version of the
main result of \cite{MR3730512}, formulated by using stacks. Note
that the untwisting stack considered in this paper is essentially
the same as the one considered in \cite{MR3508745,MR3730512} (see
Remark \ref{rem:embed-linear}).
\end{rem}

\begin{defn}
Let $f\colon\cY\to\cX$ be a morphism of formal DM stacks in \ref{assu:tw}.
We say that $f\colon\cY\to\cX$ is a \emph{pseudo-modification }if
$f_{\infty}\colon|\cJ_{\infty}\cX|\to|\cJ_{\infty}\cX|$ is almost
geometrically bijective.
\end{defn}

\begin{example}
The following are examples of pseudo-modification;
\begin{enumerate}
\item the $t$-adic completion of a (not necessarily representable) proper
birational morphism $\cW\to\cV$ of DM stacks of finite type over
$\D=\Spec k\tbrats$,
\item the blowup along a closed substack,
\item the relative coarse moduli space $\cY\to\cX$ of some morphism $\cY\to\cZ$
of formal DM stacks of finite type over $\Df$.
\end{enumerate}
A composition of finitely many such morphisms is also a pseudo-modification.
\end{example}

\begin{thm}
\label{thm:Mst-crepant}Let $f\colon\cY\to\cX$ be a pseudo-modification
which induces a crepant morphism $(\cY,B)\to(\cX,A)$ of log pairs
over $\Df$. Let $C\subset|\cX|$ be a constructible subset and $\tilde{C}\subset|\cY|$
its preimage. Then 
\[
\M_{\st}(\cX,A)_{C}=\M_{\st}(\cY,B)_{\tilde{C}}.
\]
\end{thm}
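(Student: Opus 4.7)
The plan is to apply the change of variables formula for twisted arcs (Theorem~\ref{thm:change-vars-II}) to the morphism $f\colon\cY\to\cX$ with the test function $h=\ff_{(\cX,A)}$, and then convert the integrand on the $\cY$-side using the crepant identity from Lemma~\ref{lem:crepant-j'}.

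First I would check that Assumption~\ref{assu:tw} holds for $f$. Flatness, pure relative dimension $d$, and spaciousness for both $\cX$ and $\cY$ are part of being a log pair in the sense of Section~\ref{sec:Log-pairs}; the narrowness of $\Jac_f$ is built into the fact that both stacks are generically smooth, since $f$ is a pseudo-modification and therefore an isomorphism on a dense open substack (equivalently, $f_\infty$ is almost geometrically bijective, forcing $\Jac_f$ not to vanish on any component of $\cY$ of maximal dimension). Next I would record the key set-theoretic input: since $f$ is a pseudo-modification, there exist negligible subsets $N_\cY\subset|\cJ_\infty\cY|_{\tilde C}$ and $N_\cX\subset|\cJ_\infty\cX|_C$ such that $f_\infty$ restricts to a geometric bijection $(|\cJ_\infty\cY|_{\tilde C}\setminus N_\cY)\to(|\cJ_\infty\cX|_C\setminus N_\cX)$. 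In particular $f_\infty|_{|\cJ_\infty\cY|_{\tilde C}}$ is almost geometrically injective and its image differs from $|\cJ_\infty\cX|_C$ by a negligible set.

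With these preliminaries, Theorem~\ref{thm:change-vars-II} applied to $h=\ff_{(\cX,A)}$ yields
\[
\int_{|\cJ_\infty\cX|_C}\LL^{\ff_{(\cX,A)}+\fs_\cX}\,d\mu_\cX
=\int_{|\cJ_\infty\cY|_{\tilde C}}\LL^{\ff_{(\cX,A)}\circ f_\infty-\fj_f+\fs_\cY}\,d\mu_\cY,
\]
where both sides are well-defined in $\widehat{\cM_{k,r}'}\cup\{\infty\}$ because $\ff_{(\cX,A)}$, $\fs_\cX$, $\fs_\cY$, and $\fj_f$ are all measurable (by the measurability results in Sections~\ref{sec:integration-twisted} and \ref{sec:Log-pairs}). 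Since the crepant hypothesis gives
\[
\ff_{(\cX,A)}\circ f_\infty-\fj_f=\ff_{(\cY,B)}
\]
outside a negligible subset (Lemma~\ref{lem:crepant-j'}), the right-hand integral equals
\[
\int_{|\cJ_\infty\cY|_{\tilde C}}\LL^{\ff_{(\cY,B)}+\fs_\cY}\,d\mu_\cY=\M_{\st}(\cY,B)_{\tilde C},
\]
while the left-hand side is $\M_{\st}(\cX,A)_C$ by definition. This gives the desired equality.

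The main obstacle will be the verification step: making sure that all the hypotheses of Theorem~\ref{thm:change-vars-II} and Lemma~\ref{lem:crepant-j'} genuinely apply to an arbitrary pseudo-modification. In particular, one must argue that $\Jac_f$ defines a narrow substack of $\cY$ (so that $\fj_f$ is nowhere infinity outside a negligible set), and that the boundary supports $\Supp A$ and $\Supp B$, through which the arcs appearing with infinite value of $\ff$ factor, are narrow enough to contribute nothing to the measure. Both of these ultimately reduce to the fact that pseudo-modifications preserve the generic fibre, combined with the narrowness conditions that are part of the definition of a log pair; once these are in place the rest of the argument is a direct computation.
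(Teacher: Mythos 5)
Your proof is correct and uses essentially the same ingredients as the paper, but organizes them a bit differently. The paper's proof factors through the coarse moduli space $X$ of $\cX$: it first invokes Theorem~\ref{thm:stringy-coarse-unt} to get $\M_{\st}(\cX,A)_C = \M_{\st}(X,\overline{A})_{\overline{C}}$, and then applies Theorem~\ref{thm:change-vars-II} to the crepant morphism $\cY\to X$ (a DM stack mapping to a formal algebraic space). You instead apply Theorem~\ref{thm:change-vars-II} directly to $f\colon\cY\to\cX$ between the two DM stacks, with $h=\ff_{(\cX,A)}$, and then simplify the right-hand integrand via Lemma~\ref{lem:crepant-j'}. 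Both routes are valid; yours is a bit more streamlined because it treats Theorem~\ref{thm:change-vars-II} as a black box rather than re-deriving its reduction to a formal algebraic space, whereas the paper's detour through $\M_{\st}(X,\overline{A})_{\overline{C}}$ yields that intermediate equality as a (sometimes useful) byproduct. Note also that your verification of Assumption~\ref{assu:tw} is more elaborate than necessary: by Definition~\ref{def:spacious} and the text following it, a pseudo-modification is \emph{by definition} a morphism of formal DM stacks satisfying Assumption~\ref{assu:tw}, so the flatness, spaciousness, generic smoothness, and narrowness of $\Jac_f$ come for free rather than having to be deduced from the log-pair structure and birationality as you suggest.
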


\begin{proof}
With notation as before, we have 
\[
\M_{\st}(\cX,A)_{C}=\M_{\st}(X,\overline{A})_{\overline{C}}.
\]
Applying \ref{thm:change-vars-II} to the crepant morphism $(\cY,B)\to(X,\overline{A})$,
we see 
\[
\M_{\st}(X,\overline{A})_{\overline{C}}=\M_{\st}(\cY,B)_{\tilde{C}}.
\]
\end{proof}

\section{Linear actions\label{sec:Linear-actions}}

In this section, we focus on a linear action of a finite group on
an affine space over $\Df$ and the associated quotient stack/scheme.
We specialize results obtained in earlier sections to this situation
and get a more explicit formula, the wild McKay correspondence.

Let $K$ be an algebraically closed field, let $M$ be a free $K\tbrats$-module
of rank $d$ and let $S^{\bullet}M$ be its symmetric algebra over
$K\tbrats$, which is isomorphic to the polynomial ring $K\tbrats[x_{1},\dots,x_{d}]$.
Suppose that $M$ is given a $K\tbrats$-linear action of a finite
group $H$, which induces an $H$-action on the scheme $W=\Spec S^{\bullet}M\cong\AA_{\D_{K}}^{d}$
and one on the formal scheme $V:=\Spf\widehat{S^{\bullet}M}\cong\AA_{\Df_{K}}^{d}$,
where $\widehat{S^{\bullet}M}$ is the $t$-adic completion of $S^{\bullet}M$.
Let $\iota\colon G\hookrightarrow H$ be an embedding of a Galoisian
group. For a $G$-cover $E\to\Df_{K}$, we define the \emph{tuning
module}
\[
\Xi_{E,\iota}:=\Hom_{K\tbrats}^{\iota}(M,O_{E}),
\]
the module of $\iota$-equivariant $K\tbrats$-linear maps \cite[page 127]{MR3508745}
(cf. \cite[Def. 3.1]{MR3431631}). This is again a free $K\tbrats$-module
of rank $d$. Moreover this is also a saturated $K\tbrats$-submodule
of $\Hom_{K\tbrats}(M,O_{E})$, that is, the quotient module $\Hom_{K\tbrats}(M,O_{E})/\Xi_{E}$
is a torsion-free (hence free) $K\tbrats$-module. It follows that
the map of dual $K\tbrats$-modules 
\[
\Hom_{K\tbrats}(M,O_{E})^{\vee}\to\Xi_{E,\iota}^{\vee}
\]
 is surjective.

Let $M_{\iota}^{|E|}:=\Xi_{E,\iota}^{\vee}$ and let 
\begin{gather*}
W_{\iota}^{|E|}:=\Spec S^{\bullet}(M_{\iota}^{|E|}),\\
V_{\iota}^{|E|}:=\Spf\widehat{S^{\bullet}(M_{\iota}^{|E|})}=\widehat{W_{\iota}^{|E|}}.
\end{gather*}
We have the following commutative diagram of natural morphisms \cite[page 130]{MR3508745}:
\[
\xymatrix{W_{\iota}^{|E|}\otimes_{K\tbrats}O_{E}\ar[r]\ar[d] & W_{\iota}^{|E|}\ar[d]\\
W\ar[r] & W/H
}
\]
Taking $t$-adic completion, we have the corresponding diagram for
$V$ in place of $W$.

Let $E\colon\Spec K\to\Theta_{G}$ be a morphism corresponding to
our $G$-cover $E\to\Df_{K}$ and let $\Utg_{E,\iota}(V):=\Utg_{\Theta_{G},\iota}(V)\times_{\Theta_{G},E}\Spec K$.
\begin{lem}
\label{lem:linear-Utg}We have a $\Df$-isomorphism $\Utg_{E,\iota}(V)^{\pur}\cong V_{\iota}^{|E|}$
compatible with the morphisms to $V/H$. In particular, $\Utg_{E,\iota}(V)^{\pur}$
is smooth over $\Df$.
\end{lem}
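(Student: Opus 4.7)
The strategy is to identify both sides with concrete closed subschemes of a common Weil restriction and then observe that they coincide precisely after killing $t$-torsion.

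First I would make explicit $\ulHom_{\Df_{K}}(E,V)$. Since $V=\Spf\widehat{S^{\bullet}M}$ with $M$ a free $K\tbrats$-module of finite rank, the Weil restriction $\R_{E/\Df_{K}}(V\times_{\Df_{K}}E)$ represents the functor $R\mapsto V(O_{E}\,\hat{\otimes}_{K\tbrats}R)=\Hom_{K\tbrats}(M,O_{E}\,\hat{\otimes}R)$. Because $M$ is finite free, this identifies with $R\mapsto N\,\hat{\otimes}_{K\tbrats}R$ for the free $K\tbrats$-module $N:=\Hom_{K\tbrats}(M,O_{E})$, so $\ulHom_{\Df_{K}}(E,V)\cong\mathbb{V}(N):=\Spf\widehat{S^{\bullet}N^{\vee}}$.

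Next I would unwind the conjugation action $g\cdot f=\iota(g)\circ f\circ g^{-1}$: on $N$ it is the diagonal action inherited from the $\iota$-action on $M$ (hence on $M^{\vee}$) and the given $G$-action on $O_{E}$, so its invariants are precisely $\Xi_{E,\iota}=N^{G}$. The saturation of $\Xi_{E,\iota}\subset N$ recorded just before the lemma lets me dualize the inclusion into a surjection $N^{\vee}\twoheadrightarrow\Xi_{E,\iota}^{\vee}=M_{\iota}^{|E|}$, producing a closed immersion
\[
V_{\iota}^{|E|}=\mathbb{V}(\Xi_{E,\iota})\hookrightarrow\mathbb{V}(N),
\]
and the tautological inclusion $N^{G}\,\hat{\otimes}R\subset(N\,\hat{\otimes}R)^{G}$ shows it factors through the $G$-fixed subscheme $\Utg_{E,\iota}(V)\subset\mathbb{V}(N)$.

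Then I would identify $V_{\iota}^{|E|}$ with $\Utg_{E,\iota}(V)^{\pur}$. Since $\Xi_{E,\iota}$ is $K\tbrats$-free, $V_{\iota}^{|E|}$ is flat over $\Df_{K}$, hence rig-pure. For flat $R$ over $K\tbrats$ the functor $(-)^{G}$ commutes with $\hat{\otimes}R$, so the closed immersion $V_{\iota}^{|E|}\hookrightarrow\Utg_{E,\iota}(V)$ is an isomorphism after inverting $t$; the defining ideal of $V_{\iota}^{|E|}$ inside $\Utg_{E,\iota}(V)^{\pur}$ is therefore $t$-power-torsion, and must vanish since $\Utg_{E,\iota}(V)^{\pur}$ is $t$-torsion free. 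Compatibility with the map to $V/H$ is built into the construction recalled from \cite{MR3508745}: the evaluation morphism $V_{\iota}^{|E|}\,\hat{\otimes}\,O_{E}\to V\to V/H$ and the coarse-moduli morphism $[E_{R}/G]\to[V/\iota(G)]\to[V/H]$ attached to an $\iota$-equivariant $E_{R}\to V$ are two manifestations of the same tautological $\iota$-equivariant datum. Smoothness of $\Utg_{E,\iota}(V)^{\pur}$ over $\Df$ follows immediately from the description of $V_{\iota}^{|E|}$ as a formal $d$-dimensional affine space over $\Df_{K}$.

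The main obstacle will be the clean identification of the rig-pure part: one must rule out residual $t$-torsion inside $\Utg_{E,\iota}(V)$ lying strictly between $V_{\iota}^{|E|}$ and the full $G$-fixed subscheme. The essential input is the saturation of $\Xi_{E,\iota}\subset N$, which guarantees that $V_{\iota}^{|E|}$ is itself flat (hence rig-pure) and already agrees with $\Utg_{E,\iota}(V)$ over the generic fiber $\Df_{K}^{*}$; the rest is a formal consequence of the universal property of rig-purification.
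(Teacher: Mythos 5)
Your proof is essentially correct and reaches the same final identification $\Utg_{E,\iota}(V)^{\pur}\cong V_\iota^{|E|}$, but by a somewhat different route than the paper. The paper passes to the non-formal schemes $E^{\a}=\Spec O_E$ and $W=\Spec S^\bullet M$, computes the Hom scheme $\ulHom_\D(E^{\a},W)=\Spec S^\bullet\Hom_{K\tbrats}(M,O_E)^\vee$, and identifies $\Utg_{E,\iota}(V)^{\pur}$ with the $t$-adic completion of the scheme-theoretic closure of the $\iota$-equivariant locus over $\D^*$ inside this ambient scheme, invoking Remark \ref{rem:unt-generic-fib}; the integrality of $W_\iota^{|E|}=\Spec S^\bullet\Xi_{E,\iota}^\vee$ and agreement over $\D^*$ then pins down the closure. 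You instead stay entirely within the formal category, realize the $G$-fixed locus and its closed subscheme $\mathbb{V}(\Xi_{E,\iota})$ directly, and conclude by noting that a flat formal subscheme agreeing with $\Utg_{E,\iota}(V)$ after inverting $t$ must equal $\Utg_{E,\iota}(V)^{\pur}$, using that $\Utg_{E,\iota}(V)^{\pur}$ has no $t$-torsion. This buys you a cleaner argument that avoids the detour through non-formal schemes and the closure-taking step.

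One place where your reasoning is compressed to the point of being slightly imprecise is the step ``for flat $R$ the functor $(-)^G$ commutes with $\hat\otimes R$, so the closed immersion is an isomorphism after inverting $t$.'' The $G$-fixed subscheme of $\mathbb{V}(N)$ (with $N=\Hom_{K\tbrats}(M,O_E)$) has coordinate ring $\widehat{S^\bullet((N^\vee)_G)}$ built from \emph{coinvariants}, while $V_\iota^{|E|}$ has coordinate ring $\widehat{S^\bullet((N^G)^\vee)}$ built from \emph{invariants}. Commutation of $(-)^G$ with flat base change gives $\Xi_{E,\iota}\otimes K\tpars=(N\otimes K\tpars)^G$, but you still need the natural map $(N^\vee\otimes K\tpars)_G\to((N\otimes K\tpars)^G)^\vee$ to be an isomorphism over the field $K\tpars$. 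This is true — it is a standard linear-algebra fact over a field (or one can observe that $N\otimes K\tpars$ is a free $K\tpars[G]$-module of rank $d$ because $O_E\otimes K\tpars$ is the coordinate ring of a $G$-torsor, so a dimension count finishes it) — but it is a separate ingredient, not a formal consequence of flat base change alone. The paper's version of this step is equally terse (it asserts $\ulHom_\D^\iota(E^{\a},W)\times_\D\D^*=\Spec S^\bullet\Hom^\iota_{K\tpars}(M_{K\tpars},O_{E,K\tpars})^\vee$ without comment), so this is not a defect relative to the paper; just be aware that the sentence as written does not fully justify the generic-fiber identification.
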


\begin{proof}
Let $E^{\a}:=\Spec O_{E}$ be the scheme corresponding to the formal
scheme $E$. The formal scheme $\Utg_{E,\iota}(V)^{\pur}$ is the
completion of the closure of $\ulHom_{\D}^{\iota}(E^{\a},W)\times_{\D}\D^{*}$
in $\ulHom_{\D}(E^{\a},W)$ (see Remark \ref{rem:unt-generic-fib}).
We note that 
\begin{align*}
M\otimes O_{E}^{\vee} & =\Hom_{K\tbrats}(M^{\vee},K\tbrats)\otimes_{K\tbrats}O_{E}^{\vee}\\
 & =\Hom_{K\tbrats}(M^{\vee},O_{E}^{\vee})\\
 & =\Hom_{K\tbrats}(M^{\vee}\otimes_{K\tbrats}O_{E},K\tbrats)\\
 & =\Hom_{K\tbrats}(M,O_{E})^{\vee}.
\end{align*}
Here duals are taken as $K\tbrats$-modules. The $\D$-scheme $\ulHom_{\D}(E^{\a},W)$
is identified with 
\[
\Spec S^{\bullet}(M\otimes_{K\tbrats}O_{E}^{\vee})=\Spec S^{\bullet}\Hom_{K\tbrats}(M,O_{E})^{\vee}.
\]
Indeed, for a morphism $\Spec R\to D$, 
\begin{align*}
\ulHom_{\D}(E^{\a},W)(\Spec R) & =\Hom_{K\tbrats\textrm{-alg}}(S^{\bullet}M,O_{E}\otimes_{K\tbrats}R)\\
 & =\Hom_{K\tbrats\textrm{-mod}}(M,O_{E}\otimes_{K\tbrats}R)\\
 & =\Hom_{K\tbrats\textrm{-mod}}(M,\Hom_{K\tbrats}(O_{E}^{\vee},R))\\
 & =\Hom_{K\tbrats\textrm{-mod}}(M\otimes_{K\tbrats}O_{E}^{\vee},R)\\
 & =\Hom_{K\tbrats\textrm{-alg}}(S^{\bullet}(M\otimes_{K\tbrats}O_{E}^{\vee}),R)\\
 & =\left(\Spec S^{\bullet}(M\otimes_{K\tbrats}O_{E}^{\vee})\right)(\Spec R).
\end{align*}
Therefore
\[
\ulHom_{\D}(E^{\a},W)\times_{\D}\D^{*}=\Spec S^{\bullet}\Hom_{K\tpars}(M\otimes_{K\tbrats}K\tpars,O_{E}\otimes_{K\tbrats}K\tpars)^{\vee}
\]
and for $\iota\colon G\hookrightarrow H$,
\begin{gather*}
\ulHom_{\D}^{\iota}(E^{\a},W)\times_{\D}\D^{*}=\Spec S^{\bullet}\Hom_{K\tpars}^{\iota}(M\otimes_{K\tbrats}K\tpars,O_{E}\otimes_{K\tbrats}K\tpars)^{\vee}.
\end{gather*}
Since $\Hom_{K\tbrats}(M,O_{E})^{\vee}\to\Xi_{E,\iota}^{\vee}$ is
surjective, we may regard $W_{\iota}^{|E|}$ as a closed subscheme
of $\Spec S^{\bullet}(M\otimes_{K\tbrats}O_{E}^{\vee})$. This closed
subscheme is integral and coincides over $\D^{*}$ with $\ulHom_{\D}^{\iota}(E^{\a},W)\times_{\D}\D^{*}$.
Therefore $W_{\iota}^{|E|}$ is the closure of $\ulHom_{\D}^{\iota}(E^{\a},W)\times_{\D}\D^{*}$.
Since the completion of $W_{\iota}^{|E|}$ is $V_{\iota}^{|E|}$,
the lemma follows.
\end{proof}
Following \cite[Def. 3.3]{MR3431631} and \cite[Def. 5.4]{MR3508745}
(cf. \cite[Def. 6.5]{MR3791224}), we define 
\[
v(E):=\frac{1}{\sharp G}\length\frac{\Hom_{K\tbrats}(M,O_{E})}{O_{E}\cdot\Xi_{E,\iota}}.
\]

\begin{lem}
\label{lem:s-and-v}Let $\beta\colon E\to V$ be an $\iota$-equivariant
$\Df_{K}$-morphism, let $\cX:=[V/H]$ and let $\gamma\colon\cE\to\cX$
be the induced twisted arc. Then 
\[
\fs_{\cX}(\gamma)=-v(E).
\]
\end{lem}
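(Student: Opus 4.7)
The strategy is to unwind the definition $\fs_{\cX}(\gamma) = \fj_{s}(\tilde\gamma) - \fj_{r}(\tilde\gamma)$ in this linear quotient setting, using the explicit model for the untwisting stack supplied by Lemma~\ref{lem:linear-Utg}. First I would localize: passing to the $K$-point $\Spec K \to \Theta_G \to \Gamma$ corresponding to the $G$-cover $E$, I may assume $\Gamma = \Spec K$, so that $\Utg_{\Gamma}(\cX)^{\pur}$ is identified with the smooth formal affine space $V_{\iota}^{|E|} = \Spf\widehat{S^{\bullet}M_{\iota}^{|E|}}$ over $\Df_K$, where $M_{\iota}^{|E|} = \Xi_{E,\iota}^{\vee}$. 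Setting $W := V_{\iota}^{|E|} \times_{\Df_K} \cE$, the canonical lift $\tilde\gamma\colon \cE \to W$ of $\gamma$ and $\gamma'$ becomes, after pullback along the $G$-cover $E \to \cE$, the morphism $\tilde\beta = (\gamma' \circ \pi_E,\, \id_E)\colon E \to V_{\iota}^{|E|} \times_{\Df_K} E$ satisfying $s \circ \tilde\beta = \gamma' \circ \pi_E$ and $r \circ \tilde\beta = \beta$.

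For the Jacobian of $s$, the generic \'etaleness of $\pi_E\colon E \to \Df_K$ makes $\Omega^1_{E/\Df_K}$ a torsion $O_E$-module; together with the decomposition $\Omega^1_{W/\Df_K} \cong \pr_1^{*}\Omega^1_{V_{\iota}^{|E|}/\Df_K} \oplus \pr_2^{*}\Omega^1_{E/\Df_K}$, this yields $\Omega^d_{W/\Df_K}/\tors \cong \pr_1^{*}\Omega^d_{V_{\iota}^{|E|}/\Df_K}$. Hence $\tilde\beta^{\flat}\Omega^d_{W/\Df_K}$ and $(s\circ\tilde\beta)^{\flat}\Omega^d_{V_{\iota}^{|E|}/\Df_K}$ coincide as the same rank-one free $O_E$-module (both generated by the pullback of $dx_1 \wedge \cdots \wedge dx_d$), and so $\fj_{s}(\tilde\gamma) = 0$.

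For the Jacobian of $r$, I would extract from the construction in Lemma~\ref{lem:linear-Utg} the explicit form of the universal $\iota$-equivariant morphism $r\colon V_{\iota}^{|E|} \times_{\Df_K} E \to V$: after choosing a $K\tbrats$-basis $\{\xi_i\}_{i=1}^{d}$ of $\Xi_{E,\iota}$ with dual basis $\{x_i\}$ of $M_{\iota}^{|E|}$, and a $K\tbrats$-basis $\{y_j\}_{j=1}^{d}$ of $M$, the morphism $r$ is determined by the canonical $K\tbrats$-linear map $M \to M_{\iota}^{|E|} \otimes_{K\tbrats} O_E$ sending $y_j$ to $\sum_i a_{ij}\, x_i$, where $a_{ij} := \xi_i(y_j) \in O_E$. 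Taking the $d$-th exterior power and discarding torsion, this gives
\[
r^{*}(dy_1 \wedge \cdots \wedge dy_d) \equiv \det(a_{ij}) \cdot (dx_1 \wedge \cdots \wedge dx_d) \pmod{\tors},
\]
so that $\sharp G \cdot \fj_{r}(\tilde\gamma) = \ord_E \det(a_{ij})$.

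Finally, to identify this with $v(E)$: the inclusion $\Xi_{E,\iota} \hookrightarrow \Hom_{K\tbrats}(M, O_E) = M^{\vee} \otimes_{K\tbrats} O_E$ sends $\xi_i$ to $\sum_j a_{ij}\, y_j^{\vee}$, so in the $O_E$-bases $\{\xi_i \otimes 1\}$ and $\{y_j^{\vee} \otimes 1\}$ it is represented by the matrix $(a_{ij})^{T}$; the colength of its image $O_E \cdot \Xi_{E,\iota}$ in the rank-$d$ free $O_E$-module $M^{\vee} \otimes_{K\tbrats} O_E$ is therefore $\ord_E \det(a_{ij})$, giving $\sharp G \cdot v(E) = \ord_E \det(a_{ij})$. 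Combining the three steps yields $\fs_{\cX}(\gamma) = 0 - v(E) = -v(E)$. The main technical obstacle will be extracting the concrete presentation of $r$ in terms of the tuning module $\Xi_{E,\iota}$ from the identification in Lemma~\ref{lem:linear-Utg}; once that is in hand, the rest is straightforward linear algebra over the DVR $O_E$.
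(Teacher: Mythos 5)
Your proof is correct, and it takes a genuinely different route from the paper's. The paper deduces the lemma from Lemma~\ref{lem:shift-j'}, which converts $\fs_{\cX}$ into a difference of $\ff$-functions of log pairs, and then outsources the identification of the boundary $A^{\utg}$ on $\Utg_{E,\iota}(V)^{\pur}$ as the vertical divisor with multiplicity $-v(E)$ to an external reference (Lem.~6.5 of the cited prior paper). You instead unwind Definition~\ref{def:shift} directly, computing $\fj_{s}(\tilde\gamma)=0$ from the decomposition $\Omega^{1}_{W/\Df_{K}}\cong\pr_{1}^{*}\Omega^{1}_{V_{\iota}^{|E|}/\Df_{K}}\oplus\pr_{2}^{*}\Omega^{1}_{\cE/\Df_{K}}$ (the second summand being torsion since $\cE\to\Df_{K}$ is generically \'etale and $\Omega^{d}_{V_{\iota}^{|E|}/\Df_{K}}$ being torsion-free by smoothness), and $\fj_{r}(\tilde\gamma)=v(E)$ from the explicit matrix of the coevaluation $M\to M_{\iota}^{|E|}\otimes_{K\tbrats}O_{E}$. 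This is more self-contained: it in effect reproves the content of the cited external lemma rather than citing it, at the cost of a slightly longer computation. Both arguments are valid and they trade off brevity for self-containment.

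The one step you flagged as an obstacle --- identifying the universal morphism $r$ as the map induced by $y_{j}\mapsto\sum_{i}a_{ij}x_{i}$ --- does indeed hold, and follows from the chain of identifications in the proof of Lemma~\ref{lem:linear-Utg}: the adjunction $\Hom_{K\tbrats\text{-alg}}(S^{\bullet}M,O_{E}\otimes R)\cong\Hom_{K\tbrats\text{-mod}}(M\otimes O_{E}^{\vee},R)$ makes the universal evaluation on the unrestricted Hom scheme the coevaluation $m\mapsto\sum_{\alpha}e_{\alpha}\otimes m\otimes e_{\alpha}^{\vee}$, and composing with the surjection $M\otimes O_{E}^{\vee}\twoheadrightarrow\Xi_{E,\iota}^{\vee}=M_{\iota}^{|E|}$ (dual to $\Xi_{E,\iota}\hookrightarrow M^{\vee}\otimes O_{E}$) sends $y_{j}$ to $\sum_{i}\xi_{i}(y_{j})\otimes x_{i}=\sum_{i}a_{ij}x_{i}$ as you asserted. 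So the proof is complete as written, once this verification is included. One small remark for precision: in your second step, passing from $\Omega^{d}_{W/\Df_{K}}/\tors$ to $\tilde\beta^{\flat}\Omega^{d}_{W/\Df_{K}}$ requires noting that the torsion summand of $\tilde\beta^{*}\Omega^{d}_{W}$ really is the pullback of the torsion summand --- which works here because $\tilde\beta$ factors as $(\gamma'\circ\pi_{E},\pi_{E})$ and $\pi_{E}^{*}\Omega^{1}_{\cE/\Df_{K}}\cong\Omega^{1}_{E/\Df_{K}}$ is torsion over $O_{E}$; this is worth a line of justification since torsion submodules do not pull back to torsion submodules in general.
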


\begin{proof}
Let $B$ be the $\QQ$-divisor on $V/H$ such that $V\to(V/H,B)$
is crepant. Then the induced boundary on $\Utg_{E,\iota}(V)^{\pur}\cong\AA_{\Df_{K}}^{d}$
is the relative anti-canonical divisor 
\[
-K_{\Utg_{E,\iota}(V)^{\pur}/(V/H,B)}:=-K_{\Utg_{E,\iota}(V)^{\pur}/\Df_{K}}+u^{*}(K_{(V/H)/\Df_{K}}+B),
\]
where $u$ is the morphism $\Utg_{E,\iota}(V)^{\pur}\to V/H$. (Note
that since $\Utg_{E,\iota}(V)^{\pur}$ is already normal, we don't
need to normalize it.) This is the vertical divisor $\Utg_{E,\iota}(V)_{0}^{\pur}\cong\AA_{K}^{d}$
with multiplicity $-v(E)$ (see \cite[Lem. 6.5]{MR3508745}). Since
$V$ is smooth over $\Df_{K}$, we have $\Omega_{V/\Df}^{d}=\omega_{V/\Df}$
and we have $\ff_{([V/H],0)}\equiv0$. Therefore, from \ref{lem:shift-j'},
\begin{align*}
\fs_{\cX}(\gamma) & =\ff_{(\Utg_{\cE}(\cX)^{\pur},A^{\utg}|_{\Utg_{\cE}(\cX)^{\pur}})}(\gamma')-\ff_{([V/H],0)}\\
 & =\ff_{(\Utg_{E,\iota}(V)^{\pur},-v(E)\cdot\Utg_{E,\iota}(V)_{0}^{\pur})}(\beta)\\
 & =-v(E).
\end{align*}
Note that the last equality holds because $\Utg_{E,\iota}(V)_{0}^{\pur}$
as well as its pullback by $\beta$ is a closed subscheme defined
by $t\in K\tbrats$.
\end{proof}
The following lemma was proved in \cite{Formal-Torsors-II} by a different
and more direct method.
\begin{lem}
\label{lem:v-const}Let us fix an embedding $\iota\colon G\hookrightarrow H$.
The map 
\[
|\Lambda_{G}|\mapsto\QQ,\,[E]\mapsto v(E)
\]
is locally constructible; for every substack $\cW\subset\Lambda_{G}$
of finite type, the restriction of this function to $|\cW|$ is constructible
(that is, every fiber is a constructible subset).
\end{lem}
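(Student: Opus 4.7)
The plan is to deduce constructibility from generic freeness applied to a sheaf version of the quotient module that defines $v$. First, since the question is local on $\cW$ and the function $v$ is defined on geometric points, it suffices to work on an etale atlas $\Spec R \to \cW$ from an affine scheme of finite type. Passing to a further etale cover if needed (using Proposition \ref{prop:Theta}, (1) and Lemma \ref{lem:nice-finite-etale-cover}), I may assume that the corresponding $G$-cover over $\D_R^{*}$ is uniformizable with integral model $\Spf O$, where $O$ is a finite flat $R\tbrats$-algebra. Let $M_R := M \otimes_k R\tbrats$ and form the two $R\tbrats$-modules
\[
P := \Hom_{R\tbrats}(M_R, O), \qquad \Xi := \Hom_{R\tbrats}^{\iota}(M_R, O),
\]
both of which are finitely generated and $R\tbrats$-flat (the second being a kernel of a map between flat modules, namely the equalizer cutting out the $\iota$-equivariance condition). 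Set
\[
N := P/(O \cdot \Xi),
\]
a finitely generated $R\tbrats$-module whose fiber over any geometric point $\Spec K \to \Spec R$ should compute $\sharp G \cdot v(E_K)$.

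Next, I will show that $N$ is supported on $V(t) \subset \Spec R\tbrats$, so that $N$ is in fact a finitely generated $R$-module. The point is that inverting $t$ turns $\Spec O[1/t] \to \Spec R\tpars$ into an etale $G$-torsor, and the canonical map
\[
O[1/t] \otimes_{R\tpars} \Xi[1/t] \longrightarrow P[1/t]
\]
is an isomorphism: both sides are free $O[1/t]$-modules of rank $d$, and the statement can be checked after base change along the etale $G$-cover $O[1/t]$ where the torsor trivializes and the statement reduces to the tautological identification. Thus $N[1/t] = 0$, and being finitely generated over a Noetherian ring, $N$ is killed by some power of $t$ and hence is a finite $R$-module. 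Grothendieck's generic freeness theorem then produces a finite stratification $\Spec R = \bigsqcup_i S_i$ into locally closed subschemes on each of which $N$ becomes locally free of some constant rank $r_i$.

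Finally, at any geometric point $x \colon \Spec K \to S_i$ I need to identify $N \otimes_R K$ with $\Hom_{K\tbrats}(M,O_{E_K})/(O_{E_K} \cdot \Xi_{E_K,\iota})$, which will give $v(E_K) = r_i/\sharp G$ and finish the proof via descent from the atlas. The main obstacle is precisely this base change compatibility, which splits into three points: (a) $O \otimes_{R\tbrats} K\tbrats \cong O_{E_K}$, which follows from property (3) in Lemma \ref{cor:subring-O}; (b) compatibility of $\Hom$, which is automatic since $M_R$ is free and $O$ is flat; and (c) compatibility of the $\iota$-equivariant submodule $\Xi$ with base change, which follows from flatness of $O$ together with the description of $\Xi$ as an equalizer of maps of flat modules. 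Granted these compatibilities, the constructibility of $v$ on $|\cW|$ is immediate, and this is where the technical care needs to be concentrated.
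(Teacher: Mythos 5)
Your approach is genuinely different from the paper's. The paper invokes Lemma \ref{lem:s-and-v} (the identity $\fs_{\cX}(\gamma)=-v(E)$) to reduce the claim to the assertion that, after further refining $\Gamma_G$, the boundary $\QQ$-divisor on $\Utg_{\Gamma_{G}}(V)$ is vertical, and proves that by removing, component by component, the image of the intersection of the vertical divisor with the horizontal components. You instead set up a relative tuning module over an atlas of $\cW$ and apply generic freeness to the quotient. Your observation that $N=P/(O\cdot\Xi)$ is $t$-torsion, via etale descent of the isomorphism $O[1/t]\otimes_{R\tpars}\Xi[1/t]\to P[1/t]$, is correct and is a nice shortcut.

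However, step (c) has a genuine gap. You assert that $\Xi=\Hom_{R\tbrats}^{\iota}(M_R,O)$ commutes with base change to residue fields because it is ``an equalizer of maps of flat modules.'' An equalizer is a kernel, and the kernel of a map of flat modules is neither automatically flat nor base-change compatible: tensoring a left-exact sequence with $K\tbrats$ is only right-exact. Here $\Xi=P^{G}$ with $G$ a Galoisian group which, in the wild case, has a nontrivial $p$-Sylow subgroup, and taking $G$-invariants of a modular representation over the two-dimensional ring $R\tbrats$ does not commute with base change in general (by Noether's theorem $O$ fails to be $K\tbrats[G]$-projective at a wildly ramified fibre, so no averaging or descent argument is available). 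Consequently the canonical map $\Xi\otimes_{R\tbrats}K\tbrats\to\Xi_{E_K,\iota}$ can fail to be surjective, in which case $N\otimes_{R}K$ is strictly larger than $\Hom_{K\tbrats}(M,O_{E_K})/(O_{E_K}\cdot\Xi_{E_K,\iota})$ and generic freeness of $N$ stratifies the wrong quantity. This is exactly the phenomenon the paper controls by building the flattening stratification into the choice of $\Gamma_G$ in Definition \ref{nota:Gamma}, so that $\Utg_{\Gamma_G,\iota}(V)^{\pur}$ is flat over $\Df_{\Gamma_G}$ and Lemma \ref{lem:flat-calm-base-change} applies. Your plan could be salvaged by first applying a flattening stratification over $\Spec R$ to the complex $P\to\prod_{g\in G}P$ cutting out $\Xi$ (so that forming the kernel does commute with specialization on each stratum) before running generic freeness, but that step cannot be omitted, and supplying it essentially reconstructs the paper's use of $\Gamma_G$.
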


\begin{proof}
Let $\Gamma_{G}$ be as in the paragraph before \ref{cor:stringy-quot}.
We further assume that each connected component of it is irreducible.
We claim that for a suitable choice of $\Gamma_{G}$, the boundary
on $\Utg_{\Gamma_{G}}(V)$ is vertical. Then the function $v$ on
$|\Lambda_{G}|=|\Gamma_{G}|$ is constant on each connected component
of $\Gamma_{G}$ and the lemma follows.

To show the claim, let $\Gamma_{G,0}$ be a connected component of
$\Gamma_{G}$. Let us write the boundary on $\Utg_{\Gamma_{G,0}}(V)$
as $A=\sum_{i=0}^{l}c_{i}A_{i}$ with $A_{i}$ prime divisors and
$c_{i}\in\QQ$, where $A_{0}$ is the vertical divisor $\Utg_{\Gamma_{G,0}}(V)$.
For a geometric generic point $\bar{\eta}\colon\Spec K\to\Gamma_{G,0}$,
the pullback of $A$ to $\Utg_{\bar{\eta}}(V)$ is the vertical divisor.
Therefore the image of $A_{0}\cap\bigcup_{i\ge1}A_{i}$ on $\Gamma_{G,0}$
is a constructible subset which does not containing the generic point.
Removing its closure, we get an open dense substack $\Upsilon\subset\Gamma_{G,0}$
such that the boundary on $\Utg_{\Upsilon}(V)$ is vertical. This
shows the above claim.
\end{proof}
From this lemma, for each $s\in\frac{1}{\sharp H}\ZZ$, there exist
a DM stack $\cC_{s}$ almost of finite type and a stabilizer-preserving
and geometrically injective map $\cC_{s}\to\Delta_{H}$ which maps
onto $v^{-1}(s)$. We define $\{v^{-1}(s)\}$ to be $\{\cC_{s}\}$
if $\cC_{s}$ is of finite type. If not, we put $\{v^{-1}(s)\}:=\infty$.
We can define
\[
\int_{\Delta_{H}}\LL^{d-v}:=\sum_{s\in\frac{1}{\sharp H}\ZZ}\{v^{-1}(s)\}\LL^{d-s}.
\]
In \cite{Formal-Torsors-II}, we study this kind of integrals more
systematically and especially proves the well-definedness of the above
integral.
\begin{cor}
\label{cor:wild-McKay-linear}Suppose that a finite group $H$ acts
on $\AA_{\Df}^{d}$ linearly. Let $X:=\AA_{\Df}^{d}/H$ and let $A$
be the boundary on $X$ such that $\AA_{\Df}^{d}\to(X,A)$ is crepant.
(Note that if the morphism $\AA_{\Df}^{d}\to X$ is etale in codimension
one, then $A=0$.) Then we have
\[
\M_{\st}(X,A)=\int_{\Delta_{H}}\LL^{d-v}\left(:=\sum_{s\in\frac{1}{\sharp H}\ZZ}\{v^{-1}(s)\}\LL^{d-s}\right).
\]
\end{cor}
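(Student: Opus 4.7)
My plan is to chain Theorem \ref{thm:stringy-coarse-unt} with Corollary \ref{cor:stringy-quot} to reduce $\M_{\st}(X,A)$ to a sum of untwisted stringy motives on the linear untwisting stacks, to use Lemma \ref{lem:linear-Utg} to identify these stacks with smooth affine-space bundles over the $\Gamma_{G}$, and finally to recombine the pieces into a single integral over $\Delta_{H}$. First, since $V:=\AA_{\Df}^{d}\to\cX:=[V/H]$ is étale and $V\to(X,A)$ is crepant by the choice of $A$, the induced morphism $\cX\to(X,A)$ is also crepant. Theorem \ref{thm:stringy-coarse-unt} therefore gives $\M_{\st}(X,A)=\M_{\st}(\cX)$, and Corollary \ref{cor:stringy-quot} (applied with trivial boundary on $V$) yields
\[
\M_{\st}(X,A)=\sum_{G\in\GG}\sum_{\iota}\M_{\st}^{\utd}\Bigl(\bigl[\Utg_{\Gamma_{G},\iota}(V)^{\nor}/\N_{H}(\iota(G))\bigr],A_{\iota}\Bigr),
\]
the inner sum ranging over $\iota\in\Emb(G,H)/(\Aut(G)\times H)$.

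By Lemma \ref{lem:linear-Utg}, each $\Utg_{\Gamma_{G},\iota}(V)^{\pur}$ is smooth over $\Df_{\Gamma_{G}}$ of relative dimension $d$ and is therefore already normal, and every geometric fiber is $\AA_{\Df_{K}}^{d}$. From the proof of Lemma \ref{lem:s-and-v}, the restriction of $A_{\iota}$ to the fiber over $E\in\Gamma_{G}$ is the vertical $\QQ$-divisor $-v(E)\cdot D_{E}$ (with $D_{E}$ the central fiber), so an untwisted arc $\beta$ into that fiber satisfies $\ff_{(\Utg_{E,\iota}(V)^{\pur},-v(E)\cdot D_{E})}(\beta)=-v(E)$; and the identification $\J_{n}\AA_{\Df_{K}}^{d}\cong\AA_{K}^{d(n+1)}$ gives $\mu(|\J_{\infty}\AA_{\Df_{K}}^{d}|)=\LL^{d}$. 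Stratifying $\Gamma_{G}$ by the constructible function $v$ (Lemma \ref{lem:v-const}) and using that the vertical divisor and these strata are $\N_{H}(\iota(G))$-invariant, a Fubini-type argument combining the measure-theoretic identities of Section \ref{sec:Motivic-integration-untwisted} with the pseudo-bundle relation in $K_{0}(\Var)'$ reduces every term to
\[
\M_{\st}^{\utd}\bigl([\Utg_{\Gamma_{G},\iota}(V)^{\pur}/\N_{H}(\iota(G))],A_{\iota}\bigr)=\int_{\Lambda_{G}/\N_{H}(\iota(G))}\LL^{d-v}.
\]

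The last step is to identify $\coprod_{G,\iota}\Lambda_{G}/\N_{H}(\iota(G))$ with $\Delta_{H}$ via the induction construction: every geometric $H$-torsor $P\to\D_{K}^{*}$ decomposes uniquely as the induction of a connected Galois $G$-cover $E$ along an embedding $\iota\colon G\hookrightarrow H$ (with $G=\Stab_{H}(P_{0})$ for a choice of connected component $P_{0}\subseteq P$), and two such data yield isomorphic $H$-torsors exactly when they differ by simultaneous action of an element of $\N_{H}(\iota(G))$ and an automorphism of $G$. Under this identification the function $v$ on both sides agrees, because the decomposition $O_{P}\cong\bigoplus_{h\in H/\iota(G)}h\cdot O_{E}$ induces $\Hom_{K\tbrats}^{H}(M,O_{P})\cong\Hom_{K\tbrats}^{\iota}(M,O_{E})=\Xi_{E,\iota}$; summing the previous formula over $(G,\iota)$ thus produces $\sum_{s}\{v^{-1}(s)\}\LL^{d-s}=\int_{\Delta_{H}}\LL^{d-v}$, which is the desired equality. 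The chief obstacle I anticipate is the Fubini step in the second paragraph: the relative arc space of $\Utg^{\pur}$ carries an $\N_{H}(\iota(G))$-action that must be compatible with the affine-space bundle structure, while the integrand is only constructibly (not globally) constant on $\Gamma_{G}$, so the reduction must be performed stratum by stratum and then reassembled using the identity $\int_{Z\to Z'}\mu_{\cX,Z}=\mu_{\cX,Z'}$ from Section \ref{sec:Motivic-integration-untwisted}.
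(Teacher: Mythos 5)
Your proof follows essentially the same route as the paper: reduce to $\M_{\st}(\cX)$ for $\cX=[\AA_{\Df}^{d}/H]$, apply \ref{cor:stringy-quot} (which itself packages \ref{thm:stringy-coarse-unt}, \ref{lem:Unt-quot}, and the decomposition~(\ref{eq:utg-N_H})), use \ref{lem:linear-Utg} and \ref{lem:s-and-v} to see each $\Utg_{\Gamma_{G},\iota}(V)^{\pur}$ is a smooth $\AA^{d}$-bundle whose boundary contributes $\LL^{-v}$, then reassemble via the geometric bijection $\coprod_{G,\iota}\Gamma_{G}/\N_{H}(\iota(G))\to\Delta_{H}$. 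The paper disposes of your ``Fubini'' worry exactly as you anticipate, by choosing $\Gamma_{[G]}$ as in \ref{lem:v-const} so that $v$ is locally constant and then summing stratum by stratum.
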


\begin{proof}
As in the proof of \ref{lem:v-const}, we can choose $\Gamma_{[G]}$
so that the function $v$ on $|\Gamma_{[G]}|$ is locally constant.
Let $v_{G,i}$ be the value of $v$ on a connected component $|\Gamma_{[G],i}|$.
From \ref{cor:stringy-quot}, 
\begin{align*}
\M_{\st}(X,A) & =\M_{\st}(\cX)\\
 & =\int_{|\cJ_{\infty}\cX|}\LL^{-v}\,d\mu_{\cX}\\
 & =\sum_{G\in\GG}\sum_{i}\mu_{\cX}(\cJ_{\Gamma_{[G],i},\infty}\cX)\LL^{-v_{G,i}}\\
 & =\sum_{G\in\GG}\sum_{i}\{\Unt_{\Gamma_{[G],i}}(\cX)_{0}^{\pur}\}\LL^{-v_{G,i}}.
\end{align*}
From \ref{lem:Unt-quot} and (\ref{eq:utg-N_H}),
\[
\{\Unt_{\Gamma_{[G],i}}(\cX)_{0}^{\pur}\}=\coprod_{\iota\in\frac{\Emb(G,H)}{\Aut(G)\times H}}\{\Utg_{\Gamma_{G,i},\iota}(V)_{0}^{\pur}/\N_{H}(\iota(G))\}.
\]
Here $\Gamma_{G,i}:=\Gamma_{[G],i}\times_{\cA_{G}}\Spec k$. Since
$\Utg_{\Gamma_{G,i},\iota}(V)_{0}^{\pur}\to\Gamma_{G,i}$ is $\N_{H}(\iota(G))$-equivariant
$\AA^{d}$-bundle, we have
\[
\{\Utg_{\Gamma_{G,i},\iota}(V)_{0}^{\pur}/N_{H}(\iota(G))\}=\{\Gamma_{G,i}/\N_{H}(\iota(G))\}\LL^{d}.
\]
We conclude that
\begin{align*}
\M_{\st}(X,\cP) & =\sum_{G\in\GG}\sum_{i\in\NN}\coprod_{\iota\in\frac{\Emb(G,H)}{\Aut(G)\times H}}\{\Gamma_{G,i}/N_{H}(\iota(G))\}\LL^{d-v_{G,i}}.
\end{align*}
The map
\[
\coprod_{G\in\GG}\coprod_{i\in\NN}\coprod_{\iota\in\frac{\Emb(G,H)}{\Aut(G)\times H}}\Gamma_{G,i}/\N_{H}(\iota(G))\to\Delta_{H}
\]
is geometrically bijective. Therefore

\begin{align*}
\M_{\st}(X,\cP) & =\int_{\Delta_{H}}\LL^{d-v}.
\end{align*}
\end{proof}
\begin{rem}
\label{rem:embed-linear}If $X$ is a formal affine scheme of finite
type over $\Df$ with an action of a finite group $H$, then we can
embed it by an equivariant closed immersion into an affine space $\AA_{\Df}^{n}$
with a linear action of $H$ (see \cite[Rem. 7.1]{MR3508745}). From
\ref{lem:closed-immersion-Unt}, $\Utg_{\Gamma}([X/H])^{\pur}$ is
a closed substack of $\Utg_{\Gamma}([\AA_{\Df}^{n}/H])^{\pur}$. Therefore,
for a twisted formal disk $\cE$ over $K$, the fiber $\Utg_{\cE}([X/H])^{\pur}=\Utg_{\Gamma}([X/H])^{\pur}\times_{\Gamma,\cE}\Spec K$
is essentially the same as the untwisting variety denoted by $\bv^{|F|}$
in \cite{MR3508745}.
\end{rem}

\section{The tame case\label{sec:The-tame-case}}

In this section, we suppose that $\cX$ is a \emph{tame} formal DM
stack of finite type over $\Df$, that is, the automorphism group
$\Aut(x)$ of every geometric point $x\colon\Spec K\to\cX$ has order
prime to $p$ (we follow the convention that $0$ is prime to every
positive integer so that the above assumption holds whenever $p=0$).
We also suppose that $\cX$ is spacious and generically smooth over
$\Df$ as before. Specializing to this situation, we see how to recover
results in \cite{MR2271984} from the more general ones in the present
paper.

For a wild Galoisian group $G$, $\Utg_{\Gamma_{[G]}}(\cX)$ is empty.
Hence 
\[
\Utg_{\Gamma}(\cX)=\Utg_{\Lambda_{\tame}}(\cX)=\coprod_{l>0;\,p\nmid l}\Utg_{\Lambda_{[C_{l}]}}(\cX).
\]
Theorem \ref{thm:stringy-coarse-unt} is now rephrased as follows.
\begin{prop}
\label{prop:stringy-tame}Let $(\cX,A)$ be a log pair over $\Df$
such that $\cX$ is tame. Let $A_{l}^{\utg}$ be the induced boundary
divisor on $\Utg_{\Lambda_{[C_{l}]}}(\cX)^{\pur}$. Let $C\subset|\cX|$
be a constructible subset and $C_{l}^{\utg}\subset|\Utg_{\Lambda_{[C_{l}]}}(\cX)^{\pur}|$
the induced constructible subsets. Then we have 
\[
\M_{\st}(\cX,A)_{C}=\sum_{l>0;\,l\nmid p}\M_{\st}^{\utd}(\Utg_{\Lambda_{[C_{l}]}}(\cX)^{\pur},A_{l}^{\utg})_{C_{l}^{\utg}}.
\]
\end{prop}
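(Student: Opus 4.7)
The plan is to deduce this from Theorem \ref{thm:stringy-coarse-unt} by showing that in the tame case the untwisting stack $\Utg_\Gamma(\cX)$ decomposes as the disjoint union, indexed by the tame cyclic groups $C_l$ (with $p \nmid l$), of the pieces $\Utg_{\Lambda_{[C_l]}}(\cX)$, and then invoking additivity of motivic integrals on a coproduct.

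The key geometric input will be that $\Utg_{\Theta_{[G]}}(\cX)$ is empty whenever $G$ is a wild Galoisian group (i.e.\ $p \mid \sharp G$). Indeed, over any algebraically closed field $K$, a $K$-point of $\Utg_{\Theta_{[G]}}(\cX)_0$ is represented by a twisted arc $\cE = [E/G] \to \cX$ whose representability forces an injection $G \hookrightarrow \Aut_\cX(x)$ of the stabilizer at the underlying geometric point, which is incompatible with tameness of $\cX$. Since every Galoisian group $G$ with $p \nmid \sharp G$ is a tame cyclic $C_l$, and since in that case $\Gamma_{[C_l]} = \Theta_{[C_l]} = \Lambda_{[C_l]}$ by construction (Definition \ref{def:Theta} together with the flattening remark of \ref{nota:Gamma}, because in the tame case every object is already rig-pure and flat over $\Df_{\Lambda_{[C_l]}}$), we obtain
\[
\Utg_\Gamma(\cX) \;=\; \coprod_{l>0,\,p\nmid l} \Utg_{\Lambda_{[C_l]}}(\cX).
\]

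Now I would apply the second equality of Theorem \ref{thm:stringy-coarse-unt}, which gives
\[
\M_{\st}(\cX,A)_{C} \;=\; \M_{\st}^{\utd}(\Utg_{\Gamma}(\cX)^{\nor},A^{\utg})_{C^{\utg}}.
\]
The displayed coproduct decomposition transports to the normalization (normalization commutes with disjoint unions), and the arc space $\J_{\Gamma,\infty}$ splits correspondingly, as does its motivic measure. Thus
\[
\M_{\st}^{\utd}(\Utg_{\Gamma}(\cX)^{\nor},A^{\utg})_{C^{\utg}} \;=\; \sum_{l>0,\,p\nmid l} \M_{\st}^{\utd}(\Utg_{\Lambda_{[C_l]}}(\cX)^{\nor},A_l^{\utg})_{C_l^{\utg}},
\]
because $\ff_{(-,A^{\utg})}$ and the motivic measure are defined component-wise.

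The main obstacle is reconciling the normalization $^{\nor}$ coming out of Theorem \ref{thm:stringy-coarse-unt} with the rig-pure stack $^{\pur}$ that appears in the statement. In the tame case, however, $\Utg_{\Lambda_{[C_l]}}(\cX)^{\pur}$ and its normalization give the same untwisted stringy motive: the normalization morphism is an almost geometrically bijective pseudo-modification, and the boundary $A_l^{\utg}$ on $\Utg_{\Lambda_{[C_l]}}(\cX)^{\pur}$ is defined precisely so that the pullback to the normalization is $A_l^{\utg}|_{\text{nor}}$, giving a crepant morphism. Hence Theorem \ref{thm:Mst-crepant} (or rather its proof specialized to the normalization) shows $\M_{\st}^{\utd}(\Utg_{\Lambda_{[C_l]}}(\cX)^{\nor}, A_l^{\utg}|_{\text{nor}}) = \M_{\st}^{\utd}(\Utg_{\Lambda_{[C_l]}}(\cX)^{\pur}, A_l^{\utg})$, and assembling these equalities produces the desired formula.
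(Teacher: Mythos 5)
Your proposal is correct and takes essentially the same route as the paper, which gives no separate proof beyond noting the decomposition $\Utg_{\Gamma}(\cX)=\coprod_{l>0;\,p\nmid l}\Utg_{\Lambda_{[C_{l}]}}(\cX)$ (your argument that representability of $\cE_0\to\cX$ forces $G\hookrightarrow\Aut_\cX(x)$, which is incompatible with tameness, is the right reason) and then invoking Theorem \ref{thm:stringy-coarse-unt}. Your concern about reconciling $\nor$ with $\pur$ is well-founded since $A_l^{\utg}$ is a $\QQ$-divisor and the paper's statement is a little loose there, but the reference should be to the untwisted change of variables Theorem \ref{thm:change-vars-I} applied to the normalization morphism (together with Lemma \ref{lem:crepant-j'}), rather than to Theorem \ref{thm:Mst-crepant}, which concerns the twisted stringy motive $\M_{\st}$ over $\Df$ rather than $\M_{\st}^{\utd}$ over $\Df_{\Lambda_{[C_l]}}$; your hedge ``or rather its proof'' shows you sensed this mismatch.
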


\begin{lem}
\label{lem:tame-linearization}Suppose that a tame finite group $G$
acts on $k\tbrats\llbracket x_{1},\dots,x_{n}\rrbracket$ over $k\tbrats$.
Then there exist coordinates $y_{1},\dots,y_{n}\in k\tbrats\llbracket x_{1},\dots,x_{n}\rrbracket$
(that is, $k\tbrats\llbracket y_{1},\dots,y_{n}\rrbracket=k\tbrats\llbracket x_{1},\dots,x_{n}\rrbracket$)
such that the action preserves the subring $k\llbracket y_{1},\dots,y_{n}\rrbracket$
and the induced action on $k\llbracket y_{1},\dots,y_{n}\rrbracket$
is $k$-linear with respect to $y_{1},\dots,y_{n}$.
\end{lem}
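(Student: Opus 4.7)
The plan is to choose $y_1,\dots,y_n$ as an equivariantly lifted basis of a $G$-stable complement of $k\cdot\bar t$ inside the cotangent space $\mathfrak m/\mathfrak m^2$, where $R := k\tbrats\llbracket x_1,\dots,x_n\rrbracket$ and $\mathfrak m = (t,x_1,\dots,x_n)$ is its maximal ideal. Tameness will enter twice: once through Maschke's theorem, to split the cotangent sequence $G$-equivariantly, and once through the averaging operator $\frac{1}{|G|}\sum_{g\in G} g(-)$, which upgrades an arbitrary $k$-linear section to a $G$-equivariant one. Both steps require $|G|\in k^{\times}$, which holds precisely because $G$ is tame.

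First I would observe that since $G$ acts on $R$ by $k\tbrats$-algebra automorphisms, each $g\in G$ preserves $\mathfrak m$ and fixes the line $k\bar t\subset\mathfrak m/\mathfrak m^2$. Viewing $\mathfrak m/\mathfrak m^2$ as a finite-dimensional $k[G]$-module, Maschke produces a $G$-stable complement $V$ with $\dim_k V = n$. Pick any $k$-linear section $\sigma\colon V\to\mathfrak m$ of the projection $\mathfrak m\twoheadrightarrow\mathfrak m/\mathfrak m^2$ restricted to $V$, and replace it by $\bar\sigma(v) := \frac{1}{|G|}\sum_{g\in G} g\bigl(\sigma(g^{-1}v)\bigr)$. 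Then $\bar\sigma$ is $G$-equivariant; moreover, since the projection $\mathfrak m\to\mathfrak m/\mathfrak m^2$ is $G$-equivariant and sends $\sigma$ to the already $G$-equivariant inclusion $V\hookrightarrow\mathfrak m/\mathfrak m^2$, it sends $\bar\sigma$ to the same inclusion, so $\bar\sigma$ remains a lift. Fixing a $k$-basis $v_1,\dots,v_n$ of $V$ and setting $y_i := \bar\sigma(v_i)\in\mathfrak m$, the $k$-span $W := \sum_i ky_i\subset R$ is $G$-stable and the $G$-action on $W$ is $k$-linear in $y_1,\dots,y_n$, with the same matrices as on $V$.

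Next I would verify the two conclusions. For the coordinate claim, by construction the images of $t,y_1,\dots,y_n$ form a $k$-basis of $\mathfrak m/\mathfrak m^2$, so the natural $k\tbrats$-algebra map $k\tbrats\llbracket y_1,\dots,y_n\rrbracket \to R$ is a local homomorphism of complete Noetherian local rings that is surjective on cotangent spaces; by a standard complete Nakayama argument it is surjective, and then necessarily an isomorphism since both source and target are complete regular local rings of dimension $n+1$ with residue field $k$. For the linearization claim, each $g(y_i) = \sum_j a_{ij}^{(g)} y_j$ lies in $W$ with $a_{ij}^{(g)}\in k$, and $g$ fixes $k\subset k\tbrats$; substituting these degree-one polynomials into any $\sum_\alpha c_\alpha y^\alpha \in k\llbracket y_1,\dots,y_n\rrbracket$ with $c_\alpha\in k$ produces another power series of the same form, so $k\llbracket y_1,\dots,y_n\rrbracket$ is $G$-stable and the induced action is $k$-linear in the $y_i$.

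No step here is genuinely difficult; the one point that needs a little care is making sure that the averaged lift $\bar\sigma$ remains a lift of the chosen complement, which is exactly why we must first arrange $V$ itself to be $G$-stable rather than averaging an arbitrary complement. Everything else is Maschke, averaging, and complete Nakayama, each of which is available precisely under the tameness hypothesis.
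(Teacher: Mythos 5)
Your proof is correct and uses essentially the same ingredients as the paper's (Maschke's theorem plus a tame averaging argument), merely reorganized: the paper first cites the well-known linearization of the full cotangent space and then applies Maschke to split off the $t$-direction, whereas you apply Maschke first and then carry out the averaging of a section directly, which makes the argument self-contained rather than citing the linearization lemma.
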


\begin{proof}
As is well known, we can linearize the action over $k$; there exists
a regular system of parameters $z_{0},\dots,z_{n}$ of $k\tbrats\llbracket x_{1},\dots,x_{n}\rrbracket$
for which the $G$-action is $k$-linear. Since $G$ is tame, from
Maschke's theorem, linear $G$-representations over $k$ are semisimple.
If $\fm$ denotes the maximal ideal of $k\tbrats\llbracket x_{1},\dots,x_{n}\rrbracket$,
then the image of the map of linear representations
\[
(t)/(t^{2})\to\fm/\fm^{2}\cong\bigoplus_{i}k\cdot z_{i}
\]
is a direct summand. Let $V\subset\bigoplus_{i}k\cdot z_{i}$ be a
complementary subrepresentation of it and let $y_{1},\dots,y_{n}$
be a basis of $V$. Since $t,y_{1},\dots,y_{n}$ generates $\fm/\fm^{2}$,
we have 
\[
k\tbrats\llbracket x_{1},\dots,x_{n}\rrbracket=k\tbrats\llbracket y_{1},\dots,y_{n}\rrbracket.
\]
We easily see that $y_{1},\dots,y_{n}$ satisfy the desired property. 
\end{proof}
\begin{lem}
\label{lem:tame-smooth}If $\cX$ is smooth of pure relative dimension
$d$ over $\Df$, then for every $l$, $\Utg_{\Lambda_{[C_{l}]}}(\cX)^{\pur}$
is smooth of pure relative dimension $d$ over $\Df_{\Lambda_{[C_{l}]}}$,
unless it is empty.
\end{lem}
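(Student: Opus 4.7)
The plan is to reduce étale-locally to a linear quotient stack and then invoke the explicit computation of the untwisting stack for linear actions from Section~\ref{sec:Linear-actions}. The tameness hypothesis is used twice: first to linearize the local model, and then to guarantee that the relevant tuning modules vary in a controlled way over $\Lambda_{C_l}$.

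First I would reduce to the local model of a linear quotient. By Proposition~\ref{lem:formal-locally-quot}, there exists a stabilizer-preserving étale surjection $\coprod_\gamma [V_\gamma/H_\gamma] \to \cX$ from quotients of formal affine schemes by finite groups; since $\cX$ is smooth and tame over $\Df$, each $V_\gamma$ is smooth over $\Df$ and each $H_\gamma$ is tame. Lemma~\ref{lem:unt-rep-etale} implies that $\Utg_{\Lambda_{[C_l]}}([V_\gamma/H_\gamma]) \to \Utg_{\Lambda_{[C_l]}}(\cX)$ is representable, étale and surjective, and passage to the rig-pure substack is compatible with étale base change by Lemma~\ref{lem:flat-calm-base-change}. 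Since smoothness and relative dimension can be checked on an étale cover, I am reduced to the case of a single $[V/H]$. Then Lemma~\ref{lem:tame-linearization} produces coordinates in which the $H$-action on $V$ is $k$-linear, hence $k\tbrats$-linear after base change; so I may assume $V \cong \AA^d_{\Df}$ with a linear $H$-action.

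Next I would decompose via embeddings of $C_l$. In the tame setting $\Lambda_{[C_l]} = \Theta_{[C_l]} = \Gamma_{[C_l]}$, and Lemma~\ref{lem:Unt-quot} together with~\eqref{eq:utg-N_H} gives
\[
\Utg_{\Lambda_{[C_l]}}([V/H]) \cong \coprod_{\iota \in \Emb(C_l,H)/(\Aut(C_l)\times H)} \bigl[\Utg_{\Lambda_{C_l},\iota}(V)/\N_H(\iota(C_l))\bigr].
\]
Since quotients by finite tame groups preserve smoothness and relative dimension, it suffices to show that each $\Utg_{\Lambda_{C_l},\iota}(V)^{\pur}$, when nonempty, is smooth of pure relative dimension $d$ over $\Df_{\Lambda_{C_l}}$. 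Here $\Lambda_{C_l} = \Theta_{C_l}$ is étale over $\Spec k$ (Subsection~\ref{subsec:Lambda-tame}), so $\Df_{\Lambda_{C_l}} \to \Df$ is étale.

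Finally, I would put the computation of Lemma~\ref{lem:linear-Utg} into a family over $\Lambda_{C_l}$. Since $\Lambda_{C_l}$ is a disjoint union of copies of $\B\mu_l$, the integral model of the universal $C_l$-cover has structure sheaf $\cO$ which is, étale-locally, the uniformizable cover $\cO_{\Df_{\Lambda_{C_l}}}[s]/(s^l-t)$ with its canonical diagonalizable $C_l$-action; in particular $\cO$ splits globally as a direct sum of invertible $C_l$-eigensheaves. Consequently the tuning sheaf $\Xi = \ulHom^{\iota}_{\cO_{\Df_{\Lambda_{C_l}}}}(M,\cO)$ is a locally free $\cO_{\Df_{\Lambda_{C_l}}}$-module of rank $d$. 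Running the argument of Lemma~\ref{lem:linear-Utg} in this family identifies $\Utg_{\Lambda_{C_l},\iota}(V)^{\pur}$ with the total space of the formal vector bundle $\Spf \widehat{\Sym^\bullet \Xi}$, which is smooth of pure relative dimension $d$ over $\Df_{\Lambda_{C_l}}$. The principal technical point is verifying that the fiberwise identification of Lemma~\ref{lem:linear-Utg} upgrades to an identification of formal stacks globally over $\Df_{\Lambda_{C_l}}$; this reduces to the global freeness of the eigenspace decomposition of $\cO$, which follows from semisimplicity of tame cyclic representations and the étaleness of $\Lambda_{C_l}$ over $k$.
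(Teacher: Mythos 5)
Your overall strategy (reduce \'etale-locally to a quotient presentation, linearize, and invoke Lemma~\ref{lem:linear-Utg}) is the right idea, but there is a genuine gap in the middle step: you cannot globally linearize the $H$-action on $V$. Lemma~\ref{lem:tame-linearization} linearizes a tame finite group action on the \emph{complete local ring} $k\tbrats\llbracket x_1,\dots,x_n\rrbracket$, i.e.\ on the formal completion at a single fixed point; it says nothing about the action of $H_\gamma$ on the whole formal affine scheme $W_\gamma$ supplied by Proposition~\ref{lem:formal-locally-quot}. A tame finite group action on a smooth affine (formal) scheme is formally linearizable near each fixed point, but there is no reason for a single global coordinate change to linearize it everywhere, so the passage ``hence I may assume $V\cong\AA^d_\Df$ with a linear $H$-action'' is unjustified. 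As a consequence, the ``module $M$'' and the ``tuning sheaf $\Xi$'' on which your final paragraph relies are not defined.

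The paper avoids this by working at a single geometric point $\xi$ of $\Utg_{\Lambda_{[C_l]}}(\cX)$ and passing to formal completions, where linearization is available: it takes the induced point $x$ of $\cX$, forms the completion $\widehat\cX$ of $\cX$ along $\B\Aut(x)$ (which \emph{is} isomorphic to $[\Spf k\tbrats\llbracket x_1,\dots,x_d\rrbracket/\Aut(x)]$), linearizes via Lemma~\ref{lem:tame-linearization}, and then uses Lemma~\ref{lem:Utg-completion} to identify the formal completion of $\Utg_{\Lambda_{[C_l]}}(\cX)$ near $\xi$ with $\Utg_{\Lambda_{[C_l]}}(\widehat\cX)$, which in turn is a completion of $\Utg_{\Lambda_{[C_l]}}([\AA^d_\Df/\Aut(x)])$. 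Only at that stage does Lemma~\ref{lem:linear-Utg} apply, and smoothness of the completion at $\xi$ gives smoothness of $\Utg_{\Lambda_{[C_l]}}(\cX)^\pur$ at $\xi$. To repair your argument you would need to either make this same pointwise/completion reduction explicit, or give a separate argument that the $\iota(C_l)$-fixed locus of the Weil restriction is smooth for a tame action on a smooth but \emph{non-linear} $V$; the latter would again come down to formal-local linearization at each point.
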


\begin{proof}
We may assume that $k$ is algebraically closed. A $k$-point of $\Utg_{\Lambda_{[C_{l}]}}(\cX)$
corresponds to a morphism $\cE_{\Lambda_{[C_{l}]},0}\to\cX$. Let
$x\colon\Spec k\to\cX$ be the induced $k$-point. We have the corresponding
closed embedding $\cY:=\B\Aut(x)\hookrightarrow\cX$. The completion
$\widehat{\cX}$ of $\cX$ along this closed substack is isomorphic
to $[\Spf k\tbrats\llbracket x_{1},\dots,x_{d}\rrbracket/\Aut(x)]$.
From \ref{lem:tame-linearization}, we may suppose that the $\Aut(x)$-action
on $k\tbrats\llbracket x_{1},\dots,x_{d}\rrbracket$ is the scalar
extension of a $k$-linear action on $k\llbracket x_{1},\dots,x_{d}\rrbracket$.
For $i\gg0$, the given $k$-point of $\Utg_{\Lambda_{[C_{l}]}}(\cX)$
sits in $\Utg_{\Lambda_{[C_{l}]}}(\cY_{(i)})$, where $\cY_{(i)}$
is the $i$th infinitesimal neighborhood of $\cY$. From \ref{lem:Utg-completion},
the completion of $\Utg_{\Lambda_{[C_{l}]}}(\cX)$ along $\Utg_{\Lambda_{[C_{l}]}}(\cY_{(i)})$
is isomorphic to $\Utg_{\Lambda_{[C_{l}]}}(\widehat{\cX})$. On the
other hand, $\Utg_{\Lambda_{[C_{l}]}}(\widehat{\cX})$ is also isomorphic
to the completion of 
\[
\Utg_{\Lambda_{[C_{l}]}}([\AA_{\Df}^{d}/\Aut(x)])
\]
along some closed substack. The lemma follows from \ref{lem:linear-Utg}.
\end{proof}
Recall $\Lambda_{\mu_{l}}\cong\coprod_{(\ZZ/l\ZZ)^{*}}\B\mu_{l}$
(see subsection \ref{subsec:Lambda-tame}). If $\Lambda_{\mu_{l},1}$
denotes its first component, then the morphism $\Lambda_{\mu_{l},1}\to\Lambda_{[C_{l}]}$
is an isomorphism (see (\ref{eq:Delta-mu-1})). Hence the morphism
\[
\Utg_{\Lambda_{\mu_{l},1}}(\cX)_{0}\to\Utg_{\Lambda_{[C_{l}]}}(\cX)_{0}
\]
is also an isomorphism. We define a morphism
\[
\Utg_{\Lambda_{\mu_{l},1}}(\cX)_{0}\to\I^{(\mu_{l})}\cX_{0}
\]
as follows. For an $S$-point of $\Utg_{\Lambda_{\mu_{l},1}}(\cX)$
corresponding to $\cE_{S}=[E_{S}/\mu_{l}]\to\cX$. We have the induced
closed immersion $\B\mu_{l}\times S\hookrightarrow\cE_{S}$. To the
above $S$-point of $\Utg_{\Lambda_{\mu_{l},1}}(\cX)$, we associate
the $S$-point of $\I^{(\mu_{l})}\cX$ given by the composition $\B\mu_{l}\times S\to\cE_{S}\to\cX$.

Suppose now that $\cX$ is smooth and of pure relative dimension $d$
over $\Df$. Let $\alpha\colon\Spec K\to\I^{(\mu_{l})}\cX_{0}$ be
a geometric point. The tangent space $T_{\cX_{0,K},x}$ at the induced
point $x\colon\Spec K\to\cX_{0,K}$, which is a $d$-dimensional $K$-vector
space, has an $\Aut(x)$-action. The point $\alpha$ induces an injection
$C_{l}\cong\mu_{l,K}\hookrightarrow\Aut(x)$ and a $\mu_{l,K}$-action
on $T_{\cX_{0,K},x}$.
\begin{lem}
\label{lem:Utg-I}With notation as above, the morphism $\Utg_{\Lambda_{\mu_{l},1}}(\cX)_{0}\to\I^{(\mu_{l})}\cX_{0}$
is representable and its fiber over $\alpha\in(\I^{(\mu_{l})}\cX_{0})(K)$
is isomorphic to $\AA_{K}^{c}$, where $c$ is the codimension of
the fixed locus $(T_{\cX_{0,K},x})^{\mu_{l,K}}$ in $T_{\cX_{0,K},x}$.
\end{lem}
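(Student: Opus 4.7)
The plan is to pass to a linearized quotient-stack model around the given inertia point and compute the fiber explicitly via a weight-space decomposition. The point $\alpha$ determines an underlying geometric point $x\colon\Spec K\to\cX_{0}$ with tame automorphism group $H:=\Aut(x)$ and an embedding $\iota\colon\mu_{l,K}\hookrightarrow H$. Since $\cX$ is tame and smooth of pure relative dimension $d$, combining \ref{lem:formal-locally-quot} with \ref{lem:tame-linearization} (applied over $K$) shows that the formal completion of $\cX\times_{\Df}\Df_{K}$ along $\B H\hookrightarrow\cX\times_{\Df}\Df_K$ is isomorphic to $[V/H]$, where $V:=\Spf K\tbrats\llbracket y_{1},\dots,y_{d}\rrbracket$ carries a $K$-linear $H$-action on $y_{1},\dots,y_{d}$. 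By \ref{lem:Utg-completion}, computing the fiber over $\alpha$ is unchanged after replacing $\cX$ by this local model and $\alpha$ by the inertia point given by $0\in V$ together with $\iota$.

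Next, I would unpack both stacks using \ref{lem:Unt-quot} and \ref{lem:IG}, which give
\[
\Utg_{\Lambda_{\mu_{l},1}}([V/H])_{0}\cong\coprod_{[\iota']}\left[\Utg_{\Lambda_{\mu_{l},1},\iota'}(V)_{0}\bigm/\C_{H}(\iota'(\mu_{l}))\right]
\]
and
\[
\I^{(\mu_{l})}([V/H])_{0}\cong\coprod_{[\iota']}\left[V_{0}^{\iota'(\mu_{l})}\bigm/\C_{H}(\iota'(\mu_{l}))\right],
\]
with the coproducts indexed by $H$-conjugacy classes of embeddings $\iota'\colon\mu_{l}\hookrightarrow H$. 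The morphism of the lemma respects this stratification and, on each stratum, is induced by a $\C_{H}(\iota'(\mu_{l}))$-equivariant morphism of formal schemes $\Utg_{\Lambda_{\mu_{l},1},\iota'}(V)_{0}\to V_{0}^{\iota'(\mu_{l})}$ via the same quotient group; this yields representability, and reduces the fiber over $\alpha$ to the fiber of this scheme morphism over $0\in V_{0}^{\iota(\mu_{l})}$ on the stratum of $\iota$. By \ref{lem:linear-Utg}, the source is identified with $V_{\iota}^{|E|}\times_{\Df}\Spec K$ for the uniformizable $\mu_{l}$-cover $E\to\Df_{K}$ corresponding to the point of $\Lambda_{\mu_{l},1}$, which I may take to have $O_{E}=K\llbracket s\rrbracket$ with $s^{l}=t$ and $\mu_{l}$ acting tautologically on $s$.

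Finally, I would compute this fiber directly. A $K$-point of it is a $\mu_{l}$-equivariant $K$-algebra homomorphism $K\tbrats\llbracket y_{1},\dots,y_{d}\rrbracket\to O_{E_{0}}=K[s]/(s^{l})$ sending $t\mapsto s^{l}=0$ and each $y_{i}$ into the maximal ideal $(s)$. After choosing the $y_{i}$ to be $\mu_{l}$-eigenvectors (possible by Maschke), let $a_{1},\dots,a_{d}\in\ZZ/l\ZZ$ be their weights under $\iota$, so the codimension $c=\#\{i\colon a_{i}\ne 0\}$ equals that of $(T_{\cX_{0,K},x})^{\mu_{l,K}}$. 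Equivariance forces $y_{i}\mapsto c_{i}s^{a_{i}'}$ for the unique $a_{i}'\in\{0,\dots,l-1\}$ representing $a_{i}$; the constraint $y_{i}\in(s)$ is automatic when $a_{i}\ne 0$, leaving the free parameter $c_{i}\in K$, and forces $c_{i}=0$ when $a_{i}=0$. Hence the fiber is $\AA_{K}^{c}$, as claimed. The main obstacle is the bookkeeping around the identification $\Lambda_{\mu_{l},1}\cong\B\mu_{l}$: one must verify that this specific component corresponds to the cover with the tautological $\mu_{l}$-action on $s=t^{1/l}$ (rather than a twisted one for another $i\in(\ZZ/l\ZZ)^{*}$), so that restriction to $\B\mu_{l}\subset\cE_{\Lambda_{\mu_{l},1},0}$ at $s=0$ really does recover $\iota$ on tangent directions.
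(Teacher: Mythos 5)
Your proof is essentially correct and, at its heart, uses the same core computation as the paper: linearize around the inertia point, decompose the tangent space into $\mu_{l}$-eigenspaces with weights $a_{i}$, and observe that equivariance of a morphism from $\Spec K[s]/(s^{l})$ forces $y_{i}\mapsto c_{i}s^{a_{i}'}$ with $c_{i}=0$ exactly when $a_{i}=0$, giving $\AA_{K}^{c}$.

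The routes do differ in two places. For representability, the paper applies \ref{lem:imm-Aut} globally and directly (the thickening $\B\mu_{l}\times S\hookrightarrow\cE_{S}$ gives injectivity of the automorphism-group map, and representability follows from \ref{lem:Aut-inj}), with no need to reduce to a local model; your argument via the equivariance of the quotient-stack presentations from \ref{lem:Unt-quot} and \ref{lem:IG} works but is more roundabout and in any case only yields a local statement that then has to be patched into a global one via a routine pointwise criterion. For the fiber computation, the paper simply identifies a $K$-point of the fiber with a $\mu_{l,K}$-equivariant morphism $\Spec K\llbracket t^{1/l}\rrbracket/(t)\to\Spf K\llbracket x_{1},\dots,x_{d}\rrbracket$ and reads off the answer, whereas you route through \ref{lem:Utg-completion}, \ref{lem:Unt-quot}, \ref{lem:IG} and \ref{lem:linear-Utg}; note that \ref{lem:IG} is stated for quotient stacks of algebraic spaces, not formal ones, so you would need to first truncate to a finite thickening or just argue as the paper does directly on formal disks. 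Your closing concern about which component $\Lambda_{\mu_{l},i}$ corresponds to the tautological $\mu_{l}$-action on $s=t^{1/l}$ is a fair thing to check (it is settled by the computation in \ref{lem:nice-finite-etale-cover} identifying $\Lambda_{\mu_{l},i}$ with the torsor $\Spec k\tpars[x]/(x^{l}-t^{i})$), although for the dimension count alone it would not matter, since replacing $\zeta$ by $\zeta^{i}$ only permutes the nonzero weights.
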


\begin{proof}
From \ref{lem:imm-Aut}, 
\[
\Aut(\cE_{S}\to\cX_{0})\to\Aut(\B\mu_{l}\times S\to\cX_{0})
\]
is injective. Therefore the morphism of the lemma is representable.

Let $x\colon\Spec K\to\cX_{0}$ be the point induced by $\alpha$.
Suppose that the completion of $\cX_{0,K}$ at $x$ is $[\Spf K\llbracket x_{1},\dots,x_{d}\rrbracket/G]$,
where $G$ acts on $K\llbracket x_{1},\dots,x_{d}\rrbracket$ linearly.
The $\mu_{l,K}$ also acts on this power series ring through $\alpha$.
We can choose coordinates $x_{1},\dots,x_{d}$ so that $\mu_{l,K}$
acts on $\bigoplus_{i}Kx_{i}$ diagonally; for $\zeta\in\mu_{l,K}$,
$\zeta\cdot x_{i}=\zeta^{a_{i}}x_{i}$ ($0\le a_{i}<l$). A $K$-point
of the fiber over $\alpha$ corresponds to a $\mu_{l,K}$-equivariant
morphism 
\[
\gamma\colon\Spec K\llbracket t^{1/l}\rrbracket/(t)\to\Spf K\llbracket x_{1},\dots,x_{d}\rrbracket.
\]
Such a morphism is determined by the images of $x_{i}$ in $K\llbracket t^{1/l}\rrbracket/(t)$
which should be of the form
\[
\gamma^{*}(x_{i})\begin{cases}
\in K\cdot t^{a_{i}/l} & (a_{i}\ne0)\\
=0 & (a_{i}=0)
\end{cases}.
\]
Note that $\gamma^{*}$ is a morphism of local rings and $x_{i}$
should be mapped into the maximal ideal $(t^{1/l})$, thus $\gamma^{*}(x_{i})$
cannot be a nonzero scalar in $K\cdot t^{0}$. This shows that the
fiber over $\alpha$ is an affine space of dimension $\sharp\{i\mid\alpha_{i}\ne0\}$,
which is equal to the codimension of $(T_{\cX_{0,K},x})^{\mu_{l,K}}$.
\end{proof}
Note that $\Utg_{\Lambda_{\mu_{l},1}}(\cX)_{0}$ has pure dimension
$d$, while $\I^{(\mu_{l})}\cX$ has the same dimension as $(T_{\cX_{0,K},x})^{\mu_{l,K}}$
at $\alpha$. The above lemma is compatible with this fact. The codimension
of $(T_{\cX_{0,K},x})^{\mu_{l,K}}$ depends only on the connected
component of $\I^{(\mu_{l})}\cX_{0}$ on which $\alpha$ lies. The
value of the function $\fs_{\cX}$ as a point of $|\cJ_{\infty}\cX|$
also depends only on the connected component of its image on $\I^{(\mu_{l})}\cX_{0}$
through the map 
\[
\cJ_{\infty}\cX\to\cJ_{0}\cX=\Utg_{\Lambda_{\tame}}(\cX)_{0}\to\I^{(\mu_{l})}\cX_{0}.
\]
For a connected component $\cZ\subset\I^{(\mu_{l})}\cX_{0}$, we write
these values as $c(\cZ)$ and $\fs_{\cX}(\cZ)$ respectively.
\begin{cor}[{cf. \cite[Cor. 72]{MR2271984}}]
\label{cor:tame-smooth-stringy}If $\cX$ is smooth over $\Df$,
we have
\[
\M_{\st}(\cX)_{C}=\sum_{l>0;\,p\nmid l}\sum_{\cZ\subset\I^{(\mu_{l})}\cX_{0}}\{C_{\cZ}\}\LL^{c(\cZ)+\fs_{\cX}(\cZ)},
\]
where $\cZ$ runs over connected components of $\I^{(\mu_{l})}\cX_{0}$
and $C_{\cZ}$ denotes the preimage of $C$ in $|\cZ|$.
\end{cor}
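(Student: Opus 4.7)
The plan is to combine Proposition \ref{prop:stringy-tame} with an explicit computation on each tame summand, exploiting the smoothness of the untwisting stack granted by Lemma \ref{lem:tame-smooth}. First apply Proposition \ref{prop:stringy-tame} with $A=0$ to get
\[
\M_{\st}(\cX)_{C}=\sum_{l>0;\,p\nmid l}\M_{\st}^{\utd}(\Utg_{\Lambda_{[C_{l}]}}(\cX)^{\pur},A_{l}^{\utg})_{C_{l}^{\utg}},
\]
reducing the task to computing each summand. Write $\cY_{l}:=\Utg_{\Lambda_{[C_{l}]}}(\cX)^{\pur}$. By Lemma \ref{lem:tame-smooth}, $\cY_{l}$ is smooth of relative dimension $d$ over $\Df_{\Lambda_{[C_{l}]}}$, hence normal, so the normalization morphism $\nu$ appearing in Lemma \ref{lem:shift-j'} is the identity and $\fj_{\nu}\equiv 0$. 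Since $\cX$ is smooth with $A=0$, we also have $\ff_{(\cX,0)}\equiv 0$. Lemma \ref{lem:shift-j'} therefore yields the identification $\fs_{\cX}=\ff_{(\cY_{l},A_{l}^{\utg})}$ under the tautological equality $|\cJ_{\infty}\cX|_{\text{tame }l\text{-part}}=|\J_{\infty}\cY_{l}|$, so that each summand equals $\int_{|\J_\infty \cY_l|_{C_l^{\utg}}} \LL^{\fs_\cX}\,d\mu_{\cY_l}$.

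Next I will evaluate this integral by stratifying $|\cY_{l,0}|$ according to the morphism
\[
\cY_{l,0}\cong\Utg_{\Lambda_{\mu_{l},1}}(\cX)_{0}\to\I^{(\mu_{l})}\cX_{0}
\]
of Subsection \ref{subsec:Lambda-tame}, whose fibers are pseudo-affine spaces of dimension $c(\cZ)$ by Lemma \ref{lem:Utg-I}. Writing $\cY_{l,0,\cZ}$ for the preimage of a connected component $\cZ\subset\I^{(\mu_{l})}\cX_{0}$, both integers $c(\cZ)$ and $\fs_{\cX}(\cZ)$ are constant on $\cY_{l,0,\cZ}$ by the remarks preceding the statement. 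Combined with the measure formula $\mu_{\cY_{l}}(\pi_{0}^{-1}(D))=\{D\}$ for a constructible subset $D\subset|\cY_{l,0}|$ (which holds because $\cY_{l}$ is smooth and $D$ is cylindrical of level $0$), this rewrites the integral as
\[
\sum_{\cZ\subset\I^{(\mu_{l})}\cX_{0}}\{C_{l}^{\utg}\cap|\cY_{l,0,\cZ}|\}\,\LL^{\fs_{\cX}(\cZ)}=\sum_{\cZ}\{C_{\cZ}\}\,\LL^{c(\cZ)+\fs_{\cX}(\cZ)},
\]
where the last equality uses Lemma \ref{lem:pseudo-bdl-eq} applied to the pseudo-$\AA^{c(\cZ)}$-bundle $C_{l}^{\utg}\cap|\cY_{l,0,\cZ}|\to C_{\cZ}$. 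Summing over $l$ yields the claimed formula.

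The only point that requires care is the assertion that $\fs_{\cX}$ is indeed constant on each stratum $\cY_{l,0,\cZ}$; this is the main obstacle since $\fs_\cX$ is defined globally and a priori could vary. The verification is local and reduces to the linear case: by Lemma \ref{lem:formal-locally-quot} combined with the tame linearization of Lemma \ref{lem:tame-linearization}, $\cX$ is etale-locally isomorphic to a quotient stack $[V/H]$ with $V$ smooth and $H$ acting linearly, so Lemma \ref{lem:s-and-v} identifies $\fs_{\cX}$ at a twisted arc with $-v(E,\iota)$ for the associated embedding $\iota\colon\mu_{l}\hookrightarrow H$ and $\mu_l$-cover $E$. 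In the tame setting, $v$ depends only on the eigenvalue decomposition of $\iota$ on the tangent space $T_{\cX_0,x}$, and this data is precisely what parametrizes the connected components of $\I^{(\mu_{l})}\cX_{0}$. Hence $\fs_{\cX}$ is locally constant along the stratification above, completing the argument.
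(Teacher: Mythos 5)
Your proof is correct and follows essentially the same path as the paper's: reduce via Proposition \ref{prop:stringy-tame} to the tame $l$-summands, use the smoothness of $\Utg_{\Lambda_{[C_l]}}(\cX)^{\pur}$ (Lemma \ref{lem:tame-smooth}) to identify the integrand with $\fs_\cX$ (you via Lemma \ref{lem:shift-j'}, the paper via the vertical-divisor observation from the proof of Lemma \ref{lem:s-and-v} — these amount to the same computation), and then convert to connected components of $\I^{(\mu_l)}\cX_0$ using the pseudo-$\AA^c$-bundle structure of Lemma \ref{lem:Utg-I}. Your more explicit justification of the constancy of $\fs_\cX$ on each stratum by localizing to the linear case via Lemmas \ref{lem:formal-locally-quot}, \ref{lem:tame-linearization}, and \ref{lem:s-and-v} is exactly the point the paper delegates to the discussion preceding the corollary.
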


\begin{proof}
From \ref{prop:stringy-tame}, 
\[
\M_{\st}(\cX)_{C}=\sum_{l\nmid p}\M_{\st}^{\utg}(\Utg_{\Lambda_{[C_{l}]}}(\cX)^{\pur},A_{l}^{\utg})_{C_{l}^{\utg}}.
\]
Locally at the level of completion, $\cX$ is the quotient stack associated
a linear action. The boundary $A_{l}^{\utg}$ is a vertical divisor;
on each connected component $\cW$ of $\Utg_{\Lambda_{[C_{l}]}}(\cX)$,
the coefficient of this divisor is $-\fs_{\cX}(\cW)$ (see the proof
of \ref{lem:s-and-v}). Therefore 
\begin{align*}
\M_{\st}(\cX) & =\sum_{l\nmid p}\sum_{\cW\subset\Utg_{\Lambda_{[C_{l}]}}(\cX)}\{\cW\}\LL^{\fs_{\cX}(\cW)}.
\end{align*}
From \ref{lem:Utg-I}, this is equal to 
\[
\sum_{l\nmid p}\sum_{\cZ\subset\I^{(\mu_{l})}(\cX)}\{\cZ\}\LL^{c(\cZ)+\fs_{\cX}(\cZ)}.
\]
\end{proof}
\begin{rem}
\label{rem:difference-from-Yas06}In the present paper, we adopt a
slightly different definition of twisted jets from the one in \cite{MR2271984}.
A twisted $n$-jet is a morphism from 
\[
\left[\left(\Spec K\llbracket t^{1/l}\rrbracket/(t^{n+1})\right)/\mu_{l}\right]
\]
in the present paper, while a morphism from 
\[
\left[\left(\Spec K\llbracket t^{1/l}\rrbracket/(t^{(nl+1)/l})\right)/\mu_{l}\right]
\]
in \cite{MR2271984}. Our stack $\I^{(\mu_{l})}\cX$ was the stack
of twisted $0$-jets of order $l$ in \cite{MR2271984}, which was
denoted by $\cJ_{0}^{l}\cX$. Lemma \ref{lem:Utg-I} shows that our
motivic measure on $|\cJ_{\infty}\cX|$ and the one given in \cite{MR2271984}
differ by the factor of $\LL^{c(\cZ)}$. The value of $\fs$ also
differs in the present paper and the paper \cite{MR2271984} by $c(\cZ)$.
\end{rem}

\begin{rem}
Corollary \ref{cor:tame-smooth-stringy} readily implies a refinement
of Reid--Shepherd-Barron--Tai criterion for terminal/canonical tame
quotient singularities, the homological McKay correspondence and Ruan's
conjecture (and its $l$-adic cohomology version) on orbifold cohomology
\cite[Cors. 5 to 8]{MR2271984}. (There is an error in \cite[Cor. 5]{MR2271984};
we need to take the minimal discrepancy of divisors over $X$ \emph{having
centers in the singular locus} rather than all divisors over $X$.
See \cite{MR3929517} for relation between this version of minimal
discrepancy and stringy invariants.)
\end{rem}

\section{DM stacks over $k$}

People would be interested more in DM stacks of finite type over $k$
rather than formal DM stacks of finite type over $\Df$. One reason
of the use of formal stacks is that it is essential both in our construction
of stacks of twisted arcs and in the proof of the change of variables
formula. However many results are formulated also for DM stacks of
finite type over $k$ and regarded as a special case of those results
for formal stacks. For the reader's convenience, we state them below.
We also give a few applications of them.

Let $\cX$ be a generically smooth DM stack of finite type over $k$
which has pure dimension $d$. This induces the formal DM stack $\cX\times\Df$
of finite type over $\Df$. We define the \emph{stack of twisted arcs}
on $\cX$, denoted by $\cJ_{\infty}\cX$, to be $\cJ_{\infty}(\cX\times\Df)$.
We define the measure $\mu_{\cX}$ and the shift function $\fs_{\cX}$
on it to be $\mu_{\cX\times\Df}$ and $\fs_{\cX\times\Df}$ respectively.
A point of $|\cJ_{\infty}\cX|$ corresponds a class of a representable
morphism $\cE\to\cX$ from a twisted formal disk $\cE$. With the
obvious translation of notions to this setting, from \ref{thm:change-vars-II},
we obtain the change of variables formula:
\begin{thm}
\label{thm:change-vars-II-1}Let $\cY,\cX$ be generically smooth
DM stacks of finite type over $k$ which has pure dimension $d$.
Let $f\colon\cY\to\cX$ be a morphism such that $\Jac_{f}$ defines
a closed substack of positive codimension. Let $A\subset|\cJ_{\infty}\cY|$
be such that $f_{\infty}|_{A}$ is almost geometrically injective.
Let $h\colon f_{\infty}(A)\to\frac{1}{r}\ZZ\cup\{\infty\}$ be a measurable
function. Then 
\[
\int_{f_{\infty}(A)}\LL^{h+\fs_{\cX}}\,d\mu_{\cX}=\int_{A}\LL^{h\circ f_{\infty}-\fj_{f}+\fs_{\cY}}\,d\mu_{\cY}\in\widehat{\cM_{k,r}'}\cup\{\infty\}.
\]
\end{thm}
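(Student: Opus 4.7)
The plan is to reduce the statement to the formal version already established as Theorem \ref{thm:change-vars-II}, by base-changing along the structure morphism $\Df \to \Spec k$. Concretely, I set $\tilde{\cY} := \cY \times \Df$ and $\tilde{\cX} := \cX \times \Df$, viewed as formal DM stacks of finite type over $\Df$, and let $\tilde{f} := f \times \id_{\Df}$. The first step is to verify that $(\tilde{\cY}, \tilde{\cX}, \tilde{f})$ satisfies Assumption \ref{assu:tw}: flatness and pure relative dimension $d$ over $\Df$ are immediate from the product structure; generic smoothness over $\Df$ is inherited from generic smoothness over $k$; the stacky locus of $\tilde{\cX}$ is $\cX_{\st} \times \Df$, whose narrowness is ensured by the hypothesis that $\cX$ is generically smooth of pure dimension $d$ (so that $\cX_{\st}$ has positive codimension in $\cX$); and $\Jac_{\tilde{f}}$ is the pullback of $\Jac_f$ along the projection $\tilde{\cY} \to \cY$, so it defines a narrow closed substack by the assumption that $\Jac_f$ defines a closed substack of positive codimension.

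The second step is to identify all the data involved. By the definitions stated at the start of this section, $\cJ_\infty \cX = \cJ_\infty \tilde{\cX}$, $\mu_\cX = \mu_{\tilde{\cX}}$ and $\fs_\cX = \fs_{\tilde{\cX}}$, and likewise for $\cY$. Under these identifications the map $f_\infty\colon |\cJ_\infty \cY| \to |\cJ_\infty \cX|$ coincides with $\tilde{f}_\infty$, because a twisted arc $\cE \to \cY$ over an algebraically closed field $K$ corresponds tautologically, via the canonical morphism $\cE \to \Df_K$, to a representable morphism $\cE \to \tilde{\cY}$. The Jacobian order function $\fj_f$ is likewise computed as $\fj_{\tilde{f}}$, since $\Omega_{\tilde{\cY}/\Df}$ and $\Omega_{\tilde{\cX}/\Df}$ are the pullbacks of $\Omega_{\cY/k}$ and $\Omega_{\cX/k}$ respectively, so the relevant Fitting ideals agree after pullback.

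With these matchings in place, the final step is simply to apply Theorem \ref{thm:change-vars-II} to $(\tilde{\cY}, \tilde{\cX}, \tilde{f})$ together with the subset $A \subset |\cJ_\infty \tilde{\cY}|$ and the measurable function $h$ on $\tilde{f}_\infty(A)$. Since $f_\infty|_A = \tilde{f}_\infty|_A$ is almost geometrically injective by assumption, all hypotheses of the formal change-of-variables theorem are met, and the resulting identity translates back under the dictionary above to the identity asserted in Theorem \ref{thm:change-vars-II-1}. The main point requiring care, rather than a genuine obstacle, is the bookkeeping needed to check that every measurability and narrowness statement invoked in the setup of Sections \ref{sec:Motivic-integration-untwisted} and \ref{sec:integration-twisted} descends transparently along the projection $\Df_\Phi \to \Phi$ with $\Phi = \Spec k$; but this descent is formal and introduces no new substantive argument.
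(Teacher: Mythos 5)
Your reduction to the formal case via $\tilde{\cX} := \cX \times \Df$, $\tilde{\cY} := \cY \times \Df$, $\tilde{f} := f \times \id_{\Df}$ is exactly the argument the paper intends: the paragraph preceding the theorem sets up precisely the dictionary $\cJ_{\infty}\cX := \cJ_{\infty}(\cX\times\Df)$, $\mu_{\cX} := \mu_{\cX\times\Df}$, $\fs_{\cX} := \fs_{\cX\times\Df}$, and the paper then simply invokes Theorem \ref{thm:change-vars-II}. Your identification of the arc maps, the Jacobian order functions (via pullback of $\Omega_{\cY/k}$ to $\Omega_{\tilde\cY/\Df}$), and the measurability bookkeeping is sound.

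The one genuine flaw is your justification of spaciousness. You claim the narrowness of the stacky locus $\cX_{\st}\times\Df\subset\tilde\cX$ is "ensured by the hypothesis that $\cX$ is generically smooth of pure dimension $d$." This does not follow: a DM stack such as $\B G\times\AA_{k}^{d}$ (with $G$ nontrivial) is smooth of pure dimension $d$ but its stacky locus is the whole stack, hence not narrow. Spaciousness is an independent hypothesis in Assumption \ref{assu:tw}; what guarantees it here is that the statement of Theorem \ref{thm:change-vars-II-1} should be read as carrying over all of Assumption \ref{assu:tw} "with the obvious translation," so that $\cX$, $\cY$ are implicitly assumed to have stacky locus of positive codimension. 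You cannot derive that condition from generic smoothness — you must assume it. Aside from this one incorrect inference, the proposal is correct and follows the paper's route.
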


We can also define a log pair $(\cX,A)$ for a DM stack $\cX$ of
finite type over $k$ which is generically smooth and normal. We define
its stringy motive $\M_{\st,Z}(\cX,A)\in\widehat{\cM'_{Z,r}}\cup\{\infty\}$,
where $Z$ is an algebraic space almost of finite type given with
a morphism $\Gamma_{\cX}\to Z$. In this setting, it is more natural
to consider (not necessarily representable) proper birational morphisms
instead of pseudo-modifications. Here a morphism $\cY\to\cX$ of DM
stacks is said to be \emph{birational }if it restricts to an isomorphism
$\cY'\to\cX'$ of open dense substacks $\cY'\subset\cX$ and $\cX'\subset\cX$.
From \ref{thm:Mst-crepant}, we obtain:
\begin{thm}
\label{thm:Mst-crepant-nonformal}Let $f\colon\cY\to\cX$ be a proper
birational morphism of DM stacks of finite type over $k$ which induces
a crepant morphism $(\cY,B)\to(\cX,A)$ of log pairs over $k$. Let
$C\subset|\cX|$ be a constructible subset and $\tilde{C}\subset|\cY|$
its preimage. Then 
\[
\M_{\st}(\cX,A)_{C}=\M_{\st}(\cY,B)_{\tilde{C}}.
\]
\end{thm}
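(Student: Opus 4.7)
The plan is to deduce this non-formal statement from the formal version, Theorem \ref{thm:Mst-crepant}, by passing to the product with $\Df$. By definition given at the start of Section~15, for a DM stack $\cX$ of finite type over $k$ we have $\cJ_\infty\cX = \cJ_\infty(\cX\times\Df)$, and the measure $\mu_\cX$ together with the shift function $\fs_\cX$ are by construction those associated to the formal DM stack $\cX\times\Df$ over $\Df$. Moreover, for a log pair $(\cX,A)$ over $k$, the associated pair $(\cX\times\Df, A\times\Df)$ is a log pair over $\Df$ in the sense of Section~\ref{sec:Log-pairs}, and the function $\ff_{(\cX,A)}$ is visibly the same as $\ff_{(\cX\times\Df, A\times\Df)}$. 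Consequently $\M_{\st}(\cX,A)_C$ coincides with the formal stringy motive $\M_{\st}(\cX\times\Df, A\times\Df)_C$, and similarly for $(\cY,B)$.

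The key verification is that the induced morphism $f\times\Df\colon \cY\times\Df\to\cX\times\Df$ is a pseudo-modification in the sense of Section~\ref{sec:Stringy-invariants}. This is exactly the first item listed as an example just before Theorem \ref{thm:Mst-crepant}: the $t$-adic completion (equivalently, the product with $\Df$) of a proper birational morphism of DM stacks of finite type over $\Spec k\tbrats$ is a pseudo-modification. Concretely, one needs that $(f\times\Df)_\infty\colon |\cJ_\infty(\cY\times\Df)|\to|\cJ_\infty(\cX\times\Df)|$ is almost geometrically bijective. Since $f$ restricts to an isomorphism on a dense open substack $\cX^\circ\subset\cX$ whose complement $\cZ\subset\cX$ is narrow (being lower-dimensional), the twisted arcs of $\cX$ not factoring through $\cZ$ lift uniquely to twisted arcs of $\cY$ by the valuative criterion for properness applied after normalization of the pullback (the same construction used in \ref{lem:almost-bij}); by Lemma~\ref{lem:negl} (applied to $\cY\times\Df$ and $\cX\times\Df$) the exceptional loci are negligible, giving the required almost geometric bijectivity.

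It remains to verify that Assumption \ref{assu:tw} is met for $f\times\Df$. Since $\cY,\cX$ are generically smooth, flat, and of pure dimension $d$ over $k$, the products $\cY\times\Df, \cX\times\Df$ are generically smooth, flat, and of pure relative dimension $d$ over $\Df$; they are also spacious because their stacky loci are $\cY_\st\times\Df$ and $\cX_\st\times\Df$, which are narrow in the sense of \ref{def:thin} precisely because $\cY_\st\subset\cY$ and $\cX_\st\subset\cX$ have positive codimension (a hypothesis implicit in our setup, or that can be arranged by restricting to the generically smooth irreducible components). The condition on $\Jac_{f\times\Df}$ being narrow follows from the hypothesis that $f$ is birational, so that $\Jac_f\subset\cO_\cY$ defines a closed substack of positive codimension, and the narrowness is preserved under the product with $\Df$. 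Crepancy of $(\cY,B)\to(\cX,A)$ over $k$ immediately gives crepancy of $(\cY\times\Df, B\times\Df)\to (\cX\times\Df, A\times\Df)$ over $\Df$, since pullbacks of canonical sheaves and boundary divisors commute with the flat base change by $\Df\to\Spec k$.

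With these ingredients in place, Theorem \ref{thm:Mst-crepant} applied to $f\times\Df$ with the constructible subset $C\times|\Df|\subset|\cX\times\Df|$ (whose preimage is $\tilde C\times|\Df|$) yields
\[
\M_{\st}(\cX\times\Df, A\times\Df)_{C} = \M_{\st}(\cY\times\Df, B\times\Df)_{\tilde C},
\]
which by the identification above is precisely the desired equality $\M_{\st}(\cX,A)_C = \M_{\st}(\cY,B)_{\tilde C}$. The only genuinely non-routine point is the verification that $f\times\Df$ is a pseudo-modification; everything else is bookkeeping via the definitions in Section~15.
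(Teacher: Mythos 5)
Your proposal is correct and follows precisely the route the paper intends: the paper states Theorem \ref{thm:Mst-crepant-nonformal} with the single line ``From \ref{thm:Mst-crepant}, we obtain:'' and leaves the base-change to $\Df$ and the verification of Assumption \ref{assu:tw} implicit. Your write-up fills in exactly that bookkeeping (identifying $\cJ_\infty\cX$ with $\cJ_\infty(\cX\times\Df)$, recognizing $f\times\Df$ as a pseudo-modification via Example (1), and checking spaciousness, flatness, and crepancy after the base change), so it matches the paper's approach.
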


From \ref{cor:wild-McKay-linear}, we obtain:
\begin{cor}[The wild McKay correspondence]
\label{cor:wild-McKay-linear-1}Suppose that a finite group $G$
acts on $\AA_{k}^{d}$ linearly. Let $X:=\AA_{k}^{d}/G$ and let $A$
be the boundary on $X$ such that $\AA_{k}^{d}\to(X,A)$ is crepant.
(Note that if the morphism $\AA_{k}^{d}\to X$ is etale in codimension
one, then $A=0$.) Then we have
\[
\M_{\st}(X,A)=\int_{\Delta_{G}}\LL^{d-v}.
\]
\end{cor}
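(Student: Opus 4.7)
The plan is to derive this corollary directly from its formal counterpart, Corollary \ref{cor:wild-McKay-linear}, via the base change $-\times\Df$ that converts DM stacks over $k$ into formal DM stacks over $\Df$.

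First I would unwind the definitions attached to DM stacks of finite type over $k$ given at the start of the section. By construction, for such a $\cX$ one sets $\cJ_{\infty}\cX:=\cJ_{\infty}(\cX\times\Df)$, $\mu_{\cX}:=\mu_{\cX\times\Df}$, and $\fs_{\cX}:=\fs_{\cX\times\Df}$. The same identification extends to log pairs: if $(\cX,A)$ is a log pair over $k$, then pulling back along $\cX\times\Df\to\cX$ yields a log pair $(\cX\times\Df,A\times\Df)$ over $\Df$ (narrowness and $\QQ$-Cartier-ness of $K_{\cX}+A$ are preserved by the flat base change), and by the explicit formula for $\ff_{(\cX,A)}$ the two integrands agree. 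Consequently,
\[
\M_{\st}(\cX,A)\;=\;\M_{\st}(\cX\times\Df,\,A\times\Df)
\]
where the left-hand side is the invariant defined in this section and the right-hand side is the one from Definition \ref{def:stringy-motive}.

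Next I would specialize to $\cX=\AA_{k}^{d}/G$ with its linear $G$-action. The base change gives $\cX\times\Df\cong\AA_{\Df}^{d}/G$, together with the morphism $\AA_{\Df}^{d}\to(\AA_{\Df}^{d}/G,\,A\times\Df)$, which is the base change of the crepant morphism $\AA_{k}^{d}\to(\AA_{k}^{d}/G,A)$ and is therefore itself crepant. Thus $A\times\Df$ is precisely the boundary appearing in the statement of Corollary \ref{cor:wild-McKay-linear} for the base-changed action. Applying Corollary \ref{cor:wild-McKay-linear} directly yields
\[
\M_{\st}(\cX\times\Df,\,A\times\Df)\;=\;\int_{\Delta_{G}}\LL^{d-v}.
\]
Combining with the identification above gives the desired equality.

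The only point that needs a brief justification is that the right-hand side $\int_{\Delta_{G}}\LL^{d-v}$ is genuinely the \emph{same} object in both versions of the theorem. This is immediate from the construction of $\Delta_{G}$ and the function $v$: $\Delta_{G}$ is the moduli stack of $G$-torsors on the punctured formal disk $\Spec k\tpars$ (an object over $k$, independent of any ambient formal scheme), and $v(E)$ is defined purely in terms of the tuning module $\Xi_{E,\iota}=\Hom_{K\tbrats}^{\iota}(M,O_{E})$, where $M$ is the representation underlying the linear action; base-changing the representation from $k$ to $k\tbrats$ does not alter this calculation. Since no new obstacle arises—each step is a formal consequence of the construction of $\cJ_{\infty}\cX$ via the associated formal stack and of the crepancy of base change—the proof consists essentially of assembling these identifications and citing Corollary \ref{cor:wild-McKay-linear}.
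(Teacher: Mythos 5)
Your proof is correct and follows precisely the route the paper takes: the paper offers no argument beyond citing Corollary \ref{cor:wild-McKay-linear}, and your elaboration — reducing to the formal setting via $\cX\mapsto\cX\times\Df$, noting that base change preserves the crepant structure, and checking that $\Delta_G$ and $v$ are the same objects on both sides — is exactly the content implicit in that citation.
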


We can use the wild McKay correspondence to evaluate the discrepancy
of quotient singularities, an invariant of singularities appearing
in the minimal model program. Consider the invariant 
\[
\mathrm{discrep}(\mathrm{centers}\subset X_{\sing};X)
\]
defined to be the infimum of the discrepancies of divisors over $X$
with center included in the singular locus $X_{\sing}$ (see \cite[p. 157]{MR3057950}
or \cite[Introduction]{MR3929517}). Let $o\in\Delta_{G}$ be the
point corresponding to the trivial torsor and let $\Delta_{G}\setminus\{o\}=\bigsqcup_{i}C_{i}$
be a stratification into countably many constructible subsets (of
finite dimensions) such that $v$ is constant on each stratum. We
define 
\[
\dim\int_{\Delta_{G}\setminus\{o\}}\LL^{d-v}=\sup_{i}\left\{ \dim C_{i}+d-v(C_{i})\right\} \in\QQ\cup\{\infty\}.
\]
This is independent of the choice of the stratification.
\begin{cor}
\label{cor:discrep}We keep the notation of \ref{cor:wild-McKay-linear-1}.
Suppose that the morphism $\AA_{k}^{d}\to X$ is etale in codimension
one. 
\begin{enumerate}
\item We have
\begin{align*}
 & \mathrm{discrep}(\mathrm{centers}\subset X_{\sing};X)\\
 & =d-1-\max\left\{ \dim X_{\sing},\dim\int_{\Delta_{G}\setminus\{o\}}\LL^{d-v}\right\} .
\end{align*}
\item If the integral $\int_{\Delta_{G}}\LL^{d-v}$ coverges, then $X$
is log terminal. If $X$ has a log resolution, then the converse is
also true.
\end{enumerate}
\end{cor}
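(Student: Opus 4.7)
The starting point is the wild McKay correspondence (Corollary~\ref{cor:wild-McKay-linear-1}), which rewrites $\M_{\st}(X)=\int_{\Delta_{G}}\LL^{d-v}$. Isolating the contribution of the trivial torsor point $o\in\Delta_{G}$ (at which $v(o)=0$ and the class $\{o\}$ is $1$) gives
\[
\M_{\st}(X)=\LL^{d}+\int_{\Delta_{G}\setminus\{o\}}\LL^{d-v}.
\]

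The key intermediate claim is
\[
\dim\M_{\st}(X)_{X_{\sing}}=\max\bigl\{\dim X_{\sing},\;\dim\int_{\Delta_{G}\setminus\{o\}}\LL^{d-v}\bigr\}.
\]
I will establish this by stratifying the twisted arc space $|\cJ_{\infty}[\AA_{k}^{d}/G]|$ according to the class of the associated $G$-torsor in $\Delta_{G}$. Non-trivial torsor classes force the corresponding arc on $X$ to be centered in $X_{\sing}$, since $\AA_{k}^{d}\to X$ is étale over $X_{\sm}$ under the no-pseudo-reflection hypothesis; their total contribution to $\M_{\st}(X)_{X_{\sing}}$ is exactly $\int_{\Delta_{G}\setminus\{o\}}\LL^{d-v}$. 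The remaining trivial-torsor arcs centered in $X_{\sing}$ lift to ordinary arcs on $\AA_{k}^{d}$ whose reduction meets the proper closed subscheme $\pi^{-1}(X_{\sing})$, and their motivic contribution has dimension at most $\dim X_{\sing}$ by the standard codimension estimate for jet spaces over a closed subscheme of positive codimension.

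Next I will establish $\dim\M_{\st}(X)_{X_{\sing}}=d-1-\mathrm{discrep}(\mathrm{centers}\subset X_{\sing};X)$. For the $\ge$ direction, given a prime divisor $E$ over $X$ with center in $X_{\sing}$ and discrepancy $a(E;X)=a$, choose a proper birational morphism $g\colon Y\to X$ from a normal algebraic space on which $E$ appears as a component of the exceptional divisor, and equip $Y$ with the induced crepant boundary $B$. Applying Theorem~\ref{thm:Mst-crepant-nonformal} yields $\M_{\st}(X)=\M_{\st}(Y,B)$, and integration over the cylindrical subset of arcs on $Y$ meeting the generic point of $E$ contributes a summand of dimension $d-1-a$; taking supremum over $E$ gives the desired inequality. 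The $\le$ direction, when a log resolution of $X$ is available, follows from Batyrev's classical explicit formula expressing $\M_{\st}(X)$ as a finite sum of Batyrev-type terms; in the absence of such a resolution, one argues by passing to sufficiently fine partial models refining any prescribed divisor of near-minimal discrepancy. Combining the two expressions for $\dim\M_{\st}(X)_{X_{\sing}}$ proves part~(1).

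For part~(2), convergence of $\int_{\Delta_{G}}\LL^{d-v}$ in $\widehat{\cM_{k,r}'}$ is equivalent to $\dim\int_{\Delta_{G}\setminus\{o\}}\LL^{d-v}<d$, which by part~(1) translates to $\mathrm{discrep}>-1$, i.e., $X$ is log terminal. Conversely, given a log resolution of $X$ and the log-terminal hypothesis, Batyrev's finite-sum formula exhibits $\M_{\st}(X)$ as a sum of convergent rational functions in $\LL$, forcing the integral to converge. The principal obstacle is the matching upper bound $\dim\M_{\st}(X)_{X_{\sing}}\le d-1-\mathrm{discrep}$ without assuming a log resolution of $X$; this is surmounted by combining proper birational models adapted to individual divisors with the refined stratification of the twisted arc space furnished by the untwisting construction of Section~\ref{sec:Untwisting-stacks}.
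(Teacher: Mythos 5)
Your proposal follows the paper's overall strategy — expand $\M_{\st}(X)_{X_{\sing}}$ into trivial- and non-trivial-torsor contributions, then match its dimension against the discrepancy — but the paper's own proof is a terse sketch that outsources both critical ingredients to earlier work of the author: the identity $\mathrm{discrep}(\mathrm{centers}\subset X_{\sing};X)=d-1-\dim\M_{\st}(X)_{X_{\sing}}$ is cited to \cite[Prop.~2.1]{MR3929517} (proved there by evaluating $\M_{\st}$ against an infinite sequence of modifications $Y\to X$, precisely to avoid assuming a log resolution), and the log-terminality argument for part (2) is cited to \cite{MR3230848}. Your attempt to re-derive these leaves concrete gaps.

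First, for the key decomposition $\M_{\st}(X)_{X_{\sing}}=\{X_{\sing}\}+\int_{\Delta_{G}\setminus\{o\}}\LL^{d-v}$, the paper computes the trivial-torsor contribution \emph{exactly}: on the trivial-torsor component $\Utg_{\Lambda_{[1]}}(\cX)=\cX$, the stack $\cX=[\AA_{k}^{d}/G]$ is smooth with zero boundary, so $\ff_{(\cX,0)}\equiv 0$ and $\fs_{\cX}\equiv 0$ there, and the measure of arcs centered in the stacky locus is $\{\cX_{\st}\}=\{X_{\sing}\}$ in $K_0(\Var_k)'$. You only assert that this contribution has dimension \emph{at most} $\dim X_{\sing}$; the bound $\le$ alone does not yield the stated $\max$ formula when $\dim X_{\sing}$ is the dominant term.

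Second, and more seriously, you explicitly flag the upper bound $\dim\M_{\st}(X)_{X_{\sing}}\le d-1-\mathrm{discrep}$ as ``the principal obstacle'' when no log resolution is assumed, and then wave at ``combining proper birational models with the refined stratification of the twisted arc space.'' That is not an argument; it is exactly the content of \cite[Prop.~2.1]{MR3929517}, whose method (an inductive limit over a directed system of modifications $Y\to X$, using the crepant invariance \ref{thm:Mst-crepant-nonformal} at each stage) you would need to reproduce or cite. Without this, part (1) is not proved.

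Third, your route to part (2) hinges on the claim ``convergence of $\int_{\Delta_{G}}\LL^{d-v}$ is equivalent to $\dim\int_{\Delta_{G}\setminus\{o\}}\LL^{d-v}<d$.'' That equivalence is false in both directions in $\widehat{\cM_{k,r}'}$: a single term of dimension $\ge d$ does not obstruct convergence, while infinitely many terms of bounded dimension (say all equal to $d-1$) do, even though the sup stays $<d$. The paper instead argues directly that convergence of $\M_{\st}(X)$ forces log-terminality (transferring this through the McKay correspondence), an argument deferred to \cite{MR3230848} and not recoverable from the dimension statement alone.
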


\begin{proof}
Assertions (1) and (2) can be proved in the same way as proved in
\cite{MR3929517} and \cite{MR3230848} respectively for $G=\ZZ/p\ZZ$.
We only give a sketch.

(1) We can similarly define the dimension of $\M_{\st}(X)_{X_{\sing}}\in\ZZ\cup\{\infty\}$,
whether the integral diverges or not. Evaluating it by using infinitely
many modifications $Y\to X$, we get 
\[
\mathrm{discrep}(\mathrm{centers}\subset X_{\sing};X)=d-1-\dim\M_{\st}(X)_{X_{\sing}}
\]
(see \cite[Prop. 2.1]{MR3929517}). On the other hand, the version
of the wild McKay correspondence for $\M_{\st}(X)_{X_{\sing}}$ shows
\[
\M_{\st}(X)_{X_{\sing}}=\{X_{\sing}\}+\int_{\Delta_{G}\setminus\{o\}}\LL^{d-v}.
\]
Note that the term $\{X_{\sing}\}$ on the right side is the contribution
of the trivial torsor to $\M_{\st}(X)_{X_{\sing}}$. Assertion (1)
follows from these.

(2) The convergence of $\M_{\st}(X)$ implies that $X$ is log terminal.
The converse is true if $X$ has a log resolution. From the wild McKay
correspondence, the convergence of $\M_{\st}(X)$ is equivalent to
the one of $\int_{\Delta_{G}}\LL^{d-v}$. 
\end{proof}
Let us consider the case where $G=S_{n}$, the symmetric group, and
let $\ba\colon\Delta_{S_{n}}\to\ZZ$ be the Artin conductor. This
function associates to a geometric point $\Spec K\to\Delta_{S_{n}}$
the Artin conductor (see \cite[Ch. VI]{MR554237}) of the corresponding
map $G_{K\tpars}\to S_{n}\subset\mathrm{GL}_{n}(\CC)$, where $G_{K\tpars}$
denotes the absolute Galois group of $K\tpars$ and $S_{n}$ is embedded
into $\mathrm{GL}_{n}(\CC)$ by the standard permutation representation.
As an application of \ref{cor:wild-McKay-linear-1}, we obtain the
motivic version of Bhargava's mass formula \cite{MR2354798}:
\begin{cor}
\label{cor:motivic-Bhargava}We have the following equality in $\widehat{\cM_{k}'}$,
\[
\int_{\Delta_{S_{n}}}\LL^{-\ba}=\sum_{i=0}^{n-1}P(n,n-i)\LL^{-i}.
\]
Here $P(n,m)$ denotes the number of partitions of $n$ into exactly
$m$ parts.
\end{cor}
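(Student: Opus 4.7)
The plan is to translate to the motivic setting the point-counting proof of Bhargava's formula in Wood--Yasuda~\cite{MR3730512}. The three main ingredients will be (a) an identification of geometric points of $\Delta_{S_{n}}$ with degree-$n$ \'etale $K\tpars$-algebras, (b) additivity of the Artin conductor along the canonical field decomposition of such an algebra, and (c) a motivic analog of Serre's mass formula counting totally ramified extensions of fixed degree.

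First I would establish the identification. Given an $S_{n}$-torsor $E\to\D_{K}^{*}$, set $L_{E}:=E/S_{n-1}$ where $S_{n-1}\subset S_{n}$ is the stabilizer of a point under the permutation action on $\{1,\dots,n\}$; this is a degree-$n$ \'etale $K\tpars$-algebra, and every such algebra arises uniquely this way from $E=\underline{\mathrm{Iso}}(K^{n},L)$ equipped with the natural $S_{n}$-action. Since $K$ is algebraically closed, $L_{E}$ canonically decomposes as $L_{E}=\prod_{i=1}^{m}L_{i}$ with each $L_{i}/K\tpars$ a totally ramified field extension of some degree $e_{i}$, $\sum_{i}e_{i}=n$. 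By the orbit decomposition of the permutation representation under inertia and the conductor-discriminant formula, the Artin conductor of the permutation representation $S_{n}\hookrightarrow\mathrm{GL}_{n}$ equals the discriminant valuation of the associated algebra,
\[
\ba(E)=d(L_{E}/K\tpars)=\sum_{i=1}^{m}d(L_{i}/K\tpars).
\]

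Next I would stratify $\Delta_{S_{n}}$ by the partition $\lambda=(e_{1},\dots,e_{m})\vdash n$ recorded by the decomposition of $L_{E}$, producing locally closed substacks $\Delta_{S_{n},\lambda}$ with $|\Delta_{S_{n}}|=\bigsqcup_{\lambda\vdash n}|\Delta_{S_{n},\lambda}|$. On each stratum, the additivity above factors the integrand multiplicatively over the field components, and up to the natural action of the stabilizer of $\lambda$ in $S_{n}$ permuting equal parts, $\Delta_{S_{n},\lambda}$ becomes the product $\prod_{i=1}^{m}\cF_{e_{i}}$, where $\cF_{e}$ denotes the moduli stack of totally ramified degree-$e$ field extensions of $K\tpars$. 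I would then invoke the motivic Serre mass formula $\int_{\cF_{e}}\LL^{-d}=\LL^{-(e-1)}$---itself obtained from Corollary~\ref{cor:wild-McKay-linear-1} applied to the tautological action of a Galoisian cyclic group on $\AA^{1}$, together with the convergence results of \cite{Formal-Torsors-II}. Accounting for the cancellation between the symmetry factors from permuting equal $e_{i}$ and the automorphism factors of the $\cF_{e_{i}}$, I obtain
\[
\int_{\Delta_{S_{n},\lambda}}\LL^{-\ba}=\prod_{i=1}^{m}\LL^{-(e_{i}-1)}=\LL^{-(n-m)}.
\]

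Finally, summing over all partitions of $n$ grouped by the number of parts $m$ yields
\[
\int_{\Delta_{S_{n}}}\LL^{-\ba}=\sum_{m=1}^{n}P(n,m)\LL^{-(n-m)}=\sum_{j=0}^{n-1}P(n,n-j)\LL^{-j},
\]
which is the desired identity. The hardest step will be the third paragraph: precisely formulating the motivic Serre mass formula in the Grothendieck ring $\widehat{\cM_{k,r}'}$ so that the multiplicative property $\int_{\prod \cF_{e_{i}}}=\prod\int_{\cF_{e_{i}}}$ makes literal sense, and carrying out the symmetric-product/automorphism bookkeeping identifying $\Delta_{S_{n},\lambda}$ with the appropriate quotient of $\prod_{i}\cF_{e_{i}}$. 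Both are notationally heavy but exactly parallel to, and motivically no more subtle than, the counting combinatorics of \cite{MR3730512}.
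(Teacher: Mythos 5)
Your route is genuinely different from the paper's, and it also has a gap at the step you flagged as ``hardest.''

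The paper's proof goes in the opposite direction: it never stratifies $\Delta_{S_{n}}$ at all. Instead it takes $S_{n}$ acting on $\AA_{k}^{2n}=(\AA_{k}^{2})^{n}$ by permutation of pairs (the doubling of coordinates is crucial, since it makes transpositions fix a codimension-$2$ locus and hence removes all pseudo-reflections, so $A=0$ in Corollary \ref{cor:wild-McKay-linear-1}). The quotient $X=\AA_{k}^{2n}/S_{n}=S^{n}\AA_{k}^{2}$ admits the crepant resolution $\mathrm{Hilb}^{n}(\AA_{k}^{2})\to X$, and Theorem \ref{thm:Mst-crepant-nonformal} gives $\M_{\st}(X)=\{\mathrm{Hilb}^{n}(\AA_{k}^{2})\}$, which the Ellingsrud--Str{\o}mme cell decomposition evaluates as $\sum_{i}P(n,n-i)\LL^{2n-i}$. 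Finally, the identification $\ba=v$ for this action (Wood--Yasuda, Th.\ 4.8 of the cited paper) and Corollary \ref{cor:wild-McKay-linear-1} with $d=2n$ give $\int_{\Delta_{S_{n}}}\LL^{2n-\ba}=\M_{\st}(X)$, and dividing by $\LL^{2n}$ finishes. So the paper \emph{deduces} Bhargava's formula from a known geometric computation on the Hilbert scheme, whereas your proposal tries to \emph{recompute} the left-hand side directly over $\Delta_{S_{n}}$ by stratification. Incidentally, the Hilbert-scheme argument is in fact the one carried out in Wood--Yasuda \cite{MR3730512}; the direct combinatorial argument you describe is closer to Bhargava's original.

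The concrete gap is your ``motivic Serre mass formula'' $\int_{\cF_{e}}\LL^{-d}=\LL^{-(e-1)}$. This does not follow from Corollary \ref{cor:wild-McKay-linear-1} ``applied to a Galoisian cyclic group acting on $\AA^{1}$'': the stack $\cF_{e}$ parameterizes \emph{all} totally ramified degree-$e$ field extensions of $K\tpars$, whose Galois closures realize many different Galoisian groups, so $\cF_{e}$ is not $\Delta_{G}$ for any single $G$ and no single application of the wild McKay correspondence produces it. Worse, a cyclic group acting by scalars on $\AA^{1}$ is a pseudo-reflection action ($\AA^{1}/\mu_{e}\cong\AA^{1}$), exactly the degenerate case the hypotheses exclude. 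In effect the motivic Serre formula is a nontrivial result of the same depth as the statement you are proving; establishing it motivically would require a separate argument (e.g.\ a motivic parameterization of Eisenstein polynomials) that is not present in the paper. The remaining bookkeeping you acknowledge (a Fubini-type multiplicativity for these integrals, and the identification of $\Delta_{S_{n},\lambda}$ with a stacky symmetric product of the $\cF_{e_{i}}$) is also not developed in the paper, though it is more plausibly ``notationally heavy'' rather than a conceptual obstacle. Your step-1 identification of geometric points of $\Delta_{S_{n}}$ with degree-$n$ \'etale algebras and the conductor--discriminant identity $\ba(E)=\sum_{i}d(L_{i}/K\tpars)$ are both correct.
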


\begin{proof}
Consider the action of $S_{n}$ on $\AA_{k}^{2n}=(\AA_{k}^{2})^{n}$
by permutation. The quotient variety $X=\AA_{k}^{2n}/S_{n}$ is the
symmetric product $S^{n}\AA_{k}^{2}$ and it has a crepant resolution
by the Hilbert scheme of $n$ points on $\AA_{k}^{2}$, $\mathrm{Hilb}^{n}(\AA_{k}^{2})\to X$
\cite{0537.53056,MR2107324,MR1825408}. From a cell decomposition
of $\mathrm{Hilb}^{n}(\AA_{k}^{2})$ as in \cite{MR870732} (see also
\cite{MR2492446} for the case of a general base field), we can show
\[
\M_{\st}(X)=\{\mathrm{Hilb}^{n}(\AA_{k}^{2})\}=\sum P(n,n-i)\LL^{2n-i}.
\]
On the other hand, from \cite[Th. 4.8]{MR3431631}, the function $\ba$
is equal to the $v$ function associated to the action $S_{n}\curvearrowright\AA_{k}^{2n}$.
The corollary now follows from \ref{cor:wild-McKay-linear-1}. 
\end{proof}

\appendix

\section{General results on stacks\label{sec:General-stacks}}

In Appendix, we collect general results on (mainly DM) stacks which
are necessary in this paper. Some of them would be well-known to specialists,
but the author could not find in the literature.

We first recall that a morphism $f\colon\cY\to\cX$ of DM stacks is
representable if and only if for every geometric point $y\colon\Spec K\to\cY$,
the group homomorphism $\Aut_{\cY}(y)\to\Aut_{\cX}(f(y))$ is injective
\cite[Lem. 4.4.3]{MR1862797}. We say that $f$ is \emph{stabilizer-preserving
}if for every $y$, the map $\Aut_{\cY}(y)\to\Aut_{\cX}(f(y))$ is
an isomorphism. If $\cY$ lies over another stack $\cS$, then we
can consider the kernel $\Aut_{\cY/\cS}(y)\subset\Aut_{\cY}(y)$ of
$\Aut_{\cY}(y)\to\Aut_{\cS}(\bar{y})$ with $\bar{y}\in\cS$ the image
of $y$.
\begin{lem}
\label{lem:Aut-inj}Suppose that $f\colon\cY\to\cX$ is an $\cS$-morphism
and let $y$ be a geometric point of $\cY$. Then $\Aut_{\cY}(y)\to\Aut_{\cX}(f(y))$
is injective if and only if $\Aut_{\cY/\cS}(y)\to\Aut_{\cX/\cS}(f(y))$
is injective.
\end{lem}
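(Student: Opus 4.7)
The plan is to prove the lemma by a direct diagram chase using the defining exact sequences of the relative automorphism groups. Consider the commutative diagram with exact rows
\[
\xymatrix{
1 \ar[r] & \Aut_{\cY/\cS}(y) \ar[r] \ar[d]_{f_*^{\mathrm{rel}}} & \Aut_{\cY}(y) \ar[r] \ar[d]_{f_*} & \Aut_{\cS}(\bar{y}) \ar@{=}[d] \\
1 \ar[r] & \Aut_{\cX/\cS}(f(y)) \ar[r] & \Aut_{\cX}(f(y)) \ar[r] & \Aut_{\cS}(\bar{y})
}
\]
where $\bar{y}$ denotes the image of $y$ in $\cS$ (which is identified with the image of $f(y)$ since $f$ is an $\cS$-morphism), and the horizontal maps on the right are the structural ones from $\cY\to\cS$ and $\cX\to\cS$. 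The bottom row being exact is essentially the definition of $\Aut_{\cX/\cS}(f(y))$; similarly for the top row.

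The "only if" direction is immediate: if $f_*$ is injective, then its restriction $f_*^{\mathrm{rel}}$ to the subgroup $\Aut_{\cY/\cS}(y)\subset\Aut_{\cY}(y)$ is also injective.

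For the "if" direction, suppose $f_*^{\mathrm{rel}}$ is injective and let $\phi\in\Aut_{\cY}(y)$ lie in the kernel of $f_*$. Chasing the right-hand square: the image of $\phi$ in $\Aut_{\cS}(\bar{y})$ obtained by going right then down equals the one obtained by going down then right, which is the image of $f_*(\phi)=\id$, hence is trivial. Thus $\phi$ lies in the kernel of $\Aut_{\cY}(y)\to\Aut_{\cS}(\bar{y})$, i.e.\ $\phi\in\Aut_{\cY/\cS}(y)$. Since $f_*^{\mathrm{rel}}(\phi)=f_*(\phi)=\id$ and $f_*^{\mathrm{rel}}$ is injective, we conclude $\phi=\id$. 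Hence $f_*$ is injective, which is what we wanted.

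There is no essential obstacle here; the proof is a two-line diagram chase, and the only subtlety is to verify that the rightmost vertical arrow is the identity, which follows from $f$ being an $\cS$-morphism so that the composition $\cY\to\cX\to\cS$ agrees with $\cY\to\cS$ up to the given $2$-isomorphism, inducing the identity map on $\Aut_{\cS}(\bar{y})$.
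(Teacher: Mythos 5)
Your proof is correct and is essentially the same argument as the paper's, just phrased slightly more cleanly: both exploit the fact that the maps on automorphism groups respect the structural fibration over $\Aut_{\cS}(\bar y)$, reducing injectivity to the fiber over the identity (where the paper invokes that each nonempty fiber map is set-theoretically isomorphic to the one over $0$, and you invoke the equivalent ``trivial kernel implies injective'' for group homomorphisms).
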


\begin{proof}
The ``only if'' is clear. We prove the ``if'' part. For $\alpha\in\Aut_{\cS}(\bar{y})$,
if $\Aut_{\cY}(y)_{\alpha}$ and $\Aut_{\cX}(f(y))_{\alpha}$ denote
the preimages of $\alpha$ in $\Aut_{\cY}(y)$ and $\Aut_{\cX}(f(y))$
respectively and if these preimages are note empty, then each map
$\Aut_{\cY}(y)_{\alpha}\to\Aut_{\cX}(f(y))_{\alpha}$ is isomorphic
to the map
\[
\Aut_{\cY}(y)_{0}=\Aut_{\cY/\cS}(y)\to\Aut_{\cX}(f(y))_{0}=\Aut_{\cX/\cS}(f(y))
\]
as a set map. This shows the ``if'' part.
\end{proof}
\begin{lem}
\label{lem:diagonal-property}Let $g\colon\cZ\to\cY$ and $f\colon\cY\to\cX$
be morphisms of stacks. Let $\mathbf{P}$ be a property of morphisms
of stacks which is stable under the base change and composition. If
the diagonals $\Delta_{g}$ and $\Delta_{f}$ have property $\mathbf{P}$,
then so does $\Delta_{f\circ g}$.
\end{lem}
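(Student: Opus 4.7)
The plan is to factor the diagonal $\Delta_{f \circ g}$ as a composition of two morphisms, one of which is $\Delta_g$ itself and the other is a base change of $\Delta_f$, and then invoke the two stability assumptions on $\mathbf{P}$.

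More precisely, I would first observe that there is a natural morphism $\cZ \times_{\cY} \cZ \to \cZ \times_{\cX} \cZ$ induced by $f$, and that the diagonal $\Delta_{f \circ g} \colon \cZ \to \cZ \times_{\cX} \cZ$ factors as
\[
\cZ \xrightarrow{\Delta_g} \cZ \times_{\cY} \cZ \to \cZ \times_{\cX} \cZ.
\]
The key identification is the 2-Cartesian square
\[
\xymatrix{
\cZ \times_{\cY} \cZ \ar[r] \ar[d] & \cZ \times_{\cX} \cZ \ar[d]^{g \times g} \\
\cY \ar[r]_{\Delta_f} & \cY \times_{\cX} \cY,
}
\]
which exhibits $\cZ \times_{\cY} \cZ \to \cZ \times_{\cX} \cZ$ as the base change of $\Delta_f$ along $g \times g$. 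By the assumption that $\mathbf{P}$ is stable under base change, this second morphism has property $\mathbf{P}$. Combined with the hypothesis that $\Delta_g$ has property $\mathbf{P}$ and that $\mathbf{P}$ is stable under composition, the factorization above shows that $\Delta_{f \circ g}$ has property $\mathbf{P}$.

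The only step requiring genuine verification is the 2-Cartesian square above; this is a formal manipulation of fiber products of stacks and presents no real obstacle. The rest is purely a matter of assembling the stability axioms in the right order, so there is no essential difficulty; the main point is simply to identify the correct factorization of $\Delta_{f \circ g}$.
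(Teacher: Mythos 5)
Your proof is correct and is essentially the paper's own argument: the paper factors $\Delta_{f\circ g}$ as $\phi\circ\Delta_g$ where $\phi\colon\cZ\times_{\cY}\cZ\to\cZ\times_{\cX}\cZ$ sits in exactly the 2-Cartesian square you write (just drawn with the two target corners transposed), so $\phi$ is a base change of $\Delta_f$ and the composition axiom finishes. No differences of substance.
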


\begin{proof}
Consider the 2-commutative diagram:
\[
\xymatrix{\cZ\ar[d]^{\Delta_{g}}\ar@/{}_{3pc}/[dd]_{\Delta_{f\circ g}}\\
\cZ\times_{\cY}\cZ\ar[r]\ar[d]^{\phi} & \cY\ar[d]^{\Delta_{f}}\\
\cZ\times_{\cX}\cZ\ar[r] & \cY\times_{\cX}\cY
}
\]
The square in the diagram is 2-Cartesian. Therefore $\phi$ has property
$\mathbf{P}$. It follows that $\Delta_{f\circ g}\cong\phi\circ\Delta_{g}$
has property $\mathbf{P}$.
\end{proof}
\begin{lem}
\label{lem:DM-base-ch}Let 
\[
\xymatrix{\cX_{T}\ar[r]\ar[d] & \cX\ar[d]\\
T\ar[r] & \cS
}
\]
be a 2-Cartesian diagram of stacks. Suppose that $\cS$ and $\cX_{T}$
are DM stacks and $T\to\cS$ is an atlas. Then $\cX$ is a DM stack.
\end{lem}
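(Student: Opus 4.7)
The plan is to verify the two requirements for $\cX$ to be a DM stack in this paper's (separated) sense: that the absolute diagonal $\Delta_\cX$ is representable, unramified and finite, and that $\cX$ admits a representable etale surjective morphism from an algebraic space. Both will come by descent from the given Cartesian square, exploiting the fact that the projection $\cX_T\to\cX$ is a base change of the atlas $T\to\cS$ and is therefore itself representable, etale and surjective.

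For the atlas, since $\cX_T$ is a DM stack it admits a representable etale surjective morphism $V\to\cX_T$ from an algebraic space $V$, and the composite $V\to\cX_T\to\cX$ then gives the desired atlas of $\cX$. For the diagonal, I would first handle the relative diagonal $\Delta_{\cX/\cS}\colon\cX\to\cX\times_\cS\cX$. The morphism $\cX_T\times_T\cX_T\to\cX\times_\cS\cX$ is canonically identified with the base change of $T\to\cS$ along $\cX\times_\cS\cX\to\cS$, hence is representable, etale and surjective; a direct computation of fiber products yields
\[
\cX\times_{\cX\times_\cS\cX}(\cX_T\times_T\cX_T)\cong\cX_T,
\]
with the projection to $\cX_T\times_T\cX_T$ being the relative diagonal $\Delta_{\cX_T/T}$. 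Since $\cX_T$ is a separated DM stack, $\Delta_{\cX_T/T}$ is representable, unramified and finite, and these three properties are etale-local on the target, so $\Delta_{\cX/\cS}$ inherits them. Finally, invoking Lemma \ref{lem:diagonal-property} applied to the composition $\cX\to\cS\to\Spec k$ with property $\mathbf{P}$ taken to be ``representable, unramified and finite'' (which is stable under both base change and composition), together with the fact that $\Delta_\cS$ has $\mathbf{P}$ because $\cS$ is a separated DM stack, yields that $\Delta_\cX$ has $\mathbf{P}$ as well.

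The main point that merits care is the etale-local character of the three target properties: for unramifiedness and finiteness this is classical, and for representability of $\Delta_{\cX/\cS}$ it amounts to representability of the relevant $\Iso$-sheaves, which can be checked after the etale surjective base change $\cX_T\times_T\cX_T\to\cX\times_\cS\cX$. Beyond that bookkeeping, the argument is purely a descent-and-composition exercise from the hypothesis that $\cS$ and $\cX_T$ are already DM.
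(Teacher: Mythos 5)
Your proof is correct and follows essentially the same route as the paper: identify $\Delta_{\cX_T/T}$ as the base change of $\Delta_{\cX/\cS}$ along the (representable, etale, surjective) projection $\cX_T\times_T\cX_T\to\cX\times_\cS\cX$, descend the properties (representable, unramified, finite) to $\Delta_{\cX/\cS}$, pass to $\Delta_\cX$ via Lemma \ref{lem:diagonal-property}, and produce an atlas of $\cX$ by composing an atlas of $\cX_T$ with $\cX_T\to\cX$. The only difference is that you spell out the fiber-product identification and the etale-locality caveat more explicitly; the paper's proof takes those for granted.
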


\begin{proof}
From the assumption, the diagonal $\Delta_{\cX_{T}/T}$ is representable,
unramified and finite morphism. This is the base change of $\Delta_{\cX/\cS}$
along $T\to\cS$. Therefore $\Delta_{\cX/\cS}$ is also representable,
unramified and finite morphism. From \ref{lem:diagonal-property},
$\Delta_{\cX}=\Delta_{\cX/\Spec k}$ is also representable, unramified
and finite. If $U\to\cX_{T}$ is an atlas, then so is $U\to\cX_{T}\to\cX$.
We have proved the lemma.
\end{proof}

\begin{lem}
\label{lem:rep-stab-preserving}Let $h\colon\cZ\to\cY$ and $f\colon\cY\to\cX$
be morphisms of DM stacks. Suppose that $f$ is representable (resp.\ stabilizer-preserving)
and unramified. Then $\Aut(h)\to\Aut(f\circ h)$ is injective (resp.\ bijective).
\end{lem}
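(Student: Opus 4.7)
My plan is to realize $\Aut(h) \to \Aut(f\circ h)$ as the pullback along $h$ of a canonical map between inertia stacks, and to show this map is an open immersion of algebraic spaces over $\cZ$, from which both assertions follow at once.

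To set up, recall that since $\cY$ and $\cX$ are DM, the inertia stacks $I_\cY = \cY \times_{\cY\times\cY}\cY$ and $I_\cX = \cX\times_{\cX\times\cX}\cX$ are representable, finite, and unramified over their bases. By Yoneda, the $2$-automorphism sheaf $\Aut(h)$ on $\cZ$, which sends $\xi\colon T\to\cZ$ to the group $\Aut_T(h\circ\xi)$, is represented by $h^*I_\cY$; likewise $\Aut(f\circ h)\cong (f\circ h)^*I_\cX$. The canonical map $\Aut(h)\to\Aut(f\circ h)$ is then the pullback along $h$ of the natural $\cY$-morphism $\phi\colon I_\cY \to f^*I_\cX$ given by $(y,\beta)\mapsto(y,f(\beta))$.

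The key step is a fiber-product identity. Writing $\Delta_\cY = J\circ\Delta_f$ with $J\colon\cY\times_\cX\cY\to\cY\times\cY$ the forgetful map, one verifies (a routine but careful $2$-categorical computation analyzing when a $T$-point $(y,y,\alpha)$ of $\cY\times_\cX\cY$ lies in the essential image of $\Delta_f$) that $f^*I_\cX \cong (\cY\times_\cX\cY)\times_{\cY\times\cY,\Delta_\cY}\cY$ and
$$I_\cY \;\cong\; \cY\times_{\Delta_f,\,\cY\times_\cX\cY}\,f^*I_\cX,$$
so that $\phi$ is the base change of $\Delta_f$ along $f^*I_\cX\to\cY\times_\cX\cY$. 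Now $f$ is unramified in both cases, so $\Delta_f$ is unramified and locally of finite presentation; moreover in both cases $f$ is representable (in (i) by hypothesis, and in (ii) because stabilizer-preserving implies representable), so $\Delta_f$ is a monomorphism. An unramified monomorphism locally of finite presentation is an open immersion, so $\Delta_f$, its base change $\phi$, and the further pullback $h^*\phi\colon\Aut(h)\to\Aut(f\circ h)$ are all open immersions of algebraic spaces.

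An open immersion is in particular a monomorphism, so case (i) follows. For case (ii), the stabilizer-preserving hypothesis means that for every geometric point $z$ of $\cZ$ the induced map of finite groups $\Aut_\cY(h(z))\to\Aut_\cX(f(h(z)))$ is a bijection; hence the open immersion $h^*\phi$ is also surjective on geometric points, and therefore an isomorphism. The main technical obstacle is the $2$-categorical verification of the fiber-product identity $I_\cY\cong \cY\times_{\cY\times_\cX\cY} f^*I_\cX$; once this is established, the rest reduces to the standard characterizations of unramified and representable morphisms via their diagonals.
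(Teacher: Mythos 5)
Your strategy—realizing $\Aut(h)\to\Aut(f\circ h)$ as the pullback along $h$ of a map $\phi\colon I_{\cY}\to f^{*}I_{\cX}$ between inertia stacks, then identifying $\phi$ with a base change of $\Delta_{f}$—is a valid and somewhat more structural route than the paper's, which works directly with the composite $I_{\cY}\xrightarrow{b} I_{\cX}\times_{\cX}\cY\xrightarrow{a}\cY$ and deduces from properties of $a$ and $a\circ b$ that $b$ is a \emph{closed} immersion (using that representable $+$ finite $+$ unramified $+$ universally injective implies closed immersion). Your fiber-product identity $I_{\cY}\cong\cY\times_{\Delta_f,\,\cY\times_{\cX}\cY}f^{*}I_{\cX}$ is correct and replaces the paper's ad hoc deductions by a single clean base-change observation.

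However, the step you yourself call key contains a real error. The statement \emph{``an unramified monomorphism locally of finite presentation is an open immersion''} is false. A counterexample is the closed immersion $\Spec k\hookrightarrow\Spec k[\epsilon]/(\epsilon^{2})$: it is a monomorphism, of finite presentation, and unramified (all closed immersions are formally unramified), but it is not an open immersion. What is true, and what you should invoke instead, is that a morphism $f$ is unramified if and only if $\Delta_{f}$ is an open immersion; equivalently, $\Delta_{f}$ is a section of the projection $\cY\times_{\cX}\cY\to\cY$, which is a base change of $f$ and hence unramified, and a section of an unramified morphism is always an open immersion. Note that your proof as written never actually uses the hypothesis that $f$ is unramified in a load-bearing way: the observation ``$\Delta_{f}$ is unramified'' holds for \emph{any} morphism locally of finite type (the diagonal is an immersion), and ``$\Delta_{f}$ is a monomorphism'' comes from representability alone. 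The unramified hypothesis is precisely what upgrades $\Delta_{f}$ from an immersion to an \emph{open} immersion, so it must enter the argument at that point. Once step 4 is replaced by the correct lemma, the rest (base change to $\phi$ and to $h^{*}\phi$, monomorphism gives injectivity, surjectivity on geometric points from stabilizer-preservation gives bijectivity) goes through as you wrote it.
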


\begin{proof}
Consider the morphisms
\[
\I\cY\xrightarrow{b}\I\cX\times_{\cX}\cY\xrightarrow{a}\cY.
\]
Since $a\circ b$ is representable, so is $b$. Since $a\circ b$
is finite and $a$ is separated, $b$ is finite. Since $a\circ b$
and $a$ are unramified, $b$ is unramified. Thus $b$ is representable,
finite and unramified. When $f$ is representable, then $b$ is also
universally injective. From \cite[tag 04XV]{stacks-project}, $b$
is a closed immersion. When $f$ is stabilizer-preserving, from \cite[tag 0DU9]{stacks-project},
$b$ is an isomorphism. Therefore, for each object $u\in\cZ$ say
over $U\in\Aff$, we have an injection (resp.\ an isomorphism) $\ulAut(h(u))\to\ulAut(f\circ h(u))$
of group schemes over $U$ and an injection (resp.\ an isomorphism)
$\Aut(h(u))\to\Aut(f\circ h(u))$ of groups. An automorphism $\Aut(h)$
is by definition a collection of automorphisms $\alpha_{u}\in\Aut(h(u))$,
$u\in\cZ$ with a suitable compatibility. Therefore the lemma follows.
\end{proof}
\begin{lem}
\label{lem:imm-Aut}Let $h\colon\cZ\to\cY$ and $f\colon\cY\to\cX$
be morphisms of DM stacks. Suppose that $h$ is a thickening. Then
the map $\Aut(f)\to\Aut(f\circ h)$ is injective.
\end{lem}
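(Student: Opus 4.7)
The plan is to identify $\Aut(f)$ with the set of sections of a suitable representable, finite, unramified morphism over $\cY$, and then to show that two such sections agreeing on $\cZ$ must coincide because their equalizer is a clopen substack of $\cY$.

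First, I recall that a 2-automorphism $\alpha$ of $f$ assigns functorially to each object $y\in\cY$ an element $\alpha_y\in\Aut_\cX(f(y))$. This datum is the same as a morphism $\cY\to\I\cX$ over $\cX$, i.e.\ a section of the projection
\[
W:=f^{*}\I\cX=\cY\times_{\cX}\I\cX\longrightarrow\cY.
\]
Under this identification the map $\Aut(f)\to\Aut(f\circ h)$ becomes the pullback of sections of $W\to\cY$ along $h\colon\cZ\to\cY$.

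Next, since $\cX$ is a (separated) DM stack, its diagonal $\Delta_\cX$ is representable, unramified and finite (this is the content of \ref{lem:diag-immer}). The morphism $\I\cX\to\cX$ is a base change of $\Delta_\cX$, and $W\to\cY$ is in turn a base change of $\I\cX\to\cX$, so both inherit the properties of being representable, finite and unramified. In particular, the diagonal $\Delta_{W/\cY}$ is simultaneously an open immersion (because $W\to\cY$ is unramified) and a closed immersion (because $W\to\cY$ is finite, hence separated); thus it is a clopen immersion.

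Given two sections $\sigma,\tau\colon\cY\to W$, their equalizer is the pullback of $\Delta_{W/\cY}$ along $(\sigma,\tau)\colon\cY\to W\times_\cY W$, so it is a clopen substack $E\hookrightarrow\cY$. If $\sigma$ and $\tau$ restrict to the same section on $\cZ$, then $h$ factors through $E$, whence $|\cZ|\subset|E|\subset|\cY|$. Since $h$ is a thickening, $|\cZ|\to|\cY|$ is a bijection, so the clopen subset $|E|$ of $|\cY|$ must be all of $|\cY|$, giving $E=\cY$ and hence $\sigma=\tau$. Applied to an automorphism $\alpha$ whose restriction to $\cZ$ equals the identity section, this yields $\alpha=\id$, which is the desired injectivity. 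I do not expect any real obstacle in executing this plan; the essential content is the clopen nature of the equalizer, together with the fact that a thickening is a bijection on points.
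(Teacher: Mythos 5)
Your argument is correct, and it is essentially the paper's proof in a slightly more global packaging: the paper works pointwise over each affine $S\to\cY$, forming $T=S\times_\cY\cZ$ and using that $\ulAut_S(f\circ s)\to S$ is unramified together with the thickening $T\to S$ to pin down the automorphism, while you encode the same unramifiedness-plus-thickening principle once and for all via the clopen equalizer of sections of $W=\cY\times_\cX\I\cX\to\cY$. The mathematical content — representability, finiteness and unramifiedness of the inertia, and the topological rigidity of thickenings — is identical.
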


\begin{proof}
Let $s\colon S\to\cY$ be a morphism from an affine scheme and let
\[
t\colon T:=S\times_{\cY}\cZ\to\cZ
\]
be the projection. Automorphisms $\alpha\in\Aut(f\circ s)$ and $\beta\in\Aut(f\circ h\circ t)$
with $\alpha\mapsto\beta$ induce the following commutative diagram:
\[
\xymatrix{\ulAut_{T}(f\circ h\circ t)\ar[r]\ar[d] & \ulAut_{S}(f\circ s)\ar[d]\\
T\ar@{^{(}->}[r]\ar@<-1ex>[u]_{\beta_{s}} & S\ar@<-1ex>[u]_{\alpha_{s}}
}
\]
Since $\cX$ is a DM stack, the morphism $\ulAut_{S}(f\circ s)\to S$
is unramified. Since $T\to S$ is a thickening, $\alpha_{s}$ is determined
by $\beta_{s}$. Since we have this for each object $s\in\cY(S)$,
we conclude the lemma.
\end{proof}
\begin{lem}
\label{lem:iso-characterization}Let $f\colon\cY\to\cX$ be a stabilizer-preserving
etale morphism of DM stacks such that for every algebraically closed
field $K$, the map $\cY[K]\to\cX[K]$ is bijective. Then $f$ is
an isomorphism.
\end{lem}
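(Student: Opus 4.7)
Since $f$ is stabilizer-preserving, the map $\Aut_\cY(y)\to\Aut_\cX(f(y))$ is an isomorphism at every geometric point $y$, hence in particular injective, so $f$ is representable by algebraic spaces. First I would pick an atlas $u\colon U\to\cX$ from a scheme and form the $2$-Cartesian square
\[
\xymatrix{V\ar[r]\ar[d]_{g} & \cY\ar[d]^{f}\\ U\ar[r]_{u} & \cX}
\]
so that $V$ is an algebraic space and $g$ is etale. The plan is to show $g$ is an isomorphism; then, since $V\to\cY$ is the base change of the etale surjection $u$ along the representable morphism $f$, it is itself an etale atlas of $\cY$, and comparing atlases shows that $f$ is an isomorphism.

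The key step is to verify that $g$ is bijective on geometric points. Given $\bar u\colon\Spec K\to U$ with image $\bar x\colon\Spec K\to\cX$, a $K$-point of $V$ over $\bar u$ is, by the universal property of the fiber product, an isomorphism class of pairs $(\bar y,\alpha)$ where $\bar y\in\cY(K)$ and $\alpha\colon f(\bar y)\xrightarrow{\sim}\bar x$; two pairs $(\bar y,\alpha),(\bar y',\alpha')$ are identified when there exists $\beta\colon\bar y\xrightarrow{\sim}\bar y'$ with $\alpha'\circ f(\beta)=\alpha$. The hypothesis that $\cY[K]\to\cX[K]$ is bijective ensures that up to isomorphism there is exactly one $\bar y$ with $f(\bar y)\cong\bar x$. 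Fixing such a $\bar y$, the set of choices of $\alpha$ is a torsor under $\Aut_\cX(\bar x)$, and the remaining equivalence is generated by the action of $\Aut_\cY(\bar y)$ via $f$. Because $f$ is stabilizer-preserving, $\Aut_\cY(\bar y)\to\Aut_\cX(\bar x)$ is surjective, so this action is transitive and the fiber $g^{-1}(\bar u)$ consists of a single isomorphism class. Thus $g$ is bijective on geometric points.

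Finally, an etale morphism of algebraic spaces that is bijective on geometric points is an isomorphism: every geometric fiber is a disjoint union of spectra of finite separable extensions, and the bijectivity on geometric points forces each fiber to be a single copy of the residue field, so $g$ is radicial and surjective; an etale radicial morphism is an open immersion, and a surjective open immersion is an isomorphism. I expect the main (if modest) obstacle to be the careful bookkeeping in the third paragraph showing that the stabilizer-preserving hypothesis precisely cancels the ambiguity in choices of $\alpha$; the rest of the argument is formal descent together with a standard property of etale morphisms.
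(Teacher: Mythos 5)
Your proof is correct and follows the same overall strategy as the paper's: base change $f$ along an atlas $U\to\cX$ to get an etale morphism $g\colon V=U\times_{\cX}\cY\to U$ of algebraic spaces, reduce to showing that the geometric fiber of $g$ over every $\Spec K\to U$ is a single reduced point, and then conclude from the standard fact that an etale, universally bijective morphism of algebraic spaces is an isomorphism. The only real difference is in how that fiber is computed. The paper passes through the residual gerbe: with $G=\Aut_{\cX}(\bar x)$ it uses the closed immersion $\B G\hookrightarrow\cX_{K}$, shows $(\B G\times_{\cX}\cY)_{\red}\cong\B G$, and deduces $(\Spec K\times_{\cX}\cY)_{\red}\cong\Spec K$. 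You instead unwind the $2$-fiber product directly: an isomorphism class of $K$-points of $V$ over $\bar u$ is a pair $(\bar y,\alpha)$ modulo the $\Aut_{\cY}(\bar y)$-action, and the two hypotheses enter once each — bijectivity of $\cY[K]\to\cX[K]$ gives uniqueness of $\bar y$, while surjectivity of $\Aut_{\cY}(\bar y)\to\Aut_{\cX}(\bar x)$ makes the action on the torsor of $\alpha$'s transitive, leaving one class. The two routes use the two hypotheses for exactly the same two purposes and are essentially equivalent; if anything your version makes the role of the pointwise bijectivity hypothesis slightly more visible, since the paper's step $(\B G\times_{\cX}\cY)_{\red}\cong\B G$ also needs it (to rule out a disjoint union of several copies of $\B G$) even though the text at that point only cites the stabilizer-preserving condition.
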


\begin{proof}
Let $V\to\cX$ be an atlas. It suffices to show that $V\times_{\cX}\cY\to V$
is an isomorphism. This is an etale surjective morphism of algebraic
spaces. It suffices to show that $(V\times_{\cX}\cY)(K)\to V(K)$
is injective. In turn, it suffices to show that for every geometric
point $x\colon\Spec K\to\cX$, we have $(\Spec K\times_{\cX}\cY)_{\red}\cong\Spec K$.
Let $G$ be the automorphism group of $x$. Then we have the induced
closed immersion $\B G=[\Spec K/G]\hookrightarrow\cX_{K}$. Since
$\cY\to\cX$ is stabilizer-preserving, we have 
\[
(\B G\times_{\cX_{K}}\cY_{K})_{\red}\cong(\B G\times_{\cX}\cY)_{\red}\cong\B G.
\]
Therefore 
\[
(\Spec K\times_{\cX}\cY)_{\red}\cong(\Spec K\times_{\B G}\B G\times_{\cX}\cY)_{\red}\cong\Spec K.
\]
\end{proof}
\begin{lem}
\label{lem:etale}Let $f\colon\cY\to\cX$ be a representable morphism
of DM stacks which is formally etale (in the sense that for every
morphism $V\to\cX$ from algebraic space, $\cY\times_{\cX}V\to V$
is formally etale). Let $\iota\colon\cD\hookrightarrow\cE$ be a thickening
of DM stacks. Suppose that we have the following 2-commutative diagram.
\[
\xymatrix{\cD\ar[r]\ar[d]_{\iota} & \cY\ar[d]^{f}\\
\cE\ar@{=>}[ur]_{\eta}\ar[r] & \cX
}
\]
Then there exist a dashed arrow $\gamma$ (1-morphism) and two thick
arrows $\alpha,\beta$ (2-morphisms) forming a 2-commutative diagram
\[
\xymatrix{\cD\ar[rrrr]\ar[d]_{\iota} &  & {} &  & \cY\ar[d]^{f}\\
\cE\ar@{-->}[urrrr]\sb(0.4){\gamma}\ar[rrrr]\ar@{=>}[urr]\sp(0.5){\alpha} &  & {}\ar@{=>}[urr]_{\beta} &  & \cX
}
\]
which induces $\eta$. Moreover, for two such tuples $(\gamma,\alpha,\beta)$
and $(\gamma',\alpha',\beta')$, there exists a unique 2-isomorphism
$\gamma\Rightarrow\gamma'$ through which $\alpha$ and $\alpha'$
as well as $\beta$ and $\beta'$ correspond to each other.
\end{lem}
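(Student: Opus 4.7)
The plan is to reduce the problem to the classical lifting for formally étale morphisms of algebraic spaces, via an étale atlas of $\cE$, and then descend along the resulting smooth groupoid. First I choose an étale atlas $p\colon V\to\cE$ with $V$ an affine scheme, and set $V_0:=V\times_{\cE}\cD$, which is a thickening of $V$. Since $f$ is representable and formally étale, the base change $U:=V\times_{\cX}\cY\to V$ is a formally étale morphism of algebraic spaces. The given square, pulled back along $p$, provides a section $\sigma_0\colon V_0\to U$ of $U\to V$ over $V_0\hookrightarrow V$; the classical lifting theorem produces a unique section $\sigma\colon V\to U$ extending $\sigma_0$. Composing $\sigma$ with the projection $U\to\cY$ yields a morphism $\gamma_V\colon V\to\cY$ together with canonical 2-isomorphisms $\alpha_V$ over $V_0$ and $\beta_V$ over $V$ that restrict to the given $\eta$ pulled back to $V_0$.

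Next I descend $(\gamma_V,\alpha_V,\beta_V)$ along the groupoid $R:=V\times_{\cE}V\rightrightarrows V$ with projections $q_1,q_2$. The morphisms $\gamma_V\circ q_1$ and $\gamma_V\circ q_2$ are both lifts of $R\to\cX$ through $f$, and they agree (via the pullback of $\alpha_V$) on the thickening $R_0:=R\times_{\cE}\cD$. Applying the uniqueness half of the scheme-level lifting over $R$ gives a canonical 2-isomorphism $\phi\colon\gamma_V\circ q_1\Rightarrow\gamma_V\circ q_2$ matching these restrictions. The cocycle identity for $\phi$ on $R\times_V R$ is verified by another application of uniqueness, now over $R\times_V R$ with the thickening induced by $\cD$: both prescribed composites are lifts of the same datum, hence coincide. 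Therefore $(\gamma_V,\phi)$ constitutes descent data that produce the sought $(\gamma,\alpha,\beta)$ on $\cE$, with $\beta$ recovered from $\beta_V$ by descent and $\alpha$ from $\alpha_V$.

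The uniqueness of $(\gamma,\alpha,\beta)$ up to unique 2-isomorphism is handled by the same machinery: two lifts restrict on $V$ to two sections of $U\to V$ extending $\sigma_0$, which must agree by uniqueness of $\sigma$, and the induced canonical 2-isomorphism over $V$ is compatible with the groupoid relations by a final application of the uniqueness over $R$; it therefore descends to a unique 2-isomorphism $\gamma\Rightarrow\gamma'$ identifying $\alpha$ with $\alpha'$ and $\beta$ with $\beta'$. The main obstacle I anticipate is purely organizational rather than conceptual: carefully tracking the 2-isomorphisms through the descent along $V$, $R$, and $R\times_V R$, and confirming that the coherence conditions for descent of a 1-morphism together with its witnessing 2-isomorphisms reduce at each stage to a uniqueness statement that the scheme-level lift already supplies. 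No further input beyond the formal étaleness of $U\to V$ and the representability of $f$ is needed.
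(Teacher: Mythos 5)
Your proof is correct, and it takes a genuinely different route from the paper's. The paper works directly with the fiber product $\cE\times_{\cX}\cY$ as a DM stack, notes that the section $\cD\to\cE\times_{\cX}\cY$ picks out an open and closed substack $\cE'$, invokes Lemma \ref{lem:iso-characterization} to see that $\cE'\to\cE$ is an isomorphism, and builds the lift together with the two witnessing 2-morphisms from a chosen quasi-inverse. You instead reduce to algebraic spaces by descending along an étale atlas $V\to\cE$: the classical formally-étale lifting gives a unique section of $V\times_{\cX}\cY\to V$ extending the one over $V_0=V\times_{\cE}\cD$, and the descent datum over $V\times_{\cE}V$ and the cocycle over $V\times_{\cE}V\times_{\cE}V$ come for free from the uniqueness half of the lifting together with representability of $f$ (which makes the relevant $\Iso$-sheaves injective into their images in $\cX$, so the 2-isomorphisms you need are unique whenever they exist). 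Your approach buys you a clean separation into a scheme-level lifting step plus a purely formal descent step, and it sidesteps having to argue that the image of $|\cD|\to|\cE\times_{\cX}\cY|$ is open — a point the paper asserts and which really uses that $f$ is étale rather than merely formally étale. The paper's approach is shorter once \ref{lem:iso-characterization} is in hand, but your version makes the role of representability of $f$ (reducing the $\Hom$-stack to a sheaf of sets) more transparent. One small point to tighten: where you say "both prescribed composites are lifts of the same datum, hence coincide" for the cocycle, the coincidence of 2-morphisms is not itself an instance of the lifting theorem; it follows from the injectivity of $\Iso_{\cY}(\cdot,\cdot)\to\Iso_{\cX}(f(\cdot),f(\cdot))$ afforded by representability, combined with the fact that both composites have the same image in $\cX$.
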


\begin{proof}
Consider the 2-commutative diagram:
\[
\xymatrix{\cD\ar[dr]\ar[drr]\ar[ddr]\\
 & \cE\times_{\cX}\cY\ar[r]\ar[d] & \cY\ar[d]^{f}\\
 & \cE\ar[r] & \cX
}
\]
The two triangles and the square in this diagram are given witnessing
2-isomorphisms which induces $\eta$. The image of $|\cD|\to|\cE\times_{\cX}\cY|$
is an open and closed subset. Let $\cE'$ be the corresponding open
and closed substack. From \ref{lem:iso-characterization}, $\cE'\to\cE$
is an isomorphism, which we denote by $a$. We choose the quasi-inverse
$b\colon\cE\to\cE'$ of the natural isomorphism $a\colon\cE'\to\cE$
and witnessing 2-isomorphisms $\alpha$ and $\beta$ of the two right
triangles in the following diagram which induces the identity of $a$:
\[
\xymatrix{\cD\ar@/_{1pc}/[dddr]\ar[dr]\ar@/^{1pc}/[drrrrrr]\\
 & \cE'\ar[rrrr]^{\id}\ar[dd]_{a} &  & {} &  & \cE'\ar[r]\ar[dd]_{a} & \cY\ar[dd]\\
\\
 & \cE\ar[rrrr]_{\id}\ar@{-->}[uurrrr]_{b}\ar@{=>}[uurr]^{\alpha} &  & {}\ar@{=>}[uurr]_{\beta} &  & \cE\ar[r] & \cX
}
\]
Indeed such a choice exists. First we choose any isomorphism $\alpha\colon\id_{\cE'}\to b\circ a$,
which induces $a(\alpha)\colon a\to a\circ b\circ a$ and an isomorphism
$\Iso(a\circ b\circ a,a)\to\Aut(a)$. We also have an isomorphism
$\Aut(a\circ b,\id_{\cE})\to\Iso(a\circ b\circ a,a)$ induced by $a$.
We choose $\beta\in\Aut(a\circ b,\id_{\cE})$ to be the one corresponding
to $\id_{a}$ through the two isomorphisms. The first assertion of
the lemma follows. For another quasi-inverse $b'\colon\cE\to\cE'$
and an isomorphism $\alpha'\colon\id_{\cE'}\to b'\circ a$, we obtain
a unique isomorphism $\xi\colon b\to b'$ compatible with $\alpha$
and $\alpha'$; for any $e'\in\cE'$, $\xi\colon b(a(e'))\to b'(a(e'))$
is given by $\alpha'\circ\alpha^{-1}$.
\end{proof}
\begin{lem}
\label{lem:etale-invariance}Let $\cY\to\cX$ be a thickening of DM
stacks and let $U\to\cY$ be an etale morphism from an algebraic space
(resp.\ scheme, affine scheme). There exists a thickening $U\hookrightarrow\tilde{U}$
of algebraic spaces (resp.\ schemes, affine schemes) and an etale
morphism $\tilde{U}\to\cX$ such that the diagram
\[
\xymatrix{U\ar[r]\ar[d] & \tilde{U}\ar[d]\\
\cY\ar[r] & \cX
}
\]
is 2-Cartesian. For another pair $(U\hookrightarrow\tilde{U}',\tilde{U}'\to\cX)$
of such morphisms, there exists a unique isomorphism $\tilde{U}\xrightarrow{\sim}\tilde{U}'$
compatible with these morphisms.
\end{lem}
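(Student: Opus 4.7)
The plan is to reduce the statement to the classical topological invariance of the small etale site for algebraic spaces, and then transfer the result to DM stacks via an etale atlas and descent.

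First I would pick an etale atlas $V\to\cX$ by an algebraic space and set $V_{0}:=V\times_{\cX}\cY$. Because $\cY\hookrightarrow\cX$ is a thickening, $V_{0}\hookrightarrow V$ is a thickening of algebraic spaces and $V_{0}\to\cY$ is an etale atlas. Setting $R:=V\times_{\cX}V$ and $R_{0}:=V_{0}\times_{\cY}V_{0}$ (algebraic spaces, since $\cX,\cY$ are DM) gives etale groupoids $R\rightrightarrows V$ and $R_{0}\rightrightarrows V_{0}$ presenting $\cX$ and $\cY$, with $R_{0}\hookrightarrow R$ again a thickening. An etale morphism $U\to\cY$ from an algebraic space is automatically representable, so its pullback $U_{0}:=U\times_{\cY}V_{0}\to V_{0}$ is an etale morphism of algebraic spaces equipped with a canonical descent datum along $R_{0}\rightrightarrows V_{0}$.

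Next I would invoke the classical topological invariance of the small etale site for algebraic spaces (\cite[tag 05YU]{stacks-project}): pullback along a thickening of algebraic spaces induces an equivalence between the categories of etale morphisms. Applied to $V_{0}\hookrightarrow V$, this extends $U_{0}\to V_{0}$ uniquely to an etale morphism $\tilde{U}_{V}\to V$ of algebraic spaces with $\tilde{U}_{V}\times_{V}V_{0}\cong U_{0}$. Applied to $R_{0}\hookrightarrow R$ together with the source and target maps to $V$, the $R_{0}$-action on $U_{0}$ lifts uniquely to an $R$-action on $\tilde{U}_{V}$; the associativity, unit and groupoid identities of this lifted action are each encoded by an equality of etale morphisms over a higher fiber power of $R$ whose restriction to the corresponding fiber power of $R_{0}$ is the identity by construction, hence they hold by the uniqueness part of the topological invariance. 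This is the one technical step where I expect the main subtlety to lie, though it is entirely routine diagram-chasing once set up.

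Having obtained an $R$-equivariant etale morphism $\tilde{U}_{V}\to V$, etale descent of algebraic spaces along the etale cover $V\to\cX$ (using that $\tilde{U}_{V}\to V$ is representable and etale, and that the action makes $\tilde{U}_{V}$ into a descent datum) produces an algebraic space $\tilde{U}$ with a representable etale morphism $\tilde{U}\to\cX$ such that $\tilde{U}\times_{\cX}V\cong\tilde{U}_{V}$. Descending the isomorphism $\tilde{U}_{V}\times_{V}V_{0}\cong U_{0}=U\times_{\cY}V_{0}$ along $V_{0}\to\cY$ yields the desired 2-Cartesian square $\tilde{U}\times_{\cX}\cY\cong U$. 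Uniqueness of $(\tilde{U},\tilde{U}\to\cX)$ up to unique isomorphism follows by combining the uniqueness on $V$ with uniqueness of descent data.

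Finally, for the case where $U$ is a scheme (resp.\ an affine scheme), the 2-Cartesian square forces $U\hookrightarrow\tilde{U}$ to be a thickening of algebraic spaces, so $\tilde{U}$ has the same underlying topological space as $U$. I would then invoke the standard results that a thickening of a scheme is a scheme and a thickening of an affine scheme is an affine scheme (see \cite[tag 0BPW]{stacks-project} and its affine analogue) to conclude that $\tilde{U}$ is of the required type.
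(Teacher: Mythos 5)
Your proof is correct and takes essentially the same approach as the paper: both reduce the statement to topological invariance of the small etale site under thickenings, applied through a groupoid presentation of $\cX$, and both handle the scheme and affine cases via the fact that a thickening of an algebraic space whose closed part is a scheme (resp.\ affine scheme) is itself a scheme (resp.\ affine scheme). The paper is slightly more symmetric---it base changes a full affine groupoid $W\rightrightarrows V$ to $\cY$ and to $U$ and extends \emph{both} $V$ and $W$ by topological invariance, so the groupoid structure on $W_{\tilde U}\rightrightarrows V_{\tilde U}$ comes for free from the uniqueness clause---while you extend only the atlas component $\tilde U_V\to V$ and then lift the $R_0$-descent datum to $R$ separately, but these are organizational variants of the same argument.
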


\begin{proof}
We take an etale groupoid in affine schemes, $W\rightrightarrows V$,
inducing $\cX$. Let $W_{\cY}\rightrightarrows V_{\cY}$ and $W_{U}\rightrightarrows V_{U}$
be its base changes by $\cY\to\cX$ and $U\to\cX$ respectively. These
are again etale groupoids in affine schemes. The morphisms $W_{U}\to W_{\cY}$
and $V_{U}\to V_{\cY}$ are etale and the morphisms $W_{\cY}\to W$
and $V_{\cY}\to V$ are thickenings. From the invariance of small
etale site by a thickening \cite[tag 05ZH]{stacks-project}, they
fit into the unique Cartesian diagrams: 
\[
\xymatrix{V_{U}\ar[r]\ar[d] & V_{\tilde{U}}\ar[d]\\
V_{\cY}\ar[r] & V
}
\quad\xymatrix{W_{U}\ar[r]\ar[d] & W_{\tilde{U}}\ar[d]\\
W_{\cY}\ar[r] & W
}
\]
Here horizontal arrows are thickenings and vertical arrows are etale.
We obtain an etale groupoid $W_{\tilde{U}}\rightrightarrows V_{\tilde{U}}$,
which induces a DM stack $\tilde{U}$. It fits into the Cartesian
diagram
\[
\xymatrix{U\ar[r]\ar[d] & \tilde{U}\ar[d]\\
\cY\ar[r] & \cX
}
\]
with horizontal arrows being thickenings and vertical arrows being
etale. It follows that $\tilde{U}$ is an algebraic space. From \cite[tags 06AD and  05ZR]{stacks-project},
if $U$ is a scheme (resp.\ an affine scheme), then so is $\tilde{U}$.
We have proved the first assertion.

For the second assertion, let $\tilde{U}''\subset\tilde{U}\times_{\cX}\tilde{U}'$
be the image of $U$, which is an open and closed subscheme. We get
the third pair $(U\hookrightarrow\tilde{U}'',\tilde{U}''\to\cX)$
with the same property. The projections $\tilde{U}''\to\tilde{U}$
and $\tilde{U}''\to\tilde{U}'$ are isomorphisms, giving the desired
isomorphism $\tilde{U}\to\tilde{U}'$. 
\end{proof}
\bibliographystyle{alpha}
\bibliography{../../mybib}

\begin{thebibliography}{CLNS18}

\bibitem[Abb10]{MR2815110}
Ahmed Abbes.
\newblock {\em \'{E}l\'{e}ments de g\'{e}om\'{e}trie rigide. {V}olume {I}},
  volume 286 of {\em Progress in Mathematics}.
\newblock Birkh\"{a}user/Springer Basel AG, Basel, 2010.
\newblock Construction et \'{e}tude g\'{e}om\'{e}trique des espaces rigides.
  [Construction and geometric study of rigid spaces], With a preface by Michel
  Raynaud.

\bibitem[AJP07]{MR2313672}
Leovigildo {Alonso Tarr\'{\i}o}, Ana {Jerem\'{\i}as L\'{o}pez}, and Marta
  {P\'{e}rez Rodr\'{\i}guez}.
\newblock Infinitesimal lifting and {J}acobi criterion for smoothness on formal
  schemes.
\newblock {\em Comm. Algebra}, 35(4):1341--1367, 2007.

\bibitem[AOV11]{MR2786662}
Dan Abramovich, Martin Olsson, and Angelo Vistoli.
\newblock Twisted stable maps to tame {A}rtin stacks.
\newblock {\em J. Algebraic Geom.}, 20(3):399--477, 2011.

\bibitem[AV02]{MR1862797}
Dan Abramovich and Angelo Vistoli.
\newblock Compactifying the space of stable maps.
\newblock {\em J. Amer. Math. Soc.}, 15(1):27--75 (electronic), 2002.

\bibitem[Bat99a]{MR1714818}
Victor~V. Batyrev.
\newblock Birational {C}alabi-{Y}au {$n$}-folds have equal {B}etti numbers.
\newblock In {\em New trends in algebraic geometry (Warwick, 1996)}, volume 264
  of {\em London Math. Soc. Lecture Note Ser.}, pages 1--11. Cambridge Univ.
  Press, Cambridge, 1999.

\bibitem[Bat99b]{MR1677693}
Victor~V. Batyrev.
\newblock Non-{A}rchimedean integrals and stringy {E}uler numbers of
  log-terminal pairs.
\newblock {\em J. Eur. Math. Soc. (JEMS)}, 1(1):5--33, 1999.

\bibitem[Bea83]{0537.53056}
Arnaud Beauville.
\newblock {Vari{\'e}t{\'e}s k{\"a}hleriennes dont la premi{\`e}re classe de
  Chern est nulle.}
\newblock {\em J. Differ. Geom.}, 18:755--782, 1983.

\bibitem[Ber00]{MR1752779}
Alessandra Bertapelle.
\newblock Formal {N}\'{e}ron models and {W}eil restriction.
\newblock {\em Math. Ann.}, 316(3):437--463, 2000.

\bibitem[Bha07]{MR2354798}
Manjul Bhargava.
\newblock Mass formulae for extensions of local fields, and conjectures on the
  density of number field discriminants.
\newblock {\em Int. Math. Res. Not. IMRN}, (17):Art. ID rnm052, 20, 2007.

\bibitem[BK05]{MR2107324}
Michel Brion and Shrawan Kumar.
\newblock {\em Frobenius splitting methods in geometry and representation
  theory}, volume 231 of {\em Progress in Mathematics}.
\newblock Birkh\"auser Boston Inc., Boston, MA, 2005.

\bibitem[CLNS18]{MR3838446}
Antoine Chambert-Loir, Johannes Nicaise, and Julien Sebag.
\newblock {\em Motivic integration}, volume 325 of {\em Progress in
  Mathematics}.
\newblock Birkh\"{a}user/Springer, New York, 2018.

\bibitem[CV08]{MR2492446}
Aldo Conca and Giuseppe Valla.
\newblock Canonical {H}ilbert-{B}urch matrices for ideals of {$k[x,y]$}.
\newblock {\em Michigan Math. J.}, 57:157--172, 2008.
\newblock Special volume in honor of Melvin Hochster.

\bibitem[DL99]{MR1664700}
Jan Denef and Fran{\c{c}}ois Loeser.
\newblock Germs of arcs on singular algebraic varieties and motivic
  integration.
\newblock {\em Invent. Math.}, 135(1):201--232, 1999.

\bibitem[DL02]{MR1905024}
Jan Denef and Fran{\c{c}}ois Loeser.
\newblock Motivic integration, quotient singularities and the {M}c{K}ay
  correspondence.
\newblock {\em Compositio Math.}, 131(3):267--290, 2002.

\bibitem[Eis95]{MR1322960}
David Eisenbud.
\newblock {\em Commutative algebra: with a view toward algebraic geometry},
  volume 150 of {\em Graduate Texts in Mathematics}.
\newblock Springer-Verlag, New York, 1995.

\bibitem[ES87]{MR870732}
Geir Ellingsrud and Stein~Arild Str{\o}mme.
\newblock On the homology of the {H}ilbert scheme of points in the plane.
\newblock {\em Invent. Math.}, 87(2):343--352, 1987.

\bibitem[Fog80]{MR595009}
John Fogarty.
\newblock K\"{a}hler differentials and {H}ilbert's fourteenth problem for
  finite groups.
\newblock {\em Amer. J. Math.}, 102(6):1159--1175, 1980.

\bibitem[Fr{\"o}76]{MR0414520}
A.~Fr{\"o}hlich.
\newblock Module conductors and module resolvents.
\newblock {\em proc. London Math. Soc. (3)}, 32(2):279--321, 1976.

\bibitem[Gro60]{MR0163908}
A.~Grothendieck.
\newblock \'{E}l\'ements de g\'eom\'etrie alg\'ebrique. {I}. {L}e langage des
  sch\'emas.
\newblock {\em Inst. Hautes \'Etudes Sci. Publ. Math.}, (4):228, 1960.

\bibitem[Gro67]{MR0238860}
A.~Grothendieck.
\newblock \'{E}l\'ements de g\'eom\'etrie alg\'ebrique. {IV}. \'{E}tude locale
  des sch\'emas et des morphismes de sch\'emas {IV}.
\newblock {\em Inst. Hautes \'Etudes Sci. Publ. Math.}, (32):361, 1967.

\bibitem[GWZ17]{1707.06417}
Michael Groechenig, Dimitri Wyss, and Paul Ziegler.
\newblock Mirror symmetry for moduli spaces of higgs bundles via p-adic
  integration.
\newblock arXiv:1707.06417, 2017.

\bibitem[Ill15]{MR3392531}
Luc Illusie.
\newblock From {P}ierre {D}eligne's secret garden: looking back at some of his
  letters.
\newblock {\em Jpn. J. Math.}, 10(2):237--248, 2015.

\bibitem[Ito04]{MR2098399}
Tetsushi Ito.
\newblock Stringy {H}odge numbers and {$p$}-adic {H}odge theory.
\newblock {\em Compos. Math.}, 140(6):1499--1517, 2004.

\bibitem[KM97]{MR1432041}
Se\'{a}n Keel and Shigefumi Mori.
\newblock Quotients by groupoids.
\newblock {\em Ann. of Math. (2)}, 145(1):193--213, 1997.

\bibitem[Kol13]{MR3057950}
J{{\'a}}nos Koll{{\'a}}r.
\newblock {\em Singularities of the minimal model program}, volume 200 of {\em
  Cambridge Tracts in Mathematics}.
\newblock Cambridge University Press, Cambridge, 2013.
\newblock With a collaboration of S{{\'a}}ndor Kov{{\'a}}cs.

\bibitem[Kon95]{Kontsevich-motivic}
Maxim Kontsevich.
\newblock Lecture at {O}rsay.
\newblock 1995.

\bibitem[KT01]{MR1825408}
Shrawan Kumar and Jesper~Funch Thomsen.
\newblock Frobenius splitting of {H}ilbert schemes of points on surfaces.
\newblock {\em Math. Ann.}, 319(4):797--808, 2001.

\bibitem[LM71]{MR0414528}
Max.~D. Larsen and Paul~J. McCarthy.
\newblock {\em Multiplicative theory of ideals}.
\newblock Academic Press, New York-London, 1971.
\newblock Pure and Applied Mathematics, Vol. 43.

\bibitem[Loo02]{MR1886763}
Eduard Looijenga.
\newblock Motivic measures.
\newblock {\em Ast{\'e}risque}, (276):267--297, 2002.
\newblock S{{\'e}}minaire Bourbaki, Vol. 1999/2000.

\bibitem[LP04]{MR2069013}
Ernesto Lupercio and Mainak Poddar.
\newblock The global {M}c{K}ay-{R}uan correspondence via motivic integration.
\newblock {\em Bull. London Math. Soc.}, 36(4):509--515, 2004.

\bibitem[LW19]{1912.11638}
Fran{\c c}ois Loeser and Dimitri Wyss.
\newblock Motivic integration on the hitchin fibration, 2019.

\bibitem[Nic11]{MR2770561}
Johannes Nicaise.
\newblock A trace formula for varieties over a discretely valued field.
\newblock {\em J. Reine Angew. Math.}, 650:193--238, 2011.

\bibitem[NS11]{MR2885336}
Johannes Nicaise and Julien Sebag.
\newblock The {G}rothendieck ring of varieties.
\newblock In {\em Motivic integration and its interactions with model theory
  and non-{A}rchimedean geometry. {V}olume {I}}, volume 383 of {\em London
  Math. Soc. Lecture Note Ser.}, pages 145--188. Cambridge Univ. Press,
  Cambridge, 2011.

\bibitem[Ols06]{MR2239345}
Martin~C. Olsson.
\newblock {$\underline {\rm Hom}$}-stacks and restriction of scalars.
\newblock {\em Duke Math. J.}, 134(1):139--164, 2006.

\bibitem[Ols07]{MR2357471}
Martin Olsson.
\newblock A boundedness theorem for {H}om-stacks.
\newblock {\em Math. Res. Lett.}, 14(6):1009--1021, 2007.

\bibitem[Ols15]{MR3344762}
Martin Olsson.
\newblock Fujiwara's theorem for equivariant correspondences.
\newblock {\em J. Algebraic Geom.}, 24(3):401--497, 2015.

\bibitem[Ols16]{MR3495343}
Martin Olsson.
\newblock {\em Algebraic spaces and stacks}, volume~62 of {\em American
  Mathematical Society Colloquium Publications}.
\newblock American Mathematical Society, Providence, RI, 2016.

\bibitem[Rei02]{MR1886756}
Miles Reid.
\newblock La correspondance de {M}c{K}ay.
\newblock {\em Ast{\'e}risque}, (276):53--72, 2002.
\newblock S{{\'e}}minaire Bourbaki, Vol. 1999/2000.

\bibitem[Rom05]{MR2125542}
Matthieu Romagny.
\newblock Group actions on stacks and applications.
\newblock {\em Michigan Math. J.}, 53(1):209--236, 2005.

\bibitem[Ron06]{MR2225775}
Guillaume Rond.
\newblock Lemme d'{A}rtin-{R}ees, th\'{e}or{\`e}me d'{I}zumi et fonction de
  {A}rtin.
\newblock {\em J. Algebra}, 299(1):245--275, 2006.

\bibitem[Sai10]{MR2827799}
Takeshi Saito.
\newblock Wild ramification of schemes and sheaves.
\newblock In {\em Proceedings of the {I}nternational {C}ongress of
  {M}athematicians. {V}olume {II}}, pages 335--356. Hindustan Book Agency, New
  Delhi, 2010.

\bibitem[Seb04]{MR2075915}
Julien Sebag.
\newblock Int{\'e}gration motivique sur les sch{\'e}mas formels.
\newblock {\em Bull. Soc. Math. France}, 132(1):1--54, 2004.

\bibitem[Ser79]{MR554237}
Jean-Pierre Serre.
\newblock {\em Local fields}, volume~67 of {\em Graduate Texts in Mathematics}.
\newblock Springer-Verlag, New York, 1979.
\newblock Translated from the French by Marvin Jay Greenberg.

\bibitem[{Sta}20]{stacks-project}
The {Stacks Project Authors}.
\newblock {The Stacks Project}.
\newblock \url{https://stacks.math.columbia.edu}, 2020.

\bibitem[Sun12]{MR2950161}
Shenghao Sun.
\newblock {$L$}-series of {A}rtin stacks over finite fields.
\newblock {\em Algebra Number Theory}, 6(1):47--122, 2012.

\bibitem[TY]{Tanno-Yasuda}
Mahito Tanno and Takehiko Yasuda.
\newblock The wild {M}c{K}ay correspondence for cyclic groups of prime power
  order.
\newblock in preparation.

\bibitem[TY17]{Tonini:2017qr}
Fabio Tonini and Takehiko Yasuda.
\newblock Moduli of formal torsors.
\newblock arXiv:1709.01705, to appear in J. Algebraic Geom., 2017.

\bibitem[TY19]{Formal-Torsors-II}
Fabio Tonini and Takehiko Yasuda.
\newblock Moduli of formal torsors {II}.
\newblock arXiv:1909.09276, 2019.

\bibitem[VU06]{1087.14011}
O.~Villamayor~U.
\newblock {On flattening of coherent sheaves and of projective morphisms.}
\newblock {\em J. Algebra}, 295(1):119--140, 2006.

\bibitem[WY15]{MR3431631}
Melanie~Matchett Wood and Takehiko Yasuda.
\newblock Mass formulas for local {G}alois representations and quotient
  singularities. {I}: a comparison of counting functions.
\newblock {\em Int. Math. Res. Not. IMRN}, (23):12590--12619, 2015.

\bibitem[WY17]{MR3665638}
Melanie Wood and Takehiko Yasuda.
\newblock Mass formulas for local {G}alois representations and quotient
  singularities {II}: {D}ualities and resolution of singularities.
\newblock {\em Algebra Number Theory}, 11(4):817--840, 2017.

\bibitem[Yas04]{MR2027195}
Takehiko Yasuda.
\newblock Twisted jets, motivic measures and orbifold cohomology.
\newblock {\em Compos. Math.}, 140(2):396--422, 2004.

\bibitem[Yas06]{MR2271984}
Takehiko Yasuda.
\newblock Motivic integration over {D}eligne-{M}umford stacks.
\newblock {\em Adv. Math.}, 207(2):707--761, 2006.

\bibitem[Yas14]{MR3230848}
Takehiko Yasuda.
\newblock The {$p$}-cyclic {M}c{K}ay correspondence via motivic integration.
\newblock {\em Compos. Math.}, 150(7):1125--1168, 2014.

\bibitem[Yas16]{MR3508745}
Takehiko Yasuda.
\newblock Wilder {M}c{K}ay correspondences.
\newblock {\em Nagoya Math. J.}, 221(1):111--164, 2016.

\bibitem[Yas17a]{MR3791224}
Takehiko Yasuda.
\newblock Toward motivic integration over wild {D}eligne-{M}umford stacks.
\newblock In {\em Higher dimensional algebraic geometry---in honour of
  {P}rofessor {Y}ujiro {K}awamata's sixtieth birthday}, volume~74 of {\em Adv.
  Stud. Pure Math.}, pages 407--437. Math. Soc. Japan, Tokyo, 2017.

\bibitem[Yas17b]{MR3730512}
Takehiko Yasuda.
\newblock The wild {M}c{K}ay correspondence and {$p$}-adic measures.
\newblock {\em J. Eur. Math. Soc. (JEMS)}, 19(12):3709--3734, 2017.

\bibitem[Yas19]{MR3929517}
Takehiko Yasuda.
\newblock Discrepancies of {$p$}-cyclic quotient varieties.
\newblock {\em J. Math. Sci. Univ. Tokyo}, 26(1):1--14, 2019.

\bibitem[Zas93]{MR1233848}
Eric Zaslow.
\newblock Topological orbifold models and quantum cohomology rings.
\newblock {\em Comm. Math. Phys.}, 156(2):301--331, 1993.

\end{thebibliography}

\end{document}